\theoremstyle{plain}
\newtheorem{lemma}{Lemma}[chapter]
\newtheorem{proposition}[lemma]{Proposition}
\newtheorem{proposition/definition}[lemma]{Proposition/Definition}
\newtheorem{theorem}[lemma]{Theorem}
\newtheorem{corollary}[lemma]{Corollary}
\newtheorem*{theorem*}{Theorem}
\newtheorem*{definition*}{Definition}
\theoremstyle{definition}
\newtheorem{definition}[lemma]{Definition}
\newtheorem{remark}[lemma]{Remark}
\newtheorem{example}[lemma]{Example}
\numberwithin{equation}{chapter}
\newcommand\abstractname{Abstract}  
\newenvironment{abstract}{%
	\titlepage
	\null\vfil
	\@beginparpenalty\@lowpenalty
	\begin{center}%
		\bfseries \abstractname
		\@endparpenalty\@M
	\end{center}}%
	{\par\vfil\null\endtitlepage}
	\newenvironment{abstract}{%
		\if@twocolumn
		\section*{\abstractname}%
		\else
		\small
		\begin{center}%
			{\bfseries \abstractname\vspace{-.5em}\vspace{\z@}}%
		\end{center}%
		\quotation
		\fi}
	{\if@twocolumn\else\endquotation\fi}
\DeclareMathOperator{\id}{id}
\DeclareMathOperator{\im}{im}
\DeclareMathOperator{\der}{\textit D}
\DeclareMathOperator{\Der}{\mathscr{D}}
\DeclareMathOperator{\VDer}{V\frakX}
\DeclareMathOperator{\Diff}{\mathscr{D}}
\DeclareMathOperator{\End}{End}
\DeclareMathOperator{\Aut}{Aut}
\DeclareMathOperator{\MC}{MC}
\DeclareMathOperator{\BFV}{BFV}
\DeclareMathOperator{\BRST}{BRST}
\DeclareMathOperator{\Ham}{Ham}
\DeclareMathOperator{\Jac}{Jac}
\DeclareMathOperator{\weight}{\mathsf{weight}_{\nabla}}
\newcommand{\J}{J}
\newcommand{\vder}{\textnormal{V}\frakX}
\newcommand{\Mod}{\ \operatorname{mod}\ }
\newcommand{\wwedge}{ \wedge \dots \wedge}
\newcommand{\bfA}{\mathbf{A}}
\newcommand{\bfB}{\mathbf{B}}
\newcommand{\bfC}{\mathbf{C}}
\newcommand{\calA}{\mathcal{A}}
\newcommand{\calC}{\mathcal{C}}
\newcommand{\calE}{\mathcal{E}}
\newcommand{\calF}{\mathcal{F}}
\newcommand{\calG}{\mathcal{G}}
\newcommand{\calK}{\mathcal{K}}
\newcommand{\calL}{\mathcal{L}}
\newcommand{\calO}{\mathcal{O}}
\newcommand{\calS}{\mathcal{S}}
\newcommand{\calT}{\mathcal{T}}
\newcommand{\scrL}{\mathscr{L}}
\newcommand{\scrM}{\mathscr{M}}
\newcommand{\scrP}{\mathscr{P}}
\newcommand{\scrQ}{\mathscr{Q}}
\newcommand{\bbC}{\mathbb{C}}
\newcommand{\bbD}{\mathbb{D}}
\newcommand{\bbF}{\mathbb{F}}
\newcommand{\bbG}{\mathbb{G}}
\newcommand{\bbN}{\mathbb{N}}
\newcommand{\bbQ}{\mathbb{Q}}
\newcommand{\bbR}{\mathbb{R}}
\newcommand{\bbS}{\mathbb{S}}
\newcommand{\bbT}{\mathbb{T}}
\newcommand{\bbW}{\mathbb{W}}
\newcommand{\bbY}{\mathbb{Y}}
\newcommand{\bbZ}{\mathbb{Z}}
\newcommand{\frakX}{\mathfrak{X}}
\newcommand{\fraka}{\mathfrak{a}}
\newcommand{\frakg}{\mathfrak{g}}
\newcommand{\frakh}{\mathfrak{h}}
\newcommand{\frakl}{\mathfrak{l}}
\newcommand{\frakm}{\mathfrak{m}}
\newcommand{\frakn}{\mathfrak{n}}
\newcommand{\meanphi}{\phi}
\renewcommand{\phi}{\varphi}
\newcommand{\eps}{\varepsilon}
\renewcommand{\theta}{\vartheta}
\newcommand{\injects}{\hookrightarrow}
\renewcommand{\tilde}[1]{\widetilde{#1}}
\renewcommand{\hat}[1]{\widehat{#1}}
\renewcommand{\bar}[1]{\overline{#1}}
\newcommand{\ldsb}{[\![}
\newcommand{\rdsb}{]\!]}
\begin{document}

\pagestyle{empty}    
\begingroup

\frontmatter

\begin{titlepage}
%
%
%
%
\linespread{1.1}
\pagestyle{myheadings}

\thispagestyle{empty}
\newgeometry{top=3cm,bottom=2cm}
\begin{center}
	\vspace{-4cm}\includegraphics[width = 1\linewidth]{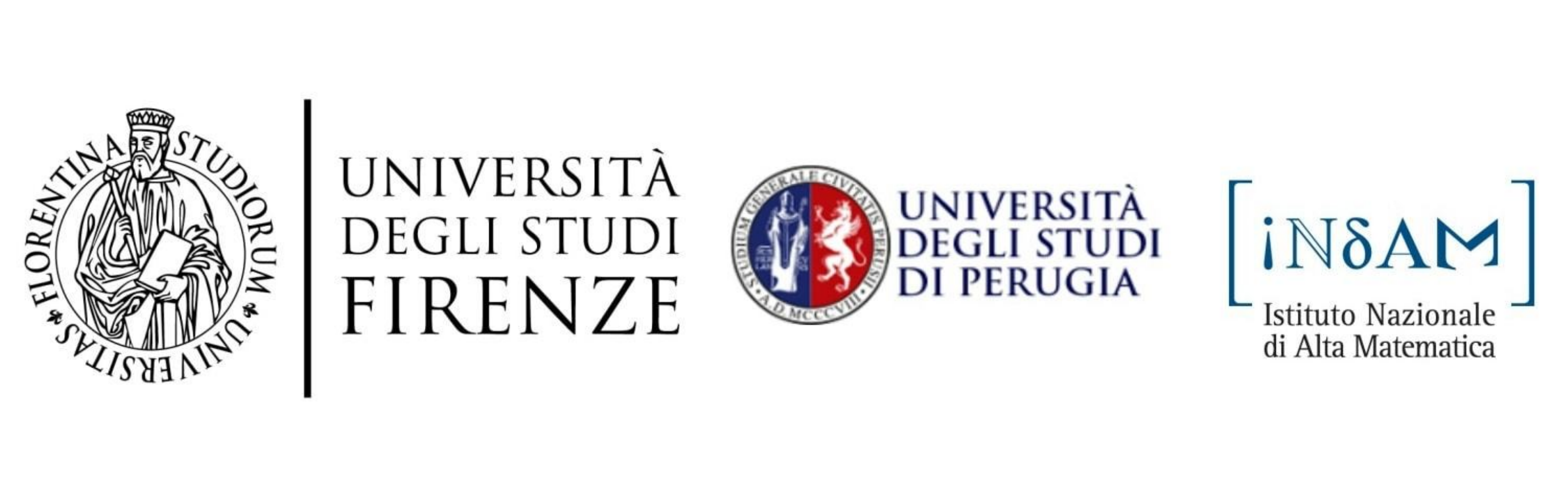}
	\par
	 { 
	  Universit\`a di Firenze, Universit\`a di Perugia, INdAM  consorziate nel {\small CIAFM}\\}
	 \vspace{1mm}
\vskip .8cm
	\par \vspace{2mm}
	\large
	\textbf{DOTTORATO DI RICERCA\\
	IN MATEMATICA, INFORMATICA, STATISTICA}\\
		 \vskip.2cm
		 CURRICULUM IN MATEMATICA\\
	CICLO XXIX
	\par \vspace{5mm}
	\large
	
		 
	 {\bf Sede amministrativa Universit\`a degli Studi di Firenze}\\
	Coordinatore Prof.~Graziano Gentili
	\par \vspace{8mm}
	\huge
	\textbf{Deformations of coisotropic\\submanifolds in Jacobi manifolds
	}
	\par \vspace{5mm}
	
	\large
	Settore Scientifico Disciplinare MAT/03
\end{center}
\par \vspace{10mm}

\normalsize
\hspace{1cm}\begin{minipage}{0.42\linewidth}
	\textbf{Dottorando}:
	\\
	{Alfonso Giuseppe Tortorella}
\end{minipage}
\hspace{2cm}
\begin{minipage}[t]{0.42\linewidth}
	\textbf{Tutore}
	\\
	{Dott.~Luca~Vitagliano}
	\vspace{14mm}
	\newline
	{Prof.~Paolo~de~Bartolomeis}
	
\end{minipage}
\par \vspace{10mm}

\begin{center}
	\begin{minipage}[t]{0.30\linewidth}
		\textbf{Coordinatore}
		\\
		{Prof.~Graziano Gentili}
	\end{minipage}
\end{center}
\par \vspace{9mm}
\begin{center}
	\hrule
	\par \vspace{5mm}
	Anni 2013/2016
	
\end{center}
\restoregeometry

\end{titlepage}

\cleardoublepage
\thispagestyle{empty}
\vspace*{\stretch{1}}
\begin{flushright}
	\itshape
	In memory of\\
	Paolo de Bartolomeis
\end{flushright}
\vspace{\stretch{3}}
\cleardoublepage


\begin{abstract}
%
%
	In this thesis, we investigate deformation theory and moduli theory of coisotropic submanifolds in Jacobi manifolds.
	Originally introduced by Kirillov as local Lie algebras with one dimensional fibers, Jacobi manifolds encompass, unifying and generalizing, locally conformal symplectic manifolds, locally conformal Poisson manifolds, and non-necessarily coorientable contact manifolds.
	
	We attach an $L_\infty$-algebra to any coisotropic submanifold in a Jacobi manifold.
	Our construction generalizes and unifies analogous constructions by Oh--Park (symplectic case), Cattaneo--Felder (Poisson case), and L\^e--Oh (locally conformal symplectic case).
	As a completely new case we also associate an $L_\infty$-algebra with any coisotropic submanifold in a contact manifold.
	The $L_\infty$-algebra of a coisotropic submanifold $S$ controls the \emph{formal} coisotropic deformation problem of $S$, even under Hamiltonian equivalence, and provides criteria both for the obstructedness and for the unobstructedness at the formal level.
	Additionally we prove that if a certain condition (``fiberwise entireness'') is satisfied then the algebra controls the \emph{non-formal} coisotropic deformation problem, even under Hamiltonian equivalence.
	
	We associate a BFV-complex with any coisotropic submanifold in a Jacobi manifold.
	Our construction extends an analogous construction by Sch\"atz in the Poisson setting, and in particular it also applies in the locally conformal symplectic/Poisson setting and the contact setting.
	Unlike the $L_\infty$-algebra, we prove that, with no need of any restrictive hypothesis, the BFV-complex of a coisotropic submanifold $S$ controls the \emph{non-formal} coisotropic deformation problem of $S$, even under both Hamiltonian equivalence and Jacobi equivalence.
	
	Notwithstanding the differences there is a close relation between the approaches to the coisotropic deformation problem via $L_\infty$-algebra and via BFV-complex.
	Indeed both the $L_\infty$-algebra and the BFV-complex of a coisotropic submanifold $S$ provide a cohomological reduction of $S$.
	Moreover they are $L_\infty$-quasi-isomorphic and so they encode equally well the moduli space of formal coisotropic deformations of $S$ under Hamiltonian equivalence.
	
	In addition we exhibit two examples of coisotropic submanifolds in the contact setting whose coisotropic deformation problem is obstructed at the formal level.
	Further we provide a conceptual explanation of this phenomenon both in terms of the $L_\infty$-algebra and in terms of the BFV-complex.
\end{abstract}


\chapter*{Acknowledgements}
\thispagestyle{empty}
I would not been able to complete the work on this thesis without the help and support received from all the people close to me during this journey.
I can only mention some of them.

First and foremost my sincere and deep gratitude goes to my adviser Luca Vitagliano.
Thank you, first for having introduced me, as a MSc student, to this beautiful field of differential geometry, and second for having proposed me this fascinating problem as PhD research topic.
For your trust in me, your generosity, and your patience I thank you again.
During these years you have constantly been an inspiring example and a firm point of reference not only from the scientific perspective but even from the human point of view.

I am grateful to Paolo de Bartolomeis for having accepted to be my ``co-tutor'', and this thesis is dedicated to him who suddenly passed away on the 29th of November 2016 leaving a deep scientific heritage.

I wish to thank also H\^ong V\^an L\^e and Yong-Geun Oh for having generously shared with me, from the very first stages of my PhD research, their knowledge of deformation and moduli theory of coisotropic submanifolds.

I am deeply indebted to Janusz Grabowski, who kindly accepted to be my mentor during the WCMCS PhD-internship at IMPAN: his helpful explanations and suggestions, and the pleasant atmosphere of his research group have decisively helped me to make progress with my PhD research.

I wish to thank Andrew J.~Bruce for having been always friendly both in talking about mathematics and in enjoying some free time together (in Warsaw, in Salerno and during several conferences).

I am also pleased to thank Florian Sch\"atz and Marco Zambon for useful comments and suggestions about the preprints~\cite{LOTV,LTV,tortorella2016rigidity} where we communicated large part of the results presented in this thesis.

A particular thanks goes to Graziano Gentili, the coordinator of the (Firenze--INdAM--Perugia) PhD program in Mathematics, Computer Science and Statistics: for the kindness and patience shown in his wisely guiding and advising all of us PhD students.

A special thanks is due to Giorgio Patrizio, for having accepted the r\^ole of ``referente interno'' for both me and my collegue Carlo: in this three years he has never missed to give us his encouragement, support and wise advise. 

I would also like to express my deep gratitude to the referees, Joana Margarida Nunes da Costa and Marco Zambon, for having kindly accepted to invest their time reading this thesis and providing their helpful suggestions and comments.

Thanks to my fellow PhD students: Agnese Baldisseri, Andrea Tamagnini, Carlo Collari, Davide Vanzo, Francesco Geraci, Gianluca Frasca-Caccia, Gioia Fioriti, Giovanni Zini, Mauro Maccioni, Kanishka Ariyapala, Majid Alamdari, Massimo Iuliani, and Sara Saldi.
Their youthful enthusiasm has contributed to create such a friendly atmosphere in the offices of viale Morgagni.

Thanks to Gianluca: he has been a great office-mate during my stay in Florence.
Sharing room T8 with him allowed me to balance the long working hours with interesting conversations with a kind friend.

Many thanks are due to all the members and the staff of the three mathematical institutions that hosted me while I was working on my PhD research: the Department of Mathematics and Computer Science ``Ulisse Dini'' of the University of Florence; the Department of Mathematics of the University of Salerno; the Institute of Mathematics of the Polish Academy of Science (IMPAN) in Warsaw.
I would like to thank, in particular, Sabina del Fonso from the secretary of the (Florence--INdAM--Perugia) PhD program in Mathematics, Computer Science and Statistics, and Marlena Nowi\'nska from the administration of the Mathematical Center for Mathematics and Computer Science (WCMCS) in Warsaw.

I would also like to thank my teachers at the University of Salerno, especially: Sergio de Filippo, Giovanni Sparano, and Alexandre Vinogradov.

Many thanks are due to my family for having always been close to me through some very difficult periods.

Finally, I gratefully acknowledge the financial support received from the  Gruppo Nazionale per le Strutture Algebriche, Geometriche e le loro Applicazioni (GNSAGA) of the Istituto Nazionale di Alta Matematica (INdAM) ``Francesco Severi'' during all three years of PhD studies, and from the Warsaw Center for Mathematics and Computer Science (WCMCS) during the semester I have spent at IMPAN.

\tableofcontents
\addtocontents{toc}{\protect\thispagestyle{empty}}
\addtocontents{toc}{\protect\thispagestyle{empty}}

\endgroup 

\cleardoublepage
\pagestyle{plain}      
\pagenumbering{arabic} 

\mainmatter

\chapter*{Introduction}
\label{chap:introduction}

\addcontentsline{toc}{chapter}{Introduction}

This thesis aims at studying deformation theory and moduli theory of coisotropic submanifolds in Jacobi manifolds.
Such aim is reached by means of two algebraic invariants we associate to each coisotropic submanifold $S$ of a Jacobi manifold, namely the $L_\infty[1]$-algebra and the BFV-complex of $S$.

In recent years, many authors have studied the coisotropic deformation problem in several geometric settings (symplectic, locally conformal symplectic and Poisson) which all share Jacobi geometry as a common extension.
However Jacobi geometry has been far less investigated than Poisson geometry.
Hence this thesis also contributes to a wider project which aims at filling the existing gap between Poisson and Jacobi geometry.

It is quite hard to overestimate the importance of Poisson, symplectic and contact manifolds both in geometry and in applications to physics.
Hence the main reason of interest in Jacobi structures is that they provide a combined generalization of (locally conformal) Poisson/symplectic structures and (non-necessarily coorientable) contact structures.
In this thesis a Jacobi manifold $(M,L,\{-,-\})$ is understood as a manifold $M$ equipped with a Jacobi bundle $(L,\{-,-\})$ over it. 
Here, following Marle~\cite{marle1991jacobi}, by Jacobi bundle over $M$ we mean a line bundle $L\to M$ equipped with a Jacobi structure $\{-,-\}$, i.e.~a Lie bracket on the module of its sections $\Gamma(L)$ which is a first order differential operator in each entry.
In this sense Jacobi manifolds are exactly the same thing as \emph{local Lie algebras with one-dimensional fibers} which were originally introduced and studied by Kirillov~\cite{kirillov1976local}.
Actually, two years later, and independently of Kirillov, Jacobi structures were rediscovered and deeply investigated by Lichnerowicz~\cite{Lich1978}.
However, on this regard, some comments about terminology are necessary.
On the one hand, Lichnerowicz's Jacobi structures are given by so-called \emph{Jacobi pairs}, and they are recovered as special case of above definition when the underlying line bundle is the trivial one.
On the other hand, Lichnerowicz's notion of locally conformal Jacobi structures coincides with our notion of Jacobi structures on non-necessarily trivial line bundles.
At a first sight it could seem that the line bundle approach à la Kirillov is an unnecessary generalization producing undue complications with respect to just work with Jacobi pairs.
On the contrary, as also shown in this thesis, the line bundle approach we adopt produces many simplifications and it is very satisfactory from conceptual point of view.
Indeed, within the Jacobi setting, geometric structures really live on the underlying line bundle and this gained geometrical insight gets obscured when we forget about such line bundle.

It is well-known that coisotropic submanifolds of symplectic and Poisson manifolds play a significant r\^ole both in geometry and in physics.
Let us mention here just some evidence of their relevance.
In many respects, coisotropic submanifolds are the most convenient way to generalize Lagrangian submanifolds from the symplectic case to the Poisson case~\cite{Weinstein1988}.
Graphs of Poisson maps are coisotropic submanifolds of product Poisson manifolds.
Coisotropic submanifolds naturally appear in symplectic and Poisson reduction.
In Hamiltonian mechanics, systems with gauge symmetries or Dirac (first class) constraints are represented by means of coisotropic submanifolds.
In topological field theory (cf.~\cite{cattaneo2004coisotropic}), coisotropic submanifolds of symplectic manifolds (resp.~Poisson manifolds) provide the D-branes for the A-model (resp.~Poisson sigma model).

Coisotropic submanifolds play a fundamental r\^ole in Jacobi and contact geometry as already in Poisson and symplectic geometry.
For instance graphs of Jacobi maps are coisotropic submanifolds in suitable product Jacobi manifolds.
Moreover coisotropic submanifolds naturally appear as zero level sets of  equivariant moment maps for Hamiltonian Lie group actions on Jacobi bundles.
Again, as in the Poisson setting, a coisotropic submanifold $S$ of a Jacobi manifold determines a Lie algebroid whose characteristic foliation allows to perform (singular) Jacobi reduction of $S$.

Note that Jacobi manifolds can be understood as homogeneous Poisson manifolds (of a special kind) via the ``Poissonization trick'' (see, e.g.~\cite{dazord1991structure, marle1991jacobi}, see also Remark~\ref{rem:poiss}).
However, not all coisotropic submanifolds in the Poissonization come from coisotropic submanifolds in the original Jacobi manifold.
On the other hand, if we regard a Poisson manifold as a Jacobi manifold, all its coisotropic submanifolds are coisotropic in the Jacobi sense as well.
In particular, the deformation problem of a coisotropic submanifold in a Jacobi manifold is genuinely more general than its analogue in the Poisson setting.

The first main result of the present thesis is that we attach an $L_\infty[1]$-algebra to any coisotropic submanifold $S$ in a Jacobi manifold, generalizing and unifying analogous constructions by Oh--Park~\cite{oh2005deformations} (symplectic case), Cattaneo--Felder\cite{cattaneo2007relative} (Poisson case), and L\^e--Oh~\cite{le2012deformations} (locally conformal symplectic case).
Our construction via higher derived brackets encompasses all the known cases as special cases and
reveals the prominent r\^ole of the Atiyah algebroid $\der L$ of a line bundle $L$.
In all previously known cases $L$ is a trivial line bundle while it is not necessarily so for general Jacobi manifolds.
As a new special case, our construction canonically applies to coisotropic submanifolds in any (non-necessarily coorientable) contact manifold.
The $L_\infty[1]$-algebra of $S$ can be seen, up to décalage, as a graded line bundle equipped with a higher homotopy Jacobi structure~\cite{brucekirillov2016}.
Further it provides a cohomological resolution of the (non-graded) Gerstenhaber-Jacobi algebra obtained from $S$ by singular Jacobi reduction.

We prove that the $L_\infty[1]$-algebra controls the formal coisotropic deformation problem of $S$, even under Hamiltonian equivalence.
This means that there exists a canonical one-to-one correspondence between formal coisotropic deformations of $S$ and formal coisotropic Maurer--Cartan elements of the $L_\infty[1]$-algebra.
Additionally such one-to-one correspondence intertwines Hamiltonian equivalence of formal coisotropic deformations with gauge equivalence of formal Maurer--Cartan elements.


For any coisotropic submanifold $S$ in a Jacobi manifold, its $L_\infty[1]$-algebra only depends on the infinite jet of the Jacobi structure along $S$.
If the Jacobi structure is fiber-wise entire along $S$, we prove that the $L_\infty[1]$-algebra also controls the non-formal coisotropic deformation problem of $S$, even under Hamiltonian equivalence.
However the fiber-wise entireness condition is not at all trivial, and so in general the $L_\infty[1]$-algebra fails to convey any information about non-formal coisotropic deformations of $S$.
This gap is filled up by means of another algebraic invariant of $S$, i.e.~its BFV-complex, whose construction is inspired by BRST and BFV formalisms.

The BRST formalism was originally introduced by Becchi, Rouet, Stora~\cite{Becchi1976} and Tyutin~\cite{Tyutin1975} as a method to deal, both on the classical and the quantum level, with physical systems possessing gauge symmetries or Dirac (first class) constraints.
The Hamiltonian counterpart of this formalism was developed by Batalin, Fradkin and Vilkovisky~\cite{Batalin1983,Batalin1977}.
It was soon realized that, for systems with finitely many degrees of freedom, the BFV-formalism is intimately related to symplectic and Poisson reduction~\cite{kostant1987symplectic}.
The construction of the underlying BFV-complex was recast in the context of homological perturbation theory by Henneaux and Stasheff~\cite{stasheff1988constrained,Stasheff1997}.
More recently, simplified versions (without ``ghosts of ghosts'') of the BFV-complex of a coisotropic submanifold have been constructed by Bordemann~\cite{Bordemann2000} (symplectic case), Herbig~\cite{herbig2007} and Sch\"atz~\cite{schatz2009bfv} (Poisson case).

The second main result of the present thesis is that we extend the construction of the BFV-complex of a coisotropic submanifold from the Poisson setting to the far more general Jacobi setting.
The resulting BFV-complex can be seen as a graded Jacobi bundle equipped with a cohomological Hamiltonian derivation.
Moreover it provides a cohomological resolution of the (non-graded) Gerstenhaber-Jacobi algebra obtained from $S$ by singular Jacobi reduction.
Our results are inspired by and encompass as special cases those of Herbig~\cite{herbig2007} and Sch\"atz~\cite{schatz2009bfv}.
However, we stress that we do not follow Sch\"atz in all our proofs.
In fact, we fully rely on the Homological Perturbation Lemma and a ``step-by-step obstruction'' method from homological perturbation theory rather than on homotopy transfer.
As a new case, our BFV-complex applies also to coisotropic submanifolds in contact and locally conformal symplectic/Poisson manifolds.

We prove that the BFV-complex controls the non-formal coisotropic deformation problem of $S$, even under both Hamiltonian equivalence and Jacobi equivalence.
We follow Sch\"atz and single out a special class of ``geometric'' Maurer-Cartan elements wrt the BFV-Jacobi bracket.
In this way we are able to establish a one-to-one correspondence between coisotropic deformations of $S$ and geometric Maurer--Cartan elements, modulo a certain equivalence.
Additionally, such one-to-one correspondence intertwines Hamiltonian/Jacobi equivalence of coisotropic deformations and Hamiltonian/Jacobi equivalence of geometric Maurer--Cartan elements.

The $L_\infty[1]$-algebra and the BFV-complex of a coisotropic submanifold $S$ in a Jacobi manifold are closely related.
Indeed, similarly as in the Poisson case~\cite{schatz2009bfv}, the $L_\infty[1]$-algebra of $S$ can be reconstructed starting from the BFV-complex by means of homotopy transfer along suitable contraction data.
As a consequence, the BFV-complex is $L_\infty$-quasi-isomorphic to the $L_\infty[1]$-algebra up to décalage.
So they encode equally well the moduli spaces of formal and infinitesimal coisotropic deformations under Hamiltonian equivalence.

We stress that we have taken seriously the issue of finding non-trivial examples where our theory applies.
In particular we have searched new examples in the contact setting where Oh--Park~\cite{oh2005deformations}, L\^e--Oh~\cite{le2012deformations} and Sch\"atz~\cite{schatz2009bfv} constructions do not apply.
Working in this direction, we have constructed in~\cite{tortorella2016rigidity} a first example of coisotropic submanifold in a contact manifold whose coisotropic deformation problem is formally obstructed.
In this thesis we provide a conceptual re-interpretation of this first obstructed example both in terms of the $L_\infty[1]$-algebra and in terms of the BFV-complex.
Actually this first obstructed example has been inspired by Zambon's example~\cite{zambon2008example} of an obstructed coisotropic submanifold in the symplectic setting.
Going on in the same direction, we have constructed a second example of an obstructed coisotropic submanifold in the contact setting (cf.~the revised version of~\cite{LOTV}).
Actually this second obstructed example has been inspired by an analogous one in the symplectic setting first considered by Oh--Park~\cite{oh2005deformations}, and later discussed in more details by Kieserman~\cite{kieserman2010liouville}.
In this thesis we provide a conceptual re-interpretation of this second obstructed example in terms of the associated $L_\infty[1]$-algebra.
We plan to analyze it in terms of the associated BFV-complex in the next future.

This thesis is organised as follows.
Chapter~\ref{chap:preliminaries} collects basic facts, including conventions and notations, about the algebraic and differential-geometric structures which are necessary to our presentation of Jacobi geometry.
This necessary machinery essentially consists of derivations of vector bundles, Jacobi algebroids and (graded) Gerstenhaber--Jacobi algebras.
Chapter~\ref{sec:abstract_jac_mfd} provides a self-contained introduction to Jacobi manifolds and their coisotropic submanifolds.
We present both classical well-known results and more specific ones concerning the purposes of the thesis.
In particular we attach important algebraic and geometric invariants to Jacobi manifolds and their coisotropic submanifolds which will be useful in studying the coisotropic deformation problem.
Our approach to Jacobi geometry, via Atiyah algebroids and first order multi-differential calculus on non-trivial line bundles, unifies and simplifies previous, analogous constructions for Poisson manifolds and locally conformal symplectic manifolds.
In Chapter~\ref{chap:L-infinity-algebra}, using results in Chapter~\ref{sec:abstract_jac_mfd}, we attach an $L_\infty[1]$-algebra to any closed coisotropic submanifold $S$ in a Jacobi manifold.
Then we prove that the $L_\infty[1]$-algebra controls the formal coisotropic deformation problem of $S$, even under Hamiltonian equivalence.
Further we point out that if the Jacobi structure is fiber-wise entire along $S$ then the associated $L_\infty[1]$-algebra controls also the non-formal coisotropic deformation problem of $S$, even under Hamiltonian equivalence.
Additionally, in the last section of Chapter~\ref{chap:L-infinity-algebra}, we study at the formal level the problem of simultaneously deforming both the Jacobi structure and the coisotropic submanifold.
In Chapter~\ref{chap:contact} we apply the theory developed in Chapter~\ref{chap:L-infinity-algebra} to the special class of Jacobi manifolds formed by contact manifolds.
In the contact setting we get pretty efficient formulas for the multi-brackets of the $L_\infty[1]$-algebra associated to a coisotropic submanifold.
These formulas are analogous to those of Oh--Park in the symplectic case~\cite{oh2005deformations} and L\^e--Oh in the locally conformal symplectic case~\cite{le2012deformations}.
Chapter~\ref{chap:graded_Jacobi_manifolds} is a direct continuation of the first two chapters, and sets the stage for the construction of the BFV-complex.
We introduce Jacobi structures on graded line bundles and also propose a suitable notion of lifting of Jacobi structures from a line bundle $L\to M$ to a certain graded line bundle $\hat L\to\hat M$.
Then we prove existence and uniqueness of such liftings extending to the Jacobi setting analogous results by Rothstein~\cite{Rothstein1991} (symplectic case), Herbig~\cite{herbig2007} and Sch\"atz~\cite{schatz2009bfv} (Poisson case).
In Chapter~\ref{chap:BFV_complex}, using results in Chapter~\ref{chap:graded_Jacobi_manifolds}, we attach a BFV-complex to any closed coisotropic submanifold $S$ in a Jacobi manifold.
Then we prove that the BFV-complex of $S$ controls the non-formal coisotropic deformation problem of $S$, even under Hamiltonian and Jacobi equivalence.
Additionally the BFV-complex also encodes the moduli spaces of infinitesimal and formal coisotropic deformations under Hamiltonian equivalence.
Finally, the thesis contains an appendix collecting auxiliary material for constructing and studying the BFV-complex.
In the first and second section we recall some tools from Homological Perturbation Theory, namely the Homological Perturbation Lemma and a ``step-by-step obstruction'' method.
The third and last section provides two very technical results necessary to prove some properties of the BFV-complex.

\chapter{Preliminaries}
\label{chap:preliminaries}


Here we collect basic facts, including conventions and notations, concerning the algebraic and differential-geometric machinery on which our presentation of Jacobi geometry is based.
 
In the first section we construct a functor from the category of vector bundles (with ``regular'' vector bundle morphisms) to the category of Lie algebroids, which associates each vector bundle $E$ with its Atiyah algebroid $\der E$.
The sections of $\der E$ are the so-called derivations of $E$, i.e.~a special kind of (linear) first order differential operators from $E$ to $E$.
Equivalently derivations of $E$ can also be seen as infinitesimal vector bundle automorphisms of $E$.

The Atiyah algebroid $DL$ of a line $L$ equipped with its tautological representation in $L$ represents a first example of Jacobi algebroid and it is denoted by $(DL,L)$.
Notice that our definition of Jacobi algebroid (Definition~\ref{def:jacb}) recovers as special cases what is called a Jacobi algebroid in~\cite{GM2001} and a Lie algebroid with a $1$-cocycle in~\cite{iglesias2000some}.
As we will see in the next chapter the r\^ole of Jacobi algebroids in Jacobi geometry is very similar to the one played by Lie algebroids in Poisson geometry.

There exists a canonical one-to-one correspondence between Jacobi algebroids and (graded) Gerstenhaber--Jacobi algebras (Proposition~\ref{prop:Jacobi_algbds_GJ-algbs}).
Our notion of Gerstenhaber--Jacobi algebra (Definition~\ref{definition:Gerstenhaber--Jacobi}) slightly generalizes the analogous notion as defined in~\cite{GM2001}.
In the setting of Jacobi geometry, and for the aims of this thesis, the most relevant example of Gerstenhaber--Jacobi algebra is formed by the multi-derivations of a line bundle $L$ equipped with the Schouten--Jacobi bracket.
The latter is exactly the Gerstenhaber--Jacobi algebra corresponding to the Jacobi algebroid $(DL,L)$.
Finally in Section~\ref{sec:GJ_algb_multi-derivations} we provide explicit formulas for the Schouten--Jacobi bracket of multi-derivations of a line bundle.

\section{Derivations and infinitesimal automorphisms of vector bundles}
\label{sec:app_0}

Let $M$ be a smooth manifold and let $E,F$ be vector bundles over $M$.
Recall (see, e.g.,~\cite{N2003}) that a (linear) $k$-th order differential operator from $E$ to $F$ is an $\bbR$-linear map $\Delta : \Gamma (E) \to \Gamma (F)$ such that
\[
[[ \ldots [[\Delta, a_0], a_1] \ldots ] , a_k] = 0
\]
for all $a_0, a_1, \ldots, a_k \in C^\infty (M)$, where we interpret the functions $a_i$ as operators (multiplication by $a_i$). 
There is a natural isomorphism between the $C^\infty (M)$-module $\mathrm{Diff}_k (E,F)$ of $k$-th order differential operators from $E$ to $F$ and the $C^\infty (M)$-module of sections of the vector bundle $\mathrm{diff}_k (E,F) := \mathrm{Hom} (J^k E , F)$, where $J^k E$ is the vector bundle of $k$-jets of sections of $E$.
The isomorphism $\Gamma (\mathrm{Hom} (J^k E , F)) \simeq \mathrm{Diff}_k (E,F) $ is given by $\phi \mapsto \phi \circ j^k$, where $\phi : \Gamma (J^k E) \to \Gamma (F)$ is a $C^\infty (M)$-linear map, and $j^k : \Gamma (E) \to \Gamma (J^k E)$ is the $k$-th jet prolongation.
Throughout this thesis, the trivial line bundle over $M$ is denoted by $\bbR_M:=M\times\bbR\to M$, then, in particular, $\mathrm{diff}_k (E, \bbR_M)$ is the dual bundle of $J^k E$ (see, e.g.,~\cite[Chapter 11]{N2003} for more details).
From now on we often denote $J_1 E := \mathrm{diff}_1 (E, \bbR_M) = (J^1 E)^\ast$.

Let $\Delta : \Gamma (E) \to \Gamma (F)$ be a $k$-th order differential operator.
The correspondence
\[
(a_1, \ldots , a_k) \longmapsto [[ \ldots [\Delta, a_1] \ldots ] , a_k],
\]
$a_1, \ldots, a_k \in C^\infty (M)$, is a well-defined symmetric, $k$-multi-derivation of the algebra $C^\infty (M)$ with values in $C^\infty (M)$-linear maps $\Gamma (E) \to \Gamma (F)$.
In other words, it is a section of the vector bundle $S^k TM \otimes \mathrm{Hom} (E , F)$, called the \emph{symbol of $\Delta$} and denoted by $\sigma(\Delta)$, or equivalently, in a more compact way, by $\sigma_\Delta$.
The symbol map $\sigma : \Delta \mapsto \sigma_\Delta$ sits in a short exact sequence
\begin{equation}\label{eq:seq_co-Spenc}
0 \longrightarrow \mathrm{Diff}_{k-1} (E,F) \longrightarrow \mathrm{Diff}_{k} (E,F) \overset{\sigma}{\longrightarrow} \Gamma (S^k TM \otimes \mathrm{Hom} (E , F)) \longrightarrow 0,
\end{equation}
of $C^\infty (M)$-modules.
Note that sequence~\eqref{eq:seq_co-Spenc} can be also obtained applying the contravariant functor $\mathrm{Hom ( - , \Gamma (F))}$ to the \emph{Spencer sequence}
\[
0 \longleftarrow \Gamma (J^{k-1} E) \longleftarrow \Gamma (J^k E) \overset{\gamma}{\longleftarrow} \Gamma (S^k T^\ast M \otimes E) \longleftarrow 0,
\]
where the inclusion $\gamma$, sometimes called the \emph{co-symbol}, is given by
\[
da_1 \cdot \cdots \cdot da_k \otimes e \longmapsto [[ \cdots [j^k, a_1] \cdots ] , a_k] e,
\]
$a_1, \ldots, a_k \in C^\infty (M)$, and $e \in \Gamma (E)$.

Now we focus on first order differential operators.
In general, there is no natural $C^\infty (M)$-linear splitting of the Spencer sequence
\begin{equation}
	\label{eq:Spencer}
	0 \longleftarrow \Gamma (E) \longleftarrow \Gamma (J^1 E) \overset{\gamma}{\longleftarrow} \Gamma (T^\ast M \otimes E) \longleftarrow 0.
\end{equation}
However, Sequence~\eqref{eq:Spencer} splits via the first order differential operator $j^1 : \Gamma (E) \to \Gamma (J^1 E)$.
In particular, $\Gamma (J^1 E) = \Gamma (E) \oplus \Gamma (T^\ast M \otimes E)$, and any section $\alpha$ of $J^1 E$ can be uniquely written as $\alpha = j^1 \lambda + \gamma (\eta)$, for some $\lambda \in \Gamma (E)$, and $\eta \in \Gamma (T^\ast M \otimes E)$.

Now, let $\Delta : \Gamma (E) \to \Gamma (E)$ be a first order differential operator.
The symbol of $\Delta$ is \emph{scalar-type} if it is of the kind $X \otimes \mathrm{id}_{\Gamma (E)}$ for some (necessarily unique) vector field $X$.
In other words $\Delta (fe) = X(f) e + f\Delta e$, for all $f \in C^\infty (M)$, and $e \in \Gamma (E)$.
In this case we identify $\sigma_\Delta$ with $X$, and call $\Delta$ a \emph{derivation} of the vector bundle $E$ (over the vector field $X$).
The space of derivations of $E$ will be denoted by $\Der E$.
It is the space of sections of a (transitive) Lie algebroid $\der  E \to M$ over $M$, sometimes called the \emph{Atiyah algebroid of} $E$, whose Lie bracket is the commutator of derivations, and whose anchor is the symbol $\sigma : \der  E \to TM$ (see, e.g., \cite[Theorem 1.4]{KM2002} for details).
The fiber $\der_x E$ of $\der E$ through $x \in M$ consists of $\bbR$-linear maps $\delta : \Gamma (E) \to E_x$ such that there exists a, necessarily unique, tangent vector $\xi \in T_x M$, called the \emph{symbol of $\delta$} and also denoted by $\sigma(\delta)$, satisfying the obvious Leibniz rule $\delta (f e) = \xi (f) e (x) + f (x) \delta (e),$ for all $f \in C^\infty (M)$ and $e \in \Gamma (E)$.

\begin{remark}
	\label{rem:derivations_of_trivial_line_bundle}
	If $E$ is a line bundle, then every first order differential operator $\Gamma (E) \to \Gamma (E)$ is a derivation of $E$.
	For the trivial line bundle $\bbR_M$, we have $\Gamma (\bbR_M) = C^\infty (M)$.
	Then first order differential operators $\Gamma (\bbR_M) \to \Gamma (\bbR_M)$ or, equivalently, derivations of $\bbR_M$, are the operators of the form $X + a : C^\infty (M) \to C^\infty (M)$, where $X$ is a vector field on $M$ and $a \in C^\infty (M)$ is interpreted as an operator (multiplication by $a$).
	Accordingly, in this case, there is a natural direct sum decomposition of $C^\infty(M)$-modules $\Der \bbR_M = \mathfrak{X} (M) \oplus C^\infty (M)$, the projection $\Der  \bbR_M \to C^\infty (M)$ being given by $\Delta \mapsto \Delta 1$.
\end{remark}

The construction of the Atiyah algebroid of a vector bundle is functorial, in the following sense.
Let $\phi : E \to F$ be a morphism of vector bundles $E \to M$, $F \to N$, over a smooth map $\underline{\phi} : M \to N$.
We assume that $\phi$ is \emph{regular}, in the sense that it is an isomorphism when restricted to fibers.
In particular a section $f$ of $F$ can be pulled-back to a section $\phi^\ast f$ of $E$, defined by $(\phi^\ast f)(x) := (\phi|_{E_x}^{-1} \circ f \circ \underline{\phi}) (x)$, for all $x \in M$.
Then $\phi$ induces a morphism of Lie algebroids $\der  \phi : \der E \to \der F$ defined by
\[
((\der  \phi) \delta) f := \delta (\phi^\ast f), \quad \delta \in \der E, \quad f \in \Gamma (F).
\]
We also denote $\phi_\ast := \der  \phi$.

Derivations of a vector bundle $E$ can be also understood as infinitesimal automorphisms of $E$ as follows.
First of all, a derivation $\Delta$ of $E$ determines a derivation $\Delta^\ast$ of the dual bundle $E^\ast$, with the same symbol as $\Delta$.
Derivation $\Delta^\ast$ is defined by
$
\Delta^\ast \varphi := \sigma_\Delta\circ \varphi - \varphi \circ \Delta
$,
where $\varphi : \Gamma (E) \to C^\infty (M)$ is a $C^\infty (M)$-linear map, i.e.~a section of $E^\ast$.
Now, recall that an automorphism of $E$ is a fiber-wise linear, bijective bundle map $\phi : E \to E$.
In particular, $\phi$ is a regular morphism (see above) and it covers a (unique) diffeomorphism $\underline{\phi} : M \to M$.
An \emph{infinitesimal automorphism} of $E$ is a vector field $Y$ on $E$ whose flow consists of (local) automorphisms.
In particular, $Y$ projects onto a (unique) vector field $\underline{Y} \in \mathfrak{X} (M)$.
Infinitesimal automorphisms of $E$ are sections of a (transitive) Lie algebroid over $M$, whose Lie-bracket is the commutator of vector fields on $E$, and whose anchor is $Y \mapsto \underline{Y}$.
It can be proven that a vector field $Y$ on $E$ is an infinitesimal automorphism iff it preserves fiber-wise linear functions on $E$, i.e.~sections of the dual bundle $E^\ast$.
Finally, note that the restriction of an infinitesimal automorphism to fiber-wise linear functions $Y|_{\Gamma (E^\ast)} : \Gamma (E^\ast) \to \Gamma (E^\ast)$ is a derivation of $E^\ast$, and the correspondence $Y \mapsto Y|^\ast_{\Gamma (E^\ast)}$ is a well-defined isomorphism between the Lie algebroid of infinitesimal automorphisms and the Atiyah algebroid of $E$.

If $\Delta$ is a derivation of $E$, $Y$ is the corresponding infinitesimal automorphism, and $\{ \phi_t \}$ is its flow, then we will also say that $\Delta$ \emph{generates the flow of automorphisms $\{ \phi_t\}$}.
We have
\[
\left. \frac{d}{dt}\right|_{t = 0} \phi_t^\ast e = \Delta e,
\]
for all $e \in \Gamma (E)$.
Similarly, if $\{ \Delta_t \}$ is a smooth one parameter family of derivations of $E$, $\{ Y_t \}$ is the corresponding one parameter family of infinitesimal automorphisms, and $\{ \psi_t \}$ is the associated one parameter family of automorphisms, then we will say that $\{ \Delta_t \}$ \emph{generates} $\{ \psi_t \}$.
We have
\[
\frac{d}{dt} \psi_t^\ast e = (\psi_t^\ast \circ \Delta_t) e .
\]

\begin{remark}
	\label{rem:derivation_along_morphism}
	Let us recall, for the reader's convenience, that, if $M_i$ is a (graded) module over a (graded) algebra $A_i$, $i=0,1$, then a graded module morphism $\phi:M_0\to M_1$, covering a graded algebra morphism $\underline{\smash{\phi}}:A_0\to A_1$, is a linear map $\phi:M_0\to M_1$ such that $\phi(am)=\underline{\smash{\phi}}(a)\phi(m)$, for all $a\in A_0$, $m\in M_0$.
	For future use we remark here also what follows.
	Let $\phi:M_0\to M_1$ is a degree $0$ graded module morphism covering a degree $0$ algebra morphism $\underline{\smash{\phi}}:A_0\to A_1$.
	A \emph{degree $k$ graded derivation covering $\underline{\smash{\phi}}$} is a degree $k$ graded linear map $X:A_0\to A_1$ such that $X(aa')=X(a)\underline{\smash{\phi}}(a')+(-)^{k|a|}\underline{\smash{\phi}}(a)X(a')$ for all homogeneous $a,a'\in A_0$.
	Additionally, \emph{a degree $k$ graded derivation covering $\phi$}, with symbol $X$, is a degree $k$ graded linear map $\square:M_0\to M_1$ such that $\square(am)=X(a)\phi(m)+(-)^{k|a|}\underline{\smash{\phi}}(a)\square(m)$, for all homogeneous $a\in A_0,m\in M_0$.
\end{remark}

\section{Jacobi algebroids and Gerstenhaber-Jacobi algebras}
\label{sec:app}


Let $A$ be a Lie algebroid with anchor $\rho$ and Lie bracket $[-,-]_A$.
Recall that a representation of $A$ in a vector bundle $E \to M$ is a \emph{flat $A$-connection in $E$}, i.e.~a Lie algebroid morphism $\nabla : A \to \der E$, written $\alpha \mapsto \nabla_\alpha$, with values in the Atiyah algebroid $\der  E$ of $E$.
In other words $\nabla$ is an $\bbR$-linear map $\Gamma (A) \to \mathrm{Diff}_1 (E,E)$, where $\mathrm{Diff}_1 (E,E)$ is the module of first order differential operators $\Gamma (E) \to \Gamma (E)$, such that
\[
\begin{aligned}
\nabla_{f\alpha} e & = f \nabla_\alpha e, \\
\nabla_\alpha (f e) &= \rho(\alpha)(f) e + f \nabla_\alpha e, \\
[\nabla_\alpha , \nabla_\beta ] e & = \nabla_{[\alpha, \beta]_A} e,
\end{aligned}
\]
for all $\alpha, \beta \in \Gamma (A)$, $f \in C^\infty (M)$, and $e \in \Gamma (E)$.
The next definition introduce the notion of Jacobi algebroid, which will play a relevant r\^ole in Jacobi geometry.
Notice that this notion, well-suited for our aims, is slightly more general than the definition of Jacobi algebroid as proposed in~\cite{GM2001}, or the equivalent notion of \emph{Lie algebroid with a $1$-cocycle}~\cite{iglesias2000some} (for more details, see Remark~\ref{rem:Jacobi_algbd}).
\begin{definition}
	\label{def:jacb}
	A \emph{Jacobi algebroid} is a pair $(A,L)$ where $A\to M$ is a Lie algebroid, with Lie bracket $[-,-]_A$ and anchor map $\rho$, and $L\to M$ is a line bundle equipped with a representation $\nabla$ of $A$.
\end{definition}

\begin{remark}
	Definition~\ref{def:jacb} of Jacobi algebroids is equivalent to Grabowski's \emph{Kirillov algebroids}~\cite[Section 8]{grabowski2013graded}.
\end{remark}

Let $(A,L)$ be a Jacobi algebroid.
Denote by $(\Gamma (\wedge^\bullet A^\ast),d_A)$ the de Rham complex of the Lie algebroid $A$.
The representation $\nabla$ of $A$ in $L$ defines a degree one differential on sections of $\wedge^\bullet A^\ast \otimes L$, denoted by $d_{A,L}$, and uniquely determined by
\begin{equation}\label{eq:d_A,L}
\begin{aligned}
d_{A,L} \lambda & = \nabla \lambda, \\
d_{A,L} (\omega \wedge \Omega) &= d_A \omega \wedge \Omega + (-)^{| \omega |}\omega \wedge d_{A,L} \Omega,
\end{aligned}
\end{equation}
for all $\lambda \in \Gamma (L)$, and all homogeneous $\omega \in \Gamma (\wedge^\bullet A^*)$ and $\Omega \in \Gamma (\wedge^\bullet A^\ast \otimes L)$, where we used the obvious $\Gamma (\wedge^\bullet A^\ast)$-module structure on $\Gamma (\wedge^\bullet A^\ast \otimes L)$.
Here, as throughout this thesis, we denoted by $|v|$ the degree of an homogeneous element $v$ of a graded vector space.
Throughout this thesis, the complex $(\Gamma (\wedge^\bullet A^\ast \otimes L), d_{A,L})$ will be called \emph{de Rham complex of the Jacobi algebroid $(A,L)$}.
Similarly its cohomology, denoted by $H(A,L)$, will be called \emph{de Rham cohomology of of the Jacobi algebroid $(A,L)$}.

Proposition~\ref{prop:Jacobi_algebroids_and_deRham_complex} below points out that the datum of a Jacobi algebroid structure on $(A,L)$ is fully encoded in its de Rham complex $(\Gamma(\wedge^\bullet A^\ast\otimes L), d_{A,L})$.
\begin{proposition}[\cite{vaintrob1997lie}]
	\label{prop:Jacobi_algebroids_and_deRham_complex}
	Let $A\to M$ be vector bundle and denote by $\scrM$ the graded manifold with algebra of functions $C^\infty(\scrM)=\Gamma(\wedge^\bullet A^\ast)$. Let $L\to M$ be line bundle, and denote by $\scrL\to\scrM$ the graded line bundle with $C^\infty(\scrM)$-module of sections $\Gamma(\scrL)=\Gamma(\wedge^\bullet A^\ast\otimes L)$.
	Then a canonical one-to-one correspondence between
	\begin{itemize}
		\item Jacobi algebroid structures on $(A,L)$, and
		\item \emph{cohomological derivations} $\Delta\in\Der\scrL$, i.e.~$|\Delta|=1$ and $[\Delta,\Delta]=2\Delta\circ\Delta=0$, (with symbol \emph{cohomological vector fields} $X\in\frakX(\scrM)$, i.e.~$|X|=1$ and $[X,X]=2X\circ X=0$),
	\end{itemize}
	is established by the following relation: $d_{A,L}=\Delta$ (and $d_A=X$).
\end{proposition}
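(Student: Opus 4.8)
The plan is to decouple the statement into its two layers—the Lie algebroid structure on $A$, governed by the symbol $X\in\frakX(\scrM)$, and the representation $\nabla$ of $A$ in $L$, governed by the full derivation $\Delta\in\Der\scrL$—and to treat them in turn. For the first layer I would invoke the classical theorem of Vaintrob \cite{vaintrob1997lie}: degree-one vector fields $X$ on $\scrM$ with $[X,X]=2X\circ X=0$ are in canonical bijection with Lie algebroid structures $(\rho,[-,-]_A)$ on $A$, the correspondence being $X=d_A$, with $\rho$ read off from the action of $X$ on $C^\infty(M)\subset C^\infty(\scrM)$ and $[-,-]_A$ from its action on $\Gamma(A^\ast)$ via the usual Cartan formulas, and with $[X,X]=0$ unpacking precisely to the Jacobi identity together with the compatibility of $\rho$. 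This reduces the problem to the following: for a fixed Lie algebroid structure, equivalently a fixed cohomological $X=d_A$, establish a bijection between representations $\nabla$ of $A$ in $L$ and degree-one derivations $\Delta$ of $\scrL$ with symbol $X$ and $\Delta\circ\Delta=0$.

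For this reduced problem the key structural observation is that $\Gamma(\scrL)=\Gamma(\wedge^\bullet A^\ast\otimes L)$ is generated, as a graded $C^\infty(\scrM)$-module, by its degree-zero part $\Gamma(L)$ (using a partition of unity subordinate to a trivializing cover of $L$). Hence a degree-one derivation $\Delta$ with prescribed symbol $X$ is completely determined by $\Delta|_{\Gamma(L)}$ through the graded Leibniz rule of \eqref{eq:d_A,L}, namely $\Delta(\omega\wedge\Omega)=d_A\omega\wedge\Omega+(-1)^{|\omega|}\omega\wedge\Delta\Omega$. For degree reasons $\Delta$ sends $\Gamma(L)$ into $\Gamma(A^\ast\otimes L)$, so setting $\nabla_\alpha\lambda:=\iota_\alpha(\Delta\lambda)$ for $\alpha\in\Gamma(A)$, $\lambda\in\Gamma(L)$, defines a map $\nabla\colon\Gamma(A)\times\Gamma(L)\to\Gamma(L)$. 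The Leibniz rule with symbol $d_A$—whose restriction to $C^\infty(M)$ is $f\mapsto\rho(-)(f)$—forces exactly $\nabla_{f\alpha}\lambda=f\nabla_\alpha\lambda$ and $\nabla_\alpha(f\lambda)=\rho(\alpha)(f)\lambda+f\nabla_\alpha\lambda$, the first two axioms of a representation. Conversely, any such $\nabla$ extends to a genuine derivation $\Delta=d_{A,L}$ via \eqref{eq:d_A,L}; I would check well-definedness by verifying that the resulting formula respects the tensor relation $(f\omega)\otimes\lambda=\omega\otimes(f\lambda)$, which comes out precisely because of the Leibniz rule for $\nabla$.

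It remains to match the flatness axiom with $\Delta\circ\Delta=0$. Here I would first note that $\Delta\circ\Delta=\tfrac{1}{2}[\Delta,\Delta]$ is $C^\infty(\scrM)$-linear: its symbol is $\tfrac{1}{2}[X,X]=X\circ X=0$, because $X=d_A$ is already cohomological by the first layer. Being tensorial, $\Delta\circ\Delta$ vanishes if and only if it vanishes on the generators $\Gamma(L)$; and a short computation with the explicit formula for $d_{A,L}$ on $\Gamma(A^\ast\otimes L)$ gives, for $\lambda\in\Gamma(L)$ and $\alpha,\beta\in\Gamma(A)$,
\[
(\Delta\circ\Delta)(\lambda)(\alpha,\beta)=\nabla_\alpha\nabla_\beta\lambda-\nabla_\beta\nabla_\alpha\lambda-\nabla_{[\alpha,\beta]_A}\lambda,
\]
that is, the curvature of $\nabla$. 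Thus $\Delta\circ\Delta=0$ is equivalent to $[\nabla_\alpha,\nabla_\beta]=\nabla_{[\alpha,\beta]_A}$, the remaining representation axiom. Assembling the two layers yields the claimed canonical bijection, with $\Delta\mapsto(X=\sigma(\Delta),\ \nabla=\Delta|_{\Gamma(L)})$ inverse to $(\rho,[-,-]_A,\nabla)\mapsto d_{A,L}$, and $d_A=X$.

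I expect the only genuinely delicate points to be the two \emph{descent} verifications: that the Koszul-type formula \eqref{eq:d_A,L} is independent of how an element of $\Gamma(\wedge^\bullet A^\ast\otimes L)$ is written as a product, and that $\Delta\circ\Delta$ reduces to the curvature. Both are sign-sensitive but mechanical once one knows that $\Delta\circ\Delta$ is tensorial, which is itself the crucial leverage supplied by the first layer through $X\circ X=0$; everything else is a direct unwinding of definitions.
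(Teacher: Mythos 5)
The paper offers no proof of this proposition: it is stated with a citation to Vaintrob and a remark that it is a standard fact about Lie algebroid representations, so there is no in-text argument to compare yours against. Your proof is correct and is the expected one: the reduction to Vaintrob's theorem for the symbol, the observation that $\Gamma(\scrL)$ is generated over $C^\infty(\scrM)$ by $\Gamma(L)$ (so that a degree-one derivation with prescribed symbol is determined by its restriction to $\Gamma(L)$, which for degree reasons lands in $\Gamma(A^\ast\otimes L)$), and the identification of $\Delta\circ\Delta$ — tensorial because its symbol $X\circ X$ vanishes — with the curvature of $\nabla$ on the generators are exactly the right ingredients, and the two Leibniz identities you extract for $\nabla$ do follow as you claim from $X(f)=d_Af$ and $\langle d_Af,\alpha\rangle=\rho(\alpha)(f)$. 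The only point I would tighten is the well-definedness of the extension $\nabla\mapsto d_{A,L}$: rather than checking compatibility with the relation $(f\omega)\otimes\lambda=\omega\otimes(f\lambda)$ by hand, one can simply write the global Koszul formula for $d_{A,L}$ on $\Gamma(\wedge^k A^\ast\otimes L)$, which is manifestly well defined, and then verify that it satisfies \eqref{eq:d_A,L}; but your route is also fine.
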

Indeed, as a standard result in the theory of Lie algebroids~\cite{crainic2011lectures,mackenzie2005general}, Proposition~\ref{prop:Jacobi_algebroids_and_deRham_complex} continues to hold, mutatis mutandis, even after having replaced $L\to M$ with an arbitrary vector bundle $E\to M$.

\begin{remark}
	\label{rem:Jacobi_algbd}
	In the case $L = \bbR_M$, a representation of $A$ in $L$ is the same as a $1$-cocycle in the de Rham complex $(\Gamma (\wedge^\bullet A^\ast) , d_A)$ of $A$.
	Namely, in this case $\Gamma (\wedge^\bullet A^\ast \otimes L) = \Gamma (\wedge^\bullet A^\ast)$ and, in view of~\eqref{eq:d_A,L} $\nabla$ is completely determined by $\omega_\nabla := d_{A,L} 1 \in \Gamma (A^\ast)$.
	It is easy to see that $\omega_\nabla$ is a $d_A$-cocycle, i.e.~$d_A \omega_\nabla = 0$.
	Conversely, a $d_A$-cocycle $\omega \in \Gamma (A^\ast)$ determines a unique representation $\nabla$ in $L = \bbR_M$ such that $d_{A,L} 1 = \omega$.
	This shows that, in the case $L = \bbR_M$, Definition~\ref{def:jacb} recovers the definition of Lie algebroid with a $1$-cocycle proposed in~\cite{iglesias2000some}, which is in turn equivalent to the definition of Jacobi algebroid proposed in~\cite{GM2001}.
\end{remark}

\subsection{The der-complex of a line bundle}
\label{sec:der_complex}

Let $M$ be a manifold, and let $L\to M$ be a line bundle.
In Section~\ref{sec:app_0} we have introduced the Atiyah algebroid $\der L$ of $L\to M$.
Further the \emph{tautological representation} $\nabla$ of $\der L$ in $L$ is defined by $\nabla_\square\lambda=\square\lambda$, in other words it is given by the Lie algebroid morphism $\id:\der L\to\der L$.
In this way $(\der L,L)$ becomes a Jacobi algebroid.
The associated de Rham complex $(\Gamma(\wedge^\bullet(\der L)^\ast\otimes L),d_{\der L,L})$ of $\der L$ with values in $L$ is also known as the \emph{der-complex} of the line bundle $L\to M$ (cf.~\cite{rubtsov1980cohomology}).
Following~\cite{vitagliano2015dirac}, we denote by $d_D$ the de Rham differential $d_{\der L,L}$, and by $\Omega^\bullet_L$ the graded module $\Gamma(\wedge^\bullet(\der L)^\ast\otimes L)$ over the graded algebra $\Gamma(\wedge^\bullet(\der L)^\ast)$.
Moreover the elements of $\Omega_L^\bullet$ are called the \emph{$L$-valued Atiyah forms}.

\begin{remark}
	Notice that $\Omega_L^0=\Gamma(L)$, and $\Omega_L^1=\Gamma((\der L)^\ast\otimes L)$ identifies with $\Gamma(J^1L)$ by means of the $L$-valued duality pairing between $\der L$ and $J^1L$.
	In view of this, $d_D:\Omega_L^0\to\Omega_L^1,\ \lambda\mapsto d_D\lambda$, agrees with the first jet prolongation $j^1:\Gamma(L)\to\Gamma(J^1L),\ \lambda\mapsto j^1\lambda$, i.e.~$\langle d_D\lambda,\square\rangle=\langle\square, j^1\lambda\rangle$ for all $\square\in\Der L$.
\end{remark}

As it is well-known in the theory of Lie algebroids (cf., e.g.,~\cite{crainic2011lectures,mackenzie2005general}), the datum of a Lie algebroid structure on $A\to M$ and a representation of $A$ in $E\to M$ determines a Cartan calculus on $A$ with values in $E$.
We focus here only on a particular instance of this result: that one which is the most relevant for the purposes of this thesis.
Specifically, for any line bundle $L\to M$, the Jacobi algebroid $(\der L,L)$ determines an $L$-valued \emph{Cartan calculus} on $\der L$ whose structural operations are the following:
\begin{itemize}
	\item the de Rham differential $d_D:\Omega_L^\bullet\to\Omega_L^\bullet$,
\end{itemize}
and, for every $\square\in\Der L$,
\begin{itemize}
	\item the \emph{contraction} $\iota_\square:\Omega_L^\bullet\to\Omega_L^\bullet$, i.e.~the degree $-1$ derivation of the graded module $\Omega_L^\bullet$, with symbol a degree $-1$ derivation of the graded algebra $\Gamma(\wedge^\bullet(\der L)^\ast)$ again denoted by $\iota_\square$, which is uniquely determined by $\iota_\square\omega=\omega(\square)$, for all $\omega\in\Omega^1_L$,
	\item the \emph{Lie derivative} $\calL_\square:\Omega_L^\bullet\to\Omega_L^\bullet$, i.e.~the degree $0$ derivation of the graded module $\Omega_L^\bullet$, with symbol a degree $0$ derivation of the graded algebra $\Gamma(\wedge^\bullet(\der L)^\ast)$ again denoted by $\calL_\square$, which is uniquely determined by
	\begin{align*}
	\calL_\square\lambda=\square \lambda,\qquad
	(\calL_\square\omega)(\Delta)=\square(\omega(\Delta))-\omega([\square,\Delta]),
	\end{align*}
	for all $\lambda\in\Omega^0_L=\Gamma(L)$, $\omega\in\Omega^1_L$, and $\Delta\in\Der L$.
\end{itemize}
Additionally these structural operations satisfy the following relations:
\begin{equation*}
\calL_\square=\left[d_D,\iota_\square\right],\qquad
[\calL_\square,\iota_\Delta]=\iota_{[\square,\Delta]},\qquad
\left[\calL_\square,\calL_\Delta\right]=\calL_{\left[\square,\Delta\right]},\qquad \left[i_\square,i_\Delta\right]=0,
\end{equation*}
for all $\square,\Delta\in\Der L$, where $[-,-]$ denotes the graded commutator.

\begin{remark}
	Der-complex is always acyclic.
	Specifically there exists a canonical contracting homotopy for $(\Omega_L^\bullet,d_D)$ provided by the contraction $\iota_{\mathbbm{1}}$, where $\mathbbm{1}:\Gamma(L)\to\Gamma(L)$ denotes the derivation of $L$ given by the identity map, i.e.~$\mathbbm{1}\lambda=\lambda$.
\end{remark}

Although we do not need it in this thesis, notice that the der-complex, and the corresponding Cartan calculus, continue to be well-defined, mutatis mutandis, even though we replace the line bundle $L\to M$ with an arbitrary vector bundle $E\to M$.
For more details see, e.g.,~\cite{rubtsov1980cohomology} and~\cite{vitagliano2015dirac}.

\color{black}
\section{Gerstenhaber--Jacobi algebras}
\label{sec:GJ_algb}

Let $A \to M$ be a vector bundle, and let $L \to M$ be a line bundle.
Consider vector bundle $A_L := A \otimes L^\ast$.
The parallel between Lie algebroid structures on $A$ and Gerstenhaber algebra structures on the associated Grassmann algebra $\Gamma (\wedge^\bullet A)$ is well-known (see e.g.~\cite{KS1995}, \cite[Theorem 3]{GM2001}).
There is an analogous parallel between Jacobi algebroid structures on $(A,L)$ and Gerstenhaber-Jacobi algebra structures on $(\Gamma (\wedge^\bullet A_L), \Gamma (\wedge^\bullet A_L \otimes L)[1])$.


\begin{definition}
	\label{definition:Gerstenhaber--Jacobi}
	A \emph{Gerstenhaber--Jacobi algebra} is given by a graded commutative, (associative) unital algebra $\calA$, a graded $ \mathcal A$-module $\calL $, and, moreover, a graded Lie bracket $[-,-]$ on $\calL $ and an action by derivations, $\lambda\mapsto X_\lambda$, of $\calL$ on $\calA$ such that
	\begin{equation}\label{eq:genleib2}
	[\lambda,a\mu]=X_\lambda(a)\mu+(-)^{|\lambda||a|}a[\lambda,\mu],
	\end{equation}
	for all homogeneous $a\in\calA$, and $\lambda, \mu\in\calL$.
	In particular $[\lambda,-]$ is a degree $|\lambda|$ graded first order differential operator with scalar-type symbol $X_\lambda$.
\end{definition}

In the following if a structure of Gerstenhaber--Jacobi algebra is given on the pair $(\calA,\calL)$, then we will also say that $\calL$ is a Gerstenhaber--Jacobi algebra over $\calA$. 
\begin{remark}
	\label{remark:Gerstenhaber--Jacobi1}
	In the case $\calL = \calA[1]$, Definition~\ref{definition:Gerstenhaber--Jacobi} recovers the notion of Gerstenhaber--Jacobi algebra as defined in~\cite{GM2001}.
	If $\calL$ is a faithful $\calA$-module, then condition $X_{[\lambda,\mu]}=[X_\lambda,X_\mu]$, for any $\lambda,\mu\in\calL $, in Definition~\ref{definition:Gerstenhaber--Jacobi}, is redundant.
\end{remark}

\begin{example}
	\label{ex:Gerstenhaber--Jacobi}
	Gerstenhaber--Jacobi algebras encompass several well-known notions, here we just mention some of them.
	\begin{enumerate}[label=(\alph*)]
		\item
		\label{enumitem:ex:Gerstenhaber--Jacobi_1}
		Let $M$ be a manifold, and $L\to M$ be a line bundle.
		A Gerstenhaber--Jacobi algebra structure on $(\calA,\calL)$, with $\calA=C^\infty(M)$ and $\calL=\Gamma(L)$, is the same as a Jacobi structure $\{-,-\})$ on $L\to M$ (cf.~Definition~\ref{def:Jacobi_structure}).
		\item
		\label{enumitem:ex:Gerstenhaber--Jacobi_2}
		A (graded) Jacobi algebra is the same as a Gerstenhaber--Jacobi algebra with $\calL =\calA$,
		\item
		\label{enumitem:ex:Gerstenhaber--Jacobi_3}
		A Gerstenhaber algebra is the same as a Gerstenhaber--Jacobi algebra with $\calL =\calA[1]$ and $X_a=[ a,-]$, for all $a\in \calA$,
		\item
		\label{enumitem:ex:Gerstenhaber--Jacobi_4}
		A graded Lie--Rinehart algebra (see, e.g.,~\cite{huebschmann2004lie} and~\cite{vitagliano2015homotopy}) is the same thing as a Gerstenhaber--Jacobi algebra such that $\lambda\mapsto X_\lambda$ is $\calA$-linear.
	\end{enumerate}
\end{example}


Proposition~\ref{prop:Jacobi_algbds_GJ-algbs} below clarifies the parallel between Jacobi algebroid structures and Gerstenhaber--Jacobi algebra structures.

\begin{proposition}[see also~{\cite[Theorem 5]{GM2001}}]
	\label{prop:Jacobi_algbds_GJ-algbs}
	Let $A \to M$ be a vector bundle, and $L \to M$ be a line bundle.
	Set $A_L := A \otimes L^\ast\to M$, and consider the graded commutative unital algebra $\calA:=\Gamma ( \wedge^\bullet A_L)$ and the graded $\calA$-module $\calL:=\Gamma ( \wedge^\bullet A_L \otimes L)[1]$.
	Then there is a canonical one-to-one correspondence between:
	\begin{itemize}
		\item Jacobi algebroid structures $([-,-]_A,\rho,\nabla)$ on $(A,L)$, and
		\item Gerstenhaber--Jacobi algebra structures $([-,-],X_{(-)})$ on $(\calA,\calL)$,
	\end{itemize}
	Such one-to-one correspondence is canonically established by the following relations:
	\begin{equation}
	\label{eq:GJ_jaca}
	\begin{aligned}
	\rho (\alpha) (f) & = X_\alpha (f), \\
	[\alpha, \beta]_A & = [\alpha, \beta], \\
	\nabla_\alpha \lambda & = [\alpha, \lambda],
	\end{aligned}
	\end{equation}
	for all $\alpha, \beta \in \calL^0 = \Gamma (A)$, $f \in C^\infty (M) = \calA^0$, and $\lambda \in\calL^{-1}=\Gamma (L)$.
\end{proposition}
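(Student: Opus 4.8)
The plan is to show that the two assignments---one sending a Jacobi algebroid structure $([-,-]_A,\rho,\nabla)$ on $(A,L)$ to a Gerstenhaber--Jacobi structure $([-,-],X_{(-)})$ on $(\calA,\calL)$ through the relations~\eqref{eq:GJ_jaca}, the other reading~\eqref{eq:GJ_jaca} off in reverse---are well defined and mutually inverse. This is the line-bundle twisted analogue of the classical bijection between Lie algebroid structures on $A$ and Gerstenhaber (Schouten--Nijenhuis) algebra structures on $\Gamma(\wedge^\bullet A)$ (see~\cite{KS1995},~\cite[Theorem 3]{GM2001}), and I would follow the same template: reduce every identity to generators, exploiting that all the operations involved are first order differential operators. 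The generating data are that $\calA$ is generated as a graded-commutative algebra by $\calA^0=C^\infty(M)$ and $\calA^1=\Gamma(A_L)$, while $\calL$ is generated as a graded $\calA$-module by $\calL^{-1}=\Gamma(L)$; note also that, for degree reasons, $[\lambda,\mu]=0$ whenever $\lambda,\mu\in\calL^{-1}$, since $\calL^{-2}=0$.

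First I would treat the direction from a Gerstenhaber--Jacobi structure to a Jacobi algebroid, which is essentially a matter of restriction. Given $([-,-],X_{(-)})$, define $\rho$, $[-,-]_A$ and $\nabla$ by~\eqref{eq:GJ_jaca}. Restricting the graded Jacobi identity of $[-,-]$ to three arguments in $\calL^0=\Gamma(A)$ yields the Jacobi identity for $[-,-]_A$; the derivation property of $X_{(-)}$ together with~\eqref{eq:genleib2} gives the anchor Leibniz rule $[\alpha,f\beta]_A=\rho(\alpha)(f)\beta+f[\alpha,\beta]_A$, and the compatibility $X_{[\alpha,\beta]}=[X_\alpha,X_\beta]$ built into the notion of action (cf.~Remark~\ref{remark:Gerstenhaber--Jacobi1}) gives $\rho([\alpha,\beta]_A)=[\rho(\alpha),\rho(\beta)]$. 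For the representation, \eqref{eq:genleib2} combined with graded antisymmetry and the vanishing of $X_\lambda$ on $\calA^0$ (for $\lambda\in\calL^{-1}$) encodes $\nabla_{f\alpha}\lambda=f\nabla_\alpha\lambda$ and $\nabla_\alpha(f\lambda)=\rho(\alpha)(f)\lambda+f\nabla_\alpha\lambda$, while the graded Jacobi identity with two arguments in $\calL^0$ and one in $\calL^{-1}$ yields flatness $[\nabla_\alpha,\nabla_\beta]=\nabla_{[\alpha,\beta]_A}$. Thus~\eqref{eq:GJ_jaca} does produce a Jacobi algebroid.

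For the converse I would start from a Jacobi algebroid and \emph{construct} $([-,-],X_{(-)})$ by prescribing it on generators through~\eqref{eq:GJ_jaca} (together with $[\lambda,\mu]=0$ for $\lambda,\mu\in\Gamma(L)$) and extending to all of $\calA$ and $\calL$ by imposing~\eqref{eq:genleib2} and the graded derivation rules in each entry. The Leibniz rules force a unique such extension---this simultaneously yields uniqueness and shows the two constructions are mutually inverse---so the real content is \emph{well-definedness}: the prescribed formulas must descend to the exterior algebra, i.e.\ be compatible with graded antisymmetry of the wedge and with $C^\infty(M)$-multilinearity. Concretely one checks that $[\Phi,-]$, for $\Phi$ a generator, is tensorial in the appropriate slots---a first order differential operator with the correct scalar-type symbol---which follows from the connection and Leibniz properties of $\nabla$ and $\rho$; here the décalage $[1]$ must be tracked carefully to fix the Koszul signs.

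The main obstacle is verifying the graded Jacobi identity for the fully extended bracket $[-,-]$ on $\calL$ (and, in parallel, that $\lambda\mapsto X_\lambda$ is an action by derivations). The device I would use is that the graded Jacobiator $J(\lambda,\mu,\nu):=(-)^{|\lambda||\nu|}[\lambda,[\mu,\nu]]+\text{cyclic}$ behaves, by~\eqref{eq:genleib2}, as a first order differential operator in each of its three arguments; hence it is determined by, and in particular vanishes as soon as it vanishes on, the generators, reducing the identity to the finitely many cases where each argument lies in $C^\infty(M)$, $\Gamma(A)$ or $\Gamma(L)$. On generators the Jacobiator collapses precisely to the Jacobi identity of $[-,-]_A$, to $\rho$ being a bracket morphism, and to flatness of $\nabla$, all of which hold by hypothesis. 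The genuinely delicate points are the sign bookkeeping induced by the shift and the careful justification that ``vanishing on generators implies vanishing everywhere'' is legitimate for a degree-shifted first order operator; once these are secured the proposition follows. I would finally remark that, for $A=\der L$ with its tautological representation, the whole argument specialises to the Schouten--Jacobi bracket of Section~\ref{sec:GJ_algb_multi-derivations}.
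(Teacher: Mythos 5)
Your proposal is correct and follows essentially the same route as the paper's (very terse) proof: observe that a Gerstenhaber--Jacobi structure on $(\calA,\calL)$ is completely determined by its values on the low-degree generators, read \eqref{eq:GJ_jaca} in both directions, extend by the Leibniz rules, and reduce all identities (in particular the graded Jacobi identity) to the Jacobi algebroid axioms via the first-order-in-each-entry property. You simply make explicit the generator/Jacobiator bookkeeping that the paper dismisses as ``prolongation to the higher degree terms'' and ``immediate to see''.
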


\begin{proof}
	Note preliminarily that a structure of Gerstenhaber--Jacobi algebra on $(\calA,\calL):=(\Gamma (\wedge^\bullet A_L), \Gamma (\wedge^\bullet A_L \otimes L)[1])$ is completely determined by
	\begin{enumerate}
		\item the action of degree zero elements of $\calL$ on degree zero elements of $\calA$,
		\item the Lie bracket between degree zero elements of $\calL$, and
		\item the Lie bracket between degree zero elements and degree $-1$ elements of $\calL$.
	\end{enumerate}

	Assume that $(A,L)$ possesses a structure of Jacobi algebroid with Lie bracket $[-,-]_A$, anchor map $\rho$, and representation $\nabla$.
	Then a structure of Gerstenhaber--Jacobi algebra on $(\calA,\calL)$ is obtained as prolongation to the higher degree terms of the operations defined by reading Equations~\eqref{eq:GJ_jaca} from the right to the left.
	
	Conversely, assume that $(\calA,\calL)$ possesses the structure of Gerstenhaber--Jacobi algebra, with graded Lie bracket $[-,-]$ and action of $\calL$ on $\calA$ written $\alpha \mapsto X_\alpha$.
	Read Equations~\eqref{eq:GJ_jaca} from the left to the right to define a Lie bracket, an anchor map, and a flat connection. 
	It is immediate to see that the above operations form a well-defined Jacobi algebroid structure on $(A,L)$.
\end{proof}

\subsection{The Gerstenhaber--Jacobi algebra of multiderivations of a line bundle}
\label{sec:GJ_algb_multi-derivations}

%
%

In Jacobi geometry, as we will see in the next chapters of this thesis, a central r\^ole is  played by the Gerstenhaber--Jacobi algebra of multi-derivations of a line bundle $L\to M$.
In view of Proposition~\ref{prop:Jacobi_algbds_GJ-algbs}, the latter can be exactly seen as the Gerstenhaber--Jacobi algebra corresponding to the Jacobi algebroid $(A,L)$ given by the Atiyah algebroid $\der L$ of $L$ equipped with the tautological representation $\id_{\der L}$ in $L$ (see Proposition~\ref{prop:Jacobi_Gerstenhaber_multi-differential}).
This section describes in detail the structure maps of this most relevant Gerstenhaber--Jacobi algebra.

Let $A = \der  L$ be the Atiyah algebroid of a line bundle $L\to M$.
In this case, $\Gamma (\wedge^\bullet A_L) = \Gamma (\wedge^\bullet J_1 L)$ and it consists of skew-symmetric, first order multi-differential operators from $\Gamma(L)$ to $C^\infty(M)$, i.e.~skew-symmetric $\bbR$-multi-linear maps which are first order differential operators in each entry.
In view of this we often denote $\operatorname{Diff}_1^\bullet (L,\bbR_M):=\Gamma(\wedge^\bullet J_1L)$, where $\operatorname{Diff}_1^0(L,\bbR_M):=C^\infty(M)$ and $\operatorname{Diff}_1^1(L,\bbR_M):=\operatorname{Diff}_1 (L,\bbR_M)$.
Let $\Delta\in\Gamma(\wedge^k J_1 L)$, and $\Delta^\prime \in \Gamma (\wedge^{k^\prime} J_1 L)$.
If we interpret $\Delta$ and $\Delta^\prime$ as multi-differential operators, then their exterior product is given by
\begin{equation}
\label{eq:multi-differential_operators}
(\Delta\wedge\Delta^\prime)(\lambda_1,\ldots,\lambda_{k+k^\prime})
= \sum_{\tau\in S_{k,k^\prime}}(-)^\tau\Delta(\lambda_{\tau(1)},\ldots,\lambda_{\tau(k)})\Delta^\prime(\lambda_{\tau(k+1)},\ldots,\lambda_{\tau(k+k^\prime)})
\end{equation}
where $\lambda_1,\ldots,\lambda_{k+k^\prime}\in\Gamma(L)$, and $S_{k, k^\prime}$ denotes the set of $(k,k^\prime)$-unshuffles.
Similarly, $\Gamma (\wedge^\bullet A_L \otimes L) = \Gamma (\wedge^\bullet J_1 L \otimes L)$ and it consists of skew-symmetric, multi-derivations of $L\to M$, i.e.~skew-symmetric $\bbR$-multi-linear maps from $\Gamma(L)$ to itself which are first order differential operators, hence derivations, in each entry.
For this reason we often denote $\Der^\bullet L := \Gamma (\wedge^\bullet J_1 L \otimes L)$, where $\Der^0 L := \Gamma (L)$ and $\Der^1 L := \Der (L)$.
Beware that an element of $\Der^k L$ is a multi-derivation with $k$-entries but its degree in $(\Der^\bullet L)[1]$ is $k-1$.
The $\Gamma (\wedge^\bullet J_1 L)$-module structure on $(\Der^\bullet L)[1]$ is given by the same formula~\eqref{eq:multi-differential_operators} as above.

\begin{remark}
	A Jacobi bracket $\{-,-\}$ on $L$ (see Chapter~\ref{sec:abstract_jac_mfd}) will be also interpreted as an element $J$ of $\Der^2 L$.
\end{remark}

\begin{proposition}
	\label{prop:Jacobi_Gerstenhaber_multi-differential}
	For every line bundle $L\to M$, there is a natural Gerstenhaber--Jacobi algebra structure $(\ldsb-,-\rdsb,X_{(-)})$ on $(\Gamma (\wedge^\bullet J_1 L), (\Der^\bullet L)[1])$, uniquely determined by
	\begin{equation}
	\ldsb \square,\square'\rdsb=[\square,\square'],\qquad
	\ldsb \square,\lambda\rdsb=\square(\lambda),\qquad
	\ldsb \lambda,\mu\rdsb=0,
	\end{equation}
	for all $\square,\square'\in\Der^1 L=\Der L$, and $\lambda,\mu\in\Der^0 L=\Gamma(L)$.
	The Lie bracket $\ldsb-,-\rdsb$ is called \emph{the Schouten--Jacobi bracket}.
\end{proposition}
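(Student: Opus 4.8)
The plan is to deduce the statement as a direct specialization of Proposition~\ref{prop:Jacobi_algbds_GJ-algbs} to the Jacobi algebroid $(\der L, L)$ introduced in Section~\ref{sec:der_complex}, so the real work is just an identification of bundles and some degree bookkeeping. First I would set $A = \der L$ in Proposition~\ref{prop:Jacobi_algbds_GJ-algbs} and pin down $A_L$. Recall from the remark in Section~\ref{sec:der_complex} that the $L$-valued duality pairing between $\der L$ and $J^1 L$ gives $(\der L)^\ast \otimes L \cong J^1 L$; dualizing and using $L^\ast \otimes L \cong \bbR_M$ yields $A_L = \der L \otimes L^\ast \cong (J^1 L)^\ast = J_1 L$. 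Consequently $\calA = \Gamma(\wedge^\bullet A_L) = \Gamma(\wedge^\bullet J_1 L)$ and $\calL = \Gamma(\wedge^\bullet A_L \otimes L)[1] = (\Der^\bullet L)[1]$, which is precisely the pair in the statement.

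Next I would invoke the fact, established in Section~\ref{sec:der_complex}, that $\der L$ equipped with the commutator bracket, the symbol anchor $\sigma$, and the tautological representation $\nabla_\square \lambda = \square\lambda$ is a Jacobi algebroid. Applying Proposition~\ref{prop:Jacobi_algbds_GJ-algbs} then furnishes a canonical Gerstenhaber--Jacobi algebra structure $(\ldsb-,-\rdsb, X_{(-)})$ on $(\Gamma(\wedge^\bullet J_1 L), (\Der^\bullet L)[1])$. Reading the correspondence~\eqref{eq:GJ_jaca} on the degree-zero and degree $-1$ generators gives $\ldsb \square, \square' \rdsb = [\square, \square']_A = [\square, \square']$ and $\ldsb \square, \lambda \rdsb = \nabla_\square \lambda = \square(\lambda)$, for $\square, \square' \in \Der L = \calL^0$ and $\lambda \in \Gamma(L) = \calL^{-1}$. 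The third relation $\ldsb \lambda, \mu \rdsb = 0$ is automatic by décalage: the bracket has degree $0$, so $\ldsb \lambda, \mu \rdsb \in \calL^{-2} = \Gamma(\wedge^{-1} J_1 L \otimes L) = 0$.

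For uniqueness I would reuse the structural observation from the proof of Proposition~\ref{prop:Jacobi_algbds_GJ-algbs}: a Gerstenhaber--Jacobi algebra structure on this pair is completely determined by the action of degree-zero elements of $\calL$ on degree-zero elements of $\calA$, the bracket among degree-zero elements of $\calL$, and the bracket between degree-zero and degree $-1$ elements. The latter two are the first two displayed formulas, while the action $X_{(-)}$ on functions is then forced by the Leibniz rule~\eqref{eq:genleib2}: comparing $\ldsb \square, f\lambda \rdsb = X_\square(f)\lambda + f\,\ldsb\square,\lambda\rdsb$ with $\ldsb \square, f\lambda \rdsb = \square(f\lambda) = \sigma(\square)(f)\lambda + f\square(\lambda)$ yields $X_\square(f) = \sigma(\square)(f)$. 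Hence the three bracket relations determine the whole structure, and its naturality is inherited from the functoriality of $\der(-)$ recorded in Section~\ref{sec:app_0} together with the canonicity of the correspondence.

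I expect no serious obstacle here, since all the analytic and homological content is carried by Proposition~\ref{prop:Jacobi_algbds_GJ-algbs}; the only points genuinely requiring care are the canonical identification $A_L \cong J_1 L$ through the $L$-valued pairing and the décalage bookkeeping that places $\ldsb \lambda, \mu \rdsb$ in the trivial summand.
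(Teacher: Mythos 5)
Your proposal is correct and follows exactly the paper's own route: the paper's proof is the one-line observation that $(\der L, L)$ with its tautological representation is a Jacobi algebroid, so the statement is a direct specialization of Proposition~\ref{prop:Jacobi_algbds_GJ-algbs}, with the identification $A_L = \der L \otimes L^\ast \cong J_1 L$ already recorded in the surrounding text. Your additional bookkeeping (the décalage argument for $\ldsb\lambda,\mu\rdsb = 0$ and the Leibniz-rule derivation of $X_\square = \sigma(\square)$) simply makes explicit what the paper leaves implicit.
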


\begin{proof}
	Since the Atiyah algebroid of a line bundle, equipped with its tautological representation, is a Jacobi algebroid, the proposition is a straightforward consequence of Proposition~\ref{prop:Jacobi_algbds_GJ-algbs}.
\end{proof}

Finally, we describe explicitly the structure of Gerstenhaber--Jacobi algebra existing on the pair $(\Gamma (\wedge^\bullet J_1 L), (\Der^\bullet L)[1])$.
The Lie bracket $\ldsb-,-\rdsb$ on $(\Der^\bullet L)[1]$ is called Schouten-Jacobi bracket because it can be seen as a ``Jacobi version'' of the Schouten--Nijenhuis bracket between multi-vector fields.
It is easy to see that
\begin{equation}
	\label{eq:SJ_bracket_and_Gerstenhaber_product}
	\ldsb\square,\square'\rdsb:=(-)^{kk'}\square\circ\square^\prime-\square'\circ\square,
\end{equation}
where $\square \in \Der^{k+1} L$, $\square^\prime \in \Der^{k^\prime +1} L$, and $\square \circ \square^\prime$ is given by the following ``\emph{Gerstenhaber product}'':
\begin{equation}
	\label{eq:Gerstenhaber_product}
	(\square \circ \square^\prime) (\lambda_1, \ldots, \lambda_{k+k^\prime +1})\!
	=\!\!\! \sum_{\tau \in S_{k^\prime+1, k}}\!\!(-)^{\tau} \square (\square^\prime (\lambda_{\tau(1)},\ldots, \lambda_{\tau(k^\prime+1)}) ,\lambda_{\tau (k^\prime+2)}, \ldots, \lambda_{\tau (k+k^\prime +1)}),
\end{equation}
where $\lambda_1,\ldots, \lambda_{k+k^\prime +1} \in \Gamma (L)$.

A direct computation shows that the action $\square \mapsto X_\square$ of $(\Der^\bullet L)[1]$ on $ \Gamma (\wedge^\bullet J_1 L)$ is defined as follows.
For $\square \in \Der^{k+1} L$, the \emph{symbol} of $\square $, denoted by $\sigma_\square \in \Gamma (TM \otimes \wedge^k J_1 L )$, is, by definition, the $\wedge^k J_1 L$-valued vector field on $M$ implicitly defined by:
\[
\sigma_\square (f)(\lambda_1 , \ldots, \lambda_{k}) \lambda := \square (f \lambda, \lambda_1,\ldots,\lambda_{k}) - f \square ( \lambda, \lambda_1,\ldots,\lambda_{k}),
\]
where $f\in C^\infty (M)$.
Finally, for any $\Delta \in \Gamma (\wedge^l J_1 L)$, and $\square \in \Der^{k+1} L$, section $X_\square(\Delta)\in\Gamma (\wedge^{k+l} J_1 L)$ is given by
\begin{equation}
\begin{aligned}
X_\square (\Delta) (\lambda_1, \ldots, \lambda_{k+l})
:= {}& (-)^{k(l-1)}\sum_{\tau \in S_{l,k}}(-)^\tau\sigma_\square (\Delta (\lambda_{\tau(1)}, \ldots, \lambda_{\tau (l)}))(\lambda_{\tau(l+1)}, \ldots, \lambda_{\tau (k+l)})\\
& - \sum_{\tau\in S_{k+1,l-1}}(-)^\tau \Delta (\square (\lambda_{\tau(1)}, \ldots, \lambda_{\tau(k+1)}), \lambda_{\tau(k+2)}, \ldots, \lambda_{\tau (k+l)}).
\end{aligned}
\end{equation}


\begin{remark}
	\label{rem:SJ_and_SN}
	Let $M$ be a manifold.
	Recall that we denoted by $\bbR_M$ the trivial line bundle $M\times\bbR\to M$.
	The tangent Lie algebroid $TM\to M$ becomes a Jacobi algebroid when equipped with the representation in $\bbR_M\to M$ provided by its anchor map.
	Then the Gerstenhaber--Jacobi algebra corresponding to $TM$ according with Proposition~\ref{prop:Jacobi_algbds_GJ-algbs} reduces to the Gerstenhaber algebra $\mathfrak{X}^\bullet (M) = \Gamma (\wedge^\bullet TM)$ of (skew-symmetric) multi-vector fields on $M$.
	Here the associative product in $\frakX^\bullet(M)$ is the exterior product given by formulas similar to~\eqref{eq:multi-differential_operators}, and the Lie bracket on $\frakX^\bullet(M)[1]$ is the Schouten-Nijenhuis bracket of multi-vector fields given, in terms of a certain ``\emph{Gerstenhaber product}'', by formulas similar to~\eqref{eq:SJ_bracket_and_Gerstenhaber_product} and~\eqref{eq:Gerstenhaber_product}.
	
	In view of Remark~\ref{rem:derivations_of_trivial_line_bundle}, the canonical splitting of vector bundles $J^1\bbR_M\simeq T^\ast M\oplus\bbR_M$ induces a $C^\infty(M)$-module isomorphism between $\Der^\bullet(\bbR_M)=\Gamma(\wedge^\bullet(J^1\bbR_M)^\ast)$ and $\Gamma(\wedge^\bullet(TM\oplus\bbR_M))=\frakX^\bullet(M)\oplus\frakX^\bullet(M)[1]$.
	Such isomorphism identifies any $\square\in\Der^{k+1}\bbR_M$ with the pair $(P,Q)\in\frakX^{k+1}(M)\oplus\frakX^k(M)$ which is uniquely determined by the condition
	\begin{equation}
	\square=P-Q\wedge\id,
	\end{equation}
	where $\id$ denotes the identity map of $\Gamma(\bbR_M)=C^\infty(M)$, or more explicitly by
	\begin{equation*}
	\square(f_1,\ldots,f_{k+1})=P(f_1,\ldots,f_{k+1})+\sum_{i=1}^{k+1}(-)^{k-i}Q(f_1,\ldots,\hat{f_i},\ldots,f_{k+1})f_i,
	\end{equation*}
	where $f_1,\ldots,f_{k+1}\in\Gamma(\bbR_M)=C^\infty(M)$, and a hat ``$\widehat{-}$'' denotes omission.
	Understanding the $C^\infty(M)$-module isomorphism $\Der^\bullet\bbR_M\simeq\frakX^\bullet(M)\oplus\frakX^\bullet(M)[1]$, through a straightforward computation, one can re-express the Schouten--Jacobi bracket of multi-derivations of $\bbR_M\to M$ in terms of the exterior product and the Schouten--Nijenhuis bracket of multi-vector fields on $M$.
	Namely, for all $(P,Q)\in\frakX^{k+1}(M)\oplus\frakX^k(M)$ and $(P',Q')\in\frakX^{k'+1}(M)\oplus\frakX^{k'}(M)$, we have that
	\begin{align}
	\label{eq:rem:SJ_and_SN}
	\ldsb P-Q\wedge\id,P'-Q'\wedge\id\rdsb&=\left(\ldsb P,P'\rdsb-(-)^{k'}kP\wedge Q'+k'Q\wedge P'\right)\\
	&\phantom{=}-\left(\ldsb P,Q'\rdsb+(-)^{k'}\ldsb Q,P'\rdsb-(k-k')Q\wedge Q'\right)\wedge\id.\nonumber
	\end{align}
	In this way, the Gerstenhaber algebra $\frakX^\bullet(M)$ of multi-vector fields on $M$ identifies with the subalgebra of $\Der^\bullet(\bbR_M)$ formed by those multi-derivations $\square$ of $\bbR_M\to M$ such that $\square(1,-,\ldots,-)=0$.
\end{remark}
\chapter{Jacobi manifolds and coisotropic submanifolds}
\label{sec:abstract_jac_mfd}

	In order to set the general background of the thesis, we provide in this chapter an elementary introduction to Jacobi geometry.
	We present both classical well-known results and more specific results which are necessary to the development, in the next chapters, of the deformation and moduli theory of coisotropic submanifolds.

In the first part of this chapter we recall the definitions of Jacobi structures on a line bundle $L\to M$ and morphisms of Jacobi manifolds, and present important examples of them (Definitions~\ref{def:Jacobi_structure} and~\ref{def:Jacobi_mfd_imorphism}, and Examples~\ref{ex:1}).
Our primary sources are~\cite{kirillov1976local}, \cite{Lich1978}, \cite{marle1991jacobi}, \cite{GM2001}, and the recent paper by Crainic and Salazar~\cite{crainic2013jacobi} whose line bundle approach \emph{à la} Kirillov we adopt.
Accordingly, we recover Lichnerowicz's notion of Jacobi pairs as special case of the Jacobi structures when $L$ is the trivial line bundle $\bbR_M$.
Generically non-trivial line bundles and first order multi-differential calculus on them (cf.~Chapter~\ref{chap:preliminaries}) play a prominent r\^ole in Jacobi geometry.
We propose some equivalent characterizations of the Jacobi structures on a line bundle $L\to M$: as local Lie brackets on $\Gamma(L)$ (see Proposition~\ref{prop:local_Lie_algebras}) and as Maurer--Cartan (MC) elements of $((\Der^\bullet L)[1],\ldsb-,-\rdsb)$ (see Proposition~\ref{prop:J_as_MC_element}).
In particular the latter allows to construct the Lichnerowicz--Jacobi and Chevalley--Eilenberg cohomologies of a Jacobi manifolds $(M,L,\{-,-\})$.

We also associate important algebraic and geometric structures with Jacobi manifolds: in doing this we stress the strong relationship existing between Jacobi manifolds and Jacobi algebroids.
On the one hand, a Jacobi algebroid structure on $(J^1 L,L)$ is canonically associated with every Jacobi structure on a line bundle $L\to M$ (see Proposition~\ref{prop:associated_Jacobi_algbd}), as first discovered by Kerbrat and Souici-Benhammadi~\cite{kerbrat1993} in the special case of trivial line bundle $L=\bbR_M$ (see~\cite{crainic2013jacobi} for the general case).
On the other hand, we discuss the existence, for any Jacobi algebroid structure on $(A,L)$, of a fiber-wise linear Jacobi structure on $\pi^\ast L\to A_L{}^*$ with $\pi:A_L{}^\ast:=A^\ast\otimes L\to M$ (Proposition~\ref{prop:lijac}), as first discovered by Iglesias and Marrero~\cite{iglesias2000some} in the special case $L=\bbR_M$.
As a consequence, these constructions provide a natural lifting of a Jacobi structure $\{-,-\}$ on a line bundle $L\to M$ to a Jacobi structure on $\pi^\ast L\to\der L$ where $\pi:\der  L\to M$ is the bundle map of the Atiyah algebroid of $L$ (Example~\ref{ex:lijac1}).

Further, after having recalled the definition of the characteristic distribution of a Jacobi manifold, we state and provide the proofs of two classical results in Jacobi geometry, first discovered by Kirillov~\cite{kirillov1976local}.
The first one is the complete characterization of transitive Jacobi manifolds $(M,L,\{-,-\})$ according to which if $\dim M$ is odd then they are nothing but contact manifolds (see Proposition~\ref{prop:odd-dim_transitive}) and if $\dim M$ is even then they are nothing more than locally conformal symplectic (lcs) manifolds (see Proposition~\ref{prop:even-dim_transitive}).
The second one is the foliation theorem according to which every Jacobi manifold is essentially a union of contact manifolds and lcs manifolds glued together in a smooth manner (see Theorem~\ref{theor:characteristi_foliation}).

In the second part of this chapter we recall the definition of coisotropic submanifold $S\subset M$ of a Jacobi manifold $(M,L,J)$ which encompasses, unifying and generalizing, the analogous notions that are specific to the several instances of Jacobi manifolds (like the contact, symplectic and Poisson ones).
We propose some equivalent characterizations of coisotropic submanifolds (Lemma~\ref{lem:cois} and Corollary~\ref{cor:cois}~\ref{enumitem:cor:cois_3}).
We also associate important geometric and algebraic invariants with coisotropic submanifolds.
In particular we establish a one-to-one correspondence between coisotropic submanifolds of a Jacobi manifold $(M,L,J)$ and a certain kind of Jacobi subalgebroids of the associated Jacobi algebroid $(J^1L,L)$ (Proposition~\ref{prop:conormal}).
The latter also yields a way how to introduce the characteristic distribution of a coisotropic submanifold.
Finally, in view of their natural r\^ole in the reduction of Jacobi manifolds, we associate a reduced Gerstenhaber--Jacobi algebra with every coisotropic submanifold (see Section~\ref{sec:Jacobi_reduction}).
These geometric and algebraic invariants of a coisotropic submanifold $S$ will be central to the construction of both its two further invariants (the $L_\infty[1]$-algebra in Chapter~\ref{chap:L-infinity-algebra} and the BFV-complex in Chapter~\ref{chap:BFV_complex}) which will turn out to control, at different levels, the coisotropic deformation problem of $S$. 

\section{Jacobi manifolds}
\label{sec:a_jac_mfd}

Let $M$ be a smooth manifold.

\begin{definition}
	\label{def:Jacobi_structure}
	\index{Jacobi structure}
	A \emph{Jacobi structure}, or a \emph{Jacobi bracket}, on a line bundle $L\to M$ is a Lie bracket $\{-,-\} :\Gamma(L)\times\Gamma (L)\rightarrow \Gamma(L)$, which is a first order differential operator, hence a derivation, in both entries.
	A \emph{Jacobi bundle} (over $M$) is a line bundle (over $M$) equipped with a Jacobi bracket.
	A \emph{Jacobi manifold} is a manifold equipped with a Jacobi bundle over it.
\end{definition}


\begin{remark}
	\label{rem:GJ_algebras_of_Jacobi_manifolds}
	A Jacobi structure on a line bundle $L\to M$ is exactly the same thing as a structure of Gerstenhaber--Jacobi algebra (concentrated in degree $0$) on $(C^\infty(M),\Gamma(L))$ (see Definition~\ref{definition:Gerstenhaber--Jacobi}).
	In the case of the trivial line bundle $L=\bbR_M$, we have $\Gamma(L)=C^\infty(M)$.
	In this case we speak about a \emph{Jacobi algebra} structure on $C^\infty(M)$ (see~\cite{GM2002}).
\end{remark}


The following proposition immediately leads to an equivalent description of the Jacobi structures on a line bundle.

\begin{proposition}
	\label{prop:local_Lie_algebras}
	Let $L\to M$ be a line bundle, and $\{-,-\}$ be a Lie algebra structure on $\Gamma(L)$.
	The following two conditions are equivalent.
	\begin{enumerate}[label=(\arabic*)]
		\item
		\label{enumitem:prop:local_Lie_algebras_1}
		The Lie bracket $\{-,-\}$ is a fist-order differential operator (hence a derivation) in both entries, i.e.~it is a Jacobi bracket.
		\item
		\label{enumitem:prop:local_Lie_algebras_2}
		The Lie bracket $\{-,-\}$ is \emph{local} in the sense that, for all $\lambda,\mu\in\Gamma(L)$,
		\begin{equation*}
		\operatorname{supp}\left(\{\lambda,\mu\}\right)\subset\operatorname{supp}(\lambda)\cap\operatorname{supp}(\mu).
		\end{equation*}
	\end{enumerate}
\end{proposition}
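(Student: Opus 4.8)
The plan is to establish the two implications separately; the implication \ref{enumitem:prop:local_Lie_algebras_1}$\Rightarrow$\ref{enumitem:prop:local_Lie_algebras_2} is elementary, while the converse carries all the weight. For \ref{enumitem:prop:local_Lie_algebras_1}$\Rightarrow$\ref{enumitem:prop:local_Lie_algebras_2} I would use the standard fact that differential operators do not enlarge supports: if $\Delta:\Gamma(L)\to\Gamma(L)$ is any differential operator and $\lambda$ vanishes on an open neighbourhood $V$ of a point $x$, then all jets of $\lambda$ vanish on $V$, so $\Delta\lambda|_V=0$ and hence $\operatorname{supp}(\Delta\lambda)\subseteq\operatorname{supp}(\lambda)$. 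Fixing $\mu$ and applying this to the first-order operator $\{-,\mu\}$ gives $\operatorname{supp}(\{\lambda,\mu\})\subseteq\operatorname{supp}(\lambda)$; fixing $\lambda$ and applying it to $\{\lambda,-\}$ gives $\operatorname{supp}(\{\lambda,\mu\})\subseteq\operatorname{supp}(\mu)$. Intersecting yields the locality condition.

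For the converse \ref{enumitem:prop:local_Lie_algebras_2}$\Rightarrow$\ref{enumitem:prop:local_Lie_algebras_1} I would first localize: choosing a local trivialization of $L$ over a chart $U$ identifies $\Gamma(L)|_U$ with $C^\infty(U)$ and turns $\{-,-\}$ into a skew-symmetric, local, bilinear operation $B$ on $C^\infty(U)$ still satisfying the Jacobi identity. The locality hypothesis says precisely that $B$ is support-non-increasing in each argument, so I can invoke Peetre's theorem in each slot to conclude that $B$ is a bidifferential operator of locally finite order; by skew-symmetry the orders in the two arguments coincide, say they equal $k$. The content of the proposition is then the bound $k\le 1$.

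The main obstacle is exactly this order bound, and here I would exploit the Jacobi identity in the form of the commutation relation $[B_f,B_g]=B_{\{f,g\}}$, where $B_f:=B(f,-)$. Each $B_f$ is a differential operator of order $\le k$, so the left-hand side has order $\le 2k-1$ whereas the right-hand side has order $\le k$. If $k\ge 2$ then $2k-1>k$, and comparing top-order principal symbols forces the Poisson bracket $\{\sigma_k(B_f),\sigma_k(B_g)\}$ on $T^\ast U$ to vanish for all $f,g$. Passing to the full leading bisymbol of $B$ (its principal part as a polynomial in the two covariables dual to the derivatives of the two entries), which by skew-symmetry is antisymmetric under interchanging these covariables, I would show that this identically vanishing Poisson-bracket constraint is incompatible with a nonzero symbol of order $\ge 2$, thereby forcing $k\le1$. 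This symbol computation is the technical heart of the argument and the step most likely to demand care in bookkeeping.

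Once $k\le1$ is established, both $\{-,\mu\}$ and $\{\lambda,-\}$ are first order differential operators $\Gamma(L)\to\Gamma(L)$, and since $L$ is a line bundle each is automatically a derivation by Remark~\ref{rem:derivations_of_trivial_line_bundle}, which yields \ref{enumitem:prop:local_Lie_algebras_1} and closes the equivalence.
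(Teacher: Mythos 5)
Your proposal follows essentially the same route as the paper: the implication (1)$\Rightarrow$(2) is the standard fact that differential operators do not enlarge supports, and (2)$\Rightarrow$(1) is obtained by localizing, invoking Peetre's theorem to get a bidifferential operator, and then using the Jacobi identity to bound the order --- which is exactly the argument the paper invokes (it simply cites Kirillov's Lemmas 1 and 2 rather than writing it out). The one caveat is that your order bound --- deducing $k\le 1$ from the Poisson-commutativity of the principal symbols $\sigma_k(B_f)$ forced by $[B_f,B_g]=B_{\{f,g\}}$ and order counting --- is precisely the content of Kirillov's Lemma 2 and is announced rather than executed; the strategy does close (one isolates in $\{\sigma_k(B_f),\sigma_k(B_g)\}$ the terms bilinear in the $k$-jet of $f$ and the $(k+1)$-jet of $g$, which can be prescribed independently at a point, and iterates), but you should be aware that this bookkeeping is the genuinely nontrivial part that neither you nor the paper actually carries out.
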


\begin{proof}
	The implication $(1)\Longrightarrow(2)$ is pretty obvious.
	As pointed out by Kirillov~\cite[Lemmas 1 and 2]{kirillov1976local}, the converse implication $(2)\Longrightarrow(1)$ follows from the combined use of Petree's Theorem (see, e.g.,~\cite[Section 19]{Kolar1993natural}) and the Jacobi identity of $\{-,-\}$.
\end{proof}

\begin{remark}
	\label{rem:local_Lie_algebras}
	As originally introduced by Kirillov~\cite{kirillov1976local}, a structure of \emph{local Lie algebra} on a line bundle $L\to M$ consists of a Lie bracket $\{-,-\}$ on $\Gamma(L)$ which is local in the sense of condition~\ref{enumitem:prop:local_Lie_algebras_2} above.
	Proposition~\ref{prop:local_Lie_algebras} tells us that a Jacobi structure on $L\to M$ is exactly the same as a local Lie algebra structure on $L\to M$.
	Hence our definition of Jacobi manifolds in terms of Jacobi structures (Definition~\ref{def:Jacobi_structure}) can also be equivalently rephrased in terms of local Lie algebras (see, e.g., \cite[Definition 3.1]{crainic2013jacobi}).
\end{remark}

\begin{example}
	\label{ex:1}
	Jacobi structures encompass, unifying and generalizing, several well-known geometric structures.
	Here are some examples.
	\begin{enumerate}
		\item Poisson structures on a manifold $M$ are exactly Jacobi structures $\{-,-\}$ on the trivial line bundle $\bbR_M:=M\times\bbR\to M$ such that $\{1,-\}=0$.
		\item Jacobi pairs on a manifold $M$, as introduced by Lichnerowicz~\cite{Lich1978} (see also Definition~\ref{def:Jacobi_pair}), are nothing but Jacobi structures on $\bbR_M\to M$ (cf.~Proposition~\ref{prop:Jacobi_pairs}).
		\item For any (non-necessarily coorientable) contact manifold $(M,C)$, the (non-nec\-es\-sar\-i\-ly trivial) line bundle $TM/C$ is naturally equipped with a Jacobi structure (for details see Section~\ref{subsec:odd-dim_transitive}).
		\item For any locally conformal symplectic (lcs) manifold $(M,L,\nabla,\omega)$, the (non-nec\-es\-sar\-i\-ly trivial) line bundle $L\to M$ is naturally equipped with a Jacobi structure (for details see Section~\ref{subsec:even-dim_transitive}).
		Notice that, following~\cite[Appendix A]{vitagliano2014vector}, we adopt a line bundle approach to lcs structures which slightly generalizes the analogous notion as defined in~\cite{vaisman1985locally}, in the same spirit as our line bundle approach to Jacobi structures encompasses Lichnerowicz's notion of Jacobi pairs (see also Proposition~\ref{prop:Jacobi_pairs} and Remark~\ref{rem:Vaisman_lcs_structures}).
	\end{enumerate}
\end{example}

Basic facts, including our notations and conventions, about (multi-)differential operators have been recollected in Chapter~\ref{chap:preliminaries}.
In the following, we will freely refer to them for details.
Here is just a very brief summary.
Let $L \to M$ be a line bundle, and denote by $J^1 L$ the bundle of $1$-jets of sections of $L$.
Let $j^1 : \Gamma (L) \to \Gamma (J^1 L)$ be the first jet prolongation, and denote by $J_1 L$ the dual bundle of $J^1 L$.
Sections of $J_1 L$ are first order differential operators $\Gamma (L) \to C^\infty (M)$.
Moreover, denote by $\Der^\bullet L = \Gamma (\wedge^\bullet J_1 L \otimes L)$ the space of multi-differential operators $\Gamma (L) \times \cdots \times \Gamma (L) \to \Gamma (L)$.
The Atiyah algebroid $\der L$ of $L$ is equipped with the tautological representation $\id_{\der  L}$ in $L$.
Accordingly, $(\der  L, L)$ is a Jacobi algebroid.
It follows from Proposition~\ref{prop:Jacobi_algbds_GJ-algbs} that there is a Gerstenhaber--Jacobi algebra structure on $( \Gamma (\wedge^\bullet J_1 L),  (\Der^\bullet L)[1])$.
The Lie bracket on $ (\Der^\bullet L)[1]$ will be also called the \emph{Schouten--Jacobi bracket} and denoted by $\ldsb-,-\rdsb$ (see Section~\ref{sec:GJ_algb_multi-derivations} for explicit formulas).

\begin{remark}
	\label{rem:J_denotes_Jacobi_structure}
	Let $\{-,-\}$ be a Jacobi bracket on a line bundle $L\to M$.
	From now on, when we want to stress that $\{-,-\}$ is a bi-differential operator, i.e.~a degree $1$ element of the graded Lie algebra $((\Der^\bullet L)[1],\ldsb-,-\rdsb)$, we also denote it by $J$.
\end{remark}

The Gerstenhaber--Jacobi algebra of multi-derivations of a line bundle plays a central r\^ole both in developing the theory of Jacobi manifolds and in analyzing the coisotropic deformation problem in the Jacobi setting.
As a starting point, Proposition~\ref{prop:J_as_MC_element} below provides a very useful characterization of Jacobi structures $J$ on a line bundle $L\to M$ as Maurer--Cartan (MC) elements of the graded Lie algebra $((\Der^\bullet L)[1],\ldsb-,-\rdsb)$.


\begin{proposition}[{\cite[Theorem 1.b, (28), (29)]{GM2001}}]
	\label{prop:J_as_MC_element}
	Let $L\to M$ be a line bundle, and $J=\{-,-\}:\Gamma(L)\times\Gamma(L)\to\Gamma(L)$ be a skew-symmetric, first order bi-differential operator, i.e.~$J \in \Der^2 L$.
	Then the following two conditions are equivalent.
	\begin{enumerate}
		\item $\{-,-\}$ is a Jacobi structure on $L\to M$, i.e.~it satisfies the Jacobi identity
		\begin{equation}
		\label{eq:prop:J_as_MC_element_1}
		\{\{\lambda,\mu\},\nu\}+\{\{\mu,\nu\},\lambda\}+\{\{\nu,\lambda\},\mu\}=0.
		\end{equation}
		\item $J$ is a MC element of $((\Der^\bullet(L)[1],\ldsb-,-\rdsb)$, i.e.~it satisfies the MC equation
		\begin{equation}
		\label{eq:prop:J_as_MC_element_2}
		\ldsb J,J\rdsb=0.
		\end{equation}
	\end{enumerate}
\end{proposition}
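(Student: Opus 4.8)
The plan is to reduce the Maurer--Cartan equation $\ldsb J,J\rdsb=0$ to the classical Jacobi identity~\eqref{eq:prop:J_as_MC_element_1} by evaluating the Schouten--Jacobi bracket directly on sections, using the explicit description of $\ldsb-,-\rdsb$ in terms of the Gerstenhaber product recorded in~\eqref{eq:SJ_bracket_and_Gerstenhaber_product} and~\eqref{eq:Gerstenhaber_product}. Since a multi-derivation is completely determined by its values on sections of $L$, it suffices to compare both sides as multilinear maps $\Gamma(L)\times\Gamma(L)\times\Gamma(L)\to\Gamma(L)$.

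First I would specialize~\eqref{eq:SJ_bracket_and_Gerstenhaber_product} to the case $\square=\square'=J\in\Der^2 L$. Here $J$ has two entries, so in the notation of that formula $k=k'=1$, and the prefactor $(-)^{kk'}=-1$ gives $\ldsb J,J\rdsb=-J\circ J-J\circ J=-2\,J\circ J$. Hence $\ldsb J,J\rdsb=0$ if and only if $J\circ J=0$.

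Next I would unwind $J\circ J$ using the Gerstenhaber product~\eqref{eq:Gerstenhaber_product} with $k=k'=1$, so that $J\circ J$ is a $3$-ary operation and the sum runs over the $(2,1)$-unshuffles $\tau\in S_{2,1}$. Enumerating these three unshuffles of $\{1,2,3\}$ together with their signs, and writing $J(\lambda,\mu)=\{\lambda,\mu\}$, I expect to obtain
\begin{equation*}
(J\circ J)(\lambda_1,\lambda_2,\lambda_3)=\{\{\lambda_1,\lambda_2\},\lambda_3\}-\{\{\lambda_1,\lambda_3\},\lambda_2\}+\{\{\lambda_2,\lambda_3\},\lambda_1\}.
\end{equation*}
Using skew-symmetry of $\{-,-\}$ to rewrite the middle term as $+\{\{\lambda_3,\lambda_1\},\lambda_2\}$, this is exactly the left-hand side of~\eqref{eq:prop:J_as_MC_element_1} (the Jacobiator) evaluated at $\lambda=\lambda_1,\mu=\lambda_2,\nu=\lambda_3$. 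Therefore $J\circ J=0$ holds if and only if the Jacobi identity holds for all $\lambda_1,\lambda_2,\lambda_3\in\Gamma(L)$, which combined with the previous step yields the desired equivalence.

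The computation is short, and the only real subtlety---hence the step I would treat most carefully---is the bookkeeping of the unshuffle conventions and signs in~\eqref{eq:Gerstenhaber_product}, i.e.\ correctly listing the elements of $S_{2,1}$ with their parities and matching the resulting three terms to the cyclic form of~\eqref{eq:prop:J_as_MC_element_1}. Everything else follows formally once $J\circ J$ is identified with the Jacobiator.
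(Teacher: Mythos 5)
Your proof is correct and follows essentially the same route as the paper: both reduce the Maurer--Cartan equation to the Jacobi identity by showing, via the Gerstenhaber-product description of the Schouten--Jacobi bracket from~\eqref{eq:SJ_bracket_and_Gerstenhaber_product} and~\eqref{eq:Gerstenhaber_product}, that $\ldsb J,J\rdsb$ is a nonzero multiple of the Jacobiator of $\{-,-\}$. The only cosmetic difference is that you expand $J\circ J$ directly over the $(2,1)$-unshuffles while the paper expands iterated brackets $\ldsb\ldsb\ldsb\ldsb J,J\rdsb,\lambda\rdsb,\mu\rdsb,\nu\rdsb$ using the graded Jacobi identity, which is why your overall constant comes out as $-2$ where the paper's is $+2$ --- a convention-dependent sign in passing between a tri-derivation and its iterated brackets, immaterial for the equivalence of the two vanishing conditions.
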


\begin{proof}
	Notice that the lhs of~\eqref{eq:prop:J_as_MC_element_1} defines a skew-symmetric first-order tri-differential operator, called the Jacobiator of $\{-,-\}$, and denoted by $\operatorname{Jac}(J)\in\Der^3L$.
	We aim at proving that $\ldsb J,J\rdsb=2\operatorname{Jac}(J)$.
	
	As a consequence of the explicit expression of the Schouten--Jacobi bracket in terms of the Gerstenhaber product (cf.~Section~\ref{sec:GJ_algb_multi-derivations}), we have
	\begin{equation}
	\label{eq:Jlm}
	\{\lambda, \mu\}=-\ldsb\ldsb J,\lambda\rdsb,\mu\rdsb,
	\end{equation}
	for all $\lambda,\mu\in\Gamma(L)$.
	Similarly, through a straightforward computation in the graded Lie algebra $((\Der^\bullet L)[1],\ldsb-,-\rdsb)$, we also get that
	\begin{align*}
	\ldsb J,J\rdsb(\lambda,\mu,\nu)&=\ldsb\ldsb\ldsb\ldsb J,J\rdsb,\lambda\rdsb,\mu\rdsb,\nu\rdsb\\
	&=2\left(\ldsb\ldsb J,\ldsb\ldsb J,\lambda\rdsb,\mu\rdsb\rdsb,\nu\rdsb+\ldsb\ldsb\ldsb J,\mu\rdsb,\ldsb J,\lambda\rdsb\rdsb,\nu\rdsb\right)\\
	&=2\left(\{\{\lambda,\mu\},\nu\}+\{\{\mu,\nu\},\lambda\}+\{\{\nu,\lambda\},\mu\}\right),
	\end{align*}
	for all $\lambda,\mu,\nu\in\Gamma(L)$.
	This proves that $\ldsb J,J\rdsb=2\operatorname{Jac}(J)$ and concludes the proof.
\end{proof}

Now, in view of Remark~\ref{rem:SJ_and_SN} and Proposition~\ref{prop:J_as_MC_element}, the notion of Jacobi pair on a manifold $M$, as introduced by Lichnerowicz~\cite{Lich1978}, can be recovered as special case of Definition~\ref{def:Jacobi_structure}.
Namely the former is obtained from the latter when $L=\bbR_M$.

\begin{definition}
	\label{def:Jacobi_pair}
	A \emph{Jacobi pair} $(\Lambda,\Gamma)$ on a manifold $M$ consists of a bi-vector field $\Lambda\in\frakX^2(M)$ and a vector field $\Gamma\in\frakX(M)$ satisfying the following compatibility condition
	\begin{equation}
	\label{eq:jac}
	\ldsb\Gamma,\Lambda\rdsb\equiv\calL_\Gamma\Lambda=0\quad\text{ and }\quad \ldsb\Lambda,\Lambda\rdsb+2\Gamma\wedge\Lambda=0.
	\end{equation}
	with $\ldsb-,-\rdsb$ denoting here the Schouten--Nijenhuis bracket on multi-vector fields.
\end{definition}

\begin{proposition}
	\label{prop:Jacobi_pairs}
	Let $M$ be a manifold.
	There exists a canonical one-to-one correspondence between
	\begin{itemize}
		\item Jacobi structures $J=\{-,-\}$ on the trivial line bundle $\bbR_M$,
		\item Jacobi pairs $(\Lambda,\Gamma)$ on $M$.
	\end{itemize}
	Such one-to-one correspondence is established by the relation $J=\Lambda-\Gamma\wedge\id$, where $\id$ denotes the identity map on $\Gamma(\bbR_M)=C^\infty(M)$, or more explicitly by
	\begin{equation*}
	\{f,g\}=\Lambda(f,g)+f\Gamma(g)-g\Gamma(f),
	\end{equation*}
	for all $f,g\in\Gamma(\bbR_M)=C^\infty(M)$.
\end{proposition}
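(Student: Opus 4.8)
The plan is to transport everything through the $C^\infty(M)$-module isomorphism $\Der^\bullet\bbR_M\simeq\frakX^\bullet(M)\oplus\frakX^\bullet(M)[1]$ recalled in Remark~\ref{rem:SJ_and_SN}, under which a multiderivation $\square\in\Der^{k+1}\bbR_M$ corresponds to the unique pair $(P,Q)\in\frakX^{k+1}(M)\oplus\frakX^k(M)$ with $\square=P-Q\wedge\id$. Specialising to $k=1$, this already furnishes a bijection between skew-symmetric first-order bi-differential operators $J\in\Der^2\bbR_M$ and pairs $(\Lambda,\Gamma)\in\frakX^2(M)\oplus\frakX^1(M)$ via $J=\Lambda-\Gamma\wedge\id$; moreover the explicit formula $\{f,g\}=\Lambda(f,g)+f\Gamma(g)-g\Gamma(f)$ is just the $k=1$ instance of the explicit expansion of $\square$ displayed in Remark~\ref{rem:SJ_and_SN}. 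Thus the set-theoretic correspondence and its formula cost no real work; the whole point is to match the two integrability conditions.

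For that, I would first invoke Proposition~\ref{prop:J_as_MC_element} to replace ``$J$ is a Jacobi structure'' by the Maurer--Cartan equation $\ldsb J,J\rdsb=0$. Next I would compute $\ldsb J,J\rdsb$ by feeding $P=P'=\Lambda$, $Q=Q'=\Gamma$, and $k=k'=1$ into the master identity~\eqref{eq:rem:SJ_and_SN}. Carrying this out yields
\[
\ldsb J,J\rdsb=\big(\ldsb\Lambda,\Lambda\rdsb+2\,\Gamma\wedge\Lambda\big)+2\,\ldsb\Gamma,\Lambda\rdsb\wedge\id,
\]
whose first summand lies in $\frakX^3(M)$ and whose second summand is the $\id$-component. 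Reading off the two components under the same direct-sum isomorphism, $\ldsb J,J\rdsb=0$ holds if and only if both $\ldsb\Lambda,\Lambda\rdsb+2\,\Gamma\wedge\Lambda=0$ and $\ldsb\Gamma,\Lambda\rdsb\equiv\calL_\Gamma\Lambda=0$, that is, precisely conditions~\eqref{eq:jac}. Combined with the previous reduction, this shows $J$ is a Jacobi structure if and only if $(\Lambda,\Gamma)$ is a Jacobi pair, completing the argument.

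The only delicate point, and the step I expect to absorb all the care, is the sign bookkeeping in evaluating~\eqref{eq:rem:SJ_and_SN}. One must use graded commutativity of the exterior product of multivector fields, which turns the purely-multivector cross terms $-(-)^{k'}kP\wedge Q'+k'Q\wedge P'=\Lambda\wedge\Gamma+\Gamma\wedge\Lambda$ into $2\,\Gamma\wedge\Lambda$ (since $\Lambda$ has even multivector degree), together with the graded antisymmetry of the Schouten--Nijenhuis bracket in the shifted degrees --- here $\Lambda$ carries shifted degree $1$ and $\Gamma$ shifted degree $0$ --- which collapses $\ldsb\Lambda,\Gamma\rdsb+(-)^{k'}\ldsb\Gamma,\Lambda\rdsb=\ldsb\Lambda,\Gamma\rdsb-\ldsb\Gamma,\Lambda\rdsb$ into $-2\,\ldsb\Gamma,\Lambda\rdsb$. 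Once these two identities are in place the displayed decomposition is forced, and since the isomorphism of Remark~\ref{rem:SJ_and_SN} is injective, the vanishing of $\ldsb J,J\rdsb$ splits cleanly into the two conditions of~\eqref{eq:jac}.
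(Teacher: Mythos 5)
Your proposal is correct and follows essentially the same route as the paper: identify $J$ with $(\Lambda,\Gamma)$ via the splitting of Remark~\ref{rem:SJ_and_SN}, reduce the Jacobi identity to $\ldsb J,J\rdsb=0$ by Proposition~\ref{prop:J_as_MC_element}, and read off the two components of $\ldsb J,J\rdsb$ from~\eqref{eq:rem:SJ_and_SN}. Your sign bookkeeping, yielding $\ldsb J,J\rdsb=(\ldsb\Lambda,\Lambda\rdsb+2\Gamma\wedge\Lambda)+2\ldsb\Gamma,\Lambda\rdsb\wedge\id$, agrees with the paper's $P=\ldsb\Lambda,\Lambda\rdsb+2\Lambda\wedge\Gamma$ and $Q=-2\ldsb\Gamma,\Lambda\rdsb$, and you spell out the intermediate cancellations slightly more explicitly than the paper does.
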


\begin{proof}
	Let us recall from Remark~\ref{rem:SJ_and_SN} that there is a canonical direct sum decomposition of $C^\infty(M)$-module $\Der^\bullet(\bbR_M)\simeq\frakX^\bullet(M)\oplus\frakX^\bullet(M)[1]$.
	As a consequence, any $J\in\Der^2(\bbR_M)$ identifies with the pair $(\Lambda,\Gamma)\in\frakX^2(M)\oplus\frakX(M)$ such that $J=\Lambda-\Gamma\wedge\id$.
	Moreover $\ldsb J,J\rdsb\in\Der^3(\bbR_M)$ identifies with the pair $(P,Q)\in\frakX^3(M)\oplus\frakX^2(M)$ such that $\ldsb J,J\rdsb=P-Q\wedge\id$.
	According with~\eqref{eq:rem:SJ_and_SN}, $P$ and $Q$ are determined by $\Lambda$ and $\Gamma$, in terms of the exterior product and the Schouten--Nijenhuis bracket on multi-vector fields, as follows
	\begin{equation*}
	P=\ldsb\Lambda,\Lambda\rdsb+2\Lambda\wedge\Gamma\quad\text{ and }Q=-2\ldsb\Gamma,\Lambda\rdsb.
	\end{equation*}
	This concludes the proof.
\end{proof}

\begin{remark}
	\label{rem:Jacobi_pairs}
	Even if, as just shown, a Jacobi pair can be seen as a special case of a Jacobi structure, on the other hand, since every line bundle is locally trivial, every Jacobi structure on a line bundle looks locally like a Jacobi pair.
\end{remark}

\begin{remark}
	\label{rem:cohalg}
	Let $(M, \Lambda)$ be a Poisson manifold, with Poisson bi-vector $\Lambda$, and Poisson bracket $\{-,-\}_\Lambda$.
	Differential $d_\Lambda:=\ldsb\Lambda,-\rdsb:\frakX^\bullet (M)\to\frakX^\bullet(M)$, with $\ldsb-,-\rdsb$ denoting here the Schouten--Nijenhuis bracket on multi-vectors fields, has been introduced by Lichnerowicz to define what is known as the Lichnerowicz-Poisson cohomology of $(M, \Lambda)$.
	Note that complex $(\frakX^\bullet (M), d_\Lambda)$ can be seen as a subcomplex of the Chevalley--Eilenberg complex associated with the Lie algebra $(C^\infty(M), \{-,-\}_\Lambda)$ and its adjoint representation.
	For more general Jacobi manifolds $(M, L, J = \{-,-\})$
	it is natural to replace multi-vector fields, with a suitable subcomplex of the Chevalley-Eilenberg complex associated with the Lie algebra $(\Gamma(L),\{-,-\})$ and its adjoint representation, specifically, the subcomplex of first order, multi-differential operators, i.e.~elements of $ \Der^\bullet L$.
	In particular, the Lichnerowicz--Poisson differential is replaced with the differential $d_J :=\ldsb J, -\rdsb$.
	The resultant cohomology is called the \emph{Chevalley--Eilenberg cohomology} of $(M, L, \{-,-\})$, and we denote it by $H_{CE} (M, L, J)$ (see~\cite{guedira1984geometrie, Lich1978}).
	Furthermore, for a general Jacobi manifold $(M, L, J = \{-,-\})$, the action of $ (\Der^\bullet L)[1]$ on $ \Gamma (\wedge^\bullet J_1 L)$ gives rise to another cohomology, namely cohomology of the complex $( \Gamma (\wedge^\bullet J_1 L), X_J)$ (see Section~\ref{sec:GJ_algb_multi-derivations} for a definition of $X_J$), also called the \emph{Lichnerowicz--Jacobi (LJ-)cohomology of $(M, L, \{-,-\})$} (see, e.g., \cite{LLMP1999}).
	Notice that in Section~\ref{subsubsec:Jacobi_algbd_of_a_Jacobi_manifold} a Jacobi algebroid structure $([-,-]_J,\rho_J,\nabla_J)$ on $(J^1L,L)$ will be associated with any Jacobi manifold $(M,L,J=\{-,-\})$ so that the complex $(\Der^\bullet L, d_J)$ is the de Rham complex of $(J^1 L , \rho_J, [-,-]_J)$ with values in $L$.
	Similarly complex $(\Gamma(\wedge^\bullet J_1 L), X_J)$ is nothing but the de Rham complex of the Lie algebroid $(J^1 L , \rho_J, [-,-]_J)$.
\end{remark}

\begin{remark}
	\label{rem:poiss}
	Many properties of Poisson manifolds have analogues for Jacobi manifolds.
	Sometimes these analogues can be found using the ``\emph{Poissonization trick}'' which consists in the remark that Jacobi brackets on a line bundle $L \to M$ are in one-to-one correspondence with homogeneous Poisson brackets on the principal $\bbR^\times$-bundle $L^\ast \smallsetminus \mathbf{0}$, where $\mathbf{0}$ is the (image of the) zero section of $L^\ast$ (cf.~\cite{dazord1991structure, marle1991jacobi}).
	For instance, using the Poissonization trick, Iglesias and Marrero established a one-to-one correspondence between Jacobi structures and certain Jacobi bi-algebroids (see~\cite[Theorem 3.9]{iglesias2001generalized}).
	Here we prefer to adopt an intrinsic approach to Jacobi structures in the spirit of~\cite{crainic2013jacobi} (see also Remarks~\ref{rem:cohalg}, \ref{rem:dcohm}, and Proposition~\ref{prop:conormal}).
\end{remark}

\section{Jacobi structures and their canonical bi-linear forms}
\label{sec:jac_bilinear_forms}

Let $(M, L, J = \{-,-\})$ be a Jacobi manifold.
For every section $\lambda \in \Gamma (L)$, the associated \emph{Hamiltonian derivation} $\Delta_\lambda\in DL$ is the first order differential operator, hence a derivation of $L$, defined by $\Delta_\lambda:=\{\lambda,-\}$.
Further, the associated \emph{Hamiltonian vector field} $X_\lambda\in\frakX(M)$ is given by the symbol of $\Delta_\lambda$ (see Section~\ref{sec:app_0}), and so it is determined by the following generalized Leibniz rule
\begin{equation}
\label{eq:genleib}
\{ \lambda , f \mu \} = f \{ \lambda , \mu \} + X_\lambda (f) \mu,
\end{equation}
for all $\lambda, \mu \in \Gamma (L)$, $f \in C^\infty (M)$.

The bi-differential operator $J$ can be also interpreted as an $L$-valued, skew-symmetric, bi-linear form $\widehat \Lambda{}_J : \wedge^2 J^1 L \to L$.
Namely, $\widehat \Lambda{}_J $ is uniquely determined by
\[
\widehat \Lambda{}_J (j^1 \lambda, j^1 \mu) = \{\lambda, \mu \},
\]
for all $\lambda, \mu \in \Gamma (L)$.
Denote by $\der  L = \mathrm{Hom} (J^1 L, L)$ the Atiyah algebroid of the line bundle $L$ (see Section~\ref{sec:app_0} for details).
Then, the bi-linear form $\widehat \Lambda{}_J $ determines an obvious morphism of vector bundles $\widehat \Lambda{}_J^\sharp  : J^1 L \to \der  L$, defined by $ \widehat \Lambda{}_J^\sharp  (\alpha) \lambda := \widehat \Lambda{}_J (\alpha, j^1\lambda )$, where $\alpha \in \Gamma (J^1 L)$ and $\lambda \in \Gamma (L)$.
The \emph{bi-symbol} $\Lambda_J$ of $J$ will be also useful.
It is defined as follows:
$
\Lambda_J : \wedge^2 (T^\ast M \otimes L) \to L
$
is the bi-linear form obtained by restriction of $\widehat \Lambda{}_J$ to the module $\Omega^1 (M, L)$ of $L$-valued one forms on $M$, regarded as a submodule in $\Gamma (J^1 L)$ via the \emph{co-symbol} $\gamma : \Omega^1 (M, L) \to \Gamma (J^1 L)$ (see Section~\ref{sec:app_0}).
Namely,
\[
\Lambda_J (\eta, \theta) := \widehat \Lambda{}_J (\gamma (\eta), \gamma (\theta)),
\]
for all $\eta,\theta \in \Omega^1 (M, L)$.
It immediately follows from the definition that
\begin{equation}\label{eq:LambdaJ}
\begin{aligned}
& \Lambda_J (df\otimes \lambda, dg\otimes \mu) \\
& = \{f \lambda, g\mu \} -fg \{\lambda, \mu\} - f X_\lambda (g) \mu + g X_\mu (f)\lambda = \left( X_{f\lambda} (g) -f X_\lambda (g)\right) \mu,
\end{aligned}
\end{equation}
where $f,g \in C^\infty (M)$, and $\lambda, \mu \in \Gamma (L)$.

\begin{remark}
	When $L = \bbR_M$, then $J$ is the same as a Jacobi pair $(\Lambda, \Gamma)$ as in Proposition~\ref{prop:Jacobi_pairs}, and $\Lambda_J$ is just a bi-vector field.
	Actually, we have $\Lambda_J=\Lambda$ and $X_1=\Gamma$.
\end{remark}

The skew-symmetric form $\Lambda_J$ determines an obvious morphism of vector bundles
$ \Lambda_J^\sharp  : T^\ast M \otimes L \to TM$, implicitly defined by
$
\langle \Lambda_J^\sharp  (\eta \otimes \lambda) , \theta \rangle \mu := \Lambda_J (\eta \otimes \lambda , \theta \otimes \mu)
$,
where $\eta, \theta \in \Omega^1 (M)$, $\lambda, \mu \in \Gamma (L)$, and $\langle -,- \rangle$ is the duality pairing.
In other words,
\begin{equation}\label{eq:Lambdash}
\Lambda_J^\sharp  (df \otimes \lambda) = X_{f\lambda} -f X_\lambda,
\end{equation}
$f \in C^\infty (M)$, $\lambda \in \Gamma (L)$.
Morphism $\Lambda^\sharp _J$ can be alternatively defined as follows.
Recall that $\der  L$ projects onto $TM$ via the symbol map $\sigma$.
It is easy to see that diagram
\begin{equation*}
	\begin{tikzcd}
		T^\ast M\otimes L \arrow[r, "\Lambda_J^\sharp "]\arrow[d, swap, "\gamma"]&TM\\
		J^L\arrow[r, "\widehat \Lambda{}^\sharp _J"]&\der L\arrow[u, swap, "\sigma"]
	\end{tikzcd}
\end{equation*}
commutes, i.e.~$\Lambda^\sharp _J = \sigma \circ \widehat \Lambda{}^\sharp _J \circ \gamma$, which can be used as an alternative definition of $\Lambda^\sharp _J$.
Finally, note that
\[
(\widehat \Lambda{}_J^\sharp  \circ \gamma)(df \otimes \lambda) = \Delta_{f\lambda} -f \Delta_\lambda.
\]

\section{Morphisms of Jacobi manifolds}
\label{subsec:Jacobi_morphisms}

\begin{definition}
	\label{def:Jacobi_mfd_morphism}
	A \emph{morphism of Jacobi manifolds}, or a \emph{Jacobi map},
	\[
	(M,L,\{-,-\}) \rightarrow (M',L',\{-,-\}')
	\]
	is a vector bundle morphism $\phi:L\to L'$, covering a smooth map $\underline{\smash{\phi}}: M \to M'$, such that $\phi$ is an isomorphism on fibers, and $ \phi^\ast \{ \lambda, \mu \}' = \{ \phi^\ast \lambda , \phi^\ast \mu \}$ for all $\lambda,\mu \in\Gamma(L')$.
\end{definition}
In the following $\operatorname{Aut}(M,L,J)$ will denote the group of Jacobi automorphisms of a given Jacobi manifold $(M,L,J=\{-,-\})$.

\begin{definition}
	\label{def:Jacobi_mfd_imorphism}
	An \emph{infinitesimal automorphism of a Jacobi manifold} $(M,L,\{-,-\})$, or a \emph{Jacobi derivation}, is a derivation $\Delta$ of the line bundle $L$, equivalently, a section of the Atiyah algebroid $\der L$ of $L$, such that $\Delta$ generates a flow of automorphisms of $(M, L, \{-,-\})$  (see Section~\ref{sec:app_0}).
	A \emph{Jacobi vector field} is the (scalar-type) symbol of a Jacobi derivation.
\end{definition}

\begin{remark}\label{rem:inf_aut}
	Let $\Delta$ be a derivation of $L$, $\{ \varphi_t \}$ be its flow, and let $\square$ be a first order multi-differential operator on $L$ with $k $ entries, i.e.~$\square \in \Der^k L$.
	Since $L$ is a line bundle, a derivation $\Delta$ of $L$ is the same as a first order differential operator on $\Gamma (L)$, i.e.~an element of $\Der  L = \Der^1 L$.
	It is easy to see that (similarly as for vector fields)
	\begin{equation}\label{Lie1}
	\left. \frac{d}{dt} \right |_{t=0} (\varphi_t)_\ast \square =\ldsb\square, \Delta\rdsb
	\end{equation}
	where we denoted by $\varphi_\ast \square$ the \emph{push forward} of $\square $ along a line bundle isomorphism $\varphi : L \to L^\prime$, defined by
	$(\varphi_\ast \square) (\lambda_1^\prime, \ldots, \lambda_{k}^\prime) := (\varphi^{-1})^\ast (\square (\varphi^\ast \lambda_1, \ldots, \varphi^\ast \lambda_{k}))$, for all $\lambda_1^\prime, \ldots, \lambda_k^\prime \in \Gamma (L^\prime)$ (see Section~\ref{sec:app_0} about pushing forward derivations along vector bundle morphisms).
	In particular, $\Delta$ is an infinitesimal automorphism of $(M,L,\{-,-\})$ iff $\ldsb J, \Delta\rdsb= 0$.
	Since
	\begin{equation}\label{eq:DeltaJ}
	\ldsb J, \Delta\rdsb (\lambda, \mu ) = \{ \Delta \lambda , \mu\} + \{ \lambda , \Delta \mu\} - \Delta \{\lambda , \mu \},
	\end{equation}
	we conclude that $\Delta$ is an infinitesimal automorphism of $(M,L,\{-,-\})$ iff
	\begin{equation}
	\Delta \{ \lambda, \mu \} = \{\Delta \lambda , \mu \} + \{ \lambda , \Delta \mu\}
	\end{equation}
	for all $\lambda, \mu \in \Gamma (L)$.
	In other words $\Delta$ is a \emph{derivation} of the Jacobi bracket.
	As a consequence Jacobi derivations and Jacobi vector fields of $(M,L,J=\{-,-\})$ form Lie subalgebras that we will denote by $\mathfrak{aut}(M,L,J)\subset\Der L$ and $\underline{\smash{\mathfrak{aut}}}(M,L,J)\subset\frakX(M)$ respectively.
\end{remark}

\begin{remark}
	More generally, let $\{ \Delta_t \}$ be a one parameter family of derivations of $L$, generating the one parameter family of automorphisms $\{ \varphi_t \}$, and let $\square \in \Der^\bullet L$.
	Then
	\begin{equation}\label{Lie2}
	\frac{d}{dt} (\varphi_t)_\ast \square = \ldsb(\varphi_t)_\ast \square, \Delta_t\rdsb.
	\end{equation}
\end{remark}

\begin{remark}
	\label{rem:conf}
	The notions of \emph{conformal morphisms} and \emph{infinitesimal conformal automorphisms} of manifolds equipped with Jacobi pairs (see, e.g.,~\cite{dazord1991structure}) are recovered as special cases, when $L=\bbR_M$, of Definitions~\ref{def:Jacobi_mfd_morphism} and~\ref{def:Jacobi_mfd_imorphism}, respectively.
	In particular two Jacobi pairs are \emph{conformally equivalent} iff they are isomorphic as Jacobi structures.
\end{remark}

Let $(M,L, J = \{-,-\})$ be a Jacobi manifold and $\lambda \in \Gamma (L)$.
Recall from the preceding section that
\begin{equation}
\Delta_\lambda = \{ \lambda , -\} = -\ldsb J , \lambda\rdsb. \label{eq:trianglelambda}
\end{equation}
As an immediate consequence of the Jacobi identity for the Jacobi bracket we get that not only $\Delta_\lambda$ is a derivation of $L$, but even more, it is an infinitesimal automorphism of $(M,L,J=\{-,-\})$, called the \emph{Hamiltonian derivation associated with the section $\lambda$}.
Similarly, the scalar symbol $X_\lambda$ of $\Delta_\lambda$ will be called the \emph{Hamiltonian vector field associated with $\lambda$}.
Clearly, for all $\lambda\in \Gamma (L)$, $\square\in\mathfrak{aut}(M,L,J)$, and $Y\in\underline{\smash{\mathfrak{aut}}}(M,L,J)$ with $Y=\sigma_\square$, we have
\begin{equation}
\label{eq:iso}
[\square , \Delta_\lambda] = \Delta_{\square\lambda}, \quad \text{and} \quad [Y,X_\lambda] = X_{\square\lambda}.
\end{equation}
Hence Hamiltonian derivations form a Lie ideal of $\mathfrak{aut}(M,L,J)$ that we will denote by $\mathfrak{ham}(M,L,J)$.
Similarly Hamiltonian vector fields form a Lie ideal of $\underline{\smash{\mathfrak{aut}}}(M,L,J)$ that we will denote by $\underline{\smash{\mathfrak{ham}}}(M,L,J)$.
Jacobi automorphisms $L \to L$ generated by Hamiltonian derivations will be called \emph{Hamiltonian automorphisms}.
Similarly, diffeomorphisms $M \to M$ generated by Hamiltonian vector fields will be called \emph{Hamiltonian diffeomorphisms}.
In the following we will denote by $\operatorname{Ham}(M,L,J)\subset\operatorname{Aut}(M,L,J)$ the subgroup of Hamiltonian automorphisms of $(M,L,J=\{-,-\})$.

Hamiltonian derivations are interpreted as \emph{inner infinitesimal automorphisms}.
The following proposition provides a geometric interpretation of the first and the second Chevalley--Eilenberg cohomologies of $(M,L, \{-,-\})$.

\begin{proposition}\label{lem:coh} \
	\begin{enumerate}
		\item A derivation $\Delta$ of $L\to M$ is an infinitesimal automorphism of $(M,L,\{-,-\})$ iff $d_J \Delta = 0$, hence the set of outer infinitesimal automorphisms of $(M, L, \{-,-\})$ is $H_{CE}^{1} (M, L, J)$.
		\item An infinitesimal deformation $\bar J$ of $J$ is a Jacobi deformation if and only if $d_J \bar J = 0$, hence the set of infinitesimal Jacobi deformations of $J$ modulo infinitesimal automorphisms of the bundle $L$ is $H_{CE}^{2} (M, L,J)$.
	\end{enumerate}
\end{proposition}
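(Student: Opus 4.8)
The plan is to unwind both statements directly from the identification $d_J = \ldsb J,-\rdsb$ together with the already-established properties of the Schouten--Jacobi bracket, so that each part reduces to reading off the cocycles and coboundaries of the complex $(\Der^\bullet L, d_J)$ in the relevant degree. Throughout I use that $\Der^k L$ sits in cohomological degree $k$ (cf.\ Remark~\ref{rem:cohalg}), so that $H_{CE}^1$ is computed at $\Der^1 L = \Der L$ and $H_{CE}^2$ at $\Der^2 L$.

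For part (1), the first assertion is essentially a restatement of Remark~\ref{rem:inf_aut}: a derivation $\Delta \in \Der L$ is an infinitesimal automorphism of $(M,L,\{-,-\})$ exactly when $\ldsb J,\Delta\rdsb = 0$, and since $d_J\Delta = \ldsb J,\Delta\rdsb$ by definition, this is precisely the cocycle condition $d_J\Delta = 0$. For the cohomological consequence I would identify the two relevant graded pieces. The space of $1$-cocycles $\ker(d_J\colon \Der^1 L \to \Der^2 L)$ is, by the first assertion, exactly $\mathfrak{aut}(M,L,J)$. The space of $1$-coboundaries $\operatorname{im}(d_J\colon \Der^0 L \to \Der^1 L)$ consists of the elements $\ldsb J,\lambda\rdsb$ with $\lambda \in \Gamma(L) = \Der^0 L$; by~\eqref{eq:trianglelambda} we have $\ldsb J,\lambda\rdsb = -\Delta_\lambda$, so the coboundaries are precisely the Hamiltonian derivations $\mathfrak{ham}(M,L,J)$. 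Hence $H_{CE}^1(M,L,J) = \mathfrak{aut}(M,L,J)/\mathfrak{ham}(M,L,J)$, which is by definition the space of outer infinitesimal automorphisms.

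For part (2), I would first make the notion of infinitesimal deformation precise by writing $J_t = J + t\bar J$ with $\bar J \in \Der^2 L$, and requiring that $J_t$ satisfy the Maurer--Cartan equation $\ldsb J_t,J_t\rdsb = 0$ of Proposition~\ref{prop:J_as_MC_element} to first order in $t$. Expanding by bilinearity, and using that $J$ has degree $|J| = 1$ in $(\Der^\bullet L)[1]$ so that the bracket is graded symmetric on the cross term, $\ldsb J,\bar J\rdsb = \ldsb\bar J,J\rdsb$, one gets $\ldsb J_t,J_t\rdsb = \ldsb J,J\rdsb + 2t\,\ldsb J,\bar J\rdsb + t^2\ldsb\bar J,\bar J\rdsb$. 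Since $J$ is Jacobi, $\ldsb J,J\rdsb = 0$, so the first-order term vanishes iff $\ldsb J,\bar J\rdsb = d_J\bar J = 0$, i.e.\ iff $\bar J$ is a $d_J$-cocycle in $\Der^2 L$; this is the claimed equivalence.

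For the cohomological consequence of part (2) I would read off $H_{CE}^2$ as the degree-$2$ cohomology of $(\Der^\bullet L, d_J)$: the $2$-cocycles are the infinitesimal Jacobi deformations just characterised, while the $2$-coboundaries are the elements $d_J\Delta = \ldsb J,\Delta\rdsb$ with $\Delta \in \Der L$. By~\eqref{Lie1}, $\ldsb J,\Delta\rdsb = \left.\tfrac{d}{dt}\right|_{t=0}(\varphi_t)_\ast J$, where $\{\varphi_t\}$ is the flow of bundle automorphisms generated by $\Delta$; thus the coboundaries are exactly the infinitesimal deformations obtained by dragging $J$ along a flow of (not necessarily Jacobi) automorphisms of $L$. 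Quotienting then yields that $H_{CE}^2(M,L,J)$ is the space of infinitesimal Jacobi deformations modulo those induced by infinitesimal automorphisms of the line bundle $L$. The only genuinely delicate point in the whole argument is the bookkeeping with the grading and signs of the Schouten--Jacobi bracket — in particular verifying the graded symmetry $\ldsb J,\bar J\rdsb = \ldsb\bar J,J\rdsb$ that produces the factor $2$ in the expansion, and the sign in $\ldsb J,\lambda\rdsb = -\Delta_\lambda$ — but none of this affects the kernels and images at stake, so the identifications of $H_{CE}^1$ and $H_{CE}^2$ go through unchanged.
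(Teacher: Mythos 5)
Your proof is correct and takes essentially the same route as the paper's: part (1) rests on Remark~\ref{rem:inf_aut} together with~\eqref{eq:trianglelambda} identifying Hamiltonian derivations with degree-one coboundaries, and part (2) rests on the first-order expansion of the Maurer--Cartan equation (the content of Proposition~\ref{prop:J_as_MC_element}) together with~\eqref{Lie1} identifying the trivial deformations induced by bundle automorphisms with coboundaries. The only difference is that you write out the expansion $\ldsb J_t,J_t\rdsb = 2t\ldsb J,\bar J\rdsb + t^2\ldsb\bar J,\bar J\rdsb$ and the graded-symmetry sign check explicitly where the paper simply cites the relevant proposition.
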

\begin{proof} \
	\begin{enumerate}
		\item The first part of the assertion follows from Remark~\ref{rem:inf_aut}.
		Using this and taking into account~\eqref{eq:trianglelambda}, which interprets inner infinitesimal automorphisms as degree one co-boundaries, we immediately obtain the second part.
		
		\item The first part of the assertion follows from Proposition~\ref{prop:J_as_MC_element}.
		To prove the second part it suffices to show that the trivial infinitesimal deformation of $J$ induced by an infinitesimal automorphism $Y \in \Der  L $ is of the form $\ldsb J, Y\rdsb$.
		Clearly~\eqref{Lie1} proves what we need and this completes the proof.
	\end{enumerate}
\end{proof}

\begin{remark}\label{rem:dcohm} Proposition~\ref{lem:coh} generalizes a known interpretation of Lichnerowicz--Poisson cohomology, see e.g.~\cite[\S 2.1.2]{DZ2005}, and fits into deformation theory of Lie algebras, since any infinitesimal Jacobi deformation $\bar J$ of a Jacobi bracket $J$ is also an infinitesimal deformation of the Lie algebra $(\Gamma (L), \{-,-\})$ and, therefore, $\bar J$ is closed in the Chevalley--Eilenberg complex (see also~\cite{NR1967}).
\end{remark}

\begin{example}
	\label{ex:jmap}
	Let $(M,L,\{-,-\})$ be a Jacobi manifold.
	Hamiltonian vector fields generate a distribution $\calK \subset TM$ whose rank, generically, is non-constant.
	Distribution $\calK$ is called \emph{the characteristic distribution} of $(M, L, \{-,-\})$.
	The Jacobi manifold $(M, L, \{-,-\})$ is said to be \emph{transitive} if its characteristic distribution $\calK$ is the whole tangent bundle $TM$.
\end{example}

\section{Jacobi manifolds and Jacobi algebroids}
\label{sec:Jacobi_mflds-Jacobi_algbds}

It is well-known the close relationship which exists between Poisson manifolds and Lie algebroids, and works in both directions.
On the one hand, for any manifold $M$, a Lie algebroid structure on the cotangent bundle $T^\ast M\to M$ is canonically associated with every Poisson structure on $M$.
On the other hand, for any vector bundle $A\to M$, a Lie algebroid structure on $A$ is nothing but a fiber-wise linear Poisson structure on $A^\ast$.
In this section such relationship will be generalized to a similar one closely interconnecting Jacobi manifolds and Jacobi algebroids.


\subsection{The Jacobi algebroid associated with a Jacobi manifold}
\label{subsubsec:Jacobi_algbd_of_a_Jacobi_manifold}

Let $L\to M$ be a line bundle.
First we show that a structure of Jacobi algebroid $([-,-]_J,\rho_J,\nabla^J)$ on $(J^1L,L)$ is canonically associated with every Jacobi structure $J$ on $L\to M$ (see Proposition~\ref{prop:associated_Jacobi_algbd}).
Later Proposition~\ref{prop:characterization_associated_Jacobi_algbd} provides a complete characterization of Jacobi algebroid structures on $(J^1L,L)$ obtained in this way.

\begin{lemma}
	\label{lem:associated_Jacobi_algbd}
	Fix a Jacobi structure $J=\{-,-\}$ on $L\to M$.
	Then, for every Jacobi algebroid structure $([-,-],\rho,\nabla)$ on $(J^1L,L)$,  the following conditions are equivalent:
	\begin{enumerate}[label=(\alph*)]
		\item\label{enumitem:lem:associated_Jacobi_algbd_a}
		the Jacobi algebroid structure on $(J^1L,L)$ and the Jacobi structure on $L\to M$ are related through the relations
		\begin{equation}
			\label{eq:lem:associated_Jacobi_algbd}
			[j^1\lambda,j^1\mu]=j^1\{\lambda,\mu\},\quad \rho(j^1\lambda)=X_\lambda,\quad \nabla_{j^1\lambda}=\Delta_\lambda,
		\end{equation}
		for all $\lambda,\mu\in\Gamma(L)$,
		\item\label{enumitem:lem:associated_Jacobi_algbd_b}
		the de Rham complex $(\Gamma(\wedge^\bullet(J^1L)^\ast\otimes L),d_{J^1L,L})$ of $(J^1L,L)$ coincides with the Chevalley--Eilenberg complex $(D^\bullet L,d_J)$ of $(M,L,J)$, i.e.~$d_{J^1L,L}=d_J$.
	\end{enumerate} 
\end{lemma}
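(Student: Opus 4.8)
The plan is to exploit Proposition~\ref{prop:Jacobi_algebroids_and_deRham_complex}, which repackages a Jacobi algebroid structure on $(J^1L,L)$ as a single object — its de Rham differential — on the graded module $\Gamma(\wedge^\bullet(J^1L)^\ast\otimes L)$. The first thing I would record is that, since $(J^1L)^\ast=J_1L$, this module \emph{is} $\Der^\bullet L$; hence the de Rham complex of $(J^1L,L)$ and the Chevalley--Eilenberg complex $(\Der^\bullet L,d_J)$ are built on one and the same graded module, and condition~\ref{enumitem:lem:associated_Jacobi_algbd_b} is literally the equality of two degree $1$ operators $d_{J^1L,L}=d_J$. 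Next I would check that $d_J=\ldsb J,-\rdsb$ is a \emph{cohomological derivation} in the sense of Proposition~\ref{prop:Jacobi_algebroids_and_deRham_complex}: it has degree $1$; by the generalized Leibniz rule~\eqref{eq:genleib2} it is a first order differential operator with scalar-type symbol; and $d_J\circ d_J=\tfrac12\ldsb\ldsb J,J\rdsb,-\rdsb=0$ by the Maurer--Cartan equation $\ldsb J,J\rdsb=0$ (Proposition~\ref{prop:J_as_MC_element}). Consequently $d_J$ is the de Rham differential of a \emph{unique} Jacobi algebroid structure $([-,-]^J,\rho^J,\nabla^J)$ on $(J^1L,L)$, and injectivity of the correspondence reduces~\ref{enumitem:lem:associated_Jacobi_algbd_b} to the identity of structures $([-,-],\rho,\nabla)=([-,-]^J,\rho^J,\nabla^J)$.

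With this reformulation, I would read off $([-,-]^J,\rho^J,\nabla^J)$ from $d_J$ and verify~\eqref{eq:lem:associated_Jacobi_algbd}. For the representation, \eqref{eq:d_A,L} gives $d_J\lambda=\nabla^J\lambda$ for $\lambda\in\Der^0L=\Gamma(L)$, while independently $d_J\lambda=\ldsb J,\lambda\rdsb=-\Delta_\lambda$ by~\eqref{eq:trianglelambda}; pairing with $j^1\mu$ through $\langle\square,j^1\mu\rangle=\square\mu$ yields $\nabla^J_{j^1\mu}\lambda=-\Delta_\lambda\mu=-\{\lambda,\mu\}=\Delta_\mu\lambda$, i.e. $\nabla^J_{j^1\mu}=\Delta_\mu$, and taking symbols gives $\rho^J(j^1\mu)=X_\mu$. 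For the bracket I would apply the Cartan formula to the $1$-cochain $\square\in\Der L$ at $(j^1\lambda,j^1\mu)$, obtaining $(d_J\square)(j^1\lambda,j^1\mu)=\nabla^J_{j^1\lambda}(\square\mu)-\nabla^J_{j^1\mu}(\square\lambda)-\langle\square,[j^1\lambda,j^1\mu]^J\rangle$, and compare it with the value of $d_J\square=\ldsb J,\square\rdsb$ as a bi-derivation at $(\lambda,\mu)$, namely $\{\square\lambda,\mu\}+\{\lambda,\square\mu\}-\square\{\lambda,\mu\}$ by~\eqref{eq:DeltaJ}. Substituting $\nabla^J_{j^1\bullet}=\Delta_\bullet$, the terms $\{\lambda,\square\mu\}$ and $\{\square\lambda,\mu\}$ cancel and leave $\langle\square,[j^1\lambda,j^1\mu]^J\rangle=\square\{\lambda,\mu\}=\langle\square,j^1\{\lambda,\mu\}\rangle$ for every $\square$, whence $[j^1\lambda,j^1\mu]^J=j^1\{\lambda,\mu\}$. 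This proves~\ref{enumitem:lem:associated_Jacobi_algbd_b}$\Rightarrow$\ref{enumitem:lem:associated_Jacobi_algbd_a}.

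To close the biconditional I would show that~\eqref{eq:lem:associated_Jacobi_algbd} determines at most one structure. The key point is that the exact jets $\{j^1\lambda:\lambda\in\Gamma(L)\}$ generate $\Gamma(J^1L)$ as a $C^\infty(M)$-module: by the splitting of the Spencer sequence~\eqref{eq:Spencer} every $\alpha\in\Gamma(J^1L)$ is $j^1\lambda+\gamma(\eta)$, and $\gamma(df\otimes\mu)=j^1(f\mu)-f\,j^1\mu$ is again a $C^\infty(M)$-combination of exact jets. Since $\rho$ is $C^\infty(M)$-linear and $\nabla$ is $C^\infty(M)$-linear in its $J^1L$-argument, the first and third relations fix $\rho$ and $\nabla$ outright; then the Leibniz rule $[\alpha,f\beta]=\rho(\alpha)(f)\beta+f[\alpha,\beta]$, together with skew-symmetry, propagates the second relation from exact jets to all of $\Gamma(J^1L)$, fixing $[-,-]$. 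Hence $([-,-]^J,\rho^J,\nabla^J)$ is the unique structure obeying~\ref{enumitem:lem:associated_Jacobi_algbd_a}, so any structure satisfying~\ref{enumitem:lem:associated_Jacobi_algbd_a} coincides with it and therefore has $d_{J^1L,L}=d_J$, giving~\ref{enumitem:lem:associated_Jacobi_algbd_a}$\Rightarrow$\ref{enumitem:lem:associated_Jacobi_algbd_b}.

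I expect the main obstacle to be purely computational: keeping the d\'ecalage degree shifts and the signs of the Schouten--Jacobi bracket straight when extracting $\nabla^J$ and $[-,-]^J$ from $d_J$ via the Cartan/Koszul formula, and matching consistently the two avatars of an element of $\Der^\bullet L$ — as a multiderivation and as an $L$-valued Atiyah cochain on $J^1L$ — through the pairing $\langle\square,j^1\mu\rangle=\square\mu$. Everything else is a formal consequence of the correspondence in Proposition~\ref{prop:Jacobi_algebroids_and_deRham_complex} and of the generation of $\Gamma(J^1L)$ by exact jets; in particular invoking that correspondence lets me avoid checking the agreement of the symbols $d_{J^1L}$ and $X_J$ by hand.
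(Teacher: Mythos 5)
Your proposal is correct and rests on the same core computation as the paper's proof: comparing the two degree~$1$ differentials on the degree~$0$ and degree~$1$ components of $\Der^\bullet L=\Gamma(\wedge^\bullet(J^1L)^\ast\otimes L)$, evaluated on first jets, which generate $\Gamma(J^1L)$. The only difference is presentational — you wrap the argument in the correspondence of Proposition~\ref{prop:Jacobi_algebroids_and_deRham_complex} and make the uniqueness step for \ref{enumitem:lem:associated_Jacobi_algbd_a}$\Rightarrow$\ref{enumitem:lem:associated_Jacobi_algbd_b} explicit, whereas the paper states the two pairs of formulas and declares the equivalence straightforward; your signs and identifications all check out.
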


\begin{proof}
	Notice that both $d_J$ and $d_{J^1L,L}$ are completely determined by their action on degree $0$ and degree $1$ elements of $D^\bullet L=\Gamma(\wedge^\bullet(J^1L)^\ast\otimes L)$.
	For all $\lambda\in\Gamma(L)$, the elements $d_J\lambda$ and $d_{J^1L,L}\lambda$ of $DL=\Gamma((J^1L)^\ast\otimes L)$ act on an arbitrary $\mu\in\Gamma(L)$ as follows
	\begin{equation}
		\label{eq:proof:lem:associated_Jacobi_algbd_1}
		\begin{aligned}
			(d_J\lambda)\mu&=\ldsb\ldsb J,\lambda\rdsb,\mu\rdsb=\Delta_\mu\lambda,\\
			(d_{J^1L,L}\lambda)\mu&=\langle d_{J^1L,L}\lambda,j^1\mu\rangle=\nabla_{j^1\mu}\lambda,
		\end{aligned}
	\end{equation}
	where $\langle-,-\rangle$ denotes the $L$-valued duality pairing between $DL$ and $J^1L$.
	For all $\square\in DL=\Gamma((J^1L)^\ast\otimes L)$, the elements $d_J\square$ and $d_{J^1L,L}\square$ of $D^2L=\Gamma(\wedge^2(J^1L)^\ast\otimes L)$ act on arbitrary $\mu,\nu\in\Gamma(L)$ as follows
	\begin{equation}
		\label{eq:proof:lem:associated_Jacobi_algbd_2}
		\begin{aligned}
			(d_J\square)(\mu,\nu)&=\ldsb J,\square\rdsb(\mu,\nu)=\Delta_\mu(\square\nu)-\Delta_\nu(\square\mu)-\square\{\mu,\nu\},\\
			(d_{J^1L,L}\square)(\mu,\nu)&
			=\nabla_{j^1\mu}(\square\nu)-\nabla_{j^1\nu}(\square\mu)-\langle\square,[j^1\mu,j^1\nu]\rangle.
			\end{aligned}
	\end{equation} 
	Now, the equivalence of~\ref{enumitem:lem:associated_Jacobi_algbd_a} and~\ref{enumitem:lem:associated_Jacobi_algbd_b} is straightforward from~\eqref{eq:proof:lem:associated_Jacobi_algbd_1} and~\eqref{eq:proof:lem:associated_Jacobi_algbd_2}.	
\end{proof}

\begin{proposition}
	\label{prop:associated_Jacobi_algbd}
	Let $J=\{-,-\}$ be a Jacobi structure on $L\to M$.
	There is a unique Jacobi algebroid structure $([-,-]_J,\rho_J,\nabla^J)$ on $(J^1L,L)$ satisfying the equivalent conditions~\ref{enumitem:lem:associated_Jacobi_algbd_a} and~\ref{enumitem:lem:associated_Jacobi_algbd_b} in Lemma~\ref{lem:associated_Jacobi_algbd}.
	Explicitly, for all $\alpha,\beta\in\Gamma(J^1L)$,
	\begin{align}
	[\alpha,\beta]_J&=\calL_{\hat\Lambda_J^\sharp\alpha}\beta-\calL_{\hat\Lambda_J^\sharp\beta}\alpha-j^1(\Lambda_J(\alpha,\beta))\nonumber\\
	&=\iota_{\hat\Lambda_J^\sharp\alpha}d_D\beta-\iota_{\hat\Lambda_J^\sharp\beta}d_D\alpha+j^1(\Lambda_J(\alpha,\beta))
	\label{eq:prop:associated_Jacobi_algbd_bracket}\\
	\rho_J(\alpha)&=(\sigma\circ\hat\Lambda_J^\sharp)\alpha,\label{eq:prop:associated_Jacobi_algbd_anchor}\\
	\nabla^J_\alpha&=\Lambda_J^\sharp\alpha,\label{eq:prop:associated_Jacobi_algbd_representation}
	\end{align}
	with~\eqref{eq:prop:associated_Jacobi_algbd_bracket} written in terms of the $L$-valued Cartan calculus on $\der L$ (cf.~Section~\ref{sec:der_complex}).
\end{proposition}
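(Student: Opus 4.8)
The plan is to obtain the structure abstractly from Proposition~\ref{prop:Jacobi_algebroids_and_deRham_complex} and only afterwards read off the explicit formulas. First I would set $\Delta:=d_J=\ldsb J,-\rdsb$, regarded as an operator on $\Der^\bullet L=\Gamma(\wedge^\bullet(J^1L)^\ast\otimes L)=\Gamma(\scrL)$. By the Leibniz rule~\eqref{eq:genleib2} for the Gerstenhaber--Jacobi bracket (applied with $J$ in the first slot), $d_J$ is a degree $1$ graded derivation of the graded line bundle $\scrL$ over $\scrM$, with symbol the degree $1$ derivation $X_J\in\frakX(\scrM)$. Moreover, by the graded Jacobi identity together with Proposition~\ref{prop:J_as_MC_element},
\[
d_J\circ d_J=\tfrac12\ldsb\ldsb J,J\rdsb,-\rdsb=0,
\]
so that $d_J$ is a cohomological derivation (and $X_J$ a cohomological vector field). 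Proposition~\ref{prop:Jacobi_algebroids_and_deRham_complex} then yields a Jacobi algebroid structure on $(J^1L,L)$ whose de Rham differential is $d_{J^1L,L}=d_J$; this is exactly condition~\ref{enumitem:lem:associated_Jacobi_algbd_b}, hence, by Lemma~\ref{lem:associated_Jacobi_algbd}, also~\ref{enumitem:lem:associated_Jacobi_algbd_a}. The same proposition gives uniqueness, since a Jacobi algebroid structure is determined by its de Rham differential.

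It then remains to match the operations with the closed formulas. For the representation and the anchor this is fibrewise linear algebra: both $\nabla^J_\alpha$ and $\rho_J(\alpha)$ are $C^\infty(M)$-linear in $\alpha$, and the $1$-jets $\{j^1\lambda(x):\lambda\in\Gamma(L)\}$ span $(J^1L)_x$ at each point, so it suffices to check agreement on generators. From~\eqref{eq:lem:associated_Jacobi_algbd} one has $\nabla^J_{j^1\lambda}=\Delta_\lambda$ and $\rho_J(j^1\lambda)=X_\lambda$, while $\hat{\Lambda}_J^\sharp(j^1\lambda)\mu=\hat{\Lambda}_J(j^1\lambda,j^1\mu)=\{\lambda,\mu\}=\Delta_\lambda\mu$; hence $\nabla^J=\hat{\Lambda}_J^\sharp$ and $\rho_J=\sigma\circ\hat{\Lambda}_J^\sharp$, which are~\eqref{eq:prop:associated_Jacobi_algbd_representation} and~\eqref{eq:prop:associated_Jacobi_algbd_anchor}.

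For the bracket I would proceed in three moves. First, the two displayed expressions in~\eqref{eq:prop:associated_Jacobi_algbd_bracket} coincide: their difference equals $d_D(\iota_{\hat{\Lambda}_J^\sharp\alpha}\beta)-d_D(\iota_{\hat{\Lambda}_J^\sharp\beta}\alpha)-2\,j^1\hat{\Lambda}_J(\alpha,\beta)$ by Cartan's magic formula $\calL_\square=[d_D,\iota_\square]$ from Section~\ref{sec:der_complex}, and this vanishes because $\iota_{\hat{\Lambda}_J^\sharp\alpha}\beta=\hat{\Lambda}_J(\alpha,\beta)$, the form $\hat{\Lambda}_J$ is skew, and $d_D=j^1$ on $\Omega^0_L=\Gamma(L)$ (here the $\Lambda_J(\alpha,\beta)$ written in~\eqref{eq:prop:associated_Jacobi_algbd_bracket} is read as $\hat{\Lambda}_J$ evaluated on $1$-jets). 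Second, on exact generators $\alpha=d_D\lambda=j^1\lambda$, $\beta=d_D\mu=j^1\mu$ the second expression collapses, via $d_D\circ d_D=0$, to $j^1\hat{\Lambda}_J(j^1\lambda,j^1\mu)=j^1\{\lambda,\mu\}$, matching~\eqref{eq:lem:associated_Jacobi_algbd}. Third, I would extend off generators: since $\gamma(df\otimes\lambda)=j^1(f\lambda)-f\,j^1\lambda$, the sections $j^1\lambda$ generate $\Gamma(J^1L)$ as a $C^\infty(M)$-module, so it is enough to check that the right-hand side of~\eqref{eq:prop:associated_Jacobi_algbd_bracket} is a derivation in $\beta$ with symbol $\rho_J(\alpha)$, i.e.\ that it obeys the Lie algebroid Leibniz rule $(\alpha,f\beta)\mapsto f\,[\alpha,\beta]_J+\rho_J(\alpha)(f)\beta$; agreement on generators plus identical Leibniz behaviour then forces equality with the genuine bracket $[-,-]_J$ supplied by the first paragraph.

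I expect this last step to be the main obstacle: verifying the Leibniz rule for the closed formula is a Cartan-calculus computation on the der-complex, and the bookkeeping of the $L$-twisting is delicate, since $\calL_{f\square}$, the jet prolongation $j^1(fs)$, and the pairing $\hat{\Lambda}_J(\alpha,f\beta)$ each contribute correction terms that must cancel. By contrast, everything else reduces either to the abstract correspondence of Proposition~\ref{prop:Jacobi_algebroids_and_deRham_complex} or to fibrewise linear algebra on $J^1L$.
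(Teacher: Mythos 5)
Your proposal is correct and follows essentially the same route as the paper: existence and uniqueness come from applying Proposition~\ref{prop:Jacobi_algebroids_and_deRham_complex} to the cohomological derivation $d_J$, and the explicit formulas are then matched to the abstract structure by evaluating on first jet prolongations and exploiting $C^\infty(M)$-linearity and Leibniz rules (together with the Cartan-calculus identity $\calL_\square=[d_D,\iota_\square]$ to reconcile the two displayed expressions for the bracket). The only, harmless, organizational difference is that the paper checks directly that the closed formulas constitute a Jacobi algebroid structure satisfying condition~\ref{enumitem:lem:associated_Jacobi_algbd_a} and then invokes uniqueness, whereas you identify them with the already-constructed structure via agreement on generators plus identical Leibniz behaviour, which spares you re-verifying flatness and the Jacobi identity but still requires the same Leibniz computation you rightly single out as the main bookkeeping step.
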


\begin{proof}
	\textsc{Existence and Uniqueness.}
	Denote by $\scrM$ the graded manifold with $C^\infty(\scrM)=\Gamma(\wedge^\bullet(J^1L)^\ast)$, and by $\scrL\to\scrM$ the graded line bundle with $\Gamma(\scrL)=\Gamma(\wedge^\bullet(J^1L)^\ast\otimes L)$.
	As pointed out in Remark~\ref{rem:cohalg} (cf.~also Proposition~\ref{prop:Jacobi_algebroids_and_deRham_complex}), $d_J$ is a cohomological derivation of $\scrL\to\scrM$, with symbol the cohomological vector field $X_J$ on $\scrM$.
	Hence, according with Proposition~\ref{prop:Jacobi_algebroids_and_deRham_complex}, there exists a unique Jacobi algebroid structure on $(J^1L,L)$ satisfying condition~\ref{enumitem:lem:associated_Jacobi_algbd_b}.
	
	\textsc{Explicit Expression.}
	Since the operations defined by Equations~\eqref{eq:prop:associated_Jacobi_algbd_bracket}--\eqref{eq:prop:associated_Jacobi_algbd_representation} satisfy condition~\ref{enumitem:lem:associated_Jacobi_algbd_a}, it remains to check that they determine a Jacobi algebroid structure on $(J^1L,L)$.
	The Leibniz rule relating $[-,-]_J$ and $\rho_J$ is satisfied.
	Indeed we have 
	\begin{equation*}
		[\alpha,f\beta]_J=\calL_{\hat\Lambda_J^\sharp\alpha}(f\beta)-\iota_{\hat\Lambda_J^\sharp(f\beta)}d_D\alpha=
		(\rho_J(\alpha)f)\beta+f[\alpha,\beta]_J,
	\end{equation*}
	for all $\alpha,\beta\in\Gamma(J^1L)$, and $f\in C^\infty(M)$.
	Hence, in particular, $[-,-]_J$ is a bi-derivation of the vector bundle $J^1L\to M$.
	The flatness condition on $\nabla^J$ holds, i.e.
	\begin{equation}
		\label{eq:proof:prop:associated_Jacobi_algbd}
		\nabla^J_{[\alpha,\beta]_J}-[\nabla^J_\alpha,\nabla^J_\beta]=0,
	\end{equation}
	for all $\alpha,\beta\in\Gamma(J^1L)$.
	Actually the lhs of~\eqref{eq:proof:prop:associated_Jacobi_algbd} is $C^\infty(M)$-bilinear in $\alpha$ and $\beta$, and vanishes on the $1$-jet prolongations which generate the $C^\infty(M)$-module $\Gamma(J^1L)$.
	Hence a fortiori $[\rho_J(\alpha),\rho_J(\beta)]=\rho_J[\alpha,\beta]^J$, for all $\alpha,\beta\in\Gamma(J^1L)$.
	In view of what above, the Jacobi identity for $[-,-]_J$ reduces to a straightforward consequence of the Jacobi identity for $\{-,-\}$.
\end{proof}

\begin{proposition}
	\label{prop:characterization_associated_Jacobi_algbd}
	The equivalent conditions~\ref{enumitem:lem:associated_Jacobi_algbd_a} and~\ref{enumitem:lem:associated_Jacobi_algbd_b} in Lemma~\ref{lem:associated_Jacobi_algbd} establish a one-to-one correspondence between
	\begin{itemize}
		\item Jacobi structures on $L\to M$, and
		\item Jacobi algebroid structures $([-,-],\rho,\nabla)$ on $(J^1L,L)$ satisfying
		\begin{equation}
		\label{eq:prop:characterization_associated_Jacobi_algbd}
		[j^1\lambda,j^1\mu]=j^1(\nabla_{j^1\lambda}\mu),\qquad\textnormal{for all }\lambda,\mu\in\Gamma(L).
		\end{equation}
	\end{itemize}
\end{proposition}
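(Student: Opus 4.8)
The plan is to exhibit two mutually inverse maps between the two sets and to check that each is well defined. In one direction, Proposition~\ref{prop:associated_Jacobi_algbd} already assigns to every Jacobi structure $J$ on $L\to M$ a Jacobi algebroid structure $([-,-]_J,\rho_J,\nabla^J)$ on $(J^1L,L)$ satisfying condition~\ref{enumitem:lem:associated_Jacobi_algbd_a} of Lemma~\ref{lem:associated_Jacobi_algbd}. First I would check that this structure lies in the prescribed subset, i.e.~that it satisfies the compatibility equation~\eqref{eq:prop:characterization_associated_Jacobi_algbd}: by~\eqref{eq:lem:associated_Jacobi_algbd} one has $\nabla^J_{j^1\lambda}\mu=\Delta_\lambda\mu=\{\lambda,\mu\}$ and $[j^1\lambda,j^1\mu]_J=j^1\{\lambda,\mu\}$, whence $[j^1\lambda,j^1\mu]_J=j^1(\nabla^J_{j^1\lambda}\mu)$, as required.

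For the reverse direction, given a Jacobi algebroid structure $([-,-],\rho,\nabla)$ on $(J^1L,L)$ satisfying~\eqref{eq:prop:characterization_associated_Jacobi_algbd}, I would define a candidate bracket by $\{\lambda,\mu\}:=\nabla_{j^1\lambda}\mu$ and prove it is a Jacobi structure. Skew-symmetry follows from the skew-symmetry of the Lie bracket together with~\eqref{eq:prop:characterization_associated_Jacobi_algbd} and the injectivity of $j^1$: since $j^1(\nabla_{j^1\lambda}\mu)=[j^1\lambda,j^1\mu]=-[j^1\mu,j^1\lambda]=-j^1(\nabla_{j^1\mu}\lambda)$, we obtain $\nabla_{j^1\lambda}\mu=-\nabla_{j^1\mu}\lambda$. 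That $\{-,-\}$ is a first order differential operator in each entry is then immediate, since $\nabla_{j^1\lambda}\in\Der L$ is first order in $\mu$ and, by skew-symmetry, also first order in $\lambda$.

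The conceptual heart of the argument is the Jacobi identity, which I expect to be the main point although it turns out to be short. Combining~\eqref{eq:prop:characterization_associated_Jacobi_algbd} with the flatness of the representation $\nabla$, I would compute
\begin{align*}
\{\{\lambda,\mu\},\nu\}&=\nabla_{j^1\{\lambda,\mu\}}\nu=\nabla_{[j^1\lambda,j^1\mu]}\nu\\
&=\nabla_{j^1\lambda}\nabla_{j^1\mu}\nu-\nabla_{j^1\mu}\nabla_{j^1\lambda}\nu=\{\lambda,\{\mu,\nu\}\}-\{\mu,\{\lambda,\nu\}\}.
\end{align*}
Rewriting the right-hand side by means of skew-symmetry as $-\{\{\mu,\nu\},\lambda\}-\{\{\nu,\lambda\},\mu\}$ yields exactly the Jacobi identity~\eqref{eq:prop:J_as_MC_element_1}, so $\{-,-\}$ is a genuine Jacobi structure on $L\to M$.

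Finally I would verify that the two assignments are mutually inverse. One composite is trivial: starting from $J$, the reconstruction returns $\nabla^J_{j^1\lambda}\mu=\Delta_\lambda\mu=\{\lambda,\mu\}$, recovering $J$. For the other composite I would check that the Jacobi algebroid associated by Proposition~\ref{prop:associated_Jacobi_algbd} to the reconstructed bracket $\{\lambda,\mu\}=\nabla_{j^1\lambda}\mu$ coincides with the original structure. By the uniqueness part of Lemma~\ref{lem:associated_Jacobi_algbd} it suffices to match all three data on the generators $j^1\lambda$ of the $C^\infty(M)$-module $\Gamma(J^1L)$: the representations agree by construction, $\nabla_{j^1\lambda}=\Delta_\lambda=\nabla^J_{j^1\lambda}$; the anchors then agree automatically, each being the scalar-type symbol of the respective representation; and the brackets agree since $[j^1\lambda,j^1\mu]=j^1\{\lambda,\mu\}=[j^1\lambda,j^1\mu]_J$ by~\eqref{eq:prop:characterization_associated_Jacobi_algbd}. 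The only real subtlety, and the place where I would be most careful, is ensuring that agreement on the generators $j^1\lambda$ propagates to all sections of $J^1L$; this is handled by the $C^\infty(M)$-linearity of $\nabla$ and $\rho$ and by the fact that both $[-,-]$ and $[-,-]_J$ are bi-derivations over the now-identified anchor, so the Leibniz rule forces them to agree everywhere.
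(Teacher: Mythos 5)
Your proposal is correct and follows essentially the same route as the paper: the forward direction via Proposition~\ref{prop:associated_Jacobi_algbd}, and the reverse direction by setting $\{\lambda,\mu\}:=\nabla_{j^1\lambda}\mu$, deducing skew-symmetry from~\eqref{eq:prop:characterization_associated_Jacobi_algbd} and the injectivity of $j^1$, and the Jacobi identity from the flatness of $\nabla$. The paper compresses all of these verifications into ``it is immediate to check''; your write-up simply supplies the details, including the correct observation that agreement of the reconstructed algebroid data on the holonomic generators $j^1\lambda$ propagates to all of $\Gamma(J^1L)$ by $C^\infty(M)$-linearity and the Leibniz rules.
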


\begin{proof}
	Evidently, for any Jacobi structure $J=\{-,-\}$ on $L\to M$, the associated Jacobi algebroid structure $([-,-]_J,\rho_J,\nabla^J)$ on $(J^1L,L)$, uniquely determined by equivalent conditions~\ref{enumitem:lem:associated_Jacobi_algbd_a} and~\ref{enumitem:lem:associated_Jacobi_algbd_b}, satisfies~\eqref{eq:prop:characterization_associated_Jacobi_algbd}.
	
	Conversely, let $([-,-],\rho,\nabla)$ be a Jacobi algebroid structure on $(J^1L,L)$ satisfying~\eqref{eq:prop:characterization_associated_Jacobi_algbd}.
	Then there is a unique bracket $\{-,-\}:\Gamma(L)\times\Gamma(L)\to\Gamma(L)$ such that
	\begin{equation}
	\{\lambda,\mu\}=\nabla_{j^1\lambda}\mu=-\nabla_{j^1\mu}\lambda,
	\end{equation}
	and a fortiori $j^1\{\lambda,\mu\}=[j^1\lambda,j^1\mu]$, for all $\lambda,\mu\in\Gamma(L)$.
	It is immediate to check now that $\{-,-\}$ is actually a Jacobi structure on $L\to M$ with, in particular, $\Delta_\lambda:=\{\lambda,-\}$ and $X_\lambda:=\sigma(\Delta_\lambda)$ coinciding with $\nabla_{j^1\lambda}$ and $\rho(j^1\lambda)$ respectively.
\end{proof}

\begin{remark}
	\label{rem:coinciding_characteristic_distributions}
	Notice that, for any Jacobi manifold $(M,L,J)$, its characteristic distribution $\calK$ (cf.~Example~\ref{ex:jmap}) coincides with the image of the anchor map $\rho_{\J}$, i.e.~the characteristic distribution of the associated Jacobi algebroid $(J^1L,L)$.
\end{remark}

\subsection{The fiber-wise linear Jacobi structure on the adjoint bundle of a Jacobi algebroid}
\label{subsubsec:fiberwise_linear_Jacobi_structures}


Let us start specifying what we mean by fiber-wise linear Jacobi structure.
Thus, let $\pi: E \to M$ be a vector bundle, $L \to M$ a line bundle and $\pi^\ast L \to E$ the pull-back line bundle.
Set $E_L:=E\otimes L^\ast\to M$.
The vector bundle $E_L{}^\ast=E^\ast\otimes L\to M$ will be called the \emph{$L$-adjoint bundle of $E$}.
Then there is a natural $C^\infty(E)$-module isomorphism $\Gamma(\pi^\ast L)\simeq C^\infty (E) \otimes_{C^\infty(M)}\Gamma (L)$.
Accordingly, we have that
\begin{itemize}
	\item the Euler derivation $\Delta_{E,L}$ of the line bundle $\pi^\ast L\to E$ is defined by
	\begin{equation*}
	\Delta_{E,L}(f \lambda)=(Z_Ef) \lambda,
	\end{equation*}
	where $\lambda\in\Gamma(L)$, $f\in C^\infty(E)$, and $Z_E$ denotes the Euler vector field on $E$,
	\item sections $\lambda$ of $L \to M$ identify, by pull-back, with the \emph{fiber-wise constant} sections of $\pi^\ast L\to E$, i.e.~those $\ell\in\Gamma(\pi^\ast L)$ such that $\Delta_{E,L}\ell=0$,
	\item sections of the $L$-adjoint bundle $E_L{}^\ast = E^\ast \otimes L$ identify with the \emph{fiber-wise linear} sections of $\pi^\ast L\to E$, i.e.~those $\ell\in\Gamma(\pi^\ast L)$ such that $\Delta_{E,L}\ell=\ell$.
\end{itemize}
Now, a Jacobi structure $J=\{-,-\}$ on the pull-back line bundle $\pi^\ast L\to E$ is \emph{fiber-wise linear} if $\ldsb\Delta_{E,L},J\rdsb=-J$.
Moreover, in view of Equations~\eqref{eq:SJ_bracket_and_Gerstenhaber_product} and~\eqref{eq:Gerstenhaber_product}, it turns out that $J=\{-,-\}$ is fiberwise linear iff:
\begin{enumerate}[label=(\arabic*)]
	\item\label{enumitem:fiber-wise_linear_Jacobi_1}
	the Jacobi bracket $\{-,-\}$ between two fiber-wise linear sections  is fiber-wise linear as well,
	\item\label{enumitem:fiber-wise_linear_Jacobi_2}
	the Jacobi bracket $\{-,-\}$ between a fiber-wise constant section and a fiber-wise linear one is fiber-wise constant,
	\item\label{enumitem:fiber-wise_linear_Jacobi_3}
	the Jacobi bracket $\{-,-\}$ between two fiber-wise constant sections is zero.
\end{enumerate}

\begin{remark}
	\label{rem:lijac_1}
	Notice that condition~\ref{enumitem:fiber-wise_linear_Jacobi_3} above is actually redundant: it follows from conditions~\ref{enumitem:fiber-wise_linear_Jacobi_1} and~\ref{enumitem:fiber-wise_linear_Jacobi_2}.
	Moreover, if we concentrate our attention only to Poisson structures on $E$, i.e.~when $L=\bbR_M$ and $\{1,-\}=0$, then even condition~\ref{enumitem:fiber-wise_linear_Jacobi_2} above becomes redundant.
	However, as first noticed in~\cite[Remark 1]{iglesias2000some}, in the general case of a Jacobi structure on $\pi^\ast L\to E$, condition~\ref{enumitem:fiber-wise_linear_Jacobi_1} does not necessarily imply condition~\ref{enumitem:fiber-wise_linear_Jacobi_2}.
\end{remark}

\begin{remark}
	\label{rem:lijac_2}
	If $J=\{-,-\}$ is a Jacobi structure on $\pi^\ast L\to E$, then conditions~\ref{enumitem:fiber-wise_linear_Jacobi_1}--\ref{enumitem:fiber-wise_linear_Jacobi_3} above imply immediately that, for all $f\in C^\infty(M)$,
	\begin{equation*}
	X_\alpha f\in C^\infty(M),\qquad \textnormal{and}\qquad X_\lambda(f)=0,
	\end{equation*}
	where $\alpha\in\Gamma(E_L{}^\ast)$ is a fiber-wise linear section of $\pi^\ast L$, and $\lambda\in\Gamma(L)$ is a fiber-wise constant section of $\pi^\ast L$.
\end{remark}

\begin{proposition}[{\cite[Theorems 1--3]{iglesias2000some}}]
	\label{prop:lijac}
	Let $A\to M$ be a vector bundle, and $L\to M$ be a line bundle.
	Denote by $\pi$ the bundle map $A_L{}^\ast\to M$.
	Then there exists a canonical one-to-one correspondence between:
	\begin{itemize}
		\item Jacobi algebroid structures $([-,-],\rho,\nabla)$ on $(A,L)$, and
		\item fiberwise linear Jacobi structures $J=\{-,-\}$ on $\pi^\ast L\to A_L{}^\ast$.
	\end{itemize}
	Such one-to-one correspondence is established by the following relations
	\begin{equation}
	\label{eq:prop:lijac}
	\begin{aligned}
	\{\alpha,\beta \}&=[\alpha,\beta]_A,\\
	X_\alpha(f)&=\rho(\alpha)f,\\
	\{\alpha,\lambda\} &=\nabla_\alpha\lambda,
	\end{aligned}
	\end{equation}
	for all $\alpha,\beta\in\Gamma(A)$, $\lambda\in\Gamma(L)$, and $f\in C^\infty(M)$.
\end{proposition}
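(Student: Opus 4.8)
The plan is to establish the bijection by constructing each direction explicitly and invoking a uniqueness principle that forces the two constructions to be mutually inverse. The starting point is the bookkeeping of fiberwise data. Writing $E := A_L{}^\ast$, one has $E^\ast = A \otimes L^\ast = A_L$ and $E_L{}^\ast = E^\ast \otimes L = A$, so that, under the identifications recalled before the statement, the fiberwise constant sections of $\pi^\ast L \to E$ are exactly $\Gamma(L)$, the fiberwise linear sections are exactly $\Gamma(A)$, and the fiberwise linear functions on $E$ are exactly $\Gamma(A_L)$. Two facts drive everything: $C^\infty(E)$ is generated, as a $C^\infty(M)$-algebra, by $C^\infty(M)$ together with the fiberwise linear functions; and $\Gamma(\pi^\ast L)$ is generated, as a $C^\infty(E)$-module, by the fiberwise constant sections $\Gamma(L)$.

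First I would prove a uniqueness lemma: a fiberwise linear Jacobi structure $J = \{-,-\}$ is completely determined by the data in \eqref{eq:prop:lijac}. Indeed, $J$ is a biderivation, hence a first order bidifferential operator in each entry, so it is determined by the Hamiltonian derivations $\Delta_s = \{s,-\}$ for $s$ ranging over a generating set of sections; and each $\Delta_s$, being a derivation of the line bundle $\pi^\ast L$, is determined by its symbol $X_s$ on generators of $C^\infty(E)$ together with its values on generating sections. Using conditions \ref{enumitem:fiber-wise_linear_Jacobi_1}--\ref{enumitem:fiber-wise_linear_Jacobi_3} and Remark~\ref{rem:lijac_2} (so that $X_\lambda$ kills $C^\infty(M)$ and $X_\alpha$ preserves $C^\infty(M)$), one checks that all these data reduce precisely to $\{\alpha,\beta\}$, $\{\alpha,\lambda\}$, $\{\lambda,\mu\}=0$, and $X_\alpha|_{C^\infty(M)}$, i.e. to the right-hand sides of \eqref{eq:prop:lijac}. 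This simultaneously shows that $J$ recovers a well-defined triple $([-,-]_A,\rho,\nabla)$ (the target spaces being correct because $\{\alpha,\beta\}$ is fiberwise linear, $\{\alpha,\lambda\}$ is fiberwise constant, and $X_\alpha$ preserves $C^\infty(M)$) and that no two distinct fiberwise linear Jacobi structures induce the same triple.

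Then I would carry out the reverse construction. Given $([-,-]_A,\rho,\nabla)$, define $\{-,-\}$ on the generators by reading \eqref{eq:prop:lijac} from right to left, set $\{\lambda,\mu\}=0$, and extend to all sections by forcing $\{s,-\}$ to be a derivation for each generator $s$ (legitimate since on a line bundle every first order operator is automatically a derivation, Remark~\ref{rem:derivations_of_trivial_line_bundle}). Skew-symmetry on generators is immediate from the skew-symmetry of $[-,-]_A$ and from using $\{\lambda,\alpha\} := -\nabla_\alpha\lambda$. The crux is the Jacobi identity, which by Proposition~\ref{prop:J_as_MC_element} amounts to $\ldsb J,J\rdsb = 0$; since the Jacobiator is a first order multidifferential operator, it is enough to verify it on triples of generators. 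I expect this to split into the familiar cases: three fiberwise linear sections give the Jacobi identity of $[-,-]_A$; two linear and one constant give exactly the flatness/representation identity $\nabla_{[\alpha,\beta]_A} = [\nabla_\alpha,\nabla_\beta]$ together with the Leibniz compatibility of $\nabla$ with $\rho$; and the remaining cases (at most one linear section) are trivial because $\{\lambda,\mu\}=0$ and $X_\lambda$ annihilates $C^\infty(M)$. Fiberwise linearity, i.e. $\ldsb \Delta_{E,L}, J\rdsb = -J$, holds by construction since conditions \ref{enumitem:fiber-wise_linear_Jacobi_1}--\ref{enumitem:fiber-wise_linear_Jacobi_3} are built in.

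Finally, the uniqueness lemma shows the two assignments are mutually inverse, completing the bijection. The main obstacle is the fiberwise-linearity bookkeeping underlying the uniqueness lemma: one must keep $\nabla$ as genuinely independent data, since---as flagged in Remark~\ref{rem:lijac_1}---in the honest Jacobi case condition \ref{enumitem:fiber-wise_linear_Jacobi_1} does not force condition \ref{enumitem:fiber-wise_linear_Jacobi_2}, so the representation $\nabla$ cannot be reconstructed from the bracket $[-,-]_A$ and anchor $\rho$ alone (contrary to the Poisson situation). Consequently the verification of the mixed Jacobi-identity case, where the flatness of $\nabla$ appears, is the technically delicate point.
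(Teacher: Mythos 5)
Your proposal is correct and follows essentially the same route as the paper's proof: both rest on the observation that a fiberwise linear Jacobi structure on $\pi^\ast L\to A_L{}^\ast$ is completely determined by its values on fiberwise linear and fiberwise constant sections, and then read \eqref{eq:prop:lijac} in each direction, invoking Remarks~\ref{rem:lijac_1} and~\ref{rem:lijac_2} exactly where you do. The only difference is that you spell out the case-by-case verification of the Jacobi identity (and correctly flag the mixed case encoding flatness of $\nabla$ as the delicate point), which the paper leaves implicit.
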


\begin{proof}	
	Let us note preliminarily that a Jacobi structure $J=\{-,-\}$ on $\pi^\ast L\to A_L{}^\ast$ is completely determined by
	\begin{enumerate}
		\item the Jacobi bracket between fiber-wise linear sections,
		\item the Jacobi bracket between a fiber-wise linear section and a fiber-wise constant one,  and
		\item the Jacobi bracket between fiber-wise constant sections.
	\end{enumerate}
	Hence if we assume that $(A,L)$ possess a structure of Jacobi algebroid with Lie bracket $[-,-]_A$, anchor map $\rho$, and representation $\nabla$, then, in view of Remark~\ref{rem:lijac_1}, a fiber-wise linear Jacobi structure on $\pi^\ast L\to A_L{}^\ast$ is well-defined by reading Equations~\eqref{eq:prop:lijac} from the left to the right.
	
	Conversely, assume that $\pi^\ast L\to A_L{}^\ast$ possesses a fiber-wise linear Jacobi structure $J=\{-,-\}$.
	Read Equations~\eqref{eq:prop:lijac} from the right to the left to define a Lie bracket, an anchor map, and a flat connection. 
	In view of Remark~\ref{rem:lijac_2} it is immediate to get that the above operations form a well-defined Jacobi algebroid structure on $(A,L)$.	
\end{proof}

\begin{example}
	\label{ex:J1L}
	Let $L \to M$ be a line bundle and let $\der  L$ be its Atiyah algebroid.
	Since $(\der  L, L)$ is a Jacobi algebroid, Proposition~\ref{prop:lijac} provides a fiber-wise linear Jacobi structure on the $L$-adjoint bundle of $\der  L$, which is $J^1 L$.
	This is nothing but the Jacobi structure determined by the canonical contact structure on $J^1 L$ (see Example~\ref{ex:1jet}).
\end{example}

\begin{example}
	\label{ex:lijac1} (cf.~\cite[Theorem 1, \S 3 Example 5]{iglesias2000some})
	Let $(M, L, \{-,-\})$ be a Jacobi manifold.
	Since $(J^1 L, L)$ is a Jacobi algebroid, there is a fiber-wise linear Jacobi structure on the $L$-adjoint bundle of $J^1 L$ which is $\der  L$.
\end{example}

\section{Characterization of transitive Jacobi manifolds}
\label{sec:transitive}

In this section, following Kirillov~\cite{kirillov1976local}, we provide a complete characterization of transitive Jacobi manifolds $(M,L,J)$, according with the dimension of $M$.

\subsection{Odd-dimensional transitive Jacobi manifolds}
\label{subsec:odd-dim_transitive}

Let us start the characterization of transitive Jacobi manifolds $(M,L,J)$ analyzing the case when $\dim M$ is odd.

Fix a smooth manifold $M$ and a line bundle $L\to M$.
Let $\theta$ be a no-where zero $L$-valued $1$-form on $M$, and $C$ be a hyperplane distribution on $M$.
Assume that $\ker\theta=C$, so that $\theta$ induces an isomorphism $TM/C\simeq L$.
These data determine the \emph{curvature form} $\omega\in\Gamma(\wedge^2 C^\ast\otimes L)$, which is defined by $\omega(X,Y):=\theta[X,Y]$, for all $X,Y\in\Gamma(C)$.
The $1$-form $\theta$, and the corresponding distribution $C$ are said to be \emph{contact} if $\omega$ is non-degenerate, i.e.~$\omega^\flat:C\to C^\ast\otimes L$, $X\mapsto\omega(X,-)$, is a vector bundle isomorphism; if this is the case, then $\dim M$ is odd, and the inverse of $\omega^\flat$ will be denoted by $\omega^\sharp:C^\ast\otimes L\to C$.
A \emph{contact manifold} is a manifold $M$ equipped with a contact structure which is equivalently given by a contact $1$-form $\theta$ or a contact distribution.

It was first pointed out by Kirillov~\cite{kirillov1976local} that a transitive Jacobi structure is canonically associated with every contact structure as described in the next proposition.

\begin{proposition}
	\label{prop:contact_to_Jacobi}
	Let $L\to M$ be a line bundle.
	A transitive Jacobi structure on $L\to M$ is canonically associated with every $L$-valued contact form on $M$.
\end{proposition}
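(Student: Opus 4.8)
The plan is to realise the desired Jacobi structure as the ``inverse'' of a canonical nondegenerate closed Atiyah two-form manufactured from $\theta$, and then to reduce the Jacobi identity, via Proposition~\ref{prop:J_as_MC_element}, to the (automatic) closedness of that form. First I would repackage the contact datum inside the $L$-valued Cartan calculus of $\der L$. Composing the symbol map $\sigma:\der L\to TM$ with $\theta:TM\to L$ yields an Atiyah one-form $\hat\theta:=\theta\circ\sigma\in\Omega^1_L$, and I set $\varpi:=d_D\hat\theta\in\Omega^2_L$. Being exact, $\varpi$ is automatically $d_D$-closed. Two observations guide everything: since $\sigma(\mathbbm 1)=0$ we have $\iota_{\mathbbm 1}\hat\theta=\hat\theta(\mathbbm 1)=0$, whence, using that $\iota_{\mathbbm 1}$ is the contracting homotopy of the acyclic der-complex (so that $\calL_{\mathbbm 1}=[d_D,\iota_{\mathbbm 1}]=\id$ on $\Omega^\bullet_L$), one gets $\iota_{\mathbbm 1}\varpi=\calL_{\mathbbm 1}\hat\theta=\hat\theta$; and the restriction of $\varpi$ to $\sigma^{-1}(C)$ recovers the curvature $\omega$.

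Next I would establish the key pointwise linear-algebra lemma: the contact condition — nondegeneracy of $\omega$ on $C$ — is equivalent to nondegeneracy of $\varpi$ on the bundle $\der L$, whose rank $\dim M+1$ is even. Equivalently, $\varpi^\flat:\der L\to(\der L)^\ast\otimes L\cong J^1L$ is a vector bundle isomorphism. Inverting it produces a skew $L$-valued form $\widehat\Lambda{}_J:=\varpi^{-1}\in\Gamma(\wedge^2(J^1L)^\ast\otimes L)=\Der^2 L$, that is, a skew first-order bidifferential operator $J$ whose sharp map $\widehat\Lambda{}_J^\sharp=(\varpi^\flat)^{-1}:J^1L\to\der L$ is an isomorphism; by construction $\Delta_\lambda=\widehat\Lambda{}_J^\sharp(j^1\lambda)$.

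It then remains to check that $J$ is a Jacobi structure, i.e.\ that $\ldsb J,J\rdsb=0$, and that it is transitive. Transitivity is immediate: $X_\lambda=\sigma(\widehat\Lambda{}_J^\sharp(j^1\lambda))$, and since the sections $j^1\lambda$ together with $\gamma(df\otimes\lambda)=j^1(f\lambda)-fj^1\lambda$ span $J^1L$ as a $C^\infty(M)$-module while $\widehat\Lambda{}_J^\sharp$ is onto $\der L$ and $\sigma$ is onto $TM$, the Hamiltonian vector fields span $TM$, so $\calK=TM$.

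The main obstacle is the Jacobi identity. Here the guiding principle is the Jacobi analogue of the classical fact that the inverse of a nondegenerate closed two-form is a Poisson bivector: I would show that $\ldsb\varpi^{-1},\varpi^{-1}\rdsb$ equals, up to sharp-contractions with $\varpi^{-1}$, a multiple of $d_D\varpi$, which vanishes. Concretely this is a computation in the $L$-valued Cartan calculus on $\der L$, exploiting $\iota_{\mathbbm 1}\varpi=\hat\theta$ and $d_D\varpi=0$. As an alternative to the brute-force route, one may instead transport the tautological Jacobi algebroid structure of $(\der L,L)$ along the isomorphism $\varpi^\flat$ to $(J^1L,L)$ and verify that it satisfies the characterising relation of Proposition~\ref{prop:characterization_associated_Jacobi_algbd}, which identifies it as the Jacobi algebroid of an honest Jacobi structure. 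Turning the implication ``$d_D\varpi=0\Rightarrow\ldsb\varpi^{-1},\varpi^{-1}\rdsb=0$'' into a clean argument within first-order multidifferential calculus is the technical heart of the proof.
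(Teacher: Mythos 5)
Your route is genuinely different from the paper's. The paper never introduces the Atiyah $2$-form $\varpi=d_D\hat\theta$: it splits $\frakX(M)=\frakX_C\oplus\Gamma(C)$ via $\omega^\sharp$, observes that $\theta$ restricts to an inverse of the resulting map $X_{(-)}:\Gamma(L)\to\frakX_C$, and \emph{defines} $\{\lambda,\mu\}:=\theta[X_\lambda,X_\mu]$ as the pullback of the commutator of contact vector fields. The only thing left to check there is the first-order/derivation property, which follows from $X_{f\lambda}=fX_\lambda+\omega^\sharp((df)|_C\otimes\lambda)$; the Jacobi identity is free. Your approach instead inverts a nondegenerate closed Atiyah $2$-form, which buys a cleaner conceptual picture (it is the contact analogue of ``inverse of a symplectic form is Poisson'', and it essentially subsumes the equivalence $(1)\Leftrightarrow(3)$ of Proposition~\ref{prop:odd-dim_transitive}), at the price of making the Jacobi identity the hard step rather than an automatic one. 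Your preliminary observations ($\iota_{\mathbbm{1}}\varpi=\hat\theta$ via $\calL_{\mathbbm{1}}=\id$, the identification of $\varpi|_{\sigma^{-1}(C)}$ with $-\omega$, the equivalence of the two nondegeneracy conditions, and the transitivity argument) are all correct.

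The gap is that the Jacobi identity is announced but not proved: you state that $\ldsb\varpi^{-1},\varpi^{-1}\rdsb$ should reduce to $d_D\varpi=0$ and explicitly defer this as ``the technical heart''. As written, the proposal therefore does not establish that $J$ is a Jacobi structure. The step is genuinely closable, and more easily than by a brute-force computation with the Gerstenhaber product: setting $\Delta_\lambda:=\widehat\Lambda{}_J^\sharp(j^1\lambda)$, i.e.\ $\iota_{\Delta_\lambda}\varpi=d_D\lambda$, the Cartan identities give
\begin{equation*}
\iota_{[\Delta_\lambda,\Delta_\mu]}\varpi=[\calL_{\Delta_\lambda},\iota_{\Delta_\mu}]\varpi=\calL_{\Delta_\lambda}d_D\mu-\iota_{\Delta_\mu}\bigl(d_D\iota_{\Delta_\lambda}+\iota_{\Delta_\lambda}d_D\bigr)\varpi=d_D(\Delta_\lambda\mu),
\end{equation*}
using $d_D^2=0$ and $d_D\varpi=0$; nondegeneracy then yields $[\Delta_\lambda,\Delta_\mu]=\Delta_{\{\lambda,\mu\}}$ with $\{\lambda,\mu\}:=\Delta_\lambda\mu$, from which skew-symmetry and the Jacobi identity for $\{-,-\}$ follow at once from those of the commutator on $\Der L$ (this is also exactly the verification needed for your alternative via Proposition~\ref{prop:characterization_associated_Jacobi_algbd}). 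Until some such argument is supplied, the central claim of the proposition remains unproved in your write-up.
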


\begin{proof}
	Fix an $L$-valued contact $1$-form $\theta$ on $M$, with corresponding contact distribution $C$ and curvature form $\omega$.
	Denote by $\frakX_C\subset\frakX(M)$ the Lie subalgebra of contact vector fields of $(M,C)$, i.e.~those $X\in\frakX(M)$ such that $[X,\Gamma(C)]\subset\Gamma(C)$.
	The first order differential operator $\meanphi_{(-)}:\frakX(M)\to\Gamma(C^\ast\otimes L)$, $X\mapsto\meanphi_X$, is defined by $\meanphi_X(Y):=\theta[X,Y]$, for all $X\in\frakX(M)$ and $Y\in\Gamma(C)$, and fits in the following short exact sequence of $\bbR$-linear maps
	\begin{equation*}
	\begin{tikzcd}
	0\arrow[r]&\frakX_C\arrow[r, "\textnormal{incl}"]&\frakX(M)\arrow[r, "\meanphi_{(-)}"]&\Gamma(C^\ast\otimes L)\arrow[r]&0.
	\end{tikzcd}
	\end{equation*}
	Since the latter splits through $\omega^\sharp:\Gamma(C^\ast\otimes L)\lhook\joinrel\longrightarrow\Gamma(C)$, we get the direct sum decomposition of $\bbR$-vector spaces
	\begin{equation}
	\label{eq:proof:contact_to_Jacobi0}
	\frakX(M)=\frakX_C\oplus\Gamma(C).
	\end{equation}
	Accordingly there is a unique $\bbR$-linear monomorphism $X_{(-)}:\Gamma(L)\to\frakX(M)$, $\lambda\mapsto X_\lambda$ such that $X_\lambda\in\frakX_C$, and $\theta(X_\lambda)=\lambda$,
	for all $\lambda\in\Gamma(L)$.
	Additionally $\phi_{(-)}$ is a first order differential operator satisfying
	\begin{equation}
	\label{eq:proof:contact_to_Jacobi1}
	X_{f\lambda}=fX_\lambda+\omega^\sharp\left((df)|_C\otimes\lambda\right),
	\end{equation}
	for all $\lambda\in\Gamma(L)$ and $f\in C^\infty(M)$.
	Pulling back the Lie algebra structure on $\frakX_C\subset\frakX(M)$ through $\phi_{(-)}$, we obtain the following Lie bracket on $\Gamma(L)$
	\begin{equation*}
	\{-,-\}:\Gamma(L)\times\Gamma(L)\longrightarrow\Gamma(L),\quad (\lambda,\mu)\longmapsto\{\lambda,\mu\}:=\theta[X_\lambda,X_\mu],
	\end{equation*}
	which is actually a Jacobi structure on $L\to M$.
	Indeed, from~\eqref{eq:proof:contact_to_Jacobi1}, it follows that, for all $\lambda,\mu\in\Gamma(L)$ and $f\in C^\infty(M)$,
	\begin{equation*}
	\{\lambda,f\mu\}=\theta[X_\lambda,fX_\mu+\omega^\sharp((df)|_C\otimes\mu)]=\theta[X_\lambda,fX_\mu]=X_\lambda(f)\mu+f\{\lambda,\mu\},
	\end{equation*}
	i.e.~$\{\lambda,-\}$ is a derivation of $L\to M$, with symbol given by $X_\lambda$.
	Moreover such Jacobi structure on $L\to M$ is transitive because, according with~\eqref{eq:proof:contact_to_Jacobi1}, the $X_\lambda$'s generate the whole tangent space $TM$.
\end{proof}

The following complete characterization of odd-dimensional transitive Jacobi manifolds first appeared in~\cite[Section 4]{kirillov1976local}.

\begin{proposition}
	\label{prop:odd-dim_transitive}
	Let $(M,L,J)$ be a Jacobi manifold.
	The following conditions are equivalent:
	\begin{enumerate}[label=(\arabic*)]
		\item
		\label{enumitem:contact_to_Jacobi}
		there is an $L$-valued contact $1$-form on $M$ whose canonically associated Jacobi structure is $J$,
		\item
		\label{enumitem:odd-dim_transitive}
		the Jacobi manifold $(M,L,J)$ is transitive, and $M$ is odd-dimensional,
		\item
		\label{enumitem:non-deg_Jacobi_bi-derivation}
		the Jacobi structure $J$ on $L$, seen as the skew-symmetric bilinear form $\hat{\Lambda}_J\in\Gamma(\wedge^2(J^1L)^\ast\otimes L)$, is non-degenerate.
	\end{enumerate}	
\end{proposition}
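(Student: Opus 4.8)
The plan is to prove the three conditions equivalent through the cycle $(1)\Rightarrow(2)\Rightarrow(3)\Rightarrow(1)$, concentrating almost all the work in the last implication. The implication $(1)\Rightarrow(2)$ is essentially already done: Proposition~\ref{prop:contact_to_Jacobi} shows that the Jacobi structure canonically associated with an $L$-valued contact form is transitive, while oddness of $\dim M$ is forced by the very definition of a contact form, since a non-degenerate skew form on the hyperplane $C$ requires $\dim C$ even and hence $\dim M=\dim C+1$ odd.

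For $(2)\Rightarrow(3)$ I would run a rank-and-parity count. Writing $n=\dim M$, the sharp map $\hat\Lambda_J^\sharp\colon J^1L\to\der L$ is a morphism between bundles of the same rank $n+1$, and condition (3) is exactly pointwise injectivity of $\hat\Lambda_J^\sharp$, i.e. vanishing of the radical $R_x$ of the skew form $\hat\Lambda_{J,x}$. By Remark~\ref{rem:coinciding_characteristic_distributions} transitivity means that $\rho_J=\sigma\circ\hat\Lambda_J^\sharp$ is fibrewise surjective onto $TM$; since $\ker\sigma$ is the line spanned by the identity derivation $\mathbbm 1=\mathrm{id}_L$, the image of $\hat\Lambda_J^\sharp$ has dimension $n$ or $n+1$, so $\dim R_x\in\{0,1\}$. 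On the other hand the radical of any skew form on $J^1_xL$ has dimension $\equiv\dim J^1_xL=n+1\pmod 2$; as $n$ is odd this parity is even, so $\dim R_x=0$, which is (3).

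The hard part is $(3)\Rightarrow(1)$, where I must manufacture the contact form. Since $\hat\Lambda_J^\sharp$ is now an isomorphism, I set $\alpha_0:=(\hat\Lambda_J^\sharp)^{-1}(\mathbbm 1)\in\Gamma(J^1L)$ and define $\theta$ by $\gamma(\theta)=-\alpha_0$. Under the identification $\der L=\mathrm{Hom}(J^1L,L)$ the derivation $\mathbbm 1$ is the projection $p\colon J^1L\to L$, and skew-symmetry gives $p(\alpha_0)=\langle\mathbbm 1,\alpha_0\rangle=\langle\hat\Lambda_J^\sharp\alpha_0,\alpha_0\rangle=\hat\Lambda_J(\alpha_0,\alpha_0)=0$, so $\alpha_0\in\ker p=\operatorname{im}\gamma$ and $\theta\in\Omega^1(M,L)$ is well defined; it is nowhere zero because $\alpha_0$ never vanishes (else $\hat\Lambda_J^\sharp\alpha_0=\mathbbm 1\neq 0$ would fail). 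Using the co-symbol/symbol adjunction $\langle\square,\gamma(\eta)\rangle=\langle\sigma(\square),\eta\rangle$, a one-line computation with $X_\lambda=\sigma(\Delta_\lambda)$ and $\Delta_\lambda=\hat\Lambda_J^\sharp(j^1\lambda)$ yields $\theta(X_\lambda)=\langle\hat\Lambda_J^\sharp\alpha_0,j^1\lambda\rangle=\lambda$.

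The delicate point, which I expect to be the main obstacle, is showing that $C=\ker\theta$ is genuinely a contact distribution, i.e. that its curvature $\omega$ is non-degenerate. Here I would first note $\Lambda_J^\sharp(\theta)=\sigma(\hat\Lambda_J^\sharp\gamma(\theta))=\sigma(-\mathbbm 1)=0$ and, applying $\theta$ to \eqref{eq:Lambdash}, that $\operatorname{im}\Lambda_J^\sharp\subseteq C$; hence $\Lambda_J^\sharp$ descends to $\bar\Lambda\colon C^\ast\otimes L\to C$. Because $\mathbbm 1=-\hat\Lambda_J^\sharp(\alpha_0)$ lies in $\hat\Lambda_J^\sharp(\ker p)$, a dimension count gives $\operatorname{rank}\Lambda_J^\sharp=n-1=\dim C$, so $\bar\Lambda$ is surjective, hence an isomorphism. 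Finally, since the flows of the Hamiltonian derivations $\Delta_\lambda$ are Jacobi automorphisms they preserve $\hat\Lambda_J^\sharp$ and $\mathbbm 1$, hence $\theta$ and $C$, so the $X_\lambda$ are contact vector fields; using $\theta(X_\lambda)=\lambda$ one computes for $Y\in\Gamma(C)$ that $\omega(\Lambda_J^\sharp(df\otimes\lambda),Y)=Y(f)\lambda$, which identifies $\omega^\flat$ with $\bar\Lambda^{-1}$ and proves $\omega$ non-degenerate. With $\theta$ contact and $\theta(X_\lambda)=\lambda$, the decomposition in Proposition~\ref{prop:contact_to_Jacobi} identifies these $X_\lambda$ with the contact construction's, and $[\Delta_\lambda,\Delta_\mu]=\Delta_{\{\lambda,\mu\}}$ from \eqref{eq:iso} gives $\theta[X_\lambda,X_\mu]=\theta(X_{\{\lambda,\mu\}})=\{\lambda,\mu\}$, so the Jacobi structure associated with $\theta$ is exactly $J$, closing the cycle.
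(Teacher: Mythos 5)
Your proof is correct, and for the substantive implication $(3)\Rightarrow(1)$ it enters the construction from a different door than the paper. The paper first builds the hyperplane distribution geometrically, as $C:=\operatorname{im}\Lambda_J^\sharp$ (item~\ref{enumitem:proof:odd-dim_transitive_b} of its proof), then transports $\Lambda_J$ to a form $\omega$ on $C$, and only afterwards recovers $\theta$ from the splitting $\frakX(M)=\Gamma(C)\oplus\underline{\smash{\mathfrak{ham}}}(M,L,J)$ induced by the Spencer sequence, characterizing it by $\ker\theta=C$ and $\theta(X_\lambda)=\lambda$. You instead produce $\theta$ in one stroke as $\gamma(\theta)=-(\hat\Lambda_J^\sharp)^{-1}(\mathbbm 1)$, using skew-symmetry to check that the preimage of the identity derivation lies in $\ker p=\operatorname{im}\gamma$, and then recover $C=\ker\theta$ and show a posteriori that it equals $\operatorname{im}\Lambda_J^\sharp$ by the rank count. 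What your route buys is an explicit closed formula for the contact form (the ``$\hat\Lambda_J$-dual of $\mathbbm 1$''), which makes the identities $\theta(X_\lambda)=\lambda$ and $\Lambda_J^\sharp(\theta)=0$ one-line computations and replaces the paper's appeal to the exact-sequence splitting by pure linear algebra in $J^1L$; your observation that the flows of the $\Delta_\lambda$ preserve both $\hat\Lambda_J$ and $\mathbbm 1$, hence $\theta$ and $C$, is also a clean substitute for the paper's unproved claims (i)--(ii) about the curvature form and the characterization of Hamiltonian vector fields. What the paper's route buys is the extra structural information recorded along the way (the commutative diagram relating the Spencer sequence to $0\to\Gamma(C)\to\frakX(M)\to\underline{\smash{\mathfrak{ham}}}\to 0$), which it reuses elsewhere. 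Your explicit parity argument for $(2)\Rightarrow(3)$ is exactly the ``counting dimensions'' the paper leaves implicit. No gaps.
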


\begin{proof}
	Implication $(1)\Longrightarrow(2)$ is contained within the discussion leading to Proposition~\ref{prop:contact_to_Jacobi}.
	Implication $(2)\Longrightarrow(3)$, up to counting dimensions, is a direct consequence of the skew-symmetry of $\hat{\Lambda}_J$.
	Hence it only remains to check that $(3)\Longrightarrow (1)$.
	
	Assume that $\hat{\Lambda}_J$ is non-degenerate.
	From the skew-symmetry of $\hat{\Lambda}_J$ and $\Lambda_J$, and the relation $\Lambda_J^\sharp=\sigma\circ\hat{\Lambda}_J^\sharp\circ\gamma$, with $\sigma$ surjective and $\gamma$ injective, we get that:
	\begin{enumerate}[label=(\alph*)]
		\item
		\label{enumitem:proof:odd-dim_transitive_a}
		the image of $\gamma:T^\ast M\otimes L\hookrightarrow J^1L$ contains the kernel of $\sigma\circ\hat{\Lambda}_J^\sharp:J^1L\twoheadrightarrow TM$,
		\item
		\label{enumitem:proof:odd-dim_transitive_b}
		the image of $\Lambda_J^\sharp:T^\ast M\otimes L\to TM$ is an hyperplane distribution $C$ on $M$.
	\end{enumerate}
	In view of~\ref{enumitem:proof:odd-dim_transitive_b}, there is a non-degenerate $\omega\in\Gamma(\wedge^2 C^\ast\otimes L)$ which is defined by setting
	$\omega(\Lambda_J^\sharp(\alpha),\Lambda_J^\sharp(\beta))=\Lambda_J(\alpha,\beta)$, for all $\alpha,\beta\in\Omega^1(M,L)$.
	From~\ref{enumitem:proof:odd-dim_transitive_a} it follows that the Lie algebra morphism $X_{(-)}:=\sigma\circ\hat{\Lambda}_J^\sharp\circ j^1:\Gamma(L)\to\underline{\smash{\mathfrak{ham}}}(M,L,J)$, $\lambda\mapsto X_\lambda$, is actually an isomorphism, with
	\begin{equation}
	\label{eq:proof:odd-dim_transitive1}
	X_{f\lambda}=fX_\lambda\Mod C,
	\end{equation}
	for all $f\in C^\infty(M)$, and $\lambda\in\Gamma(L)$.
	Moreover the Spencer sequence $0\to\Omega^1(M,L)\to\Gamma(J^1L)\to\Gamma(L)\to 0$, which is split by $j^1:\Gamma(L)\to\Gamma(J^1L)$, gives rise to a new short exact sequence of $\bbR$-linear maps $0\to\Gamma(C)\to\frakX(M)\to\underline{\smash{\mathfrak{ham}}}(M,L,J)\to 0$, which splits through the inclusion map $\underline{\smash{\mathfrak{ham}}}(M,L,J)\to\frakX(M)$, and fits in the following commutative diagram
	\begin{equation*}
	\begin{tikzcd}
	0\arrow[r]&\Omega^1(M,L)\arrow[d, two heads, "\Lambda_J^\sharp"]\arrow[r, "\gamma"]&\Gamma(J^1L)\arrow[d, two heads, "\sigma\circ \hat{\Lambda}_J^\sharp"]\arrow[r]&\Gamma(L)\arrow[d, hook, two heads, "X_{(-)}"]\arrow[r]&0\\
	0\arrow[r]&\Gamma(C)\arrow[r]&\frakX(M)\arrow[r, dotted]&\underline{\smash{\mathfrak{ham}}}(M,L,J)\arrow[r]&0
	\end{tikzcd}
	\end{equation*}
	Hence, in particular, we have the following direct sum decomposition of $\bbR$-vector spaces
	\begin{equation}
	\label{eq:proof:odd-dim_transitive2}
	\frakX(M)=\Gamma(C)\oplus\underline{\smash{\mathfrak{ham}}}(M,L,J).
	\end{equation}
	In view of~\eqref{eq:proof:odd-dim_transitive1} and~\eqref{eq:proof:odd-dim_transitive2} there is a unique $\theta\in\Omega^1(M,L)$ such that $\ker\theta=C$, and $\theta(X_\lambda)=\lambda$ for all $\lambda\in\Gamma(L)$.
	Further a straightforward computation, involving the Jacobi algebroid structure on $(J^1L,L)$ canonically associated with $J$, allows to get that
	\begin{enumerate}[label=(\roman*)]
		\item\label{enumitem:proof:odd-dim_transitive_i} $\omega$ is the curvature form associated with $\theta$,
		\item\label{enumitem:proof:odd-dim_transitive_ii} $X\in\underline{\smash{\mathfrak{ham}}}(M,L,J)$ if and only if $\theta[X,\Gamma(C)]=0$.
	\end{enumerate}
	Finally~\ref{enumitem:proof:odd-dim_transitive_i} and~\ref{enumitem:proof:odd-dim_transitive_ii} imply that $\theta$ is an $L$-valued contact form on $M$, and $J$ coincides with the Jacobi structure on $L\to M$ canonically associated with $\theta$ on the basis of Proposition~\ref{prop:contact_to_Jacobi}.
\end{proof}

\subsection{Even-dimensional transitive Jacobi manifolds}
\label{subsec:even-dim_transitive}

In this section we complete the characterization of the transitive Jacobi manifolds $(M,L,J)$ considering the case when $\dim M$ is even.

Fix a smooth manifold $M$ and a line bundle $L\to M$.
Let $\nabla$ be a flat $TM$-connection in $L\to M$, and denote by $(\Omega^\bullet(M,L),d_\nabla)$ the de Rham complex of the tangent Lie algebroid $TM\to M$ with values in its representation $\nabla$ in $L\to M$.
Let $\omega$ an $L$-valued $2$-form on $M$, and denote by $\omega^\flat:TM\to T^\ast M\otimes L$ the vector bundle morphism defined by $\omega^\flat(X):=\omega(X,-)$, for all $X\in\frakX(M)$.
The pair $(\nabla,\omega)$ is said to be a \emph{locally conformal symplectic} (or \emph{lcs} for short) \emph{structure} on $L\to M$ if $d_\nabla\omega=0$, and $\omega$ is non-degenerate, i.e.~$\omega^\flat:TM\to T^\ast M\otimes L$ is a vector bundle isomorphism.
If this is the case, then $\dim M$ is even, and the inverse of $\omega^\flat$ will be denoted by $\omega^\sharp:T^\ast M\otimes L\to TM$.
A \emph{lcs manifold} $(M,L,\nabla,\omega)$ is a manifold $M$ equipped with a line bundle $L\to M$, and an lcs structure $(\nabla,\omega)$ over it.

\begin{remark}
	\label{rem:Vaisman_lcs_structures}
	Let $M$ be a manifold.
	Recall that a flat connection in the trivial line bundle $\bbR_M$ is the same thing as a closed $1$-form on $M$.
	Hence an lcs structure on $\bbR_M\to M$ is nothing but a pair $(\omega,\theta)$, with $\omega\in\Omega^2(M)$ and $\theta\in\Omega^1(M)$, such that $d\theta=0$ and $d\omega+\omega\wedge\theta=0$.
	In this way we recover, as a special case of the definition above, Vaisman's notion of lcs structure on a manifold $M$ (cf.~\cite{vaisman1985locally}).
\end{remark}

The following proposition shows that, as first pointed out by Kirillov~\cite{kirillov1976local}, a transitive Jacobi structure is canonically associated with every lcs structure.

\begin{proposition}
	\label{prop:lcs_to_Jacobi}
	Let $L\to M$ be a line bundle.
	A transitive Jacobi structure on $L\to M$ is canonically associated with every lcs structure on $L\to M$.
\end{proposition}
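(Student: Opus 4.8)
The plan is to mirror the odd-dimensional construction of Proposition~\ref{prop:contact_to_Jacobi}, with the contact $1$-form replaced by the lcs data $(\nabla,\omega)$. Fix an lcs structure $(\nabla,\omega)$ on $L\to M$ and use non-degeneracy to invert $\omega^\flat$ into $\omega^\sharp:T^\ast M\otimes L\to TM$. For each $\lambda\in\Gamma(L)$ I would define the \emph{Hamiltonian vector field} $X_\lambda:=\omega^\sharp(d_\nabla\lambda)\in\frakX(M)$, equivalently $\iota_{X_\lambda}\omega=d_\nabla\lambda$, and set
\[
\{\lambda,\mu\}:=\iota_{X_\lambda}d_\nabla\mu=\omega(X_\mu,X_\lambda).
\]
A one-line contraction using $d_\nabla(f\mu)=df\otimes\mu+f\,d_\nabla\mu$ gives $\{\lambda,f\mu\}=X_\lambda(f)\mu+f\{\lambda,\mu\}$, so $\{\lambda,-\}$ is a derivation of $L$ with symbol $X_\lambda$; and the relation $\langle\alpha,\omega^\sharp\beta\rangle=-\langle\beta,\omega^\sharp\alpha\rangle$ (a restatement of the skew-symmetry of $\omega$) gives $\{\mu,\lambda\}=-\{\lambda,\mu\}$. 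Hence $J:=\{-,-\}$ is a skew-symmetric first-order bi-differential operator, i.e.\ $J\in\Der^2 L$, and by Proposition~\ref{prop:J_as_MC_element} it only remains to verify the Jacobi identity (equivalently $\ldsb J,J\rdsb=0$).

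The crux is the identity $X_{\{\lambda,\mu\}}=[X_\lambda,X_\mu]$, asserting that $\lambda\mapsto X_\lambda$ sends the bracket to the commutator of vector fields. I would prove it through the $L$-valued Cartan calculus of the de Rham complex $(\Omega^\bullet(M,L),d_\nabla)$ of the tangent Lie algebroid with values in $\nabla$ (the analogue, for this algebroid, of the calculus in Section~\ref{sec:der_complex}), using freely $\calL^\nabla_X=[\iota_X,d_\nabla]$, $\iota_{[X,Y]}=[\calL^\nabla_X,\iota_Y]$ and $d_\nabla^2=0$. First, since $\iota_{X_\lambda}\omega=d_\nabla\lambda$,
\[
\calL^\nabla_{X_\lambda}\omega=\iota_{X_\lambda}d_\nabla\omega+d_\nabla\iota_{X_\lambda}\omega=0+d_\nabla^2\lambda=0,
\]
where both closedness $d_\nabla\omega=0$ and flatness $d_\nabla^2=0$ enter. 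Then
\[
\iota_{[X_\lambda,X_\mu]}\omega=\calL^\nabla_{X_\lambda}\iota_{X_\mu}\omega-\iota_{X_\mu}\calL^\nabla_{X_\lambda}\omega=\calL^\nabla_{X_\lambda}d_\nabla\mu=d_\nabla\iota_{X_\lambda}d_\nabla\mu=d_\nabla\{\lambda,\mu\}=\iota_{X_{\{\lambda,\mu\}}}\omega,
\]
and non-degeneracy of $\omega$ yields the claim. This Cartan-calculus step, where both defining conditions of the lcs structure are consumed, is the main obstacle.

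With this in hand the Jacobi identity becomes formal. By definition $\{\lambda,-\}=\nabla_{X_\lambda}$ as operators on $\Gamma(L)$, since $\{\lambda,\nu\}=\langle d_\nabla\nu,X_\lambda\rangle=\nabla_{X_\lambda}\nu$. Using the morphism property and then the flatness of $\nabla$ (so that $\nabla_{[X,Y]}=[\nabla_X,\nabla_Y]$),
\[
\{\{\lambda,\mu\},\nu\}=\nabla_{X_{\{\lambda,\mu\}}}\nu=\nabla_{[X_\lambda,X_\mu]}\nu=[\nabla_{X_\lambda},\nabla_{X_\mu}]\nu=\{\lambda,\{\mu,\nu\}\}-\{\mu,\{\lambda,\nu\}\},
\]
which is exactly the Jacobi identity. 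Equivalently, one could feed $X_\lambda,X_\mu,X_\nu$ into the Koszul expansion of $0=(d_\nabla\omega)(X_\lambda,X_\mu,X_\nu)$ and read off $2\,\Jac(J)=0$; both routes rest on the same two facts.

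Finally, for transitivity I would invoke the analogue of~\eqref{eq:Lambdash}: since $X_{f\lambda}-fX_\lambda=\omega^\sharp(df\otimes\lambda)$, and at each point the elements $df\otimes\lambda$ span $(T^\ast M\otimes L)_x$ while $\omega^\sharp$ is an isomorphism, the Hamiltonian vector fields span $TM$ pointwise. Thus the characteristic distribution is all of $TM$, so the canonically associated Jacobi structure $J$ is transitive, completing the proof.
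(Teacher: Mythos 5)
Your proof is correct and follows essentially the same construction as the paper: the same Hamiltonian vector fields $X_\lambda=\omega^\sharp(d_\nabla\lambda)$, the same bracket $\{\lambda,\mu\}=\nabla_{X_\lambda}\mu=\omega(X_\mu,X_\lambda)$ with the same derivation and skew-symmetry checks, and the same transitivity argument from $X_{f\lambda}-fX_\lambda=\omega^\sharp(df\otimes\lambda)$ together with non-degeneracy of $\omega$. The only (cosmetic) difference is that you verify the Jacobi identity by first establishing $X_{\{\lambda,\mu\}}=[X_\lambda,X_\mu]$ via Cartan calculus and then invoking flatness of $\nabla$, whereas the paper directly computes the Koszul expansion of $d_\nabla\omega(X_\lambda,X_\mu,X_\nu)$ and identifies it with the Jacobiator --- the equivalent route you yourself note at the end, resting on the same two facts $d_\nabla\omega=0$ and $d_\nabla^2=0$.
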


\begin{proof}
	Fix an lcs structure $(\nabla,\omega)$ on $L\to M$.
	Since $\omega$ is non-degenerate, for any section $\lambda\in\Gamma(L)$ , the associated \emph{Hamiltonian vector field} $X_\lambda\in\frakX(M)$ is well-defined by $X_\lambda:=\omega^\sharp(d_\nabla\lambda)$.
	Notice that the map $X_{(-)}:\Gamma(L)\to\frakX(M)$, $\lambda\mapsto X_\lambda$, is a first order differential operator; indeed it is $\bbR$-linear, and satisfies, in particular,
	\begin{equation}
	\label{eq:proof:lcs_to_Jacobi1}
	X_{f\lambda}=fX_\lambda+\omega^\sharp(df\otimes\lambda),
	\end{equation}
	for all $\lambda\in\Gamma(L)$ and $f\in C^\infty(M)$.
	Accordingly a skew-symmetric $\bbR$-bilinear map $\{-,-\}:\Gamma(L)\times\Gamma(L)\longrightarrow\Gamma(L)$ is defined by
	\begin{equation}
	\label{eq:proof:lcs_to_Jacobi2}
	\{\lambda,\mu\}:=\nabla_{X_\lambda}\mu=\omega(X_\mu,X_\lambda),
	\end{equation}
	for all $\lambda,\mu\in\Gamma(L)$.
	Indeed $\{-,-\}$ is a bi-derivation because, for all $\lambda\in\Gamma(L)$, Equation~\eqref{eq:proof:lcs_to_Jacobi2} tells us, in particular, that $\{\lambda,-\}$ is a derivation of $L\to M$ with symbol given by $X_\lambda$.
	Moreover $\{-,-\}$ is a Jacobi structure on $L\to M$.
	Indeed, just using the flatness of $\nabla$ and Equation~\eqref{eq:proof:lcs_to_Jacobi2}, we obtain, for all $\lambda,\mu,\nu\in\Gamma(L)$, the following identity
	\begin{equation}
	\label{eq:closedness=Jacobi_identity}
	\begin{aligned}
	d_\nabla\omega(X_\lambda,X_\mu,X_\nu)&=\nabla_{X_\lambda}(\omega(X_\mu,X_\nu))-\nabla_{X_\mu}(\omega(X_\lambda,X_\nu))+\nabla_{X_\nu}(\omega(X_\lambda,X_\mu))\\
	&\phantom{=}-\omega([X_\lambda,X_\mu],X_\nu)+\omega([X_\lambda,X_\nu],X_\mu)-\omega([X_\mu,X_\nu],X_\lambda)\\
	&=\nabla_{X_\lambda}\nabla_{X_\nu}\mu+\nabla_{X_\mu}\nabla_{X_\lambda}\nu+\nabla_{X_\nu}\nabla_{X_\mu}\lambda\\
	&\phantom{=}+{[\nabla_{X_\lambda},\nabla_{X_\mu}]}\nu-{[\nabla_{X_\lambda},\nabla_{X_\nu}]}\mu+{[\nabla_{X_\mu},\nabla_{X_\nu}]}\lambda\\
	&=\{\lambda,\{\mu,\nu\}\}+\{\mu,\{\nu,\lambda\}\}+\{\nu,\{\lambda,\mu\}\}.
	\end{aligned}
	\end{equation}
	Hence, from~\eqref{eq:closedness=Jacobi_identity}, it turns out that the Jacobi identity for $\{-,-\}$ is just a rephrasing of $d_\nabla\omega=0$.
	Finally such Jacobi structure $\{-,-\}$ on $L\to M$ is transitive because of~\eqref{eq:proof:lcs_to_Jacobi1} and of $\omega$ being non-degenerate.
\end{proof}

The following complete characterization of even-dimensional transitive Jacobi manifolds first appeared in~\cite[Section 3]{kirillov1976local}.

\begin{proposition}
	\label{prop:even-dim_transitive}
	Let $(M,L,J=\{-,-\})$ be a Jacobi manifold.
	The following conditions are equivalent:
	\begin{enumerate}
		\item
		\label{enumitem:lcs_to_Jacobi}
		there is an lcs structure on $L\to M$ whose canonically associated Jacobi structure is $J$,
		\item
		\label{enumitem:even-dim_transitive}
		the Jacobi manifold $(M,L,J)$ is transitive, and $M$ is even-dimensional,
		\item
		\label{enumitem:non-deg_Jacobi_bi-symbol}
		the bi-symbol of the Jacobi bi-derivation $J$, seen as the skew-symmetric bilinear form $\Lambda_J\in\Gamma(\wedge^2(T^\ast M\otimes L)^\ast\otimes L)$, is non-degenerate.
	\end{enumerate}
\end{proposition}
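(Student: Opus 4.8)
The plan is to prove the three conditions equivalent cyclically, $(1)\Rightarrow(2)\Rightarrow(3)\Rightarrow(1)$, in close analogy with the odd-dimensional Proposition~\ref{prop:odd-dim_transitive}, the r\^oles of $\hat\Lambda_J$ and $\Lambda_J$ being now interchanged. The implication $(1)\Rightarrow(2)$ is immediate: if $J$ is the Jacobi structure canonically associated with an lcs structure $(\nabla,\omega)$, then transitivity was already established at the end of the proof of Proposition~\ref{prop:lcs_to_Jacobi} (the Hamiltonian vector fields span $TM$ by~\eqref{eq:proof:lcs_to_Jacobi1} and non-degeneracy of $\omega$), while $\dim M$ is even because $\omega$ is a non-degenerate $L$-valued $2$-form.

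For $(2)\Rightarrow(3)$ I would argue by a rank count on the radical of $\hat\Lambda_J$. Set $n=\dim M$, so $J^1L$ has rank $n+1$. Transitivity means $\rho_J=\sigma\circ\hat\Lambda_J^\sharp$ is surjective onto $TM$ (cf.~\eqref{eq:prop:associated_Jacobi_algbd_anchor} and Remark~\ref{rem:coinciding_characteristic_distributions}). Since $\sigma$ has one-dimensional kernel $\bbR\mathbbm{1}$ and the rank $r$ of the skew form $\hat\Lambda_J$ is even, surjectivity forces $r\in\{n,n+1\}$; as $n$ is even and $n+1$ odd, necessarily $r=n$ and moreover $\mathbbm{1}\notin\im\hat\Lambda_J^\sharp$. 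Hence the radical $R=\ker\hat\Lambda_J^\sharp$ is one-dimensional, and because $\im\hat\Lambda_J^\sharp=\Ann(R)$ does not contain $\mathbbm{1}$ (which annihilates $\im\gamma$ and satisfies $\langle\mathbbm{1},j^1\lambda\rangle=\lambda$), a generator $\rho_0$ of $R$ has non-zero $1$-jet component in the Spencer decomposition $\rho_0=j^1\lambda_0+\gamma\eta_0$, i.e.~$R\cap\gamma(T^\ast M\otimes L)=0$. Since $\Lambda_J=\gamma^\ast\hat\Lambda_J$ and $\gamma(T^\ast M\otimes L)$ is a hyperplane transverse to the one-dimensional radical, the restricted form $\Lambda_J$ is non-degenerate.

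For $(3)\Rightarrow(1)$ I would reconstruct the lcs structure explicitly. Assuming $\Lambda_J$ non-degenerate, $\Lambda_J^\sharp$ is a vector bundle isomorphism; set $\omega^\sharp:=\Lambda_J^\sharp$, $\omega^\flat:=(\Lambda_J^\sharp)^{-1}$, and let $\omega\in\Omega^2(M,L)$ be the non-degenerate $L$-valued $2$-form with these musical morphisms (skew-symmetry of $\omega$ following from that of $\Lambda_J$). Define the candidate connection by $\nabla_X:=\hat\Lambda_J^\sharp(\gamma(\omega^\flat X))$ for $X\in\frakX(M)$; this is a derivation of $L$ with symbol $\sigma(\hat\Lambda_J^\sharp\gamma\omega^\flat X)=\Lambda_J^\sharp\omega^\flat X=X$, hence a genuine $TM$-connection in $L$. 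Two identities drive everything. First, using $\langle\delta,\gamma\theta\rangle=\langle\sigma\delta,\theta\rangle$ and $\hat\Lambda_J^\sharp(j^1\lambda)=\Delta_\lambda$, a short computation gives $\nabla_X\lambda=\omega(X_\lambda,X)$, equivalently $X_\lambda=\omega^\sharp(d_\nabla\lambda)$, so the $J$-Hamiltonian vector fields coincide with the lcs ones. Second, $\hat\Lambda_J^\sharp(\gamma d_\nabla\lambda)$ and $\Delta_\lambda=\hat\Lambda_J^\sharp(j^1\lambda)$ are derivations with the same symbol $X_\lambda$, so their difference is a multiple of $\mathbbm{1}$ lying in $\im\hat\Lambda_J^\sharp=\Ann(R)$; since $\mathbbm{1}\notin\Ann(R)$ (as in the previous paragraph) this multiple vanishes, whence $\gamma(d_\nabla\lambda)-j^1\lambda\in R$ and therefore $\nabla_{X_\lambda}\mu=\hat\Lambda_J(\gamma d_\nabla\lambda,j^1\mu)=\hat\Lambda_J(j^1\lambda,j^1\mu)=\{\lambda,\mu\}$.

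It remains to verify that $(\nabla,\omega)$ is genuinely an lcs structure and that its associated Jacobi structure is $J$; this last point is the crux. Flatness of $\nabla$ I would get from the curvature being tensorial: by transitivity the Hamiltonian vector fields span $TM$ pointwise, and for them the second identity above together with $[X_\lambda,X_\mu]=X_{\{\lambda,\mu\}}$ (see~\eqref{eq:iso}) yields $R_\nabla(X_\lambda,X_\mu)\nu=\{\lambda,\{\mu,\nu\}\}-\{\mu,\{\lambda,\nu\}\}-\{\{\lambda,\mu\},\nu\}$, which vanishes by the Jacobi identity for $\{-,-\}$. With $\nabla$ now flat, the computation~\eqref{eq:closedness=Jacobi_identity} applies verbatim and gives $d_\nabla\omega(X_\lambda,X_\mu,X_\nu)=\{\lambda,\{\mu,\nu\}\}+\{\mu,\{\nu,\lambda\}\}+\{\nu,\{\lambda,\mu\}\}=0$; since $d_\nabla\omega$ is tensorial and the $X_\lambda$ span $TM$, we conclude $d_\nabla\omega=0$. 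Thus $(\nabla,\omega)$ is lcs, and by the second identity its associated bracket $\nabla_{X_\lambda}\mu$ equals $\{\lambda,\mu\}$, so its associated Jacobi structure is exactly $J$. I expect the main obstacle to be precisely the derivation of flatness and closedness from the Jacobi identity, i.e.~organizing the argument so that the tensoriality of $R_\nabla$ and $d_\nabla\omega$ lets one test them only on the spanning Hamiltonian vector fields, where the two structural identities reduce everything to $\Jac(J)=0$.
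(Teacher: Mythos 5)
Your proof is correct, and its overall architecture — the cyclic scheme, the dimension/rank count for $(2)\Rightarrow(3)$, the reconstruction of $\omega$ from $(\Lambda_J^\sharp)^{-1}$ and of $\nabla$ from $\widehat\Lambda{}_J^\sharp\circ\gamma\circ\omega^\flat$, and the reduction of $d_\nabla\omega=0$ to the Jacobi identity via~\eqref{eq:closedness=Jacobi_identity} — matches the paper's. (Your $\nabla$ coincides with the paper's, which is defined as the inverse of $\sigma$ restricted to $\im\widehat\Lambda{}_J^\sharp$ using the splitting $D^1L=\ker\sigma\oplus\im\widehat\Lambda{}_J^\sharp$; your formula is just the explicit inverse.) The one place where you genuinely diverge is the flatness of $\nabla$: the paper deduces it structurally, from the fact that $\widehat\Lambda{}_J^\sharp:(J^1L,L)\to(DL,L)$ is a Jacobi algebroid morphism, so that $\im\widehat\Lambda{}_J^\sharp$ is a Lie subalgebroid of $DL$ closed under commutators, whence $[\nabla_X,\nabla_Y]=\nabla_{[X,Y]}$ by uniqueness in the splitting. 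You instead use tensoriality of the curvature, the fact that every tangent vector is the value of a Hamiltonian vector field (so it suffices to test on the $X_\lambda$'s), the identity $\nabla_{X_\lambda}=\Delta_\lambda$, and $[X_\lambda,X_\mu]=X_{\{\lambda,\mu\}}$, reducing everything to $\operatorname{Jac}(J)=0$. Both arguments bottom out in the Jacobi identity; yours is more elementary and self-contained, the paper's is shorter once the algebroid machinery of Proposition~\ref{prop:associated_Jacobi_algbd} is in place. One small presentational point: in $(3)\Rightarrow(1)$ you invoke $\mathbbm{1}\notin\im\widehat\Lambda{}_J^\sharp$ ``as in the previous paragraph,'' but that paragraph ran under hypothesis $(2)$; you should either note that $(3)$ implies $(2)$ directly (surjectivity of $\Lambda_J^\sharp$ gives transitivity, non-degeneracy of a skew form gives even dimension), or redo the count under $(3)$ — either way the fact holds, so this is cosmetic rather than a gap.
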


\begin{proof}
	The implication $(1)\Longrightarrow(2)$ is contained within the discussion preceding Proposition~\ref{prop:lcs_to_Jacobi}.
	The implication $(2)\Longrightarrow(3)$, up to counting dimensions, follows directly from the skew-symmetry of $\Lambda_J$ and the injectivity of the co-symbol map $\gamma:T^\ast M\otimes L\to J^1L$.
	Hence it only remains to prove that $(3)\Longrightarrow(1)$.
	
	Assume that $\Lambda_J$ is non-degenerate.
	Hence there is a non-degenerate $\omega\in\Omega^2(M,L)$ such that $\omega^\flat:TM\longrightarrow T^\ast M\otimes L$ is the inverse of $\Lambda_J^\sharp:T^\ast M\otimes L\longrightarrow TM$.
	Moreover now, through the skew-symmetry of $\hat{\Lambda}_J$ and $\Lambda_J$, and the relation $\Lambda_J^\sharp=\sigma\circ\hat{\Lambda}_J^\sharp\circ\gamma$, we also get that $D^1L$ is the direct sum of the $C^\infty(M)$-modules $\ker\sigma=\langle\mathbbm{1}\rangle$ and $\im\hat{\Lambda}^\sharp_J$
	\begin{equation*}
	D^1L=\ker\sigma\oplus\im\hat{\Lambda}^\sharp_J.
	\end{equation*}
	Accordingly there exists a $TM$-connection $\nabla$ in $L\to M$ well-defined by $\nabla_{\sigma\Delta}=\Delta$, for all $\Delta\in\im\hat{\Lambda}_J^\sharp$.
	Moreover such connection $\nabla$ is flat because $\im\hat{\Lambda}_J^\sharp\subset D^1L$ is a Lie subalgebroid.
	Indeed, by its very definition, $\hat{\Lambda}_J^\sharp:J^1L\to D^1L$ is a Jacobi algebroid morphism from $(J^1L,L)$ to $(D^1L,L)$, where $(J^1L,L)$ carries the Jacobi algebroid structure canonically associated with $J$.
	Denote by $(\Omega^\bullet(M,L),d_\nabla)$ the de Rham complex of the tangent Lie algebroid $TM\to M$ with values in its representation $\nabla$ in $L\to M$.
	
	We want to show that
	\begin{enumerate}[label=(\alph*)]
		\item
		\label{enumitem:proof:even-dim_transitive1}
		$d_\nabla\omega=0$, so that $(\omega,\nabla)$ is an lcs structure on $L\to M$,
		\item
		\label{enumitem:proof:even-dim_transitive2}
		$\nabla_{X_\lambda}\mu=\{\lambda,\mu\}$ and $d_\nabla\lambda=\omega^\flat(X_\lambda)$, for all $\lambda,\mu\in\Gamma(L)$, so that $J=\{-,-\}$ is exactly the Jacobi structure on $L\to M$ canonically associated with $(\omega,\nabla)$.
	\end{enumerate}
	The first part of~\ref{enumitem:proof:even-dim_transitive2} is straightforward from the definition of $\nabla$, whilst the second part follows from
	\begin{equation*}
	\omega(X_\lambda,\Lambda_J^\sharp(df\otimes\mu))=- X_\lambda(f)\mu=f\{\lambda,\mu\}-\{\lambda,f\mu\}=\langle d_\nabla\lambda,\Lambda_J^\sharp(df\otimes\mu)\rangle,
	\end{equation*}
	for all $\lambda,\mu\in\Gamma(L)$ and $f\in C^\infty(M)$.
	Finally, from~\ref{enumitem:proof:even-dim_transitive2} and Formula~\eqref{eq:closedness=Jacobi_identity}, it turns out that~\ref{enumitem:proof:even-dim_transitive1} is equivalent to the Jacobi identity satisfied by $\{-,-\}$.
\end{proof}

\section{Global structure of Jacobi manifolds}
\label{subsec:global_structure}

This section aims to investigate the global structure of Jacobi manifolds,  decomposing them in their smallest building blocks.
With this aim in view, given a Jacobi manifold $(M,L,J)$ and a submanifold $S\subset M$, we have to understand under what conditions $J$ induces a Jacobi structure on the restricted line bundle $L|_S\to S$ according with the following definition.

\begin{definition}
	\label{def:Jacobi_submanifolds}
	Let $(M,L,J=\{-,-\})$ be a Jacobi manifold, and $S\subset M$ be an immersed submanifold.
	Set $\ell:=L|_S\to S$.
	The submanifold $S$ is said to be a \emph{Jacobi submanifold} of $(M,L,J)$ if $J$ induces a Jacobi structure on $\ell\to S$, i.e.~there is a (unique) Jacobi structure $J_S=\{-,-\}_S$ on $\ell\to S$ such that $i:\ell\to L$, the regular line bundle morphism given by the inclusion, is a Jacobi map from $(S,\ell,J_S)$ to $(M,L,J)$.
\end{definition}

Helpful characterizations of Jacobi submanifolds are contained in the following proposition.

\begin{proposition}
	\label{prop:Jacobi_submanifolds}
	Let $(M,L,J=\{-,-\})$ be a Jacobi manifold, with characteristic distribution $\calK$, and $S\subset M$ be an embedded submanifold.
	Set $\ell:=L|_S\to S$, and denote by $\Gamma_S\subset\Gamma(L)$ the submodule of sections vanishing on $S$.
	Then the following conditions are equivalent:
	\begin{enumerate}[label=(\arabic*)]
		\item
		\label{enumitem:prop:Jacobi_submanifolds1}
		$S$ is a Jacobi submanifold of $(M,L,J)$,
		\item
		\label{enumitem:prop:Jacobi_submanifolds2}
		the bi-derivation $J\in D^2L$ is tangent to $S$, i.e.~$J|_S\in D^2\ell\subset(D^2L)|_S$,
		\item
		\label{enumitem:prop:Jacobi_submanifolds3}
		$\Gamma_S$ is a Lie ideal of $\Gamma(L)$, i.e.~$\{\Gamma_S,\Gamma(L)\}\subset\Gamma_S$,
		\item
		\label{enumitem:prop:Jacobi_submanifolds4}
		$X_\lambda|_S\in\frakX(S)$, for all $\lambda\in\Gamma(L)$, i.e.~$\calK|_S\subset TS$.
	\end{enumerate}
\end{proposition}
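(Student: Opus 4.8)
The plan is to treat condition~\ref{enumitem:prop:Jacobi_submanifolds3} as the logical hub and to prove the three equivalences \ref{enumitem:prop:Jacobi_submanifolds1}~$\Leftrightarrow$~\ref{enumitem:prop:Jacobi_submanifolds3}, \ref{enumitem:prop:Jacobi_submanifolds3}~$\Leftrightarrow$~\ref{enumitem:prop:Jacobi_submanifolds4}, and \ref{enumitem:prop:Jacobi_submanifolds2}~$\Leftrightarrow$~\ref{enumitem:prop:Jacobi_submanifolds3}. The unifying idea is that each of the four conditions expresses, in a different language, that the bi-derivation $J$ descends along the restriction $i:\ell\to L$ to a bi-derivation of $\ell\to S$.

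I would begin with the elementary but pivotal observation that a derivation $\square\in\Der L$ maps $\Gamma_S$ into itself if and only if its symbol $\sigma(\square)\in\frakX(M)$ is tangent to $S$. Working in a local trivialization of $L$ near a point of $S$, a derivation reads $\square=X+a$ with $X\in\frakX(M)$ and $a\in C^\infty(M)$; for $\lambda$ vanishing on $S$ one has $(\square\lambda)|_S=X(\lambda)|_S$, and since $d\lambda$ annihilates $TS$ along $S$, this vanishes for every such $\lambda$ exactly when $X$ is tangent to $S$. Applying this to $\square=\Delta_\mu=\{\mu,-\}$, whose symbol is $X_\mu$, and using skew-symmetry to rewrite \ref{enumitem:prop:Jacobi_submanifolds3} as $\Delta_\mu(\Gamma_S)\subset\Gamma_S$ for all $\mu\in\Gamma(L)$, yields at once \ref{enumitem:prop:Jacobi_submanifolds3}~$\Leftrightarrow$~\ref{enumitem:prop:Jacobi_submanifolds4}, i.e.~$\calK|_S\subset TS$.

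For \ref{enumitem:prop:Jacobi_submanifolds1}~$\Leftrightarrow$~\ref{enumitem:prop:Jacobi_submanifolds3} the argument is essentially definitional. If $i$ is a Jacobi map then $\{\lambda,\mu\}|_S=\{\lambda|_S,\mu|_S\}_S$; taking $\lambda\in\Gamma_S$ forces $\{\lambda,\mu\}|_S=0$, which is \ref{enumitem:prop:Jacobi_submanifolds3}. Conversely, assuming \ref{enumitem:prop:Jacobi_submanifolds3}, I would define $J_S$ on $\ell$ by $J_S(\lambda|_S,\mu|_S):=\{\lambda,\mu\}|_S$ for arbitrary extensions $\lambda,\mu\in\Gamma(L)$ of given sections of $\ell$ (extensions exist since $S$ is embedded and $\Gamma(L)\to\Gamma(\ell)$ is onto); condition~\ref{enumitem:prop:Jacobi_submanifolds3} makes this independent of the chosen extensions, and $J_S$ then inherits skew-symmetry, the Jacobi identity, and the first-order bi-differential character directly from $J$, so that $(S,\ell,J_S)$ is a Jacobi manifold with $i$ a Jacobi map.

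The step I expect to be the main obstacle is \ref{enumitem:prop:Jacobi_submanifolds2}~$\Leftrightarrow$~\ref{enumitem:prop:Jacobi_submanifolds3}, since it is the only one requiring genuine first-order jet bookkeeping rather than pointwise or purely algebraic manipulations. The key is to analyze the restriction map on $1$-jets $J^1L|_S\to J^1\ell$: it is a surjection of vector bundles over $S$ whose kernel is canonically the conormal piece $N^\ast S\otimes\ell$. Dualizing and taking wedge powers embeds $D^2\ell$ into $(D^2L)|_S$ as precisely those forms annihilating $N^\ast S\otimes\ell$ in either slot, so \ref{enumitem:prop:Jacobi_submanifolds2} says $\hat\Lambda_J(\alpha,-)|_x=0$ for all $x\in S$ and $\alpha\in N^\ast_xS\otimes L_x$. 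I would then identify this kernel with the $1$-jets of sections vanishing on $S$: for $\lambda\in\Gamma_S$ and $x\in S$ one has $j^1\lambda_x\in N^\ast_xS\otimes L_x$, and every element of the kernel arises this way. Since $\hat\Lambda_J(j^1\lambda_x,j^1\mu_x)=\{\lambda,\mu\}(x)$ and $j^1$ is fibrewise surjective, condition~\ref{enumitem:prop:Jacobi_submanifolds2} becomes $\{\lambda,\mu\}|_S=0$ for all $\lambda\in\Gamma_S$ and $\mu\in\Gamma(L)$, which is exactly~\ref{enumitem:prop:Jacobi_submanifolds3}. Assembling the three equivalences closes the proof.
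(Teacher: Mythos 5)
Your proof is correct and follows essentially the same route as the paper: the equivalences with (1) are definitional unwindings, and the substantive step (3)$\Leftrightarrow$(4) rests on the same local Leibniz-rule computation $\{\lambda,f\mu\}|_S=X_\lambda(f)|_S\,\mu|_S$ for $f$ vanishing on $S$, which you phrase as a symbol lemma for derivations preserving $\Gamma_S$. The only difference is organizational (you route everything through (3) rather than (1)) and that you spell out, via the conormal kernel of $J^1L|_S\to J^1\ell$, the identification of $D^2\ell$ inside $(D^2L)|_S$ that the paper treats as immediate.
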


\begin{proof}
	Both~\ref{enumitem:prop:Jacobi_submanifolds2} and~\ref{enumitem:prop:Jacobi_submanifolds3} can be immediately recognized as direct rephrasings of~\ref{enumitem:prop:Jacobi_submanifolds1} via the Definition~\ref{def:Jacobi_mfd_morphism} of Jacobi maps: actually, for the inclusion map $i:\ell\to L$, we have that $\Gamma_S$ is the kernel of $i^\ast:\Gamma(L)\to\Gamma(\ell)$, and $Di:D^\bullet\ell\to D^\bullet L$ is the inclusion map.
	Hence it remains to check that $(3)\Longleftrightarrow (4)$.
	
	Working locally, we may assume, without loss of generality, that $\Gamma_S=I_S\cdot\Gamma(L)$, where $I_S\subset C^\infty(M)$ is the ideal of functions vanishing on $S$.
	For all $\lambda,\mu\in\Gamma(L)$, and $f\in I_S$, we have:
	\begin{equation*}
	\left.\{\lambda,f\mu\}\right|_S=\left.\left(X_\lambda(f)\mu+f\{\lambda,\mu\}\right)\right|_S=X_\lambda(f)|_S\mu|_S.
	\end{equation*}
	The latter shows that, under the current assumptions,~\ref{enumitem:prop:Jacobi_submanifolds3} is equivalent to~\ref{enumitem:prop:Jacobi_submanifolds4}.
\end{proof}

\begin{remark}
	\label{rem:Jacobi_submanifolds}
	The equivalences $(1)\Longleftrightarrow (2)\Longleftrightarrow (4)$ from Proposition~\ref{prop:Jacobi_submanifolds} continue to hold for an immersed submanifold $S\subset M$ as well.
\end{remark}

\begin{corollary}
	\label{cor:Jacobi submanifolds}
	Let $(M,L,J)$ be a Jacobi manifold, with characteristic distribution $\calK$, and $S\subset M$ be an immersed Jacobi submanifold.
	Set $\ell:=L|_S\to S$, and denote by $\Gamma_S\subset\Gamma(L)$ the submodule of sections vanishing on $S$.
	Then
	\begin{enumerate}[label=(\alph*)]
		\item\label{enumitem:cor:Jacobi_submanifolds_a}
		$J_S:=J|_S\in D^2\ell$ is the (unique) Jacobi structure induced by $J$ on $\ell\to S$,
		\item\label{enumitem:cor:Jacobi_submanifolds_b}
		$\calK_S:=\calK|_S$ is the characteristic distribution of $(S,\ell,J_S)$,
		\item\label{enumitem:cor:Jacobi_submanifolds_c} $X_\lambda|_S=0$ for all $\lambda\in\Gamma_S$.
	\end{enumerate}
\end{corollary}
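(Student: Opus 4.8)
The plan is to deduce all three statements from Proposition~\ref{prop:Jacobi_submanifolds}, using that by Remark~\ref{rem:Jacobi_submanifolds} its conditions~\ref{enumitem:prop:Jacobi_submanifolds1}, \ref{enumitem:prop:Jacobi_submanifolds2} and~\ref{enumitem:prop:Jacobi_submanifolds4} remain equivalent for the immersed $S$, and that all the computations involved are local, so the local description $\Gamma_S = I_S\cdot\Gamma(L)$ from the proof of that proposition is available near each point of $S$. For part~\ref{enumitem:cor:Jacobi_submanifolds_a} I would start from the tangency condition~\ref{enumitem:prop:Jacobi_submanifolds2}, which says exactly that $J$ restricts to a bi-derivation $J|_S \in D^2\ell$; concretely $(J|_S)(i^\ast\lambda, i^\ast\mu) = i^\ast(J(\lambda,\mu))$ for all $\lambda,\mu \in \Gamma(L)$, the right-hand side depending only on $i^\ast\lambda, i^\ast\mu$ precisely because of tangency. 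This identity is nothing but the statement that $i:\ell \to L$ is a Jacobi map from $(S,\ell,J|_S)$ to $(M,L,J)$. Since $i^\ast$ is locally surjective on germs, the Jacobi identity for $J|_S$ is inherited from that of $J$, so $J|_S$ is a Jacobi structure, and by the uniqueness clause in Definition~\ref{def:Jacobi_submanifolds} it coincides with the induced structure $J_S$; hence $J_S = J|_S$.

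For part~\ref{enumitem:cor:Jacobi_submanifolds_c} I would first observe that $\Gamma_S$ is locally a Lie ideal: from the computation $\{\lambda, f\mu\}|_S = X_\lambda(f)|_S\,\mu|_S$ recalled in the proof of Proposition~\ref{prop:Jacobi_submanifolds}, together with condition~\ref{enumitem:prop:Jacobi_submanifolds4} (i.e.~$X_\lambda(f)|_S = 0$ for $f \in I_S$), it follows that $\{\Gamma_S, \Gamma(L)\}$ vanishes on $S$. Then, for $\sigma \in \Gamma_S$ and arbitrary $g \in C^\infty(M)$, $\mu \in \Gamma(L)$, the defining relation of the Hamiltonian vector field gives $X_\sigma(g)\,\mu = \{\sigma, g\mu\} - g\{\sigma, \mu\}$; restricting to $S$, both brackets lie in $\{\Gamma_S, \Gamma(L)\}$ and so vanish on $S$, whence $X_\sigma(g)|_S\,\mu|_S = 0$. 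Choosing $\mu$ nowhere zero locally yields $X_\sigma(g)|_S = 0$ for every $g$, that is $X_\sigma|_S = 0$.

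For part~\ref{enumitem:cor:Jacobi_submanifolds_b} I would use part~\ref{enumitem:cor:Jacobi_submanifolds_a} to identify the Hamiltonian derivations of $(S,\ell,J_S)$ with restrictions of those of $(M,L,J)$: for $\lambda \in \Gamma(L)$ the Jacobi-map property of $i$ gives $\Delta^S_{i^\ast\lambda}(i^\ast\mu) = \{i^\ast\lambda, i^\ast\mu\}_S = i^\ast\{\lambda,\mu\} = i^\ast(\Delta_\lambda \mu)$, so that $\Delta^S_{i^\ast\lambda} = \Delta_\lambda|_S$ and, passing to symbols, $X^S_{i^\ast\lambda} = X_\lambda|_S$ (the latter being tangent to $S$ by~\ref{enumitem:prop:Jacobi_submanifolds4}). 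Since every germ of section of $\ell$ is of the form $i^\ast\lambda$, the Hamiltonian vector fields of $(S,\ell,J_S)$ spanning $\calK_S$ are exactly the restrictions $X_\lambda|_S$ spanning $\calK|_S$; therefore $\calK_S = \calK|_S$.

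The routine parts are the bookkeeping identities, all of which are immediate from the definitions of $\Delta_\lambda$, $X_\lambda$ and the Jacobi-map condition. The main subtlety I expect is the care needed in passing from the embedded setting of Proposition~\ref{prop:Jacobi_submanifolds} to the immersed one: I must check that the ingredients actually used (the local form $\Gamma_S = I_S\cdot\Gamma(L)$, the local surjectivity of $i^\ast$ on germs exploited in parts~\ref{enumitem:cor:Jacobi_submanifolds_a} and~\ref{enumitem:cor:Jacobi_submanifolds_b}, and the local ideal property used in part~\ref{enumitem:cor:Jacobi_submanifolds_c}) are genuinely local statements, hence valid in a neighbourhood of each point of $S$ in its own immersed topology, even though $\Gamma_S$ need not be a global ideal.
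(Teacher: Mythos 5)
Your proof is correct; the paper states this corollary without proof, and your argument is precisely the intended deduction from Proposition~\ref{prop:Jacobi_submanifolds} (via its conditions~\ref{enumitem:prop:Jacobi_submanifolds2} and~\ref{enumitem:prop:Jacobi_submanifolds4}) together with the local identity $\{\lambda,f\mu\}|_S=X_\lambda(f)|_S\,\mu|_S$ from that proposition's proof. The care you take in passing to the immersed setting via Remark~\ref{rem:Jacobi_submanifolds} and the locality of all the ingredients is exactly what is needed.
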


The next theorem, describing the global structure of Jacobi manifolds, was first obtained by Kirillov~\cite[Theorem 1]{kirillov1976local}, and it also encompass, as its special case, the symplectic foliation theorem for Poisson manifolds.
In both the statement and the proof of the next theorem we will make use of terminology and results coming from the Stefan--Sussmann theory for the integrability of singular tangent distributions (for details see, e.g.,~\cite[Section 3]{Kolar1993natural} and~\cite[Chapter 2]{vaisman1994lectures}).

\begin{theorem}[Characteristic Foliation Theorem]\cite[Theorem 1]{kirillov1976local}
	\label{theor:characteristi_foliation}
	The characteristic distribution $\calK\subset TM$ of a Jacobi manifold $(M,L,J)$ is completely integrable à la Stefan--Sussmann.
	Hence it determines a singular foliation $\calF$ of $M$ whose leaves are called the \emph{characteristic leaves} of $(M,L,J)$.
	Moreover each characteristic leaf $\calC$ is a Jacobi submanifold of $(M,L,J)$, with $J$ inducing a transitive Jacobi structure on $L|_\calC\to\calC$.
\end{theorem}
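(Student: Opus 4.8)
The plan is to verify the Stefan--Sussmann integrability criterion for $\calK$ and then read off the structure of the leaves from results already established. Recall from Example~\ref{ex:jmap} that $\calK$ is the singular distribution spanned pointwise by the Hamiltonian vector fields $X_\lambda$, $\lambda\in\Gamma(L)$; equivalently, by Remark~\ref{rem:coinciding_characteristic_distributions}, it is the image of the anchor $\rho_{\J}$ of the associated Jacobi algebroid $(J^1L,L)$, so it is a smooth, locally finitely generated singular distribution. I would take $\mathcal D:=\{X_\lambda:\lambda\in\Gamma(L)\}$ as a generating family and show that $\calK$ is invariant under the flows of the vector fields in $\mathcal D$, which is precisely the hypothesis needed to invoke the Stefan--Sussmann theorem.

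The heart of the argument is the following invariance computation. Fix $\mu\in\Gamma(L)$; its Hamiltonian derivation $\Delta_\mu$ is an infinitesimal automorphism of $(M,L,J)$ (Remark~\ref{rem:inf_aut}), hence it generates a one-parameter family $\{\varphi_t\}$ of Jacobi automorphisms of $L$ covering the flow $\{\underline{\smash{\varphi}}_t\}$ of $X_\mu$ on $M$. Since each $\varphi_t$ is a Jacobi map, it intertwines Hamiltonian data: a direct check from Definition~\ref{def:Jacobi_mfd_morphism} gives $(\varphi_t)_\ast\Delta_\lambda=\Delta_{(\varphi_t)_\ast\lambda}$ for all $\lambda\in\Gamma(L)$, and passing to symbols yields $(\underline{\smash{\varphi}}_t)_\ast X_\lambda=X_{(\varphi_t)_\ast\lambda}$. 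Therefore the flow of $X_\mu$ carries Hamiltonian vector fields to Hamiltonian vector fields, so it preserves $\calK$. At the infinitesimal level this is just $[X_\mu,X_\lambda]=X_{\{\mu,\lambda\}}$, which is the special case of~\eqref{eq:iso} with $\square=\Delta_\mu$. With this invariance in hand the Stefan--Sussmann theorem applies and shows that $\calK$ is completely integrable, producing the singular foliation $\calF$ whose leaves are the characteristic leaves.

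It remains to analyze a single leaf $\calC$. By construction $\calC$ is an immersed integral submanifold of $\calK$, so $T\calC=\calK|_\calC$ and in particular $\calK|_\calC\subset T\calC$. This is exactly condition~\ref{enumitem:prop:Jacobi_submanifolds4} of Proposition~\ref{prop:Jacobi_submanifolds}, which by Remark~\ref{rem:Jacobi_submanifolds} stays valid for immersed submanifolds; hence $\calC$ is a Jacobi submanifold of $(M,L,J)$. Corollary~\ref{cor:Jacobi submanifolds} then identifies the induced Jacobi structure as $J_\calC=J|_\calC$ on $L|_\calC\to\calC$ and its characteristic distribution as $\calK_\calC=\calK|_\calC$. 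Since $\calK|_\calC=T\calC$, the induced structure is transitive, which completes the proof.

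The main obstacle I anticipate is the careful handling of the singular (non-constant-rank) setting: one must use the correct formulation of the Stefan--Sussmann theorem and make sure that the generating family $\mathcal D$, together with the flow-invariance established above, genuinely meets its hypotheses, rather than only checking the weaker pointwise involutivity $[X_\lambda,X_\mu]\in\calK$. The algebraic identities themselves are routine consequences of the Jacobi identity and of the fact that Hamiltonian flows are Jacobi automorphisms; the subtlety lies entirely in the foliation-theoretic bookkeeping and in invoking the correct integrability statement from Stefan--Sussmann theory.
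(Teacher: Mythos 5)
Your proposal is correct and follows essentially the same route as the paper: both establish that the flows of the Hamiltonian vector fields preserve $\calK$ (via $(\varphi_t)_\ast\Delta_\lambda=\Delta_{(\varphi_t)_\ast\lambda}$ and its consequence $(d_x\underline{\smash{\varphi}}_t)\calK_x=\calK_{\underline{\smash{\varphi}}_t(x)}$), invoke the Stefan--Sussmann theorem, and then identify each leaf as a transitive Jacobi submanifold via Proposition~\ref{prop:Jacobi_submanifolds}~\ref{enumitem:prop:Jacobi_submanifolds4} and Corollary~\ref{cor:Jacobi submanifolds}. Your emphasis that pointwise involutivity alone would not suffice in the singular setting is exactly the point the paper's proof also addresses.
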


\begin{proof}
	Recall that the characteristic distribution $\calK$ of $(M,L,J)$ is the smooth singular distribution on $M$ generated by the Hamiltonian vector fields $X_\lambda$, with $\lambda\in\Gamma(L)$.
	As remarked in Section~\ref{subsec:Jacobi_morphisms} (see, e.g., Equation~\eqref{eq:iso}), Hamiltonian vector fields form a Lie subalgebra $\underline{\smash{\mathfrak{ham}}}(M,L,J)\subset\frakX(M)$.
	Hence $\calK$ is involutive, and, in order to prove its complete integrability, it remains to check that the rank of $\calK$ is constant along the flow lines of Hamiltonian vector fields.
	
	Fix an arbitrary $\lambda\in\Gamma(L)$.
	Denote by $\phi_t$ (resp.~$\underline{\smash{\phi}}_t$) the smooth $1$-parameter group of Jacobi automorphisms of $(M,L,J)$ (resp.~diffeomorphisms of $M$) which is generated by $\Delta_\lambda$ (resp.~$X_\lambda$).
	Clearly the local line bundle automorphism $\phi_t:L\to L$ covers the local diffeomorphism $\underline{\smash{\phi}}_t:M\to M$.
	Then, as remarked in Section~\ref{subsec:Jacobi_morphisms}, we have:
	\begin{equation*}
	(D\phi_t)\circ\Delta_\mu=\Delta_\nu\circ\underline{\smash{\phi}}_t,\qquad (d\phi_t)\circ X_\mu=X_\nu\circ\underline{\smash{\phi}}_t,
	\end{equation*}
	for all $\mu,\nu\in\Gamma(L)$ with $\mu=\phi_t^\ast\nu$.
	As a consequence, we obtain that
	\begin{equation*}
	(d_x\underline{\smash{\phi}}_t)\calK_x=\calK_{\underline{\smash{\phi}}_t(x)}.
	\end{equation*}
	for all $t\in\bbR$ and $x\in\operatorname{dom}(\underline{\smash{\phi}}_t)$.
	This proves that the rank of $\calK$ remains constant along flow lines of Hamiltonian vector fields.
	All the assumptions of the Frobenius Theorem for singular distributions are satisfied.
	So $\calK$ is completely integrable à la Stefan--Sussmann.	
	
	Now let $\calC$ be an integral leaf of $\calK$.
	Since, by its very definition, $\calC$ is an immersed submanifold of $M$ with $T\calC=\calK|_\calC$, from Proposition~\ref{prop:Jacobi_submanifolds} and Corollary~\ref{cor:Jacobi submanifolds}, it follows that $\calC$ is a Jacobi submanifold of $(M,L,J)$, and moreover, if $J_\calC$ denotes the Jacobi structure on $L|_\calC\to\calC$ induced by $J$, then the characteristic distribution of $(\calC,L|_\calC,J_\calC)$ is given by $\calK|_\calC=T\calC$.
\end{proof}

\begin{remark}
	The characteristic distribution $\calK$ of a Jacobi manifold $(M,L,J)$ coincides with $\im(\rho_J)$, i.e.~the characteristic distribution of the associated Jacobi algebroid $(J^1L,L)$ (cf.~Remark~\ref{rem:coinciding_characteristic_distributions}).
	This leads to two different points of view on the first part of Theorem~\ref{theor:characteristi_foliation}.
	On the one hand, the integrability of $\calK$ derives from the more general result according to which the characteristic distribution of a Lie algebroid is always integrable à la Stefan--Sussmann~(cf., e.g.,~\cite[Section 2.2]{crainic2011lectures}).
	On the other hand, the latter general result can be proven by just slightly adapting the above argument which was originally adopted by Kirillov~\cite{kirillov1976local} in the special case of Jacobi (and Poisson) manifolds.
\end{remark}

Combining together the characteristic foliation theorem for Jacobi manifolds (Theorem~\ref{theor:characteristi_foliation}) and the characterization of transitive Jacobi manifolds (Propositions~\ref{prop:odd-dim_transitive} and~\ref{prop:even-dim_transitive}), we obtain an alternative equivalent description of Jacobi structures.

\begin{proposition}
	\label{prop:Dirac-Jacobi_bundles}
	A Jacobi structure on a line bundle $L\to M$ is equivalent to the datum of a singular foliation $\calF$ of $M$, and
	\begin{itemize}
		\item for every odd-dimensional leaf $\calC$ of $\calF$, an $L|_\calC$-valued contact $1$-form on $\calC$, with associated Jacobi structure $\{-,-\}_\calC$ on $L|_\calC\to\calC$,
		\item for every even-dimensional leaf $\calC$ of $\calF$, an lcs structure on  $L_\calC\to\calC$, with associated Jacobi structure $\{-,-\}_\calC$ on $L|_\calC\to\calC$,
	\end{itemize}
	such that Jacobi structures $\{-,-\}_\calC$ on $L|_\calC\to\calC$, with $\calC\in\calF$, fit together in a smooth way, i.e., for all $\lambda,\mu\in\Gamma(L)$, a smooth section $\{\lambda,\mu\}\in\Gamma(L)$ is well-defined by $\{\lambda,\mu\}|_\calC=\{\lambda|_\calC,\mu|_\calC\}_\calC$, for all $\calC\in\calF$.
\end{proposition}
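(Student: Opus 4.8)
The plan is to read the asserted equivalence as a bijective correspondence and to construct the two directions separately, checking afterwards that they are mutually inverse. The forward direction, turning a Jacobi structure into the leafwise data, is essentially a repackaging of results already established. Starting from a Jacobi structure $J$ on $L\to M$, I would invoke the Characteristic Foliation Theorem (Theorem~\ref{theor:characteristi_foliation}) to obtain the singular foliation $\calF$ integrating the characteristic distribution $\calK$, together with the fact that each leaf $\calC$ is a Jacobi submanifold on which $J$ induces a \emph{transitive} Jacobi structure $J_\calC=J|_\calC$ (Corollary~\ref{cor:Jacobi submanifolds}). Splitting according to the parity of $\dim\calC$, Proposition~\ref{prop:odd-dim_transitive} supplies, for odd-dimensional leaves, a unique $L|_\calC$-valued contact $1$-form whose associated Jacobi structure is $J_\calC$, while Proposition~\ref{prop:even-dim_transitive} supplies, for even-dimensional leaves, a unique lcs structure with the same property. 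The smooth fitting condition is then automatic: since $J_\calC=J|_\calC$, one has $\{\lambda,\mu\}|_\calC=\{\lambda|_\calC,\mu|_\calC\}_\calC$ for all $\lambda,\mu\in\Gamma(L)$, and the left-hand side is the restriction of a single globally smooth section.

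For the backward direction I would start from the data and define a bracket on $\Gamma(L)$ by declaring $\{\lambda,\mu\}$ to be the section determined by $\{\lambda,\mu\}|_\calC=\{\lambda|_\calC,\mu|_\calC\}_\calC$ for every leaf $\calC$; smoothness and well-definedness of this section are exactly the hypothesis. The key point is to verify that this bracket is a Jacobi structure, and here I would avoid checking the first-order bidifferential property by hand and instead route everything through the locality criterion of Proposition~\ref{prop:local_Lie_algebras}. Bilinearity, skew-symmetry and the Jacobi identity are leafwise identities: restricting each term of, say, the Jacobiator to a leaf $\calC$ reduces it to the Jacobiator of $\{-,-\}_\calC$, which vanishes because $\{-,-\}_\calC$ is a genuine Jacobi bracket; as the leaves cover $M$, the global identities follow. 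Locality is equally leafwise, since $\operatorname{supp}(\{\lambda,\mu\}|_\calC)=\operatorname{supp}(\{\lambda|_\calC,\mu|_\calC\}_\calC)\subset\operatorname{supp}(\lambda)\cap\operatorname{supp}(\mu)$, so that $\operatorname{supp}(\{\lambda,\mu\})\subset\operatorname{supp}(\lambda)\cap\operatorname{supp}(\mu)$. Proposition~\ref{prop:local_Lie_algebras} then upgrades this local Lie bracket to a Jacobi bracket.

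Finally I would check that the two assignments are inverse to one another. Going from $J$ to the leafwise data and back reproduces $J$, because the reconstructed bracket agrees with $J$ on each characteristic leaf and the leaves cover $M$. In the other order, one must show that the characteristic foliation of the bracket reconstructed from $(\calF,\{\text{leafwise structures}\})$ is again $\calF$: each leaf $\calC$ carries a transitive structure, so the Hamiltonian vector fields span $T\calC$ there, while they remain tangent to the leaves by construction; hence the characteristic distribution coincides with the tangent distribution of $\calF$, and the foliations agree. That the leafwise contact/lcs data are recovered unchanged is precisely the content of the uniqueness clauses in Propositions~\ref{prop:odd-dim_transitive} and~\ref{prop:contact_to_Jacobi} (odd case) and Propositions~\ref{prop:even-dim_transitive} and~\ref{prop:lcs_to_Jacobi} (even case), which exhibit the contact-form versus transitive-Jacobi and lcs versus transitive-Jacobi correspondences as bijections on each leaf. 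I expect the main obstacle to be the backward direction, specifically confirming that the leafwise-assembled bracket really is a first-order differential operator in each entry, and the cleanest way past it is the locality reformulation above, which sidesteps any direct symbol computation.
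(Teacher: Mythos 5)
Your proposal is correct and follows essentially the route the paper intends: the forward direction is exactly the combination of the Characteristic Foliation Theorem (Theorem~\ref{theor:characteristi_foliation}) with the characterizations of transitive Jacobi manifolds (Propositions~\ref{prop:odd-dim_transitive} and~\ref{prop:even-dim_transitive}), which is all the paper itself offers by way of proof. Your explicit treatment of the converse --- assembling the bracket leafwise, checking the Jacobi identity and locality leaf by leaf, and invoking Proposition~\ref{prop:local_Lie_algebras} to upgrade the local Lie bracket to a Jacobi bracket, then identifying the characteristic foliation via tangency and leafwise transitivity of the Hamiltonian vector fields --- fills in details the paper leaves implicit, but uses precisely the tools the paper sets up for this purpose.
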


\begin{remark}
\label{rem:Dirac-Jacobi_bundles}
The equivalent description of Jacobi bundles given by Proposition~\ref{prop:Dirac-Jacobi_bundles} can rephrased in more geometrical terms within the extended setting provided by Dirac--Jacobi bundles~\cite{vitagliano2015dirac}.
\end{remark}

\section{Coisotropic submanifolds}
\label{subsec:coisotropic_submanifolds}

In the context of Jacobi manifolds there exists a notion of coisotropic submanifolds which unifies the analogous notions in the Poisson, l.c.s., and contact settings.
In this section we propose some characterizations of coisotropic submanifolds in a Jacobi manifold (Lemma~\ref{lem:cois} and Corollary~\ref{cor:cois}~(3)).

Let $(M, L, J= \{-,-\})$ be a Jacobi manifold, and let $x \in M$.
A subspace $T \subset T_xM$ is said to be \emph{coisotropic} (wrt~the Jacobi structure $J=\{-,-\}$ on $L$, if $\Lambda_J ^\sharp  (T^0 \otimes L_{{x}}) \subset T$, where $T^0 \subset T^*_xM$ denotes the annihilator of $T$ (cf.~\cite[Definition 4.1]{ILMM1997}).
Equivalently, $T^0 \otimes L_{{x}}$ is isotropic wrt~the $L$-valued bi-linear form $\Lambda_J$.

A submanifold $S\subset M$ is called \emph{coisotropic} (wrt~the Jacobi structure $(L,J=\{-,-\})$, if its tangent space $T_xS$ is coisotropic for all $ x \in S$.

\begin{lemma}
	\label{lem:cois}
	Let $S \subset M$ be an embedded submanifold, and let $\Gamma_S$ denote the set of sections $\lambda$ of the Jacobi bundle such that $\lambda |_S = 0$.
	The following three conditions are equivalent:
	\begin{enumerate}[label=(\arabic*)]
		\item
		\label{enumitem:lem:cois_1}
		$S$ is a coisotropic submanifold,
		\item
		\label{enumitem:lem:cois_2}
		$\Gamma_S$ is a Lie subalgebra in $\Gamma (L)$,
		\item
		\label{enumitem:lem:cois_3}
		$X_\lambda$ is tangent to $S$, for all $\lambda \in \Gamma_S$.
	\end{enumerate}
\end{lemma}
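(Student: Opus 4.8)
The plan is to prove the two equivalences $\ref{enumitem:lem:cois_1}\Leftrightarrow\ref{enumitem:lem:cois_3}$ and $\ref{enumitem:lem:cois_2}\Leftrightarrow\ref{enumitem:lem:cois_3}$, so that condition \ref{enumitem:lem:cois_3} serves as the pivot linking the infinitesimal (pointwise) coisotropicity to the algebraic property of $\Gamma_S$. Throughout I would use the standard description of the annihilator of an embedded submanifold: for $x\in S$ one has $T_xS^0=\langle d_xf : f\in I_S\rangle$ and, dually, $v\in T_xS$ iff $d_xf(v)=0$ for every $f\in I_S$, where $I_S\subset C^\infty(M)$ is the ideal of functions vanishing on $S$. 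Since condition \ref{enumitem:lem:cois_3} is local along $S$, and $X_\lambda|_x$ depends only on the $1$-jet of $\lambda$ at $x$, I may work in a neighbourhood where $\Gamma_S=I_S\cdot\Gamma(L)$ (as in the proof of Proposition~\ref{prop:Jacobi_submanifolds}) and where $\lambda\mapsto X_\lambda$ is the $\bbR$-linear operator $\lambda\mapsto\sigma(\Delta_\lambda)$.

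For $\ref{enumitem:lem:cois_1}\Leftrightarrow\ref{enumitem:lem:cois_3}$ I would start from the identity \eqref{eq:Lambdash}, namely $\Lambda_J^\sharp(df\otimes\lambda)=X_{f\lambda}-fX_\lambda$. Evaluating at a point $x\in S$ and using $f(x)=0$ for $f\in I_S$ gives
\[
\Lambda_J^\sharp(d_xf\otimes\lambda_x)=X_{f\lambda}\big|_x,\qquad f\in I_S,\ \lambda\in\Gamma(L).
\]
Because $T_xS^0\otimes L_x$ is spanned by the tensors $d_xf\otimes\lambda_x$ and $\Lambda_J^\sharp$ is fibre-wise linear, the coisotropicity of $T_xS$, i.e.\ $\Lambda_J^\sharp(T_xS^0\otimes L_x)\subset T_xS$, amounts to $X_{f\lambda}|_x\in T_xS$ for all $f\in I_S$, $\lambda\in\Gamma(L)$. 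Letting $x$ vary over $S$, and recalling that the sections $f\lambda$ generate $\Gamma_S$ together with the $\bbR$-linearity of $\nu\mapsto X_\nu|_x$, this is exactly the statement that $X_\nu$ is tangent to $S$ for every $\nu\in\Gamma_S$, i.e.\ condition \ref{enumitem:lem:cois_3}.

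For $\ref{enumitem:lem:cois_2}\Leftrightarrow\ref{enumitem:lem:cois_3}$ I would use the generalized Leibniz rule \eqref{eq:genleib}, $\{\lambda,f\mu\}=f\{\lambda,\mu\}+X_\lambda(f)\mu$. Restricting to $S$ and taking $f\in I_S$, the term $f\{\lambda,\mu\}$ vanishes on $S$, so
\[
\{\lambda,f\mu\}\big|_S=\big(X_\lambda(f)\big|_S\big)\,\mu|_S.
\]
Writing an arbitrary element of $\Gamma_S$ locally as $f\mu$ with $f\in I_S$ and $\mu\in\Gamma(L)$, the condition $\{\Gamma_S,\Gamma_S\}\subset\Gamma_S$ becomes: for every $\lambda\in\Gamma_S$ and every $f\in I_S$, $\mu\in\Gamma(L)$, one has $(X_\lambda(f)|_S)\,\mu|_S=0$. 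As $\mu|_S$ ranges over all sections of $L|_S$, this holds iff $X_\lambda(f)|_S=0$ for all $\lambda\in\Gamma_S$ and $f\in I_S$, which, since $(X_\lambda f)(x)=d_xf(X_\lambda|_x)$ and by the dual description of $T_xS$ recalled above, means precisely that $X_\lambda$ is tangent to $S$ for all $\lambda\in\Gamma_S$, i.e.\ condition \ref{enumitem:lem:cois_3}.

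The only genuinely delicate points, and hence where I would take care, are the reductions to generators: that it suffices to test coisotropicity on the spanning tensors $d_xf\otimes\lambda_x$ (legitimate by linearity of $\Lambda_J^\sharp$), and that it suffices to test the bracket on local generators $f\mu$ of $\Gamma_S$ (legitimate because $\{-,-\}$ is a bi-differential operator, so $\{\lambda,-\}|_S$ is locally determined). Both reductions rely on the local identity $\Gamma_S=I_S\cdot\Gamma(L)$ and on the fact that Hamiltonian vector fields depend $\bbR$-linearly and first-order-differentially on their generating section; once these are in place, all three conditions collapse onto the single pointwise requirement $X_\lambda(f)|_S=0$ for $\lambda\in\Gamma_S$, $f\in I_S$, and the equivalences follow.
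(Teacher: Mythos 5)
Your proof is correct and rests on the same two identities the paper uses, namely \eqref{eq:Lambdash} (equivalently \eqref{eq:LambdaJ}) for the link with coisotropicity and the Leibniz rule \eqref{eq:genleib} for the link with the bracket; the only difference is organizational: you pivot on condition \ref{enumitem:lem:cois_3}, proving $\ref{enumitem:lem:cois_1}\Leftrightarrow\ref{enumitem:lem:cois_3}$ and $\ref{enumitem:lem:cois_2}\Leftrightarrow\ref{enumitem:lem:cois_3}$, whereas the paper proves $\ref{enumitem:lem:cois_1}\Leftrightarrow\ref{enumitem:lem:cois_2}$ directly from the computation $\{f\lambda,g\lambda\}|_S=\langle\Lambda_J^\sharp(df\otimes\lambda),dg\rangle\,\lambda|_S$ and then $\ref{enumitem:lem:cois_2}\Leftrightarrow\ref{enumitem:lem:cois_3}$ exactly as you do. Your attention to the reduction to generators ($T_xS^0$ spanned by $d_xf$ with $f\in I_S$, and $\Gamma_S=I_S\cdot\Gamma(L)$ locally) is exactly the point the paper also relies on, so the two arguments are essentially the same.
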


\begin{proof}
	Let $S \subset M$ be a submanifold.
	We may assume, without loss of generality, that $L$ is trivial.
	Then $\Gamma_S=I_S\cdot\Gamma(L)$, where $I_S$ denotes the ideal in $C^\infty(M)$ consisting of functions that vanish on $S$.
	In particular, if $\lambda$ is a generator of $\Gamma (L)$, then every section in $\Gamma_S$ is of the form $f \lambda$ for some $f \in I_S$.
	Now, let $f,g \in I_S$.
	Putting $\mu = \lambda$ in~\eqref{eq:LambdaJ} and restricting to $S$, we find
	\[
	\{ f\lambda, g\lambda \}|_S = \langle \Lambda_J^\sharp  (df \otimes \lambda) , dg \rangle \lambda |_S .
	\]
	This shows that $(1) \Longleftrightarrow (2)$.
	The equivalence $(2) \Longleftrightarrow (3)$ follows from the identity $X_\lambda(f)\mu|_S=\{\lambda,f\mu\}|_S$, for all $\lambda\in\Gamma_S$, $\mu\in\Gamma(L)$, and $f\in I_S$.
\end{proof}

\begin{example}
	\label{ex:simul}
	\leavevmode
	\begin{enumerate}
		\item Any coisotropic submanifold (in particular a Legendrian/Lagrangian submanifold) in a contact/lcs manifold is a coisotropic submanifold wrt~the associated Jacobi structure (see Section~\ref{sec:cois_cont} for details).
		\item As it is well-known~\cite[\S 2]{cattaneo2007relative}, in the Poisson setting, coisotropic submanifolds admit a characterization which is just a special case of Lemma~\ref{lem:cois}.
		\item Let $S$ be a coisotropic submanifold of the Jacobi manifold $(M,L,\{-,-\})$, and let $X \in \frakX(M)$ be a Jacobi vector field such that $X_x \notin T_x S$, for all $x\in S$.
		Then the flowout of $S$ along $X$, is a coisotropic submanifold as well.
		Indeed, let $\{\phi_t\}$ be the flow of $X$.
		Clearly, whenever defined, $\phi_t (S)$ is a coisotropic submanifold, and the claim immediately follows from Lemma~\ref{lem:cois}.
	\end{enumerate}
\end{example}

Now, let $S \subset M$ be a coisotropic submanifold and let $T^0 S \subset T^\ast M |_S$ be the annihilator of $TS$.
The (generically singular) distribution $\mathcal{K}_S := \Lambda^\sharp _J (T^0 S \otimes L) \subset TS$ on $S$ is called the \emph{characteristic distribution of $S$}.
In view of~\eqref{eq:Lambdash}, $\calK_S$ is generated by the (restrictions to $S$ of) the Hamiltonian vector fields of the kind $X_\lambda$, with $\lambda \in \Gamma_S$.

From Lemma~\ref{lem:cois} we derive the following

\begin{corollary}\label{cor:cois}\
	\begin{enumerate}[label=(\arabic*)]
		\item
		\label{enumitem:cor:cois_1}
		(cf.~\cite[\S 2]{cattaneo2007relative})
		The characteristic distribution $\mathcal K_S$ of any coisotropic submanifold $S$ is completely integrable à la Stefan--Sussmann (hence, it determines a singular foliation on $S$, called the \emph{characteristic foliation} of $S$).
		
		\item
		\label{enumitem:cor:cois_2}
		If $S$ is a Jacobi submanifold, then $S$ is also a coisotropic submanifold, with characteristic distribution $\calK_S=0$.
		
		\item
		\label{enumitem:cor:cois_3}
		A submanifold $S \subset M$ is coisotropic, iff $TS \cap T\calC$ is coisotropic in the tangent bundle $T\calC$, for all characteristic leaves $\calC$ intersecting $S$, where $\calC$ is equipped with
		the induced Jacobi structure.
	\end{enumerate}
\end{corollary}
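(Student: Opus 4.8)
The plan is to deduce all three parts from Lemma~\ref{lem:cois} together with the foliation theory already established. Throughout I use that, by~\eqref{eq:Lambdash}, the characteristic distribution $\calK_S=\Lambda_J^\sharp(T^0S\otimes L)$ is generated by the restrictions $X_\lambda|_S$ of Hamiltonian vector fields with $\lambda\in\Gamma_S$, and that these are genuinely tangent to $S$ by Lemma~\ref{lem:cois}~\ref{enumitem:lem:cois_3}. For part~\ref{enumitem:cor:cois_1} I would mimic the proof of Theorem~\ref{theor:characteristi_foliation}. First, Lemma~\ref{lem:cois}~\ref{enumitem:lem:cois_2} gives $\{\Gamma_S,\Gamma_S\}\subset\Gamma_S$, so combined with $[X_\lambda,X_\mu]=X_{\{\lambda,\mu\}}$ from~\eqref{eq:iso} the generating family $\{X_\lambda|_S:\lambda\in\Gamma_S\}$ is closed under Lie bracket and $\calK_S$ is involutive. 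Second, since each such $X_\lambda$ is tangent to $S$, its Hamiltonian flow $\underline{\phi}_t$ restricts to local diffeomorphisms of $S$ and, being a Jacobi automorphism, satisfies $(\underline{\phi}_t)_\ast X_\mu=X_{(\phi_t^{-1})^\ast\mu}$; as $\underline{\phi}_t$ preserves $S$ it preserves $\Gamma_S$, so $(\underline{\phi}_t)_\ast(X_\mu|_S)$ is again a generator of $\calK_S$. Hence $(d_x\underline{\phi}_t)(\calK_S)_x=(\calK_S)_{\underline{\phi}_t(x)}$, the rank of $\calK_S$ is constant along these flow lines, and complete integrability à la Stefan--Sussmann follows exactly as in Theorem~\ref{theor:characteristi_foliation}.

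Part~\ref{enumitem:cor:cois_2} is immediate. If $S$ is a Jacobi submanifold, then by Proposition~\ref{prop:Jacobi_submanifolds} the module $\Gamma_S$ is a Lie ideal of $\Gamma(L)$, a fortiori a Lie subalgebra, so $S$ is coisotropic by Lemma~\ref{lem:cois}~\ref{enumitem:lem:cois_2}. Moreover $\calK_S=0$, since Corollary~\ref{cor:Jacobi submanifolds}~\ref{enumitem:cor:Jacobi_submanifolds_c} yields $X_\lambda|_S=0$ for every $\lambda\in\Gamma_S$, and these vector fields generate $\calK_S$.

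For part~\ref{enumitem:cor:cois_3}, which I expect to be the main obstacle, I would argue pointwise at $x\in S$ lying on a characteristic leaf $\calC$, so that $T_x\calC=\calK_x\supseteq\im(\Lambda_J^\sharp)_x$ (the image lies in $\calK_x$ since $\Lambda_J^\sharp(df\otimes\lambda)=X_{f\lambda}-fX_\lambda$ is a combination of Hamiltonian vector fields). The first key step is that $(\Lambda_J^\sharp)_x$ annihilates $(T_x\calC)^0\otimes L_x$: if $\eta\in(T_x\calC)^0$, then $\eta$ kills $\im\Lambda_J^\sharp$, and skew-symmetry of $\Lambda_J$ gives $\langle\Lambda_J^\sharp(\eta\otimes\lambda),\theta\rangle\mu=\Lambda_J(\eta\otimes\lambda,\theta\otimes\mu)=-\langle\Lambda_J^\sharp(\theta\otimes\mu),\eta\rangle\lambda=0$ for all $\theta,\mu$, whence $\Lambda_J^\sharp(\eta\otimes\lambda)=0$. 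Thus $(\Lambda_J^\sharp)_x$ descends along the restriction $\iota^\ast\colon T_x^\ast M\to T_x^\ast\calC$ to a map $T_x^\ast\calC\otimes L_x\to T_x\calC$, which one identifies with $(\Lambda_{J_\calC}^\sharp)_x$ for the induced transitive Jacobi structure $J_\calC$ on $L|_\calC$ furnished by Theorem~\ref{theor:characteristi_foliation}. Using the elementary annihilator identity $\iota^\ast\big((T_xS)^0\big)=(T_xS\cap T_x\calC)^{0}$ (annihilator taken in $T_x^\ast\calC$) I then get
\[
\Lambda_J^\sharp\big((T_xS)^0\otimes L_x\big)=\Lambda_{J_\calC}^\sharp\big((T_xS\cap T_x\calC)^{0}\otimes L_x\big)\subseteq T_x\calC.
\]
Since the right-hand side already lies in $T_x\calC$, the inclusion in $T_xS$ is equivalent to the inclusion in $T_xS\cap T_x\calC$, i.e.\ to $T_xS\cap T_x\calC$ being coisotropic in $T_x\calC$; running this over all $x\in S$ and all leaves meeting $S$ (the characteristic foliation covers $M$) gives the stated characterization. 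The delicate point is precisely the identification of the descended map with $\Lambda_{J_\calC}^\sharp$, for which I would compute $\Lambda_{J_\calC}$ from the restricted bracket $\{-,-\}_\calC$ via~\eqref{eq:LambdaJ}, using that restriction to $\calC$ commutes with the Hamiltonian construction.
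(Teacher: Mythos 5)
Your proposal is correct and follows essentially the same route as the paper: part (1) mimics the proof of Theorem~\ref{theor:characteristi_foliation} using involutivity from $[X_\lambda,X_\mu]=X_{\{\lambda,\mu\}}$ and invariance of $\calK_S$ under the Hamiltonian flows of sections in $\Gamma_S$; part (2) combines Proposition~\ref{prop:Jacobi_submanifolds} with Corollary~\ref{cor:Jacobi submanifolds}~\ref{enumitem:cor:Jacobi_submanifolds_c}; and part (3) rests on exactly the two identities the paper records as~\eqref{eq:int1}, namely $\Lambda_{J_\calC}^\sharp(i^\ast\xi\otimes\lambda)=\Lambda_J^\sharp(\xi\otimes\lambda)$ and $(TS\cap T\calC)^{0_\calC}\otimes L|_\calC=i^\ast(T^0S)\otimes L|_\calC$. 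Your extra justification that $\Lambda_J^\sharp$ kills $(T\calC)^0\otimes L$ via skew-symmetry, so that it genuinely descends along $i^\ast$, is a correct and slightly more explicit rendering of what the paper asserts directly.
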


\begin{proof} \
	(1)
	Vector fields $X_\lambda|_S$, with $\lambda\in\Gamma_S$, generate the singular distribution $\calK_S$, and form a Lie subalgebra of $\underline{\smash{\mathfrak{ham}}}(M,L,J)$.
	Indeed, for $\lambda, \mu \in \Gamma_S$,
	\begin{equation}
	\label{eq:com1}
	[X_\lambda, X_\mu] = X_{\{\lambda, \mu\}}
	\end{equation}
	is again in $\calK_S$, because of Lemma~\ref{lem:cois}~\ref{enumitem:lem:cois_3} and in view of what remarked in Section~\ref{subsec:Jacobi_morphisms} (see, e.g., Equation~\eqref{eq:iso}).
	Hence $\calK_S$ is involutive, and it only remains to check that the rank of $\calK_S$ is constant along the flow lines of these generating vector fields.
	
	Fix an arbitrary $\lambda\in\Gamma_S$.
	Denote by $\phi_t$ (resp.~$\underline{\smash{\phi}}_t$) the smooth $1$-parameter group of Jacobi automorphisms of $(M,L,J)$ (resp.~diffeomorphisms of $M$) which is generated by $\Delta_\lambda$ (resp.~$X_\lambda$).
	Clearly the local line bundle automorphism $\phi_t:L\to L$ covers the local diffeomorphism $\underline{\smash{\phi}}_t:M\to M$, and the latter fix $S$ point-wise.
	Then, as remarked in Section~\ref{subsec:Jacobi_morphisms}, we have:
	\begin{equation*}
	(D\phi_t)\circ\Delta_\mu=\Delta_\nu\circ\underline{\smash{\phi}}_t,\qquad (d\phi_t)\circ X_\mu=X_\nu\circ\underline{\smash{\phi}}_t,
	\end{equation*}
	for all $\mu,\nu\in\Gamma(L)$, with $\mu=\phi_t^\ast\nu$, and additionally $\phi_t^\ast\Gamma_S=\Gamma_S$.
	As a consequence, the tangent map to $\underline{\smash{\phi}}_t$ preserves the fibers of $\calK_S$.
	
	All the assumptions of the Frobenius Theorem for singular distributions are satisfied.
	So $\calK$ is completely integrable à la Stefan--Sussmann.
	
	(2) If $S$ is a Jacobi submanifold, i.e.~$X_\lambda$ is tangent to $S$ for all sections $\lambda\in\Gamma(L)$ (cf.~Proposition~\ref{prop:Jacobi_submanifolds}~\ref{enumitem:prop:Jacobi_submanifolds4}), then, a fortiori, $S$ is a coisotropic submanifold, with additionally $\calK_S=0$.
	Indeed $X_\lambda$ vanishes on $S$ for all section $\lambda\in\Gamma(L)$ vanishing on $S$ (cf.~Corollary~\ref{cor:Jacobi submanifolds}~\ref{enumitem:cor:Jacobi_submanifolds_c} and Lemma~\ref{lem:cois}~\ref{enumitem:lem:cois_3}).
	
	(3) For $V \subset T\calC$ let $V^{0 _\calC}$ denote the annihilator of $V$ in $T^*\calC$.
	Noting that $\calC$ is coisotropic, the transitivity of $(\calC, L|_\calC, \{-,-\}_\calC)$ and~\eqref{eq:com1} imply that the restriction to $\calC$ of an Hamiltonian vector field on $M$ is an Hamiltonian vector field.
	Denote by $i: \calC \injects M$ the inclusion.
	Then for any $\xi \in T^*M$, $\lambda \in L$ and for any submanifold $S$ in $M$ we have
	\begin{equation}
	\Lambda_{J_\calC}^\sharp  (i^*\xi \otimes \lambda)= \Lambda_J^\sharp  (\xi \otimes \lambda) \quad \text{and} \quad (TS\cap T\calC)^{0_\calC} \otimes L|_\calC = i ^* (T^0 S ) \otimes L|_\calC.\label{eq:int1}
	\end{equation}
	Hence, if $S$ is coisotropic, we have
	\[
	\Lambda _{J_\calC}^\sharp  ((TS \cap T\calC)^{0_\calC} \otimes L|_\calC) \subset TS\cap T\calC,
	\]
	i.e.~$TS \cap T\calC$ is coisotropic in $T\calC$.
	Conversely, assume that $TS \cap T\calC$ is coisotropic in $T\calC$, i.e.~$\Lambda_{J_\calC}^\sharp  ((TS \cap T\calC)^{0_\calC} \otimes L|_\calC) \subset TS\cap T\calC$.
	Using~\eqref{eq:int1} we obtain
	immediately
	\[
	\Lambda ^\sharp  (T^0 S \otimes L\, |_\calC) = \Lambda_{J_\calC} ^\sharp  (i^* (T^0 S ) \otimes L|_\calC) = \Lambda^\sharp _{J_\calC} ((TS \cap TC ) ^ {0_\calC} \otimes L|_\calC)\subset TS \cap T\calC.
	\]
\end{proof}

\section{Jacobi subalgebroid associated with a closed co\-iso\-tro\-pic submanifold}
\label{sec:Jacobi_subalgbd_of_a_coisotropic_submanifold}

Since in this thesis we will be interested in deformations of a \emph{closed} coisotropic submanifold, from now on we assume that $S$ is a closed submanifold in a smooth manifold $M$.
Let $A \to M$ be a Lie algebroid.
Recall that a \emph{subalgebroid of $A$ over $S$} is a vector subbundle $B \to S$, with embeddings $j : B \injects A$ and $\underline{j} : S \injects M$, such that the anchor $\rho : A \to TM$ descends to a (necessarily unique) vector bundle morphism $\rho_B : B \to TS$, making diagram
\begin{equation*}
	\begin{tikzcd}
		B \arrow[r, "j"]\arrow[d, swap, "\rho_B"] & A\arrow[d, "\rho"]\\
		TS \arrow[r, swap, "d\underline{\smash{j}}"] & TM
	\end{tikzcd}
\end{equation*}
commutative and, moreover, for all $\beta, \beta^\prime \in \Gamma (B)$ there exists a (necessarily unique) section $[\beta,\beta^\prime]_B \in \Gamma (B)$ such that whenever $\alpha, \alpha^\prime \in \Gamma (A)$ are $j$-related to $\beta, \beta^\prime$ (i.e.~$j \circ \beta = \alpha \circ \underline{j}$, in other words $\alpha |_S = \beta$, and similarly for $\beta^\prime, \alpha^\prime$) then $[\alpha, \alpha^\prime]_A$ is $j$-related to $[\beta, \beta^\prime]_B$.
In this case $B$, equipped with $\rho_B$ and $[-,-]_B$, is a Lie algebroid itself.
One can also give a notion of \emph{Jacobi subalgebroid} as follows.

Let $(A,L)$ be a Jacobi algebroid with flat $A$-connection $\nabla$ in $L$.

\begin{definition}
	A \emph{Jacobi subalgebroid of $(A,L)$ over S} is a pair $(B, \ell)$, where $B \to S$ is a Lie subalgebroid of $A$ over $S \subset M$, and $\ell := L|_S \to S$ is the pull-back line subbundle of $L$, such that $\nabla$ descends to a (necessarily unique) vector bundle morphism $\nabla|_\ell$ making diagram
	\begin{equation*}
	\begin{tikzcd}
		B \arrow[r, "j"]\arrow[d, swap, "\nabla|_\ell"] & A\arrow[d, "\nabla"]\\
		\der\ell\arrow[r, swap, "\der j_\ell"]&\der L
	\end{tikzcd}
	\end{equation*}	
	commutative.
	Here $j_\ell : \ell \injects L$ is the inclusion (cf.~Section~\ref{sec:app_0} for a definition of morphism $\der  j_\ell$).
\end{definition}

If $(B,\ell)$ is a Jacobi subalgebroid, then the restriction $\nabla|_\ell$ is a representation so that $(B, \ell)$, equipped with $\nabla|_\ell$, is a Jacobi algebroid itself.

Now, let $(M,L,J=\{-,-\})$ be a Jacobi manifold, and let $S$ be a submanifold.
In what follows, we denote by
\begin{itemize}
	\item $\ell :=L|_S$ the restricted line bundle,
	\item $NS:=TM|_S/TS$ the normal bundle of $S$ in $M$,
	\item $N^\ast S:= (NS)^\ast\simeq T^0 S \subset T^\ast M$ the conormal bundle of $S$ in $M$,
	\item $N_\ell S := NS \otimes \ell^\ast$, and by
	\item $N_\ell{}^\ast S := (N_\ell S)^\ast = N^\ast S \otimes \ell$ the $\ell$-adjoint bundle of $NS$.
\end{itemize}
Vector bundle $N_\ell {}^\ast S$ will be also regarded as a vector subbundle of $(J^1 L)|_S$ via the vector bundle embedding
\begin{equation*}
	N_\ell {}^\ast S \lhook\joinrel\longrightarrow (T^\ast M \otimes L)|_S \overset{\gamma}{\longrightarrow} J^1 L|_S,
\end{equation*}
where $\gamma$ is the co-symbol.
If $\lambda\in\Gamma(L)$, we have that $(j^1\lambda)|_S\in \Gamma (N_\ell {}^\ast S) $ if and only if $\lambda|_S=0$, i.e.~$\lambda\in\Gamma_S$.

The following Proposition establishes a one-to-one correspondence between co\-iso\-tro\-pic submanifolds and certain Lie subalgebroids of $J^1 L$.

\begin{proposition}
	\label{prop:conormal}
	(cf.~\cite[Proposition 5.2]{iglesias2003jacobi}) The submanifold $S \subset M$ is coisotropic iff $(N_\ell {}^\ast S, \ell)$ is a Jacobi subalgebroid of $(J^1 L, L)$.
\end{proposition}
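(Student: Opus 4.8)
The plan is to test the three defining conditions of a Jacobi subalgebroid—anchor descent, bracket closure, and descent of the representation—against the two equivalent characterizations of coisotropicity furnished by Lemma~\ref{lem:cois}. Throughout I would use the explicit structure maps of the Jacobi algebroid $(J^1L,L)$ associated with $J$ (Proposition~\ref{prop:associated_Jacobi_algbd}): on $1$-jets these read $\rho_J(j^1\lambda)=X_\lambda$, $[j^1\lambda,j^1\mu]_J=j^1\{\lambda,\mu\}$, and $\nabla^J_{j^1\lambda}=\Delta_\lambda$, while on the image of the co-symbol the anchor satisfies $\rho_J\circ\gamma=\sigma\circ\hat{\Lambda}_J^\sharp\circ\gamma=\Lambda_J^\sharp$ (the commutative square of Section~\ref{sec:jac_bilinear_forms}). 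I would also record at the outset that $\Gamma(N_\ell{}^\ast S)$ is generated, as a $C^\infty(S)$-module, by the restrictions $(j^1\lambda)|_S$ with $\lambda\in\Gamma_S$: writing locally $\lambda=f\mu$ with $f\in I_S$ gives $(j^1\lambda)|_S=\gamma(df|_S\otimes\mu|_S)$, and $df|_S$ exhausts $N^\ast S=T^0S$ as $f$ ranges over $I_S$.

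First I would match the anchor condition with coisotropicity. Under the embedding $N_\ell{}^\ast S\hookrightarrow(J^1L)|_S$ via $\gamma$, the relation $\rho_J\circ\gamma=\Lambda_J^\sharp$ yields $\rho_J(N_\ell{}^\ast S)=\Lambda_J^\sharp(T^0S\otimes\ell)=\calK_S$. Hence $\rho_J$ descends to $TS$ exactly when $\Lambda_J^\sharp(T^0_xS\otimes L_x)\subset T_xS$ for all $x\in S$, that is, exactly when $S$ is coisotropic. This already delivers the ``only if'' direction: a Jacobi subalgebroid is in particular a Lie subalgebroid, so its anchor descends, forcing $S$ coisotropic.

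For the ``if'' direction I would assume $S$ coisotropic and dispatch the two remaining conditions. The anchor-tangency just obtained makes the bracket well defined on restrictions: if $\alpha_1|_S=\alpha_2|_S$ and $\alpha'|_S$ lie in $\Gamma(N_\ell{}^\ast S)$, then writing $\alpha_1-\alpha_2=\sum_i f_i\sigma_i$ with $f_i\in I_S$ and invoking the Leibniz rule together with $\rho_J(\alpha')|_S\in TS$ shows $[\alpha_1,\alpha']_J|_S=[\alpha_2,\alpha']_J|_S$. Closure I would check on generators via $[j^1\lambda,j^1\mu]_J|_S=(j^1\{\lambda,\mu\})|_S$, which lands in $N_\ell{}^\ast S$ precisely because $\{\lambda,\mu\}\in\Gamma_S$ by Lemma~\ref{lem:cois}~\ref{enumitem:lem:cois_2}; the Leibniz rule and anchor-tangency then propagate closure to arbitrary $C^\infty(S)$-combinations. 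For the representation, I would use that a derivation of $L$ restricts to $\ell=L|_S$ iff its symbol is tangent to $S$; since $\nabla^J_\alpha=\hat{\Lambda}_J^\sharp(\alpha)$ is $C^\infty(M)$-linear in $\alpha$, it suffices to note that on a generator $(j^1\lambda)|_S$ the symbol of $\nabla^J$ is $X_\lambda|_S$, tangent to $S$ by Lemma~\ref{lem:cois}~\ref{enumitem:lem:cois_3}, whence by linearity the descent holds on all of $N_\ell{}^\ast S$ and fits into the required commutative square.

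I expect the only genuinely delicate points to be the passage to the generating set $\{(j^1\lambda)|_S:\lambda\in\Gamma_S\}$ and the well-definedness of the restricted bracket; both are governed by the single geometric input $\calK_S\subset TS$, i.e.\ by coisotropicity itself, so once the anchor step is in place the remainder is bookkeeping with the Leibniz rule.
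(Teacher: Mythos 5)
Your proof is correct and follows essentially the same route as the paper's: reduce to the generators $(j^1\lambda)|_S$ with $\lambda\in\Gamma_S$, identify the anchor condition with coisotropicity via $\rho_J\circ\gamma=\Lambda_J^\sharp$, and verify bracket closure and descent of the representation using Lemma~\ref{lem:cois} together with the Leibniz rules. The only slip is cosmetic: you have swapped the labels ``if'' and ``only if'' relative to the statement (the anchor argument proves that a Jacobi subalgebroid forces $S$ to be coisotropic, which is the ``if'' part), but the mathematics is unaffected.
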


\begin{proof} Let $S \subset M$ be a coisotropic submanifold.
	We want to show that $N_\ell {}^\ast S$ is a Jacobi subalgebroid of $J^1 L$.
	We propose a proof which is shorter than the one in~\cite{iglesias2003jacobi}.
	Since $S$ is coisotropic, we have
	\begin{equation}
		\rho_J ( N_\ell {}^\ast S ) \subset TS , \label{eq:anchor1}
	\end{equation}
	and similarly
	\begin{equation}
		\nabla^J ( N_\ell {}^\ast S ) \subset \der  \ell .
	\end{equation}
	Next we shall show that for any $\alpha, \beta \in \Gamma (J^1 L)$ such that $\alpha|_S , \beta |_S \in \Gamma (N_\ell {}^\ast S)$ we have
	\begin{equation}
		[\alpha, \beta]_{J} |_S \in \Gamma (N_\ell {}^\ast S).
	\end{equation}
	First we note that if $\alpha |_S \in \Gamma (N_\ell {}^\ast S)$ then $\alpha = \sum f j^1 \lambda$ for some $\lambda \in \Gamma_S$.
	Using the Leibniz properties of the Jacobi bracket we can restrict to the case $\alpha, \beta \in j^1 \Gamma_S$.
	The latter case can be handled taking into account~\eqref{eq:lem:associated_Jacobi_algbd} and Lemma~\ref{lem:cois}.
	Moreover, using~\eqref{eq:prop:associated_Jacobi_algbd_bracket}, we easily check that
	\[
	[\alpha, \beta]_{J} |_S = 0 \text{ if } \alpha|_S = 0 \text{ and } \beta |_S \in \Gamma (N_\ell {}^\ast S).
	\]
	This completes the ``only if part'' of the proof.
	
	To prove the ``if part'' it suffices to note that condition~\eqref{eq:anchor1}, regarded as a condition on the image of the anchor map of the Lie subalgebroid $N_\ell {}^\ast S$, implies, in view of~\eqref{eq:prop:associated_Jacobi_algbd_anchor}, that
	$S$ is a coisotropic submanifold.
\end{proof}

\begin{remark}
	\label{rem:defcois}
	Proposition~\ref{prop:conormal} extends to the Jacobi setting a similar well-known result for coisotropic submanifolds of Poisson manifolds.
	See, e.g., \cite[Proposition 3.1.3]{Weinstein1988}, \cite[Proposition 5.1]{cattaneo2004integration}, and~\cite[Theorem 10.4.2]{mackenzie2005general}.
\end{remark}

\begin{remark}
	Let $(M,L,\{-,-\})$ be a Jacobi manifold, and let $S\subset M$ be a closed coisotropic submanifold.
	In view of Proposition~\ref{prop:conormal}, the characteristic distribution $\calK_S$ of $S$ coincides with $\rho_J(N_\ell{}^\ast S)$, i.e.~the characteristic distribution of the Jacobi algebroid $(N_\ell{}^\ast S,\ell)$  associated with $S$.
	This lead to a different point of view on Corollary~\ref{cor:cois}~\ref{enumitem:cor:cois_1}.
	Indeed the integrability of $\calK_S$ can also be seen as a further instance of the general result according to which the characteristic distribution of a Lie algebroid is always integrable à la Stefan--Sussmann~(cf., e.g.,~\cite[Section 2.2]{crainic2011lectures}).
\end{remark}

\section{Coisotropic submanifolds and Jacobi reduction}
\label{sec:Jacobi_reduction}

The reduction of Jacobi manifolds can be loosely understood as the replacement of a Jacobi manifold with a new ``smaller'' one obtained ``quotienting out some degrees of freedom''.
Actually a reason of interest in coisotropic submanifolds is that they naturally appear in the reduction of Jacobi manifolds.

Our initial setting consists of a Jacobi manifold $(M,L,J=\{-,-\})$, and a closed submanifold $S\subset M$, with restricted line bundle $\ell:=L|_S\to S$.
Generically $S$ is not a Jacobi submanifold of $(M,L,J)$, i.e.~$J$ does not induce a Jacobi structure on $\ell\to S$.
However under suitable hypotheses is still possible, by Jacobi reduction, to determine a Jacobi structure on a quotient of $\ell\to S$ as formalized in the following definition.

\begin{definition}
	\label{def:Jacobi_reduction}
	Let $L'\to M'$ be a line bundle, and $p:\ell\to L'$ be a regular line bundle morphism, covering a surjective submersion $\underline{\smash{p}}:S\to M'$.
	The submanifold $S\subset M$ is said to be \emph{Jacobi reducible} via $p$ if there is a (unique) Jacobi structure $J'=\{-,-\}'$ on $L'\to M'$ such that
	\begin{equation}
	\label{eq:def:Jacobi_reduction}
	\{\lambda,\mu\}|_S=p^\ast\{\lambda',\mu'\}'
	\end{equation}
	for all $\lambda,\mu\in\Gamma(L)$ and $\lambda',\mu'\in\Gamma(L')$ with $\lambda|_S=p^\ast\lambda'$ and $\mu|_S=p^\ast\mu'$.
	In such case we will also say that the \emph{Jacobi reduction} of $S$ is performed via $p$, and $(M',L',J')$ is the \emph{reduced Jacobi manifold}.
\end{definition}

\begin{remark}
	Jacobi structures live on non-necessarily trivial line bundles.
	Hence generically the surjective submersion $\underline{\smash{p}}:S\to M'$ is not enough to perform Jacobi reduction: it is also necessary a regular line bundle morphism $p:\ell\to L'$ covering $\underline{\smash{p}}$.
	So if $S$ is Jacobi reducible, not only $S$ is a foliated manifold with leaf space $M'$ and quotient map $\underline{\smash{p}}:S\to M'$, but additionally the reduced line bundle $\ell$ is isomorphic to the pull-back line bundle $\underline{\smash{p}}^\ast L'$.
\end{remark}

The next proposition gives an equivalent characterization of Jacobi reducibility.
\begin{proposition}
	\label{prop:Jacobi_reduction}
	Let $L'\to M'$ be a line bundle, and $p:\ell\to L'$ be a regular line bundle morphism, covering a surjective submersion $\underline{\smash{p}}:S\to M'$.
	Then $S$ is Jacobi reducible via $p$ if and only if the following conditions are satisfied:
	\begin{enumerate}[label=(\arabic*)]
		\item\label{enumitem:prop:Jacobi_reduction_1}
		$\{\lambda,\mu\}|_S\in p^\ast\Gamma(L')$ for all $\lambda,\mu\in\Gamma(L)$ such that $\lambda|_S,\mu|_S\in p^\ast\Gamma(L')$,
		\item\label{enumitem:prop:Jacobi_reduction_2}
		$\Delta_\lambda$, or equivalently $X_\lambda$, is tangent to $S$, for all $\lambda\in\Gamma(L)$ with $\lambda|_S\in p^\ast\Gamma(L')$.
	\end{enumerate}
	Moreover, in the affirmative case, for all $\lambda\in\Gamma(L)$, $\lambda'\in\Gamma(L')$ with $\lambda|_S=p^\ast\lambda'$, we have \begin{equation}
	\label{eq:prop:Jacobi_reduction}
	(Dp)\circ(\Delta_\lambda|_S)=\Delta_{\lambda'}\circ\underline{\smash{p}},\qquad (T\underline{\smash{p}})\circ (X_\lambda|_S)=X_{\lambda'}\circ\underline{\smash{p}}.
	\end{equation}
\end{proposition}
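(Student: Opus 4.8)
The plan is to treat the two implications of the equivalence separately and then to read off the two displayed formulas from the construction used in the backward implication. Throughout I will use that, since $\underline{\smash{p}}:S\to M'$ is a surjective submersion and $p:\ell\to L'$ is a regular (fiber-wise invertible) line bundle morphism covering it, the pullback $p^\ast:\Gamma(L')\to\Gamma(\ell)$ is injective with image the $\underline{\smash{p}}$-basic sections; moreover every $\lambda'\in\Gamma(L')$ admits an extension $\lambda\in\Gamma(L)$ with $\lambda|_S=p^\ast\lambda'$, since $S$ is closed and $\ell=L|_S$.

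For the ``only if'' part, assume $S$ is Jacobi reducible via $p$. Condition~\ref{enumitem:prop:Jacobi_reduction_1} is immediate: if $\lambda|_S=p^\ast\lambda'$ and $\mu|_S=p^\ast\mu'$ then $\{\lambda,\mu\}|_S=p^\ast\{\lambda',\mu'\}'\in p^\ast\Gamma(L')$ by~\eqref{eq:def:Jacobi_reduction}. For condition~\ref{enumitem:prop:Jacobi_reduction_2}, fix $\lambda$ with $\lambda|_S=p^\ast\lambda'$; I will show $X_\lambda(f)|_S=0$ for every $f\in I_S$, which is exactly tangency of $X_\lambda$ to $S$ (and hence, since $\Delta_\lambda$ has symbol $X_\lambda$ and $L$ is a line bundle, of $\Delta_\lambda$). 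Given $\mu\in\Gamma(L)$, the generalized Leibniz rule~\eqref{eq:genleib} gives $\{\lambda,f\mu\}|_S=X_\lambda(f)|_S\,\mu|_S$, because $f|_S=0$. On the other hand $(f\mu)|_S=0=p^\ast 0$, so~\eqref{eq:def:Jacobi_reduction} forces $\{\lambda,f\mu\}|_S=p^\ast\{\lambda',0\}'=0$. Thus $X_\lambda(f)|_S\,\mu|_S=0$ for all $\mu$, and choosing $\mu$ locally nowhere-vanishing yields $X_\lambda(f)|_S=0$.

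For the ``if'' part, assume~\ref{enumitem:prop:Jacobi_reduction_1} and~\ref{enumitem:prop:Jacobi_reduction_2} and define a bracket on $\Gamma(L')$ by choosing extensions $\lambda,\mu$ of $p^\ast\lambda',p^\ast\mu'$ and letting $\{\lambda',\mu'\}'$ be the unique section with $p^\ast\{\lambda',\mu'\}'=\{\lambda,\mu\}|_S$; this is legitimate by~\ref{enumitem:prop:Jacobi_reduction_1} and injectivity of $p^\ast$. The crucial point is independence of the chosen extensions: if $\lambda_1,\lambda_2$ both restrict to $p^\ast\lambda'$ then $\alpha:=\lambda_1-\lambda_2\in\Gamma_S$, and for any $\mu$ with $\mu|_S\in p^\ast\Gamma(L')$ condition~\ref{enumitem:prop:Jacobi_reduction_2} says $\Delta_\mu$ is tangent to $S$, hence $\Delta_\mu(\Gamma_S)\subset\Gamma_S$, so $\{\alpha,\mu\}|_S=-(\Delta_\mu\alpha)|_S=0$; the same argument in the second slot handles a change of $\mu$. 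Skew-symmetry of $\{-,-\}'$ is inherited from $\{-,-\}$, and the Jacobi identity follows by pulling it back: since $\{\lambda,\mu\}$ is itself an extension of $\{\lambda',\mu'\}'$, one gets $p^\ast\{\{\lambda',\mu'\}',\nu'\}'=\{\{\lambda,\mu\},\nu\}|_S$, and summing cyclically kills the right-hand side, whence injectivity of $p^\ast$ gives the identity for $\{-,-\}'$.

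It remains to see that $\{-,-\}'$ is a first order bi-differential operator, and here both hypotheses conspire. Fix $\lambda'$ with extension $\lambda$, pick $f'\in C^\infty(M')$ and extend $\underline{\smash{p}}^\ast f'$ to $f\in C^\infty(M)$, and take $\mu$ extending $p^\ast\mu'$. Then~\eqref{eq:genleib} gives $\{\lambda,f\mu\}|_S=p^\ast(f'\{\lambda',\mu'\}')+X_\lambda(f)|_S\,\mu|_S$. By~\ref{enumitem:prop:Jacobi_reduction_1} the left-hand side is basic and the first term on the right is basic, so $X_\lambda(f)|_S\,\mu|_S$ is basic; taking $\mu'$ locally nowhere-vanishing shows $X_\lambda(f)|_S$ is a basic function, i.e.\ $X_\lambda|_S$ is $\underline{\smash{p}}$-projectable. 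Denoting its projection by $X_{\lambda'}$, the identity above becomes $\{\lambda',f'\mu'\}'=f'\{\lambda',\mu'\}'+X_{\lambda'}(f')\mu'$, so $\{\lambda',-\}'$ is a derivation of $L'$ with symbol $X_{\lambda'}$; this is precisely the second formula in~\eqref{eq:prop:Jacobi_reduction}, and the derivation-level statement $(Dp)\circ(\Delta_\lambda|_S)=\Delta_{\lambda'}\circ\underline{\smash{p}}$ then follows by evaluating on an arbitrary $\mu'$ and using $(\Delta_\lambda|_S)(p^\ast\mu')=\{\lambda,\mu\}|_S=p^\ast(\Delta_{\lambda'}\mu')$ with the definition of $Dp$. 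Uniqueness of $J'$ is clear, since~\eqref{eq:def:Jacobi_reduction} together with injectivity of $p^\ast$ determines every $\{\lambda',\mu'\}'$. The main obstacle is this last paragraph: establishing that the reduced Hamiltonian vector fields are well defined (equivalently, the first-order-operator property of $\{-,-\}'$) genuinely requires combining the tangency furnished by~\ref{enumitem:prop:Jacobi_reduction_2} with the basicness furnished by~\ref{enumitem:prop:Jacobi_reduction_1}, and one must be careful that \emph{projectability}, not merely tangency, holds.
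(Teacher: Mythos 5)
Your proof is correct and follows essentially the same route as the paper's: conditions~\ref{enumitem:prop:Jacobi_reduction_1} and~\ref{enumitem:prop:Jacobi_reduction_2} are exactly what is needed for the bracket to descend through the diagram $\Gamma_p\twoheadrightarrow\Gamma(L')$, and the descended bracket is then automatically a Jacobi structure, with~\eqref{eq:prop:Jacobi_reduction} read off from the Leibniz rule. You merely make explicit the steps the paper's terse proof leaves implicit — well-definedness of the extension-independent bracket via the tangency in~\ref{enumitem:prop:Jacobi_reduction_2}, and the projectability (not just tangency) of $X_\lambda|_S$ obtained by combining both hypotheses — all of which is sound.
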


\begin{proof}
	On the one hand,~\ref{enumitem:prop:Jacobi_reduction_1} and~\ref{enumitem:prop:Jacobi_reduction_2} represent the necessary and sufficient condition for the existence of a (unique) binary operation $\{-,-\}'$ on $\Gamma(L')$ satisfying~\eqref{eq:def:Jacobi_reduction}, or equivalently fitting in the following commutative diagram
	\begin{equation*}
	\begin{tikzcd}
	\Gamma(L)\times\Gamma(L)\arrow[d, swap, "{\{-,-\}}"]&\Gamma_p\times\Gamma_p\arrow[l, hook']\arrow[r, two heads]\arrow[d, dotted]&\Gamma(L')\times\Gamma(L')\arrow[d, dotted, "{\{-,-\}'}"]\\
	\Gamma(L)&\Gamma_p\arrow[l, hook']\arrow[r, two heads]&\Gamma(L')
	\end{tikzcd}
	\end{equation*}
	where $\Gamma_p$ denotes the space of those sections $\lambda\in\Gamma(L)$ such that $\lambda|_S\in p^\ast\Gamma(L')$.
	On the other hand, if there is a binary operation $J'=\{-,-\}'$ on $\Gamma(L')$ satisfying~\eqref{eq:def:Jacobi_reduction}, then it is automatically a Jacobi structure on $L'\to M'$ as well.
	In particular, for arbitrary  $\lambda'\in\Gamma(L')$ and $\lambda\in\Gamma(L)$ with $p^\ast\lambda'=\lambda|_S$, from~\eqref{eq:def:Jacobi_reduction} it follows that $\Delta_{\lambda'}:=\{\lambda',-\}'$ is a derivation of $L'\to M'$ with symbol $X_{\lambda'}\in\frakX(M')$, and they are completely determined by
	\begin{equation*}
	p^\ast(\Delta_{\lambda'}\mu')=(\Delta_\lambda \mu)|_S,\qquad\underline{\smash{p}}^\ast(X_{\lambda'}f')=(X_\lambda f)|_S,
	\end{equation*}
	for all $\mu\in\Gamma(L)$, $\mu'\in\Gamma(L')$, and $f\in C^\infty(M)$, $f'\in C^\infty(M')$ such that $p^\ast\mu'=\mu|_S$ and $\underline{\smash{p}}^\ast f'=f|_S$, i.e.~\eqref{eq:prop:Jacobi_reduction} is satisfied.
\end{proof}

\begin{corollary}
	\label{cor:Jacobi_reduction}
	\leavevmode
	\begin{enumerate}[label=(\alph*)]
		\item
		\label{enumitem:cor:Jacobi_reduction_a} 
		The submanifold $S$ is Jacobi reducible via the identity map $id_\ell:\ell\to\ell$ if and only if $S$ is a Jacobi submanifold of $(M,L,J)$.
		\item
		\label{enumitem:cor:Jacobi_reduction_b}
		If $S$ is Jacobi reducible, then it is a coisotropic submanifold of $(M,L,J)$.
	\end{enumerate}
\end{corollary}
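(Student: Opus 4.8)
The plan is to derive both statements as immediate specialisations of the reducibility criterion established in Proposition~\ref{prop:Jacobi_reduction}, matching its conditions~\ref{enumitem:prop:Jacobi_reduction_1} and~\ref{enumitem:prop:Jacobi_reduction_2} against the characterisation of Jacobi submanifolds (Proposition~\ref{prop:Jacobi_submanifolds}) and that of coisotropic submanifolds (Lemma~\ref{lem:cois}).

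For part~\ref{enumitem:cor:Jacobi_reduction_a}, I would take $p=\id_\ell$, which covers $\underline{\smash{p}}=\id_S$ (trivially a surjective submersion) and satisfies $p^\ast\Gamma(L')=\Gamma(\ell)$. Since $\lambda|_S\in\Gamma(\ell)$ for \emph{every} $\lambda\in\Gamma(L)$, condition~\ref{enumitem:prop:Jacobi_reduction_1} of Proposition~\ref{prop:Jacobi_reduction} holds automatically, while condition~\ref{enumitem:prop:Jacobi_reduction_2} collapses to the requirement that $X_\lambda$ (equivalently $\Delta_\lambda$) be tangent to $S$ for all $\lambda\in\Gamma(L)$. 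By Proposition~\ref{prop:Jacobi_submanifolds}~\ref{enumitem:prop:Jacobi_submanifolds4} this is exactly the condition that $S$ be a Jacobi submanifold, giving the desired equivalence in both directions; moreover the reduced bracket then coincides with $J_S=J|_S$ from Corollary~\ref{cor:Jacobi submanifolds}~\ref{enumitem:cor:Jacobi_submanifolds_a}.

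For part~\ref{enumitem:cor:Jacobi_reduction_b}, I would assume $S$ is Jacobi reducible via some $p:\ell\to L'$ covering a surjective submersion $\underline{\smash{p}}:S\to M'$. The key observation is that the zero section always belongs to $p^\ast\Gamma(L')$, whence $\Gamma_S\subseteq\Gamma_p$, where $\Gamma_p$ denotes the sections $\lambda\in\Gamma(L)$ with $\lambda|_S\in p^\ast\Gamma(L')$. Applying condition~\ref{enumitem:prop:Jacobi_reduction_2} of Proposition~\ref{prop:Jacobi_reduction} to the sections $\lambda\in\Gamma_S$ then shows that $X_\lambda$ is tangent to $S$ for all $\lambda\in\Gamma_S$, which by Lemma~\ref{lem:cois}~\ref{enumitem:lem:cois_3} is precisely the coisotropicity of $S$.

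I do not expect a genuine obstacle: once Proposition~\ref{prop:Jacobi_reduction} is in hand, both parts are formal. The only points demanding attention are the elementary bookkeeping of what $p^\ast\Gamma(L')$ equals in each specialisation and the remark that $0\in p^\ast\Gamma(L')$, which is exactly what lets condition~\ref{enumitem:prop:Jacobi_reduction_2} be tested against the vanishing sections $\Gamma_S$ in part~\ref{enumitem:cor:Jacobi_reduction_b}.
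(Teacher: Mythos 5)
Your proposal is correct and follows essentially the same route as the paper: both parts are obtained by specialising Proposition~\ref{prop:Jacobi_reduction}, with condition~\ref{enumitem:prop:Jacobi_reduction_1} becoming vacuous for $p=\id_\ell$ in part~\ref{enumitem:cor:Jacobi_reduction_a}, and condition~\ref{enumitem:prop:Jacobi_reduction_2} tested against the sections vanishing on $S$ in part~\ref{enumitem:cor:Jacobi_reduction_b}. Your explicit remark that $0\in p^\ast\Gamma(L')$, hence $\Gamma_S\subseteq\Gamma_p$, is exactly the "in particular" step the paper leaves implicit.
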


\begin{proof}
	(a) In Proposition~\ref{prop:Jacobi_reduction}, with $M'=S$, $L'=\ell$, and $p=\id_\ell$, condition~\ref{enumitem:prop:Jacobi_reduction_1} is empty, while condition~\ref{enumitem:prop:Jacobi_reduction_2} becomes $X_\lambda|_S\in\frakX(S)$, for all $\lambda\in\Gamma(L)$, i.e.~$S$ is a Jacobi submanifold of $(M,L,J)$.
	
	(b) Condition~\ref{enumitem:prop:Jacobi_reduction_2} from Proposition~\ref{prop:Jacobi_reduction} implies that $X_\lambda|_S\in\frakX(S)$, in particular, for all $\lambda\in\Gamma(L)$ such that $\lambda|_S=0$, i.e.~$S$ is a coisotropic submanifold of $(M,L,J)$.
\end{proof}

Adapting our initial setting, we will assume now, till the end of the section, that the submanifold $S$ is coisotropic in $(M,L,J)$.
Denote by $I_S\subset C^\infty(S)$ the ideal of functions vanishing on $S$ and by $\Gamma_S\subset\Gamma(L)$ the $C^\infty(M)$-submodule and Lie subalgebra of sections vanishing on $S$.
Define the associative subalgebra $N(I_S)\subset C^\infty(M)$, and the Lie subalgebra $N(\Gamma_S)\subset\Gamma(L)$ by setting
\begin{align*}
N(\Gamma_S)&:=\{\nu\in\Gamma(L)\colon\{\Gamma_S,\nu\}\subset\Gamma_S\},\\
N(I_S)&:=\{f\in C^\infty(M)\colon X_\lambda(f)\in I_S,\ \textnormal{for all}\ \lambda\in\Gamma_S\}.
\end{align*}
Clearly, $N(\Gamma_S)$ is the normalizer of $\Gamma_S$ in $\Gamma (L)$, and consists of those sections $\lambda\in\Gamma(L)$ such that $X_\lambda$ is tangent to $S$.
Moreover $N(I_S)$ consists of those functions $f\in C^\infty(M)$ which are constant along the leaves of $\calF_S$.

The pair $(C^\infty(M_{\textnormal{red}}),\Gamma(L_{\textnormal{red}})):=(N(I_S)/I_S,N(\Gamma_S)/\Gamma_S)$ admits an obvious structure of Gerstenhaber-Jacobi algebra (concentrated in degree $0$), that we call the \emph{reduced Gerstenhaber-Jacobi algebra of $S$}.
The latter is morally the Gerstenhaber--Jacobi algebra of the ``singular'' Jacobi manifold $(M_{\textnormal{red}},L_{\textnormal{red}},\{-,-\}_{\textnormal{red}})$ obtained by performing a singular reduction of $S$ wrt its characteristic foliation $\calF_S$.

\begin{remark}
	\label{rem:linfty_cohom_resolution}
	For future reference let us point out here that, if $S$ is closed, then there exists a canonical module isomorphism $\phi:\Gamma(L_{\textnormal{red}})\to H^0(N_\ell{}^\ast S,\ell)$, covering an algebra isomorphism $\underline{\smash\phi}:C^\infty(M_{\textnormal{red}})\to H^0(N_\ell{}^\ast S)$, defined by $\underline{\smash\phi}(f+I_S)=[f|_S]$, and $\phi(\lambda+\Gamma_S)=[\lambda|_S]$, for all $f\in N(I_S)$, and $\lambda\in N(\Gamma_S)$.
\end{remark}

In general the converse of the implication in Corollary~\ref{cor:Jacobi_reduction}(b) does not hold, that is a coisotropic submanifold generically is not Jacobi reducible.
However a generically non-trivial sufficient condition for the Jacobi reducibility of a coisotropic submanifold is pointed out by the next proposition.
\begin{proposition}
	\label{prop:reduction_of_coisotropic_submanifolds}
	Assume that $S$ is coisotropic in $(M,L,J)$.
	Let $L'\to M'$ be a line bundle, and $p:\ell\to L'$ be a regular line bundle morphism, covering a surjective submersion $\underline{\smash{p}}:S\to M'$.
	If the fibers of $\underline{\smash{p}}:S\to M'$ are connected, and
	\begin{equation}
	\label{eq:prop:reduction_of_coisotropic_submanifolds_1}
	\ker(Dp)=(\hat{\Lambda}_J^\sharp\circ\gamma)(T^0S\otimes\ell),
	\end{equation}
	then $S$ is Jacobi reducible via $p$.
	Additionally, denoting by $(M',L',J'=\{-,-\}')$ the reduced Jacobi manifold, there is a Gerstenhaber--Jacobi algebra isomorphism
	\begin{equation}
	\label{eq:prop:reduction_of_coisotropic_submanifolds_2}
	(C^\infty(M_{\textnormal{red}}),\Gamma(L_{\textnormal{red}}),\{-,-\}_{\textnormal{red}})\simeq(C^\infty(M'),\Gamma(L'),\{-,-\}'),
	\end{equation}
	such that $f\Mod I_S\simeq f'$ and $\lambda\Mod\Gamma_S\simeq\lambda'$ iff $f|_S=p^\ast f'$ and $\lambda|_S=p^\ast\lambda'$.
\end{proposition}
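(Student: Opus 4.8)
The plan is to verify the two reducibility conditions of Proposition~\ref{prop:Jacobi_reduction} by identifying the space $\Gamma_p := \{\lambda\in\Gamma(L) : \lambda|_S\in p^\ast\Gamma(L')\}$ with the normalizer $N(\Gamma_S)$, and then to read off the Gerstenhaber--Jacobi isomorphism~\eqref{eq:prop:reduction_of_coisotropic_submanifolds_2} from the resulting descriptions of $\Gamma(L')$ and $C^\infty(M')$. First I would unravel the hypothesis on $\ker(Dp)$. Using the identity $(\hat{\Lambda}_J^\sharp\circ\gamma)(df\otimes\lambda)=\Delta_{f\lambda}-f\Delta_\lambda$ from the last display of Section~\ref{sec:jac_bilinear_forms} and restricting to $S$ (where $f\in I_S$, hence $f\lambda\in\Gamma_S$), one sees that $(\hat{\Lambda}_J^\sharp\circ\gamma)(T^0S\otimes\ell)$ is exactly the sub-bundle of $D\ell$ generated by the restricted Hamiltonian derivations $\Delta_\mu|_S$ with $\mu\in\Gamma_S$; its symbol is the characteristic distribution $\calK_S$. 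Thus the hypothesis reads $\ker(Dp)=\langle\Delta_\mu|_S : \mu\in\Gamma_S\rangle$, and in particular $\ker(d\underline{p})=\calK_S=T\calF_S$, so the connected fibres of $\underline{p}$ are precisely the leaves of the characteristic foliation $\calF_S$ from Corollary~\ref{cor:cois}~\ref{enumitem:cor:cois_1}.

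Next I would establish the descent statement $\Gamma_p=N(\Gamma_S)$. Since $Dp(\delta)\lambda'=\delta(p^\ast\lambda')$, any section of the form $p^\ast\lambda'$ is annihilated by every $\delta\in\ker(Dp)$; conversely, a section of $\ell$ annihilated by $\ker(Dp)$ is flat along the connected fibres of $\underline{p}$ for the flat partial $\calF_S$-connection furnished by the $\Delta_\mu|_S$, and hence descends to a section of $L'$. Therefore $\lambda|_S\in p^\ast\Gamma(L')$ iff $\{\mu,\lambda\}|_S=\Delta_\mu(\lambda)|_S=0$ for all $\mu\in\Gamma_S$, that is, iff $\lambda\in N(\Gamma_S)$. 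The same argument applied to functions, using that $\calK_S$ is generated by the $X_\lambda|_S$ with $\lambda\in\Gamma_S$, yields $\{f : f|_S\in\underline{p}^\ast C^\infty(M')\}=N(I_S)$. The reducibility conditions are now immediate: condition~\ref{enumitem:prop:Jacobi_reduction_2} of Proposition~\ref{prop:Jacobi_reduction} holds because every $\lambda\in N(\Gamma_S)$ has $X_\lambda$ tangent to $S$, and condition~\ref{enumitem:prop:Jacobi_reduction_1} holds because $\Gamma_S$ is a Lie subalgebra by coisotropy (Lemma~\ref{lem:cois}), so its normalizer $N(\Gamma_S)=\Gamma_p$ is again a Lie subalgebra and $\{\lambda,\mu\}\in\Gamma_p$ whenever $\lambda,\mu\in\Gamma_p$. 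This proves that $S$ is Jacobi reducible via $p$.

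For the isomorphism~\eqref{eq:prop:reduction_of_coisotropic_submanifolds_2} I would define $N(\Gamma_S)\to\Gamma(L')$ by $\lambda\mapsto\lambda'$, where $\lambda|_S=p^\ast\lambda'$, and likewise $N(I_S)\to C^\infty(M')$ by $f\mapsto f'$, where $f|_S=\underline{p}^\ast f'$. These are well defined by the descent above, their kernels are exactly $\Gamma_S$ and $I_S$, and they are surjective because $S$ is closed, so that $\Gamma(L)\to\Gamma(\ell)$ and $C^\infty(M)\to C^\infty(S)$ are onto and every $p^\ast\lambda'$, $\underline{p}^\ast f'$ admits an extension off $S$. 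Hence they descend to module and algebra isomorphisms $N(\Gamma_S)/\Gamma_S\cong\Gamma(L')$ and $N(I_S)/I_S\cong C^\infty(M')$, with the stated characterization $f\Mod I_S\simeq f'$ iff $f|_S=\underline{p}^\ast f'$ and $\lambda\Mod\Gamma_S\simeq\lambda'$ iff $\lambda|_S=p^\ast\lambda'$. Compatibility with all Gerstenhaber--Jacobi operations is then forced by the defining relations of the reduced structures: the bracket identity follows from~\eqref{eq:def:Jacobi_reduction}, which gives $\{\lambda,\mu\}|_S=p^\ast\{\lambda',\mu'\}'$, the action/anchor compatibility follows from~\eqref{eq:prop:Jacobi_reduction}, and compatibility of the associative products and the module actions is clear since $\underline{p}^\ast$ and $p^\ast$ are an algebra, resp.\ module, morphism.

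The main obstacle I anticipate is precisely the descent step $\Gamma_p=N(\Gamma_S)$: one must argue carefully that a section (resp.\ function) annihilated by the restricted Hamiltonian derivations (resp.\ Hamiltonian vector fields) of $\Gamma_S$ genuinely descends along $\underline{p}$, and it is here that the hypothesis on $\ker(Dp)$, the integrability of $\calK_S$, and the connectedness of the fibres of $\underline{p}$ all enter simultaneously; everything else is a formal consequence of Proposition~\ref{prop:Jacobi_reduction} and the definitions of $N(\Gamma_S)$ and $N(I_S)$.
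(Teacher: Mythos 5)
Your proof is correct and follows essentially the same route as the paper's: both reduce the claim to verifying the two conditions of Proposition~\ref{prop:Jacobi_reduction}, identify $(\hat{\Lambda}_J^\sharp\circ\gamma)(T^0S\otimes\ell)$ with the span of the restricted Hamiltonian derivations $\Delta_\mu|_S$, $\mu\in\Gamma_S$, and use connectedness of the fibres of $\underline{\smash{p}}$ to obtain $\Gamma_p=N(\Gamma_S)$. The only cosmetic difference is that the paper gets condition~\ref{enumitem:prop:Jacobi_reduction_2} directly by applying the symbol map to~\eqref{eq:prop:reduction_of_coisotropic_submanifolds_1} and invoking skew-symmetry of $\Lambda_J^\sharp$ (so without using connectedness at that step), whereas you route it through the descent identification $\Gamma_p=N(\Gamma_S)$; you also write out the final Gerstenhaber--Jacobi isomorphism in detail, which the paper leaves as immediate.
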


\begin{proof}
	Applying the symbol map $\sigma$ to both sides of~\eqref{eq:prop:reduction_of_coisotropic_submanifolds_1}, and because of the skew-symmetry of $\Lambda_J^\sharp$, we get $\Lambda_J^\sharp((\ker T\underline{\smash{p}})^0\otimes\ell)\subset TS$, which is just a rewording of condition~\ref{enumitem:prop:Jacobi_reduction_2} in Proposition~\ref{prop:Jacobi_reduction}.
	Going further, equation~\eqref{eq:prop:reduction_of_coisotropic_submanifolds_1} means that the sub-bundle $\ker Dp\subset(DL)|_S$ is generated by $\Delta_\lambda|_S$, for all $\lambda\in\Gamma_S$.
	As a consequence, since the fibers of $\underline{\smash{p}}$ are connected, it turns out, for all $\lambda\in\Gamma(L)$, that $\lambda|_S\in p^\ast\Gamma(L')$ if and only if $\lambda\in N(\Gamma_S)$.
	Therefore even condition~\ref{enumitem:prop:Jacobi_reduction_1} in Proposition~\ref{prop:Jacobi_reduction} is verified, so that $S$ is Jacobi reducible via $p$.
	The remaining part of the proof follows immediately.
\end{proof}

\begin{remark}
	\label{rem:Jacobi_reduction}
	Jacobi reduction of coisotropic submanifolds is a special instance of a wider reduction scheme: the Marsden--Ratiu reduction of Jacobi manifolds.
	The latter allows, under suitable hypotheses, to construct a Jacobi structure even on quotients of submanifolds which are not necessarily coisotropic submanifolds of the ambient Jacobi manifold.
	Moreover many interesting applications of Jacobi reduction emerge in presence of a Lie group acting on a Jacobi manifold by Jacobi automorphisms.
	More details about reduction of Jacobi manifolds can be found in~\cite{dacosta1989reduction} and~\cite{dacosta1990reduction}.
\end{remark}

\nocite{vitagliano2016holomorphic,LTV,LOTV}

\chapter{The \texorpdfstring{$L_\infty[1]$}{L∞[1]}-algebra of a coisotropic submanifold}
\label{chap:L-infinity-algebra}

In this chapter, aiming to control the coisotropic deformation problem, a natural $L_\infty$-isomorphism class of $L_\infty[1]$-algebras is associated with each coisotropic submanifold $S$ of a Jacobi manifold $(M,L,J)$.

For any choice of a fat tubular neighborhood (Definition~\ref{def:fat_tubular_neighborhod}), an $L_\infty[1]$-algebra is associated with $S$ (Proposition~\ref{prop:linfty}) by means of Th.~Voronov's technique of higher derived brackets~\cite{Voronov2005higher1}.
Such $L_\infty[1]$-algebra is an enrichment of the de Rham complex of the Jacobi algebroid of $S$, and provides a cohomological resolution of the reduced Gerstenhaber--Jacobi algebra of $S$ (Proposition~\ref{prop:linfty_cohomological_resolution}).
Such $L_\infty[1]$-algebra is proven to be independent, up to $L_\infty$-isomorphisms, from the chosen fat tubular neighborhood (Proposition~\ref{prop:gauge_invariance}) extending to the Jacobi setting the argument given by Cattaneo and Sch\"atz~\cite{cattaneo2008equivalences} in the Poisson setting.

For a coisotropic submanifold $S$ of a Jacobi manifold, analysing the notions of coisotropic deformations (Proposition~\ref{prop:coiss}) and Hamiltonian equivalence of coisotropic deformations (Proposition~\ref{prop:Hameq}), we are lead to introduce the definitions of formal coisotropic deformations (Definition~\ref{def:coisoformal}) and Hamiltonian equivalence of formal co\-iso\-tro\-pic deformations (Definition~\ref{def:formHeq}).
We prove that the $L_\infty[1]$-algebra of $S$ controls its formal coisotropic deformation problem, even under Hamiltonian equivalence.
Namely there is a one-to-one correspondence between formal coisotropic deformations of $S$ and (degree $0$) formal Maurer--Cartan elements
of the associated $L_\infty[1]$-algebra (Proposition~\ref{prop:mcformal}), which moreover intertwines Hamiltonian equivalence of formal coisotropic deformations with gauge equivalence of the corresponding formal Maurer--Cartan elements (Proposition~\ref{prop:MC_gauge}).
As a consequence of this fact there are criteria for the unobstructedness (Corollary~\ref{prop:rigid}) and the obstructedness (Proposition~\ref{prop:Kuranishi}) of an infinitesimal coisotropic deformation of $S$. 
We also give a necessary and sufficient condition for the convergence of the formal Maurer--Cartan series $MC(s)$ for any smooth section $s$ (Proposition~\ref{prop:fana}), extending a previous sufficient condition given by Sch\"atz and Zambon in~\cite{SZ2012}.
As a consequence of the latter, we prove that, when the Jacobi structure $J$ is fiber-wise entire, the $L_\infty[1]$-algebra of $S$ controls also its (non-formal) coisotropic deformation problem even under Hamiltonian equivalence (Corollaries~\ref{cor:conver} and~\ref{cor:hequi}).

Finally, for any choice of a fat tubular neighborhood, the associated $L_\infty[1]$-algebra of $S$ is extended to a larger $L_\infty[1]$-algebra (Proposition~\ref{prop:extended_linfty}) again by higher derived brackets.
We prove that the extended $L_\infty[1]$-algebra of $S$ controls, at the formal level, the problem of deforming simultaneously $J$ into a new Jacobi structure $J'$ on $L\to M$ and $S$ into a new submanifold $S'$ which is now coisotropic wrt $J'$.
Namely there is a one-to-one correspondence between formal simultaneous coisotropic deformations of $(J,S)$ and (degree $0$) formal Maurer--Cartan elements
of the extended $L_\infty[1]$-algebra (Proposition~\ref{prop:mc_formal_simultaneous}).
The latter also encodes the infinitesimal simultaneous coisotropic deformations of $(J,S)$.
As a consequence there are criteria for the unobstructedness (Corollary~\ref{prop:simultaneous_rigid}) and the obstructedness (Proposition~\ref{prop:simultaneous_Kuranishi}) of an infinitesimal simultaneous coisotropic deformation of $(J,S)$. 


%

\section{Fat tubular neighborhoods}
\label{sec:fat_tubular_nghb}

Let us start with some heuristic considerations.
As it is well-known (see, e.g., \cite{hamilton1982inverse}), for any manifold $M$, there is a natural structure of Fréchet manifold on the space $\calS(M)$ of all compact submanifolds of $M$.
Let $S\subset M$ be a submanifold.
Locally, around $S$, the space $\calS(M)$ is modelled on the linear space $\Gamma(NS)$ of sections of the normal bundle $NS\to S$.
Recall that a \emph{tubular neighborhood} of $S$ in $M$ consists of an open embedding $\underline{\smash{\tau}}:NS\to M$ identifying the zero section of the normal bundle with the inclusion map $S\to M$.
Then a tubular neighborhood $\underline{\smash{\tau}}$ of $S$ in $M$ determines the local coordinate chart of $\calS(M)$, centered at $S$, which identifies any section $s\in\Gamma(NS)$ with the image of $\underline{\smash{{\tau}}}\circ s$.
Given a Jacobi manifold $(M,L,J)$, the global description of the space of all coisotropic submanifolds is out of reach, but we can still hope to describe its local models around a given coisotropic submanifold $S$.
However it is not enough to work within a tubular neighborhood of $S$ in $M$ because we have also to take care of the line bundle $L\to M$.
This leads us to introduce the notion of \emph{fat tubular neighborhood}.

Let $L\to M$ be a line bundle and $S\subset M$ be a submanifold, with restricted line bundle $\ell:=L|_S\to S$ and normal bundle $\pi:NS\to S$.
Denote by $i:S\to M$ and $i_L:\ell\to L$ the inclusion maps, and consider the pull-back line bundle $L_{NS}:=\pi^\ast\ell=NS\times_{S}\ell\to NS$.

\begin{definition}
	\label{def:fat_tubular_neighborhod}
	A \emph{fat tubular neighborhood} $(\tau,\underline{\smash{\tau}})$ of $\ell\to S$ in $L\to M$ consists of two layers:
	\begin{itemize}
		\item a tubular neighborhood $\underline{\smash{\tau}}$ of $S$ in $M$, and 
		\item a regular line bundle morphism $\tau:L_{NS}\to L$, covering $\underline{\smash{\tau}}$,
	\end{itemize}
	such that the following diagram commutes
	\begin{equation}
	\label{eq:fat_tubular_neighborhod}
		\begin{tikzcd}
			L_{NS} = \pi^\ast \ell \ar[rr, "\tau"]\arrow[dd]& &L\arrow[dd]\\
			&\ell\arrow[ul]\ar[ur, swap, "i_L"]&\\
			NS\ar[dr, "\pi", shift left=0.7 ex]\arrow[rr, "\underline{\smash{\tau}}"]& & M \\
			& S\arrow[from=uu, crossing over] \ar[ul, "\mathbf 0", shift left=0.7 ex]\arrow[ur, swap, "i"]&
		\end{tikzcd}
	\end{equation}
\end{definition}
A fat tubular neighborhood can be understood as a ``tubular neighborhood in the category of line bundles''.
As already for tubular neighborhoods, there are existence and uniqueness results for fat tubular neighborhoods as well.
We need a preliminary lemma.

\begin{lemma}
	\label{lem:fat_tubular_neighborhood}
	Let $q:A\to N$ be a vector bundle.
	Regard $N$ as a submanifold of $A$ identifying it with the image of the zero section $\mathbf 0 :N\to A$.
	For any vector bundle $B\to A$, with restricted vector bundle $B_N:=B|_N\to N$, there exists a (non-canonical) vector bundle isomorphism $\phi$ from $B\to A$ to $q^\ast B_N=A\times_N B_N\to A$, covering the identity map of $A$, which agrees with the identity map on $ B_N$.
\end{lemma}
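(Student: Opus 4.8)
The strategy is to obtain $\phi$ by parallel transport along the fibrewise scaling contraction of $A$ onto its zero section; morally this is just the homotopy invariance of pullback bundles, but the explicit parallel-transport construction has the advantage of being manifestly the identity over $N$. First I would choose an arbitrary linear connection $\nabla$ on the vector bundle $B\to A$, which exists by a partition-of-unity argument. Then consider the scaling map $h:[0,1]\times A\to A$, $h_t(a):=ta$, which is smooth and satisfies $h_1=\mathrm{id}_A$ and $h_0=\mathbf{0}\circ q$, the retraction of $A$ onto its zero section $N$. For each $a\in A$ the curve $\gamma_a(t):=ta$ runs from $\gamma_a(0)=\mathbf{0}(q(a))$ to $\gamma_a(1)=a$ inside the single fibre $A_{q(a)}$, so it is a smooth straight segment with no degeneracy at $t=0$.

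Next, for each $a$ I would define $\phi_a:B_a\to B_{\mathbf{0}(q(a))}$ to be the $\nabla$-parallel transport along $\gamma_a$ from $t=1$ back to $t=0$. By the very definitions of $B_N:=B|_N$ and of the pullback, one has $B_{\mathbf{0}(q(a))}=(B_N)_{q(a)}=(q^\ast B_N)_a$, so the family $\{\phi_a\}_{a\in A}$ assembles into a bundle map $\phi:B\to q^\ast B_N$ covering $\mathrm{id}_A$. Each $\phi_a$ is linear and invertible because parallel transport is, so $\phi$ is a fibrewise linear isomorphism.

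It then remains to check the two qualitative properties. Over the zero section, for $a=\mathbf{0}(x)$ the curve $\gamma_{\mathbf{0}(x)}$ is constant equal to $\mathbf{0}(x)$, hence its parallel transport is the identity; thus $\phi$ restricts to the identity on $B_N=B|_N=(q^\ast B_N)|_N$, which is exactly the required normalization. This also makes transparent that the isomorphism is non-canonical precisely through the choice of $\nabla$.

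The one genuinely technical point, and the place where care is needed, is the smoothness of $\phi$ in $a$: the endpoints of parallel transport solve the linear ODE $\nabla_{\dot\gamma_a}\sigma=0$ along the family of curves $\gamma_a$, and I must invoke the standard theorem on smooth dependence of ODE solutions on parameters, using that $h$ is jointly smooth in $(t,a)$ and that on compact parameter ranges the curves stay inside finitely many trivializing charts of $B$. Granting this, $\phi$ is a smooth vector bundle isomorphism with the stated properties, which completes the proof.
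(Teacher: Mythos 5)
Your proof is correct and follows essentially the same route as the paper: the paper's entire proof is the one-line observation that pulling back $B\to A$ along the homotopic maps $\id_A$ and $\mathbf{0}\circ q$ yields isomorphic vector bundles, and your parallel-transport argument along the fibrewise scaling $h_t(a)=ta$ is exactly the standard proof of that homotopy-invariance statement, specialized to this situation. The only thing your explicit version adds is that the normalization $\phi|_{B_N}=\id$ --- which the paper's citation of the abstract fact leaves implicit --- becomes manifest, since the curves $\gamma_{\mathbf{0}(x)}$ are constant.
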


\begin{proof}
	It is a straightforward consequence of the fact that, if we pull-back the vector bundle $B\to A$ along the homotopic maps $\mathbf 0\circ q:A\to A$ and $\id_A$, then we obtain isomorphic vector bundles.
\end{proof}

In the following we regard $S$ as a submanifold of $NS$ identifying it with the image of the zero section $\mathbf 0 : S \to NS$.

\begin{proposition}[Existence]
	There exist fat tubular neighborhoods of $\ell$ in $L$.
\end{proposition}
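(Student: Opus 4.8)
The plan is to build the two layers of the fat tubular neighborhood in order, reducing the construction of the line bundle layer $\tau$ to Lemma~\ref{lem:fat_tubular_neighborhood}. First I would invoke the classical tubular neighborhood theorem to fix an open embedding $\underline{\smash{\tau}}:NS\to M$ with $\underline{\smash{\tau}}\circ\mathbf 0=i$; this already settles the lower triangle of diagram~\eqref{eq:fat_tubular_neighborhod}. The whole remaining problem is thus to produce, over this fixed $\underline{\smash{\tau}}$, a regular line bundle morphism $\tau:L_{NS}\to L$ that agrees with $i_L$ along the zero section.

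The key observation is that pulling back $L\to M$ along $\underline{\smash{\tau}}$ yields a line bundle $\underline{\smash{\tau}}{}^\ast L\to NS$ together with the canonical projection $\mathrm{pr}_L:\underline{\smash{\tau}}{}^\ast L\to L$, which covers $\underline{\smash{\tau}}$ and is by construction an isomorphism on fibers, hence a regular line bundle morphism. Moreover, since $\underline{\smash{\tau}}\circ\mathbf 0=i$, restricting to the zero section $S\subset NS$ gives the canonical identification $(\underline{\smash{\tau}}{}^\ast L)|_S=i^\ast L=\ell$, under which $\mathrm{pr}_L|_\ell$ is precisely the inclusion $i_L:\ell\to L$.

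Next I would apply Lemma~\ref{lem:fat_tubular_neighborhood} with $q=\pi:NS\to S$ in the role of $A\to N$ and $B=\underline{\smash{\tau}}{}^\ast L$. Since $B_N=(\underline{\smash{\tau}}{}^\ast L)|_S=\ell$ and $q^\ast B_N=\pi^\ast\ell=L_{NS}$, the lemma furnishes a vector bundle isomorphism $\phi:\underline{\smash{\tau}}{}^\ast L\to L_{NS}$ covering $\id_{NS}$ and restricting to the identity on $\ell$. I then set
\[
\tau:=\mathrm{pr}_L\circ\phi^{-1}:L_{NS}\longrightarrow L.
\]
Being a composition of regular line bundle morphisms, $\tau$ is regular and covers $\underline{\smash{\tau}}\circ\id_{NS}=\underline{\smash{\tau}}$, which is the commutativity of the square formed by $\tau$, $\underline{\smash{\tau}}$ and the two bundle projections. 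Along the zero section $\phi^{-1}$ restricts to the identity and $\mathrm{pr}_L$ restricts to $i_L$, whence $\tau|_\ell=i_L$, which is the commutativity of the upper triangle. The lower triangle commutes by the choice of $\underline{\smash{\tau}}$, and the remaining compatibilities with the bundle projections hold automatically, so the whole of~\eqref{eq:fat_tubular_neighborhod} commutes.

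The construction involves no genuine obstacle: the substantive input is the existence of the underlying tubular neighborhood, while the line bundle layer is entirely formal once Lemma~\ref{lem:fat_tubular_neighborhood} is in hand. The only point requiring a little care is the identification $(\underline{\smash{\tau}}{}^\ast L)|_S=\ell$ together with the fact that the isomorphism supplied by the lemma may be taken to be the identity there, so that $\tau$ genuinely \emph{extends} $i_L$ rather than merely some bundle isomorphism covering it.
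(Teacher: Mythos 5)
Your proof is correct and follows essentially the same route as the paper: fix a tubular neighborhood $\underline{\smash{\tau}}$, use Lemma~\ref{lem:fat_tubular_neighborhood} to identify $\underline{\smash{\tau}}{}^\ast L$ with $L_{NS}=\pi^\ast\ell$ relative to the zero section, and compose with the canonical projection $\underline{\smash{\tau}}{}^\ast L\to L$. Your write-up is merely more explicit than the paper's about why the resulting $\tau$ restricts to $i_L$ along $S$, which is a welcome clarification but not a different argument.
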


\begin{proof}
	Let $\underline \tau : NS \injects M$ be a tubular neighborhood of $S$.
	According to Lemma~\ref{lem:fat_tubular_neighborhood}, the pull-back bundle $\underline \tau {}^\ast L \to NS$ is (non-canonically) isomorphic to $L_{NS}$.
	Pick any isomorphism $\phi : L_{NS} \to \underline \tau {}^\ast L $.
	Then the composition
	\[
	L_{NS} \overset{\phi}{\longrightarrow} \underline \tau {}^\ast L \longrightarrow L,
	\]
	where the second arrow is the canonical map, is a fat tubular neighborhood of $\ell$ over $\underline \tau$.
\end{proof}

\begin{proposition}[Uniqueness]
	\label{prop:isotopy}
	Any two fat tubular neighborhoods $ \tau{}_0$ and $\tau{}_1$ of $S$ are \emph{isotopic}, i.e.~there is a smooth one parameter family of fat tubular neighborhoods $\mathcal T{}_t $ of $\ell$ in $L$, and an automorphism $\psi : L_{NS} \to L_{NS}$ of $L_{NS}$ covering an automorphism $ \underline \psi : NS \to NS$ of $NS$ over the identity, such that $\mathcal T_0 = \tau_0$, and $\mathcal T_1 = \tau_1 \circ \psi$.
\end{proposition}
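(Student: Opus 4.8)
The plan is to reduce the statement to the classical uniqueness theorem for ordinary tubular neighborhoods and then to lift the resulting base isotopy to the line bundle, controlling the residual gauge freedom by a convexity argument. Write $\tau_i=(\tau_i,\underline{\tau}_i)$, $i=0,1$. By the classical uniqueness of tubular neighborhoods (see, e.g.,~\cite{Kolar1993natural}) there is a smooth one-parameter family $\underline{\mathcal T}_t:NS\hookrightarrow M$ of tubular neighborhoods of $S$ and a vector bundle automorphism $\underline{\psi}:NS\to NS$ over $\mathrm{id}_S$ with $\underline{\mathcal T}_0=\underline{\tau}_0$ and $\underline{\mathcal T}_1=\underline{\tau}_1\circ\underline{\psi}$. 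Since $\underline{\psi}$ covers $\mathrm{id}_S$ we have $\pi\circ\underline{\psi}=\pi$, whence $\underline{\psi}^\ast L_{NS}=\underline{\psi}^\ast\pi^\ast\ell=\pi^\ast\ell=L_{NS}$ canonically; this furnishes a distinguished line bundle automorphism $\psi:L_{NS}\to L_{NS}$ covering $\underline{\psi}$ and restricting to the identity on $\ell$ (explicitly $\psi(v,e)=(\underline{\psi}(v),e)$ under $L_{NS}=NS\times_S\ell$). With this choice $\tau_1\circ\psi$ is a fat tubular neighborhood covering $\underline{\mathcal T}_1$, so that both $\tau_0$ and $\tau_1\circ\psi$ cover the endpoints of the base isotopy $\underline{\mathcal T}_t$.

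Next I would lift the base isotopy to the line bundle. Applying Lemma~\ref{lem:fat_tubular_neighborhood} to the vector bundle $[0,1]\times NS\to[0,1]\times S$ and to the line bundle $\underline{\mathcal T}^\ast L\to[0,1]\times NS$, where $\underline{\mathcal T}(t,v):=\underline{\mathcal T}_t(v)$ and whose restriction over $[0,1]\times S$ is the pullback of $\ell$ (because $\underline{\mathcal T}_t|_S=i$ for all $t$), I obtain a line bundle isomorphism over the identity of $[0,1]\times NS$ that is the identity over $[0,1]\times S$. Restricting to the slice $\{t\}\times NS$ and composing with the canonical map $\underline{\mathcal T}_t^\ast L\to L$ yields a smooth family $\hat\Psi_t:L_{NS}\to L$ of fat tubular neighborhoods covering $\underline{\mathcal T}_t$ and restricting to $i_L$ on $\ell$.

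Finally I would match the prescribed endpoints. Since $\tau_0$ and $\hat\Psi_0$ both cover $\underline{\tau}_0$, are fibre-wise isomorphisms, and agree with $i_L$ on $\ell$, over each point $v\in NS$ they differ by a scalar, i.e.\ $\tau_0=\hat\Psi_0\circ\mathrm{mult}(a_0)$ for a nowhere-vanishing $a_0\in C^\infty(NS)$ with $a_0|_S=1$, where $\mathrm{mult}(a_0)$ denotes the automorphism of $L_{NS}$ over $\mathrm{id}_{NS}$ given by multiplication by $a_0$; similarly $\tau_1\circ\psi=\hat\Psi_1\circ\mathrm{mult}(a_1)$. The key observation is that such a function is automatically positive: its restriction to each fibre $N_xS\cong\mathbb R^k$ is nowhere zero, equals $1$ at the origin, and $N_xS$ is connected, so $a_i>0$ everywhere. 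Hence $a_t:=(1-t)a_0+t\,a_1$ is a smooth family of nowhere-vanishing functions with $a_t|_S=1$, and $\mathcal T_t:=\hat\Psi_t\circ\mathrm{mult}(a_t)$ is the desired isotopy of fat tubular neighborhoods, with $\mathcal T_0=\tau_0$ and $\mathcal T_1=\tau_1\circ\psi$.

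The main obstacle is the lifting step: producing a single smooth family of line bundle morphisms covering the entire base isotopy while keeping the prescribed behaviour along $\ell$, which is exactly what the parametrized form of Lemma~\ref{lem:fat_tubular_neighborhood} supplies once the base isotopy is in hand. The remaining two ingredients are comparatively soft — the base isotopy is the classical statement, and the positivity-plus-convexity observation on the gauge functions $a_t$ disposes of the residual ambiguity without any cohomological obstruction, precisely because each normal fibre is connected.
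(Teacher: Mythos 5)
Your proof is correct, and its skeleton coincides with the paper's: first invoke the classical isotopy theorem for tubular neighborhoods to get the base isotopy $\underline{\mathcal{T}}_t$ and the automorphism $\underline{\psi}$, then apply Lemma~\ref{lem:fat_tubular_neighborhood} over the cylinder $[0,1]\times NS$ to lift that isotopy to the line bundles. Where you genuinely diverge is in how the prescribed endpoints are achieved. The paper trivializes the line bundle $L_{NS}^\ast\otimes_{NS}\underline{\mathcal{T}}^\ast L$ over $NS\times[0,1]$, prescribes a nowhere-zero section on $(S\times[0,1])\cup(NS\times\{0,1\})$ (equal to $\operatorname{id}_\ell$ on the core and to the given fat tubular neighborhoods on the two ends), extends it to a nowhere-zero section of the whole cylinder ``by triviality'', reads off $\mathcal{T}_t$ from the extension, and only extracts $\psi$ at the very end as the unique automorphism with $\mathcal{T}_1=\tau_1\circ\psi$. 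You instead pin down $\psi$ from the outset as the canonical lift of $\underline{\psi}$ afforded by $\pi\circ\underline{\psi}=\pi$, apply the lemma directly to $\underline{\mathcal{T}}^\ast L$ to obtain an unconstrained family $\hat\Psi_t$, and then correct the two ends by gauge functions. Your positivity-plus-convexity observation is precisely the justification for the extension step that the paper asserts without comment: a nowhere-vanishing function on $NS$ that equals $1$ along the zero section is positive because each fibre is connected, so the affine interpolation $(1-t)a_0+ta_1$ stays nowhere zero. In short, your route is equivalent but slightly more explicit at the one point where the paper is terse, and it has the minor aesthetic advantage of exhibiting $\psi$ concretely rather than a posteriori.
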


\begin{proof}
	In view of the tubular neighborhood Theorem~\cite[Theorem 5.3]{hirsch}, there is a smooth one parameter family of tubular neighborhoods $\underline{\mathcal T}{}_t : NS \injects M$ of $S$ in $M$, and an automorphism $\underline \psi : NS \to NS$ over the identity such that $\underline{\mathcal T}{}_0 = \underline{\tau}{}_0$, and $\underline{\mathcal T}{}_1 = \underline \tau{}_1 \circ \underline \psi $.
	Denote by $\underline{\mathcal T}:NS\times[0,1] \to M$ the map defined by $\underline{\mathcal T} (\nu,t)=\underline{\mathcal T}{}_t(\nu)$ and consider the line bundle
	\[
	p : L_{NS}^\ast \otimes_{NS} \underline{\mathcal T}^\ast L \longrightarrow NS \times [0,1].
	\]
	Note that $L_{NS}^\ast \otimes_{NS} \underline{\mathcal T}^\ast L $ reduces to $\operatorname{End} \ell \times [0,1] = \bbR_{S \times [0,1]}$ over $S \times [0,1]$.
	Hence, according to Lemma~\ref{lem:fat_tubular_neighborhood}, $L_{NS}^\ast \otimes_{NS} \underline{\mathcal T}^\ast L$ is isomorphic to the pull-back over $NS \times [0,1]$ of the trivial line bundle $\bbR_{S \times [0,1]}$ over $S \times [0,1]$.
	In particular, $p$ is a trivial bundle.
	Moreover, $p$ admits a nowhere zero section $\upsilon$ defined on $(S\times[0,1])\cup(NS\times\{0,1\})$ and given by $\operatorname{id}_{\ell} $ on $S\times[0,1]$, by ${\mathcal T}_0$ on $NS\times\{0\}$ and by ${\mathcal T}_1$ on $NS \times \{ 1 \}$.
	By triviality, $\upsilon$ can be extended to a nowhere zero section $\Upsilon$ on the whole $NS \times [0,1]$.
	Section $\Upsilon$ is the same as a one parameter family of vector bundle isomorphisms $\Upsilon_t : L_{NS} \to \underline{\mathcal T}{}_t^\ast L$ over the identity of $NS$.
	Denote by ${\mathcal T}_t : L_{NS} \to L$ the composition
	\[
	L_{NS} \overset{\Upsilon_t}{\longrightarrow} \underline{\mathcal T}{}_t^\ast L \lhook\joinrel\longrightarrow L,
	\]
	where the second arrow is the natural inclusion.
	By construction, the ${\mathcal T}_t$'s are line bundle embeddings covering the $\underline{\mathcal T}{}_t$'s.
	Finally, there exists a unique automorphism $\psi : L_{NS} \to L_{NS}$ over $\underline \psi$ such that ${\mathcal T}_1 = \tau_1 \circ \psi$.
	We conclude that the ${\mathcal T}_t$'s and $\psi$ possess all the required properties.
\end{proof}

\section{\texorpdfstring{$L_\infty[1]$}{L∞[1]}-algebra associated with a coisotropic submanifold}
\label{sec:linfty_algebra}

This section aims to show that, for a closed coisotropic submanifold of a Jacobi manifold, any choice of a fat tubular neighborhood determines a set of \emph{V-data} (as defined by Frégier--Zambon~\cite{fregier2014simultaneous}).
Hence an $L_\infty[1]$-algebra is constructed out of this set of V-data through Th.~Voronov's technique~\cite{Voronov2005higher1} of higher derived brackets.
Notice that our conventions about $L_\infty$ and $L_\infty[1]$-algebras are the same as those in~\cite{fregier2014simultaneous}, so that, in particular, the multi-brackets in an $L_\infty[1]$-algebra are graded symmetric of degree $1$.

Let $(M,L,J=\{-,-\})$ be a Jacobi manifold, and $S \subset M$ be a closed submanifold.
There exists a unique degree $0$ graded module morphism $P:\Der^\bullet L\longrightarrow\Gamma(\wedge^\bullet N_\ell S\otimes\ell)$, covering a degree $0$ graded algebra morphism $\underline{\smash{P}}:\Gamma(\wedge^\bullet(J^1L)^\ast)\longrightarrow\Gamma(N_\ell S)$, which is completely determined by
\begin{equation}
	\label{eq:projection_P}
	P(\lambda)=\lambda|_S,\qquad P(\square)=\sigma_\square|_S\Mod TS,
\end{equation}
for all $\lambda\in\Gamma(L)=\Der^0L$ and $\square\in\Der L=\Der^1L$.
Now, it is not hard to see that, for all $k\geq 0$, and $\square\in\Der^kL$, the projection $P\square\in\Gamma(\wedge^k N_\ell S\otimes\ell)$ is explicitly given by
\begin{equation}
	\label{eq:projection_P_bis}
	(P\square)(\alpha_1,\ldots,\alpha_k)=\langle\square,\gamma(\alpha_1)\wedge\ldots\wedge\gamma(\alpha_k)\rangle|_S,
\end{equation}
for all $\alpha_1,\ldots,\alpha_k\in\Gamma(N_\ell{}^\ast S)$.
As in the Poisson case (see, e.g., \cite{cattaneo2008equivalences}), projection $P:(\Der^\bullet L)\to\Gamma(\wedge^\bullet N_\ell S\otimes\ell)$ allows to formulate a further characterization of coisotropic submanifolds.
\begin{proposition}
	\label{prop:PJ=0}
	\leavevmode
	\begin{enumerate}[label=(\arabic*)]
		\item
		\label{enumitem:prop:PJ=0_1}
		$\ker P$ is a graded Lie subalgebra of $(\Der^\bullet(L)[1],\ldsb-,-\rdsb)$.
		\item
		\label{enumitem:prop:PJ=0_2}
		$S$ is coisotropic in $(M,L,J)$ iff $P(J)=0$.
	\end{enumerate}
\end{proposition}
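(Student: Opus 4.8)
The plan is to reduce both statements to a single, more tractable description of $\ker P$: a multi-derivation $\square \in \Der^k L$ lies in $\ker P$ if and only if $\square(\lambda_1, \ldots, \lambda_k)|_S = 0$ for all $\lambda_1, \ldots, \lambda_k \in \Gamma_S$. To establish this characterization I would start from the explicit formula~\eqref{eq:projection_P_bis} and observe that, working locally where $L$ is trivial and $\Gamma_S = I_S \cdot \Gamma(L)$, every section $\alpha$ of $N_\ell {}^\ast S$ arises as $(j^1\lambda)|_S$ under the co-symbol $\gamma$ for a suitable $\lambda \in \Gamma_S$: writing $\alpha = \sum_a (df_a)|_S \otimes \mu_a|_S$ with $f_a \in I_S$, the section $\lambda := \sum_a f_a \mu_a$ belongs to $\Gamma_S$ and satisfies $\gamma(\alpha) = (j^1\lambda)|_S$, since $\lambda|_S = 0$ kills the value part of the jet. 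As $\square$ is first order in each entry, $\square(\lambda_1, \ldots, \lambda_k)|_S$ depends only on the restricted jets $(j^1\lambda_i)|_S$, whence $(P\square)(\alpha_1, \ldots, \alpha_k) = \square(\lambda_1, \ldots, \lambda_k)|_S$; letting the $\alpha_i$ range over $\Gamma(N_\ell {}^\ast S)$ yields the equivalence, and the global statement follows by a partition of unity from the locality of the operations.

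Granting this, part~\ref{enumitem:prop:PJ=0_2} is immediate: by the characterization $P(J) = 0$ means precisely $\{\lambda, \mu\}|_S = J(\lambda, \mu)|_S = 0$ for all $\lambda, \mu \in \Gamma_S$, i.e.~$\{\Gamma_S, \Gamma_S\} \subset \Gamma_S$, which is exactly the condition that $\Gamma_S$ be a Lie subalgebra of $\Gamma(L)$, equivalent to the coisotropy of $S$ by Lemma~\ref{lem:cois}.

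For part~\ref{enumitem:prop:PJ=0_1} I would compute directly. Let $\square \in \Der^p L$ and $\square' \in \Der^q L$ both lie in $\ker P$, and fix arbitrary $\lambda_1, \ldots, \lambda_{p+q-1} \in \Gamma_S$. Expressing the Schouten--Jacobi bracket through the Gerstenhaber product as in~\eqref{eq:SJ_bracket_and_Gerstenhaber_product}--\eqref{eq:Gerstenhaber_product}, each summand of $\square \circ \square'$ has the shape $\square(\square'(\lambda_{\tau(1)}, \ldots, \lambda_{\tau(q)}), \lambda_{\tau(q+1)}, \ldots, \lambda_{\tau(p+q-1)})$. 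Since $\square' \in \ker P$ and its $q$ arguments lie in $\Gamma_S$, the characterization forces $\square'(\lambda_{\tau(1)}, \ldots, \lambda_{\tau(q)}) \in \Gamma_S$; thus $\square$ is evaluated on $p$ arguments all belonging to $\Gamma_S$, and since $\square \in \ker P$ the result vanishes on $S$. The same reasoning applies verbatim to $\square' \circ \square$. Hence $\ldsb \square, \square' \rdsb(\lambda_1, \ldots, \lambda_{p+q-1})|_S = 0$ for every choice of $\lambda_i \in \Gamma_S$, and the characterization gives $\ldsb \square, \square' \rdsb \in \ker P$; as $\ker P$ is automatically a graded subspace ($P$ being a degree $0$ graded module morphism), this proves it is a graded Lie subalgebra.

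The main obstacle is the characterization of $\ker P$ itself: everything downstream is formal once it is available, so the work lies in carefully matching the pairing in~\eqref{eq:projection_P_bis} with evaluation on sections of $\Gamma_S$, in realizing conormal $L$-valued forms as restricted $1$-jets of sections vanishing on $S$, and in checking that these local identifications patch together. The bracket-closure computation then hinges on the simple but crucial observation that the inner slot of the Gerstenhaber product, fed with sections from $\Gamma_S$, again produces a section of $\Gamma_S$.
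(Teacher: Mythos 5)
Your proof is correct and follows essentially the same route as the paper's: both hinge on the characterization that $\square\in\ker P$ iff $\square(\lambda_1,\ldots,\lambda_k)\in\Gamma_S$ for all $\lambda_i\in\Gamma_S$, then deduce (2) from Lemma~\ref{lem:cois} and (1) from the Gerstenhaber-product expression of the Schouten--Jacobi bracket. You merely spell out in more detail the steps the paper leaves implicit (realizing sections of $N_\ell{}^\ast S$ as restricted first jets of elements of $\Gamma_S$, and the closure of the inner slot of the Gerstenhaber product under $\Gamma_S$).
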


\begin{proof}
	Clearly the kernel $\ker P$ is a graded $\bbR$-vector subspace of $\Der^\bullet L$.
	For all $k\geq 0$, and $\square\in\Der^k L$, Equation~\eqref{eq:projection_P_bis} implies that $\square\in\ker P$ iff $\square(\lambda_1,\ldots,\lambda_k)\in\Gamma_S$, for all $\lambda_1,\ldots,\lambda_k\in\Gamma_S$, where $\Gamma_S\subset\Gamma(L)$ denotes the submodule of sections vanishing on $S$.
	So we immediately get both condition~\ref{enumitem:prop:PJ=0_2} (by means of Lemma~\ref{lem:cois}) and condition~\ref{enumitem:prop:PJ=0_1} (by means of the expression~\eqref{eq:SJ_bracket_and_Gerstenhaber_product} of the Schouten--Jacobi bracket in terms of the Gerstenhaber product~\eqref{eq:Gerstenhaber_product}).
\end{proof}

\begin{corollary}
	\label{cor:PJ=0}
	If $S$ is coisotropic in $(M,L,J)$ then $\ker P$ is preserved by $d_J:=\ldsb J,-\rdsb$.
\end{corollary}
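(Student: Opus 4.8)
The plan is to obtain this corollary directly from the two parts of Proposition~\ref{prop:PJ=0}, with essentially no additional computation. The key observation I would record first is that, by Proposition~\ref{prop:PJ=0}\ref{enumitem:prop:PJ=0_2}, coisotropicity of $S$ is equivalent to the single condition $P(J)=0$; in other words, the bi-derivation $J$ itself lies in $\ker P$. This is the only place where the hypothesis ``$S$ coisotropic'' enters.

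Granting this, the argument I would give is the following. Let $\square\in\ker P$ be arbitrary. Then $d_J\square=\ldsb J,\square\rdsb$ is the Schouten--Jacobi bracket of two elements of $\ker P$, namely $J$ and $\square$. By Proposition~\ref{prop:PJ=0}\ref{enumitem:prop:PJ=0_1}, $\ker P$ is a graded Lie subalgebra of $((\Der^\bullet L)[1],\ldsb-,-\rdsb)$, hence closed under the bracket; therefore $\ldsb J,\square\rdsb\in\ker P$. Since $\square$ was arbitrary, this shows $d_J(\ker P)\subseteq\ker P$, which is precisely the assertion that $\ker P$ is preserved by $d_J$.

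There is no genuine obstacle here, as the whole content has been packaged into Proposition~\ref{prop:PJ=0}; the corollary is just the combination of its two items. The only point requiring a little care is the bookkeeping of the degree shift $[1]$: one should read $J$ as a degree $1$ element of $(\Der^\bullet L)[1]$ and view $d_J=\ldsb J,-\rdsb$ inside the shifted graded Lie algebra in which part~\ref{enumitem:prop:PJ=0_1} is phrased, so that the graded Lie subalgebra property applies verbatim. This is purely a matter of the conventions already fixed in Section~\ref{sec:GJ_algb_multi-derivations}, and does not affect the one-line deduction above.
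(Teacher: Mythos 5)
Your argument is correct and is exactly the intended one: the paper states Corollary~\ref{cor:PJ=0} without proof precisely because it follows by combining the two items of Proposition~\ref{prop:PJ=0} as you do ($J\in\ker P$ by coisotropicity, and $\ker P$ is closed under $\ldsb-,-\rdsb$). Nothing further is needed.
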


\begin{remark}
	\label{rem:PJ}
	Let $S \subset M$ be any submanifold, then $P (J)$ does only depend on the bi-symbol $\Lambda_J$ of $J$.
	To see this, note, first of all, that the symbol $\sigma : \Der  L \to \mathfrak X (M)$ induces an obvious projection $ (\Der^\bullet L) \to \Gamma (\wedge^\bullet (TM \otimes L^\ast) \otimes L)$.
	Moreover, in view of its very definition, $P :  (\Der^\bullet L)[1] \to \Gamma (\wedge^\bullet N_\ell S \otimes \ell)[1]$ descends to an obvious projection
	\[
	\Gamma (\wedge^\bullet (TM \otimes L^\ast) \otimes L)[1] \longrightarrow \Gamma (\wedge^\bullet N_\ell S \otimes \ell)[1],
	\]
	which, abusing the notation, we denote again by $P$.
	Now, recall that $\Lambda_J \in \Gamma (\wedge^2 (TM \otimes L^\ast) \otimes L)$.
	It immediately follows from the definition of $P$ that, actually,
	\[
	P(J) = P(\Lambda_J).
	\]
	In particular $S$ is coisotropic iff $P (\Lambda_J)=0$.
\end{remark}

Choose once and for all a fat tubular neighborhood $(\tau,\underline{\smash{\tau}})$  of $\ell\to S$ in $L\to M$.
We identify $NS$ with the open neighborhood $\underline{\smash{\tau}}(NS)$ of $S$ in $M$.
Similarly, we identify $L_{NS}$ with $L|_{\underline{\smash{\tau}}(NS)}$.
In particular the line bundle $L_{NS}\to NS$ inherits a Jacobi structure from $L\to M$ by means of pull-back via $\tau$.
Abusing the notation we denote by $J$ again the Jacobi bracket on $\Gamma (L_{NS})$.
Moreover, in view of Proposition~\ref{prop:PJ=0}, there is a projection $P : (\Der^\bullet L_{NS})[1] \to \Gamma (\wedge^\bullet N_\ell S \otimes \ell)[1]$, with $\ldsb\ker P,\ker P\rdsb\subset\ker P$, such that $S$ is coisotropic iff $P(J)=0$.

Now, regard the vertical bundle $V (NS) := \ker d\pi$ as a Lie algebroid and note preliminarily that
\begin{enumerate}
	\item
	\label{enumitem:natural_splitting}
	There is a natural splitting $T(NS)|_S = TS \oplus NS$: projection $T(NS)|_S \to TS$ is $d \pi$, while projection $T (NS)|_S \to NS$ is the natural one.
	In particular, sections of $NS$ can be understood as vector fields on $NS$ along the submanifold $S$ and vertical wrt~$\pi$.
	\item Since $\pi : NS \to S$ is a vector bundle, the vertical bundle $V(NS)$ identifies canonically with the induced bundle $\pi^\ast NS \to NS$.
	In particular, there is an embedding $\pi^\ast : \Gamma (NS) \injects \frakX (NS)$ that takes a section $\nu$ of $NS$ to the unique vertical vector field $\pi^\ast \nu$ on $NS$, which is constant along the fibers of $\pi$, and agrees with $\nu$ on $S$.
	\item Since $L_{NS} = \pi^\ast \ell = NS \times_S \ell$, there is a natural flat connection $\mathbb D$ in {$L_{NS}$}, along the Lie algebroid $V(NS)$, uniquely determined by $\mathbb D_X \pi^\ast \lambda = 0$, for all vertical vector fields $X$ on $NS$, and all fiber-wise constant sections $\pi^\ast \lambda$ of $L_{NS}$, $\lambda \in \Gamma (\ell)$.
\end{enumerate}

After these preliminary remarks it is easy to see that there exists a unique degree $0$ graded module morphism $I:\Gamma(\wedge^\bullet N_\ell S\otimes\ell)\to\Der^\bullet(L_{NS})$, covering a degree $0$ graded algebra morphism $\underline{\smash{I}}:\Gamma(\wedge^\bullet N_\ell S)\to\Gamma(\wedge^\bullet(J^1L_{NS})^\ast)$, which is completely determined by
\begin{equation}
\label{eq:embedding_I}
I(\lambda)=\pi^\ast\nu,\qquad I(\nu)=\bbD_{\pi^\ast\nu},
\end{equation}
for all $\lambda\in\Gamma(\ell)=\Gamma(\wedge^0 N_\ell S\otimes\ell)$ and $\nu\in\Gamma(NS)=\Gamma(\wedge^1 N_\ell S\otimes\ell)$.

\begin{proposition}
	\label{prop:embeddingI}
	\leavevmode
	\begin{enumerate}[label=(\arabic*)]
		\item
		\label{enumitem:prop:embeddingI_1}
		$\im I$ is an abelian graded Lie subalgebra of $(\Der^\bullet(L_{NS})[1],\ldsb-,-\rdsb)$.
		\item
		\label{enumitem:prop:embeddingI_2}
		$I$ is a right inverse of $P$, i.e.~$P\circ I=\id$ on $\Gamma(\wedge^\bullet N_\ell S\otimes\ell)$.
	\end{enumerate}
\end{proposition}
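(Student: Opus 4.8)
The plan is to deduce both claims from the defining relations~\eqref{eq:embedding_I}, the explicit description of $P$ in~\eqref{eq:projection_P}--\eqref{eq:projection_P_bis}, and the fact that $P$ and $I$ are graded module morphisms, covering $\underline{\smash P}$ and $\underline{\smash I}$ respectively.

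I would dispose of~\ref{enumitem:prop:embeddingI_2} first. The composition $P\circ I$ is a degree $0$ module endomorphism of $\Gamma(\wedge^\bullet N_\ell S\otimes\ell)$ covering the algebra endomorphism $\underline{\smash P}\circ\underline{\smash I}$; since the module is generated over $\Gamma(\wedge^\bullet N_\ell S)$ by its degree $-1$ part $\Gamma(\ell)$, while the algebra is generated by $C^\infty(S)$ and $\Gamma(N_\ell S)$, it suffices to check the identity on these generators. On $\Gamma(\ell)$ one has $P(I(\lambda))=(\pi^\ast\lambda)|_S=\lambda$, because $\pi^\ast\lambda$ is fiber-wise constant and hence agrees with $\lambda$ along the zero section; on $C^\infty(S)$ likewise $\underline{\smash P}(\underline{\smash I}(f))=(\pi^\ast f)|_S=f$. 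For $\nu\in\Gamma(NS)=\Gamma(\wedge^1 N_\ell S\otimes\ell)$ the degree $0$ computation $P(I(\nu))=\sigma_{\bbD_{\pi^\ast\nu}}|_S\bmod TS=(\pi^\ast\nu)|_S\bmod TS=\nu$ holds by the natural splitting $T(NS)|_S=TS\oplus NS$ together with the fact that the symbol of $\bbD_{\pi^\ast\nu}$ is $\pi^\ast\nu$; writing $\nu=\tilde\nu\cdot\lambda$ with $\tilde\nu\in\Gamma(N_\ell S)$ and cancelling the nowhere-vanishing $\lambda$ then yields $\underline{\smash P}(\underline{\smash I}(\tilde\nu))=\tilde\nu$ as well.

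For~\ref{enumitem:prop:embeddingI_1} the cleanest route is a local computation. Since vanishing of $\ldsb-,-\rdsb$ on $\im I$ is a local, $\bbR$-bilinear condition, I may trivialize $\ell$ by a nowhere-zero local section, whence $L_{NS}=\pi^\ast\ell$ is trivialized and, by Remark~\ref{rem:SJ_and_SN}, the Schouten--Jacobi bracket on $\Der^\bullet L_{NS}\simeq\frakX^\bullet(NS)\oplus\frakX^\bullet(NS)[1]$ is expressed through~\eqref{eq:rem:SJ_and_SN} in terms of the Schouten--Nijenhuis bracket and wedge product of multivector fields on $NS$. The key step is to identify $\im I$ in this picture: I claim it is carried precisely onto the pull-back multivector fields $\pi^\ast\Gamma(\wedge^\bullet NS)$, i.e.\ the vertical multivector fields on $NS$ that are constant along the fibers, sitting inside $\frakX^\bullet(NS)$ as the subalgebra of those multiderivations $\square$ with $\square(1,-,\ldots,-)=0$ (cf.\ the last sentence of Remark~\ref{rem:SJ_and_SN}). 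This is where the two defining features of $\bbD$ enter: the symbol of $\bbD_{\pi^\ast\nu}$ is the vertical, fiber-wise constant field $\pi^\ast\nu$, while $\bbD_{\pi^\ast\nu}(\pi^\ast\lambda)=0$ forces the $\id$-component (the value on the constant section) of every generator, and hence of every element of $\im I$, to vanish.

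Granting this identification, abelianness follows from the classical fact that vertical, fiber-wise constant multivector fields Schouten--Nijenhuis commute: for $P=\pi^\ast w$ and $P'=\pi^\ast w'$ with $w,w'\in\Gamma(\wedge^\bullet NS)$, formula~\eqref{eq:rem:SJ_and_SN} with both $\id$-components set to zero collapses the Schouten--Jacobi bracket to $\ldsb\pi^\ast w,\pi^\ast w'\rdsb_{\mathrm{SN}}=0$. I expect the main obstacle to be precisely the identification of the previous paragraph, namely verifying that all module and algebra generators of $\im I$ have vanishing $\id$-component and vertical, fiber-wise constant symbol, and that this property is inherited under products; once this is in place the vanishing of the bracket is immediate. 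As a cross-check that avoids trivializing, one can also argue intrinsically: writing a general element of $\im I$ as $\underline{\smash I}(a)\cdot\pi^\ast\lambda$ and expanding $\ldsb-,-\rdsb$ via the Leibniz rule~\eqref{eq:genleib2}, each summand is either a multiple of the core bracket $\ldsb\pi^\ast\lambda,\pi^\ast\mu\rdsb=0$ or an action term $X_{(-)}(\underline{\smash I}(a))$, and one checks that the latter vanish because the action of the fiber-wise constant data annihilates $\im\underline{\smash I}$ — which once more reduces to the $\bbD$-flatness of the fiber-wise constant sections.
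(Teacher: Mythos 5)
Your proof is correct. Part~(2) is essentially the paper's argument: both reduce, via the module-morphism property of $P\circ I$ over $\underline{\smash{P}}\circ\underline{\smash{I}}$, to checking the identity on the generators $\Gamma(\ell)$ and $\Gamma(NS)$, where it follows from~\eqref{eq:projection_P} and~\eqref{eq:embedding_I}. For part~(1) your primary route differs in form from the paper's: you trivialize $\ell$ locally, transport everything through the isomorphism $\Der^\bullet(\bbR_{NS})\simeq\frakX^\bullet(NS)\oplus\frakX^\bullet(NS)[1]$ of Remark~\ref{rem:SJ_and_SN}, identify $\im I$ with the pull-back multivector fields $\pi^\ast\Gamma(\wedge^\bullet NS)$ (vanishing $\id$-component, vertical fiber-wise constant symbols), and then kill the bracket via formula~\eqref{eq:rem:SJ_and_SN} with $Q=Q'=0$. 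The paper instead invokes the Leibniz rule~\eqref{eq:genleib2} directly to reduce abelianness to the vanishing of $\ldsb I(\alpha),I(\beta)\rdsb$ for $\alpha,\beta$ generators in $\Gamma(\ell)$ and $\Gamma(NS)$, which holds because fiber-wise constant vertical vector fields commute --- this is exactly the ``intrinsic cross-check'' you sketch in your last sentences, so you in fact have both arguments. The trade-off is that your main route requires the extra bookkeeping of verifying that the $\id$-component vanishes and the fiber-wise constancy is preserved under products, whereas the paper's Leibniz reduction makes that inheritance automatic; on the other hand your route makes the identification of $\im I$ with a concrete abelian subalgebra of multivector fields explicit, which is illuminating. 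Both hinge on the same core fact, and both are sound.
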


\begin{proof}
	(1) Since the Schouten--Jacobi bracket $\ldsb-,-\rdsb$ satisfies the (generalized) Leibniz rule~\eqref{eq:genleib2}, and $I$ is a graded module morphism covering the graded algebra morphism $\underline{\smash{I}}$, it is enough to check that $\ldsb I(\alpha),I(\beta)\rdsb$ vanishes for all $\alpha,\beta\in\Gamma(\ell)\cap\Gamma(NS)$.
	Indeed the latter is a consequence of the very definitions of $I$ and $\ldsb-,-\rdsb$, and the fact that any two fiber-wise constant vertical vector fields on $NS$ commute.
	
	(2) Since $P\circ I:\Gamma(\wedge^\bullet N_\ell S\otimes\ell)\to\Gamma(\wedge^\bullet N_\ell S\otimes\ell)$ is a graded module morphism, covering the graded algebra morphism $\underline{\smash{P}}\circ \underline{\smash{I}}:\Gamma(\wedge^\bullet N_\ell S)\to\Gamma(\wedge^\bullet N_\ell S)$, it is enough to check that $P\circ I$ agrees with the identity map on $\Gamma(\ell)\cap\Gamma(NS)$.
	Indeed this is exactly the case because of~\eqref{eq:projection_P} and~\eqref{eq:embedding_I}.
\end{proof}
%

According to Frégier--Zambon~\cite[Definition~1.7]{fregier2014simultaneous} a \emph{set of V-data} is a quadruple $(\calL,\fraka,P,\Delta)$ where:
\begin{itemize}
	\item $\calL$ is a graded Lie algebra, with Lie bracket denoted by $[-,-]$, 
	\item $\fraka\subset\calL$ is an abelian Lie subalgebra,
	\item $P:\calL\to\fraka$ is a projection such that $\ker P\subset\calL$ is a Lie subalgebra,
	\item $\Delta\in\ker P$ is a Maurer--Cartan element of $\calL$, i.e.~$|\Delta|=1$ and $[\Delta,\Delta]=0$.
\end{itemize} 
From now on we assume that $S$ is coisotropic.
In this case, as summarized by the next Lemma~\ref{lem:linfty}, a set of V-data is singled out by Propositions~\ref{prop:PJ=0} and~\ref{prop:embeddingI}.

\begin{lemma}
	\label{lem:linfty}
	The quadruple $((\Der^\bullet L_{NS})[1],\im I,P,J)$ is a set of V-data
\end{lemma}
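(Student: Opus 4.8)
The plan is to verify the four defining properties of a set of V-data in the sense of Frégier--Zambon directly against the structures already assembled in the excerpt. The candidate quadruple is $((\Der^\bullet L_{NS})[1],\im I,P,J)$, so I would go through the checklist item by item, since each condition has essentially been prepared by the preceding two propositions.

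First I would observe that $(\Der^\bullet L_{NS})[1]$ carries the Schouten--Jacobi bracket $\ldsb-,-\rdsb$ (Proposition~\ref{prop:Jacobi_Gerstenhaber_multi-differential}), hence is a genuine graded Lie algebra; this is the ambient $\calL$. Next, the required abelian subalgebra is $\fraka=\im I$: this is exactly the content of Proposition~\ref{prop:embeddingI}\ref{enumitem:prop:embeddingI_1}, which states that $\im I$ is an abelian graded Lie subalgebra of $((\Der^\bullet L_{NS})[1],\ldsb-,-\rdsb)$. For the projection condition, I would take $P:(\Der^\bullet L_{NS})[1]\to\im I$ and note two things: by Proposition~\ref{prop:embeddingI}\ref{enumitem:prop:embeddingI_2}, $P\circ I=\id$, so $P$ is indeed a projection onto $\im I$ (after identifying $\im I$ with $\Gamma(\wedge^\bullet N_\ell S\otimes\ell)[1]$ via $I$); and by Proposition~\ref{prop:PJ=0}\ref{enumitem:prop:PJ=0_1}, $\ker P$ is a graded Lie subalgebra. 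These together give the third V-data axiom. Here I would be careful to address the bookkeeping point that $P$ as originally defined lands in $\Gamma(\wedge^\bullet N_\ell S\otimes\ell)[1]$ rather than literally in $\im I\subset(\Der^\bullet L_{NS})[1]$; composing with $I$ turns it into an honest idempotent endomorphism with image $\im I$, and the two formulations are interchangeable precisely because $P\circ I=\id$.

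For the final axiom I would verify that $J\in\ker P$ and that $J$ is a Maurer--Cartan element. That $J$ is Maurer--Cartan is immediate from Proposition~\ref{prop:J_as_MC_element}: since $\{-,-\}$ is a Jacobi structure, $\ldsb J,J\rdsb=0$, and manifestly $|J|=1$ in the shifted grading (as $J\in\Der^2 L_{NS}$ has degree $1$ in $(\Der^\bullet L_{NS})[1]$). That $J\in\ker P$ is exactly Proposition~\ref{prop:PJ=0}\ref{enumitem:prop:PJ=0_2} combined with the standing hypothesis that $S$ is coisotropic: coisotropy is equivalent to $P(J)=0$. I would note that the Jacobi structure on $L_{NS}\to NS$ is the one pulled back via the chosen fat tubular neighborhood $\tau$, and that $S$ remains coisotropic for this pulled-back structure because $\tau$ restricts to a Jacobi isomorphism near $S$; this is why Proposition~\ref{prop:PJ=0} applies verbatim on $L_{NS}$, as already remarked in the paragraph preceding the lemma.

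I do not anticipate a genuine obstacle here: the lemma is essentially a repackaging of Propositions~\ref{prop:PJ=0} and~\ref{prop:embeddingI} together with the coisotropy hypothesis, and the only subtlety is the notational reconciliation between $P$ viewed as a map to $\Gamma(\wedge^\bullet N_\ell S\otimes\ell)[1]$ and $P$ viewed as an idempotent with image $\im I$. The mildly delicate point worth stating explicitly is that all four structures live on $L_{NS}$ rather than $L$, so I would open the proof by fixing the identification of $L_{NS}$ with $L|_{\underline{\smash{\tau}}(NS)}$ and the induced Jacobi bracket, after which every cited proposition transfers directly. The proof itself is then just a one-line invocation of each of the four inputs.
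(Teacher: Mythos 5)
Your proposal is correct and follows exactly the route the paper intends: the lemma is stated there without a written proof precisely because, as the preceding sentence says, it is ``singled out by Propositions~\ref{prop:PJ=0} and~\ref{prop:embeddingI},'' and your verification of the four axioms (graded Lie algebra via Proposition~\ref{prop:Jacobi_Gerstenhaber_multi-differential}, abelian subalgebra and $P\circ I=\id$ via Proposition~\ref{prop:embeddingI}, $\ker P$ a subalgebra and $P(J)=0$ via Proposition~\ref{prop:PJ=0} together with coisotropy, and $\ldsb J,J\rdsb=0$ via Proposition~\ref{prop:J_as_MC_element}) is just that argument made explicit. Your two clarifying remarks---identifying $\Gamma(\wedge^\bullet N_\ell S\otimes\ell)[1]$ with $\im I$ via $I$, and noting that everything is transported to $L_{NS}$ by the fat tubular neighborhood---are exactly the bookkeeping the paper leaves implicit.
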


An $L_\infty[1]$-algebra structure on $\Gamma(\wedge^\bullet N_\ell S\otimes\ell)[1]$ can be constructed out of this set of V-data via higher derived brackets construction~\cite{Voronov2005higher1}.

\begin{proposition}
	\label{prop:linfty}
	There is an $L_\infty[1]$-algebra structure on $\Gamma (\wedge^\bullet N_\ell S \otimes \ell)[1]$ with multi-brackets $\frakm _k:\Gamma (\wedge^\bullet N_\ell S \otimes \ell)[1]^{\otimes k} \to \Gamma (\wedge^\bullet N_\ell S \otimes \ell)[1]$ given by the following higher derived brackets
	\begin{equation}
	\label{eq:higher_derived_brackets}
	\frakm_k (\xi_1,\ldots,\xi_k): =P\ldsb\ldsb\ldots\ldsb J, I(\xi_1)\rdsb,\ldots\rdsb,I(\xi_k)\rdsb,
	\end{equation}
	for all $k>0$, and $\xi_1,\ldots,\xi_k\in\Gamma(\wedge^\bullet N_\ell S\otimes\ell)[1]$.
\end{proposition}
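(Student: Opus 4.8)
The plan is to obtain the asserted $L_\infty[1]$-algebra as a direct application of Th.~Voronov's higher derived brackets construction~\cite{Voronov2005higher1} to the set of V-data singled out in Lemma~\ref{lem:linfty}. Recall that Voronov's theorem takes as input a quadruple $(\calL,\fraka,P,\Delta)$ consisting of a graded Lie algebra $\calL$, an abelian graded Lie subalgebra $\fraka$, a projection $P:\calL\to\fraka$ whose kernel is a graded Lie subalgebra, and a Maurer--Cartan element $\Delta\in\ker P$; it then endows $\fraka$ with an $L_\infty[1]$-algebra structure whose $k$-th multi-bracket is the higher derived bracket $(a_1,\ldots,a_k)\mapsto P\ldsb\ldsb\cdots\ldsb \Delta,a_1\rdsb,\ldots\rdsb,a_k\rdsb$. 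By Lemma~\ref{lem:linfty} the quadruple $((\Der^\bullet L_{NS})[1],\im I,P,J)$ is precisely such a set of V-data, so Voronov's theorem applies.

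First I would make explicit the transport of the resulting structure from $\im I$ to $\Gamma(\wedge^\bullet N_\ell S\otimes\ell)[1]$ along $I$. Setting $\tilde P:=I\circ P$ and using $P\circ I=\id$ from Proposition~\ref{prop:embeddingI}~\ref{enumitem:prop:embeddingI_2}, one sees that $\tilde P$ is an idempotent endomorphism of $(\Der^\bullet L_{NS})[1]$ with image $\im I$ and kernel $\ker P$; thus $((\Der^\bullet L_{NS})[1],\im I,\tilde P,J)$ is a set of V-data in Voronov's sense, with projection landing inside $\calL$. The associated higher derived brackets on $\im I$ are $\tilde P\ldsb\ldsb\cdots\ldsb J,I(\xi_1)\rdsb,\ldots\rdsb,I(\xi_k)\rdsb=I\bigl(\frakm_k(\xi_1,\ldots,\xi_k)\bigr)$, where $\frakm_k$ is exactly the operation in~\eqref{eq:higher_derived_brackets}. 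Since $I$ restricts to a graded vector space isomorphism of $\Gamma(\wedge^\bullet N_\ell S\otimes\ell)[1]$ onto $\im I$ (injective because $P\circ I=\id$), it intertwines the $\frakm_k$ with Voronov's brackets, so the former inherit all the $L_\infty[1]$-relations from the latter.

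The genuine content has already been discharged in the preceding results: the facts that $\im I$ is abelian, that $\ker P$ is a graded Lie subalgebra containing the Maurer--Cartan element $J$, and that $P$ admits $I$ as a right inverse -- that is, the V-data axioms -- are supplied by Propositions~\ref{prop:PJ=0} and~\ref{prop:embeddingI} (together with $\ldsb J,J\rdsb=0$, which expresses that $J$ is a Jacobi structure) and summarized in Lemma~\ref{lem:linfty}. Consequently there is no substantial obstacle in the proof of the proposition itself; what remains is purely conventional bookkeeping, namely checking that, with the degree~$1$ element $J$ and the graded Lie bracket $\ldsb-,-\rdsb$ on $(\Der^\bullet L_{NS})[1]$, the higher derived brackets come out graded symmetric of degree~$1$ as demanded by our $L_\infty[1]$ sign conventions. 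This is immediate from the parities built into the shift~$[1]$ and requires no further computation.
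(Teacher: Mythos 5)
Your proposal is correct and follows exactly the route the paper takes: Proposition~\ref{prop:linfty} is obtained by feeding the V-data of Lemma~\ref{lem:linfty} into Th.~Voronov's higher derived brackets construction, with the $L_\infty[1]$-structure transported to $\Gamma(\wedge^\bullet N_\ell S\otimes\ell)[1]$ via the identification $P\circ I=\id$. Your explicit remark that $\tilde P:=I\circ P$ is the idempotent realizing Voronov's projector is a harmless and accurate piece of bookkeeping that the paper leaves implicit.
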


A version of Proposition~\ref{prop:linfty} for Poisson manifolds is well-known (see~\cite{cattaneo2007relative}, and also~\cite[Lemma 2.2]{fregier2014simultaneous}).

%
%

\begin{remark}
	\label{rem:multi-brackets}
	By their very definition~\eqref{eq:higher_derived_brackets}, the multi-brackets $\mathfrak m_k$ of the $L_\infty[1]$-algebra satisfy the following properties:
	\begin{enumerate}[label=(\alph*)]
		\item
		\label{enumitem:rem:multi-brackets_a}
		$\mathfrak m_k$ is a degree $1$ graded-symmetric $\bbR$-linear map,
		\item
		\label{enumitem:rem:multi-brackets_b}
		$\mathfrak m_k$ is a graded derivation of the graded line bundle $\scrL\to\scrM$, in each entry, with $C^\infty(\scrM)=\Gamma(\wedge^\bullet N_\ell S)$ and  $\Gamma(\scrL)=\Gamma(\wedge^\bullet N_\ell S\otimes\ell)$.
	\end{enumerate}
	As a consequence, the constructed $L_\infty[1]$-algebra structure can be seen as a \emph{Jacobi structure up to homotopy} on $\scrL\to\scrM$ (cf.~\cite{brucekirillov2016} where these structures have been introduced and studied for the first time).
\end{remark}

The $L_\infty[1]$-algebra provides a cohomological resolution of $S$, and its unary bracket $\frakm_1$ has an intrinsic meaning in the sense of the next proposition.
 
\begin{proposition}
	\label{prop:linfty_cohomological_resolution}
	\leavevmode
	\begin{enumerate}[label=(\arabic*)]
		\item
		\label{enumitem:prop:linfty_cohomological_resolution_1}
		The unary bracket $\frakm_1$ coincides with de Rham differential $d_{N_\ell {}^\ast S, \ell}$ of the Jacobi algebroid $(N_\ell {}^\ast S, \ell)$.
		\item
		\label{enumitem:prop:linfty_cohomological_resolution_2}
		The binary bracket $\frakm_2$ induces on $(H^\bullet(N_\ell S),H^\bullet(N_\ell S,\ell))$, up to décalage isomorphism, a structure of graded Gerstenhaber--Jacobi algebra.
		\item
		\label{enumitem:prop:linfty_cohomological_resolution_3}
		The degree $0$ component of $(H^\bullet(N_\ell S),H^\bullet(N_\ell S,\ell),\frakm_2)$ is canonically isomorphic to the reduced Gerstenhaber--Jacobi of $S$, i.e.
		\begin{equation*}
		(H^0(N_\ell S),H^0(N_\ell S,\ell),\frakm_2)\simeq (C^\infty(M_{\textnormal{red}}),\Gamma(L_{\textnormal{red}}),\{-,-\}_{\textnormal{red}}).
		\end{equation*}
	\end{enumerate}	
\end{proposition}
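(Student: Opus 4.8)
The plan is to treat the three claims in order, relying on the fact—established by Propositions~\ref{prop:PJ=0} and~\ref{prop:embeddingI}—that $P$ and $I$ intertwine the ambient Schouten--Jacobi structure with the one on $\Gamma(\wedge^\bullet N_\ell S\otimes\ell)$, and on the observation that the underlying graded module is exactly the de Rham complex of the Jacobi subalgebroid $(N_\ell{}^\ast S,\ell)$ of Proposition~\ref{prop:conormal}, since $(N_\ell{}^\ast S)^\ast=N_\ell S$. For part~\ref{enumitem:prop:linfty_cohomological_resolution_1} I would first rewrite the unary bracket~\eqref{eq:higher_derived_brackets} with $k=1$ as $\frakm_1=P\circ d_J\circ I$, where $d_J=\ldsb J,-\rdsb$ is the Chevalley--Eilenberg differential of the pulled-back Jacobi structure on $L_{NS}$. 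By Remark~\ref{rem:multi-brackets} together with the $L_\infty[1]$-relation $\frakm_1\circ\frakm_1=0$, the operator $\frakm_1$ is a cohomological derivation of the graded line bundle $\scrL\to\scrM$ with $\Gamma(\scrL)=\Gamma(\wedge^\bullet N_\ell S\otimes\ell)$. Hence, by Proposition~\ref{prop:Jacobi_algebroids_and_deRham_complex}, it is determined by its action on generators, namely on $C^\infty(S)=\Gamma(\wedge^0 N_\ell S)$, on $\Gamma(N_\ell S)=\Gamma(\wedge^1 N_\ell S)$, and on the module generators $\Gamma(\ell)$. It then suffices to evaluate $\frakm_1$ on these generators using~\eqref{eq:projection_P}, \eqref{eq:embedding_I}, the identity~\eqref{eq:trianglelambda}, and the explicit bracket, anchor and representation of the associated Jacobi algebroid (Proposition~\ref{prop:associated_Jacobi_algbd}), and to compare with the defining relations~\eqref{eq:d_A,L} of $d_{N_\ell{}^\ast S,\ell}$; the subalgebroid property from Proposition~\ref{prop:conormal} ensures the relevant brackets stay in $\Gamma(N_\ell{}^\ast S)$, so the two cohomological derivations agree on generators and therefore coincide.

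For part~\ref{enumitem:prop:linfty_cohomological_resolution_2} I would exploit the remaining $L_\infty[1]$-relations. The quadratic relation gives $\frakm_1^2=0$, so $H^\bullet(N_\ell S,\ell)$ is the $\frakm_1$-cohomology, which by part~\ref{enumitem:prop:linfty_cohomological_resolution_1} equals the de Rham cohomology $H(N_\ell{}^\ast S,\ell)$ of the Jacobi algebroid. The next relation states that $\frakm_1$ is a graded derivation of $\frakm_2$, so $\frakm_2$ descends to the cohomology; the cubic relation expresses the failure of the graded Jacobi identity for $\frakm_2$ as a sum of terms each involving $\frakm_1$ (as an outer differential or through $\frakm_3\circ(\frakm_1\otimes\id^{\otimes 2})$), all of which vanish on $\frakm_1$-cohomology, so the induced bracket is a genuine graded Lie bracket there. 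Since the exterior product descends to an associative graded commutative product on $H^\bullet(N_\ell S)$ making $H^\bullet(N_\ell S,\ell)$ a module, and since $\frakm_2$ is a graded derivation in each entry by Remark~\ref{rem:multi-brackets}\ref{enumitem:rem:multi-brackets_b}, the induced bracket obeys the Leibniz rule~\eqref{eq:genleib2}; collecting these against Definition~\ref{definition:Gerstenhaber--Jacobi} yields the graded Gerstenhaber--Jacobi structure, the degree shift being absorbed by the décalage isomorphism relating the $L_\infty[1]$-brackets to the underlying graded Lie bracket.

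For part~\ref{enumitem:prop:linfty_cohomological_resolution_3} I would restrict to degree $0$. Here $H^0(N_\ell S)=H^0(N_\ell{}^\ast S)$ and $H^0(N_\ell S,\ell)=H^0(N_\ell{}^\ast S,\ell)$, and Remark~\ref{rem:linfty_cohom_resolution} supplies the canonical isomorphisms $\underline{\smash\phi}:C^\infty(M_{\textnormal{red}})\to H^0(N_\ell{}^\ast S)$ and $\phi:\Gamma(L_{\textnormal{red}})\to H^0(N_\ell{}^\ast S,\ell)$, $\lambda+\Gamma_S\mapsto[\lambda|_S]$. It remains to check these intertwine $\{-,-\}_{\textnormal{red}}$ and the action $X_{(-)}$ with the bracket and action induced by $\frakm_2$. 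For $\lambda,\mu\in N(\Gamma_S)$ the class $[\lambda|_S]$ is represented by $\pi^\ast(\lambda|_S)=I(\lambda|_S)$, and a direct computation with~\eqref{eq:higher_derived_brackets}, \eqref{eq:trianglelambda} and Proposition~\ref{prop:Jacobi_Gerstenhaber_multi-differential} reduces $\frakm_2(\lambda|_S,\mu|_S)$ to $P$ applied to the Jacobi bracket $\{\pi^\ast(\lambda|_S),\pi^\ast(\mu|_S)\}$ on $L_{NS}$. Writing $\lambda-\pi^\ast(\lambda|_S),\,\mu-\pi^\ast(\mu|_S)\in\Gamma_S$ and using $\lambda,\mu\in N(\Gamma_S)$ together with Lemma~\ref{lem:cois}, one sees this restricts on $S$ to $\{\lambda,\mu\}|_S$, so $\frakm_2$ reproduces $\{-,-\}_{\textnormal{red}}$ under $\phi$ (up to the décalage sign), and an analogous computation handles the action on $C^\infty(M_{\textnormal{red}})$.

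I expect the main obstacle to be the sign bookkeeping in part~\ref{enumitem:prop:linfty_cohomological_resolution_2}: one must track carefully the décalage signs relating the degree-$1$ graded-symmetric multi-brackets $\frakm_k$ to the graded Lie bracket of Definition~\ref{definition:Gerstenhaber--Jacobi}, and verify that both the derivation property and the module Leibniz rule~\eqref{eq:genleib2} survive the passage to cohomology with the correct signs—rather than the routine, if lengthy, generator computation underlying part~\ref{enumitem:prop:linfty_cohomological_resolution_1}.
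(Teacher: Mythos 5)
Your proposal is correct and follows essentially the same route as the paper's proof: part (1) is the identity $\frakm_1=P\circ d_J\circ I=d_{N_\ell{}^\ast S,\ell}$ resting on Propositions~\ref{prop:conormal} and~\ref{prop:embeddingI}, part (2) is the standard consequence of the $L_\infty[1]$-relations together with Remark~\ref{rem:multi-brackets}, and part (3) is the check that the isomorphism of Remark~\ref{rem:linfty_cohom_resolution} intertwines $\{-,-\}_{\textnormal{red}}$ with $\frakm_2$ via $\{\pi^\ast(\lambda|_S),\pi^\ast(\mu|_S)\}|_S=\{\lambda,\mu\}|_S$ for $\lambda,\mu\in N(\Gamma_S)$. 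Your write-up merely fills in details (the generator computation in (1), the $N(\Gamma_S)$/Lemma~\ref{lem:cois} argument in (3)) that the paper leaves implicit.
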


\begin{proof}
	(1) Propositions~\ref{prop:conormal} and~\ref{prop:embeddingI}~\ref{enumitem:prop:embeddingI_2} imply that, for all $\alpha\in\Gamma(\wedge^\bullet N_\ell S\otimes\ell)$,
	\begin{equation}
	\label{eq:d*2}
	\frakm_1(\alpha):=P\ldsb J,I(\alpha)\rdsb=(P\circ d_J\circ I)(\alpha)=d_{N_\ell{}^\ast S,\ell}(\alpha).
	\end{equation}
	(2) It is a straightforward consequence of Remark~\ref{rem:multi-brackets}.
	\newline\noindent
	(3) Recall from Remark~\ref{rem:linfty_cohom_resolution} that there exists a module isomorphism $\phi:\Gamma(L_{\textnormal{red}})\to H^0(N_\ell{}^\ast S,\ell)$, covering an algebra isomorphism $\underline{\smash{\phi}}:C^\infty(M_{\textnormal{red}})\to H^0(N_\ell{}^\ast S)$, which is canonically defined by $\phi(\lambda\Mod\Gamma_S)=[\lambda|_S]$, for all $\lambda\in N(\Gamma_S)$.
	Now it is easy to check that
	\begin{equation*}
	\phi(\{\lambda\Mod\Gamma_S,\mu\Mod\Gamma_S\}_{\textnormal{red}})=\left[\{\pi^\ast(\lambda|_S),\pi^\ast(\mu|_S)\}|_S\right]=\frakm_2(\phi(\lambda),\phi(\mu)),
	\end{equation*}
	for all $\lambda,\mu\in N(\Gamma_S)$.
	This concludes the proof.
\end{proof}

\section{Coordinate formulas for the multi-brackets}
\label{sec:multi-brackets_coordinates}

In this section we propose some more efficient formulas for the multi-brackets in the $L_\infty[1]$-algebra of a coisotropic submanifold.
Let $(M, L, J = \{-,-\})$ be a Jacobi manifold and let $S \subset M$ be a coisotropic submanifold.
Moreover, as in the previous section, we equip $S$ with a fat tubular neighborhood $\tau : L_{NS} \injects L$.

	Because of Remark~\ref{rem:multi-brackets}~\ref{enumitem:rem:multi-brackets_b} the $\mathfrak m_k$'s are completely determined by their action on all $\lambda\in\Gamma(\ell) = \Gamma (\wedge^0 N_\ell S \otimes \ell)$, and on all $s \in \Gamma (NS) = \Gamma (\wedge^1 N_\ell S \otimes \ell)$.
	Moreover Remark~\ref{rem:multi-brackets}~\ref{enumitem:rem:multi-brackets_a} implies that, if $\xi_1,\ldots,\xi_k \in \Gamma (\wedge^\bullet N_\ell S \otimes \ell)[1]$ have non-positive degrees, then $\mathfrak m_k (\xi_1, \ldots, \xi_k) = 0$ whenever more than two arguments have degree $-1$.

From now on, in this section, we identify
\begin{itemize}
	\item a section $\lambda \in \Gamma (\ell)$, with its pull-back $\pi^\ast \lambda \in \Gamma (L_{NS})$ (as already in Section~\ref{sec:linfty_algebra}),
	\item a section $s \in \Gamma (NS)$, with the corresponding vertical vector field $\pi^\ast s \in \Gamma (\pi^\ast NS) \simeq \Gamma (V (NS))$ (as already in Section~\ref{sec:linfty_algebra}),
	\item a section $\varphi \in \Gamma (N_\ell {}^\ast S)$ of the $\ell$-adjoint bundle $N_\ell {}^\ast S = N^\ast S \otimes \ell$ with the corresponding fiber-wise linear section of $L_{NS}$.
\end{itemize}
Moreover, we denote by $\langle-,-\rangle : NS \otimes N_\ell {}^\ast S \to \ell$ the obvious ($\ell$-twisted) duality pairing.

\begin{proposition}
	\label{prop:CF}
	The multi-bracket $\mathfrak m_{k+1}$ is completely determined by
	\begin{equation}\label{eq:CF1}
	\frakm_{k+1}(s_1, \ldots, s_{k-1}, \lambda,\nu )  =(-)^{k} \bbD_{s_1} \cdots \bbD_{s_{k-1}} \{ \lambda , \nu \} |_S,
	\end{equation}
	\begin{equation}\label{eq:CF2}
	\begin{aligned}
	& \left \langle \frakm_{k+1} (s_1,\ldots,s_{k},\lambda ) , \varphi \right \rangle \\
	& = -(-)^{k} \left. \left( \bbD_{s_1} \cdots \bbD_{s_{k}} \{ \lambda , \varphi \} - \sum_{i} \bbD_{ s_1} \cdots \widehat{\bbD_{s_{i}}} \cdots \bbD_{s_{k}} \{ \lambda , \langle s_{i}, \varphi \rangle \} \right) \right |_S,
	\end{aligned}
	\end{equation}
	\begin{equation}\label{eq:CF3}
	\begin{aligned}
	&\left \langle \frakm_{k+1} (s_1,\ldots,s_{k+1}) , \varphi \otimes \psi \right \rangle 
	\\ & = -(-)^k \left( \bbD_{s_1} \cdots \bbD_{s_{k+1}} \{ \varphi, \psi \} \text{\textcolor{white}{$\sum_{i = 1}^{k+1}$}}\right.  \\
	& \quad \, + \sum_{i<j } \bbD_{s_1} \cdots \widehat{\bbD_{s_i}}\cdots \widehat{\bbD_{s_j}} \cdots \bbD_{s_{k+1}} \left(\{ \langle s_i, \varphi \rangle, \langle s_j , \psi \rangle \} + \{\langle s_j, \varphi \rangle, \langle s_i , \psi \rangle \} \right) \\
	& \quad \, \left. \left. - \sum_{i} \bbD_{s_1} \cdots \widehat{\bbD_{s_i}} \cdots \bbD_{s_{k+1}} \left(\{ \langle s_i , \varphi \rangle, \psi \} + \{ \varphi , \langle s_i , \psi \rangle \} \right) \right) \right |_S, 
	\end{aligned}
	\end{equation}
	where $\lambda, \nu \in \Gamma (\ell)$, $s_1, \ldots, s_{k+1} \in \Gamma (NS)$, $\varphi, \psi \in \Gamma (N_\ell {}^\ast S)$, and a hat ``$\widehat{-}$'' denotes omission.
\end{proposition}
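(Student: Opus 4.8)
The plan is to reduce the statement to a direct expansion of the higher derived brackets \eqref{eq:higher_derived_brackets}, exploiting the structural properties recorded in Remark~\ref{rem:multi-brackets}. First I would invoke Remark~\ref{rem:multi-brackets}~\ref{enumitem:rem:multi-brackets_b}: since each $\frakm_{k+1}$ is a graded derivation of $\scrL\to\scrM$ in every entry, it is completely determined by its values when every argument is either a section $\lambda\in\Gamma(\ell)=\Gamma(\wedge^0 N_\ell S\otimes\ell)$ (of degree $-1$) or a section $s\in\Gamma(NS)=\Gamma(\wedge^1 N_\ell S\otimes\ell)$ (of degree $0$). By graded symmetry and degree counting (Remark~\ref{rem:multi-brackets}~\ref{enumitem:rem:multi-brackets_a}), an input with $m$ arguments of degree $-1$ and $k+1-m$ of degree $0$ produces an output of degree $1-m$, which must be $\geq -1$; hence $m\leq 2$, leaving exactly the three cases of \eqref{eq:CF1}, \eqref{eq:CF2} and~\eqref{eq:CF3}, according to whether two, one, or none of the arguments are sections of $\ell$.

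The core of the computation is to rewrite the iterated bracket. Recalling $I(s)=\bbD_{\pi^\ast s}$ and $I(\lambda)=\pi^\ast\lambda$ from \eqref{eq:embedding_I}, I would bracket $J$ successively with the derivations $\bbD_{s_i}$ first; by \eqref{Lie1} each such bracket is, up to sign, the Lie derivative $\calL_{\bbD_{s_i}}$ of the $L$-valued Cartan calculus, so that $\ldsb\ldots\ldsb J,\bbD_{s_1}\rdsb,\ldots,\bbD_{s_k}\rdsb=\pm\,\calL_{\bbD_{s_k}}\cdots\calL_{\bbD_{s_1}}J$ remains a bi-derivation in $\Der^2 L_{NS}$. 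Two identities drive everything: the flat connection satisfies $\bbD_{\pi^\ast s}\,\pi^\ast\mu=0$ on fiber-wise constant sections (the defining property of $\bbD$), whereas $\bbD_{\pi^\ast s}\varphi=\langle s,\varphi\rangle$ on a fiber-wise linear section $\varphi\in\Gamma(N_\ell{}^\ast S)$, the result being again fiber-wise constant so that $\bbD_{\pi^\ast s'}\bbD_{\pi^\ast s}\varphi=0$; the second identity is a short local computation using the linearity of $\varphi$ along the fibers and the flatness of $\bbD$. The projection $P$ is then applied through \eqref{eq:projection_P_bis}: for a bi-derivation $\square$ one has $\langle P\square,\varphi\otimes\psi\rangle=\langle\square,\gamma(\varphi)\wedge\gamma(\psi)\rangle|_S=\square(\varphi,\psi)|_S$, and analogously in the one- and zero-$\lambda$ cases (where bracketing $\square$ with a section is the insertion $\ldsb\square,\lambda\rdsb=\pm\,\square(\lambda,\cdots)$ coming from the structure maps of Proposition~\ref{prop:Jacobi_Gerstenhaber_multi-differential}, after which $P$ is just restriction to $S$).

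With these tools in hand, each formula follows by expanding the iterated Lie derivative via the Cartan (Leibniz) rule $(\calL_{\bbD_s}\square)(\alpha,\beta)=\bbD_s(\square(\alpha,\beta))-\square(\bbD_s\alpha,\beta)-\square(\alpha,\bbD_s\beta)$ and sorting the resulting terms according to which covariant derivatives $\bbD_{s_i}$ act on the Jacobi bracket $\square(\cdots)=\{\cdots\}$ and which instead hit a fiber-wise linear argument (turning $\varphi$ into $\langle s_i,\varphi\rangle$). In \eqref{eq:CF1} both remaining arguments $\lambda,\nu$ are fiber-wise constant, so all $k-1$ derivatives $\bbD_{s_i}$ must land on $\{\lambda,\nu\}$, giving the single term. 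In \eqref{eq:CF2} at most one $\bbD_{s_i}$ can hit $\varphi$, producing $\{\lambda,\varphi\}$ together with the sum over a single omitted index carrying $\langle s_i,\varphi\rangle$. In \eqref{eq:CF3} one organizes the terms where zero, one, or two covariant derivatives hit the two linear slots $\varphi,\psi$ (two hits on the \emph{same} slot vanishing by $\bbD\bbD\varphi=0$), which is precisely the origin of the double sum over pairs $i<j$. I expect the main obstacle to be the sign and combinatorial bookkeeping: tracking the $(-)$ contributed by each Cartan hit on a linear argument (so that single hits enter with a minus and double hits with a plus), reconciling the global factor $-(-)^k$ in \eqref{eq:CF2} and~\eqref{eq:CF3} and $(-)^k$ in \eqref{eq:CF1} with the signs coming from \eqref{Lie1} and from bracketing $\square$ with the degree $-1$ sections $\lambda,\nu$, and correctly symmetrizing the $i\neq j$ double sum into the stated $i<j$ form.
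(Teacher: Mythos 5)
Your proposal is correct and follows essentially the same route as the paper's (very terse) proof: reduce to the three cases via Remark~\ref{rem:multi-brackets}, expand the iterated Schouten--Jacobi brackets using~\eqref{eq:DeltaJ} and $\ldsb\Delta,\lambda\rdsb=\Delta(\lambda)$, and exploit the two key identities $\bbD_s\varphi=\langle s,\varphi\rangle$ and $\bbD_{s_1}\bbD_{s_2}\varphi=0$ together with the projection formula~\eqref{eq:projection_P_bis}. You have simply spelled out in more detail the bookkeeping that the paper declares to "immediately follow".
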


\begin{proof}
	Equation~\eqref{eq:CF1} immediately follows from~\eqref{eq:higher_derived_brackets}, \eqref{eq:Jlm}, and the easy remark that $\ldsb\Delta, \lambda\rdsb = \Delta (\lambda)$ for all $\Delta \in \Der  L_{NS} = \Der^1 L_{NS}$, and $\lambda \in \Gamma (L_{NS}) = \Der^0 L_{NS}$.
	Equation~\eqref{eq:CF2} follows from~\eqref{eq:higher_derived_brackets}, \eqref{eq:DeltaJ}, and the obvious remark that
	$ \langle s, \varphi \rangle = \bbD_s \varphi $,
	hence $\bbD_{s_1} \bbD_{s_2} \varphi = 0$, for all $s, s_1, s_2 \in \Gamma (NS)$, and $\varphi \in \Gamma (N_\ell {}^\ast S)$.
	Equation~\eqref{eq:CF3} can be proven in a similar way.
\end{proof}

Let $z^\alpha$ be local coordinates on $M$, and let $\mu$ be a local generator of $\Gamma (L)$.
Define local sections $\mu^\ast$ and $\nabla_\alpha$ of $J_1 L$ by putting
\begin{equation*}
\mu^\ast(f\mu)= f,\quad \nabla_\alpha(f\mu)=\partial_\alpha f,
\end{equation*}
where $f\in C^\infty(M)$, and $\partial_\alpha = \partial / \partial z^\alpha$.
Then $ \Gamma (\wedge^\bullet J_1 L)$ is locally generated, as a $C^\infty (M)$-module, by
\begin{equation*}
\nabla_{\alpha_1}\wedge\ldots\wedge\nabla_{\alpha_k},\qquad\nabla_{\alpha_1}\wedge\ldots\wedge\nabla_{\alpha_{k-1}}\wedge\mu^\ast, \quad k>0,
\end{equation*}
with $\alpha_1<\ldots<\alpha_k$.
In particular, any $\Delta\in  \Gamma (\wedge^\bullet J_1 L)$ is locally expressed as
\begin{equation*}
\Delta=X^{\alpha_1\ldots \alpha_k}\nabla_{\alpha_1}\wedge\ldots\wedge\nabla_{\alpha_k}+g^{\alpha_1\ldots \alpha_{k-1}}\nabla_{\alpha_1}\wedge\ldots\wedge\nabla_{\alpha_{k-1}}\wedge\mu^\ast,
\end{equation*}
where $X^{\alpha_1\ldots \alpha_k},g^{\alpha_1\ldots \alpha_{k-1}}\in C^\infty(M)$.
Here and in what follows, we adopt the Einstein summation convention over pair of upper-lower repeated indexes.
Hence, $ (\Der^\bullet L)[1]$ is locally generated, as a $C^\infty(M)$-module, by
\begin{equation*}
\nabla_{\alpha_1}\wedge\ldots\wedge\nabla_{\alpha_k}\otimes\mu,\qquad\nabla_{\alpha_1}\wedge\ldots\wedge\nabla_{\alpha_{k-1}}\wedge\operatorname{id}, \quad k > 0,
\end{equation*}
with $\alpha_1<\ldots<\alpha_k$, and any $\square\in \mathcal  (\Der^\bullet L)[1]$ is locally expressed as
\begin{equation*}
\square=X^{\alpha_1\ldots \alpha_k}\nabla_{\alpha_1}\wedge\ldots\wedge\nabla_{\alpha_k}\otimes\mu+g^{\alpha_1\ldots \alpha_{k-1}}\nabla_{\alpha_1}\wedge\ldots\wedge\nabla_{\alpha_{k-1}}\wedge\operatorname{id}.
\end{equation*}

\begin{remark}
	\label{oss:components_of_J}
	Let $J\in \Der^2 L$.
	Locally,
	\begin{equation}
	\label{eq:components_of_J}
	J=J^{\alpha\beta}\nabla_{\alpha}\wedge\nabla_{\beta}\otimes\mu+J^\alpha\nabla_\alpha\wedge\operatorname{id},
	\end{equation}
	for some local functions $J^{\alpha \beta}, J^\alpha$.
\end{remark}

Now, identify $L_{NS}$ with its image in $L$ under $\tau$ and assume that:
\begin{itemize}
	\item coordinates $z^\alpha$ are fibered, i.e.~$z^\alpha = (x^i,y^a)$, with $x^i$ coordinates on $S$, and $y^a$ linear coordinates along the fibers of $\pi : NS \to S$,
	\item local generator $\mu$ is fiber-wise constant so that, locally, $\Gamma(\ell) \subset \Gamma (L_{NS})$ consists exactly of sections $\lambda$ which are vertical, i.e.~$\nabla_a\lambda=0$.
\end{itemize}
In particular, local expression~\eqref{eq:components_of_J} for $J$ expands as
\begin{equation}
\label{eq:components_of_J_bis}
J= \left( J^{a b}\nabla_a\wedge\nabla_b +2J^{ai}\nabla_a\wedge\nabla_i +J^{ij}\nabla_i\wedge\nabla_j \right) \otimes\mu+ \left( J^a\nabla_a +J^i\nabla_i \right)\wedge\operatorname{id}.
\end{equation}
We have the following

\begin{corollary}
	\label{prop:multi-brackets_coordinates}
	Locally, the multi-bracket $\mathfrak m_{k+1}$ is uniquely determined by
	\begin{align*}
	& \frakm_{k+1} \left(\partial_{a_1},\ldots,\partial_{a_{k-1}},f \mu,g \mu \right) =(-)^k \partial_{a_1} \cdots \partial_{a_{k-1}}\left.\left[2J^{ij} \partial_i f \partial_i g - J^i(f\partial_i g-g \partial_i f)\right]\right|_
	S \mu,\\
	& \frakm_{k+1} \left( \partial_{a_1},\ldots,\partial_{a_{k}}, f \mu \right) =(-)^k \partial_{a_1} \cdots \partial_{a_{k}}\left.\left(2J^{ai}\partial_i f+J^a f\right)\right|_S \partial_a,\\
	& \frakm_{k+1}\left(\partial_{a_1},\ldots,\partial_{a_{k+1}} \right) =-(-)^k\left. \partial_{a_1} \cdots \partial_{a_{k+1}}J^{ab} \right|_S \delta_a\wedge\delta_b\otimes\mu,
	\end{align*}
	where $f, g \in C^\infty (S)$, and $\delta_a := \partial_a \otimes \mu^\ast$.
\end{corollary}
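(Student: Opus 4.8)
The plan is to specialize the coordinate-free expressions \eqref{eq:CF1}--\eqref{eq:CF3} of Proposition~\ref{prop:CF} to the fibered coordinates $z^\alpha=(x^i,y^a)$ together with the fiber-wise constant generator $\mu$. By Remark~\ref{rem:multi-brackets}\ref{enumitem:rem:multi-brackets_b} the multi-brackets are determined by their values on $\Gamma(\ell)$ and $\Gamma(NS)$, and Proposition~\ref{prop:CF} computes exactly those values; so it suffices to evaluate the three formulas on the coordinate sections $\partial_a$ and $f\mu$, and to read off the components by pairing against the coordinate fiber-wise linear sections $\varphi=y^a\mu$. Two simplifications drive everything. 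First, since $\mu$ is fiber-wise constant, the flat connection acts on a section $h\mu$ ($h\in C^\infty(NS)$) by $\bbD_{\partial_a}(h\mu)=(\partial_a h)\mu$, so each $\bbD_{\partial_{a_i}}$ is just the partial derivative $\partial_{a_i}$ on coefficients. Second, for a fiber-wise linear section one has $\langle s,\varphi\rangle=\bbD_s\varphi$, which for $\varphi=y^b\mu$ and $s=\partial_a$ gives the \emph{fiber-wise constant} section $\delta^b_a\mu$; likewise $\langle\partial_a,y^b\mu\rangle=\delta^b_a\mu$, which is what lets us extract the $\partial_a$- and $\delta_a\wedge\delta_b$-components.

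The basic input is the value of the bi-derivation $J$ on sections of the form $h\mu$. From \eqref{eq:components_of_J_bis}, taking $J^{\alpha\beta}$ antisymmetric, one gets $\{h\mu,h'\mu\}=\left[2J^{\alpha\beta}\partial_\alpha h\,\partial_\beta h'+J^\alpha(h'\partial_\alpha h-h\partial_\alpha h')\right]\mu$. Substituting $h=f$, $h'=g$ with $f,g\in C^\infty(S)$, so that $\partial_a f=\partial_a g=0$ and only the $i,j$ indices survive, and feeding the result into \eqref{eq:CF1}, yields at once the first formula of the corollary. The remaining two formulas follow from the same bracket rule applied to the pairs $(f\mu,y^b\mu)$ and $(y^a\mu,y^b\mu)$, after which \eqref{eq:CF2} and \eqref{eq:CF3} are paired against $\varphi=y^a\mu$ (resp.\ $\varphi\otimes\psi=y^a\mu\otimes y^b\mu$).

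The crux, and the only genuinely delicate point, is the role of the correction sums in \eqref{eq:CF2} and \eqref{eq:CF3}. Evaluating $\{f\mu,y^b\mu\}$ and $\{y^a\mu,y^b\mu\}$ from the bracket rule produces, besides the desired coefficients $2J^{ib}$ and $2J^{ab}$, extra fiber-linear contributions proportional to $J^\alpha y^{(\cdot)}$. Because $\bbD_{\partial_{a_1}}\cdots\bbD_{\partial_{a_{k+1}}}$ is applied \emph{before} restriction to $S$, such a factor $y^{(\cdot)}$ survives $|_S$ precisely when it is differentiated, contributing terms of the shape $\sum_i\delta^{(\cdot)}_{a_i}\,\partial_{a_1}\cdots\widehat{\partial_{a_i}}\cdots\partial_{a_{k+1}}J^\alpha$. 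A Leibniz bookkeeping shows these are cancelled exactly by the correction sums $-\sum_i\cdots\{\lambda,\langle s_i,\varphi\rangle\}$ in \eqref{eq:CF2} and $-\sum_i\cdots(\{\langle s_i,\varphi\rangle,\psi\}+\{\varphi,\langle s_i,\psi\rangle\})$ in \eqref{eq:CF3}, since $\langle s_i,\varphi\rangle=\delta^{(\cdot)}_{a_i}\mu$ is constant and $\{h\mu,\mu\}=(J^\alpha\partial_\alpha h)\mu$, while the double sum $\sum_{i<j}$ in \eqref{eq:CF3} vanishes identically because both arguments are fiber-wise constant and $\{\mu,\mu\}=0$. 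After the cancellation only the stated coefficients remain, the factor $2$ in $2J^{ab}$ being absorbed into the antisymmetry of $\delta_a\wedge\delta_b\otimes\mu$; the apparent sign mismatch in the $J^{ai}$-term is resolved by the antisymmetry $J^{ai}=-J^{ia}$. The main effort is therefore the careful matching of signs and index patterns in this cancellation; everything else is a mechanical substitution.
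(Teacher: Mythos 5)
Your proposal is correct and is exactly the computation the paper intends: the Corollary is stated without a separate proof as a direct coordinate specialization of Proposition~\ref{prop:CF}, and you carry out that specialization faithfully — identifying $\bbD_{\partial_a}$ with $\partial_a$ on coefficients of the fiber-wise constant frame $\mu$, extracting components by pairing against $y^a\mu$, and verifying that the correction sums in \eqref{eq:CF2}--\eqref{eq:CF3} cancel the fiber-linear remainders while the $\sum_{i<j}$ term dies because $\{\mu,\mu\}=0$. The sign and factor-of-$2$ bookkeeping (antisymmetry of $J^{\alpha\beta}$ and of $\delta_a\wedge\delta_b$) is also handled correctly.
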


\section{Independence of the tubular embedding}
\label{sec:gauge_invariance}
Now we show that, as already in the symplectic~\cite[Appendix]{oh2005deformations}, the Poisson~\cite{cattaneo2008equivalences}, and the l.c.s.~\cite[Theorem 9.5]{le2012deformations} cases, the $L_\infty[1]$-algebra in Proposition~\ref{prop:linfty} does not really depend on the choice of a fat tubular neighborhood, in the sense clarified by Proposition~\ref{prop:gauge_invariance} below.
As a consequence, its $L_\infty$-isomorphism class is an \emph{invariant} of the coisotropic submanifold.

\begin{proposition}
	\label{prop:gauge_invariance}
	Let $S$ be a coisotropic submanifold of the Jacobi manifold $(M,L,J=\{-,-\})$.
	Then the $L_\infty[1]$-algebra structures on $\Gamma (\wedge^\bullet N_\ell S \otimes \ell)[1]$ associated with $S$, through different choices of the fat tubular neighborhood $L_{NS} \injects L$ of $\ell$ in $L$, are $L_\infty$-isomorphic.
\end{proposition}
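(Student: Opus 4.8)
The plan is to reduce the statement to a gauge-invariance question about sets of V-data, and then invoke the transfer of $L_\infty[1]$-structures along isomorphisms of V-data. Two fat tubular neighborhoods $\tau_0,\tau_1$ of $\ell$ in $L$ give two sets of V-data $((\Der^\bullet L_{NS})[1],\im I_0,P,J_0)$ and $((\Der^\bullet L_{NS})[1],\im I_1,P,J_1)$, where $J_0,J_1$ denote the two Jacobi brackets pulled back to $L_{NS}$ via $\tau_0,\tau_1$, and $I_0,I_1$ are the corresponding right inverses of $P$ built from the respective splittings. The higher derived brackets of Proposition~\ref{prop:linfty} depend on the tubular neighborhood only through this data, so it suffices to produce an $L_\infty[1]$-isomorphism between the two resulting structures on $\Gamma(\wedge^\bullet N_\ell S\otimes\ell)[1]$.

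First I would apply the uniqueness result, Proposition~\ref{prop:isotopy}, to obtain a smooth one-parameter family of fat tubular neighborhoods $\mathcal T_t$ together with an automorphism $\psi$ of $L_{NS}$ covering $\underline\psi$ such that $\mathcal T_0=\tau_0$ and $\mathcal T_1=\tau_1\circ\psi$. The automorphism $\psi$ acts by push-forward $\psi_\ast$ on $(\Der^\bullet L_{NS})[1]$ (cf.~Remark~\ref{rem:inf_aut}), and since it covers a bundle automorphism of $NS$ over the identity of $S$ it induces an isomorphism of the data; more precisely $\psi_\ast$ intertwines the two Schouten--Jacobi brackets, preserves the image of $I$ and commutes with $P$ up to the obvious induced map on $\Gamma(\wedge^\bullet N_\ell S\otimes\ell)[1]$. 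Thus $\psi$ reduces the comparison of $\tau_1$ and $\tau_1\circ\psi$ to a strict isomorphism, and the remaining task is to compare $\mathcal T_0$ with $\mathcal T_1$ through the isotopy.

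The core step is therefore to show that the $L_\infty[1]$-algebras attached to the endpoints of the smooth family $\{\mathcal T_t\}$ are $L_\infty$-isomorphic. Here I would follow the Poisson argument of Cattaneo--Sch\"atz~\cite{cattaneo2008equivalences}: the isotopy $\{\mathcal T_t\}$ is generated by a time-dependent derivation of $L_{NS}$, equivalently a time-dependent section $\square_t\in\Der L_{NS}$, whose flow realizes the identification of the pulled-back Jacobi structures $J_t$. Differentiating the family of higher derived brackets in $t$ and using Equation~\eqref{Lie2}, together with the fact that $\ker P$ is a graded Lie subalgebra (Proposition~\ref{prop:PJ=0}) and $\im I$ is abelian (Proposition~\ref{prop:embeddingI}), one produces an explicit $L_\infty[1]$-homotopy. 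Concretely, the flow of $\square_t$ integrates to an $L_\infty[1]$-isomorphism between the structures for different values of $t$, because the whole construction~\eqref{eq:higher_derived_brackets} is natural with respect to isomorphisms of the underlying line bundle covering the tubular data.

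The main obstacle I expect is the bookkeeping needed to show that the infinitesimal generator $\square_t$ of the isotopy descends, via $P$ and $I$, to a well-defined time-dependent $L_\infty[1]$-vector field on $\Gamma(\wedge^\bullet N_\ell S\otimes\ell)[1]$ whose flow is an $L_\infty[1]$-morphism rather than merely an $L_\infty$-morphism of the décalés underlying complexes. This requires checking compatibility of $\square_t$ with the projection $P$ and the embedding $I$ at all arities, i.e.~that the derivation preserves the V-data structure along the isotopy, and invoking a standard result that a smooth family of V-data related by such flows yields $L_\infty$-isomorphic higher-derived-bracket algebras. I would isolate this as the key technical lemma, proving it by the same differentiate-and-integrate scheme used in the Poisson case, and relegate the explicit homotopy formulas to a direct computation.
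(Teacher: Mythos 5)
Your overall strategy is the one the paper follows: reduce via Proposition~\ref{prop:isotopy} to the two cases $\tau_1=\tau_0\circ\psi$ (handled by a strict isomorphism of V-data) and $\tau_0,\tau_1$ joined by an isotopy $\{\mathcal T_t\}$, and in the second case feed the infinitesimal generators of the isotopy into the Cattaneo--Sch\"atz gauge-equivalence theorem for higher derived brackets (Theorem~\ref{theor:CS}). However, there is one genuine gap in your plan for the isotopy case. The comparison automorphisms $\varphi_t=\tau_t^{-1}\circ\tau_0$ and their generators $\xi_t$ are only defined on a neighborhood of $S$ in $NS$ (the images $\underline{\smash{\tau}}_t(NS)$ need not agree), so there is no globally defined time-dependent derivation of $L_{NS}$ whose flow intertwines $J_0$ and $J_1$ on all of $\Der^\bullet L_{NS}$; your ``differentiate-and-integrate'' scheme, as stated, acts on objects that do not exist globally, and $J_0$, $J_1$ are in general not gauge equivalent in $(\Der^\bullet L_{NS})[1]$. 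The paper's fix is to replace $\Der^\bullet L_{NS}$ by the space $\Der^\bullet_{\mathrm{for}}L_{NS}$ of multi-differential operators in a formal neighborhood of $S$ (Taylor series normal to $S$), where the $\varphi_t$ do induce well-defined automorphisms; this replacement is harmless precisely because, by Corollary~\ref{prop:multi-brackets_coordinates}, the multibrackets~\eqref{eq:higher_derived_brackets} depend only on the infinite jet of $J$ along $S$. Any complete write-up must either include this localization step or supply an alternative (e.g.\ a cut-off argument), and should justify why the truncated V-data yield the same $L_\infty[1]$-algebra.

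A secondary remark: the hypotheses you actually need to verify for Theorem~\ref{theor:CS} are (i) that the Cauchy problem~\eqref{eq:equivalence_2a} has only the trivial solution, which follows from uniqueness of the flow generated by $\xi_t$, and (ii) that $[\xi_t,\ker P]\subset\ker P$, which holds because $\varphi_t|_\ell=\mathrm{id}$ forces $\xi_t$ to vanish along $S$. The concern you flag about the flow being an ``$L_\infty[1]$-morphism rather than merely an $L_\infty$-morphism of the d\'ecal\'es underlying complexes'' is not where the difficulty lies; the conclusion of Theorem~\ref{theor:CS} already produces an $L_\infty$-isomorphism of the $L_\infty[1]$-algebras once the two conditions above are checked.
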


The proof is an adaptation of the one given by Cattaneo and Sch\"atz in the Poisson setting (see sections 4.1 and 4.2 of~\cite{cattaneo2008equivalences}, see also Remark~\ref{rem:CS} below) and it is based on Theorem 3.2 of~\cite{cattaneo2008equivalences} and the fact that \emph{any two fat tubular neighborhoods are isotopic} (in the sense of Proposition~\ref{prop:isotopy} above).
Before proving Proposition~\ref{prop:gauge_invariance}, let us recall Cattaneo--Sch\"atz Theorem.
We will present a ``minimal version'' of it, adapted to our purposes.
The main ingredients are the following.

We work in the category of real topological vector spaces.
Let $(\mathfrak h,\mathfrak a, P, \Delta_0)$ and $(\mathfrak h, \mathfrak a, P, \Delta_1)$ be V-data~\cite{fregier2014simultaneous}.
We identify $\mathfrak a$ with the target space of $P$.
Note that $(\mathfrak h,\mathfrak a, P, \Delta_0)$ and $(\mathfrak h, \mathfrak a, P, \Delta_1)$ differ for the last entry only.
Higher derived brackets construction associates $L_\infty[1]$-algebras to $(\mathfrak h,\mathfrak a, P, \Delta_0)$ and $(\mathfrak h, \mathfrak a, P, \Delta_1)$.
Denote them $\mathfrak a_0$ and $\mathfrak a_1$ respectively.
Cattaneo and Sch\"atz main idea is proving that when
\begin{itemize}
	\item $\Delta_0$ and $\Delta_1$ are gauge equivalent elements of the graded Lie algebra $\mathfrak h$, and
	\item they are intertwined by a gauge transformation preserving $\ker P$,
\end{itemize}
then $\mathfrak a_0$ and $\mathfrak a_1$ are $L_\infty$-isomorphic.
Specifically, $\Delta_0$ and $\Delta_1$ are \emph{gauge equivalent} if they are interpolated by a smooth family $\{ \Delta_t \}_{t \in [0,1]}$ of elements $\Delta_t \in \mathfrak h$, and there exists a smooth family $\{ \xi_t \}_{t \in [0,1]}$ of degree zero elements $\xi_t \in \mathfrak h$ such that the following evolutionary differential equation is satisfied:
\begin{equation}
\label{eq:gaugeMC00}
\frac{d}{dt} \Delta_t = [\xi_t , \Delta_t].
\end{equation}
One usually assumes that the family $\{ \xi_t \}_{t \in [0,1]}$ integrates to a family $\{ \phi_t \}_{t \in [0,1]}$ of automorphisms $\phi_t : \mathfrak h \to \mathfrak h$ of the Lie algebra $\mathfrak h$, i.e.~$\{ \phi_t \}_{t \in [0,1]}$ is a solution of the Cauchy problem
\begin{equation}
\label{eq:Cauchy}
\left\{
\begin{aligned}
& \frac{d}{dt}\phi_t (-)=[\phi_ t (-), \xi_t] \\
& \phi_0= \id
\end{aligned}
\right. .
\end{equation}

Finally we say that $\Delta_0$ and $\Delta_1$ are intertwined by a gauge transformation preserving $\ker P$ if family $\{ \xi_t \}_{t \in [0,1]}$ above satisfies the following conditions:
\begin{enumerate}
	\item the only solution $\{ a_t \}_{t \in [0,1]}$, where $a_t \in \mathfrak a$, of the Cauchy problem
	\begin{equation}
	\label{eq:equivalence_2a}
	\left\{
	\begin{aligned}
	& \frac{d}{dt}a_t =P [a_t , \xi_t] \\
	& a_0 =0
	\end{aligned}
	\right.
	\end{equation}
	is the trivial one: $a_t = 0$ for all $t \in [0,1]$,
	\item $[ \xi_t, \ker P] \subset \ker P$ for all $t \in [0,1]$.
\end{enumerate}

\begin{theorem}[{\cite[Theorem 3.2]{cattaneo2008equivalences}}]\label{theor:CS}
	Let $(\mathfrak h,\mathfrak a, P, \Delta_0)$ and $(\mathfrak h, \mathfrak a, P, \Delta_1)$ be V-data, and let $\mathfrak a_0$ and $\mathfrak a_1$ be the associated $L_\infty[1]$-algebras.
	If $\Delta_0$ and $\Delta_1$ are gauge equivalent and they are intertwined by a gauge transformation preserving $\ker P$, then $\mathfrak a_0$ and $\mathfrak a_1$ are $L_\infty$-isomorphic.
\end{theorem}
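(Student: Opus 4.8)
The plan is to realize the desired $L_\infty$-isomorphism as the time-$1$ value of a flow interpolating between the two higher-derived-bracket structures. First I would record that, by Voronov's construction~\cite{Voronov2005higher1} (in the conventions of~\cite{fregier2014simultaneous}), the $L_\infty[1]$-algebra $\mathfrak{a}_i$ is encoded by the degree $+1$ codifferential $Q_i$ on the reduced symmetric coalgebra $\bar S(\mathfrak{a})$ whose Taylor coefficients are the derived brackets
\[
(Q_i)_k(\xi_1,\dots,\xi_k)=P[\cdots[\Delta_i,\xi_1],\dots,\xi_k],
\]
and that an $L_\infty$-morphism $\mathfrak{a}_0\to\mathfrak{a}_1$ is the same datum as a counit-preserving coalgebra morphism $\Phi\colon\bar S(\mathfrak{a})\to\bar S(\mathfrak{a})$ intertwining the codifferentials, $\Phi\circ Q_0=Q_1\circ\Phi$. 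Using the interpolating gauge family $\{\Delta_t\}$ of~\eqref{eq:gaugeMC00} I would build the corresponding smooth family of codifferentials $Q_t$, all genuine (flat, non-curved) since each $\Delta_t$ stays in $\ker P$.

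Next I would introduce the degree $0$ coderivation $\mathcal{X}_t$ of $\bar S(\mathfrak{a})$ whose components are the derived brackets of the gauge generator $\xi_t$,
\[
(\mathcal{X}_t)_k(\xi_1,\dots,\xi_k)=P[\cdots[\xi_t,\xi_1],\dots,\xi_k].
\]
Condition (2), $[\xi_t,\ker P]\subset\ker P$, is exactly what makes these brackets well behaved and guarantees that the generated flow respects the splitting $\mathfrak{h}=\mathfrak{a}\oplus\ker P$, so that $P$ may be commuted past it. The heart of the argument is the Lie-derivative identity
\[
\frac{d}{dt}Q_t=[\mathcal{X}_t,Q_t],
\]
which I would deduce from $\tfrac{d}{dt}\Delta_t=[\xi_t,\Delta_t]$ by differentiating the derived-bracket formula defining $Q_t$ and reorganizing the nested brackets via the graded Jacobi identity in $\mathfrak{h}$ (together with $P\circ[\ker P,\ker P]$-type vanishings coming from $\ker P$ being a subalgebra).

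I would then integrate $\mathcal{X}_t$ to a family of coalgebra automorphisms $\Phi_t$ solving the coalgebra counterpart of~\eqref{eq:Cauchy}, namely $\tfrac{d}{dt}\Phi_t=\Phi_t\circ\mathcal{X}_t$ with $\Phi_0=\id$; the identity above then yields $\Phi_t\circ Q_0=Q_t\circ\Phi_t$, so that $\Phi_1$ is the sought $L_\infty[1]$-isomorphism $\mathfrak{a}_0\to\mathfrak{a}_1$, its invertibility being automatic because its linear part is itself a flow. The remaining point, where condition (1) is used, is that $\Phi_t$ be a genuine pointed morphism, i.e.\ that its $0$-th Taylor coefficient $a_t:=\Phi_t(1)\in\mathfrak{a}^0$ vanish: a direct computation shows that $a_t$ solves precisely the Cauchy problem~\eqref{eq:equivalence_2a}, whose only solution is the trivial one by hypothesis, so $a_t\equiv 0$.

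I expect the main obstacle to be the analytic well-posedness of these flows in the category of topological vector spaces — existence, uniqueness and smoothness of $\Phi_t$ and of $a_t$ — rather than the formal algebra; this is also where conditions (1) and (2) really earn their keep. The secondary delicate point is the careful bookkeeping behind $\tfrac{d}{dt}Q_t=[\mathcal{X}_t,Q_t]$, where one must check that the derived brackets of $\xi_t$ genuinely assemble into a coderivation and that differentiating a $k$-fold nested bracket reproduces, after resummation, the bracket of the two coderivations $\mathcal{X}_t$ and $Q_t$.
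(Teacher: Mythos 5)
First, a point of reference: the thesis does not actually prove Theorem~\ref{theor:CS} — it is imported verbatim from Cattaneo--Sch\"atz \cite[Theorem 3.2]{cattaneo2008equivalences} with no proof given — so your proposal can only be measured against the argument of that reference. Your overall architecture (encoding each $\mathfrak a_i$ as a codifferential $Q_i$ on the symmetric coalgebra, forming the coderivation $\mathcal X_t$ of derived brackets of $\xi_t$, establishing $\tfrac{d}{dt}Q_t=[\mathcal X_t,Q_t]$ and integrating) is indeed the strategy of that proof, and the identity $\tfrac{d}{dt}Q_t=[\mathcal X_t,Q_t]$ itself is sound.

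There is, however, a genuine gap in how you deploy hypothesis (1), the uniqueness statement for the Cauchy problem~\eqref{eq:equivalence_2a}, and it surfaces at two linked places. First, you assert without argument that each $\Delta_t$ stays in $\ker P$, so that $Q_t$ is a flat codifferential on the reduced coalgebra. This is not automatic from~\eqref{eq:gaugeMC00}: applying $P$ gives $\tfrac{d}{dt}P\Delta_t=P[\xi_t,\Delta_t]$, which by condition (2) collapses (since $P[\xi_t,\Delta_t-P\Delta_t]=0$) to the linear homogeneous equation $\tfrac{d}{dt}P\Delta_t=-P[P\Delta_t,\xi_t]$ with $P\Delta_0=0$ — precisely the Cauchy problem~\eqref{eq:equivalence_2a} up to an overall sign. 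It is condition (1) that forces $P\Delta_t\equiv 0$; in the topological-vector-space setting this uniqueness can genuinely fail, which is why it is a hypothesis at all. Second, having (implicitly) spent condition (1) there, your proposed use of it to kill the zeroth Taylor coefficient of $\Phi_t$ is incorrect: if $\mathcal X_t$ carries the arity-zero coefficient $P\xi_t$ (as it must if you integrate on the full coalgebra), then $a_t:=\Phi_t(1)$ does not solve~\eqref{eq:equivalence_2a} but rather the inhomogeneous, nonlinear equation $\dot a_t=P\xi_t+P[\xi_t,a_t]+\tfrac12P[[\xi_t,a_t],a_t]+\cdots$, whose solution is nonzero near $t=0$ as soon as $P\xi_0\neq 0$ — and nothing in the hypotheses forces $P\xi_t=0$. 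The repair is to drop the arity-zero component of $\mathcal X_t$ altogether: once $\Delta_t\in\ker P$ is known, condition (2) is exactly what makes the reduced coderivations satisfy $\widehat{[\xi_t,\Delta_t]}=[\widehat{\xi_t},\widehat{\Delta_t}]$ (the discrepancy terms are all of the form $P[\xi_t,b]$ with $b\in\ker P$, killed by $[\xi_t,\ker P]\subset\ker P$), and then $\Phi_t$ is pointed by construction, with no further appeal to condition (1). Your closing remarks on the analytic well-posedness of the flows are apt, but they do not substitute for these two algebraic points.
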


\begin{proof}[Proof of Proposition {\ref{prop:gauge_invariance}}]
	Let $\tau_0, \tau_1 : L_{NS} \injects L$ be fat tubular neighborhoods over tubular neighborhoods $\underline \tau{}_0, \underline \tau{}_1 : NS \injects M$.
	Denote by $J_0$ and $J_1$ the Jacobi brackets induced on $\Gamma (L_{NS})$ by $\tau_0$ and $\tau_1$ respectively, i.e.~$J_0 = (\tau_0^{-1})_\ast J$, and $J_1 = (\tau_1^{-1})_\ast J$ (see Remark~\ref{rem:inf_aut} about pushing forward a multi-differential operator along a line bundle isomorphism).
	In view of Proposition~\ref{prop:isotopy} it is enough to consider the following two cases:
	
	{\textbf Case I:} \emph{$\tau_1 = \tau_0 \circ \psi$ for some automorphism $\psi : L_{NS} \to L_{NS}$ covering an automorphism $ \underline \psi : NS \to NS$ of $NS$ over the identity}.
	Obviously, $\psi$ identifies the V-data $((\Der^\bullet L_{NS})[1], \im I, P, J_0)$ and $((\Der^\bullet L_{NS})[1], \im I, P, J_1)$.
	As an immediate consequence, the $L_\infty[1]$-algebra structures on $\Gamma (\wedge^\bullet N_\ell S \otimes \ell)[1]$ determined by $\tau_0$ and $\tau_1$ are (strictly) $L_\infty$-isomorphic.
	
	{\textbf Case II:} \emph{$\tau_0$ and $\tau_1$ are interpolated by a smooth one parameter family of fat tubular neighborhoods $\tau_t$.}
	Consider $\phi_t := \tau_t^{-1} \circ \tau_0 $.
	It is a local automorphism of $L_{NS}$ covering a local diffeomorphism $\underline \varphi{}_t = \underline \tau{}_t^{-1} \circ \underline \tau{}_0$, well defined in a suitable neighborhood of $S$ in $NS$, fixing $S$ point-wise and such that $\varphi_0 = \id$.
	Let $\xi_t$ be infinitesimal generators of the family $\{ \varphi_t \}$.
	They are derivations of $L_{NS}$ well defined around $S$.
	Our strategy is using $\xi_t$ and $\varphi_t$ to prove that $J_0$ and $J_1$ are gauge equivalent Maurer--Cartan elements of $(\Der^\bullet L_{NS})[1]$ intertwined by a gauge transformation preserving $\ker P$, and then applying Theorem~\ref{theor:CS}.
	However, the $\varphi_t$'s are well-defined only around $S$ in $NS$.
	In order to remedy this minor drawback, we slightly change the graded space $\Der^\bullet L_{NS}$ underlying our V-data, passing to the graded space $\Der_{\text for}^\bullet L_{NS}$ of \emph{skew-symmetric, first order, multi-differential operators on $L_{NS}$ in a formal neighborhood of $S$ in $NS$}.
	The space $\Der_{\text for}^\bullet L_{NS}$ is defined as the inverse limit
	\begin{equation*}
	\lim_{\longleftarrow} \Der^\bullet L_{NS}/ I(S)^n \Der^\bullet L_{NS},
	\end{equation*}
	where $I(S) \subset C^\infty (NS)$ is the ideal of functions vanishing on $S$, and consists of ``Taylor series normal to $S$'' of multi-differential operators.
	V-data $((\Der^\bullet L_{NS})[1], \im I, P, J)$ induce obvious V-data $((\Der_{\text for}^\bullet L_{NS})[1], \im I_{\text for}, P_{\text for}, J_{\text for})$.
	In particular, $I_{\text for} : \Gamma (\wedge^\bullet N_\ell S \otimes \ell)[1] \injects (\Der_{\text for}^\bullet L_{NS})[1]$ is the natural embedding, and $J_{\text for}$ is the class of $J$ in $(\Der_{\text for}^\bullet L_{NS})[1]$.
	Moreover, in view of Corollary~\ref{prop:multi-brackets_coordinates}, the $L_\infty[1]$-algebra determined by $((\Der^\bullet L_{NS})[1], \im I, P, J)$ does only depend on $J_{\text for}$.
	Therefore, V-data $((\Der_{\text for}^\bullet L_{NS})[1], \im I_{\text for}, P_{\text for}, J_{\text for})$ determine the same $L_\infty[1]$-algebra as $((\Der^\bullet L_{NS})[1], \im I, P, J)$.
	
	Now, being well defined around $S$, the $\varphi_t$'s determine well-defined automorphisms $\phi_t := (\varphi_t)_\ast : (\Der_{\text for}^\bullet L_{NS})[1]\longrightarrow(\Der_{\text for}^\bullet L_{NS})[1]$ such that $\phi_0 = \id$.
	Similarly the $\xi_t$'s descend to zero degree elements of $(\Der_{\text for}^\bullet L_{NS})[1]$ which we denote by $\xi_t$ again.
	Clearly, family $\{ \phi_t (J_0)_{\text for} \}$ interpolates between $(J_0)_{\text for}$ and $(J_1)_{\text for}$ and, in view of Equation~\eqref{Lie2}, the $\phi_t$'s satisfy Cauchy problem~\eqref{eq:Cauchy}.
	Finally,
	\begin{enumerate}
		\item from uniqueness of the one parameter family of automorphisms $\varphi_t$ generated by the one parameter family of derivation $\xi_t$, it follows that Cauchy problem~\eqref{eq:equivalence_2a} possesses a unique solution,
		\item $\varphi_t |_{\ell} = \id$ so that the $\xi_t$'s vanish on $S$, hence $[\xi_t, \ker P ] \subset \ker P$ for all $t$.
	\end{enumerate}
	The above considerations show that $(J_0)_{\text for}$ and $(J_1)_{\text for}$ are gauge equivalent and they are intertwined by a gauge transformation preserving $\ker P$.
	Hence, from Theorem~\ref{theor:CS}, the $L_\infty[1]$-algebra structures on $\Gamma (\wedge^\bullet N_\ell S \otimes \ell)[1]$ associated to the two choices $\tau_0$ and $\tau_1$ of the fat tubular neighborhood $L_{NS} \injects L$ are actually $L_\infty$-isomorphic.
\end{proof}

\begin{remark}\label{rem:CS}
	As already mentioned, in the proof of Proposition {\ref{prop:gauge_invariance}} we basically follow Cattaneo and Sch\"atz~\cite[Section 4.2]{cattaneo2008equivalences}.
	Actually, the structure of our proof looks slightly simpler than theirs, which contains some redundancy.
	On another hand, our case is a bit more complicated technically, in view of the presence of a generically non-trivial line-bundle.
	Namely, in our case, having a tubular neighborhood of $S$ is not enough.
	We also need a \emph{fat tubular neighborhood} of $\ell = L|_S$.
	The latter guarantees that $L$ is presented as a pull-back bundle from $\ell$ around $S$ which is crucial in our proof.
\end{remark}

\begin{remark}
	In the contact case, as already in the l.c.s.~one, there exists a tubular neighborhood theorem for coisotropic submanifolds.
	As a consequence, the proof of Proposition~\ref{prop:gauge_invariance} simplifies.
	In particular, it does not require using any \emph{formal neighborhood technique}.
\end{remark}

%

\section{Smooth coisotropic deformations}
\label{sec:cois_def}

Let $(M, L, J=\{-,-\})$ be a Jacobi manifold and let $S \subset M$ be a closed coisotropic submanifold.
We fix a fat tubular neighborhood $(\tau,\underline{\smash{\tau}})$ of $\ell\to S$ in $L\to M$ and use it to identify $L_{NS}\to NS$ with its image.
Accordingly, and similarly as above, from now on in this section, we abuse the notation and denote by $(L,J=\{-,-\})$ (instead of $(L_{NS},\tau_\ast^{-1}J)$) the Jacobi bundle on $NS$ (unless otherwise specified).
It is well known that locally, around $S$, a deformation of $S$ in $NS$ can be identified with a section of the normal bundle $\pi:NS\to S$.
We say that a section $s\in\Gamma(NS)$ is \emph{coisotropic} if its image $s (S)$ is a coisotropic submanifold in $(NS,L,J)$.

\begin{definition}
	\label{def:smcoiso}
	A smooth one parameter family of smooth sections of $NS \to S$ starting from the zero section is a \emph{smooth coisotropic deformation of} $S$ if each section in the family is coisotropic.
	A section $s$ of $NS \to S$ is an \emph{infinitesimal coisotropic deformation of} $S$ if $\eps s$ is a coisotropic section up to infinitesimals $\calO(\eps^2)$, where $\eps$ is a formal parameter.
\end{definition}

\begin{remark}
	Let $\{ s_t \}$ be a smooth coisotropic deformation of $S$.
	Then
	\[
	\left. \frac{d}{dt}\right|_{t=0} s_t
	\]
	is an infinitesimal coisotropic deformation.
\end{remark}

\begin{remark}
	\label{rem:coiss}
	Recall that a section $s\in\Gamma(NS)$ is mapped, via $I:\Gamma(\wedge^\bullet N_\ell S\otimes\ell)[1] \to  (\Der^\bullet L)[1]$, to a derivation $I(s):=\mathbb D_{\pi^\ast s}$ of $L\to NS$, where $\pi : NS \to S$ is the projection.
	Then  $I(s)$ generates a smooth $1$-parameter group $\{\Phi_t\}$ of automorphisms of $L\to NS$ which covers the smooth $1$-parameter group $\{\underline{\smash{\Phi}}_t\}$ of diffeomorphisms generated by $\sigma(Is)=\pi^\ast s$.
	Set $\exp I (s) := \Phi_{1}$ and $\underline{\smash{\exp I (s)}} := \underline{\smash{\Phi}}_{1}$.
	Clearly $\exp I(s)(\nu,\lambda)=(\underline{\smash{\exp I(s)}}(\nu), \lambda)=(\nu + s(x) , \lambda)$, for all $(\nu,\lambda)\in L = NS \times_S \ell$, $x = \pi (\nu)$.
	Further, let $\mathrm{pr} : J^1 L \to NS$ be the projection, denote by $j^1 \exp I(s) : J^1 L \to J^1 L$ the first jet prolongation of $\exp I(s)$, and consider the following commutative diagram

	\begin{equation*}
		\begin{tikzcd}[column sep=huge, row sep=large]
			N_\ell{}^\ast S\arrow[d]\arrow[r, "\gamma"]&J^1L\arrow[d]\arrow[r, "j^1 \exp I(s)"]&J^1L\arrow[d]\\
			S\arrow[r, shift left=0.7 ex, "\mathbf 0"]&NS\arrow[l, shift left=0.7 ex, "\pi"]\arrow[r, shift left=0.7 ex, "\underline{\smash{\exp I (s)}}"]&NS\arrow[l, shift left=0.7 ex, "\underline{{\exp I(-s)}}"]
		\end{tikzcd}
	\end{equation*}
	where $\mathbf 0$ is the zero section.
	Note that $s = \underline{\smash{\exp I(s)}} \circ \mathbf 0$.
\end{remark}

\begin{proposition}
	\label{prop:coiss}
	Let $s : S \to NS$ be a section of $\pi$.
	The following three conditions are equivalent
	\begin{enumerate}
		\item $s$ is coisotropic,
		\item $P (\exp I (-s)_* J) = 0$ (cf.~\cite{SZ2012}),
		\item vector bundle $\mathrm{pr} \circ j^1 \exp I(s) \circ \gamma : N_\ell {}^\ast S \to s(NS)$ is a Jacobi subalgebroid of $J^1 L$.
	\end{enumerate}
\end{proposition}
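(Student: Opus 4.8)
The plan is to establish the two equivalences $(1)\Leftrightarrow(2)$ and $(1)\Leftrightarrow(3)$ separately. In both cases the idea is the same: transport the deformed submanifold $s(S)$ back to the zero section $S$ by means of the automorphism $\exp I(s)$ (which, by Remark~\ref{rem:coiss}, is the global fibre-translation $\underline{\smash{\exp I(s)}}(\nu)=\nu+s(\pi(\nu))$, so that $\underline{\smash{\exp I(s)}}(S)=s(S)$), and then invoke the characterisations of coisotropicity already available, namely Propositions~\ref{prop:PJ=0} and~\ref{prop:conormal}.

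For $(1)\Leftrightarrow(2)$, I would first note that $\exp I(s)$ and $\exp I(-s)$ are mutually inverse line bundle automorphisms of $L\to NS$, and that $\exp I(-s)$ is a Jacobi isomorphism from $(NS,L,J)$ onto $(NS,L,\exp I(-s)_\ast J)$; since pushing forward a multi-differential operator along a line bundle isomorphism is an automorphism of the graded Lie algebra $((\Der^\bullet L)[1],\ldsb-,-\rdsb)$ (cf.~Remark~\ref{rem:inf_aut}), the bi-derivation $\exp I(-s)_\ast J$ is again a genuine Jacobi structure. Now coisotropicity is preserved under Jacobi isomorphisms: this is immediate from the Lie-subalgebra characterisation of Lemma~\ref{lem:cois}, because $\exp I(-s)^\ast$ is a Lie algebra isomorphism carrying $\Gamma_S$ onto $\Gamma_{s(S)}$. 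Since $\underline{\smash{\exp I(-s)}}$ maps $s(S)$ onto $S$, the section $s$ is coisotropic (i.e.~$s(S)$ is coisotropic with respect to $J$) if and only if $S$ is coisotropic with respect to $\exp I(-s)_\ast J$; by Proposition~\ref{prop:PJ=0}~\ref{enumitem:prop:PJ=0_2} the latter is precisely the condition $P(\exp I(-s)_\ast J)=0$, which is $(2)$.

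For $(1)\Leftrightarrow(3)$, the crux is to identify the image of the vector bundle morphism $\mathrm{pr}\circ j^1\exp I(s)\circ\gamma$ with the $\ell$-adjoint conormal bundle of the deformed submanifold. Writing $\phi:=\exp I(s)$, I would use the standard description of the first jet prolongation on jets, $j^1\phi\,(j^1\lambda)=j^1(\phi_\ast\lambda)$ over $\underline{\smash{\phi}}$. Combining this with the identity $\gamma(N_\ell{}^\ast S)=\{(j^1\lambda)|_S:\lambda\in\Gamma_S\}$ (recorded just before Proposition~\ref{prop:conormal}) and the fact that $\phi_\ast$ maps $\Gamma_S$ isomorphically onto $\Gamma_{s(S)}$, one obtains $j^1\phi\bigl(\gamma(N_\ell{}^\ast S)\bigr)=\{(j^1\mu)|_{s(S)}:\mu\in\Gamma_{s(S)}\}=\gamma\bigl(N_\ell{}^\ast(s(S))\bigr)$ inside $(J^1L)|_{s(S)}$. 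Thus $\mathrm{pr}\circ j^1\phi\circ\gamma$ is exactly the inclusion of $N_\ell{}^\ast(s(S))$ into $J^1L$, and condition $(3)$ reads: $(N_\ell{}^\ast(s(S)),\ell)$ is a Jacobi subalgebroid of $(J^1L,L)$. By Proposition~\ref{prop:conormal} applied to the submanifold $s(S)$, this is equivalent to $s(S)$ being coisotropic, i.e.~to $(1)$.

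The main obstacle is the jet-transport identification in the second equivalence: one must verify carefully that prolonging $\exp I(s)$ sends the co-symbol image of $N_\ell{}^\ast S$ exactly onto the co-symbol image of the conormal bundle of the deformed submanifold, keeping track of both the value part and the derivative (covariant) part of the $1$-jets, and checking that the ranks agree. Once this bundle-level statement is in place, everything else is a direct application of Propositions~\ref{prop:PJ=0} and~\ref{prop:conormal} and Lemma~\ref{lem:cois}, with no further computation required.
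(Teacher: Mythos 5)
Your proof is correct and follows essentially the same route as the paper's: both equivalences are obtained by transporting along the fibre translation $\exp I(\pm s)$ and then invoking Propositions~\ref{prop:PJ=0} and~\ref{prop:conormal}. The only cosmetic difference is in $(1)\Leftrightarrow(2)$, where the paper factorises the projection as $P^s = P\circ \exp I(-s)_*$ while you instead push the Jacobi structure forward and use invariance of coisotropicity under Jacobi isomorphisms; the two computations are interchangeable.
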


\begin{proof} \
	
	(1) $\Longleftrightarrow$ (2).
	Let $P^s : \Der L \to \Gamma(NS)$ be composition
	\begin{equation*}
	\Der  L \overset{\sigma}{\longrightarrow} \mathfrak{X}(M) \longrightarrow \Gamma (TM |_{s(S)})\longrightarrow \Gamma (NS),
	\end{equation*}
	where second arrow is the restriction, and last arrow is the canonical projection (cf.~Equation~\eqref{eq:projection_P}).
	Surjection $P^s$ extends to a surjection of graded modules $(\Der^\bullet L)[1] \to \Gamma (\wedge^\bullet N_\ell S \otimes \ell)[1]$ which we denote again by $P^s$ (and is defined analogously as $P : (\Der^\bullet L)[1] \to \Gamma (\wedge^\bullet N_\ell S \otimes \ell)[1]$).
	By Proposition~\ref{prop:PJ=0}, $s$ is coisotropic iff $ P^s (J) = 0$.
	Since
	\[
	\der  \ell = \exp I (-s)_* \der  L|_{s(S)} \quad \text{and} \quad \exp I(-s)_* NS = NS,
	\]
	we obtain
	\begin{equation}\label{eq:Ps}
	P^s = P \circ \exp I(-s)_*.
	\end{equation}
	In particular, $P^s (J) = P (\exp I(-s)_*J) = 0$ iff $s$ is coisotropic.
	
	(1) $\Longleftrightarrow$ (3).
	Note that $\mathrm{pr} \circ j^1 \exp I(s) \circ \gamma : N_\ell {}^\ast S \to s(N)$ is the $\ell$-adjoint bundle of the normal bundle of $s(S)$ in $NS$.
	Now the claim follows immediately from Proposition~\ref{prop:conormal}.
\end{proof}

\begin{corollary}
	\label{cor:inf1}
	Let $s : S \to NS$ be a section of $\pi$.
	The following two conditions are equivalent
	\begin{enumerate}
		\item $s$ is an infinitesimal coisotropic deformation of $S$,
		\item $s$ is a $1$-cocycle in the de Rham complex of the Jacobi algebroid $(N_\ell{}^\ast S,\ell)$, i.e.
		\begin{equation}
			\label{eq:infdef}
			d_{N_\ell{}^\ast S,\ell}s\equiv\frakm_1 s=0.
		\end{equation}
	\end{enumerate}
\end{corollary}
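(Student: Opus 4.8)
The plan is to reduce the statement to a first-order Taylor expansion in the formal parameter $\eps$ and then identify the vanishing of the linear term with the unary bracket $\frakm_1$. By Proposition~\ref{prop:coiss}, a section $s$ is coisotropic precisely when $P(\exp I(-s)_\ast J)=0$. For an infinitesimal deformation we replace $s$ by $\eps s$ and ask that this condition hold modulo $\calO(\eps^2)$. So first I would expand $\exp I(-\eps s)_\ast J$ in powers of $\eps$, using the fact that $\exp I(-\eps s)$ is the time-one flow of the derivation $-\eps I(s)=-\eps\bbD_{\pi^\ast s}$.

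The key computation is the derivative at $\eps=0$. Using Equation~\eqref{Lie1} from Remark~\ref{rem:inf_aut}, which says $\left.\tfrac{d}{dt}\right|_{t=0}(\varphi_t)_\ast\square=\ldsb\square,\Delta\rdsb$ for the flow generated by a derivation $\Delta$, I get
\begin{equation*}
\left.\frac{d}{d\eps}\right|_{\eps=0}\exp I(-\eps s)_\ast J=\ldsb J,I(s)\rdsb.
\end{equation*}
Since $\exp I(0)_\ast J=J$ and $P(J)=0$ because $S$ is coisotropic (Proposition~\ref{prop:PJ=0}~\ref{enumitem:prop:PJ=0_2}), the Taylor expansion gives
\begin{equation*}
P(\exp I(-\eps s)_\ast J)=\eps\, P\ldsb J,I(s)\rdsb+\calO(\eps^2).
\end{equation*}
Therefore $\eps s$ is coisotropic up to $\calO(\eps^2)$ if and only if $P\ldsb J,I(s)\rdsb=0$. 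By the very definition~\eqref{eq:higher_derived_brackets} of the higher derived brackets, $P\ldsb J,I(s)\rdsb=\frakm_1(s)$, and by Proposition~\ref{prop:linfty_cohomological_resolution}~\ref{enumitem:prop:linfty_cohomological_resolution_1} this equals $d_{N_\ell{}^\ast S,\ell}s$. This establishes the equivalence of the two conditions and yields Equation~\eqref{eq:infdef}.

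The only delicate point is justifying that the first-order term is computed correctly, i.e.~that differentiating $P(\exp I(-\eps s)_\ast J)$ at $\eps=0$ genuinely produces $P\ldsb J,I(s)\rdsb$ rather than a term involving higher brackets. This is unproblematic because $P$ is $\bbR$-linear and continuous, so it commutes with the $\eps$-derivative, and the chain rule applied to the one-parameter family reduces the matter to Equation~\eqref{Lie1}; the higher-order brackets $\frakm_k$ with $k\geq2$ only enter at orders $\eps^{\geq2}$. Thus the proof is essentially a direct unwinding of definitions once Proposition~\ref{prop:coiss} and the identification $\frakm_1=d_{N_\ell{}^\ast S,\ell}$ are in hand, and the main conceptual content has already been packaged into those earlier results.
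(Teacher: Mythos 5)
Your proof is correct and follows exactly the route the paper intends (the corollary is stated without proof as an immediate consequence of Proposition~\ref{prop:coiss}, via first-order Taylor expansion in $\eps$ and the identity $P\ldsb J,I(s)\rdsb=\frakm_1(s)$ from~\eqref{eq:higher_derived_brackets}). The only blemish is a sign in the intermediate step: since $\exp I(-\eps s)$ is the time-$(-\eps)$ flow of $I(s)$, Equation~\eqref{Lie1} gives $\left.\tfrac{d}{d\eps}\right|_{\eps=0}\exp I(-\eps s)_\ast J=-\ldsb J,I(s)\rdsb$, so the first-order term is $-\eps\,\frakm_1(s)$ rather than $+\eps\,\frakm_1(s)$ --- which of course does not affect the vanishing condition or the conclusion.
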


\begin{remark}
	\label{rem:cois_Lambda}
	Let $s$ be a section of $NS$.
	In view of Remark~\ref{rem:PJ}, $P^s (J) = P^s (\Lambda_J)$, where, in the rhs, $P^s$ denotes the extension $\Gamma (\wedge^\bullet (T(NS) \otimes L^\ast) \otimes L) \to \Gamma (\wedge^\bullet N_\ell S \otimes \ell)$ of composition $T(NS) \to T(NS)|_{s(S)} \to NS$ defined analogously as $P : (\Der^\bullet L)[1] \to \Gamma (\wedge^\bullet N_\ell S \otimes \ell)[1]$.
	Moreover, it is clear that
	\[
	\Lambda_{\exp I(-s)_\ast J} = \exp I(-s)_\ast \Lambda_J,
	\]
	where, in the lhs, $\Lambda_{\exp I(-s)_\ast J} $ denotes the bi-symbol of $\exp I(-s)_\ast J$ and, in the rhs, $\exp I(-s)_\ast : \Gamma (\wedge^\bullet (T(NS) \otimes L^\ast)\otimes L) \to \Gamma (\wedge^\bullet (T(NS) \otimes L^\ast)\otimes L)$ denotes the isomorphism induced by the line bundle automorphism $\exp I(-s)$.
	It immediately follows that $s$ is coisotropic iff $P(\exp I (-s)_\ast \Lambda_J)=0$.
\end{remark}

\section{Formal coisotropic deformations}
Let $\eps$ be a formal parameter.

\begin{definition}
	A formal series $s (\eps) = \sum _{i =0} ^ \infty \eps ^i s_i \in \Gamma (NS) [[ \eps]]$, $s_i \in \Gamma (NS)$, such that $s_0 = 0$, is called a \emph{formal deformation of $S$}.
\end{definition}

\begin{remark}
	\label{rem:Lieder}
	Formal series $I (s (\eps)) : = \sum_{i =0}^\infty \eps ^i I (s_i) \in (\Der L) [[ \eps ]]$ is a formal derivation of $L$.
	It is easy to see that the space $(\Der  L) [[\eps]]$ of formal derivations of $L$ is a Lie algebra, which has a linear representation in the space $ (\Der^\bullet L)[[\eps]]$ of formal first order multi-differential operators on $L$ via the following \emph{Lie derivative}:
	\begin{equation}
		\label{eq:Lieder}
		\calL_{\xi(\eps)} \Delta (\eps) \equiv \ldsb\xi (\eps), \Delta (\eps)\rdsb := \sum _{ k =0} ^\infty \eps ^{k} \sum_{i+j = k}\ldsb \xi_i , \Delta_j\rdsb,
	\end{equation}
	for $\xi (\eps)= \sum_{ i= 0} ^ \infty\eps ^i \xi_i$, $\xi_i \in \Der  L$, and $\Delta (\eps) = \sum _{ i =0}^\infty \eps ^ i \Delta_i$, $\Delta_i\in  \Der^\bullet L$.
	
	We define the exponential of the Lie derivative $\calL_{\xi (\eps)}$ as the following formal power series
	\begin{equation}
		\label{eq:exp}
		\exp \calL_{\xi(\eps)} = \sum_{n=0}^\infty \frac{1}{n!} \calL_{\xi (\eps)}^n.
	\end{equation}
\end{remark}

Proposition~\ref{prop:coiss} motivates the next definition of \emph{formal coisotropic deformation} of $S$.
For further motivation of this notion see also the following Remark~\ref{rem:motivation_formal_coisotropic_deformations}.

\begin{definition}
	\label{def:coisoformal}
	A formal deformation $s (\eps)$ of $S$ is \emph{coisotropic}, if
	$P (\exp \calL_{I(s (\eps))} J) = 0$.
\end{definition}

\begin{remark}
	\label{rem:motivation_formal_coisotropic_deformations}
	Let $s_t$ be a smooth one parameter family in $\Gamma(NS)$ starting from the zero section $\mathbf 0$ of $NS\to S$.
	Denote by $s(\eps)$ the Taylor series of $s_t$, at $t=0$, in the formal parameter $\eps$, i.e.
	\begin{equation*}
	s(\eps):=\sum_{i=1}^\infty\underbrace{\frac{1}{i!}\left(\left.\frac{d^i}{dt^i}\right|_{t=0}s_t\right)}_{:=s_i}\eps^i
	\end{equation*}
	Through an iterated application of Leibniz rule, we get, for all $n\geq 0$, the following
	\begin{equation*}
	\frac{1}{n!}\left.\frac{d^n}{dt^n}\right|_{t=0}P(\exp(I(-s_t))_\ast J)=\sum_{k=1}^n\frac{1}{k!}\sum_{\genfrac{}{}{0pt}{}{i_1,\ldots,i_k}{i_1+\ldots+i_k=n}}P\ldsb I(s_{i_1}),\ldsb\ldots\ldsb I(s_{i_k}),J\rdsb\ldots\rdsb\rdsb.
	\end{equation*}
	The latter means exactly that $P(\exp\calL_{I(s(\eps))}J)$ is the Taylor series of $P(\exp(I(-s_t))_\ast J)$, at $t=0$, in the formal parameter $\eps$, i.e.
	\begin{equation*}
	P(\exp\calL_{I(s(\eps))}J)=\sum_{n=0}^\infty\frac{1}{n!}\left(\left.\frac{d^n}{dt^n}\right|_{t=0}P(\exp(I(-s_t))_\ast J)\right)\eps^n.
	\end{equation*}
	Hence if $\{s_t\}$ is a smooth coisotropic deformation of $S$, then $s(\eps)$, its Taylor series at $t=0$, is a formal coisotropic deformation of $S$.
\end{remark}

\begin{remark}
	Let $\xi (\eps) \in (\Der  L) [[ \eps]]$.
	Define a Lie derivative
	\[
	\calL_{\xi (\eps)} : \Gamma (\wedge^\bullet (T(NS) \otimes L^\ast) \otimes L) [[\eps]] \to \Gamma (\wedge^\bullet (T(NS) \otimes L^\ast) \otimes L) [[\eps]],
	\]
	in the obvious way.
	It is easy to see that
	\begin{equation}\label{eq:PJvsPLambda}
	P (\exp \calL_{I(s (\eps))} J) = P (\exp \calL_{I(s (\eps))} \Lambda_J),
	\end{equation}
	for all formal deformations $s (\eps)$ of $S$ (cf.~Remarks~\ref{rem:PJ} and~\ref{rem:cois_Lambda}).
	In particular, $s (\eps)$ is coisotropic iff $ P (\exp \calL_{I(s (\eps))} \Lambda_J) = 0$.
\end{remark}

\begin{remark}
	\label{rem:formal_deformation_problem}
	[Formal deformation problem]
	The \emph{formal deformation problem for a coisotropic submanifold $S$} consists in finding formal coisotropic deformations of $S$.
	According with Deligne's principle~\cite{deligneletter1986},
	in characteristic zero, every deformation problem should be controlled by a differential graded Lie algebra, or by its higher homotopy version: an $L_\infty[1]$-algebra.
	Actually such viewpoint holds also for the formal deformation problem of a coisotropic submanifold $S$ in a Jacobi manifold $(M,L,J)$.
	Indeed the latter is \emph{controlled by the $L_\infty[1]$-algebra} $(\Gamma (\wedge^\bullet N_\ell S \otimes \ell)[1], \{ \frakm_k\})$ in the sense clarified by the next Proposition~\ref{prop:mcformal} (see also the following Proposition~\ref{prop:MC_gauge}).
	In this way what was first established in the symplectic setting by Oh--Park~\cite{oh2005deformations} is generalized to the Jacobi setting.
\end{remark}

\begin{proposition}
	\label{prop:mcformal}
	A formal deformation $s (\eps)$ of $S$ is coisotropic iff $-s (\eps)$
	is a solution of the (formal) Maurer--Cartan equation
	\begin{equation}
	\label{eq:MC}
	MC (-s (\eps)):=\sum _{ k =1} ^\infty \frac{1}{k!}\frakm_k (-s (\eps), \cdots , -s(\eps))= 0.
	\end{equation}
\end{proposition}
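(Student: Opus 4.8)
The plan is to reduce both conditions to the vanishing of a single expression by establishing the identity
\begin{equation*}
P(\exp\calL_{I(s(\eps))} J) = MC(-s(\eps))
\end{equation*}
as an equality in $\Gamma(\wedge^2 N_\ell S\otimes\ell)[[\eps]]$. Once this is proven the claim is immediate: by Definition~\ref{def:coisoformal} the formal deformation $s(\eps)$ is coisotropic precisely when the left-hand side vanishes, whereas the Maurer--Cartan equation~\eqref{eq:MC} asks exactly that the right-hand side vanish, so the two conditions coincide.

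To prove the identity I would expand the exponential~\eqref{eq:exp} and compare it with the defining series~\eqref{eq:MC} term by term, showing that for each $n\geq 1$
\begin{equation*}
\tfrac{1}{n!}\, P\calL_{I(s(\eps))}^n J = \tfrac{1}{n!}\,\frakm_n(-s(\eps),\ldots,-s(\eps)).
\end{equation*}
The crux is a sign computation in the graded Lie algebra $((\Der^\bullet L)[1],\ldsb-,-\rdsb)$. Since $I$ is linear and $I(s(\eps))$ is an even (degree $0$) element, graded antisymmetry gives $\ldsb I(s(\eps)), X\rdsb = -\ldsb X, I(s(\eps))\rdsb$ for every $X$, and a straightforward induction on $n$ then shows that the left-nested iterated bracket $\calL_{I(s(\eps))}^n J$ equals $(-1)^n$ times the right-nested bracket $\ldsb\cdots\ldsb J, I(s(\eps))\rdsb,\ldots, I(s(\eps))\rdsb$ appearing in the higher derived brackets~\eqref{eq:higher_derived_brackets}. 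Substituting $I(s(\eps)) = -I(-s(\eps))$ into the $n$ slots contributes a further factor $(-1)^n$, which cancels the first, so that $\calL_{I(s(\eps))}^n J = \ldsb\cdots\ldsb J, I(-s(\eps))\rdsb,\ldots, I(-s(\eps))\rdsb$; applying $P$ turns the right-hand side into $\frakm_n(-s(\eps),\ldots,-s(\eps))$ by the very definition~\eqref{eq:higher_derived_brackets}.

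It remains to handle the $n=0$ term and the question of convergence. The $n=0$ term of the exponential is $P(J)$, which vanishes because $S$ is coisotropic (Proposition~\ref{prop:PJ=0}~\ref{enumitem:prop:PJ=0_2}); hence the constant term drops out and the surviving sum begins at $n=1$, matching exactly the range of summation in $MC(-s(\eps))$. Both series are well defined $\eps$-adically: since $s(\eps)$ has vanishing constant term, $I(s(\eps))$ is of order $\eps$, so $\calL_{I(s(\eps))}^n$ (respectively $\frakm_n(-s(\eps),\ldots,-s(\eps))$) raises the $\eps$-order by at least $n$, and only finitely many $n$ contribute to each coefficient.

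The one genuinely delicate point in this argument is the bookkeeping of the décalage signs in the iterated bracket; everything else is a formal manipulation combined with the already established fact $P(J)=0$. I therefore expect the sign induction to be the only step requiring real care, and once it is in place the proposition follows with no further input.
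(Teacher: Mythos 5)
Your proposal is correct and follows essentially the same route as the paper: the paper's proof also reduces everything to the identity $P(\calL_{I(s(\eps))}^k J)=\frakm_k(-s(\eps),\ldots,-s(\eps))$ together with $P(J)=0$ and the expansion~\eqref{eq:exp} of the exponential, merely asserting that this identity "immediately follows from the definition of $\frakm_k$". Your sign induction (the factor $(-1)^n$ from reordering the degree-$0$ element $I(s(\eps))$ cancelling the factor $(-1)^n$ from $I(s(\eps))=-I(-s(\eps))$) is exactly the computation the paper leaves implicit, and your remarks on the $n=0$ term and $\eps$-adic convergence are consistent with the paper's setup.
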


\begin{proof}
	The expression $MC (-s (\eps))$ should be interpreted as an element of $\Gamma (\wedge^2 N_\ell S \otimes \ell)[[ \eps]]$.
	The proposition is then a consequence of~\eqref{eq:exp}, $P(J) = 0$, and the following identities
	\begin{equation}
	\label{eq:mcl}
	P (\calL_{I(\xi)}^k J) = \frakm_k (-\xi, \cdots, -\xi) , \quad k \ge 1,
	\end{equation}
	for $\xi \in \Gamma (NS)$, which immediately follow from the definition of $\frakm_k$.
\end{proof}

%


\begin{remark}
	Let $s(\eps)$ be an arbitrary formal deformation of $S$.
	Hence $s(\eps)=\sum_{i=1}^\infty\eps^is_i$, for arbitrary $s_i\in\Gamma(NS)$, and we have $MC(s(-\eps))=\sum_{i=1}^\infty\eps^i\Gamma_i$, where $\Gamma_i\in\Gamma(\wedge^2N_\ell S\otimes\ell)$ is given by
		\begin{equation*}
		\Gamma_i=\sum_{k=1}^i\frac{(-1)^k}{k!}\sum_{\genfrac{}{}{0pt}{}{i_1,\ldots,i_k>0}{i_1+\ldots+i_k=i}}\frakm_k(s_{i_1},\ldots,s_{i_k}),
		\end{equation*}
	for all $i>0$.
	As a consequence the formal Maurer--Cartan equation~\eqref{eq:MC} for $-s(\eps)$ splits in the following infinite sequence of equations for the $s_i$'s
		\begin{equation}
		\label{eq:MC_k}
		\tag{$MC_k$}
		\frakm_1(s_k)=\sum_{h=2}^k\frac{(-1)^h}{h!}\sum_{\genfrac{}{}{0pt}{}{0<i_1,\ldots,i_h<k}{i_1+\ldots+i_h=k}}\frakm_h(s_{i_1},\ldots,s_{i_h}),
		\end{equation}
	for all $k>0$.
	In particular~\eqref{eq:MC_k}, for $k=1$, reads as $\frakm_1(s_1)=0$.
	In view of Proposition~\ref{prop:mcformal} and Corollary~\ref{cor:inf1}, the latter means that if $s(\eps)$ is a formal coisotropic deformation of $S$ then $s_1$ is an infinitesimal coisotropic deformation of $S$.
	In such case one also says that $s(\eps)$ is a prolongation of the infinitesimal coisotropic deformation $s_1$ to a formal one.
	However, in general, not all infinitesimal coisotropic deformations can be ``prolonged'' to a formal coisotropic deformation.
	If this is the case, one says that \emph{the formal deformation problem is unobstructed}.
	Otherwise, \emph{the formal deformation problem is obstructed}.
\end{remark}

Let $s(\eps)$ be an arbitrary formal deformation of $S$.
From the generalized Jacobi identities for the $L_\infty[1]$-algebra $(\Gamma(\wedge^\bullet N_\ell S\otimes\ell),\{\frakm_k\})$, through a straightforward computation, it follows that $s(\eps)$ must satisfy the following identity
\begin{equation}
\label{eq:obstructions}
\sum_{k=1}^\infty\frac{1}{(k-1)!}\frakm_k(MC(-s(\eps)),-s(\eps),\ldots,-s(\eps))=0.
\end{equation}  
Fix $n>0$.
Assume that $-s(\eps)$ satisfies the formal Maurer--Cartan equation~\eqref{eq:MC} up to infinitesimals $\calO(\eps^n)$, i.e.~Equation~\eqref{eq:MC_k} holds for all $k=1,\ldots,n-1$.
Notice that this assumption only concerns the first $(n-1)$-coefficients $s_1,\ldots,s_{n-1}$ of $s(\eps)$.
Now, from~\eqref{eq:obstructions}, it follows that $\frakm_1(MC(-s(\eps)))$ vanishes up to infinitesimals $\calO(\eps^{n+1})$, i.e.~the rhs of~\eqref{eq:MC_k}, for $k=n$, is a $2$-cocycle in the de Rham complex of the Jacobi algebroid $(N_\ell{}^\ast S,\ell)$.
Hence the cohomology class of the rhs of~\eqref{eq:MC_k}, for $k=n$, represents the obstruction to the prolongability of $s_1$ to a formal coisotropic deformation up to infinitesimals $\calO(\eps^{n+1})$.
Indeed, under the current assumption,~\eqref{eq:MC_k}, for $k=n$, admits a solution $s_n$ if and only if the cohomology class of its rhs is zero.
This leads to the following unobstructedness criterion for the formal deformation problem.

\begin{corollary}
	\label{prop:rigid}
	Assume that the second cohomology group $H^2 (N_\ell {}^\ast S, \ell)$ of the Jacobi algebroid $(N_\ell {}^\ast S,\ell)$ vanishes.
	Then every infinitesimal coisotropic deformation of $S$ can be prolonged to a formal coisotropic deformation, i.e.~for any given $\overline{s}_1 \in\Gamma(NS)$, with $\frakm_1 (\overline{s}_1)=0$, there is a formal coisotropic deformation $s(\eps)=\sum_{i=i}^\infty\eps^i s_i$ such that $s_1=\overline{s}_1$.
	In other words, the formal deformation problem is unobstructed.
\end{corollary}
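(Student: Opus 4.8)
The plan is to build the coefficients $s_k$ of the desired deformation recursively, solving the equations~\eqref{eq:MC_k} one order at a time, using the hypothesis $H^2(N_\ell{}^\ast S,\ell)=0$ precisely to annihilate the obstruction that appears at each stage. By Proposition~\ref{prop:mcformal}, a formal deformation $s(\eps)=\sum_{i\geq 1}\eps^i s_i$ of $S$ is coisotropic if and only if $-s(\eps)$ solves the Maurer--Cartan equation~\eqref{eq:MC}, equivalently the infinite system~\eqref{eq:MC_k}. I would set $s_1:=\overline{s}_1$; since $\frakm_1(\overline{s}_1)=0$ by assumption, \eqref{eq:MC_k} holds for $k=1$, which is the base of the induction.

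For the inductive step, suppose $s_1,\ldots,s_{n-1}$ have been produced so that~\eqref{eq:MC_k} holds for $k=1,\ldots,n-1$. Write
\[
R_n:=\sum_{h=2}^n\frac{(-1)^h}{h!}\sum_{\genfrac{}{}{0pt}{}{0<i_1,\ldots,i_h<n}{i_1+\ldots+i_h=n}}\frakm_h(s_{i_1},\ldots,s_{i_h})
\]
for the right-hand side of~\eqref{eq:MC_k} at $k=n$; since each $i_j<n$, the element $R_n\in\Gamma(\wedge^2 N_\ell S\otimes\ell)$ depends only on the already-constructed $s_1,\ldots,s_{n-1}$. Solving~\eqref{eq:MC_k} at order $n$ means finding $s_n\in\Gamma(NS)$ with $\frakm_1(s_n)=R_n$. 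By Proposition~\ref{prop:linfty_cohomological_resolution}~\ref{enumitem:prop:linfty_cohomological_resolution_1}, $\frakm_1$ is the de Rham differential $d_{N_\ell{}^\ast S,\ell}$, so such an $s_n$ exists as soon as $R_n$ is a coboundary; because $H^2(N_\ell{}^\ast S,\ell)=0$, it therefore suffices to check that $R_n$ is a $2$-cocycle, i.e.~$\frakm_1(R_n)=0$.

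To establish the cocycle property I would apply the generalized Jacobi identity~\eqref{eq:obstructions} to the truncated deformation $s_{<n}(\eps):=\sum_{i=1}^{n-1}\eps^i s_i$ and count orders in $\eps$. Under the inductive hypothesis one has $MC(-s_{<n}(\eps))=\eps^n R_n+\calO(\eps^{n+1})$: the coefficients at $\eps^k$ with $k<n$ vanish by~\eqref{eq:MC_k}, and, as no $s_k$ with $k\geq n$ occurs in $s_{<n}(\eps)$, the $\eps^n$-coefficient gets no contribution from the $h=1$ term of~\eqref{eq:MC} and reduces to $R_n$. Since $s_0=0$ we also have $-s_{<n}(\eps)=\calO(\eps)$, so every summand $\frakm_k(MC(-s_{<n}(\eps)),-s_{<n}(\eps),\ldots,-s_{<n}(\eps))$ with $k\geq 2$ is of order $\calO(\eps^{n+1})$, being the product of one factor of order $\calO(\eps^n)$ with $k-1\geq 1$ factors of order $\calO(\eps)$. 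Reading~\eqref{eq:obstructions} at order $\eps^n$ thus leaves only the $k=1$ term $\frakm_1(MC(-s_{<n}(\eps)))$, whose $\eps^n$-coefficient is $\frakm_1(R_n)$; hence $\frakm_1(R_n)=0$. This yields $s_n$, completes the induction, and produces a formal coisotropic deformation $s(\eps)=\sum_{i\geq 1}\eps^i s_i$ with $s_1=\overline{s}_1$, so the formal deformation problem is unobstructed.

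The genuinely delicate point is not the recursion but the verification that the order-$n$ obstruction $R_n$ is closed in the Jacobi-algebroid de Rham complex independently of the still-undetermined $s_n$; this is exactly where the $L_\infty[1]$-identity~\eqref{eq:obstructions}, together with $\frakm_1\circ\frakm_1=0$ and the normalization $s_0=0$, does the work. Everything else — the identification $\frakm_1=d_{N_\ell{}^\ast S,\ell}$ and the dictionary between coisotropy and the Maurer--Cartan system — is already available from Propositions~\ref{prop:mcformal} and~\ref{prop:linfty_cohomological_resolution} and from Corollary~\ref{cor:inf1}.
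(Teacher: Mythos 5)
Your proposal is correct and follows essentially the same route as the paper: the recursion on the equations $(MC_k)$, with the generalized Jacobi identity~\eqref{eq:obstructions} showing that the order-$n$ obstruction is $\frakm_1$-closed and the hypothesis $H^2(N_\ell{}^\ast S,\ell)=0$ then making it exact. Your order-counting argument with the truncated deformation $s_{<n}(\eps)$ is a clean way of phrasing exactly the cocycle verification the paper sketches in the discussion preceding the corollary.
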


There is also a simple criterion for the non-prolongability of an infinitesimal coisotropic deformation to a formal coisotropic deformation expressed in terms of the \emph{Kuranishi map}:
\begin{equation}
\label{eq:L_infty_kuranishi_map}
\operatorname{Kr} : H^1 (N_\ell {}^\ast S, \ell) \longrightarrow H^2 (N_\ell {}^\ast S, \ell), \quad [s] \longmapsto [\mathfrak m_2 (s, s)].
\end{equation}
Since $\frakm_1$ is a derivation of the binary bracket $\frakm_2$, the Kuranishi map is well-defined.
Moreover, Equation~\eqref{eq:MC_k}, for $k=2$, reads as follows
\begin{equation*}
\frakm_1 s_2=\frac{1}{2}\frakm_2(s_1,s_1).
\end{equation*}
Hence $\operatorname{Kr}([s_1])$ is the first obstruction we meet when we try to prolong an infinitesimal coisotropic deformation $s_1$ to a formal coisotropic deformation $s(\eps)$.
This leads to the following obstructedness criterion.
\begin{proposition}
	\label{prop:Kuranishi}
	Let $\alpha = [s] \in H^1 (N_\ell {}^\ast S, \ell)$, where $s \in \Gamma (NS)$ is an infinitesimal coisotropic deformation, i.e.~$d_{N_\ell {}^\ast S, \ell} s = \frakm_1 s = 0$.
	If $\operatorname{Kr} (\alpha) \neq 0$, then $s$ cannot be prolonged to a formal coisotropic deformation.
	In particular, the formal deformation problem is obstructed.
\end{proposition}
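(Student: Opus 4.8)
The plan is to argue by contradiction and to reduce the entire statement to the single component equation of the Maurer--Cartan equation in order $\eps^2$, which has already been isolated in the discussion preceding the statement. Suppose that the infinitesimal coisotropic deformation $s$ does admit a prolongation, that is, suppose there exists a formal coisotropic deformation $s(\eps)=\sum_{i=1}^\infty\eps^i s_i$ of $S$ with $s_1=s$. By Proposition~\ref{prop:mcformal}, the formal series $-s(\eps)$ is then a solution of the formal Maurer--Cartan equation~\eqref{eq:MC}, and as recalled above this is equivalent to the whole sequence $(MC_k)$, $k>0$, obtained by collecting the coefficients of $\eps^k$.

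First I would record what $(MC_k)$ says in the two lowest degrees. Since $s_1=s$ satisfies $\frakm_1 s=0$ by hypothesis, the equation $(MC_1)$ is automatically fulfilled, and the next one, namely~\eqref{eq:MC_k} for $k=2$, reads
\begin{equation*}
\frakm_1 s_2=\tfrac12\frakm_2(s_1,s_1)=\tfrac12\frakm_2(s,s).
\end{equation*}
This exhibits $\frakm_2(s,s)$ as $2\,\frakm_1 s_2$, i.e.~as a coboundary in the de Rham complex of the Jacobi algebroid $(N_\ell{}^\ast S,\ell)$, where I am using that $\frakm_1=d_{N_\ell{}^\ast S,\ell}$ by Proposition~\ref{prop:linfty_cohomological_resolution}. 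Consequently the cohomology class $\operatorname{Kr}(\alpha)=[\frakm_2(s,s)]\in H^2(N_\ell{}^\ast S,\ell)$ would vanish, contradicting the hypothesis $\operatorname{Kr}(\alpha)\neq0$. Hence no such prolongation can exist, which is exactly the first assertion; and since $s$ is then an infinitesimal coisotropic deformation that fails to prolong, the formal deformation problem is obstructed, giving the second assertion.

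I do not expect any genuine obstacle here, as the argument is a direct unwinding of the definition of the Kuranishi map together with the order-two Maurer--Cartan equation. The only two points requiring (routine) care are, first, to confirm — as already observed before the statement — that $\frakm_2(s,s)$ is $\frakm_1$-closed whenever $\frakm_1 s=0$, so that $\operatorname{Kr}(\alpha)$ is a well-defined class in $H^2(N_\ell{}^\ast S,\ell)$; this follows from the generalized Jacobi identity of the $L_\infty[1]$-algebra, which expresses that $\frakm_1$ is a graded derivation of the binary bracket $\frakm_2$. Second, one must keep track of the numerical factor $\tfrac12$ and of the sign convention relating $-s(\eps)$ to $s(\eps)$ in Proposition~\ref{prop:mcformal}; neither affects the conclusion, since a nonzero scalar multiple of a coboundary is again a coboundary, and hence the vanishing or non-vanishing of $\operatorname{Kr}(\alpha)$ is insensitive to these normalizations.
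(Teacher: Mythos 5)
Your argument is correct and is essentially identical to the paper's own reasoning (which appears as the discussion immediately preceding the statement rather than in a proof environment): you extract the $k=2$ component of the formal Maurer--Cartan equation, $\frakm_1 s_2 = \tfrac12\frakm_2(s_1,s_1)$, to conclude that $\frakm_2(s,s)$ is exact and hence that $\operatorname{Kr}(\alpha)=0$ for any prolongable $s$. Your two points of care — well-definedness of $\operatorname{Kr}$ via the derivation property of $\frakm_1$ over $\frakm_2$, and the irrelevance of the normalization constants — are exactly the ones the paper addresses.
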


\begin{remark}
	Notice that, even when the $L_\infty[1]$-algebra reduces to a dgLa up to décalage, i.e.~$\frakm_k=0$ for all $k>2$, the above Equations~\eqref{eq:MC_k}, with $k>0$, still represent an infinite sequence of obstructions to the prolongability of an infinitesimal coisotropic deformation $s_1$ to a formal coisotropic deformation $s(\eps)$.
\end{remark}

%

\section{Moduli of coisotropic sections}
\label{subsec:Moduli_coisotropic_sections}

Jacobi diffeomorphisms, in particular Hamiltonian diffeomorphisms, preserve coisotropic submanifolds.
Two coisotropic submanifolds are \emph{Hamiltonian equivalent} if there is an Hamiltonian isotopy (i.e.~a one parameter family of Hamiltonian diffeomorphisms) interpolating them.
With this definition at hand one can define a moduli space of coisotropic submanifolds under \emph{Hamiltonian equivalence}.
Now, let $S$ be a coisotropic submanifold.
In this section we adapt the definition of Hamiltonian equivalence to the case of coisotropic sections of $NS \to S$~\cite[Definition 6.3]{le2012deformations}.
In this way we define a local version of the moduli space under Hamiltonian equivalence.

\begin{definition}
	\label{def:hequi}
	(cf.~\cite[Definition 10.2]{le2012deformations}).\
	\begin{enumerate}[label=(\arabic*)]
		\item\label{enumitem:def:hequi_1}
		Two coisotropic sections $s_0, s_1 \in \Gamma (NS)$ are called \emph{Hamiltonian equivalent} if they are interpolated by a smooth family of sections $s_t \in \Gamma (NS)$ and there exists a family of Hamiltonian diffeomorphisms $\psi_t : NS \to NS$ of $(NS, L, J = \{-,-\})$ (i.e.~the family $\{\psi_t\}$ is generated by a family $\{ X_{\lambda_t}\}$ of Hamiltonian vector fields, where the $\lambda_t$'s depend smoothly on $t$) and a family of diffeomorphisms $g_t : S \to S$, $t \in [0,1]$, such that $g_0 = \operatorname{id}_{S}$, $\psi_0 = \operatorname{id}_{NS}$ and
		$s_t = \psi_t \circ s_0 \circ g_t^{-1}$.
		A coisotropic deformation of $S$ is \emph{trivial} if it is Hamiltonian equivalent to the zero section.
		\item\label{enumitem:def:hequi_2}
		Two infinitesimal coisotropic deformations $s_0, s_1 \in \Gamma (NS)$ are called \emph{infinitesimally Hamiltonian equivalent} if $s_1 - s_0$ is the vertical component along $S$ of an Hamiltonian vector field.
		An infinitesimal coisotropic deformation is trivial if it is infinitesimally Hamiltonian equivalent to the zero section.
	\end{enumerate}
\end{definition}

Note that both Hamiltonian equivalence and infinitesimal Hamiltonian equivalence are equivalence relations.
The notion of infinitesimal Hamiltonian equivalence is motivated by the following remark.

\begin{remark}
	Let $s_0,s_1$ be Hamiltonian equivalent coisotropic sections, and let $s_t$ be the family of sections interpolating them as in Definition~\ref{def:hequi}.(1).
	Then $s_t$ is obviously a coisotropic section for all $t$.
	Moreover, $s_0$ and
	\[
	s_0 + \left. \frac{d}{dt}\right|_{t= 0} s_t
	\]
	are infinitesimally Hamiltonian equivalent coisotropic sections.
\end{remark}

\begin{proposition}\label{prop:Hameq}
	Let $s_0,s_1 \in \Gamma (NS)$ be Hamiltonian equivalent coisotropic sections.
	Then $s_0,s_1$ are interpolated by a smooth family of sections $s_t \in \Gamma (NS)$ and there exists a smooth family of sections $\lambda_t$ of the Jacobi bundle $L$ such that $s_t$ is a solution of the following evolutionary equation:
	\begin{equation}\label{eq:Hameq}
	\frac{d}{dt} s_t = P (\exp I(-s_t)_\ast \Delta_{\lambda_t}).
	\end{equation}
	If $S$ is compact the converse is also true.
\end{proposition}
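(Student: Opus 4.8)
The plan is to treat the two implications separately, reducing both to the single geometric fact that the map $P^{s_t}$ records the \emph{normal velocity} of the moving submanifold $s_t(S)$. First I would rewrite the right-hand side of \eqref{eq:Hameq} using \eqref{eq:Ps}, so that $P(\exp I(-s_t)_\ast \Delta_{\lambda_t}) = P^{s_t}(\Delta_{\lambda_t})$. By the definition of $P^{s_t}$ (the symbol map followed by restriction to $s_t(S)$ and the canonical projection onto the normal bundle, cf.~\eqref{eq:projection_P}), the section $P^{s_t}(\Delta_{\lambda_t}) \in \Gamma(NS)$ is exactly the normal component, along $s_t(S)$, of the Hamiltonian vector field $X_{\lambda_t} = \sigma(\Delta_{\lambda_t})$. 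Thus \eqref{eq:Hameq} asserts precisely that the velocity of $t \mapsto s_t$ equals the normal component of $X_{\lambda_t}$ along $s_t(S)$, and the whole proposition becomes the statement that \emph{Hamiltonian isotopy} and \emph{normal velocity driven by a Hamiltonian vector field} coincide.

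For the forward implication, suppose $s_0,s_1$ are Hamiltonian equivalent, so that by Definition~\ref{def:hequi} we have $s_t = \psi_t \circ s_0 \circ g_t^{-1}$ with $\psi_t$ the flow of $X_{\lambda_t}$ and $g_t$ diffeomorphisms of $S$, $g_0 = \mathrm{id}_S$, $\psi_0 = \mathrm{id}_{NS}$. Since the $s_t$ are sections, $\pi \circ s_t = \mathrm{id}_S$, whence $\frac{d}{dt} s_t$ is a \emph{vertical} vector field along $s_t(S)$; in particular $P^{s_t}$ acts on it as the identity, since it is already normal. Differentiating the relation and using $\frac{d}{dt}\psi_t = X_{\lambda_t} \circ \psi_t$, I would obtain
\begin{equation*}
\frac{d}{dt} s_t = X_{\lambda_t}\big|_{s_t(S)} + d\psi_t\big(ds_0(\tfrac{d}{dt} g_t^{-1})\big).
\end{equation*}
The crucial observation is that $g_t^{-1}$ is a diffeomorphism of $S$, so $s_t(S) = \psi_t(s_0(S))$; hence the second summand, being the image under $d\psi_t$ of a vector tangent to $s_0(S)$, is tangent to $s_t(S)$. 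Applying $P^{s_t}$ --- which annihilates vectors tangent to $s_t(S)$ and is the identity on vertical ones --- kills this term and leaves $\frac{d}{dt}s_t = P^{s_t}(X_{\lambda_t}|_{s_t(S)}) = P^{s_t}(\Delta_{\lambda_t})$, which is \eqref{eq:Hameq}.

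For the converse, with $S$ compact, I am given interpolating sections $s_t$ and a smooth family $\lambda_t$ satisfying \eqref{eq:Hameq}, and I must build the Hamiltonian isotopy. I would let $\psi_t$ be the flow of the time-dependent Hamiltonian vector field $X_{\lambda_t}$ and reverse the computation above: equation \eqref{eq:Hameq} says exactly that the discrepancy $\frac{d}{dt}s_t - X_{\lambda_t}|_{s_t(S)}$ is tangent to $s_t(S)$, so it can be written as $d\psi_t(ds_0(Y_t))$ for a vector field $Y_t$ on $S$, unique because $ds_0$ is injective and $d\psi_t$ invertible. Integrating $Y_t$ produces diffeomorphisms $g_t^{-1}$ of $S$ with $g_0 = \mathrm{id}_S$, and a uniqueness (Gronwall) argument for the ODE $\frac{d}{dt}(\psi_t\circ s_0 \circ g_t^{-1}) = \frac{d}{dt}s_t$ with matching initial data then forces $s_t = \psi_t \circ s_0 \circ g_t^{-1}$, i.e.~the desired Hamiltonian equivalence.

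The main obstacle is entirely in the converse, and it is analytic rather than algebraic: one must guarantee that the Hamiltonian flow $\psi_t$ and the reparametrizing flow of $Y_t$ are defined for all $t \in [0,1]$. This is where compactness of $S$ is indispensable --- the sets $s_t(S)$ sweep out a compact region of the (noncompact) total space $NS$, so the relevant trajectories of $X_{\lambda_t}$ remain in a compact set and hence exist up to time $1$, while the vector field $Y_t$ on the compact manifold $S$ is automatically complete. I would set up the two flows simultaneously (or solve the single ODE for $g_t^{-1}$ on $S$, feeding in $\psi_t$ restricted to a tubular neighborhood of the compact family $\{s_t(S)\}$) so as to avoid any circularity, and invoke compactness to extend the solution to the whole interval $[0,1]$.
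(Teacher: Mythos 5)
Your forward implication is exactly the paper's argument: differentiate $s_t=\psi_t\circ s_0\circ g_t^{-1}$, observe that the reparametrization term is tangent to $s_t(S)$ while $\frac{d}{dt}s_t$ is vertical, and apply $P^{s_t}$, which kills the tangent part and is the identity on vertical vectors. The paper carries out the same computation dually, on pull-backs of functions via $s_t^\ast=(g_t^{-1})^\ast\circ s_0^\ast\circ\psi_t^\ast$, arriving at the identity $\frac{d}{dt}s_t^\ast=s_t^\ast\circ X_{\lambda_t}\circ(\id-\pi^\ast\circ s_t^\ast)$, but the content is identical.

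In the converse, the step where you write the discrepancy $\frac{d}{dt}s_t-X_{\lambda_t}|_{s_t(S)}$ as $d\psi_t\bigl(ds_0(Y_t)\bigr)$ is circular as stated: a vector tangent to $s_t(S)$ lies in $d\psi_t\bigl(T(s_0(S))\bigr)$ only if $\psi_t(s_0(S))=s_t(S)$, which is precisely what you are trying to prove. You acknowledge the circularity at the end, but the clean repair --- the one the paper uses --- dispenses with $Y_t$ altogether: compactness of $S$ gives the flow $\psi_t$ of $X_{\lambda_t}$ on all of $[0,1]$; one sets $g_t:=\pi\circ\psi_t\circ s_0$ (a diffeomorphism of $S$ for $t$ in a maximal subinterval containing $0$) and notes, by the forward computation, that $\tilde s_t:=\psi_t\circ s_0\circ g_t^{-1}$ solves the same evolution equation \eqref{eq:Hameq} with the same initial datum $s_0$. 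Equation \eqref{eq:Hameq} is a quasilinear transport equation whose characteristics are the flow lines of $X_{\lambda_t}$, so its solution with given initial datum is unique; hence $\tilde s_t=s_t$ where both are defined, and since $s_t$ is a globally defined section for all $t\in[0,1]$, an open--closed argument extends the identity to the whole interval. With that substitution your proof coincides with the paper's.
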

\begin{proof}
	Denote by $\pi : NS \to S$ the projection.
	First of all, let $s_0,s_1$ be Hamiltonian equivalent coisotropic sections, and let $s_t$, $\psi_t$, $g_t$ be as in Definition~\ref{def:hequi}.(1).
	The $g_t$'s are completely determined by the $\psi_t$'s via $g_t = \pi \circ \psi_t \circ s_0$.
	In their turn, the $\psi_t$'s are generated by a smooth family $\{ X_{\lambda_t} \}$ of Hamiltonian vector fields, $\lambda_t \in \Gamma (L)$.
	Differentiating the identity $s_t = \psi_t \circ s_0 \circ g_t^{-1}$ with respect to $t$, one finds
	\begin{equation}\label{eq:HameqPs}
	\frac{d}{dt} s_t= P^{s_t} (\Delta_{\lambda_t}),
	\end{equation}
	where, for a generic section $s \in \Gamma (NS)$, the projection $P^s :  (\Der^\bullet L)[1] \to \Gamma (\wedge^\bullet N_\ell S \otimes \ell)[1]$ is defined as in the proof of Proposition~\ref{prop:coiss}.
	To see this, interpret the $s_t$'s as smooth maps, and consider their pull-backs $s_t^\ast : C^\infty (NS) \to C^\infty (S)$.
	Then $s_t^\ast = (g_t^{-1})^\ast \circ s_0^\ast \circ \psi^\ast_t$ and a straightforward computation shows that
	\[
	\frac{d }{dt} s_t^\ast= s_t^\ast \circ X_{\lambda_t} \circ (\id - \pi^\ast \circ s_t^\ast).
	\]
	which is equivalent to \eqref{eq:HameqPs}.
	Equation~\eqref{eq:Hameq} now follows from~\eqref{eq:Ps}.
	
	Conversely, let $S$ be compact, $s_t$ be a solution of Equation~\eqref{eq:Hameq} interpolating $s_0$ and $s_1$, and let $\{ \psi_t \}$ be the one parameter family of Hamiltonian diffeomorphisms $NS \to NS$ generated by $\{ X_{\lambda_t} \}$.
	The compactness assumption guarantees that $\psi_t$ is well-defined for all $t \in [0,1]$ (see, e.g.~\cite[Lemma 3.15]{SZ2014}).
	In view of~\eqref{eq:Ps} again, $s_t$ is the (unique) solution of~\eqref{eq:HameqPs} starting at $s_0$.
	In particular, $\psi_t$ maps diffeomorphically the image of $s_0$ to the image of $s_t$.
	Hence, the map $g_t = \pi \circ \psi_t \circ s_0$ is a diffeomorphism and $s_t = \psi_t \circ s_0 \circ g_t^{-1}$.
\end{proof}

Note that if $\{ s_t \}$ is a solution of~\eqref{eq:Hameq} interpolating coisotropic sections $s_0,s_1$, then $s_t$ is a coisotropic section for all $t$.
Proposition~\ref{prop:Hameq} motivates the following

\begin{definition}\label{def:formHeq}
	Two formal coisotropic deformations $s_0 (\eps), s_1(\eps) \in \Gamma (NS)[[\eps]]$ are called \emph{Hamiltonian equivalent} if they are interpolated by a smooth family of formal coisotropic deformations $s_t (\eps) \in \Gamma (NS)[[\eps]]$ (i.e.~$s_t (\eps) = \sum_i s_{t,i} \eps^i$ and the $s_{t,i}$'s depend smoothly on $t$) and there exists a smooth family of formal sections $\lambda_t (\eps) \in \Gamma (L)[[ \eps]]$ of the Jacobi bundle such that
	\[
	\frac{d}{dt} s_t (\eps)= P (\exp \calL_{I (s_t( \eps))} \Delta_{\lambda_t (\eps)}).
	\]
\end{definition}

We now show that formal coisotropic deformations $s_0 (\eps), s_1(\eps)$ are Hamiltonian equivalent iff $-s_0 (\eps), -s_1 (\eps)$ are gauge equivalent solutions of the Maurer--Cartan equation $MC(\xi(\eps)) = 0$.
Two solutions $\xi_0 (\eps) , \xi_1 (\eps)$ of the Maurer--Cartan equation are \emph{gauge equivalent} if, by definition, they are interpolated by a smooth family of formal sections $\xi_t (\eps) \in \Gamma (NS) [[\eps]]= \Gamma (\wedge^1 N_\ell S \otimes \ell) [[\eps]]$ and there exists a smooth family of formal sections $\lambda_t (\eps) \in \Gamma (\ell)[[ \eps]] =\Gamma (\wedge^0 N_\ell S \otimes \ell) [[\eps]]$ such that

\begin{equation}\label{eq:HameqMC}
\frac{d }{dt} \xi_t (\eps) = \sum_{k = 0}^\infty \frac{1}{k!} \frakm_{k+1} (\xi_t (\eps) , \ldots, \xi_t (\eps) , \lambda_t (\eps)).
\end{equation}
Gauge equivalence is an equivalence relation~\cite{doubek2007deformation,kontsevich2003deformation}.
Moreover, it follows from Equation~\eqref{eq:HameqMC} that the $\xi_t(\eps)$ is a solution of the Maurer--Cartan equation for all $t$.

\begin{proposition}
	\label{prop:MC_gauge}
	Two formal coisotropic deformations $s_0 (\eps), s_1 (\eps) \in \Gamma (NS)[[\eps]]$ are Hamiltonian equivalent iff they are gauge equivalent solutions of the Maurer--Cartan equation.
\end{proposition}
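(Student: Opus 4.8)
The plan is to prove the equivalence by showing that the two defining conditions are literally the same differential equation, expressed in two different ways. First I would recall from Proposition~\ref{prop:mcformal} (via the identities~\eqref{eq:mcl}) that $-s(\eps)$ satisfies the Maurer--Cartan equation precisely when $s(\eps)$ is coisotropic, so that both Definition~\ref{def:formHeq} and the definition of gauge equivalence operate on solutions of $MC=0$. The real content is to match the evolution equations governing the two equivalence relations. On the Hamiltonian side we have
\[
\frac{d}{dt} s_t (\eps)= P (\exp \calL_{I (s_t( \eps))} \Delta_{\lambda_t (\eps)}),
\]
while on the gauge side we have
\[
\frac{d }{dt} \xi_t (\eps) = \sum_{k = 0}^\infty \frac{1}{k!} \frakm_{k+1} (\xi_t (\eps) , \ldots, \xi_t (\eps) , \lambda_t (\eps)).
\]
Setting $\xi_t(\eps) = -s_t(\eps)$, I would show these coincide.

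The key computation is an analogue of~\eqref{eq:mcl} with one extra slot occupied by a section $\lambda_t(\eps)\in\Gamma(\ell)$. Starting from the higher derived brackets~\eqref{eq:higher_derived_brackets} and the definition $I(\lambda)=\pi^\ast\lambda$, $\ldsb J,I(\lambda)\rdsb = -\Delta_\lambda$ (up to sign, by~\eqref{eq:trianglelambda}), I expect to establish the identity
\[
P (\calL_{I(\xi)}^k \Delta_{\lambda}) = k!\, \frac{1}{k!}\,\frakm_{k+1}(-\xi,\ldots,-\xi,\lambda)\cdot(\pm 1),
\]
so that summing over $k$ and using the exponential~\eqref{eq:exp} yields
\[
P(\exp\calL_{I(\xi)}\Delta_\lambda) = \sum_{k=0}^\infty \frac{1}{k!}\,\frakm_{k+1}(-\xi,\ldots,-\xi,\lambda),
\]
with the signs conspiring correctly under the substitution $\xi = -s$. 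This is the same type of graded-Leibniz/Gerstenhaber-product manipulation used in the proof of Proposition~\ref{prop:mcformal}, now carried out for the operator $\Delta_\lambda = \{\lambda,-\}$ rather than for $J$. Care with the sign conventions in $\exp\calL_{I(-s)}$ versus $\exp\calL_{I(s)}$ (cf.~the sign flip already visible between Definition~\ref{def:coisoformal} and Proposition~\ref{prop:mcformal}) will be the fiddly bookkeeping part.

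Granting this identity, the two evolution equations become term-by-term identical under $\xi_t(\eps)=-s_t(\eps)$, and both interpolating families $s_t(\eps)$ and $\lambda_t(\eps)$ transfer directly from one notion to the other without modification. Hence a smooth family realizing Hamiltonian equivalence of $s_0(\eps),s_1(\eps)$ is exactly a smooth family realizing gauge equivalence of $-s_0(\eps),-s_1(\eps)$, and conversely. I expect the main obstacle to be purely the sign and combinatorial-coefficient accounting in establishing the displayed operator identity $P(\exp\calL_{I(\xi)}\Delta_\lambda)=\sum_k \tfrac{1}{k!}\frakm_{k+1}(-\xi,\ldots,-\xi,\lambda)$; once this is pinned down, the rest of the proof is a formal matching of the two Cauchy problems, using that the $\frakm_k$ are graded derivations in each entry (Remark~\ref{rem:multi-brackets}) to justify that the right-hand sides indeed land in the correct graded components $\Gamma(\wedge^1 N_\ell S\otimes\ell)[[\eps]]$.
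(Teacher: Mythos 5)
Your overall architecture is the right one and is in fact the paper's: both notions of equivalence are governed by evolution equations, and the proof reduces to identifying $P(\exp\calL_{I(s_t(\eps))}\Delta_{\lambda_t(\eps)})$ with $-\sum_{k\geq 0}\tfrac{1}{k!}\frakm_{k+1}(-s_t(\eps),\ldots,-s_t(\eps),\lambda_t(\eps)|_S)$. The gap is in how you propose to establish that identification. You present it as an off-shell operator identity, provable by ``the same type of graded-Leibniz/Gerstenhaber-product manipulation'' as~\eqref{eq:mcl}, with only signs to watch. It is not: unlike $J_k=\ldsb\ldots\ldsb J,I(-s)\rdsb,\ldots,I(-s)\rdsb$, the derivation $\Delta_\lambda=-\ldsb J,\lambda\rdsb$ has $\lambda\in\Gamma(L)$ in a slot \emph{not} of the form $I(\mu)$, so after commuting everything into the shape $P\ldsb J_k,\lambda\rdsb$ you are still not looking at a multibracket --- the $\frakm_{k+1}$ only accept arguments from $\Gamma(\wedge^\bullet N_\ell S\otimes\ell)$, i.e.\ you need $I(\lambda|_S)=IP\lambda$ in the last slot, not $\lambda$ itself. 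The discrepancy $P\ldsb J_k,\lambda\rdsb-P\ldsb J_k,IP\lambda\rdsb$ is controlled by Voronov's reformulation of the fact that $\ker P$ is a Lie subalgebra,
\begin{equation*}
P\ldsb\square_1,\square_2\rdsb=P\ldsb IP\square_1,\square_2\rdsb+P\ldsb\square_1,IP\square_2\rdsb,
\end{equation*}
and equals $P\ldsb IPJ_k,\lambda\rdsb$; summing over $k$ with $1/k!$ this is $P\ldsb I(MC(-s_t(\eps))),\lambda_t(\eps)\rdsb$, which does \emph{not} vanish for a general $s$. It vanishes precisely because the interpolating family consists of Maurer--Cartan solutions, $MC(-s_t(\eps))=0$ for all $t$. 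So your displayed ``identity'' is false off-shell, and its on-shell validity rests on two ingredients you never invoke: the $\ker P$-subalgebra identity above and the hypothesis that the whole interpolating path solves the MC equation. These two ingredients \emph{are} the proof; the sign bookkeeping you anticipate as the main obstacle is the easy part.

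A secondary point you should make explicit rather than wave at: in Definition~\ref{def:formHeq} the Hamiltonian generators $\lambda_t(\eps)$ are formal sections of $L$ over the tubular neighborhood, whereas in the gauge-equivalence equation they are formal sections of $\ell$ over $S$. The passage from one to the other is exactly the replacement $\lambda_t(\eps)\rightsquigarrow\lambda_t(\eps)|_S=P\lambda_t(\eps)$ effected by the $\ker P$ identity, together with surjectivity of the restriction to see that every gauge generator arises from some Hamiltonian generator. Without naming this mechanism, the claim that the two Cauchy problems ``transfer directly from one notion to the other without modification'' does not go through.
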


\begin{proof} Recall that $\ker P \subset  (\Der^\bullet L)[1]$ is a Lie subalgebra.
	As Th.~Voronov notes~\cite{Voronov2005higher1}, this can be rephrased as:
	\begin{equation}\label{eq:kerP}
	P\ldsb\square_1, \square_2\rdsb = P\ldsb IP\square_1 , \square_2\rdsb + P\ldsb\square_1, IP \square_2\rdsb,
	\end{equation}
	$\square_1, \square_2 \in  (\Der^\bullet L)[1]$.
	Now, let $\{ s_t (\eps) \}$ be a family of formal coisotropic deformations, and let $\{ \lambda_t (\eps) \}$ be a family of formal sections of $L$.
	Put
	\[
	J_k (\eps) := \ldsb\ldots \ldsb J, \underset{k \text{ times}}{\underbrace{I(-s(\eps))\rdsb \ldots, I(-s(\eps))\rdsb}},
	\]
	In particular, $P J_k (\eps) = \mathfrak m_k (-s (\eps), \ldots, -s(\eps))$.
	Compute
	\begin{align*}
	P(\exp \calL_{I (s_t (\eps))} \Delta_{\lambda_t (\eps)} )
	& = -\sum_{k=0}^\infty \frac{1}{k!} P \ldsb J_k (\eps), \lambda_t (\eps)\rdsb \\
	& = -\sum_{k=0}^\infty \frac{1}{k!} P \ldsb IPJ_k (\eps), \lambda_t (\eps)\rdsb
	-\sum_{k=0}^\infty \frac{1}{k!} P \ldsb J_k (\eps), IP\lambda_t (\eps)\rdsb \\
	& = - P \ldsb I (MC (-s_t(\eps))), \lambda_t (\eps)\rdsb
	-\sum_{k=0}^\infty \frac{1}{k!} P \ldsb J_k (\eps), I (\lambda_t (\eps)|_S)\rdsb \\
	& = -\sum_{k=0}^\infty \frac{1}{k!} \frakm_{k+1} (-s(\eps), \cdots, -s(\eps), \lambda_t (\eps) |_S ),
	\end{align*}
	where we used~\eqref{eq:kerP}, and the fact that $MC(-s_t (\eps)) = 0$ for all $t$.
	This concludes the proof.
\end{proof}

\begin{corollary}
	\label{cor:infequi}
	Two solutions of~\eqref{eq:infdef} are infinitesimally Hamiltonian equivalent iff they are cohomologous elements of the complex $(\Gamma (\wedge^\bullet N_\ell S \otimes \ell)[1], \frakm_1)$.
	Hence, the infinitesimal moduli space (i.e.~the set of infinitesimal Hamiltonian equivalence classes) of infinitesimal coisotropic deformations of $S$ is $H^0(\Gamma (\wedge^\bullet N_\ell S \otimes \ell)[1], \frakm_1) = H^1 (N_\ell {}^\ast S , \ell)$.
\end{corollary}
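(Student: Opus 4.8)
The plan is to reduce the whole statement to a single computation of the unary bracket $\frakm_1$ on degree $-1$ elements, i.e.\ on $\Gamma(\ell) = \Gamma(\wedge^0 N_\ell S \otimes \ell)$, and to identify its values with vertical components of Hamiltonian vector fields along $S$. First I would fix the degree bookkeeping: in the shifted complex $\Gamma(\wedge^\bullet N_\ell S \otimes \ell)[1]$ the space $\Gamma(NS) = \Gamma(\wedge^1 N_\ell S \otimes \ell)$ sits in degree $0$ and $\Gamma(\ell) = \Gamma(\wedge^0 N_\ell S \otimes \ell)$ in degree $-1$, while $\frakm_1$ raises degree by one. By Corollary~\ref{cor:inf1} the infinitesimal coisotropic deformations are exactly the degree-$0$ cocycles, i.e.\ the $s \in \Gamma(NS)$ with $\frakm_1 s = 0$, and "cohomologous" means differing by $\frakm_1 \lambda$ for some $\lambda \in \Gamma(\ell)$.

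The heart of the matter is the identity
\[
\frakm_1(\lambda) = -P(\Delta_{\pi^\ast\lambda}), \qquad \lambda \in \Gamma(\ell),
\]
where $\pi^\ast\lambda \in \Gamma(L_{NS})$ is the fibre-wise constant extension $I(\lambda)$, and $\Delta_{\pi^\ast\lambda} = \{\pi^\ast\lambda, -\}$ is its Hamiltonian derivation on $NS$. I would obtain it directly from the higher derived bracket formula~\eqref{eq:higher_derived_brackets}, namely $\frakm_1(\lambda) = P\ldsb J, I(\lambda)\rdsb$, combined with $\ldsb J, \pi^\ast\lambda\rdsb = -\Delta_{\pi^\ast\lambda}$ (cf.~\eqref{eq:trianglelambda}). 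Since $P(\Delta_{\pi^\ast\lambda}) = \sigma_{\Delta_{\pi^\ast\lambda}}|_S \Mod TS$ by~\eqref{eq:projection_P}, the right-hand side is precisely minus the vertical component along $S$ of the Hamiltonian vector field $X_{\pi^\ast\lambda}$, regarded as a section of $NS$.

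To match this against Definition~\ref{def:hequi}~\ref{enumitem:def:hequi_2}, where an \emph{arbitrary} Hamiltonian vector field $X_\mu$, $\mu \in \Gamma(L_{NS})$, appears, I would prove that $P(\Delta_\mu)$ depends only on $\mu|_S$. This is where coisotropicity enters: writing $\mu = \pi^\ast(\mu|_S) + \nu$ with $\nu := \mu - \pi^\ast(\mu|_S) \in \Gamma_S$, linearity of $\mu \mapsto \Delta_\mu$ and of $P$ reduces the claim to $P(\Delta_\nu) = 0$ for $\nu \in \Gamma_S$; and indeed by Lemma~\ref{lem:cois}~\ref{enumitem:lem:cois_3} such $X_\nu$ is tangent to $S$, so $\sigma_{\Delta_\nu}|_S \Mod TS = 0$. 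Hence the vertical component of $X_\mu$ equals $P(\Delta_{\pi^\ast(\mu|_S)}) = -\frakm_1(\mu|_S)$. Combining, $s_1 - s_0$ is the vertical component along $S$ of a Hamiltonian vector field iff $s_1 - s_0 = \frakm_1(-\mu|_S)$ for some $\mu$, i.e.\ iff $s_0$ and $s_1$ differ by an $\frakm_1$-coboundary; this is the first assertion. For the last sentence, the set of infinitesimal coisotropic deformations modulo infinitesimal Hamiltonian equivalence is the quotient of degree-$0$ $\frakm_1$-cocycles by $\frakm_1(\Gamma(\ell))$, which is by definition $H^0(\Gamma(\wedge^\bullet N_\ell S \otimes \ell)[1], \frakm_1)$; finally $\frakm_1 = d_{N_\ell{}^\ast S, \ell}$ by Proposition~\ref{prop:linfty_cohomological_resolution}~\ref{enumitem:prop:linfty_cohomological_resolution_1}, and the décalage identifies the degree-$0$ cohomology of the shifted complex with the de Rham cohomology $H^1(N_\ell{}^\ast S, \ell)$ of the Jacobi algebroid.

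The main obstacle I anticipate is the clean justification that $P(\Delta_\mu)$ depends only on $\mu|_S$ — that is the one place where the argument genuinely uses that $S$ is coisotropic rather than an arbitrary submanifold — together with keeping the degree (décalage) conventions consistent throughout so that the final identification $H^0(\Gamma(\wedge^\bullet N_\ell S \otimes \ell)[1], \frakm_1) = H^1(N_\ell{}^\ast S, \ell)$ comes out correctly.
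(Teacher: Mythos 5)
Your proposal is correct and follows essentially the same route as the paper: the corollary is obtained there as the linearization of Proposition~\ref{prop:MC_gauge}, whose proof contains exactly your key identity $P(\Delta_\mu)=-\frakm_1(\mu|_S)$, established via the relation~\eqref{eq:kerP} expressing that $\ker P$ is a Lie subalgebra — which is the same fact you invoke in the equivalent form of Lemma~\ref{lem:cois}~\ref{enumitem:lem:cois_3} after writing $\mu=\pi^\ast(\mu|_S)+\nu$ with $\nu\in\Gamma_S$. The degree bookkeeping and the final identification with $H^1(N_\ell{}^\ast S,\ell)$ are also handled as in the paper.
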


\section{Fiber-wise entireness: from formal deformations to non-formal deformations}
\label{sec:radially_entire}

In this section we establish a connection between (Hamiltonian equivalence of) formal coisotropic sections and (Hamiltonian equivalence of) non-formal coisotropic sections.
We do this introducing the notion of fiber-wise entireness for Jacobi structures and their bi-symbols, which slightly generalizes the analogous notion of fiber-wise entireness for Poisson structures introduced by Sch\"atz and Zambon in~\cite{SZ2012}.

Let $E \to S$ be a vector bundle. Recall that a smooth function on $E$ is called \emph{fiber-wise entire} if its restriction to each fiber of $E$ is \emph{entire}, i.e.~it is real analytic on the whole fiber.
Now, let $\ell \to S$ be a line bundle, and $L := E \times_S \ell$.
A section of $L$ is called \emph{fiber-wise entire} if it is a linear combination of fiber-wise constant sections, with coefficients being fiber-wise entire functions.
In particular, fiber-wise linear and the fiber-wise constant sections of $L$ (as in Section~\ref{subsubsec:fiberwise_linear_Jacobi_structures}) are fiber-wise entire. 
Let $\Theta \in \Gamma (\wedge^k (TE \otimes L^\ast)\otimes L)$.
We regard $\Theta$ as a multi-linear map
\[
\Theta : \wedge^{k} (T^\ast E \otimes L) \longrightarrow L.
\]
The multi-linear map $\Theta$ is called \emph{fiber-wise entire} if
\[
\Theta (df_1 \otimes \lambda_1 , \ldots, df_{k} \otimes \lambda_{k})
\]
is fiber-wise entire, whenever $f_1, \ldots, f_{k} \in C^\infty (E)$ and $\lambda_1, \ldots, \lambda_{k}$ are fiber-wise linear.
Equivalently $\Theta$ is fiber-wise entire if its components in some (and therefore any) system of vector bundle coordinates are fiber-wise entire functions (cf.~\cite[Lemmas 1.4, 1.7]{SZ2012}).
Let $\Delta\in\Der^k L$.
We regard $\Delta$ as a multi-linear map
\[
\Delta : \wedge^{k} (J^1L) \longrightarrow L.
\]
The multi-differential operator $\Delta$ is said to be \emph{fiber-wise entire} if
\[
\Delta (j^1\lambda_1 , \ldots, j^1\lambda_k)
\]
is a fiber-wise entire section of $L$, whenever $\lambda_1, \ldots, \lambda_k\in\Gamma(L)$ are fiber-wise linear.
Equivalently, $\Delta$ is fiber-wise entire if its component in vector bundle coordinates are fiber-wise entire.

Now, let $S$ and $(NS, L, J = \{-,-\})$ be as in Section~\ref{sec:cois_def}.
For any section $s \in \Gamma(NS)$, the \emph{Maurer--Cartan series of $s$} is defined by
\[
MC(-s) := \sum_{k=1}^\infty \frac{1}{k!} \frakm_k (-s, \ldots, -s).
\]
In general, $MC(-s)$ does not converge, not even for a coisotropic section $s$.
The following proposition generalizes the main result of~\cite{SZ2012} establishing a necessary and sufficient condition for the convergence of the Maurer--Cartan series $MC(-s)$ of a generic section $s \in \Gamma(NS)$.
In this way, we can describe (see Corollary~\ref{cor:conver}) coisotropic sections in terms of non-formal Maurer--Cartan elements.

\begin{proposition}
	\label{prop:fana}
	The bi-symbol $\Lambda_J$ of the Jacobi bi-differential operator $J$ is fiber-wise entire iff, for all sections $s \in \Gamma (NS)$, the Maurer--Cartan series $MC (-s)$ converges to $P ( \exp I(s)_\ast J) = P (\exp I(s)_\ast \Lambda_J )$ in the sense of point-wise convergence.
\end{proposition}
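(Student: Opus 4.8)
The plan is to reduce the statement to a comparison, fiber by fiber and base point by base point, between the Maurer--Cartan series and the Taylor expansion of $\Lambda_J$ along the fibers of $\pi : NS \to S$. First I would record the purely algebraic identity underlying the whole argument: by~\eqref{eq:mcl} the $k$-th Maurer--Cartan term is $\frakm_k(-s,\ldots,-s) = P(\calL_{I(s)}^k J)$, so that, as a formal series, $MC(-s) = P(\exp \calL_{I(s)} J)$ (using $P(J)=0$). On the other hand, since $I(s)$ generates the fiber-translation flow $\underline{\smash{\exp I(s)}} : \nu \mapsto \nu + s(x)$ of Remark~\ref{rem:coiss}, integrating~\eqref{Lie2} with the constant generator $I(s)$ identifies $\exp I(s)_\ast J$ with the genuine (convergent, not merely formal) exponential of $\calL_{I(s)}$ applied to $J$. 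Thus the content of the proposition is exactly that this formal exponential converges to the actual push-forward. Finally, \eqref{eq:PJvsPLambda} together with Remark~\ref{rem:cois_Lambda} shows that $P(\calL_{I(s)}^k J) = P(\calL_{I(s)}^k \Lambda_J)$ for every $k$, so the two equalities on the right-hand side ($J$ and $\Lambda_J$) are established simultaneously, and it suffices to work throughout with the bi-symbol.

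Next I would pass to adapted coordinates $z^\alpha = (x^i, y^a)$ as in Section~\ref{sec:multi-brackets_coordinates}, in which the flow of $I(s)$ is the translation $y \mapsto y + s(x)$ and the operators $\bbD_{s}$ occurring in the coordinate formulas~\eqref{eq:CF1}--\eqref{eq:CF3}, equivalently in Corollary~\ref{prop:multi-brackets_coordinates}, become ordinary fiber derivatives $s^a \partial_a$. With this dictionary, $\frakm_k(-s,\ldots,-s)|_x$ is the $k$-fold fiber derivative of the components of $\Lambda_J$ contracted with $s(x)^{\otimes k}$, so that $\tfrac{1}{k!}\frakm_k(-s,\ldots,-s)|_x$ is precisely the degree-$k$ term of the Taylor expansion, along $\pi^{-1}(x)$ and about the zero section, of those components. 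Summing over $k$, the partial sums of $MC(-s)|_x$ are exactly the Taylor polynomials, at the origin of $\pi^{-1}(x)$, of the function recording the components of $\exp I(s)_\ast \Lambda_J|_S$, whose value at the fiber point determined by $s(x)$ is $P(\exp I(s)_\ast \Lambda_J)|_x$. I would spell this out on the three types of arguments distinguished in Corollary~\ref{prop:multi-brackets_coordinates}, checking that the combinatorial factors assemble into the single Taylor series in each case.

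It then remains to prove the analytic equivalence: the fiber-Taylor series of each component of $\Lambda_J$ converges to that component, for every base point $x$ and every value $s(x)$, if and only if those components are entire on each fiber, which by~\cite[Lemmas 1.4, 1.7]{SZ2012} is the coordinate-free notion of fiber-wise entireness of $\Lambda_J$. For the ``if'' direction, entireness gives at each $x$ the convergence of the Taylor series to the value at the arbitrary point $s(x)$, which by the previous paragraph is $P(\exp I(s)_\ast \Lambda_J)|_x$; for the ``only if'' direction, convergence of $MC(-s)$ for all $s$ forces, upon choosing sections with prescribed value at a given point, convergence of each fiber-Taylor series on the whole fiber, i.e.~entireness. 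The delicate points I expect to absorb most of the work are, first, the bookkeeping identifying $\tfrac{1}{k!}\frakm_k(-s,\ldots,-s)$ with the homogeneous Taylor term (keeping track of the skew-symmetrizations and of the vanishing $\bbD_{s_1}\bbD_{s_2}\varphi = 0$ for fiber-wise linear $\varphi$, already exploited in Proposition~\ref{prop:CF}), and second, making the ``for all $s$'' quantifier interact correctly with real-analyticity so that pointwise convergence of the series for every section is genuinely equivalent to convergence of each fiber-Taylor series on its whole fiber. Both steps are modelled on~\cite{SZ2012}, the only new feature being the presence of the generically non-trivial line bundle, which however enters only through fiber-wise constant and fiber-wise linear sections and therefore leaves the entireness analysis unaffected.
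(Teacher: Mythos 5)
Your proposal is correct and follows essentially the same route as the paper's proof: reduce to vector bundle coordinates, identify $MC(-s)$ with the Taylor series in the flow parameter $t$ of $P((\Phi_{-t})_\ast\Lambda_J)$ evaluated at $t=1$, and in the converse direction extract entireness of the components by choosing first locally constant sections and then sections with prescribed derivatives $s^a_i$. The one point you compress — that the push-forward also transforms the frame, so the $k$-th term is the $t$-Taylor coefficient of the composite expression $J^{ab}\circ ts-2ts_i^b(J^{ai}\circ ts)+t^2s_i^as_j^b(J^{ij}\circ ts)$ rather than of the components of $\Lambda_J$ alone — is precisely the bookkeeping you flag as delicate, and it is carried out explicitly in the paper's formula~\eqref{eq:MC_Taylor}.
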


\begin{proof} Let $z^\alpha = (x^i, y^a)$ be vector bundle coordinates on $NS$, with $x^i$ coordinates on $S$, and $y^a$ linear coordinates along the fibers of $NS$.
	Moreover, let $\mu$ be a fiber-wise constant local generator of $\Gamma (L)$.
	The Jacobi bi-differential operator $J$ is locally given by Equation~\eqref{eq:components_of_J}, or, equivalently, Equation~\eqref{eq:components_of_J_bis}
	\begin{align*}
	J &= J^{\alpha \beta} \nabla_\alpha \wedge \nabla_\beta \otimes \mu + J^\alpha \nabla_\alpha \wedge \operatorname{id} \\
	& = \left( J^{a b}\nabla_a\wedge\nabla_b +2J^{ai}\nabla_a\wedge\nabla_i +J^{ij}\nabla_i\wedge\nabla_j \right) \otimes\mu+ \left( J^a\nabla_a +J^i\nabla_i \right) \wedge\operatorname{id}.
	\end{align*}
	Accordingly, the bi-symbol $\Lambda_J$ is locally given by
	\[
	\begin{aligned}
	\Lambda_J & = J^{\alpha \beta} \delta_\alpha \wedge \delta_\beta \otimes \mu \\
	& = \left( J^{a b}\delta_a\wedge\delta_b +2J^{ai}\delta_a\wedge\delta_i +J^{ij}\delta_i\wedge\delta_j \right) \otimes\mu
	\end{aligned}
	\]
	where $\delta_\alpha := \partial_\alpha \otimes \mu^\ast$.
	In particular, $\Lambda_J$ is fiber-wise entire iff its components $J^{ab}, J^{ai}, J^{ij}$ are fiber-wise entire functions.
	Now, let $s\in \Gamma (NS)$ and denote by $\{ \Phi_t \}$ the one parameter group of automorphisms of $L$ generated by $I(s)$.
	Then, from $P(J) = P (\Lambda_J) = 0$, Equations~\eqref{eq:mcl}, \eqref{eq:PJvsPLambda}, and the very definition of the Lie derivative, we get
	\begin{equation*}
	MC (-s) = P \sum_{k=0}^\infty \left. \frac{\partial^k (\Phi_{-t_1 - \cdots - t_k})_\ast \Lambda_J}{\partial t_1 \cdots \partial t_k}\right|_{t_1 = \cdots = t_k = 0}=P \sum_{k=0}^\infty \frac{1}{k!} \left. \frac{d^k}{d t^k} \right|_{t=0}(\Phi_{-t})_\ast \Lambda_J.
	\end{equation*}
	Let $(x,y,\lambda) \in L$, $x \in S$, $y \in N_x S$, $\lambda \in\ell_x $.
	Then $\Phi_{-t} (x,y, \lambda) = (x, y -t s(x) , \lambda)$ and
	\begin{align*}
	(\Phi_{-t})_\ast \Lambda_J&=(J^{a b} \circ \Phi_t)\delta_a\wedge\delta_b \otimes\mu +2(J^{ai} \circ \Phi_t) \delta_a\wedge(\delta_i - t s_{i}^b \delta_b ) \otimes\mu\\
	&\phantom{=} +(J^{ij}\circ \Phi_t )(\delta_i - t s_{i}^a \delta_a )\wedge (\delta_j - t s_{j}^b \delta_b) \otimes\mu,
	\end{align*}
	where $s^a_i$ denotes the partial derivative wrt~$x^i$ of the $a$-th local component of $s$ in the local basis $(\partial_a)$ of $\Gamma (NS)$.
	Hence
	\begin{equation}\label{eq:MC_Taylor}
	MC(-s) = \sum_{k=0}^\infty \frac{1}{k!}\left. \frac{d^k}{dt^k}\right|_{t=0}\left[ J^{ab} \circ ts - 2t s_i^b (J^{ai}\circ ts ) + t^2 s_i^a s_j^b (J^{ij} \circ ts) \right] \delta_a \wedge \delta_b \otimes \mu .
	\end{equation}
	Assume that $\Lambda_J$ is fiber-wise entire.
	Then the Taylor expansions in $t$, around $t = 0$, of $J^{ab} \circ ts$, $J^{ai} \circ ts$, and $J^{ij} \circ ts$ converge for all $t$'s, in particular for $t = 1$.
	It immediately follows that the series in the rhs of~\eqref{eq:MC_Taylor} converges as well.
	This proves the ``only if'' part of the proposition (cf.~the proof of the analogous proposition in~\cite{SZ2012}).
	
	For the ``if part'' of the proposition assume that the series in the rhs of~\eqref{eq:MC_Taylor} converges for all $s$.
	First of all, locally, we can choose $s$ to be ``constant'' wrt~coordinates $(x^i, y^a)$.
	Then $s^a_i = 0$ and~\eqref{eq:MC_Taylor} reduces to
	\begin{equation}\label{Taylor}
	MC(-s) = \sum_{k=0}^\infty \frac{1}{k!}\left. \frac{d^k}{dt^k}\right |_{t=0}\left( J^{ab} \circ ts \right) \delta_a \wedge \delta_b \otimes \mu .
	\end{equation}
	Since $s$ is arbitrary, \eqref{Taylor} shows that the $J^{ab}$'s are entire on any straight line through the origin in the fibers of $NS$.
	Since the Taylor series of the restriction to such a straight line is the same as the restriction of the Taylor series, we conclude that the $J^{ab}$'s are fiber-wise entire.
	Now, fix values $i_0, a_0$ for the indexes $i,a$ respectively, and choose $s$ so that $s^a_i = \delta_i^{i_0}\delta_{a_0}^a$ to see that the $J^{a_0 i_0}$'s are fiber-wise entire for all $a_0, i_0$.
	One can prove that the $J^{ij}$'s are fiber-wise entire in a similar way.
	This concludes the proof.
\end{proof}

\begin{corollary}
	\label{cor:conver}
	Let $(M, L, J= \{-,-\})$ be a Jacobi manifold, and let $S \subset M$ be a coisotropic submanifold equipped with a fat tubular neighborhood $\tau : L_{NS} \injects L$.
	If $\tau_\ast ^{-1} \Lambda_J $ is fiber-wise entire, then a section $s : S \to NS$ of $NS$ is coisotropic iff the Maurer--Cartan series $MC (-s)$ converges to zero.
\end{corollary}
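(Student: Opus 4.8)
The plan is to obtain the corollary as the conjunction of two results already proved, namely the geometric characterization of coisotropic sections in Proposition~\ref{prop:coiss} and the analytic convergence criterion in Proposition~\ref{prop:fana}, once both are transported to the line bundle $L_{NS}\to NS$ through the fixed fat tubular neighborhood $\tau$. Throughout I would identify $L_{NS}$ with its image $\tau(L_{NS})\subset L$ and, abusing notation exactly as in Section~\ref{sec:cois_def}, write $J$ and $\Lambda_J$ for the pulled-back Jacobi bi-differential operator $\tau_\ast^{-1}J$ and its bi-symbol $\tau_\ast^{-1}\Lambda_J$. With this convention the standing hypothesis of the corollary is simply that $\Lambda_J$ is fiber-wise entire, which is precisely the input required by Proposition~\ref{prop:fana}.

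First I would recall from Proposition~\ref{prop:coiss} (combined with Remark~\ref{rem:cois_Lambda}) that a section $s\in\Gamma(NS)$ is coisotropic if and only if $P(\exp I(-s)_\ast J)=P(\exp I(-s)_\ast\Lambda_J)=0$; thus the coisotropy of $s$ is measured by the single element $P(\exp I(-s)_\ast\Lambda_J)\in\Gamma(\wedge^2 N_\ell S\otimes\ell)$. Next, under the entireness assumption, Proposition~\ref{prop:fana} guarantees that for \emph{every} $s\in\Gamma(NS)$ the Maurer--Cartan series $MC(-s)=\sum_{k\ge 1}\frac{1}{k!}\frakm_k(-s,\ldots,-s)$ is pointwise convergent and that its sum is exactly the quantity $P(\exp I(-s)_\ast\Lambda_J)$ appearing in the previous sentence. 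The bridge between the two is the chain of identities~\eqref{eq:mcl} and~\eqref{eq:PJvsPLambda}, which rewrite $MC(-s)$ as $P(\exp\calL_{I(s)}\Lambda_J)$, together with the observation from Remark~\ref{rem:coiss} that summing the Taylor series at $t=1$ reproduces the time-$(-1)$ flow of $I(s)$, i.e.\ the automorphism $\exp I(-s)$ that enters Proposition~\ref{prop:coiss}.

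The corollary is then immediate: fiber-wise entireness makes the phrase \emph{``$MC(-s)$ converges''} meaningful for all $s$, its limit is always $P(\exp I(-s)_\ast\Lambda_J)$, and by the characterization above this limit vanishes precisely when $s$ is coisotropic; hence $s$ is coisotropic if and only if $MC(-s)$ converges to zero. I expect the only genuinely delicate point to be one that has already been discharged inside Proposition~\ref{prop:fana}, namely the equivalence between fiber-wise entireness of the components $J^{ab},J^{ai},J^{ij}$ and convergence of the rearranged Taylor series~\eqref{eq:MC_Taylor}. At the level of the corollary no new analysis is needed, and the expected difficulty is purely one of bookkeeping: confirming that the object to which Proposition~\ref{prop:fana} asserts convergence coincides, sign for sign, with the coisotropy obstruction $P(\exp I(-s)_\ast J)$ of Proposition~\ref{prop:coiss}, so that ``limit zero'' and ``coisotropic'' are literally the same condition.
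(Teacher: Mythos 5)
Your proposal is correct and is precisely the argument the paper intends: the corollary is stated without a separate proof exactly because it is the immediate conjunction of Proposition~\ref{prop:coiss} (coisotropy of $s$ $\Leftrightarrow$ vanishing of $P(\exp I(-s)_\ast J)=P(\exp I(-s)_\ast\Lambda_J)$) with Proposition~\ref{prop:fana} (fiber-wise entireness $\Rightarrow$ $MC(-s)$ converges to that very element for every $s$), transported through $\tau$ as you describe. Your bookkeeping of the flow direction via Remark~\ref{rem:coiss} and the identities~\eqref{eq:mcl}, \eqref{eq:PJvsPLambda} is exactly the right way to confirm that the limit of $MC(-s)$ coincides with the coisotropy obstruction, so nothing further is needed.
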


Now, we establish a necessary and sufficient condition for the convergence of both the Maurer--Cartan series $MC(-s)$ and the series
\begin{equation}\label{eq:delta_MC}
\delta_\lambda MC(-s) := \sum_{k=0}^\infty \frac{1}{k!} \frakm_{k+1} (-s, \ldots, -s, \lambda )
\end{equation}
for generic sections $s \in \Gamma(NS)$ and $\lambda \in \Gamma (\ell)$.
In this way, we can describe (see Corollary~\ref{cor:hequi}) moduli of coisotropic sections under Hamiltonian equivalence in terms of gauge equivalence classes of non-formal Maurer--Cartan elements.
Again, let $S$ and $(NS, L, J = \{-,-\})$ be as in Section~\ref{sec:cois_def}. 

\begin{theorem}
	\label{prop:fana1}
	The Jacobi bi-differential operator $J$ is fiber-wise entire iff, for all sections $s \in \Gamma (NS)$, and $\lambda \in \Gamma (L)$, the Maurer--Cartan series $MC (-s)$ converges to $P (\exp I(s)_\ast J)$, and the series $\delta_{\lambda|_S} MC (-s)$ \eqref{eq:delta_MC} converges to $P(\exp I(s)_\ast \Delta_\lambda)$, in the sense of point-wise convergence.
\end{theorem}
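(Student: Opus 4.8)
The plan is to mirror the proof of Proposition~\ref{prop:fana}, upgrading it so as to keep track not only of the bi-symbol $\Lambda_J$ (equivalently the components $J^{\alpha\beta}$) but of the full operator $J$, whose remaining components $J^\alpha$ are precisely the ones detected by the auxiliary series $\delta_{\lambda|_S}MC(-s)$. The first step is an algebraic identity: writing $J^s := \exp\calL_{I(s)}J$, the definition~\eqref{eq:higher_derived_brackets} of the higher derived brackets together with the computation $\ldsb\ldots\ldsb J, I(-s)\rdsb,\ldots,I(-s)\rdsb = \calL_{I(s)}^k J$ already used in Proposition~\ref{prop:mcformal} gives immediately $\delta_{\lambda|_S}MC(-s) = P\ldsb J^s, I(\lambda|_S)\rdsb$, in perfect analogy with the identity $MC(-s) = P(\exp I(s)_\ast\Lambda_J)$ of Proposition~\ref{prop:fana}. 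A short manipulation, using $\Delta_\lambda = -\ldsb J,\lambda\rdsb$ from Equation~\eqref{eq:trianglelambda}, the fact that $\calL_{I(s)}$ is a derivation of $\ldsb-,-\rdsb$, and the property~\eqref{eq:kerP} that $\ker P$ is a Lie subalgebra, exactly as in the proof of Proposition~\ref{prop:MC_gauge}, then identifies this limit with the stated target $P(\exp I(s)_\ast\Delta_\lambda)$. The structural point I would stress is that $I(\lambda|_S) = \pi^\ast(\lambda|_S)$ is fiber-wise constant, so that all the $t$-dependence, and hence every entireness requirement, is carried by $J^s$, i.e.\ by the components of $J$ alone.

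Next I would pass to local fibered coordinates $z^\alpha = (x^i, y^a)$ with a fiber-wise constant generator $\mu$, exactly as in Proposition~\ref{prop:fana}, and expand $\delta_{\lambda|_S}MC(-s)$ as a Taylor series in an auxiliary parameter $t$ evaluated at $t = 1$, the analogue of Equation~\eqref{eq:MC_Taylor}. Using the local shape~\eqref{eq:components_of_J_bis} of $J$ and the coordinate formulas of Corollary~\ref{prop:multi-brackets_coordinates}, this series is a finite combination of the functions $J^{\alpha\beta}\circ ts$ and $J^\alpha\circ ts$, with coefficients polynomial in $t$, in the normal derivatives $s^a_i$ of $s$, and in $f|_S$, $\partial_i(f|_S)$ (where $\lambda = f\mu$). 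For the ``only if'' direction I would then argue as in Proposition~\ref{prop:fana}: if $J$ is fiber-wise entire, all its components $J^{\alpha\beta}$ and $J^\alpha$ are fiber-wise entire, so each $J^\bullet\circ ts$ is entire in $t$, its Taylor series converges at $t = 1$, and therefore both $MC(-s)$ (already covered by Proposition~\ref{prop:fana}) and $\delta_{\lambda|_S}MC(-s)$ converge to their respective $t = 1$ values, which are the claimed limits.

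For the ``if'' direction I would reverse the extraction. Convergence of $MC(-s)$ for all $s$ already yields, by Proposition~\ref{prop:fana}, that $J^{ab}$, $J^{ai}$, $J^{ij}$ are fiber-wise entire, so only the components $J^a$, $J^i$ of $J^\alpha$ remain to be treated. Specialising $\delta_{\lambda|_S}MC(-s)$ to sections $s$ that are ``constant'' in the chosen coordinates ($s^a_i = 0$) and to $\lambda = f\mu$, the series collapses to the Taylor series of $y\mapsto 2J^{ai}(x,y)\partial_i(f|_S) + J^a(x,y)(f|_S)$ evaluated at $y = s(x)$; choosing $f$ constant isolates $J^a$ and shows, via ``the Taylor series of the restriction equals the restriction of the Taylor series'' as in Proposition~\ref{prop:fana}, that $J^a$ is fiber-wise entire. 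To reach the last component $J^i$ I would take $s$ with prescribed nonzero normal derivatives $s^a_i$: as in the pushforward formula of Proposition~\ref{prop:fana}, these derivatives mix the horizontal directions into the vertical projection and make $J^i$ appear (weighted by $ts^a_i$), so that convergence for a suitable family of such $s$ forces $J^i$ fiber-wise entire as well. Collecting the components gives that $J$ is fiber-wise entire, completing the equivalence.

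The routine part is the coordinate bookkeeping; the delicate point, which I expect to be the main obstacle, is the passage in the first paragraph, namely identifying the limit $P\ldsb J^s, I(\lambda|_S)\rdsb$ coming straight from the definition with the geometric target $P(\exp I(s)_\ast\Delta_\lambda)$, since $\lambda$ and its fiber-wise constant extension $I(\lambda|_S)$ differ off $S$ and the two brackets agree only after projecting by $P$ and using that $\ker P$ is a Lie subalgebra. Getting this identification right, and, relatedly, correctly locating exactly which components of $J$ are seen by $\delta_{\lambda|_S}MC$ versus by $MC$, is where the care is needed; everything else follows the template of Proposition~\ref{prop:fana}.
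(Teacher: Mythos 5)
Your proposal is correct and follows essentially the same route as the paper: it reduces the new half of the statement to the identity $\delta_{\lambda|_S}MC(-s)=-P(\exp\calL_{I(s)}\Delta_\lambda)$ already obtained in the proof of Proposition~\ref{prop:MC_gauge}, expands in fibered coordinates as a Taylor series in $t$ at $t=1$ involving $J^{ai}\circ ts$, $J^{ij}\circ ts$, $J^a\circ ts$, $J^i\circ ts$, and then runs both directions exactly as in Proposition~\ref{prop:fana}, extracting $J^a$ (constant $s$, constant $\lambda$) and then $J^i$ (prescribed $s^a_i$). The only detail your first paragraph elides is the overall sign --- the series equals \emph{minus} $P(\exp\calL_{I(s)}\Delta_\lambda)$ --- which is worth recording explicitly when matching the limit to the stated target.
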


\begin{proof}
	We already know that the bi-linear form $\Lambda_J$ is fiber-wise entire iff $MC(-s)$ converges for all $s$.
	Now, it is easy to see that $P (\exp \calL_{I(s)} \Delta_\lambda) = P (\exp \calL_{I(s)} X_\lambda)$ for all $s \in \Gamma (NS)$, and $\lambda \in \Gamma (L)$ (cf.~\eqref{eq:PJvsPLambda}).
	Moreover, from the proof of Proposition~\ref{prop:MC_gauge}, we get
	\[
	\delta_{\lambda|_S} MC (-s) = - P (\exp \calL_{I(s)} \Delta_\lambda) = - P (\exp \calL_{I(s)} X_\lambda).
	\]
	Therefore, similarly as in the proof of Proposition~\ref{prop:fana}, we find
	\[
	\delta_{\lambda|_S} MC (-s) = -P \sum_{k=0}^\infty \frac{1}{k!} \left. \frac{d^k}{d t^k} \right|_{t=0}(\Phi_{-t})_\ast X_\lambda.
	\]
	The bi-differential operator $J$ is locally given by~\eqref{eq:components_of_J_bis}, hence a straightforward computation shows that
	\[
	\begin{aligned}
	& \delta_{\lambda|_S} MC (-s) \\
	&= \sum_{k=0}^\infty \frac{1}{k!} \left. \frac{d^k}{dt^k}\right |_{t=0} \left[ 2\partial_i g (J^{ai} \circ ts)-2ts_j^a\partial_i g (J^{ij} \circ ts) + g (J^a \circ ts) -t s^a_i g (J^i \circ s) \right] \partial_a,
	\end{aligned}
	\]
	where we used the same notations as in the proof of Proposition~\ref{prop:MC_gauge}, and $g$ is the component of $\lambda|_S$ in the basis $\mu$.
	The assertion now follows in a very similar way as in the proof of Proposition~\ref{prop:MC_gauge}.
\end{proof}

\begin{corollary}
	\label{cor:hequi}
	Let $(M, L, J= \{-,-\})$ be a Jacobi manifold, and let $S \subset M$ be a compact coisotropic submanifold equipped with a fat tubular neighborhood $\tau : \ell \injects L$.
	If $\tau_\ast^{-1}J$ is fiber-wise entire, then two solutions $s_0, s_1 : S \to NS$ of the (well-defined) Maurer--Cartan equation $MC (-s) = 0$ are Hamiltonian equivalent iff they are interpolated by a smooth family of sections $s_t \in \Gamma (NS)$ and there exists a smooth family of sections $\lambda_t$ of $\ell$ such that $s_t$ is a solution of the following well-defined evolutionary equation:
	\[
	\frac{d}{dt} s_t = \delta_{\lambda_t} MC (-s_t).
	\]
\end{corollary}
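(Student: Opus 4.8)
The plan is to deduce the corollary by combining the characterization of Hamiltonian equivalence in Proposition~\ref{prop:Hameq} with the convergence statements of Theorem~\ref{prop:fana1}, the whole point of the fiber-wise entireness hypothesis being to convert the \emph{formal} objects appearing in Proposition~\ref{prop:Hameq} into genuinely convergent, non-formal ones. Thus the corollary is essentially a repackaging of these two previous results, and I would present it as such.

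First I would check that the standing hypotheses make every series in sight well-defined. Since $\tau_\ast^{-1}J$ is fiber-wise entire and the components of its bi-symbol $\tau_\ast^{-1}\Lambda_J$ form a subset of the components of $\tau_\ast^{-1}J$ (compare~\eqref{eq:components_of_J_bis} with the local expression of $\Lambda_J$ in the proof of Proposition~\ref{prop:fana}), the bi-symbol is fiber-wise entire as well. Hence Corollary~\ref{cor:conver} applies: $MC(-s)$ converges for every $s\in\Gamma(NS)$, and the solutions of $MC(-s)=0$ are exactly the coisotropic sections of $NS$. In particular $s_0$ and $s_1$ are coisotropic sections, so Proposition~\ref{prop:Hameq} is applicable to them. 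Likewise, Theorem~\ref{prop:fana1} guarantees that $\delta_\lambda MC(-s)$ converges point-wise for all $s\in\Gamma(NS)$ and $\lambda\in\Gamma(\ell)$, so the evolutionary equation in the statement is well-defined.

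The core of the argument is the identification of the right-hand side of~\eqref{eq:Hameq} with the convergent series $\delta_{(-)}MC(-s)$. By the very computation carried out in the proof of Proposition~\ref{prop:MC_gauge}, together with the identification $\exp I(-s)_\ast=\exp\calL_{I(s)}$ on multi-differential operators recorded in Remark~\ref{rem:inf_aut} (Equation~\eqref{Lie1}), one has $P(\exp I(-s)_\ast\Delta_\lambda)=P(\exp\calL_{I(s)}\Delta_\lambda)=-\delta_{\lambda|_S}MC(-s)$, the series on the right converging precisely by Theorem~\ref{prop:fana1}. Two observations make this usable. First, the value $P(\exp I(-s)_\ast\Delta_\lambda)$ depends on $\lambda$ only through its restriction $\lambda|_S\in\Gamma(\ell)$, so passing between smooth families of sections of $L$ over $NS$ (as in Proposition~\ref{prop:Hameq}) and smooth families of sections of $\ell$ over $S$ (as in the statement) is harmless: any family $\mu_t\in\Gamma(\ell)$ extends to some $\lambda_t\in\Gamma(L)$ with $\lambda_t|_S=\mu_t$, and conversely one restricts. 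Second, since $\delta_{(-)}MC(-s)$ is $\bbR$-linear in its subscript, the minus sign is absorbed by replacing $\lambda_t$ with $-\lambda_t$.

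With these identifications in place the two implications follow immediately. If $s_0,s_1$ are Hamiltonian equivalent, Proposition~\ref{prop:Hameq} produces an interpolating family $s_t$ and a smooth family $\lambda_t\in\Gamma(L)$ solving $\tfrac{d}{dt}s_t=P(\exp I(-s_t)_\ast\Delta_{\lambda_t})=-\delta_{\lambda_t|_S}MC(-s_t)$; renaming the section of $\ell$ in the statement to be $-\lambda_t|_S$ yields $\tfrac{d}{dt}s_t=\delta_{\lambda_t}MC(-s_t)$. Conversely, given an interpolating family $s_t$ and sections $\lambda_t\in\Gamma(\ell)$ with $\tfrac{d}{dt}s_t=\delta_{\lambda_t}MC(-s_t)$, I would extend each $-\lambda_t$ to a section of $L$ over $NS$, read the previous identity backwards to recover~\eqref{eq:Hameq}, and then invoke the converse part of Proposition~\ref{prop:Hameq}—which is where the compactness of $S$ is used—to conclude that $s_0$ and $s_1$ are Hamiltonian equivalent. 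The only genuine subtlety, and the step I would treat most carefully, is the bookkeeping of signs and of the $\lambda\leftrightarrow\lambda|_S$ correspondence relating the push-forward formulation of Proposition~\ref{prop:Hameq} to the higher-derived-bracket formulation of Theorem~\ref{prop:fana1}; everything else is a direct citation of those two results together with the entireness-driven convergence.
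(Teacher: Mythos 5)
Your proposal is correct and follows essentially the same route the paper intends: Corollary~\ref{cor:hequi} is stated without a separate proof precisely because it is meant to be the combination of Proposition~\ref{prop:Hameq} with the convergence statements of Theorem~\ref{prop:fana1}, via the identity $\delta_{\lambda|_S}MC(-s)=-P(\exp I(-s)_\ast\Delta_\lambda)$ extracted from the proofs of Proposition~\ref{prop:MC_gauge} and Theorem~\ref{prop:fana1}. Your bookkeeping of the sign, of the restriction/extension between $\Gamma(L)$ and $\Gamma(\ell)$, and of the r\^ole of compactness in the converse implication matches exactly what the paper leaves implicit.
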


\begin{remark}
	\label{rem:Schaetz-Zambon}
	Immediately after a preliminary version of these results appeared on arXiv (see~\cite{LOTV}), Sch\"atz and Zambon, independently, finalized a pre-print where they discuss the moduli space of coisotropic submanifolds of a symplectic manifold.
	In particular, they use our method to prove Corollary~\ref{cor:hequi} in the symplectic case (see~\cite[Theorem 3.21]{SZ2014}).
	Notice that $\tau_\ast^{-1}J$ is automatically fiber-wise entire in Sch\"atz--Zambon situation and, therefore, convergence issues do not appear in their work.
\end{remark}

\section{Simultaneous coisotropic deformations}
\label{sec:simultaneous_coisotropic_deformations}

Let $(M, L, J=\{-,-\})$ be a Jacobi manifold and let $S \subset M$ be a closed coisotropic submanifold.
Earlier in this chapter we only considered the problem of deforming $S$ into a new submanifold $S'\subset M$ which is still coisotropic wrt the \emph{fixed} Jacobi structure $J$ on $L\to M$.
In this section, we remove the constraint of $J$ being fixed, and we consider the extended problem of deforming simultaneously $J$ into a new Jacobi structure $J'$ on $L\to M$, and $S$ into a new submanifold $S'\subset M$ which is coisotropic wrt $J'$.
Starting again from the set of V-data singled out in Lemma~\ref{lem:linfty}, and implementing higher derived brackets construction for arbitrary derivations~\cite{Voronov2005higher2}, we extend the $L_\infty[1]$-algebra constructed in Proposition~\ref{prop:linfty} to a new $L_\infty[1]$-algebra controlling simultaneous deformations at the formal level.
Similar results were obtained first by Frégier--Zambon~\cite{fregier2014simultaneous} for fiber-wise polynomial Poisson structures and then by Sch\"atz--Zambon~\cite{SZ2012} for fiber-wise entire Poisson structures.
In this section no restrictive assumption is made on the Jacobi structures.
An explicit example, in the contact setting, where the simultaneous deformation problem is formally obstructed will be exhibited in  Section~\ref{sec:obstructed_example_contact_L-infinity}.

Fix a fat tubular neighborhood $(\tau,\underline{\smash{\tau}})$ of $\ell\to S$ in $L\to M$ and use it to identify multi-derivations on $L_{NS}\to NS$ with multi-derivations on its image.
Similarly as above, from now on in this section, we abuse the notation and denote by $(L,J=\{-,-\})$ (instead of $(L_{NS},\tau_\ast^{-1}J)$) the Jacobi bundle on $NS$ (unless otherwise specified).
Set $\frakg^\bullet(S):=\Gamma(\wedge^\bullet N_\ell S\otimes\ell)$ and $\frakh^\bullet(S):=\Der^\bullet(L_{NS})[1]\oplus\frakg^\bullet(S)$.
In Section~\ref{sec:linfty_algebra}, we used higher derived brackets technique~\cite{Voronov2005higher1} to construct an $L_\infty[1]$-algebra $(\frakg^\bullet(S),\{\frakm_k\})$ out of V-data $((\Der^\bullet L_{NS})[1],\im I,P,J)$.
Now, following Th.~Voronov~\cite[Theorem 2]{Voronov2005higher2}, we will adapt this technique to construct a larger $L_\infty[1]$-algebra out of the same set of V-data.
\begin{proposition}
	\label{prop:extended_linfty}
	There is an $L_\infty[1]$-algebra structure on $\frakh^\bullet(S)[1]$ whose degree $1$ graded symmetric multi-brackets $\frakn _k:\frakh^\bullet(S)[1]^{\otimes k}\to\frakh^\bullet(S)[1]$ are given by the following higher derived brackets
	\begin{align*}
	\frakn_1(\Delta)&:=(-\ldsb J,\Delta\rdsb,P\Delta),\\
	\frakn_2 (\Delta,\Delta')&:=(-)^{|\Delta|}\ldsb\Delta,\Delta'\rdsb,
	\end{align*}
	and, for $k\geq 1$,
	\begin{align*}
	\frakn_{k+1} (\Delta,\xi_1,\ldots,\xi_k)&:=P \ldsb\ldsb\ldots\ldsb\Delta, I(\xi_1)\rdsb,\ldots\rdsb,I(\xi_k)\rdsb,\\
	\frakn_k (\xi_1,\ldots,\xi_k)&:=P \ldsb\ldsb\ldots\ldsb J, I(\xi_1)\rdsb,\ldots\rdsb,I(\xi_k)\rdsb,
	\end{align*}
	for all $\Delta,\Delta'\in(\Der^\bullet L_{NS})[1]$, and $\xi_1,\ldots,\xi_k\in\frakg^\bullet(S)$.
	Up to graded symmetry and $\bbR$-multi-linearity, all the other multi-brackets are set to be zero. 
\end{proposition}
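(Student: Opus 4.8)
The plan is to recognize the stated structure as an instance of Th.~Voronov's higher derived brackets for arbitrary derivations \cite{Voronov2005higher2}, applied to the very set of V-data $((\Der^\bullet L_{NS})[1], \im I, P, J)$ singled out in Lemma~\ref{lem:linfty}. Whereas Proposition~\ref{prop:linfty} feeds this V-data into Voronov's \emph{first} construction \cite{Voronov2005higher1} --- producing the $L_\infty[1]$-algebra $(\frakg^\bullet(S)[1], \{\frakm_k\})$ on the abelian part alone --- here one retains the whole graded Lie algebra $\calL := (\Der^\bullet L_{NS})[1]$ as a summand and lets its elements act on $\fraka := \im I$ as (higher) derivations. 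This is exactly the situation covered by \cite[Theorem~2]{Voronov2005higher2}, and it is the Jacobi analogue of the extended $L_\infty[1]$-algebra built by Fr\'egier--Zambon in the Poisson case \cite[Section~2]{fregier2014simultaneous}; the argument goes through verbatim, since it relies solely on the graded Lie algebra $\calL$, the abelian subalgebra $\fraka$, the projection $P$ with $\ker P$ a subalgebra, and the Maurer--Cartan element $J$, all of which are furnished by Lemma~\ref{lem:linfty}.

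First I would recall the output of \cite[Theorem~2]{Voronov2005higher2}: from such V-data one obtains an $L_\infty[1]$-algebra on $(\calL \oplus \fraka)[1]$ whose multi-brackets are (i) the d\'ecalage of the Lie bracket $\ldsb-,-\rdsb$ on $\calL$, (ii) the higher derived brackets $P\ldsb\cdots\ldsb J, I\xi_1\rdsb, \ldots, I\xi_k\rdsb$ recovering $\{\frakm_k\}$ on $\fraka$, (iii) the ``mixed'' brackets $P\ldsb\cdots\ldsb\Delta, I\xi_1\rdsb, \ldots, I\xi_k\rdsb$ carrying one entry $\Delta \in \calL$ and $k$ entries in $\fraka$, and (iv) a unary bracket combining $-d_J = -\ldsb J,-\rdsb$ on $\calL$ with the projection $P$. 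Transporting this along the identification $\fraka = \im I \cong \frakg^\bullet(S)$ provided by the mutually inverse maps $I$ and $P$ (Proposition~\ref{prop:embeddingI}), and inserting the d\'ecalage signs of the conventions of \cite{fregier2014simultaneous}, matches each bracket with the displayed $\frakn_k$; in particular the anomalous term $P\Delta$ in $\frakn_1$ is precisely the image of $\Delta$ under $P$.

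The $L_\infty[1]$-relations themselves are then inherited from Voronov's theorem, so the only genuine checks are the few identities where the specific data intervene. The most instructive is $\frakn_1 \circ \frakn_1 = 0$: writing $\frakn_1(\Delta, \xi) = (-d_J\Delta,\, P\Delta + \frakm_1\xi)$, nilpotency reduces to $d_J^2 = 0$ (which holds because $\ldsb J, J\rdsb = 0$, Proposition~\ref{prop:J_as_MC_element}), to $\frakm_1^2 = 0$ (already part of Proposition~\ref{prop:linfty}), and to the cross-term identity $P\,d_J\Delta = \frakm_1(P\Delta)$. This last equality follows from $PJ = 0$ --- equivalent to coisotropy of $S$ by Proposition~\ref{prop:PJ=0} --- together with the derivation property \eqref{eq:kerP} of $P$ along $\ker P$: indeed $P\ldsb J,\Delta\rdsb = P\ldsb IPJ, \Delta\rdsb + P\ldsb J, IP\Delta\rdsb = P\ldsb J, IP\Delta\rdsb = \frakm_1(P\Delta)$. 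I expect the main obstacle to be organizational rather than conceptual: keeping the two gradings (the internal degree on $\Der^\bullet L_{NS}$ and the homological $L_\infty[1]$-degree) and the d\'ecalage signs consistent throughout, and verifying that nowhere does the construction secretly require a Maurer--Cartan condition on the \emph{deformed} Jacobi structures --- it uses only that $J$ is Maurer--Cartan in $\calL$ and lies in $\ker P$, so that no restrictive hypothesis on $J$ is needed, in agreement with the statement.
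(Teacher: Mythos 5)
Your proposal is correct and follows essentially the same route as the paper: the paper likewise obtains Proposition~\ref{prop:extended_linfty} by feeding the V-data $((\Der^\bullet L_{NS})[1],\im I,P,J)$ of Lemma~\ref{lem:linfty} into Th.~Voronov's higher derived brackets construction for arbitrary derivations \cite[Theorem 2]{Voronov2005higher2}, exactly as in the Poisson-case precedent of Fr\'egier--Zambon. Your explicit check of the cross-term identity $P\,d_J\Delta=\frakm_1(P\Delta)$ via $PJ=0$ and \eqref{eq:kerP} is a sound (if redundant) confirmation of a relation already guaranteed by Voronov's theorem.
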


Notice that both the graded Lie algebra $((\Der^\bullet L_{NS})[1],\ldsb-,-\rdsb)$, up to décalage, and the $L_\infty[1]$-algebra $(\frakg^\bullet(S)[1],\{\frakm_k\})$ are naturally embedded into $(\frakh^\bullet(S)[1],\{\frakn_k\})$.

\subsubsection{Smooth simultaneous coisotropic deformations}
\label{subsubsec:smooth_simultaneous_coisotropic_deformations}

\begin{definition}
	\label{def:smooth_simultaneous_coisotropic_deformation}
	A smooth one parameter family $(J_t,s_t)$ in $\frakh^1(S)=\Der^2L_{NS}\oplus\Gamma(NS)$ starting from $(J,\mathbf 0)$ is a \emph{smooth simultaneous coisotropic deformation of} $(J,S)$ if $J_t$ is a Jacobi structures on $L_{NS}\to NS$ and $s_t$ is a coisotropic section wrt $J_t$, for all $t$.
	A pair $(\square,s)\in\frakh^1(S)$ is an \emph{infinitesimal simultaneous coisotropic deformation} if $(J,\mathbf 0)+\eps (\square,s)$ is a smooth simultaneous coisotropic deformation of $(J,S)$ up to infinitesimals $\calO(\eps^2)$, where $\eps$ is a formal parameter.
\end{definition}

\begin{remark}
	Let $(J_t,s_t)$ be a smooth simultaneous coisotropic deformation of $(J,S)$.
	Then
	\[
	\left. \frac{d}{dt}\right|_{t=0} (J_t,s_t)
	\]
	is an infinitesimal simultaneous coisotropic deformation of $(J,S)$.
\end{remark}


Recalling Remark~\ref{rem:coiss}, smooth simultaneous coisotropic deformations can be characterized as follows.
\begin{proposition}
	\label{prop:smooth_simultaneous_coisotropic_deformations}
	Let $(J_t,s_t)$ be a smooth one parameter family in $\frakh^1(S)$ starting from $(J,\mathbf 0)$.
	The following two conditions are equivalent
	\begin{enumerate}
		\item $(J_t,s_t)$ is a smooth simultaneous coisotropic deformation of $(J,S)$,
		\item $\ldsb J_t,J_t\rdsb=0$ and $P (\exp I (-s_t)_* J_t) = 0$.
	\end{enumerate}
\end{proposition}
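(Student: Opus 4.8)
The plan is to decouple the two requirements packaged in the notion of a simultaneous coisotropic deformation and to treat each of them by an already-established characterization, applied now pointwise in the deformation parameter $t$. By Definition~\ref{def:smooth_simultaneous_coisotropic_deformation}, the family $(J_t,s_t)$ is a smooth simultaneous coisotropic deformation of $(J,S)$ precisely when, for every $t$, the bi-differential operator $J_t$ is a Jacobi structure on $L_{NS}\to NS$ and the section $s_t$ is coisotropic with respect to $J_t$. I would therefore match the ``Jacobi structure'' half of condition (1) with the first equation of condition (2), and the ``coisotropicity'' half with the second.

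For the Jacobi-structure half, Proposition~\ref{prop:J_as_MC_element} tells us that, for each fixed $t$, the skew-symmetric first order bi-differential operator $J_t\in\Der^2 L_{NS}$ is a Jacobi structure if and only if it satisfies the Maurer--Cartan equation $\ldsb J_t,J_t\rdsb=0$. As this is a pointwise statement in $t$, the requirement ``$J_t$ is a Jacobi structure for all $t$'' is equivalent to ``$\ldsb J_t,J_t\rdsb=0$ for all $t$''.

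For the coisotropicity half, I would invoke the equivalence $(1)\Leftrightarrow(2)$ of Proposition~\ref{prop:coiss}. The key observation, which I would make explicit, is that the embedding $I$ and the projection $P$ — hence the line bundle automorphism $\exp I(-s_t)$ and its pushforward $\exp I(-s_t)_\ast$ — depend only on the chosen fat tubular neighborhood of $\ell\to S$ and not on any Jacobi structure. Consequently Proposition~\ref{prop:coiss} applies verbatim with the fixed Jacobi structure replaced by the varying $J_t$: once $J_t$ is known to be a Jacobi structure, the section $s_t$ is coisotropic with respect to $J_t$ if and only if $P(\exp I(-s_t)_\ast J_t)=0$.

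Combining the two halves yields both implications. In the direction $(1)\Rightarrow(2)$ the hypothesis supplies, for each $t$, that $J_t$ is Jacobi (whence $\ldsb J_t,J_t\rdsb=0$ by Proposition~\ref{prop:J_as_MC_element}) and that $s_t$ is coisotropic wrt the Jacobi structure $J_t$ (whence $P(\exp I(-s_t)_\ast J_t)=0$ by Proposition~\ref{prop:coiss}). Conversely, in the direction $(2)\Rightarrow(1)$, I would first use $\ldsb J_t,J_t\rdsb=0$ together with Proposition~\ref{prop:J_as_MC_element} to conclude that each $J_t$ is a genuine Jacobi structure, and only then interpret $P(\exp I(-s_t)_\ast J_t)=0$, via Proposition~\ref{prop:coiss}, as the coisotropicity of $s_t$ wrt $J_t$. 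The closest thing to an obstacle is exactly this logical ordering: the coisotropicity criterion of Proposition~\ref{prop:coiss} is meaningful only after $J_t$ has been recognized as a Jacobi structure, so in the ``if'' direction the Maurer--Cartan condition must be secured before the second condition is read off as coisotropicity. Smoothness in $t$ requires no separate argument, since both conditions are imposed pointwise and the family $(J_t,s_t)$ is smooth by assumption.
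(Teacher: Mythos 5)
Your proposal is correct and follows essentially the same route as the paper, which simply cites Propositions~\ref{prop:J_as_MC_element} and~\ref{prop:coiss} as a straightforward consequence. Your additional remarks on the $t$-independence of $P$, $I$, and $\exp I(-s_t)_\ast$ and on the logical ordering in the converse direction are correct elaborations of that same argument.
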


\begin{proof}
	It is a straightforward consequence of Propositions~\ref{prop:J_as_MC_element} and~\ref{prop:coiss}.
\end{proof}

\begin{corollary}
	\label{cor:infinitesimal_simultaneous_coisotropic_deformations}
	For any pair $(\square,s)\in\frakh^1(S)$, the following two conditions are equivalent
	\begin{enumerate}
		\item $(\square,s)$ is an infinitesimal simultaneous coisotropic deformation of $(J,S)$,
		\item $(\square,-s)$ is a $1$-cocycle in $(\frakh^\bullet(S),\frakn_1)$, i.e. $\frakn_1(\square,-s)=0$, or explicitly $d_J\square=0$ and $d_{N_\ell{}^\ast S,\ell}s\equiv\frakm_1 s=P\square$.
	\end{enumerate}
\end{corollary}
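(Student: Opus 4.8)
The plan is to reduce both conditions to the same pair of equations by direct computation, exploiting the $\bbR$-linearity of the unary bracket $\frakn_1$ and a first-order expansion of the defining conditions for smooth simultaneous deformations. First I would compute $\frakn_1(\square,-s)$ explicitly. Since $\frakn_1$ is linear and $(\square,-s)=(\square,0)+(0,-s)$, Proposition~\ref{prop:extended_linfty} gives, on the $\Der^\bullet L_{NS}[1]$-summand, $\frakn_1(\square)=(-\ldsb J,\square\rdsb,P\square)$, while on the $\frakg^\bullet(S)$-summand $\frakn_1(-s)=\frakm_1(-s)=-\frakm_1 s$ lands in the second factor. Hence
\begin{equation*}
\frakn_1(\square,-s)=\bigl(-\ldsb J,\square\rdsb,\ P\square-\frakm_1 s\bigr).
\end{equation*}
Therefore $(\square,-s)$ is a $1$-cocycle, i.e.~$\frakn_1(\square,-s)=0$, precisely when $\ldsb J,\square\rdsb=0$ (that is $d_J\square=0$) and $\frakm_1 s=P\square$. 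This already identifies condition (2) with the explicit equations in the statement; it remains only to match them with condition (1).

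Next I would characterize infinitesimal simultaneous coisotropic deformations. By Definition~\ref{def:smooth_simultaneous_coisotropic_deformation}, $(\square,s)$ is such a deformation iff $(J,\mathbf 0)+\eps(\square,s)$ satisfies the two conditions of Proposition~\ref{prop:smooth_simultaneous_coisotropic_deformations} modulo $\calO(\eps^2)$. Writing $J_\eps:=J+\eps\square$ and expanding the first condition,
\begin{equation*}
\ldsb J_\eps,J_\eps\rdsb=\ldsb J,J\rdsb+2\eps\ldsb J,\square\rdsb+\calO(\eps^2)=2\eps\ldsb J,\square\rdsb+\calO(\eps^2),
\end{equation*}
using $\ldsb J,J\rdsb=0$ (Proposition~\ref{prop:J_as_MC_element}); so $\ldsb J_\eps,J_\eps\rdsb=0$ modulo $\calO(\eps^2)$ amounts to $d_J\square=0$. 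For the second condition I would expand the push-forward to first order by means of~\eqref{Lie1}, namely $\exp I(-\eps s)_\ast\Delta=\Delta-\eps\ldsb\Delta,I(s)\rdsb+\calO(\eps^2)$, whence
\begin{equation*}
P\bigl(\exp I(-\eps s)_\ast J_\eps\bigr)=P(J)+\eps\bigl(P\square-P\ldsb J,I(s)\rdsb\bigr)+\calO(\eps^2).
\end{equation*}
Since $P(J)=0$ ($S$ is coisotropic, Proposition~\ref{prop:PJ=0}) and $P\ldsb J,I(s)\rdsb=\frakm_1 s$ by the very definition~\eqref{eq:higher_derived_brackets} of $\frakm_1$, the vanishing of this expression modulo $\calO(\eps^2)$ is equivalent to $P\square=\frakm_1 s$. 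Thus condition (1) reduces to the pair $d_J\square=0$ and $\frakm_1 s=P\square$, which coincides with the pair obtained above, completing the proof.

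The computation is essentially routine; the one point deserving care is the first-order expansion of $P(\exp I(-\eps s)_\ast J_\eps)$. I must make sure that the ``mixed'' term $\eps^2\ldsb\square,I(s)\rdsb$, coming from differentiating both the flow and the deformed Jacobi structure, is genuinely of order $\calO(\eps^2)$, and that the sign conventions in~\eqref{Lie1} and in the definition of $\frakn_1(\square)$ are consistent, so that the second component $P\square-\frakm_1 s$ of $\frakn_1(\square,-s)$ and the $\eps$-coefficient of $P(\exp I(-\eps s)_\ast J_\eps)$ agree on the nose rather than up to sign. Both are immediate consequences of the graded Leibniz rule for $\ldsb-,-\rdsb$ and of the definitions, so no genuine difficulty is expected.
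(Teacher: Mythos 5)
Your proof is correct and follows exactly the route the paper intends: the corollary is stated without an explicit proof as a direct consequence of Proposition~\ref{prop:smooth_simultaneous_coisotropic_deformations}, obtained by linearizing the two conditions $\ldsb J_t,J_t\rdsb=0$ and $P(\exp I(-s_t)_\ast J_t)=0$ at first order in $\eps$ and comparing with the explicit form of $\frakn_1$ from Proposition~\ref{prop:extended_linfty}. Your sign bookkeeping (in particular $\frakn_1(\square,-s)=(-\ldsb J,\square\rdsb,\,P\square-\frakm_1 s)$ and the first-order expansion of the push-forward via~\eqref{Lie1}) is consistent with the paper's conventions.
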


\subsubsection{Formal simultaneous coisotropic deformations}
Let $\eps$ be a formal parameter.

\begin{definition}
	\label{def:formal_simultaneous_deformations}
	A formal series $(J(\eps),s(\eps)) = \sum_{i=0}^\infty\eps^i (J_i,s_i)\in\frakh^1(S)[[\eps]]$, with coefficients in $\frakh^1(S)$, such that $(J_0,s_0)=(J,\mathbf 0)$, is called a \emph{formal deformation of $(J,S)$}.
\end{definition}

%

Remark~\ref{rem:Lieder} and Proposition~\ref{prop:smooth_simultaneous_coisotropic_deformations} motivate the next definition of \emph{formal simultaneous coisotropic de\-for\-ma\-tion} of $(J,S)$.
For further motivations see also Remark~\ref{rem:motivation_formal_simultaneous_coisotropic_deformations}.

\begin{definition}
	\label{def:formal_simultaneous_coisotropic_deformations}
	A formal deformation $(J(\eps),s(\eps))$ of $(J,S)$ is said \emph{simultaneous co\-iso\-tro\-pic}, if $\ldsb J(\eps),J(\eps)\rdsb=0$ and $P (\exp \calL_{I(s (\eps))} J(\eps)) = 0$.
\end{definition}

\begin{remark}
	\label{rem:motivation_formal_simultaneous_coisotropic_deformations}
	Let $(J_t,s_t)$ be a smooth one parameter family in $\frakh^1(S)$ starting from $(J,\mathbf 0)$.
	Denote by $(J(\eps),s(\eps))$ the Taylor series of $(J_t,s_t)$, at $t=0$, in the formal parameter $\eps$, i.e.
	\begin{equation*}
	(J(\eps),s(\eps)):=\sum_{i=1}^\infty\underbrace{\frac{1}{i!}\left(\left.\frac{d^i}{dt^i}\right|_{t=0}(J_t,s_t)\right)}_{:=(J_i,s_i)}\eps^i.
	\end{equation*}
	Applying the Leibniz rule iteratively, we get, for all $n\geq 0$,
	\begin{align*}
	\frac{1}{n!}\left.\frac{d^n}{dt^n}\right|_{t=0}\ldsb J_t,J_t\rdsb&=\sum_{\genfrac{}{}{0pt}{}{h,k}{h+k=n}}\ldsb J_h,J_k\rdsb,\\
	\frac{1}{n!}\left.\frac{d^n}{dt^n}\right|_{t=0}P(\exp(I(-s_t))_\ast J_t)&=\sum_{k=1}^n\frac{1}{(k-1)!}\sum_{\genfrac{}{}{0pt}{}{i_1,\ldots,i_k}{i_1+\ldots+i_k=n}}P\ldsb I(s_{i_1}),\ldots\ldsb I(s_{i_{k-1}}),J_{i_k}\rdsb\ldots\rdsb.	
	\end{align*}
	The latter means that $\ldsb J(\eps),J(\eps)\rdsb$ and $P(\exp\calL_{I(s(\eps))}(J(\eps)))$ provide exactly the Taylor series, at $t=0$, of $\ldsb J_t,J_t\rdsb $ and $P(\exp(I(-s_t))_\ast J_t)$ respectively:
	\begin{align*}
	\ldsb J(\eps),J(\eps)\rdsb&=\sum_{n=0}^\infty\frac{1}{n!}\left(\left.\frac{d^n}{dt^n}\right|_{t=0}\ldsb J_t,J_t\rdsb\right)\eps^n,\\
	P(\exp\calL_{I(s(\eps))}(J(\eps)))&=\sum_{n=0}^\infty\frac{1}{n!}\left(\left.\frac{d^n}{dt^n}\right|_{t=0}P(\exp(I(-s_t))_\ast J_t)\right)\eps^n.
	\end{align*}
	Hence if $\{(J_t,s_t)\}$ is a smooth simultaneous coisotropic deformation of $(J,S)$, then its Taylor series $(J(\eps),s(\eps))$ at $t=0$ is a formal simultaneous coisotropic deformation of $(J,S)$. 
\end{remark}


\begin{remark}[Formal simultaneous deformation problem]
	\label{rem:formal_simultaneous_deformation_problem}
	The \emph{formal simultaneous deformation problem for a coisotropic submanifold $S$ of a Jacobi manifold $(M,L,J)$} consists in finding formal simultaneous coisotropic deformations of $(J,S)$.
	The guiding philosophy of deformation theory expressed by the Deligne's principle (see also Remark~\ref{rem:formal_deformation_problem}) works for formal simultaneous deformations of $(J,S)$ as well.
	Indeed the latter is \emph{controlled by the $L_\infty[1]$-algebra} $(\frakh^\bullet(S),\{\frakn_k\})$ in the sense clarified by next Proposition~\ref{prop:mc_formal_simultaneous}.
	In this way we generalize to the Jacobi setting what was first established by Frégier--Zambon~\cite{fregier2014simultaneous} (for fiber-wise polynomial Poisson structures) and Sch\"atz--Zambon~\cite{SZ2012} (for fiber-wise entire Poisson structures).
\end{remark}

\begin{proposition}
	\label{prop:mc_formal_simultaneous}
	Let $(\square(\eps),s(\eps))=\sum_{i=1}^\infty\eps^i (\square_i,s_i)\in\frakh^1(S)[[\eps]]$ be an arbitrary formal series, with coefficients in $\frakh^1(S)$, vanishing up to infinitesimals $\calO(\eps)$.
	Then the following two conditions are equivalent:
	\begin{itemize}
		\item $(J+\square(\eps),-s(\eps))$ is a formal simultaneous coisotropic deformation of $(J,S)$,
		\item $(\square(\eps),s(\eps))$ is a formal Maurer--Cartan element of $L_\infty[1]$-algebra $(\frakh^\bullet(S)[1],\{\frakn_k\})$, i.e.~it satisfies the formal Maurer--Cartan equation
			\begin{equation}
			\label{eq:extended_MC}
			MC(\square(\eps),s(\eps)):=\sum_{k=1}^\infty\frac{1}{k!}\frakn_k((\square(\eps),s(\eps)),\ldots,(\square(\eps),s(\eps)))=0.
			\end{equation}
	\end{itemize}
\end{proposition}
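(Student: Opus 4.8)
The claim is that the formal series $(J+\square(\eps),-s(\eps))$ is a formal simultaneous coisotropic deformation of $(J,S)$ if and only if $(\square(\eps),s(\eps))$ is a formal Maurer--Cartan element of $(\frakh^\bullet(S)[1],\{\frakn_k\})$. The plan is to unpack \emph{both} conditions into explicit expressions in the graded Lie algebra $((\Der^\bullet L_{NS})[1],\ldsb-,-\rdsb)$ and verify they coincide term by term in the formal parameter $\eps$. The key observation is that the Maurer--Cartan element $(\square(\eps),s(\eps))$ lives in the direct sum $\frakh^\bullet(S)=\Der^\bullet(L_{NS})[1]\oplus\frakg^\bullet(S)$, so its MC equation $\sum_k\frac{1}{k!}\frakn_k((\square,s),\ldots,(\square,s))=0$ decomposes into \emph{two} components according to this splitting: a component valued in $\Der^\bullet(L_{NS})[1]$ and a component valued in $\frakg^\bullet(S)$.

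\textbf{Decomposing the Maurer--Cartan equation.} First I would compute the $\Der^\bullet(L_{NS})[1]$-valued component of $MC(\square(\eps),s(\eps))$. By inspection of the multi-brackets $\frakn_k$ in Proposition~\ref{prop:extended_linfty}, the only brackets whose output lands in $\Der^\bullet(L_{NS})[1]$ are $\frakn_1(\Delta)=(-\ldsb J,\Delta\rdsb,P\Delta)$ and $\frakn_2(\Delta,\Delta')=(-)^{|\Delta|}\ldsb\Delta,\Delta'\rdsb$; all higher brackets $\frakn_{k+1}(\Delta,\xi_1,\ldots,\xi_k)$ and $\frakn_k(\xi_1,\ldots,\xi_k)$ take values in $\frakg^\bullet(S)$. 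Hence the $\Der^\bullet$-component of the MC equation reads
\begin{equation*}
-\ldsb J,\square(\eps)\rdsb+\tfrac12\ldsb\square(\eps),\square(\eps)\rdsb=0,
\end{equation*}
which, using $\ldsb J,J\rdsb=0$ (Proposition~\ref{prop:J_as_MC_element}) and bilinearity of $\ldsb-,-\rdsb$, is exactly equivalent to $\ldsb J+\square(\eps),J+\square(\eps)\rdsb=0$. This is precisely the first defining condition of a formal simultaneous coisotropic deformation for $J+\square(\eps)$.

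\textbf{Matching the coisotropy component.} Next I would compute the $\frakg^\bullet(S)$-valued component. Collecting all contributions of $\frakn_1,\frakn_{k+1}(\Delta,\xi,\ldots,\xi)$ and $\frakn_k(\xi,\ldots,\xi)$ that output into $\frakg^\bullet(S)$, and recalling the definition of the higher derived brackets together with $P(J)=0$, the identity $P(\calL_{I(\xi)}^k\Delta)$-type expansion (cf.~Equation~\eqref{eq:mcl} and Remark~\ref{rem:motivation_formal_simultaneous_coisotropic_deformations}) yields that this component equals $P(\exp\calL_{I(s(\eps))}(J+\square(\eps)))$ up to sign conventions absorbed into $-s(\eps)$. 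The cleanest route is to lean on the Taylor-series identities already recorded in Remark~\ref{rem:motivation_formal_simultaneous_coisotropic_deformations}, which show that the formal expression $P(\exp\calL_{I(s(\eps))}(J(\eps)))$ is precisely the generating series of the higher derived brackets applied to $s(\eps)$ with one insertion of the deformed structure $J(\eps)=J+\square(\eps)$. Thus vanishing of the $\frakg$-component of $MC$ is equivalent to $P(\exp\calL_{I(s(\eps))}(J+\square(\eps)))=0$, the second defining condition.

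\textbf{The main obstacle.} I expect the bookkeeping of signs and of the combinatorial coefficients $\tfrac{1}{k!}$ versus $\tfrac{1}{(k-1)!}$ to be the genuinely delicate part: the mixed brackets $\frakn_{k+1}(\Delta,\xi_1,\ldots,\xi_k)$ carry \emph{one} distinguished $\Der^\bullet$-entry, so when one substitutes the homogeneous MC argument $(\square(\eps),s(\eps))$ repeated $k+1$ times, the graded symmetry produces a factor counting which slot carries $\square(\eps)$, and this is exactly what converts the $L_\infty[1]$ normalization into the exponential $\exp\calL_{I(s(\eps))}$ acting on $J+\square(\eps)$ rather than on $J$ alone. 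Verifying that these factors reorganize correctly so that the insertion of $\square(\eps)$ in one slot precisely reconstructs the full deformed Jacobi structure inside the projector $P$ is the crux; once this is checked, the equivalence follows by equating the two components of $MC(\square(\eps),s(\eps))=0$ with the two conditions in Definition~\ref{def:formal_simultaneous_coisotropic_deformations}, completing the proof.
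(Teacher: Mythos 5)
Your proposal follows essentially the same route as the paper's own proof: split $MC(\square(\eps),s(\eps))$ along $\frakh^\bullet(S)=\Der^\bullet(L_{NS})[1]\oplus\frakg^\bullet(S)$, identify the first component with $-\tfrac12\ldsb J+\square(\eps),J+\square(\eps)\rdsb$ and the second with $P(\exp\calL_{I(-s(\eps))}(J+\square(\eps)))$ via the identities $P(\calL_{I(-s(\eps))}^kJ)=\frakm_k(s(\eps),\ldots,s(\eps))$ and $P(\calL_{I(-s(\eps))}^k\square(\eps))=P\ldsb\ldots\ldsb\square(\eps),Is(\eps)\rdsb,\ldots,Is(\eps)\rdsb$, exactly as in the paper. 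One small correction: since $\frakn_2(\Delta,\Delta')=(-)^{|\Delta|}\ldsb\Delta,\Delta'\rdsb$ and $|\square(\eps)|=1$, the $\Der^\bullet$-component of the Maurer--Cartan equation is $-\ldsb J,\square(\eps)\rdsb-\tfrac12\ldsb\square(\eps),\square(\eps)\rdsb=0$ (not $+\tfrac12$); with this sign it is indeed $-\tfrac12\ldsb J+\square(\eps),J+\square(\eps)\rdsb=0$, whereas the equation as you displayed it is not equivalent to $\ldsb J+\square(\eps),J+\square(\eps)\rdsb=0$.
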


\begin{proof}
	The expression $MC(\square(\eps),s(\eps))$ should be interpreted as a formal series, with coefficients in $\frakh^2(S)=\Der^3(L_{NS})\oplus \Gamma(\wedge^2 N_\ell S \otimes \ell)$, vanishing up to infinitesimals $\calO(\eps)$.
	From Propositions~\ref{prop:linfty} and~\ref{prop:extended_linfty} it follows that
	\begin{align*}
	\frakn_1(\square(\eps),s(\eps))\!&=\!(-\ldsb J,\square(\eps)\rdsb,\frakm_1(s(\eps))+P\square(\eps)),\\
	\frakn_2((\square(\eps),s(\eps)),(\square(\eps),s(\eps)))\!&=\!(-\ldsb\square(\eps),\square(\eps)\rdsb,\frakm_2(s(\eps),s(\eps))+2P\ldsb\square(\eps),Is(\eps)\rdsb),
	\end{align*}
	and, for $k\geq 3$,
	\begin{multline*}
	\frakn_k((\square(\eps),s(\eps)),\ldots,(\square(\eps),s(\eps)))\\=(0,\frakm_k(s(\eps),\ldots,s(\eps))+kP\ldsb\ldots\ldsb\square(\eps),\underbrace{Is(\eps)\rdsb,\ldots,Is(\eps)}_{(k-1)-\textnormal{times}}\rdsb).
	\end{multline*}
	Moreover, from the definition~\eqref{eq:Lieder} of $\calL_{I(-s(\eps))}$, the basic assumption $PJ=0$, and the definition of the $\frakm_k$'s, we immediately get the following identities
	\begin{equation*}
	\begin{aligned}
	P (\calL_{I(-s(\eps))}^k J)&=
	\begin{cases}
	0,&\text{if } k=1,\\
	\frakm_k(s(\eps),\ldots,s(\eps)),&\text{if } k\geq 1,
	\end{cases}
	\\
	P (\calL_{I(-s(\eps))}^k\square(\eps))&=P\ldsb\ldots\ldsb\square(\eps),\underbrace{Is(\eps)\rdsb\ldots,Is(\eps)}_{k-\textnormal{times}}\rdsb,
	\end{aligned}
	\end{equation*}
	for all $k\geq 0$.
	Finally, from what above, the basic assumption $\ldsb J,J\rdsb=0$, and the definition~\eqref{eq:exp} of $\exp\calL_{I(-s(\eps))}$, it follows that
	\begin{equation*}
	MC(\square(\eps),s(\eps))=\left(-\frac{1}{2}\ldsb J+\square(\eps),J+\square(\eps)\rdsb,P(\exp\calL_{I(-s(\eps))}(J+\square(\eps)))\right),
	\end{equation*}
	which concludes the proof.
\end{proof}

\begin{remark}
	Let $(\square(\eps),s(\eps))$ be an arbitrary formal series, with coefficients in $\frakh^1(S)$, vanishing up to infinitesimals $\calO(\eps)$.
	Hence $(J(\eps),s(\eps)):=(J,\mathbf 0)+(\square(\eps),s(\eps))$ is an arbitrary formal deformation of $(J,S)$, and $(\square(\eps),s(\eps))=\sum_{i=1}^\infty\eps^i(\square_i,s_i)$, for arbitrary $(\square_i,s_i)\in\frakh^1(S)$.
	Now $MC(\square(\eps),-s(\eps))=\sum_{i=1}^\infty\eps^i\Gamma'_i$, where $\Gamma'_i\in\frakh^2(S)$ is given by
	\begin{equation*}
	\Gamma'_i=\sum_{k=1}^i\frac{1}{k!}\sum_{\genfrac{}{}{0pt}{}{i_1,\ldots,i_k>0}{i_1+\ldots+i_k=i}}\frakn_k((\square_{i_1},-s_{i_1}),\ldots,(\square_{i_1},-s_{i_k})),
	\end{equation*}
	for all $i>0$.
	As a consequence, formal Maurer--Cartan equation~\eqref{eq:extended_MC} for $(\square(\eps),-s(\eps))$ splits in the following infinite sequence of equations for the $\square_i$'s and the $s_i$'s
	\begin{equation}
	\label{eq:MC'_k}
	\tag{$MC'_k$}
	\frakn_1(\square_k,-s_k)=-\sum_{h=2}^k\frac{1}{h!}\sum_{\genfrac{}{}{0pt}{}{0<i_1,\ldots,i_h<k}{i_1+\ldots+i_h=k}}\frakn_h\left((\square_{i_1},-s_{i_1}),\ldots,(\square_{i_h},-s_{i_h})\right),
	\end{equation}
	for all $k>0$.
	In particular~\eqref{eq:MC'_k}, for $k=1$, reads as $\frakn_1(\square_1,-s_1)=0$.
	In view of Proposition~\ref{prop:mc_formal_simultaneous} and Corollary~\ref{cor:infinitesimal_simultaneous_coisotropic_deformations}, the latter means that, if $(J(\eps),s(\eps))$ is a formal simultaneous coisotropic deformation of $(J,S)$, then $(\square_1,s_1)$ is an infinitesimal simultaneous coisotropic deformation of $(J,S)$.
	In this case one says that $(J(\eps),s(\eps))$ is a prolongation of the infinitesimal simultaneous coisotropic deformation $(\square_1,s_1)$ to a formal one.
	The formal simultaneous deformation problem is said to be \emph{unobstructed} if every infinitesimal simultaneous coisotropic deformation of $(J,S)$ can be ``prolonged'' to a formal one, i.e.~for any given $(\overline{\square}_1,-\overline{s}_1) \in\frakh^1(S)$, with $\frakn_1(\overline{\square}_1,-\overline{s}_1)=0$, there is a formal simultaneous coisotropic deformation $(J(\eps),s(\eps))=(J,\mathbf 0)+(\square(\eps),s(\eps))$ of $(J,S)$ such that $(\square_1,s_1)=(\overline{\square}_1,\overline{s}_1)$.
	However, in general, not all infinitesimal simultaneous coisotropic deformations can be prolonged to a formal one.
	In this case the formal deformation problem is \emph{obstructed}.
\end{remark}

Let $(J(\eps),s(\eps):=(J,\mathbf 0)+(\square(\eps),s(\eps))$ be an arbitrary formal deformation of $(J,S)$,
From the generalized Jacobi identities for the $L_\infty[1]$-algebra $(\frakh^\bullet(S),\{\frakn_k\})$, through a straightforward computation, we get the following identity
\begin{equation}
\label{eq:simultaneous_obstructions}
\sum_{k=1}^\infty\frac{1}{(k-1)!}\frakn_k(MC(\square(\eps),-s(\eps)),(\square(\eps),-s(\eps)),\ldots,(\square(\eps),-s(\eps)))=0.
\end{equation}  
Fix $n>0$.
Assume that $(\square(\eps),-s(\eps))$ satisfies the formal Maurer--Cartan equation~\eqref{eq:extended_MC} up to infinitesimals $\calO(\eps^n)$, i.e.~Equation~\eqref{eq:MC'_k} holds for all $k=1,\ldots,n-1$.
Notice that this assumption only concerns the first $(n-1)$-coefficients $s_1,\ldots,s_{n-1}$ and $\square_1,\ldots,\square_{n-1}$ of $s(\eps)$ and $\square(\eps)$ respectively.
Now, from~\eqref{eq:simultaneous_obstructions}, it follows that $\frakn_1(MC(\square(\eps),-s(\eps)))$ vanishes up to infinitesimals $\calO(\eps^{n+1})$, i.e.~the rhs of~\eqref{eq:MC'_k}, for $k=n$, is a $2$-cocycle in $(\frakh^\bullet(S),\frakn_1)$.
Hence the cohomology class of the rhs of~\eqref{eq:MC'_k}, for $k=n$, represents the obstruction to the prolongability of the infinitesimal coisotropic deformation $(\square_1,s_1)$ to a formal one up to infinitesimals $\calO(\eps^{n+1})$.
Indeed, under the current assumption,~\eqref{eq:MC'_k}, for $k=n$, admits a solution $(\square_n,s_n)$ if and only if the cohomology class of its rhs is zero.
This leads to the following unobstructedness criterion for the formal simultaneous deformation problem.

\begin{corollary}
	\label{prop:simultaneous_rigid}
	Assume that the second cohomology group $H^2 (\frakh^\bullet(S),\frakn_1)$ of the $L_\infty[1]$-algebra $(\frakh^\bullet(S)[1],\{\frakn_k\})$ vanishes.
	Then every infinitesimal simultaneous coisotropic deformation of $(J,S)$ can be prolonged to a formal simultaneous coisotropic deformation, i.e.~the formal simultaneous deformation problem is unobstructed.
\end{corollary}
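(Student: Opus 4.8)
The plan is to run the \emph{step-by-step obstruction} method already set up in the discussion preceding the statement, exactly as in the proof of its non-simultaneous analogue Corollary~\ref{prop:rigid}. By Proposition~\ref{prop:mc_formal_simultaneous}, prolonging an infinitesimal simultaneous coisotropic deformation $(\overline{\square}_1,\overline{s}_1)$ to a formal one amounts to solving the infinite sequence of equations~\eqref{eq:MC'_k}, for $k\geq 1$, by a formal series $(\square(\eps),s(\eps))=\sum_{i\geq 1}\eps^i(\square_i,s_i)$ whose first coefficient is the prescribed $(\square_1,s_1)=(\overline{\square}_1,\overline{s}_1)$. First I would note that~\eqref{eq:MC'_k} for $k=1$ reads $\frakn_1(\square_1,-s_1)=0$, which by Corollary~\ref{cor:infinitesimal_simultaneous_coisotropic_deformations} is precisely the hypothesis that the prescribed $(\overline{\square}_1,\overline{s}_1)$ is an infinitesimal simultaneous coisotropic deformation; thus the recursion can be started.

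Then I would proceed by induction on $n$. Suppose coefficients $(\square_i,s_i)$, for $i=1,\dots,n-1$, have been constructed so that~\eqref{eq:MC'_k} holds for $k=1,\dots,n-1$. Denote by $\mathrm{Obs}_n\in\frakh^2(S)$ the right-hand side of~\eqref{eq:MC'_k} for $k=n$; by its very form it depends only on the already-fixed lower coefficients. The crucial point, already recorded in the discussion above, is that the generalized Jacobi identities of the $L_\infty[1]$-algebra $(\frakh^\bullet(S)[1],\{\frakn_k\})$ yield identity~\eqref{eq:simultaneous_obstructions}; extracting the coefficient of $\eps^n$ from it and feeding in the inductive hypothesis shows that $\frakn_1(\mathrm{Obs}_n)=0$, i.e.~$\mathrm{Obs}_n$ is a $2$-cocycle of the complex $(\frakh^\bullet(S),\frakn_1)$. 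Since by hypothesis $H^2(\frakh^\bullet(S),\frakn_1)=0$, this cocycle is a coboundary, so there is $(\square_n,-s_n)\in\frakh^1(S)$ with $\frakn_1(\square_n,-s_n)=\mathrm{Obs}_n$; this is exactly~\eqref{eq:MC'_k} for $k=n$, completing the inductive step.

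Collecting the coefficients produced by the induction, the resulting formal series $(\square(\eps),s(\eps))$ satisfies all of~\eqref{eq:MC'_k} and hence, by Proposition~\ref{prop:mc_formal_simultaneous}, $(J+\square(\eps),-s(\eps))$ is a formal simultaneous coisotropic deformation of $(J,S)$ prolonging $(\overline{\square}_1,\overline{s}_1)$, which is the assertion. I expect no single step to be genuinely hard: all the algebraic content has been front-loaded into the preceding paragraph, so that the corollary is a purely formal consequence of the step-by-step scheme together with the vanishing of $H^2$. The only point deserving care is the verification that $\mathrm{Obs}_n$ is a $2$-cocycle --- that is, that identity~\eqref{eq:simultaneous_obstructions} combined with the inductive hypothesis really forces $\frakn_1(\mathrm{Obs}_n)=0$ at order $\eps^n$ --- but this is precisely what has been established in the discussion preceding the statement, so here it is only invoked.
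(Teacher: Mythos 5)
Your proposal is correct and follows essentially the same route as the paper: the paper proves this corollary precisely through the step-by-step obstruction discussion immediately preceding the statement, using identity~\eqref{eq:simultaneous_obstructions} to show the right-hand side of~\eqref{eq:MC'_k} at order $n$ is an $\frakn_1$-cocycle depending only on the lower-order coefficients, and then killing it by the hypothesis $H^2(\frakh^\bullet(S),\frakn_1)=0$. Your write-up merely makes the induction explicit, which the paper leaves implicit.
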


There is also a simple criterion for the non-prolongability of an infinitesimal simultaneous coisotropic deformation of $(J,S)$ to a formal simultaneous coisotropic deformation of $(J,S)$.
It is expressed in terms of the \emph{extended Kuranishi map}:
\begin{equation}
\label{eq:extended_L_infty_kuranishi_map}
\operatorname{Kr} : H^1 (\frakh^\bullet(S),\frakn_1) \longrightarrow H^2 (\frakh^\bullet(S),\frakn_1), \quad [(\square,s)] \longmapsto [\frakn_2 ((\square,s),(\square,s))].
\end{equation}
Since $\frakn_1$ is a derivation of the binary bracket $\frakn_2$, the extended Kuranishi map is well-defined.
Moreover, Equation~\eqref{eq:MC'_k}, for $k=2$, reads
\begin{equation*}
\frakn_1 (\square_2,-s_2)=\frac{1}{2}\frakn_2((\square_1,-s_1),(\square_1,-s_1)).
\end{equation*}
Hence $\operatorname{Kr}[(\square_1,-s_1)]$ is the first obstruction we meet when we try to prolong an infinitesimal simultaneous coisotropic deformation $(\square_1,s_1)$ of $(J,S)$ to a formal simultaneous coisotropic deformation $(J(\eps),s(\eps)))=(J,\mathbf 0)+(\square(\eps),s(\eps))$.
This leads to the following obstructedness criterion.

\begin{proposition}
	\label{prop:simultaneous_Kuranishi}
	Let $\beta=[(\overline{\square}_1,-\overline{s}_1)] \in H^1 (\frakh^\bullet(S),\frakn_1)$, where $(\overline{\square}_1,\overline{s}_1)\in\frakh^1(S)$ is an infinitesimal simultaneous coisotropic deformation of $(J,S)$, i.e.~$\frakn_1(\overline{\square}_1,-\overline{s}_1)= 0$.
	If $\operatorname{Kr}(\beta)\neq 0$, then $(\overline{\square}_1,\overline{s}_1)$ cannot be prolonged to a formal simultaneous coisotropic deformation of $(J,S)$.
	In particular, the formal simultaneous deformation problem is obstructed.
\end{proposition}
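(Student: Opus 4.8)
The plan is to argue by contradiction, reducing the statement to the obstruction-theoretic analysis carried out in the paragraphs preceding equation~\eqref{eq:extended_L_infty_kuranishi_map}. Suppose, contrary to the claim, that the infinitesimal simultaneous coisotropic deformation $(\overline{\square}_1,\overline{s}_1)$ \emph{can} be prolonged to a formal simultaneous coisotropic deformation $(J(\eps),s(\eps))=(J,\mathbf 0)+(\square(\eps),s(\eps))$ with $(\square_1,s_1)=(\overline{\square}_1,\overline{s}_1)$. By Proposition~\ref{prop:mc_formal_simultaneous}, the formal series $(\square(\eps),-s(\eps))$ is then a formal Maurer--Cartan element of $(\frakh^\bullet(S)[1],\{\frakn_k\})$, so it satisfies the whole infinite tower of equations~\eqref{eq:MC'_k} obtained by splitting~\eqref{eq:extended_MC} coefficient by coefficient.

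First I would extract from this tower its second member~\eqref{eq:MC'_k} with $k=2$, namely
\[
\frakn_1(\square_2,-s_2)=\tfrac12\frakn_2\bigl((\square_1,-s_1),(\square_1,-s_1)\bigr)=\tfrac12\frakn_2\bigl((\overline{\square}_1,-\overline{s}_1),(\overline{\square}_1,-\overline{s}_1)\bigr),
\]
where I have used $(\square_1,s_1)=(\overline{\square}_1,\overline{s}_1)$. This exhibits $\frakn_2\bigl((\overline{\square}_1,-\overline{s}_1),(\overline{\square}_1,-\overline{s}_1)\bigr)$ as a $\frakn_1$-coboundary, hence its class in $H^2(\frakh^\bullet(S),\frakn_1)$ vanishes. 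By the very definition~\eqref{eq:extended_L_infty_kuranishi_map} of the extended Kuranishi map, and since $\beta=[(\overline{\square}_1,-\overline{s}_1)]$, that class is precisely $\operatorname{Kr}(\beta)$; thus $\operatorname{Kr}(\beta)=0$, contradicting the hypothesis $\operatorname{Kr}(\beta)\neq 0$. This contradiction proves that $(\overline{\square}_1,\overline{s}_1)$ admits no prolongation, and the last assertion follows at once: the hypotheses furnish a concrete infinitesimal simultaneous coisotropic deformation that cannot be prolonged, so by definition the formal simultaneous deformation problem is obstructed.

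The only points needing care—both already secured in the preceding discussion—are that~\eqref{eq:MC'_k} genuinely encodes the splitting of~\eqref{eq:extended_MC}, so that extracting the $k=2$ equation is legitimate, and that $\operatorname{Kr}$ is well-defined, which rests on $\frakn_1$ being a graded derivation of $\frakn_2$. Since these facts are in hand, there is no substantial obstacle; the argument is the exact analogue, in the extended $L_\infty[1]$-algebra $\frakh^\bullet(S)$, of the proof of the obstructedness criterion Proposition~\ref{prop:Kuranishi} for the fixed-$J$ problem.
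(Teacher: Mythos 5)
Your proof is correct and follows essentially the same route as the paper: the paper's own justification (given in the discussion immediately preceding the statement) is precisely that equation~\eqref{eq:MC'_k} for $k=2$ exhibits $\frakn_2\bigl((\square_1,-s_1),(\square_1,-s_1)\bigr)$ as a $\frakn_1$-coboundary whenever a prolongation exists, so that $\operatorname{Kr}(\beta)$ is the first obstruction. Your reformulation as an explicit contradiction, together with the appeal to Proposition~\ref{prop:mc_formal_simultaneous} and the well-definedness of $\operatorname{Kr}$, adds nothing essentially new but is entirely sound.
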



\chapter{The contact case}
\label{chap:contact}

As seen in Section~\ref{subsec:odd-dim_transitive} contact manifolds form a distinguished class of Jacobi manifolds.
In this chapter we get rather efficient formulas (from a computational point of view) for the multibrackets of the $L_\infty$-algebra associated to a coisotropic submanifold in a contact manifold.
These formulas are analogous to those of Oh--Park in the symplectic case~\cite[Equation (9.17)]{oh2005deformations} and L\^e--Oh in the locally conformal symplectic case~\cite[Equation (9.10)]{le2012deformations}.

In the first part of this chapter, after having briefly recalled the necessary preliminaries from (pre-)contact geometry, we introduce the special subclass of so called ``regular'' coisotropic submanifolds in a contact manifold $(M,C)$ (Definition~\ref{def:regular_coisotropic}).
In this case there is a normal form theorem (see Theorem~\ref{teor:coisotropic_embedding}) which goes back to Loose~\cite{loose1998neighborhoods}.
On the one hand, in a contact manifold, every regular coisotropic submanifold inherits a structure of pre-contact manifold.
On the other hand every pre-contact manifold admits a unique coisotropic embedding up to local contactomorphisms.

Further we investigate the implications of this normal form theorem on the $L_\infty[1]$-algebra, and hence on the deformation problem of a regular coisotropic submanifold $S$ in a contact manifold $(M,C)$.
Any distribution $G$ complementary to the characteristic distribution $T\calF$ of $S$ canonically determines a coisotropic tubular neighborhood of $S$ in $(M,C)$ (Proposition~\ref{prop:contact_thickening}), and so also a fat tubular neighborhood over it.
As a consequence we get formulas for the multibrackets of the associated $L_\infty[1]$-algebra which only depend on the intrinsic pre-contact geometry of $S$ and the transversal geometry to $T\calF$ (Theorem~\ref{theor:multi}).
As a direct application of these formulas we prove that if the pre-contact structure on $S$ is transversally integrable then the associated $L_\infty[1]$-algebra is represented by a differential graded Lie algebra up to décalage (Proposition~\ref{prop:transversally_integrable}).

Finally we use these formulas for the multibrackets for studying coisotropic deformations in explicit examples.
First, in Section~\ref{subsec:toy_examples}, we present two toy examples: a Legendrian submanifold, and its flowout wrt a transversal contact vector field.
Then, in Sections~\ref{sec:obstructed_example_contact_L-infinity} and~\ref{sec:second_obstructed_example_contact_L-infinity} we consider two examples whose coisotropic deformation problem is formally obstructed (cf.~\cite[Examples~3.5 and~3.8]{tortorella2016rigidity} and the revised version of~\cite{LOTV}).


\section{Coisotropic submanifolds in contact manifolds}
\label{sec:cois_cont}

Let $C$ be an hyperplane distribution on a smooth manifold $M$.
Denote by $L$ the quotient line bundle $TM/C$, and by $\theta:TM\to L$, $X \mapsto \theta (X) := X \Mod C$ the projection.
We will often interpret $\theta$ as an $L$-valued differential $1$-form, and call it the \emph{structure form of $C$}.
The \emph{curvature form} of $(M,C)$ is the vector bundle morphism $\omega:\wedge^2C\to L$ well-defined by
$\omega(X,Y)=\theta([X,Y])$, with $X,Y\in\Gamma(C)$.
Consider also the vector bundle morphism $\omega^\flat : C \to C^\ast \otimes L$, $X\mapsto\omega^\flat(X):=\omega(X,-)$.
The \emph{characteristic distribution} of $(M,C)$, is the (generically singular) distribution $\ker\omega^\flat = C^{\perp_\omega}$, where, as in the following, $V^{\perp_\omega}$ denotes the $\omega$-orthogonal complement of a subbundle $V \subset C$.

\begin{remark}
	\label{rem:curvature_form_X_distributions}
	Note that the definition of curvature form works verbatim for distribution of arbitrary co-rank.
\end{remark}

\begin{remark}
	\label{rem:characteristic_distribution_contact}
	The characteristic distribution of an hyperplane distribution $C$ is involutive, meaning that
	\begin{equation*}
	[\Gamma(\ker\omega^\flat),\Gamma(\ker\omega^\flat)]\subset \Gamma(\ker\omega^\flat).
	\end{equation*}
	However it is a singular distribution which is not smooth in general.
	Indeed, since its rank is upper semi-continuous, it is smooth if and only if its rank is locally constant, and in such case it is an ordinary distribution on each connected component, and it is integrable because of the ordinary Frobenius theorem.
\end{remark}

\begin{definition}
	\label{mydef:pre-contact}
	A \emph{pre-contact structure} on a smooth manifold $M$ is an hyperplane distribution $C$ on $M$ such that its characteristic distribution $\ker \omega^\flat$ has constant rank.
	A \emph{pre-contact manifold} $(M,C)$ is a smooth manifold $M$ equipped with a pre-contact structure $C$.
	The integral foliation of $\ker \omega^\flat$ is called the \emph{characteristic foliation} of $C$ and will be denoted by $\mathcal{F}$.
\end{definition}


\begin{remark}
	\label{rem:maximally-integrable}
	The curvature form $\omega$ of $(M,C)$ measures how far is $C$ from being integrable.
	Indeed, $C$ is integrable iff $\omega = 0$, or, equivalently, $\omega^\flat=0$.
	Accordingly, $C$ is said to be \emph{maximally non-integrable} when $\omega$ is non degenerate, or, equivalently, $\ker \omega^\flat =0$.
	If $C$ is maximally non-integrable, then the rank of $C$ is even, and $M$ is odd-dimensional.
	Additionally, in the maximally integrable case, $\omega^\flat$ is a vector bundle isomorphism, whose inverse will be denoted by $\omega^\sharp :C^\ast\otimes L\to C$, and $C\to M$ becomes a symplectic vector bundle with $L$-valued symplectic form $\omega$ on its fibers.
\end{remark}

\begin{definition}
	\label{mydef:contact}
	A \emph{contact structure} on a smooth manifold $M$ is a maximally non-integrable hyperplane distribution $C$ on $M$.
	A \emph{contact manifold} $(M,C)$ is a smooth manifold $M$ equipped with a contact structure $C$.
\end{definition}

\begin{example}\label{ex:1jet}
	Let $L \to M$ be a line bundle.
	There is a canonical contact structure $C$ on $J^1 L$, sometimes called the \emph{Cartan distribution} and defined as follows.
	Let $\pi : J^1 L \to M$, and $\mathrm{pr} : J^1 L \to L$ be canonical projections.
	Consider the pull-back line bundle $\pi^\ast L \to J^1 L$.
	There is a canonical $\pi^\ast L$-valued $1$-form $\theta$ on $J^1 L$ given by
	\begin{equation}
	\label{eq:ex:1jet}
	\theta (\xi_\alpha ) := (d \mathrm{pr} - d\lambda \circ d\pi) (\xi_\alpha), \quad \xi_\alpha \in T_\alpha J^1 L,
	\end{equation}
	where $\alpha = (j^1 \lambda )(x) \in J^1 L$, and $x = \pi(\alpha)$, $\lambda \in \Gamma (L)$.
	The rhs of~\eqref{eq:ex:1jet} is a tangent vector to $L$ at $\operatorname{pr}(\alpha)=\lambda(x)$ which is vertical with respect to the line bundle map $L\to M$.
	So it identifies with an element of $L_x\simeq(\pi^\ast L)_\alpha$.
	The Cartan distribution is then defined as the kernel of $\theta$.
	In particular, the line bundle $T(J^1 L)/C$ identifies canonically with $\pi^\ast L$.
	Finally notice that the Jacobi structure 
	 associated with the Cartan distribution on $J^1L$ coincides exactly with the fiberwise linear one coming from the Jacobi algebroid $(DL,L)$.
\end{example}

\begin{remark}
	\label{rem:coorientable}
	Let $(M,C)$ be a contact manifold.
	There exists a natural one-to-one correspondence between
	\begin{enumerate}
		\item local trivializations (or nowhere zero local sections) of the line bundle $L\to M$ and
		\item local contact forms of $(M,C)$, i.e.~$1$-forms $\alpha\in\Omega^1(U)$, with $U$ open in $M$, such that $C|_U=\ker\alpha$.
	\end{enumerate}
In particular, the contact structure is called \emph{coorientable} if the line bundle $L\to M$ is trivial, or equivalently if there is a \emph{global contact form}, i.e. a $1$-form $\alpha\in\Omega^1(M)$ such that $C=\ker\alpha$.
\end{remark}

Let $(M,C)$ and $(M^\prime, C^\prime)$ be contact manifolds.
A contactomorphism from $(M,C)$ to $(M',C')$ is a diffeomorphism $\phi:M\to M'$ such that
\begin{equation*}
(d\phi)C=C'.
\end{equation*}

An \emph{infinitesimal contactomorphism} (or \emph{contact vector field}) of a contact manifold $(M,C)$ is a vector field $X\in\mathfrak X(M)$ whose flow consists of local contactomorphisms.
Equivalently, $X \in \mathfrak X(M)$ is a contact vector field if
$
[X, \Gamma (C)] \subset \Gamma (C)
$.
Contact vector fields of $(M,C)$ form a Lie subalgebra of $\mathfrak X(M)$ which will be denoted by $\mathfrak X_C$, and we have the natural direct sum decomposition of $\bbR$-vector spaces $\mathfrak X(M)=\mathfrak X_C\oplus\Gamma(C)$ (see the proof of Proposition~\ref{prop:contact_to_Jacobi}).
As a consequence there exists an isomorphism of $\bbR$-vector spaces $X_{(-)}:\Gamma(L)\longrightarrow\frakX_C,\ \lambda\longmapsto X_\lambda$, uniquely determined by $\theta(X_\lambda)=\lambda$, for all $\lambda\in\Gamma(L)$.
Then, according to Proposition~\ref{prop:contact_to_Jacobi}, the contact structure on $M$ determines a Jacobi structure $J=\{-,-\}$ on $L\to M$ which is defined by $X_{\{\lambda,\mu\}}=\theta([X_\lambda,X_\mu])$, for all $\lambda,\mu\in\Gamma(L)$.
In particular, for all $\lambda\in\Gamma(L)$, the contact vector field $X_\lambda$ coincides exactly with the associated Hamiltonian vector field, i.e. the symbol of the associated Hamiltonian derivation $\Delta_\lambda:=\{\lambda,-\}\in\Der L$.

\begin{remark}
	Let $(M, L , J=\{-,-\})$ be a Jacobi manifold.
	For completeness and the reader's convenience, we recall here that, according to Proposition~\ref{prop:odd-dim_transitive}, $(L,J=\{-,-\})$ is the Jacobi structure associated with a (necessarily unique) contact structure iff the corresponding bi-linear form $\widehat \Lambda{}_J: \wedge^2 J^1 L \rightarrow L$ is non-degenerate.
	Moreover, in this case,  Hamiltonian derivations exhaust all infinitesimal Jacobi automorphisms, and Hamiltonian vector fields exhaust all Jacobi vector fields.
\end{remark}

For future reference, we recall here Darboux Lemma for (pre-)contact manifolds.
According to Darboux Lemma the dimension of a pre-contact manifold, and the rank of the characteristic distribution, are its only local invariants up to contactomorphisms.
For its proof we refer the reader to~\cite{geiges2008introduction} and \cite{libermann2012symplectic}).

\begin{proposition}[Darboux Lemma]
	\label{prop:Darboux_lemma}
	Let $(M,C)$ be a pre-contact manifold.
	Fix an arbitrary $p\in M$.
	Then there is an open neighborhood $U$ of $p$ in $M$, and a set of local coordinates $x^1,\ldots,x^k,z,u^1,\ldots,u^{2n}$ on $U$ such that 
	\begin{equation*}
	C|_U=\ker\left(dz-\sum_{a=1}^nu^{n+a}du^a\right).
	\end{equation*}
	So, in particular, the characteristic distribution of $(M,C)$ is locally generated by $\frac{\partial}{\partial x^i}$.
\end{proposition}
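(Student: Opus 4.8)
The plan is to reduce the statement to the classical Darboux theorem for genuine contact manifolds by quotienting out the characteristic foliation; throughout I work on a small neighbourhood of $p$, since the statement is local. First I would integrate the characteristic foliation. By Remark~\ref{rem:characteristic_distribution_contact} the characteristic distribution $\ker\omega^\flat$ is involutive, and by Definition~\ref{mydef:pre-contact} it has constant rank, say $k$; hence it is integrable by the Frobenius theorem, giving a foliation $\calF$ of dimension $k$. Shrinking $M$ to a foliated chart $U$, I may assume there is a surjective submersion $q:U\to N$ onto the local leaf space, with $\ker dq=T\calF=\ker\omega^\flat$. Since the curvature form of a hyperplane distribution is skew-symmetric, its rank $2n:=\dim M-1-k$ is even, so $\dim N=\dim M-k=2n+1$.

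Next I would descend $C$ along $q$. The crucial observation is that $C$ is projectable: for $X\in\Gamma(\ker\omega^\flat)$ and $Y\in\Gamma(C)$ the very definition of the curvature form gives $\theta([X,Y])=\omega(X,Y)=0$, whence $[X,Y]\in\ker\theta=\Gamma(C)$. Thus the characteristic flows preserve $C$; as $T\calF\subset C$, this means that $C$ is the $q$-preimage of a well-defined hyperplane distribution $\bar C\subset TN$, i.e.~$C=(dq)^{-1}(\bar C)$. Under the induced isomorphism $L=TM/C\cong q^\ast(TN/\bar C)$ the curvature form $\bar\omega$ of $\bar C$ pulls back to $\omega$, so a tangent vector of $N$ lies in $\ker\bar\omega^\flat$ exactly when its $q$-lifts lie in $\ker\omega^\flat=T\calF$, i.e.~only when it is zero. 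Hence $\bar\omega$ is non-degenerate and $(N,\bar C)$ is a genuine contact manifold of dimension $2n+1$.

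Finally I would invoke the classical contact Darboux theorem. Trivializing $TN/\bar C$ near $q(p)$ yields a local contact form $\bar\alpha$ with $\bar C=\ker\bar\alpha$, and the standard Darboux theorem for contact manifolds (see~\cite{geiges2008introduction, libermann2012symplectic}) provides coordinates $z,u^1,\ldots,u^{2n}$ near $q(p)$ in $N$ with $\bar\alpha=dz-\sum_{a=1}^n u^{n+a}\,du^a$. Pulling $z,u^1,\ldots,u^{2n}$ back along $q$ and completing them by coordinates $x^1,\ldots,x^k$ along the leaves of $\calF$ gives coordinates on $U$; since $C=(dq)^{-1}(\bar C)$ and the pulled-back form is again $dz-\sum_{a=1}^n u^{n+a}\,du^a$, we obtain $C|_U=\ker(dz-\sum_{a=1}^n u^{n+a}\,du^a)$, with characteristic distribution $T\calF=\langle\partial/\partial x^1,\ldots,\partial/\partial x^k\rangle$.

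The main obstacle is the descent step: one must check that the quotient by $\calF$ leaves a \emph{non-degenerate} curvature form on $N$, that is, that the characteristic foliation accounts for \emph{all} the degeneracy of $\omega$. This is precisely what the constant-rank pre-contact hypothesis guarantees, and it is what reduces the problem to the base case $k=0$, namely the classical Darboux theorem for a genuine contact structure.
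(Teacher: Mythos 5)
Your proof is correct. Note that the thesis itself does not supply an argument for this proposition -- it explicitly defers to the cited references \cite{geiges2008introduction, libermann2012symplectic} -- so there is no in-paper proof to compare against; judged on its own, your reduction to the genuinely contact case is a clean and standard route. The two key steps are both handled properly: the projectability of $C$ along the characteristic foliation, which follows from $\theta([X,Y])=\omega(X,Y)=0$ for $X\in\Gamma(\ker\omega^\flat)$ and $Y\in\Gamma(C)$ exactly as you say, and the non-degeneracy of the descended curvature $\bar\omega$, which is precisely where the constant-rank hypothesis enters (the kernel of $\omega^\flat$ is entirely absorbed into $\ker dq$). Together with the parity observation $\dim M-1-k=2n$ forced by skew-symmetry of the $L$-valued form $\omega$, this reduces everything to the classical contact Darboux theorem on the local leaf space. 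The only alternative one finds in the literature is a direct construction of the coordinates on $M$ itself (an inductive flow-box/Moser-type argument as in Libermann--Marle), which avoids passing to the quotient but is computationally heavier; your version isolates the conceptual content -- that a pre-contact structure is locally the pull-back of a contact structure along the leaf-space projection -- which is arguably the more illuminating statement. One cosmetic remark: when you pull back the contact form, it is worth saying explicitly that $(x^1,\ldots,x^k, z\circ q, u^1\circ q,\ldots,u^{2n}\circ q)$ form a coordinate system on the foliated chart $U\cong\mathbb{R}^k\times N$ because $q$ is the projection onto the second factor there; you implicitly use this when completing the coordinates.
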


Now, let $(M,C)$ be a contact manifold, and let $S \subset M$ be a submanifold.
The intersection $C_S := C\cap TS$ is a generically singular distribution on $S$.
More precisely $S$ is the union of two disjoint subsets $S_0, S_1$ defined by
\begin{itemize}
	\item $p \in S_0$ iff $\dim(C_S)_p=\dim S$,
	\item $p \in S_1$ iff $\dim(C_S)_p=\dim S-1$.
\end{itemize}
If $S = S_0$ then $S$ is said to be an \emph{isotropic submanifold} of $(M,C)$.
In other words, an isotropic submanifold of $(M,C)$ is an integral manifold of the contact distribution $C$.
Locally maximal isotropic, or, equivalently, locally maximal integral submanifolds of $C$ are \emph{Legendrian submanifolds}.

\begin{proposition}
	\label{prop:coisotropics_contact_setting}
	Let $S = S_1$.
	The following conditions are equivalent:
	\begin{enumerate}[label=(\arabic*)]
		\item $C_S$ is a pre-contact structure on $S$, and its characteristic distribution coincides with $(C_S)^{\perp_\omega} \subset C|_S$,
		\item $(C_S)_p$ is coisotropic wrt $L_p$-valued symplectic form $\omega_p$ on $C_p$, i.e.~$(C_S)_p^{\perp_\omega} \subset (C_S)_p$, for all $p\in S$, (cf.~Remark~\ref{rem:maximally-integrable}),
		\item $S$ is a coisotropic submanifold of the associated Jacobi manifold $(M,L,J=\{-,-\})$.
	\end{enumerate}
\end{proposition}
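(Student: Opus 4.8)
The plan is to establish the equivalences $(1)\Leftrightarrow(2)$ and $(2)\Leftrightarrow(3)$ separately, working throughout with the $L$-valued symplectic structure $\omega$ on the contact distribution $C$ and the relationship between $\omega$ and the associated Jacobi structure $J$. The key observation is that for a submanifold $S=S_1$, the tangent space $T_pS$ meets $C_p$ in a hyperplane $(C_S)_p$ of $T_pS$, so that $T_pS = (C_S)_p \oplus \langle v_p\rangle$ for some $v_p \in T_pS$ transverse to $C$. I would first make precise, pointwise, how the Jacobi-coisotropy condition $\Lambda_J^\sharp(T_p^0S \otimes L_p) \subset T_pS$ translates into a condition on $(C_S)_p$ inside the symplectic vector space $(C_p,\omega_p)$.

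The equivalence $(1)\Leftrightarrow(2)$ is essentially a linear-algebra fact applied fiberwise, combined with a smoothness/rank argument. First I would show that, for $S=S_1$, the curvature form of the hyperplane distribution $C_S$ on $S$ (in the sense of Remark~\ref{rem:curvature_form_X_distributions}) is the restriction of $\omega$ to $\wedge^2 C_S$, so its kernel $\ker(\omega|_{C_S})^\flat$ is precisely the $\omega$-orthogonal complement $(C_S)^{\perp_\omega}$ \emph{computed inside $C_S$}, which equals $(C_S)^{\perp_\omega}\cap C_S$. The coisotropy condition in (2) says $(C_S)_p^{\perp_\omega}\subset (C_S)_p$, so under (2) this characteristic distribution is exactly $(C_S)^{\perp_\omega}$. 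The content of (1) beyond (2) is that this characteristic distribution have \emph{constant rank}, making $C_S$ a genuine pre-contact structure; here I would invoke the standard fact that for a coisotropic subspace in a symplectic space, $\dim (C_S)_p^{\perp_\omega} = \dim C_p - \dim (C_S)_p$ is forced to be locally constant since $\dim(C_S)_p$ is locally constant on $S=S_1$ (being $\dim S - 1$). Thus $(2)$ supplies exactly the constant-rank hypothesis needed for $(1)$, and conversely $(1)$ presupposes coisotropy of each fiber.

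For $(2)\Leftrightarrow(3)$, I would compute $\Lambda_J^\sharp$ explicitly in terms of $\omega^\sharp$. Recall from the proof of Proposition~\ref{prop:contact_to_Jacobi} that $X_{f\lambda} = fX_\lambda + \omega^\sharp((df)|_C \otimes \lambda)$, and from~\eqref{eq:Lambdash} that $\Lambda_J^\sharp(df\otimes\lambda) = X_{f\lambda}-fX_\lambda$; hence $\Lambda_J^\sharp(df\otimes\lambda) = \omega^\sharp((df)|_C\otimes\lambda)$. This identifies $\Lambda_J^\sharp$, on the conormal bundle, with $\omega^\sharp$ restricted to the annihilator of $C_S$ inside $C^\ast\otimes L$. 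The Jacobi-coisotropy condition $\Lambda_J^\sharp(T_p^0S\otimes L_p)\subset T_pS$ then becomes $\omega^\sharp_p((T_p^0S)|_{C_p}\otimes L_p)\subset T_pS$, and since the image of $\omega^\sharp$ lands in $C_p$, this intersects $T_pS$ in $(C_S)_p$; using that $(T_p^0S)|_{C_p}$ is the annihilator of $(C_S)_p$ in $C_p^\ast$ and that $\omega^\sharp$ sends annihilators to $\omega$-orthogonal complements, the condition reads precisely $(C_S)_p^{\perp_\omega}\subset (C_S)_p$, which is (2). I would be careful to track the subtlety that $T^0_pS$ is the full conormal in $T^\ast_pM$ whereas only its image under the co-symbol $\gamma$ and its restriction to $C_p$ matters for $\Lambda_J^\sharp$.

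The main obstacle I anticipate is the careful bookkeeping in the linear algebra of $(2)\Leftrightarrow(3)$: one must correctly relate the annihilator $T^0_pS\subset T^\ast_pM$ of $T_pS$ to the annihilator of $(C_S)_p$ \emph{inside} $C_p^\ast$, accounting for the restriction map $T^\ast_pM\to C_p^\ast$ and the transverse direction $v_p$. Because $S=S_1$, the transverse vector $v_p$ is \emph{not} in $C_p$, so the restriction of $T^0_pS$ to $C_p$ has the right dimension to match the annihilator of $(C_S)_p$ in $C^\ast_p$, and I would verify this dimension count explicitly to ensure $\omega^\sharp$ carries it isomorphically onto $(C_S)_p^{\perp_\omega}$. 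Once this identification is pinned down, both equivalences follow; the constant-rank point in $(1)$ is then the only remaining global (as opposed to pointwise) ingredient, handled by the semicontinuity remark in Remark~\ref{rem:characteristic_distribution_contact}.
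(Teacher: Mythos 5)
Your proposal is correct and follows essentially the same route as the paper: the equivalence $(1)\Leftrightarrow(2)$ is the standard symplectic linear algebra argument (kernel of the restricted form equals $(C_S)^{\perp_\omega}\cap C_S$, with constant rank forced by the dimension count on $S=S_1$), and $(2)\Leftrightarrow(3)$ rests on the identity $\Lambda_J^\sharp(df\otimes\lambda)=X_{f\lambda}-fX_\lambda=\omega^\sharp((df)|_C\otimes\lambda)$, which is exactly the paper's observation that $Y_{f,\lambda}:=\Lambda_J^\sharp(df\otimes\lambda)$ lies in $\Gamma(C)$ and satisfies $\omega(Y_{f,\lambda},X)=X(f)\lambda$. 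Your explicit verification that, for $p\in S_1$, the restriction map carries $T^0_pS$ isomorphically onto the annihilator of $(C_S)_p$ in $C_p^\ast$ is the dimension-count detail the paper leaves implicit.
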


\begin{proof}
	The equivalence $1)\Longleftrightarrow 2)$ amounts to a standard argument in symplectic linear algebra.
	The equivalence $2)\Longleftrightarrow 3)$ is based on the following facts.
	Let $(L, J = \{-,-\})$ be the Jacobi structure associated with $(M,C)$.
	For $\lambda\in\Gamma(L)$, and $f\in C^\infty(M)$ put $Y_{f,\lambda}:=\Lambda_J^\sharp (df \otimes \lambda) =X_{f\lambda}-fX_\lambda$.
	We have the following:
	\begin{itemize}
		\item $Y_{f, \lambda} \in \Gamma(C)$.
		
		\item Let $I_S \subset C^\infty (M)$ be the ideal of functions vanishing on $S$.
		Then
		$
		Y_{f,\lambda}$ is tangent to $S$ iff $X_{f\lambda}$ is tangent to $S$,
		for all $f\in I_S$, and $\lambda\in\Gamma(L)$.
		
		\item
		$
		\omega(Y_{f,\lambda},X)=X(f)\lambda,
		$
		for all $f\in\ C^\infty(M)$, $\lambda\in\Gamma(L)$, and $X\in\Gamma(C)$.
		
		\item Let $\Gamma_S \subset \Gamma(L)$ be the submodule consisting of sections vanishing on $S$.
		Then
		$
		\Gamma_S=I_S\cdot\Gamma(L)
		$.
		
	\end{itemize}
	Now it is easy to see that $(C_S)^{\perp_\omega}\subset C_S$ if and only if $S$ is coisotropic in $(M,L,\{-,-\})$.
\end{proof}

\begin{definition}
	\label{def:regular_coisotropic}
	Let $S = S_1$.
	If equivalent conditions $(1)$-$(3)$ in Proposition~\ref{prop:coisotropics_contact_setting} are satisfied, then $S$ is said to be a \emph{regular coisotropic submanifold} of $(M,C)$.
\end{definition}

\begin{remark}
	Unlike equivalence $(1) \Longleftrightarrow (2)$, in Proposition~\ref{prop:coisotropics_contact_setting}, equivalence $(2)\Longleftrightarrow (3)$ continues to hold also without assuming that $S = S_1$.
\end{remark}

\section{Coisotropic embeddings and \texorpdfstring{$L_\infty[1]$}{L∞[1]}-algebras from pre-con\-tact manifolds}
\label{sec:tubular_neighborhood} From now till the end of this section we consider only closed regular coisotropic submanifolds.
The intrinsic pre-contact geometry of a regular coisotropic submanifold $S$ in a contact manifold $M$ contains a full information about the coisotropic embedding of $S$ into $M$, at least locally around $S$.
This is an immediate consequence of the \emph{Tubular Neighborhood Theorem} in contact geometry (see~\cite{loose1998neighborhoods}, \cite[Section 6]{OW2013},
see also~\cite{gotay1982coisotropic} for the analogous result in symplectic geometry).

Let $(S,C_S)$ be a pre-contact manifold, with characteristic foliation $\calF$.

\begin{definition}
	\label{mydef:coisotropic_embedding}
	A \emph{coisotropic embedding} of $(S,C_S)$ into a contact manifold $(M,C)$ is an embedding $i: S \injects M$ such that
	$
	(di) C_S = C_{i(S)}$, and $(di)T\calF=(C_{i(S)})^{\perp_{\omega}}$,
	where $C_{i(S)}:=C\cap T(i(S))$, and $\omega $ is the curvature form of $(M,C)$.
\end{definition}

\begin{remark}
	Clearly, in view of Proposition~\ref{prop:coisotropics_contact_setting}, if $i : S \injects M$ is a coisotropic embedding of $(S,C_S)$ into $(M,C)$, then $i(S)$ is a regular coisotropic submanifold of $(M,C)$.
\end{remark}

Let $i_1$ and $i_2$ be coisotropic embeddings of $(S,C_S)$ into contact manifolds $(M_1,C_1)$ and $(M_2,C_2)$, respectively.
\begin{definition}
	\label{mydef:local_equivalence_coisotropic_embeddings}
	Coisotropic embeddings $i_1$ and $i_2$ are said to be \emph{locally equivalent} if there exist open neighborhoods $U_j$ of $\im i_j$ in $M_j$, $j=1,2$, and a contactomorphism $\phi:(U_1,C_1)\to(U_2,C_2)$ such that $\phi\circ i_1=i_2$.
\end{definition}

\begin{theorem}
	[Coisotropic embedding of pre-contact manifolds: existence and uniqueness]
	\label{teor:coisotropic_embedding}
	Every pre-contact manifold admits a coisotropic embedding.
	Additionally, any two coisotropic embeddings of a given pre-contact manifold are locally equivalent.
\end{theorem}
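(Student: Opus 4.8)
The plan is to realise the abstract pre-contact manifold $(S,C_S)$ inside a canonical contact manifold built from its own Jacobi-algebroid data, exactly as in the symplectic and l.c.s.\ coisotropic embedding theorems. First I would equip $S$ with the line bundle $\ell:=TS/C_S$, so that the structure form $\theta_S:TS\to\ell$ realises $C_S=\ker\theta_S$. Since $C_S$ is pre-contact, its characteristic distribution $T\calF=(C_S)^{\perp_\omega}$ has constant rank, and the curvature form descends to a nondegenerate $\ell$-valued $2$-form on the quotient symplectic vector bundle $C_S/T\calF$. The natural ambient candidate is a suitable neighborhood of the zero section inside the total space of the conormal-type bundle $N_\ell{}^\ast\calF:=(T\calF)^0\otimes\ell$ (the $\ell$-adjoint bundle of the characteristic directions), which carries a fiberwise-linear Jacobi structure by Example~\ref{ex:lijac1} and Proposition~\ref{prop:lijac}. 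Concretely I would first choose, along each leaf, a symplectic complement and use the fiberwise $L$-valued symplectic form $\omega$ on $C_S/T\calF$ together with the Cartan-type $1$-form (cf.\ Example~\ref{ex:1jet}) on the jet-like bundle to write down an $\ell$-extended contact form $\theta$ whose restriction to the zero section is $\theta_S$ and whose curvature form pairs the fiber directions of $N_\ell{}^\ast\calF$ against $T\calF$ nondegenerately. By Remark~\ref{rem:maximally-integrable}, this makes $\theta$ maximally non-integrable in a neighborhood of $S$ (nondegeneracy is an open condition), giving a genuine contact structure there, and the zero-section embedding is coisotropic by construction: $(di)C_S=C_{i(S)}$ and $(di)T\calF=(C_{i(S)})^{\perp_\omega}$.

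\textbf{Uniqueness.} For the uniqueness statement I would run a Moser-type deformation argument in the contact/Jacobi category. Suppose $i_1:S\injects(M_1,C_1)$ and $i_2:S\injects(M_2,C_2)$ are two coisotropic embeddings. The coisotropic embedding condition forces the normal data to agree: in both cases the normal bundle of $i_j(S)$ is canonically identified, via $\omega$ and $\theta$, with $N_\ell{}^\ast\calF\to S$, because $(di_j)T\calF=(C_{i_j(S)})^{\perp_\omega}$ pins down the fiber directions and $\omega$ identifies them with the annihilator $(T\calF)^0$ twisted by $\ell$. Using fat tubular neighborhoods (Definition~\ref{def:fat_tubular_neighborhod}) of $\ell\to S$ in $L_j:=TM_j/C_j$ and the existence/uniqueness results for them (the Existence Proposition and Proposition~\ref{prop:isotopy}), I would reduce to the case where both embeddings have the \emph{same} underlying fat tubular model $L_{NS}\to NS$ with $NS$ identified with the total space of $N_\ell{}^\ast\calF$, and where the two contact structures $C_1,C_2$ pull back to structure forms $\theta_1,\theta_2$ agreeing to first order along $S$ (same value and same curvature form on $S$).

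\textbf{The Moser step.} Interpolate by $\theta_t:=(1-t)\theta_1+t\theta_2$; since nondegeneracy of the curvature form is open and $\theta_1,\theta_2$ agree along $S$ to first order, shrinking the neighborhood makes each $\theta_t$ a contact form with the same curvature pattern in a fixed neighborhood $U$ of $S$ for all $t\in[0,1]$. I would then seek a time-dependent contact vector field $X_t$, vanishing on $S$, whose flow $\phi_t$ fixes $S$ pointwise and satisfies $\phi_t^\ast\theta_t=\theta_1$; differentiating gives the homological equation $\calL_{X_t}\theta_t+\tfrac{d}{dt}\theta_t=0$, i.e.\ $\calL_{X_t}\theta_t=\theta_1-\theta_2$. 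Writing $X_t$ through the isomorphism $X_{(-)}:\Gamma(L_t)\to\frakX_{C_t}$ of Proposition~\ref{prop:contact_to_Jacobi} in terms of a section $\lambda_t$ of $L_t=TU/C_t$, the equation becomes algebraically solvable for $\lambda_t$ precisely because $\theta_1-\theta_2$ vanishes to first order along $S$ and $\omega_t$ is nondegenerate transverse to $T\calF$; hence $\lambda_t$ (and so $X_t$) vanishes along $S$, guaranteeing that the flow is defined for all $t\in[0,1]$ on a possibly smaller neighborhood and fixes $S$ pointwise. The resulting $\phi:=\phi_1$ is then a contactomorphism with $\phi\circ i_1=i_2$, establishing local equivalence. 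The main obstacle I anticipate is the homological equation: one must check that $\theta_1-\theta_2$ lies in the image of $\calL_{(-)}$ restricted to contact vector fields vanishing on $S$, which is exactly where the coisotropic embedding hypothesis (matching of curvature forms and of the characteristic directions $(C_S)^{\perp_\omega}$) is indispensable, together with a relative Poincaré-type lemma to arrange agreement to first order after the tubular-neighborhood reduction.
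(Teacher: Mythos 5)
Your overall strategy for existence is the same as the paper's: the paper proves existence via the ``contact thickening'' of Section~\ref{sec:contact_thickening}, i.e.\ the contact form $\theta_G+\tau^\ast\theta$ on a neighbourhood of the zero section after choosing a complement $G$ to $T\calF$ (Proposition~\ref{prop:contact_thickening}); for uniqueness the paper gives no proof at all but cites Loose. There is, however, a concrete error in your choice of ambient bundle that would derail the construction: you set the total space to be $(T\calF)^0\otimes\ell$, the conormal bundle of the characteristic foliation twisted by $\ell$, while simultaneously calling it ``the $\ell$-adjoint bundle of the characteristic directions''. The correct bundle is $T_\ell{}^\ast\calF=T^\ast\calF\otimes\ell=(T\calF)^\ast\otimes\ell$, the ($\ell$-twisted) \emph{dual} of $T\calF$, not its annihilator in $T^\ast S$. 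These bundles have ranks $\dim S-\operatorname{rank}(T\calF)$ and $\operatorname{rank}(T\calF)$ respectively, so in general only the latter produces an odd-dimensional total space, and only the latter is isomorphic to the normal bundle of a coisotropic embedding: symplectic linear algebra on the fibres of $C$ gives $NS\cong C|_S/C_S\cong(T\calF)^\ast\otimes\ell$ via $v\mapsto\omega(v,-)|_{T\calF}$, precisely because $T\calF=(C_S)^{\perp_\omega}$. The same slip reappears in your uniqueness discussion, where you assert that ``$\omega$ identifies them with the annihilator $(T\calF)^0$ twisted by $\ell$''. With the bundle corrected, your existence construction coincides with the paper's.

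For uniqueness your Moser scheme is a reasonable route (it is essentially the content of Loose's theorem, which the paper invokes without proof), but as written it is a programme rather than a proof: the two load-bearing steps --- (i) arranging, after the tubular-neighbourhood reduction, that $\theta_1$ and $\theta_2$ agree to first order along $S$, and (ii) solving the homological equation $\calL_{X_t}\theta_t=\theta_1-\theta_2$ for a contact vector field vanishing along $S$ --- are both only asserted. Step (ii) in particular requires an $\ell$-valued Cartan calculus (the forms take values in the generically non-trivial line bundle $TM/C$, so one cannot write $\calL_X\theta=d\iota_X\theta+\iota_Xd\theta$ without further structure), a verification that nondegeneracy of the transverse curvature actually inverts the relevant operator on the complement of $\ker P$-type terms, and an argument that the resulting $X_t$ vanishes along $S$ so that the flow exists up to time $1$ on a possibly smaller neighbourhood. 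You correctly identify these as the obstacles, but they are exactly where the work lies; the paper sidesteps them entirely by citation.
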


Theorem~\ref{teor:coisotropic_embedding} is a special case of Theorem 3 in~\cite{loose1998neighborhoods}.
We do not repeat the ``uniqueness part'' of the proof here.
The ``existence part'' can be proven constructively via \emph{contact thickening}.
This is done for later purposes in the next section.

\begin{corollary}[{$L_\infty[1]$}-algebra of a pre-contact manifold] \label{cor:L_infty_precont}
	Every pre-contact manifold determines a natural isomorphism class of $L_\infty[1]$-algebras.
\end{corollary}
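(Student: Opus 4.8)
The plan is to combine the existence and uniqueness parts of the coisotropic embedding theorem (Theorem~\ref{teor:coisotropic_embedding}) with the tubular-neighborhood independence already established for the $L_\infty[1]$-algebra of a coisotropic submanifold (Proposition~\ref{prop:gauge_invariance}). Given an abstract pre-contact manifold $(S,C_S)$, Theorem~\ref{teor:coisotropic_embedding} produces a coisotropic embedding $i:S\injects M$ into some contact manifold $(M,C)$; by Proposition~\ref{prop:coisotropics_contact_setting} the image $i(S)$ is a regular coisotropic submanifold of $(M,C)$, which in turn is a coisotropic submanifold of the associated Jacobi manifold $(M,L,J=\{-,-\})$ in the sense of Section~\ref{subsec:coisotropic_submanifolds}. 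Hence, after choosing a fat tubular neighborhood, Proposition~\ref{prop:linfty} attaches to $i(S)$ an $L_\infty[1]$-algebra structure on $\Gamma(\wedge^\bullet N_\ell S\otimes\ell)[1]$, and Proposition~\ref{prop:gauge_invariance} guarantees its $L_\infty$-isomorphism class does not depend on the chosen fat tubular neighborhood.

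The main point to verify is that this $L_\infty$-isomorphism class depends only on the intrinsic pre-contact geometry of $(S,C_S)$, and not on the particular contact manifold $(M,C)$ into which we embed it. First I would invoke the ``uniqueness part'' of Theorem~\ref{teor:coisotropic_embedding}: any two coisotropic embeddings $i_1:(S,C_S)\injects(M_1,C_1)$ and $i_2:(S,C_S)\injects(M_2,C_2)$ are locally equivalent, i.e.~there are open neighborhoods $U_j$ of $\im i_j$ and a contactomorphism $\phi:(U_1,C_1)\to(U_2,C_2)$ with $\phi\circ i_1=i_2$. A contactomorphism $\phi$ lifts to a Jacobi isomorphism of the associated Jacobi bundles $L_1|_{U_1}\to L_2|_{U_2}$ (cf.~the construction of $J$ from $C$ in Section~\ref{sec:cois_cont}, and the fact that contactomorphisms intertwine the induced Jacobi structures). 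Since the $L_\infty[1]$-algebra of Proposition~\ref{prop:linfty} is built out of the Jacobi bracket $J$, the projection $P$, and the embedding $I$ via higher derived brackets, a Jacobi isomorphism carrying one coisotropic datum to the other transports the corresponding sets of V-data into each other, and therefore yields a (strict) $L_\infty$-isomorphism between the two $L_\infty[1]$-algebras, exactly as in ``Case~I'' of the proof of Proposition~\ref{prop:gauge_invariance}.

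Concretely I would carry out the argument in these steps. I first fix an arbitrary coisotropic embedding $i:(S,C_S)\injects(M,C)$ and a fat tubular neighborhood of $i(S)$ in $L$, obtaining via Proposition~\ref{prop:linfty} a representative $L_\infty[1]$-algebra. Next, given a second coisotropic embedding, I use local equivalence to produce a Jacobi isomorphism between neighborhoods of the two images; as the $L_\infty[1]$-algebra only depends on the germ of the Jacobi structure along $S$ (indeed, by Corollary~\ref{prop:multi-brackets_coordinates} and the formal-neighborhood reduction in the proof of Proposition~\ref{prop:gauge_invariance}, it depends only on $J_{\text{for}}$), restricting this isomorphism to the relevant neighborhoods suffices. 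The Jacobi isomorphism identifies the normal bundles $N_\ell S$ compatibly and hence intertwines the two higher-derived-bracket constructions, giving an $L_\infty$-isomorphism. Finally, Proposition~\ref{prop:gauge_invariance} absorbs the remaining ambiguity in the choice of fat tubular neighborhood within each fixed embedding.

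The hard part will be checking that the local equivalence furnished by Theorem~\ref{teor:coisotropic_embedding} really does lift to a genuine \emph{line-bundle} isomorphism intertwining the Jacobi structures, rather than just a contactomorphism of the underlying manifolds: one must track the identification $L=TM/C$ under $\phi$ and confirm that the induced map on sections is a Jacobi map in the sense of Definition~\ref{def:Jacobi_mfd_morphism}. Once this is granted, the transport of V-data and the resulting $L_\infty$-isomorphism are formal, so the statement follows: the construction assigns to each pre-contact manifold a well-defined $L_\infty$-isomorphism class, independent of all auxiliary choices.
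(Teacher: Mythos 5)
Your proposal is correct and follows essentially the same route as the paper: existence of a coisotropic embedding (Theorem~\ref{teor:coisotropic_embedding}) plus Proposition~\ref{prop:linfty} to produce a representative, then the uniqueness part of the embedding theorem together with Proposition~\ref{prop:gauge_invariance} to show the $L_\infty$-isomorphism class is independent of all choices. The paper's proof is just a two-sentence citation of these facts; your elaboration of how the local contactomorphism lifts (via $d\phi$ on $TM/C$) to a Jacobi isomorphism transporting the V-data is a correct filling-in of the details the paper leaves implicit.
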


\begin{proof}
	The ``existence part'' of Theorem~\ref{teor:coisotropic_embedding} and Proposition~\ref{prop:linfty} guarantee that a pre-contact manifold $(S,C_S)$ determines a unique $L_\infty[1]$-algebra up to the choice of a coisotropic embedding $(S,C_S)\subset (M,C)$, a fat tubular neighborhood $\tau : NS \times_S \ell \injects L$ of $\ell$ in $L$, where $\ell = TS / C_S$ and $L$ is the Jacobi bundle of $(M,C)$.
	Any two such $L_\infty[1]$-algebras are $L_\infty$-isomorphic because of Proposition~\ref{prop:gauge_invariance} and the ``uniqueness part'' of Theorem~\ref{teor:coisotropic_embedding}.
\end{proof}

\section{Contact thickening}
\label{sec:contact_thickening}
We now show that every pre-contact manifold $(S,C_S)$ admits a coisotropic embedding into a suitable contact manifold uniquely determined by $(S,C_S)$ up to the choice of a complementary distribution to the characteristic distribution.
Thus, let $(S,C_S)$ be a pre-contact manifold, $\calF $ its characteristic foliation, $\ell = TS/C_S$ the quotient line bundle, and let $\theta : TS \rightarrow \ell$ be the structure form.
Theorem~\ref{teor:coisotropic_embedding} is a ``contact version'' of a theorem by Gotay~\cite{gotay1982coisotropic} and can be proven by a similar technique as the \emph{symplectic thickening} of~\cite{oh2005deformations}.
Accordingly, we will speak about \emph{contact thickening}.
See also~\cite{OW2013} for a relevant discussion on contact thickening in a different context.

Pick a distribution $G$ on $S$ complementary to $T\calF $, and let $p_{T\calF;G} : TS \to T\calF $ be the projection determined by the splitting $TS = G \oplus T\calF $.
Put $T_\ell {}^\ast \calF := T^\ast\calF \otimes \ell$, and let $\tau : T_\ell {}^\ast\calF \to S$ be the natural projection.
We equip the manifold $T_\ell {}^\ast\calF$
with the line bundle $L := \tau^\ast \ell$.
The $\ell$-valued $1$-form $\theta$ can be pulled-back via $\tau$ to an $L$-valued $1$-form $\tau^\ast \theta$ on $T_\ell {}^\ast\calF $.
There is also another $L$-valued $1$-form $\theta_G$ on $T_\ell {}^\ast\calF$.
It is defined as follows: for $\alpha \in T_\ell {}^\ast\calF $, and $\xi \in T_\alpha(T_\ell {}^\ast\calF )$
\[
(\theta_G)_\alpha (\xi) := \left\langle\alpha, (p_{T\calF;G} \circ d \tau)\xi\right\rangle \in \ell_x = L_\alpha, \quad x := \tau (\alpha),
\]
where $\alpha$ is interpreted as a linear map $T_x\calF \to\ell_x$, and $\langle-,-\rangle$ denotes the $\ell$-twisted duality pairing between $T_\ell{}^\ast\calF$ and $T\calF$.
By definition, $\theta_G$ depends on the choice of splitting $G$.

\begin{proposition}
	\label{prop:contact_thickening}
	Distribution $C := \ker (\theta_G + \tau^\ast \theta )$ is a contact structure on a neighborhood $U$ of $\operatorname{im}\mathbf{0}$, the image of the zero section $\mathbf{0}$ of $\tau$.
	Additionally $\mathbf{0}$ is a coisotropic embedding of $(S,C_S)$ into the contact manifold $(U,C|_U)$.
\end{proposition}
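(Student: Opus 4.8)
The plan is to verify two things separately: first, that $C := \ker(\theta_G + \tau^\ast\theta)$ is a genuine contact structure (i.e.\ maximally non-integrable) on some neighborhood $U$ of $\operatorname{im}\mathbf 0$; second, that the zero section $\mathbf 0 : S \injects T_\ell{}^\ast\calF$ is a coisotropic embedding of $(S,C_S)$ in the sense of Definition~\ref{mydef:coisotropic_embedding}. For the first point I would work in Darboux-type coordinates adapted to the characteristic foliation. By Proposition~\ref{prop:Darboux_lemma} I can choose local coordinates $x^1,\dots,x^k,z,u^1,\dots,u^{2n}$ on $S$ so that the characteristic distribution $T\calF$ is spanned by $\partial/\partial x^i$ and $C_S = \ker(dz - \sum_a u^{n+a}du^a)\otimes\text{(generator of }\ell)$. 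Choosing the complementary distribution $G$ amounts to a choice of splitting; denoting by $p_a$ the fiber coordinates on $T_\ell{}^\ast\calF$ dual to the $\partial/\partial x^a$ and $\mu$ a local generator of $\ell$, the $1$-form $\theta_G + \tau^\ast\theta$ takes the local shape $\bigl(dz - \sum_a u^{n+a}du^a + \sum_i p_i\, dx^i\bigr)\otimes\mu$, up to terms governed by how $G$ twists the coordinates.

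The key computational step is then to evaluate the curvature form $\omega$ of $C$ along $\operatorname{im}\mathbf 0$ and show it is non-degenerate there. Along the zero section the $p_i$ vanish, and a direct computation of $d(\theta_G + \tau^\ast\theta)$ shows that the added term $\sum_i p_i\,dx^i$ contributes precisely the ``missing'' symplectic pairings $dp_i \wedge dx^i$ needed to pair up the previously characteristic directions $\partial/\partial x^i$ with the new fiber directions $\partial/\partial p_i$; combined with the already non-degenerate $\sum_a du^{n+a}\wedge du^a$ from the pre-contact part, this makes $\omega$ non-degenerate on the fibers of $C$ at points of $\operatorname{im}\mathbf 0$. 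Since non-degeneracy is an open condition (the curvature form depends continuously on the base point, and $\omega^\flat$ being an isomorphism is stable under small perturbations), $\omega$ remains non-degenerate on a whole neighborhood $U$ of $\operatorname{im}\mathbf 0$. This is the maximal non-integrability required in Definition~\ref{mydef:contact}, establishing that $C|_U$ is a contact structure.

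For the second point I would check the two defining conditions of a coisotropic embedding directly at points of $S = \operatorname{im}\mathbf 0$. First, $(d\mathbf 0)C_S = C_{\mathbf 0(S)}$: the structure form of $C$ restricted along $\mathbf 0$ pulls back (via $\mathbf 0^\ast$) to $\theta$ itself, because $\mathbf 0^\ast\theta_G = 0$ (the zero section annihilates the tautological part) and $\mathbf 0^\ast\tau^\ast\theta = \theta$; hence $\ker$ of the structure form restricts to $C_S$ along the zero section. Second, $(d\mathbf 0)T\calF = (C_{\mathbf 0(S)})^{\perp_\omega}$: using the local normal form above, the $\omega$-orthogonal complement of $C_{\mathbf 0(S)}$ inside $C$ is spanned exactly by the images of the characteristic vectors $\partial/\partial x^i$, which is $(d\mathbf 0)T\calF$. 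I expect the main obstacle to be organizing the local computation of $d(\theta_G + \tau^\ast\theta)$ cleanly enough to read off non-degeneracy without the choice of $G$ obscuring the structure; the conceptual content is transparent, but keeping track of the $\ell$-valued (rather than scalar) nature of the forms and the twisting introduced by $G$ requires care. Once the local model is fixed, both the contact condition and the two coisotropic-embedding identities follow by inspection, and the openness argument handles the passage from $\operatorname{im}\mathbf 0$ to the neighborhood $U$.
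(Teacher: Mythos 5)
Your proposal matches the paper's own proof essentially step for step: Darboux coordinates adapted to $C_S$ (Proposition~\ref{prop:Darboux_lemma}), fiber coordinates $p_i$ on $T_\ell{}^\ast\calF$, a local computation showing the curvature of $C=\ker(\theta_G+\tau^\ast\theta)$ pairs the characteristic directions $\partial/\partial x^i$ with the fiber directions $\partial/\partial p_i$ while the pre-contact block $\omega_{ab}$ handles the rest, non-degeneracy along $\operatorname{im}\mathbf 0$ plus openness to get $U$, and both coisotropic-embedding identities read off the same curvature matrix. The one point you flag as delicate --- the twisting introduced by $G$ --- is resolved in the paper exactly as you anticipate, by working in the frame $X_i=\partial/\partial x^i - p_i Z$, $\bbC'_a$, $\partial/\partial p_i$ adapted to $G$, so that the curvature takes the clean block form up to infinitesimals $\calO(\boldsymbol p)$, which suffices since only the value at $p=0$ and openness are needed.
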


\begin{proof}
	Use Darboux Lemma (see, e.g.,~Proposition~\ref{prop:Darboux_lemma}) and choose local coordinates $(x^i,u^a,z)$ on $S$ \emph{adapted to $C_S$}, i.e.
	\begin{gather*}
	\Gamma(T \calF)=\left\langle \partial/\partial x^i \right\rangle, \quad \Gamma(C_S)=\left\langle \partial/\partial x^i, \mathbb C_a\right\rangle, \quad \mathbb C_a = \frac{\partial}{\partial u^a} + C_a \frac{\partial}{\partial z},
	\end{gather*}
	where the $C_a$'s are \emph{linear functions} of the $u^b$'s only.
	Section $\mu:= \theta(\partial/ \partial z)$ is a local generator of $\Gamma (\ell)$.
	Moreover $\theta$ is locally given by
	$
	\theta = (dz - C_a du^a) \otimes \mu,
	$
	and the curvature form $\omega_S$ of $C_S$ is locally given by
	\[
	\omega_S = \frac{1}{2} \omega_{ab} du^a |_C \wedge du^b|_C \otimes \mu, \quad \omega_{ab} = \frac{\partial C_b}{\partial u^a} - \frac{\partial C_a}{\partial u^b}.
	\]
	In particular, the skew-symmetric matrix $(\omega_{ab})$ is non-degenerate.
	We will use the following local frame on $S$ adapted to both $C_S$ and $G$:
	\[
	\left(\frac{\partial}{\partial x^i}, \mathbb C'_a,Z \right),
	\]
	where
	$
	\mathbb C'_a := (\mathrm{id} - p_{T\calF;G}) (\mathbb C_a)
	$,
	and
	$
	Z := (\mathrm{id} - p_{T\calF;G})(\partial/\partial z)
	$.
	Now, let $p = (p_i)$ be linear coordinates along the fibers of $\tau:T_\ell {}^\ast\calF \to S$ corresponding to the local frame $(dx^i|_{T \mathcal F} \otimes \mu)$.
	Then $(\partial / \partial x^i,\mathbb C'_a, Z, \frac{\partial}{\partial p_i})$ is a local frame on $T_\ell {}^\ast\calF $.
	It is easy to check that locally
	\[
	\Gamma(C) = \left\langle X_i, \mathbb C'_a,\frac{\partial}{\partial p_i}\right\rangle ,
	\]
	where $X_i :=\partial / \partial x^i - p_i Z$.
	Finally, the representative matrix of the curvature of $C$ wrt the local frames $(X_i, \mathbb C'_a,\frac{\partial}{\partial p_i})$ of $C$ and $Z\, \mathrm{mod}\, C$ of $T (T_\ell {}^\ast \calF)/C = L$ is
	\begin{equation}\label{eq:cont_thick}
	\left( \begin{array}{ccc}
	0 & 0 & \delta_{i}^j \\
	0 & \omega_{ab}  & 0 \\
	-\delta_{j}^i & 0 & 0
	\end{array} \right) \ \text{up to infinitesimals $\calO(\boldsymbol p)$}
	\end{equation}
	This shows that $C$ is maximally non-integrable around the zero section of $T_\ell {}^\ast\calF $.
	Moreover, it immediately follows from~\eqref{eq:cont_thick} that the zero section of $T_\ell {}^\ast\calF $ is a coisotropic embedding (transversal to fibers of $\tau$).
	This concludes the proof.
\end{proof}

The contact manifold $(U,C|_U)$ is called a \emph{contact thickening} of $(S,C_S)$.
Now, let $NS$ be the normal bundle of $S$ in $U$.
Clearly $NS = T_\ell {}^\ast \calF$, hence $N_\ell S = T^\ast \mathcal F$.
According to the proof of Corollary~\ref{cor:L_infty_precont} the choice of the complementary distribution $G$ determines an $L_\infty[1]$-algebra structure on $\Gamma (\wedge^\bullet N_\ell S \otimes \ell)[1]  = \Gamma (\wedge^\bullet T^\ast \calF \otimes \ell)[1]$.
Moreover, such $L_\infty[1]$-algebra structure is actually independent of the choice of $G$ up to $L_\infty$-isomorphisms.
Sections of $\wedge^\bullet T^\ast \calF \otimes \ell$ are $\ell$-valued leaf-wise differential forms on $S$ and we also denote them by $\Omega^\bullet (\calF, \ell)$ (see below).

\section{The transversal geometry of the characteristic foliation}
\label{sec:transversal_geometry}

Similarly as in the symplectic case (cf.~\cite[Section 9.3]{oh2005deformations}), and in the lcs case (cf.~\cite[Section 9.1]{le2012deformations}), the multi-brackets in the $L_\infty[1]$-algebra of a pre-contact manifold can be expressed in terms of the ``geometry transversal to the characteristic foliation''.
To write down this expression, the relevant transversal geometry needs to be described.
Let $(S,C_S)$ be a pre-contact manifold, with characteristic foliation $\calF $.
Denote by $N\calF :=TS/T\calF $ the normal bundle to $\calF $, and by $N^\ast\calF =(N\calF )^\ast = T^0 \calF \subset T^\ast S$ the conormal bundle to $\calF $.

Recall that $T \calF $ is a Lie algebroid.
The standard Lie algebroid differential in $\Omega^\bullet(\calF ):=\Gamma(\wedge^\bullet T^\ast\calF )$ will be denoted by $d_{\calF }$ and called the \emph{leaf-wise de Rham differential}.
There is a flat $T \calF $-connection $\nabla$ in $N^\ast \calF $ well-defined by
\[
\nabla_X \eta :=\mathcal L_X \eta, \quad X \in \Gamma (T \calF ), \quad \eta \in \Gamma (N^\ast \calF ).
\]

\begin{remark}
	Connection $\nabla$ is ``dual to the Bott connection'' in $N \calF $.
\end{remark}

As usual, $\nabla$ determines a differential in $\Omega^\bullet(\calF ,N^\ast\calF ):=\Gamma(\wedge^\bullet T^\ast\calF \otimes N^\ast \calF )$ denoted again by $d_{\calF }$.
There exists also a flat $T \calF $-connection in $\ell$, denoted again by $\nabla$, and defined by
\begin{equation*}
\nabla_X \theta(Y) := \theta([X,Y]),\quad X\in\Gamma(T\calF ),\quad Y\in\mathfrak X(S).
\end{equation*}
The corresponding differential in $\Omega^\bullet(\calF ,\ell):=\Gamma(\wedge^\bullet T^\ast\calF \otimes \ell)$ will be also denoted by $d_\calF $.
Now, let $J^1_\perp \ell$ be the vector subbundle of $J^1\ell$ given by the kernel of the vector bundle epimorphism
\[
\varphi_\nabla :J^1\ell\longrightarrow T^\ast\calF \otimes \ell, \quad j^1_x\lambda\longmapsto(d_\calF \lambda)_x.
\]
Sections of $J^1_\perp \ell$ will be interpreted as sections of $J^1\ell$ ``transversal to $\calF $''.
Note also that the Spencer sequence $0 \rightarrow T^\ast S \otimes \ell\rightarrow J^1\ell \rightarrow \ell \rightarrow 0$ restricts to a ``transversal Spencer sequence'' $0 \rightarrow N^\ast \calF \otimes \ell\rightarrow J_\perp ^1\ell \rightarrow \ell\rightarrow 0$ which is uniquely determined by the condition to fit in the following exact commutative diagram of vector bundle morphisms
\begin{equation*}
\begin{tikzcd}
	&0\arrow[d]&0\arrow[d]&0\arrow[d]&\\
	0\arrow[r]&N^\ast\calF \otimes \ell \arrow[r]\arrow[d]&J^1_\perp \ell\arrow[r]\arrow[d]&\ell\arrow[r]\arrow[d, dash, shift left=0.2ex]\arrow[d, dash, shift right=0.2ex]&0\\
	0\arrow[r]&T^\ast S\otimes \ell\arrow[r]\arrow[d]&J^1 \ell\arrow[r]\arrow[d, "\varphi_\nabla"]&\ell\arrow[d]\arrow[r]&0\\
	0\arrow[r]&T^\ast\calF \otimes \ell\arrow[d]\arrow[r, dash, shift left=0.2ex]\arrow[r, dash, shift right=0.2ex]&T^\ast\calF \otimes \ell\arrow[r]\arrow[d]&0&&\\
	&0&0&&&
\end{tikzcd}
\end{equation*}
The above condition is well-posed because of standard diagram chasing arguments from homological algebra as in the proof of the Snake lemma.

In what follows embeddings $\gamma: T^\ast S\otimes \ell \injects J^1\ell$ and $N^\ast\calF \otimes \ell \injects J^1_\perp \ell$ will be understood, and we will identify $df\otimes\lambda$ with $j^1(f \lambda) - f j^1\lambda$, for any $f\in C^\infty (S)$, and $\lambda\in\Gamma(\ell)$.
Recall that an arbitrary $\alpha\in\Gamma(J^1\ell)$ can be uniquely decomposed as $\alpha=j^1\lambda+\eta$, with $\lambda\in\Gamma(\ell)$, and $\eta\in\Gamma(T^\ast S\otimes \ell)$.
Then, by definition, for $p\in S$, $\alpha_p$ is in $J^1_\perp \ell$ iff $\varphi_\nabla(\eta_p)=-(d_{\calF }\lambda)_p$.
Finally, there is a flat $T\calF $-connection in $J^1_\perp \ell$, also denoted by $\nabla$, well-defined as follows.
For $X\in\Gamma(T\calF )$ and $\alpha = j^1 \lambda + \eta \in \Gamma (J^1_\bot \ell)$, with $\lambda\in\Gamma(\ell)$, $\eta \in\Omega^1(S,\ell)$ such that $\varphi_\nabla (\eta)=-d_{\calF }\lambda$, put
\begin{equation}\label{eq:nabla_J1}
\nabla_X(j^1\lambda+\eta)=j^1(\nabla_X\lambda)+\calL_{\nabla_X} \eta,
\end{equation}
where $\calL_{\nabla_X}$ is the Lie derivative of $\ell$-valued forms on $S$ along derivation $\nabla_X \in \Der\, \ell$.
Accordingly, there is a differential in $\Omega^\bullet(\calF ,J^1_\perp \ell) :=\Gamma(\wedge^\bullet T^\ast\calF \otimes J^1_\perp \ell)$ which we also denote by $d_{\calF }$.

Now, note that the curvature form of $(S,C_S)$, $\omega_S:\wedge^2 C_S\rightarrow \ell$, descends to a(n $\ell$-valued) symplectic form $\omega_\perp:\wedge^2(C_S/T\calF )\rightarrow \ell$.
In particular, it determines a vector bundle isomorphism $\omega_\perp^\flat:C_S/T\calF \rightarrow(C_S/T\calF )^\ast\otimes \ell$ (cf.~Section~\ref{sec:cois_cont}).

\begin{remark}
	Let $p\in S$, $X\in\mathfrak X(S)$, and $\lambda=\theta(X)$.
	Recall that $\phi_X\in\Gamma(C_S^\ast\otimes \ell)$ is defined by $\phi_X(Y)=\theta([X,Y])$, for all $Y\in\Gamma(C_S)$ (cf.~Section~\ref{sec:cois_cont}).
	\begin{itemize}
		\item Then we have that $j_p^1\lambda\in J^1_\perp \ell$ if and only if $(\phi_X)_p\in (C_S/T\calF )^\ast\otimes \ell$.
		\item Furthermore it is easy to check, for instance using local coordinates, that when $j^1_p\lambda=0$ then the following holds:
		\begin{enumerate}
			\item $X_p\in (C_S)_p$, and
			\item $\omega_S(X_p,Y_p)=\theta([X,Y]_p)$, for all $Y\in\Gamma (C_S)$.
		\end{enumerate}
	\end{itemize}
	Therefore, if $j^1_p\lambda=0$, then $X_p\Mod T_p\calF =(\omega_\perp^\flat)^{-1}(\phi_X)_p$, and the following definition is well-posed.
\end{remark}

\begin{definition}
	Define $\sigma \widehat \Lambda{}_\perp^\sharp :J^1_\perp \ell\to N\calF $ to be the vector bundle morphism uniquely determined by:
	\begin{equation}\label{eq:sharp_bot}
	\sigma \widehat \Lambda{}_\bot^\sharp (j_p^1\lambda):=X_p{}\Mod T_p\calF -(\omega_\perp^\flat)^{-1}(\phi_X)_p,
	\end{equation}
	where $p\in M$, $\lambda\in\Gamma(\ell)$, and $X\in\mathfrak X(S)$, such that $j^1_p\lambda \in J^1_\perp L$, and $\lambda=\theta(X)$.
\end{definition}

\begin{proposition}
	There exists a vector bundle morphism $\widehat \Lambda{}_\bot:\wedge^2 J^1_\perp \ell\to \ell$ uniquely determined by putting
	\begin{equation}\label{eq:Lambda_bot}
	\widehat \Lambda{}_\bot(j^1_p \lambda,j^1_p \lambda ')=\theta([Y,Y^\prime]_p),
	\end{equation}
	where $p\in M$, $\lambda, \lambda '$ are $\nabla$-constant local sections of $\ell$ and $Y,Y^\prime\in\mathfrak X(S)$ are such that $\sigma \widehat \Lambda{}_\bot^\sharp (j^1 \lambda)=Y \Mod \Gamma (T\calF )$ and $\sigma \widehat \Lambda{}_\bot^\sharp (j^1 \lambda^\prime)=Y^\prime \Mod \Gamma (T\calF ) $.
\end{proposition}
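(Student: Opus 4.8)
The plan is to define $\widehat{\Lambda}_\bot$ on decomposable elements $j^1_p\lambda\wedge j^1_p\lambda'$ by the stated formula and then to verify, in turn, that such decomposables span $\wedge^2(J^1_\perp\ell)_p$, that the right-hand side is independent of all the choices involved, and finally that the resulting assignment is skew-symmetric and $C^\infty(S)$-bilinear, hence a genuine vector bundle morphism. First I would record that a local section $\lambda$ of $\ell$ is $\nabla$-constant (i.e.\ $\nabla_X\lambda=0$ for all $X\in\Gamma(T\calF)$) if and only if $d_\calF\lambda=0$, so that $j^1\lambda$ is automatically a section of $J^1_\perp\ell=\ker\varphi_\nabla$. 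Using flatness of $\nabla$ along the leaves, I would show that the $1$-jets $j^1_p\lambda$ of $\nabla$-constant sections exhaust the whole fibre $(J^1_\perp\ell)_p$: indeed one may prescribe the value in $\ell_p$ and the transversal derivative (the $N^\ast\calF\otimes\ell$-component dictated by the transversal Spencer sequence $0\to N^\ast\calF\otimes\ell\to J^1_\perp\ell\to\ell\to 0$) freely, extend $\nabla$-parallel along the leaf through $p$, and then extend smoothly in transversal directions. Consequently the stated formula, extended bilinearly, does determine $\widehat{\Lambda}_\bot$ on all of $\wedge^2 J^1_\perp\ell$, provided it is well posed.

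Next I would check that $\theta([Y,Y']_p)$ does not depend on the chosen lifts $Y,Y'\in\mathfrak{X}(S)$ of $\sigma\widehat{\Lambda}_\bot^\sharp(j^1\lambda),\sigma\widehat{\Lambda}_\bot^\sharp(j^1\lambda')$ through $TS\to N\calF$. Since $T\calF\subset C_S$, the structure form $\theta$ descends to $\overline{\theta}\colon N\calF\to\ell$, which kills $C_S/T\calF$; applying $\overline{\theta}$ to the defining formula for $\sigma\widehat{\Lambda}_\bot^\sharp$ shows $\theta(Y)=\lambda$ and $\theta(Y')=\lambda'$. Then for $Z\in\Gamma(T\calF)$ one has $\theta([Z,Y'])=\nabla_Z\theta(Y')=\nabla_Z\lambda'=0$ by $\nabla$-constancy, and symmetrically $\theta([Y,Z'])=-\nabla_{Z'}\lambda=0$; hence replacing $Y$ by $Y+Z$ or $Y'$ by $Y'+Z'$ leaves $\theta([Y,Y']_p)$ unchanged.

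The heart of the matter, and the step I expect to be the main obstacle, is to show that $\theta([Y,Y']_p)$ depends only on the $1$-jets $j^1_p\lambda$ and $j^1_p\lambda'$ and not on the full germs of $\lambda,\lambda'$ --- equivalently, that the a priori second-order dependence of the bracket collapses to first order. By bilinearity and the previous step it suffices to prove that $j^1_p\lambda=0$ forces $\theta([Y,Y']_p)=0$. If $j^1_p\lambda=0$ then the Remark preceding the Definition gives $\sigma\widehat{\Lambda}_\bot^\sharp(j^1_p\lambda)=0$, so the lift $Y$ satisfies $Y_p\in T_p\calF$; by the representative-independence just proved I may subtract a section of $T\calF$ and assume $Y_p=0$. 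At a zero $p$ of the vector field $Y$ the intrinsic linearization $A_p:=(DY)_p\colon T_pS\to T_pS$ is well defined and $[Y,Y']_p=-A_p(Y'_p)$. On the other hand, since $\lambda=\theta(Y)$ and $Y_p=0$, the intrinsic derivative of $\lambda$ at its zero $p$ is the map $V\mapsto\theta_p(A_pV)$; as $j^1_p\lambda=0$ this derivative vanishes, whence $A_p(T_pS)\subset\ker\theta_p=(C_S)_p$. Therefore $A_p(Y'_p)\in(C_S)_p$ and $\theta([Y,Y']_p)=-\theta_p(A_p(Y'_p))=0$, as desired.

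Finally I would assemble the morphism. Skew-symmetry is immediate from skew-symmetry of the Lie bracket; $\bbR$-bilinearity follows because $\sigma\widehat{\Lambda}_\bot^\sharp$ is linear, so a lift of $j^1\lambda_1+j^1\lambda_2$ may be taken to be $Y_1+Y_2$, and the bracket is bilinear. Together with the pointwise well-posedness of the previous step this shows that $v\wedge v'\mapsto\theta([Y,Y']_p)$ descends to a $C^\infty(S)$-linear, hence vector-bundle, morphism $\widehat{\Lambda}_\bot\colon\wedge^2 J^1_\perp\ell\to\ell$, and uniqueness holds since the decomposables $j^1_p\lambda\wedge j^1_p\lambda'$ span. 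As a cross-check, and as an alternative to the conceptual Step~3, I would verify everything in Darboux coordinates adapted to $C_S$, where a $\nabla$-constant section is $\lambda=f(u,z)\mu$ and a canonical lift is the transversal contact Hamiltonian field $Y=f\,\partial_z+\omega^{ab}(\mathbb C_a f)\,\mathbb C_b$, whose coefficients depend only on $j^1\lambda$; the standard cancellation of top-order terms in $\theta([Y,Y'])$ then exhibits the first-order dependence explicitly.
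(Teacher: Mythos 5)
Your proof is correct, and its skeleton coincides with the paper's: (i) first jets of $\nabla$-constant sections exhaust $J^1_\perp\ell$, (ii) independence of the lifts $Y,Y'$ via $\theta(Y)=\lambda$ and $\theta([Z,Y'])=\nabla_Z\lambda'=0$ for $Z\in\Gamma(T\calF)$, (iii) dependence on first jets only. The genuine difference is in step (iii), which is the real content of the proposition: the paper dispatches it with ``one can check, e.g.~using local coordinates,'' whereas you give an intrinsic argument — reduce by linearity to $j^1_p\lambda=0$, normalize $Y_p=0$ using step (ii), and observe that the intrinsic linearization $A_p=(DY)_p$ satisfies $[Y,Y']_p=-A_p(Y'_p)$ while $d_p\lambda(V)=\theta_p(A_pV)=0$ forces $\im A_p\subset(C_S)_p=\ker\theta_p$. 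This computation is sound (at a zero of $Y$ the covariant derivative is connection-independent and equals the Lie-bracket linearization, and $j^1_p\lambda=0$ is exactly $\lambda_p=0$ together with vanishing intrinsic derivative), and it buys a coordinate-free explanation of why the a priori second-order expression $\theta([Y,Y'])$ collapses to first order — precisely the point the paper leaves implicit. Two minor polish items: the well-definedness of the leafwise connection $\nabla_X\theta(Y):=\theta([X,Y])$ used in step (ii) rests on $\omega_S(X,W)=0$ for $X\in\Gamma(T\calF)$, $W\in\Gamma(C_S)$, which is worth citing explicitly; and the phrase ``$C^\infty(S)$-bilinear'' should be ``fibrewise $\bbR$-bilinear'' — pointwise well-definedness plus smoothness (clear from your coordinate cross-check) is what yields the vector bundle morphism.
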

\begin{proof}
	First of all notice that every point in $J^1_\bot \ell$ is the first jet of a $\nabla$-constant local section of $\ell$.
	Hence Definition~\eqref{eq:Lambda_bot} makes sense.
	Moreover, the rhs only depends on $\lambda, \lambda^\prime$.
	Indeed, first of all, $\theta (Y) = \lambda$, and $\theta (Y ') = \lambda '$.
	Moreover, if $Y \in \Gamma (T \calF)$, then, $0 = \nabla_Y \lambda ' = \theta ([Y,Y '])$.
	Finally, one can check, e.g.~using local coordinates, that the rhs of~\eqref{eq:Lambda_bot} does actually depend on the first jets at $p$ of $\lambda, \lambda '$.
	This shows that $\widehat{\Lambda}_\bot$ is well-defined.
\end{proof}

Vector bundle morphism $\widehat \Lambda{}_\bot :\wedge^2 J^1_\perp \ell\to \ell$ will be interpreted as the \emph{transversal version} of the bi-linear form $\widehat \Lambda{}_J$ associated with a Jacobi bi-differential operator $J$.

\section{An explicit formula for the multi-brackets: the construction}
\label{sec:multi-brackets}

Retaining notations from previous section, choose a distribution $G$ on $S$ which is complementary to $T\calF $, i.e.~$TS=G\oplus T\calF $.
There is a dual splitting $T^\ast S\simeq T^\ast\calF \oplus N^\ast\calF $ and there are identifications
$
N\calF \simeq G$, $T^\ast\calF \simeq G^0
$.
Furthermore the induced splitting of
\begin{equation*}
0\rightarrow N^\ast\calF \otimes \ell \rightarrow T^\ast S\otimes \ell\rightarrow T^\ast\calF \otimes \ell \rightarrow 0
\end{equation*}
lifts to a splitting of
\begin{equation*}
0\rightarrow J^1_\perp \ell \rightarrow J^1 \ell\rightarrow T^\ast\calF \otimes \ell \rightarrow 0.
\end{equation*}
Hence $J^1\ell\simeq J^1_\perp \ell\oplus (T^\ast\calF \otimes \ell)$.
Let $F\in\Gamma( \wedge^2 G^\ast \otimes TS/G )$ be the curvature form of $G$.
The curvature $F$ will be also understood as an element $F\in\Gamma(\wedge^2 N^\ast\calF \otimes T\calF )\subset\Gamma(\wedge^2(J^1_\perp \ell\otimes \ell^\ast)\otimes T\calF )$, where we used embedding $N^\ast\calF \otimes \ell \injects J^1_\perp \ell$.

Let $d_G:C^\infty(S)\rightarrow\Gamma(N^\ast\calF )$ be the composition of the de Rham differential $d:C^\infty(S)\rightarrow\Omega^1(S)$ followed by the projection $\Omega^1(S)\rightarrow\Gamma(N^\ast\calF )$ determined by decomposition $T^\ast S=T^\ast\calF \oplus N^\ast\calF $.
Then $d_G$ is a $\Gamma (N^\ast \calF )$-valued derivation of $C^\infty (S)$ and will be interpreted as ``transversal de Rham differential''.

\begin{proposition}\label{prop:eps}
	There exists a unique degree zero, graded $\mathbb R$-linear map $\varepsilon:\Omega(\calF )\rightarrow\Omega(\calF ,N^\ast\calF )$ such that
	\begin{enumerate}
		\item $\varepsilon|_{C^\infty(S)}=d_G$,
		\item $[\varepsilon,d_{\calF }]=0$, and
		\item the following identity holds
		\begin{equation*}
		\varepsilon(\tau \wedge \tau')=\tau \wedge \varepsilon(\tau') + (-)^{|\tau ||\tau' |} \tau' \wedge \varepsilon(\tau),
		\end{equation*}
		for all homogeneous $\tau, \tau' \in \Omega(\calF )$.
	\end{enumerate}
\end{proposition}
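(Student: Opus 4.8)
I need to construct a degree-zero graded $\mathbb{R}$-linear map $\varepsilon:\Omega(\calF)\to\Omega(\calF,N^\ast\calF)$ that restricts to $d_G$ on functions, commutes with $d_\calF$, and satisfies the Leibniz-type rule with respect to the wedge product. Let me think about what this object really is.

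Let me unpack the structure. $\Omega^\bullet(\calF) = \Gamma(\wedge^\bullet T^\ast\calF)$ is the leaf-wise de Rham complex, a module over $C^\infty(S)$ and a graded algebra. The target $\Omega^\bullet(\calF, N^\ast\calF) = \Gamma(\wedge^\bullet T^\ast\calF \otimes N^\ast\calF)$ is the leaf-wise complex with values in $N^\ast\calF$. Condition (3) says $\varepsilon$ is a degree-zero graded derivation of the algebra $\Omega(\calF)$ with values in the module $\Omega(\calF,N^\ast\calF)$ — note that since $\varepsilon$ has degree $0$ and is graded-symmetric in the sign $(-)^{|\tau||\tau'|}$, it behaves like a derivation twisted by a module map. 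Condition (1) fixes it on degree $0$, and condition (2) is the compatibility with the differential.

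**The plan.** A derivation of the algebra $\Omega(\calF)$ into a module is determined by its value on generators. The algebra $\Omega^\bullet(\calF)$ is locally generated over $C^\infty(S)$ by functions and by $d_\calF f$ for $f\in C^\infty(S)$, since locally the leaf-wise $1$-forms are spanned by leaf-wise differentials. First I would verify this generation statement carefully (it is the leaf-wise analogue of the fact that $\Omega^1$ is generated by exact forms). Given this, conditions (1) and (2) force the value of $\varepsilon$ on all generators: on a function $f$ it must be $d_G f$, and on $d_\calF f$ it must be $\varepsilon(d_\calF f)$, which by condition (2) equals $d_\calF(\varepsilon f)=d_\calF(d_G f)$. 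Together with the derivation property (3), this uniquely determines $\varepsilon$ on all of $\Omega(\calF)$, giving \emph{uniqueness} immediately.

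**The existence, and the main obstacle.** For existence I would define $\varepsilon$ on generators by the forced formulas above and extend by the graded Leibniz rule, then check the definition is consistent — i.e.\ independent of the way an element is written as a sum of products of generators. The natural strategy is to define $\varepsilon$ first on decomposable elements $f\, d_\calF g_1 \wedge \cdots \wedge d_\calF g_k$ via
\begin{equation*}
\varepsilon(f\, d_\calF g_1 \wedge \cdots \wedge d_\calF g_k) = (d_G f)\wedge d_\calF g_1\wedge\cdots\wedge d_\calF g_k + \sum_{i=1}^{k} (-)^{?}\, f\, d_\calF g_1\wedge\cdots\wedge d_\calF(d_G g_i)\wedge\cdots\wedge d_\calF g_k,
\end{equation*}
with the correct Koszul signs, and then prove this is well-defined. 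The well-definedness — equivalently, checking that the local formula is independent of coordinates and glues to a global operator — is where the work concentrates: one must show the prescription respects the relations in $\Omega(\calF)$ (for instance $d_\calF(fg)=f\,d_\calF g+g\,d_\calF f$ and the Leibniz rule for $d_G$), and that the contributions patch together since $d_G$ and $d_\calF$ are genuinely defined operators. I expect the \textbf{main obstacle} to be verifying consistency/well-definedness of the local formula together with the compatibility $[\varepsilon,d_\calF]=0$ in positive degrees: condition (2) is imposed on generators by construction, but one must confirm it propagates to all of $\Omega(\calF)$, which amounts to checking that $d_G$ and $d_\calF$ commute in the appropriate twisted sense, i.e.\ that $d_\calF \circ d_G = - d_G\circ d_\calF$ holds as a consequence of $d_\calF^2=0$ and the flatness of the Bott-type connection $\nabla$. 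I would carry out this last verification in Darboux-adapted local coordinates (as in Proposition~\ref{prop:Darboux_lemma}), where $T\calF$ is spanned by the $\partial/\partial x^i$ and the splitting $T^\ast S = T^\ast\calF\oplus N^\ast\calF$ is explicit, reducing everything to a finite computation with partial derivatives.
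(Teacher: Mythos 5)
Your proposal follows essentially the same route as the paper: define $\varepsilon$ on the generators $f$ and $d_\calF f$ by the forced values $\varepsilon f = d_G f$ and $\varepsilon(d_\calF f) = d_\calF d_G f$, extend by the graded Leibniz rule, and verify well-definedness against the relations among generators, with uniqueness then immediate. The only point you leave slightly implicit is that the one non-trivial relation to preserve is $d_\calF f = 0$ on domains where $f$ is leaf-wise constant; its compatibility is exactly the paper's Lemma~\ref{lem:eps} (if $d_\calF f = 0$ then $d_\calF d_G f = 0$, since $d_G f = df \in \Gamma(N^\ast\calF)$ and $\calL_X df = d(Xf) = 0$ for $X \in \Gamma(T\calF)$), and your proposed coordinate check of the $d_G$--$d_\calF$ compatibility would deliver the same fact.
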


In order to prove Proposition~\ref{prop:eps}, the following Lemma will be useful:
\begin{lemma}\label{lem:eps}
	Let $f $ be a leaf-wise constant local function on $S$, i.e.~$d_\calF f = 0$.
	Then $d_\calF d_G f = 0$ as well.
\end{lemma}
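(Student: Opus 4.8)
The claim is that if $f$ is a leaf-wise constant local function on $S$ (i.e. $d_{\calF}f=0$), then $d_{\calF}d_G f=0$ as well. Here $d_G f \in \Gamma(N^\ast\calF)$ is the transversal de Rham differential, and $d_{\calF}$ on $\Omega^\bullet(\calF, N^\ast\calF)$ is the differential induced by the flat $T\calF$-connection $\nabla$ on $N^\ast\calF$ given by $\nabla_X\eta = \calL_X\eta$.

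**The approach.** Let me think about what the hypothesis means. The characteristic foliation $\calF$ has leaves whose tangent bundle is $T\calF$. The condition $d_{\calF}f=0$ says $df$ annihilates $T\calF$, equivalently $Xf=0$ for every $X\in\Gamma(T\calF)$, so $df\in\Gamma(N^\ast\calF)=\Gamma(T^0\calF)$. This immediately identifies $d_G f$ with $df$ itself (since the transversal projection $\Omega^1(S)\to\Gamma(N^\ast\calF)$ fixes forms already lying in $N^\ast\calF$). So under the hypothesis, $d_G f = df$, and the problem reduces to showing $d_{\calF}(df)=0$ as a section of $T^\ast\calF\otimes N^\ast\calF$.

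**Key computation.** The plan is to evaluate $d_{\calF}(df)$ on a pair $X,Y\in\Gamma(T\calF)$ using the Lie-algebroid formula for the differential of a $1$-form valued in the representation $N^\ast\calF$:
\[
(d_{\calF}(df))(X,Y)=\nabla_X\big((df)(Y)\big)-\nabla_Y\big((df)(X)\big)-(df)([X,Y]).
\]
Since $df\in\Gamma(N^\ast\calF)$ while $X,Y\in\Gamma(T\calF)$, each pairing $(df)(Y)$ here should be read as the section $\nabla_X(df)$ of $N^\ast\calF$ evaluated appropriately; more carefully, I would compute directly with $\nabla_X(df)=\calL_X(df)=d(\calL_X f)=d(Xf)$. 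Because $Xf=0$ for $X\in\Gamma(T\calF)$ (this is exactly $d_{\calF}f=0$), we get $\calL_X(df)=0$ for all $X\in\Gamma(T\calF)$, i.e. $df$ is $\nabla$-parallel along the leaves. A $\nabla$-parallel section is automatically $d_{\calF}$-closed, which gives $d_{\calF}(d_G f)=d_{\calF}(df)=0$.

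**Main obstacle and cleanup.** The one subtlety to handle carefully is the bookkeeping between $d_G f$ (a section of $N^\ast\calF$, defined via the transversal projection) and the ordinary differential $df$: I must verify that the hypothesis forces these to coincide, so that the clean identity $\calL_X(df)=d(Xf)$ applies and the Cartan-type computation goes through without spurious terms from the $T^\ast\calF$ component. A safe alternative, if one prefers to avoid invoking $d_G f = df$, is to work in Darboux coordinates $(x^i,u^a,z)$ adapted to $C_S$ (Proposition~\ref{prop:Darboux_lemma}), where $\Gamma(T\calF)=\langle\partial/\partial x^i\rangle$; then $d_{\calF}f=0$ reads $\partial f/\partial x^i=0$, so $f=f(u,z)$, and $d_G f$ has components only in the $du^a, dz$ directions with coefficients independent of the $x^i$, whence $d_{\calF}d_G f$ — which differentiates those coefficients along $\partial/\partial x^i$ — vanishes term by term. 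I expect the invariant argument via $\nabla$-parallelism to be the cleanest, with the coordinate computation serving as a reliable fallback.
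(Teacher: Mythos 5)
Your proof is correct and follows essentially the same route as the paper: observe that $d_\calF f=0$ forces $df\in\Gamma(N^\ast\calF)$ so that $d_G f=df$, then note that $d_\calF(d_G f)$ is just $\nabla_X(d_G f)=\calL_X(df)=d(Xf)=0$ for $X\in\Gamma(T\calF)$. The only stray element is the two-argument formula for $d_\calF$ of a ``$1$-form'' (here $d_G f$ is a degree-$0$ element of $\Omega^\bullet(\calF,N^\ast\calF)$, so $d_\calF$ acts by a single $\nabla_X$), but you immediately replace it with the correct one-argument computation, so the argument stands.
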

\begin{proof}
	Let $f$ be as in the statement.
	First of all, note that $d f$ takes values in $N^\ast \calF$, so that $d_G f = df$.
	Now recall that $d_\calF d_G f = 0$ iff
	$0 = \langle d_\calF d_G f, X \rangle = \nabla_X d_G f = \calL_X d_G f $
	for all $X \in \Gamma (T \calF)$, where $\nabla$ is the canonical $T \calF$-connection in $N^\ast \calF$.
	But
	$
	\calL_X d_G f = \calL_X df = d (X f) = 0
	$.
	This completes the proof.
\end{proof}

\begin{proof}[Proof of Proposition {\ref{prop:eps}}]
	The graded algebra $\Omega (\calF)$ is generated in degree $0$ and $1$.
	In order to define $\eps$, we first define it on the degree one piece $\Omega^1 (\calF)$ of $\Omega (\calF)$.
	Thus, note that $\Omega^1 (\calF)$ is generated, as a $C^\infty (S)$-module, by leaf-wise de Rham differentials $d_\calF f \in \Omega^1 (\calF)$ of functions $f \in C^\infty (S)$.
	The only relations among these generators are the following
	\begin{equation} \label{eq:rel}
	\begin{aligned}
	d_\calF (f+g) & = d_\calF f + d_\calF g, \\
	d_\calF (fg) &= f d_\calF g + g d_\calF f , \\
	d_\calF f & = 0 \text{ on every open domain where $f$ is leaf-wise constant},
	\end{aligned}
	\end{equation}
	where $f,g \in C^\infty (S)$.
	Now define $\eps : \Omega^1 (\calF) \to \Omega^1 (\calF, N^\ast \calF)$ on generators by putting
	\[
	\eps f := d_G f \quad \text{and} \quad \eps d_\calF f := d_\calF d_G f,
	\]
	and extend it to the whole $\Omega^1 (\calF)$ by prescribing $\bbR$-linearity and the following Leibniz rule:
	\begin{equation}\label{eq:leib_dG}
	\eps (f \sigma) = f \eps (\sigma) + \sigma \otimes d_G f ,
	\end{equation}
	for all $f \in C^\infty (S)$, and $\sigma \in \Omega^1 (\calF)$.
	In order to see that $\eps$ is well defined it suffices to check that it preserves relations~\eqref{eq:rel}.
	Compatibility with the first two relations can be checked by a straightforward computation that we omit.
	Compatibility with the third relation immediately follows from Lemma~\ref{lem:eps}.
	Finally, in view of Leibniz rule~\eqref{eq:leib_dG}, $d_G$ and $\eps$ combine and extend to a well-defined derivation $\Omega (\calF) \to \Omega (\calF, N^\ast \calF)$.
	By construction, the extension satisfies all required properties.
	Uniqueness is obvious.
\end{proof}

The graded differential operator $\varepsilon$ will be also denoted by $d_G$.

Similarly, there is a ``transversal version of the first jet prolongation $j^1$''.
Namely, let $j^1_G:\Gamma(\ell)\rightarrow\Gamma(J^1_\perp \ell)$ be the composition of the first jet prolongation $j^1:\Gamma(\ell)\rightarrow\Gamma(J^1\ell)$ followed by the projection $\Gamma(J^1\ell)\rightarrow\Gamma(J^1_\perp \ell)$ determined by decomposition $J^1\ell=J^1_\perp \ell \oplus (N^\ast\calF \otimes \ell )$.
Then $j^1_G$ is a first order differential operator from $\Gamma(\ell)$ to $\Gamma(J^1_\perp \ell)$ such that
\begin{equation}\label{eq:leib_j1G}
j^1_G(f\lambda)=fj^1_G\lambda+(d_Gf)\otimes\lambda,
\end{equation}
$\lambda\in\Gamma(\ell)$ and $f\in C^\infty(S)$, where, similarly as above, we understood the embedding $N^\ast\calF \otimes \ell \injects J^1_\perp \ell$.
As announced, the operator $j^1_G$ will be interpreted as ``transversal first jet prolongation''.

\begin{proposition}\label{prop:delta}
	There exists a unique degree zero graded $\mathbb R$-linear map $\delta:\Omega(\calF ,\ell)\rightarrow\Omega(\calF ,J^1_\perp \ell)$ such that
	\begin{enumerate}
		\item $\delta|_{\Gamma(\ell)}=j^1_G$,
		\item $[\delta,d_{\calF }]=0$, and
		\item the following identity holds
		\begin{equation*}
		\delta(\tau \wedge \Omega)=\tau \wedge \delta(\omega)+d_G \tau \otimes\omega,
		\end{equation*}
		for all $\tau\in\Omega(\calF )$, and $\omega\in\Omega(\calF ,\ell)$, where the tensor product is over $\Omega(\calF )$, and we understood both the isomorphism
		\begin{equation}\label{eq:isoOmega}
		\Omega(\calF ,N^\ast\calF )\underset{\Omega(\calF )}{\otimes}\Omega(\calF ,\ell)\simeq\Omega(\calF ,N^\ast\calF \otimes \ell)
		\end{equation}
		and embedding $N^\ast\calF \otimes \ell \injects J_\perp^1 \ell$.
	\end{enumerate}
\end{proposition}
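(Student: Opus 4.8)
The plan is to construct $\delta$ by mimicking exactly the proof of Proposition~\ref{prop:eps}, transferring that argument from the module $\Omega(\calF)$ of leaf-wise forms to the module $\Omega(\calF,\ell)$ of $\ell$-valued leaf-wise forms, with the role of the transversal de Rham differential $d_G = \eps$ now played by the transversal first jet prolongation $j^1_G$. First I would note that $\Omega(\calF,\ell)$ is generated over $\Omega(\calF)$ by $\Gamma(\ell)$ in degree $0$, and moreover that $\Omega^{\leq 1}(\calF,\ell)$ is generated, as a $C^\infty(S)$-module, by elements of the form $\lambda$ and $d_\calF\lambda$ with $\lambda\in\Gamma(\ell)$. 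Accordingly I would define $\delta$ on these generators by
\begin{equation*}
\delta\lambda := j^1_G\lambda,\qquad \delta(d_\calF\lambda):=d_\calF j^1_G\lambda,
\end{equation*}
and extend to all of $\Omega(\calF,\ell)$ by $\bbR$-linearity together with the Leibniz-type rule in item~(3), i.e.~$\delta(f\sigma)=f\,\delta(\sigma)+(d_Gf)\otimes\sigma$ for $f\in C^\infty(S)$. Here I would use the identification~\eqref{eq:isoOmega} and the embedding $N^\ast\calF\otimes\ell\injects J^1_\perp\ell$ to make sense of the target of the correction term, exactly as the term $\sigma\otimes d_Gf$ appears in~\eqref{eq:leib_dG}.

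The substance of the proof is well-definedness: I must check that these prescriptions respect the defining relations among the generators. The generators $\lambda$ and $d_\calF\lambda$ satisfy relations coming from $C^\infty(S)$-linearity and the Leibniz rule for $d_\calF$, namely $d_\calF(f\lambda)=f\,d_\calF\lambda+(d_\calF f)\otimes\lambda$, together with the relation that $d_\calF\lambda=0$ wherever $\lambda$ is $\nabla$-leaf-wise-constant (i.e.~$\nabla_X\lambda=0$ for $X\in\Gamma(T\calF)$). Compatibility with additivity and with the Leibniz relation is a direct computation of the same shape as in Proposition~\ref{prop:eps}, using~\eqref{eq:leib_j1G} which is precisely the transversal analogue of~\eqref{eq:leib_dG}. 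The key point --- and the step I expect to be the main obstacle --- is compatibility with the vanishing relation: I need the analogue of Lemma~\ref{lem:eps}, asserting that $d_\calF\lambda=0$ implies $d_\calF j^1_G\lambda=0$. This is the technical heart, and it is exactly where the flat $T\calF$-connection $\nabla$ on $J^1_\perp\ell$ defined in~\eqref{eq:nabla_J1} enters.

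For this transversal Lemma I would argue as follows. If $\lambda$ is leaf-wise constant, $d_\calF\lambda=0$, then $j^1_G\lambda$ lands in $J^1_\perp\ell$ and I must show it is annihilated by $d_\calF$, i.e.~$\nabla_X(j^1_G\lambda)=0$ for all $X\in\Gamma(T\calF)$. Writing $j^1\lambda = j^1_G\lambda + \eta$ with $\eta\in\Omega^1(\calF)\otimes\ell$ the $N^\ast\calF\otimes\ell$-free part, and applying the explicit formula~\eqref{eq:nabla_J1} for $\nabla_X$ on $J^1_\perp\ell$, the computation reduces to the flatness of $\nabla$ on $\ell$ together with $\nabla_X\lambda=\langle d_\calF\lambda,X\rangle=0$; the Lie-derivative term $\calL_{\nabla_X}\eta$ vanishes by the same leaf-wise constancy, in strict parallel with the identity $\calL_X d_G f=d(Xf)=0$ used in Lemma~\ref{lem:eps}. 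Once this is in place, well-definedness on generators follows, and then item~(3) shows that $j^1_G$ and $\delta$ combine into a single well-defined graded derivation $\Omega(\calF)\oplus\Omega(\calF,\ell)\to\Omega(\calF,J^1_\perp\ell)$ of the expected type; property~(2), $[\delta,d_\calF]=0$, holds on generators by construction (it is built into the definition $\delta(d_\calF\lambda)=d_\calF\delta\lambda$) and propagates to all of $\Omega(\calF,\ell)$ by the derivation property, since $d_\calF$ and $\delta$ are both derivations over the common base derivation $d_G=\eps$ on $\Omega(\calF)$ with $[\eps,d_\calF]=0$ from Proposition~\ref{prop:eps}. Uniqueness is immediate, as any map satisfying (1)--(3) is forced on generators and extends uniquely by~(3).
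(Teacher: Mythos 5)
Your proposal is correct in substance and rests on exactly the same two pillars as the paper's proof --- the defining formula $\delta(\sigma\otimes\lambda)=\sigma\otimes j^1_G\lambda+d_G\sigma\otimes\lambda$ and the key technical lemma that $d_\calF\lambda=0$ implies $d_\calF j^1_G\lambda=0$ (which you correctly identify as the transversal heart of the matter, and whose proof you sketch correctly: leaf-wise constancy forces $j^1_G\lambda=j^1\lambda$, and then $\nabla_X j^1\lambda=j^1\nabla_X\lambda=0$ by the definition of the connection on $J^1_\perp\ell$). Where you diverge is the well-definedness mechanism. You transplant the generators-and-relations argument of the $\eps$-proposition: present $\Omega^{\le 1}(\calF,\ell)$ by the generators $\lambda$ and $d_\calF\lambda$, list the relations, and check that $\delta$ kills them. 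The paper instead defines an operator $\delta'$ on the plain tensor product $\Omega(\calF)\otimes_\bbR\Gamma(\ell)$ by the displayed formula and shows it descends to the balanced tensor product $\Omega(\calF)\otimes_{C^\infty(S)}\Gamma(\ell)$, which requires verifying only the single identity $\delta'(f\sigma\otimes_\bbR\lambda)=\delta'(\sigma\otimes_\bbR f\lambda)$; properties (1) and (3) are then immediate from the formula, and (2) is checked on a local $d_\calF$-flat generator $\mu$ exactly as you do. The descent route is slightly more economical for a module (as opposed to an algebra): it avoids having to assert that the relations you list among the $d_\calF\lambda$'s are \emph{exhaustive}, a point your argument implicitly relies on and which the paper only states for $\Omega^1(\calF)$ in the $\eps$-proposition, not for the $\ell$-valued case. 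Your closing argument for property (2) --- that $[\delta,d_\calF]$ is a derivation covering the zero derivation of $\Omega(\calF)$ (since $[\eps,d_\calF]=0$) and vanishes on $\Gamma(\ell)$ by construction --- is a clean way to propagate the identity to all degrees and is, if anything, tidier than the paper's local computation. Uniqueness is handled identically in both.
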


In order to prove Proposition~\ref{prop:delta}, the following Lemma will be useful:
\begin{lemma}\label{lem:delta}
	Let $\mu$ be a leaf-wise constant local section of $\ell$, i.e.~$d_\calF \mu = 0$, then $d_\calF j^1_G \mu = 0$ as well.
\end{lemma}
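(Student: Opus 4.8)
The plan is to mirror the proof of Lemma~\ref{lem:eps}, exploiting the fact that the transversal first jet prolongation $j^1_G$ collapses onto the ordinary first jet prolongation when applied to a leaf-wise constant section. The whole argument rests on one observation: if $d_\calF\mu = 0$, then $\varphi_\nabla(j^1\mu) = d_\calF\mu = 0$, so that $j^1\mu$ already lies in $J^1_\perp\ell = \ker\varphi_\nabla$. Consequently the projection $\Gamma(J^1\ell) \to \Gamma(J^1_\perp\ell)$ along $N^\ast\calF\otimes\ell$ fixes $j^1\mu$, whence $j^1_G\mu = j^1\mu$, with no correction term in $N^\ast\calF\otimes\ell$.

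First I would record this identification $j^1_G\mu = j^1\mu$ and then reduce the vanishing of $d_\calF j^1_G\mu \in \Omega^1(\calF, J^1_\perp\ell)$ to the vanishing of $\nabla_X(j^1_G\mu)$ for every $X \in \Gamma(T\calF)$, where $\nabla$ is the flat $T\calF$-connection in $J^1_\perp\ell$ given by~\eqref{eq:nabla_J1}. Applying that formula to $\alpha = j^1\mu$, i.e.\ to the decomposition with $\lambda = \mu$ and $\eta = 0$ (which is admissible precisely because $\varphi_\nabla(0) = 0 = -d_\calF\mu$), yields
\[
\nabla_X(j^1_G\mu) = \nabla_X(j^1\mu) = j^1(\nabla_X\mu) + \calL_{\nabla_X}0 = j^1(\nabla_X\mu).
\]

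To finish, I would use that $d_\calF\mu = 0$ is equivalent to $\nabla_X\mu = 0$ for all $X \in \Gamma(T\calF)$, since the leaf-wise differential on $\Omega^\bullet(\calF,\ell)$ is the one induced by the flat $T\calF$-connection $\nabla$ in $\ell$. Then $j^1(\nabla_X\mu) = j^1(0) = 0$, so $\nabla_X(j^1_G\mu) = 0$ for every $X \in \Gamma(T\calF)$, which is exactly $d_\calF j^1_G\mu = 0$.

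The computation itself is immediate; the only point requiring care --- and the natural place for an error to hide --- is the bookkeeping of the several roles played by $\nabla$ and by $d_\calF$ (on $\ell$, on $N^\ast\calF$, and on $J^1_\perp\ell$), together with the verification that the representative $(\lambda,\eta) = (\mu, 0)$ of $j^1\mu$ genuinely satisfies the compatibility constraint $\varphi_\nabla(\eta) = -d_\calF\lambda$ built into~\eqref{eq:nabla_J1}. Both are guaranteed by the single hypothesis $d_\calF\mu = 0$, which is also what forces $j^1_G\mu = j^1\mu$, so the two subtleties collapse into one use of leaf-wise constancy.
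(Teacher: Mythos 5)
Your proof is correct and follows essentially the same route as the paper's: both rest on the observation that $d_\calF\mu=0$ forces $j^1\mu$ to take values in $J^1_\perp\ell$ (so $j^1_G\mu=j^1\mu$), and then apply the connection formula~\eqref{eq:nabla_J1} to get $\nabla_X j^1\mu=j^1(\nabla_X\mu)=0$ for all $X\in\Gamma(T\calF)$. Your additional care in verifying that the representative $(\lambda,\eta)=(\mu,0)$ satisfies the constraint $\varphi_\nabla(\eta)=-d_\calF\lambda$ is a detail the paper leaves implicit, but it does not change the argument.
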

\begin{proof}
	Let $\mu$ be as in the statement.
	First of all note that, by the very definition of $J^1_\bot \ell$, $j^1 \mu$ takes values in $J^1_\bot \ell$ so that $j^1_G \mu = j^1 \mu$.
	Now recall that $d_\calF j^1_G \mu = 0$ iff
	$0 = \langle d_\calF j^1_G \mu, X \rangle = \nabla_X j^1_G \mu $
	for all $X \in \Gamma (T \calF)$, where $\nabla$ is the canonical $T \calF$-connection in $J^1_\bot \ell$.
	But
	$
	\nabla_X j^1_G \mu = \nabla_X j^1 \mu = j^1 \nabla_X \mu = 0
	$,
	where we used~\eqref{eq:nabla_J1}.
	This completes the proof.
\end{proof}

\begin{proof}[Proof of Proposition {\ref{prop:delta}}]
	In this proof a tensor product $\otimes$ will be over $C^\infty (S)$ unless otherwise stated.
	We can regard $\Omega (\calF , \ell) = \Omega (\calF ) \otimes \Gamma (\ell)$ as a quotient of $\Omega (\calF ) \otimes_\bbR \Gamma (\ell)$ in the obvious way.
	Our strategy is defining an operator $\delta ': \Omega (\calF ) \otimes_\bbR \Gamma (\ell) \to \Omega (\calF, J^1_\bot \ell)$ and prove that it descends to an operator $\delta : \Omega(\calF ,\ell)\rightarrow\Omega(\calF ,J^1_\perp \ell)$ with the required properties.
	Thus, for $\sigma \in \Omega (\calF)$ and $\lambda \in \Gamma (\ell)$ put
	\begin{equation}\label{eq:delta'}
	\delta ' (\sigma \otimes_\bbR \lambda):= \sigma \otimes j^1_G \lambda+ d_G \sigma\otimes_{\Omega (\calF)} \lambda \in \Omega (\calF, J^1_\bot \ell),
	\end{equation}
	where, in the second summand, we understood both isomorphism~\eqref{eq:isoOmega} and embedding $N^\ast \calF \otimes \ell \injects J^1_\bot \ell$ (just as in the statement of the proposition).
	In order to prove that $\delta '$ descends to an operator $\delta$ on $\Omega (\calF, \ell)$ it suffices to check that $\delta ' (f \sigma \otimes_\bbR \lambda) = \delta ' (\sigma \otimes_\bbR f \lambda)$ for all $\sigma, \lambda$ as above, and all $f \in C^\infty (S)$.
	This can be easily obtained using the derivation property of $d_G$ and~\eqref{eq:leib_j1G}.
	Now, Properties (1) and (3) immediately follows from~\eqref{eq:delta'}.
	In order to prove Property (2), it suffices to check that $\delta d_\calF \lambda = d_\calF j^1_G \lambda$ for all $\lambda \in \Gamma (\ell)$ (and then use Property (3)).
	It is enough to work locally.
	Thus, let $\mu$ be a local generator of $\Gamma (\ell)$ with the further property that $d_\calF \mu = 0$.
	Moreover, let $f \in C^\infty (S)$, and compute
	\begin{align*}
	\delta d_\calF ( f \mu ) & = \delta (d_\calF f \otimes \mu) = d_\calF f \otimes j^1_G \mu + d_G d_\calF f \otimes \mu = d_\calF f \otimes j^1_G \mu + d_\calF d_G f \otimes \mu \\
	&= d_\calF ( f j^1_G \mu + d_G f \otimes \mu) = d_\calF (j^1_G f \mu),
	\end{align*}
	where we used $d_\calF \mu = 0$, Proposition~\ref{prop:eps}, Lemma~\ref{lem:delta}, and~\eqref{eq:leib_j1G}.
	Uniqueness of $\delta$ is obvious.
	
\end{proof}

The graded differential operator $\delta$ will be also denoted by $j^1_G$.

Now, interpret $\widehat \Lambda{}_\bot \in\Gamma(\wedge^2(J^1_\perp \ell)^\ast\otimes \ell)$ as a section $\sharp  \in\Gamma((J^1_\perp \ell\otimes \ell^\ast)^\ast\otimes (J^1_\perp \ell)^\ast)$.
The interior product of $\sharp  $ and $F\in\Gamma(\wedge^2(J^1_\perp \ell\otimes \ell^\ast)\otimes T\calF )$ is a section $F^\sharp \in\Gamma(\operatorname{End}(J^1_\perp \ell)\otimes T\calF \otimes \ell^\ast)$.
For any $\mu \in\Omega^{k+1}(\calF ,\ell)$, with $k\geq 0$, the interior product of $F^\sharp $ and $\mu$ is a section $i_{F^\sharp }\mu\in\Omega^{k}(\calF ,\operatorname{End}J^1_\perp \ell)$.
Now, we extend
\begin{enumerate}
	\item the bi-linear map $\widehat \Lambda_{\bot} : \wedge^2 J^1_\bot \ell \to \ell $ to a degree $+1$, $\Omega (\calF)$-bilinear, symmetric form
	\[
	\langle-,-\rangle_{C}:\Omega(\calF , J^1_\perp \ell)[1]\times\Omega(\calF , J^1_\perp \ell)[1]\longrightarrow \Omega(\calF , \ell)[1]
	\]
	\item the natural bilinear map $\circ : \operatorname{End} J^1_\bot \ell \otimes  \operatorname{End} J^1_\bot \ell \to\operatorname{End} J^1_\bot \ell $ to a degree $+1$, $\Omega (\calF)$-bilinear map
	\[
	\Omega (\calF, \operatorname{End} J^1_\bot \ell )[1] \times \Omega (\calF, \operatorname{End} J^1_\bot \ell )[1] \longrightarrow \Omega (\calF, \operatorname{End} J^1_\bot \ell )[1],
	\] also denoted by $\circ$, and
	\item the tautological action $\operatorname{End} J^1_\bot \ell \otimes   J^1_\bot \ell \to J^1_\bot \ell $ to a degree $+1$, $\Omega (\calF)$-linear action
	\[
	\Omega (\calF, \operatorname{End} J^1_\bot \ell )[1] \times \Omega (\calF, J^1_\bot \ell )[1] \longrightarrow \Omega (\calF, J^1_\bot \ell )[1].
	\]
\end{enumerate}

\begin{theorem}
	\label{theor:multi}
	The first (unary) bracket in the $L_\infty[1]$-algebra structure on $\Omega(\calF ,\ell)[1] $ is $d_{\calF }$.
	Moreover, for $k > 1$, the $k$-th multi-bracket is given by
	\begin{equation}
	\label{eq:multi}
	\frakm_k (\nu_1,\ldots,\nu_k) = \frac{1}{2}\sum_{\sigma\in S_k}\epsilon(\sigma, \boldsymbol{\nu})\left\langle j^1_G\nu_{\sigma(1)},(i_{F^\sharp }\nu_{\sigma(2)}\circ\cdots\circ i_{F^\sharp }\nu_{\sigma(k-1)})j^1_G\nu_{\sigma(k)}\right\rangle_{C},
	\end{equation}
	for all homogeneous $\nu_1\,\ldots,\nu_k\in\Omega(\calF ,\ell)[1] $, where $\epsilon (\sigma, \boldsymbol{\mu})$ is the graded symmetric Koszul sign prescribed by the permutations of the $\mu$'s.
\end{theorem}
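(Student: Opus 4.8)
The plan is to verify formula~\eqref{eq:multi} by reducing, via the derivation and symmetry properties of the $\frakm_k$'s, to a coordinate computation in a contact thickening, and then matching that computation against the higher derived bracket definition~\eqref{eq:higher_derived_brackets}. First I would dispose of the unary bracket. Since $NS=T_\ell{}^\ast\calF$, one has $N_\ell S=T^\ast\calF$ and $N_\ell{}^\ast S=T\calF$, so the Jacobi subalgebroid $(N_\ell{}^\ast S,\ell)$ of Proposition~\ref{prop:conormal} is precisely $(T\calF,\ell)$ equipped with the flat connection $\nabla$. By Proposition~\ref{prop:linfty_cohomological_resolution}~\ref{enumitem:prop:linfty_cohomological_resolution_1}, $\frakm_1$ coincides with the de Rham differential of this Jacobi algebroid, which is $d_\calF$ by its very definition. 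This settles $k=1$.

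For $k>1$ I would exploit Remark~\ref{rem:multi-brackets}: by~\ref{enumitem:rem:multi-brackets_b} each $\frakm_k$ is a graded derivation of $\scrL\to\scrM$ in every entry, and by~\ref{enumitem:rem:multi-brackets_a} it is graded symmetric, so it is completely determined by its values on the two generating types of sections, namely $\lambda\in\Gamma(\ell)=\Omega^0(\calF,\ell)$ and $\nu\in\Gamma(NS)=\Omega^1(\calF,\ell)$, and it vanishes whenever more than two arguments have positive degree. The right-hand side of~\eqref{eq:multi} enjoys exactly these structural features: it is visibly graded symmetric, it behaves as a derivation in each slot through the Leibniz rules for $j^1_G$ and $d_G$ established in Propositions~\ref{prop:eps} and~\ref{prop:delta}, and it is supported only on the admissible degree combinations (one pairing $\langle-,-\rangle_C$ of two $j^1_G$-prolongations, with the remaining arguments entering through $i_{F^\sharp}$). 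Hence it suffices to compare both sides on generators, which amounts to reproving the three coordinate identities of Corollary~\ref{prop:multi-brackets_coordinates}.

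The core of the argument is then a direct computation in the adapted Darboux coordinates $(x^i,u^a,z,p_i)$ on the contact thickening of Proposition~\ref{prop:contact_thickening}, where $C=\langle X_i,\mathbb C'_a,\partial/\partial p_i\rangle$ with $X_i=\partial/\partial x^i-p_iZ$, and where the fibre coordinates $p_i$ of $NS=T_\ell{}^\ast\calF$ are indexed by the leaf directions. I would first read off the components $J^{ab},J^{ai},J^{ij},J^a,J^i$ of the associated Jacobi bi-differential operator (in the notation of~\eqref{eq:components_of_J_bis}) from the curvature matrix~\eqref{eq:cont_thick}, being careful to retain all higher-order terms in $\boldsymbol p$. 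The transversal symplectic form $\omega_\perp$ governs the fibre block and produces the pairing $\widehat\Lambda{}_\bot$, hence $\langle-,-\rangle_C$; the transversal jet prolongation emerges as $j^1_G$ through the splitting $J^1\ell\simeq J^1_\perp\ell\oplus(T^\ast\calF\otimes\ell)$; and the $\boldsymbol p$-dependence carried by the $X_i$ encodes the curvature $F$ of the chosen complement $G$. Substituting these into the coordinate multi-bracket formulas of Corollary~\ref{prop:multi-brackets_coordinates}, the iterated fibre derivatives $\partial_{a_1}\cdots\partial_{a_{k-1}}$ acting on the $\boldsymbol p$-dependent structure functions should generate precisely the $(k-2)$-fold composition $i_{F^\sharp}\nu_{\sigma(2)}\circ\cdots\circ i_{F^\sharp}\nu_{\sigma(k-1)}$, each differentiation contributing one factor of $F^\sharp$.

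The main obstacle will be the bookkeeping in this last step: tracking how repeated differentiation in the $p_i$ distributes across the structure functions, and verifying that the resulting numerical factors and Koszul signs assemble exactly into the graded-symmetric sum over $S_k$ with prefactor $\tfrac12$ in~\eqref{eq:multi} (the $\tfrac12$ accounting for the symmetry of the two outer $j^1_G$-slots under $\langle-,-\rangle_C$). I expect to control this by expanding the relevant $(\exp I(-s)_\ast J)$-type series order by order in the number of positive-degree arguments, as in the Taylor analysis underlying Proposition~\ref{prop:fana}, and comparing it termwise with the closed transversal expression; the uniqueness of the graded-symmetric derivation extension then upgrades agreement on the generators $\lambda$ and $\nu$ to agreement of all multi-brackets.
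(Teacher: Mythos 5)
Your proposal is correct and follows essentially the same route as the paper's proof in Section~\ref{subsec:proof:theor:multi}: reduce both sides to generators of the form $f\mu$ and $d_\calF x^i\otimes\mu$ using the graded-symmetry and entrywise-derivation properties, then carry out the coordinate computation on the contact thickening, matching the iterated fibre derivatives of $\bbW_{\boldsymbol p}^{-1}$ at $\boldsymbol p=0$ against the compositions of $i_{F^\sharp}$. The only (harmless) deviations are your use of Proposition~\ref{prop:linfty_cohomological_resolution} for the unary bracket, where the paper computes directly, and a small slip of wording — the brackets vanish when more than two arguments lie in $\Gamma(\ell)$, i.e.\ have degree $-1$ in $\Omega(\calF,\ell)[1]$, not \emph{positive} degree.
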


A coordinate proof of Theorem~\ref{theor:multi} is provided in the next section~\ref{subsec:proof:theor:multi}.

\begin{remark}
	The explicit form of the contact thickening (see Section~\ref{sec:contact_thickening}) shows that the Jacobi bracket is actually fiber-wise entire.
	In particular Corollaries~\ref{cor:conver} and~\ref{cor:hequi} always apply to the contact case.
\end{remark}

As seen above, on a pre-contact manifold $(S,C)$, the choice of a distribution $G$ complementary to the characteristic distribution $\ker\omega_S^\flat$ allows to construct an explicit representative of the $L_\infty[1]$-algebra of $(S,C_S)$ (which is only known up to $L_\infty$-isomorphisms).
The expressions for the multi-brackets of the $L_\infty[1]$-algebra corresponding to a choice of such a $G$ are pretty explicit, and handy from the computational point of view.
Moreover, 
 further interesting information can be inferred from them in the transversally integrable case.

\begin{definition}
	\label{def:transversally_integrable}
	A pre-contact manifold $(S,C)$ is called \emph{transversally integrable} if there exists a distribution $G$ on $S$ which is integrable and complementary to the characteristic distribution $\ker\omega_S^\flat$, i.e.~such that
	\begin{equation*}
	TS=G\oplus \ker\omega_S^\flat,\quad\text{and}\quad[\Gamma(G),\Gamma(G)]\subset\Gamma(G).
	\end{equation*}
\end{definition}

If $(S,C)$ is transversally integrable and $G$ is as in Definition~\ref{def:transversally_integrable}, then the associated $L_\infty[1]$-algebra is actually a differential graded Lie algebra (see next Proposition~\ref{prop:transversally_integrable}).
The analogous results in the symplectic and the lcs settings were proven in~\cite[Section 9.3]{oh2005deformations} and~\cite[Section 9.1]{le2012deformations} respectively (see also~\cite[Proposition 5.3]{SZ2014} for an alternative proof in the symplectic setting).

\begin{proposition}
	\label{prop:transversally_integrable}
	Let $(S,C)$ be a transversally integrable pre-contact manifold.
	Then its $L_\infty[1]$-algebra is $L_\infty$-isomorphic to a differential graded Lie algebra, up to décalage.
\end{proposition}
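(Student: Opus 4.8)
The plan is to read off the result directly from the explicit formula for the multi-brackets established in Theorem~\ref{theor:multi}, exploiting the fact that transversal integrability makes the relevant curvature term vanish. First I would use the hypothesis of Definition~\ref{def:transversally_integrable} to fix a distribution $G$ that is complementary to the characteristic distribution $\ker\omega_S^\flat = T\calF$ \emph{and} integrable. By the contact thickening of Section~\ref{sec:contact_thickening} together with Proposition~\ref{prop:contact_thickening} and Corollary~\ref{cor:L_infty_precont}, this choice of $G$ produces a concrete representative of the $L_\infty[1]$-algebra of $(S,C)$ on $\Omega^\bullet(\calF,\ell)[1]$, whose multi-brackets are exactly those of formula~\eqref{eq:multi}.

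The key step is the vanishing of the curvature. The tensor $F\in\Gamma(\wedge^2 G^\ast\otimes TS/G)\simeq\Gamma(\wedge^2 N^\ast\calF\otimes T\calF)$ appearing in Theorem~\ref{theor:multi} is, by its very definition at the start of Section~\ref{sec:multi-brackets}, the curvature form of the distribution $G$, i.e.~$F(X,Y)=[X,Y]\Mod G$ for $X,Y\in\Gamma(G)$. As recalled in Remarks~\ref{rem:curvature_form_X_distributions} and~\ref{rem:maximally-integrable}, the curvature form of a distribution vanishes identically if and only if the distribution is integrable. Since $G$ was chosen integrable, $F=0$; hence the induced endomorphism-valued form $F^\sharp\in\Gamma(\operatorname{End}(J^1_\perp\ell)\otimes T\calF\otimes\ell^\ast)$ vanishes, and so does the interior product $i_{F^\sharp}\nu$ for every $\nu\in\Omega^\bullet(\calF,\ell)$.

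From here I would simply inspect formula~\eqref{eq:multi}: for each $k\geq 3$ every summand defining $\frakm_k$ carries the factor $i_{F^\sharp}\nu_{\sigma(2)}\circ\cdots\circ i_{F^\sharp}\nu_{\sigma(k-1)}$, which is a composition of $k-2\geq 1$ copies of $i_{F^\sharp}$ and therefore vanishes once $F=0$. Thus $\frakm_k=0$ for all $k\geq 3$, while the arity-$2$ bracket is unaffected (for $k=2$ the composition runs over the empty index set $2,\dots,1$ and reduces to the identity, so $\frakm_2$ never involves $F$). Consequently only $\frakm_1=d_\calF$ and $\frakm_2$ survive. An $L_\infty[1]$-algebra whose brackets vanish in all arities $\geq 3$ is, through the décalage isomorphism, precisely a differential graded Lie algebra: the remaining $L_\infty[1]$-relations reduce to $d_\calF^2=0$, to $d_\calF$ being a graded derivation of $\frakm_2$, and to the graded Jacobi identity for $\frakm_2$. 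Finally, Proposition~\ref{prop:gauge_invariance} (with the uniqueness part of Theorem~\ref{teor:coisotropic_embedding}) ensures that this representative is $L_\infty$-isomorphic to the canonical $L_\infty[1]$-algebra of $S$, giving the claim.

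The argument is conceptually short, so the ``main obstacle'' is really just careful bookkeeping: one must verify that the empty composition producing the identity occurs \emph{only} for $k=2$, so that every higher bracket genuinely carries an $i_{F^\sharp}$ factor — this is immediate from the index range $2,\dots,k-1$, which is non-empty exactly when $k\geq 3$. The only other point needing attention is the identification $TS/G\simeq T\calF$ under which the abstract curvature $F$ of Theorem~\ref{theor:multi} is recognized as the Frobenius obstruction of $G$; but this is precisely the splitting-induced identification fixed at the beginning of Section~\ref{sec:multi-brackets}, so no new input is required.
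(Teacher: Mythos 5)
Your proof is correct and follows essentially the same route as the paper: fix an integrable complement $G$, observe that its curvature $F$ (hence $F^\sharp$) vanishes, and read off from formula~\eqref{eq:multi} that every $\frakm_k$ with $k>2$ carries at least one factor of $i_{F^\sharp}$ and therefore vanishes, leaving a dgLa up to décalage. The extra bookkeeping you supply (the empty composition at $k=2$, the identification of $F$ with the Frobenius obstruction, and the appeal to Proposition~\ref{prop:gauge_invariance}) is all consistent with what the paper leaves implicit.
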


\begin{proof}
	Let $G$ be a distribution on $S$ complementary to $\ker\omega_S^\flat$, and let $\{\frakm_k\}$ be the $L_\infty[1]$-algebra structure on $\Omega(\calF;\ell)[1]$ associated with $S$ through $G$ as in Theorem~\ref{theor:multi}.
	If $G$ is integrable, then $F^\sharp $ vanishes and it follows immediately from~\eqref{eq:multi} that $\frakm_k=0$, for all $k>2$.
\end{proof}

\section{An explicit formula for the multi-brackets: the proof}
\label{subsec:proof:theor:multi}

In this section we provide a proof of Theorem~\ref{theor:multi}.


Let $(S, C_S)$ be a pre-contact manifold, with normal line bundle $\ell := TS / C_S$, and characteristic foliation $\calF$, and let $G$ be a complementary distribution to $T  \calF$, i.e.~$TS =  G \oplus T \calF$.
As shown in Section~\ref{sec:contact_thickening}, the bundle $T_\ell^\ast \calF := T^\ast \calF \otimes \ell$ is equipped with an hyperplane distribution $C$ (whose quotient line bundle is $L:=T_\ell{}^\ast\calF\times_S\ell\to T_\ell{}^\ast\calF$) which is contact in a neighborhood of the image $\im\mathbf{0}$ of the zero section $\mathbf 0$: the contact thickening of $(S, C_S)$.
Moreover $\mathbf 0$ is a coisotropic embedding.
In particular, there is an $L_\infty[1]$-algebra $(\Gamma (\wedge^\bullet N_\ell S \otimes \ell)[1], \{ {\mathfrak m}_k \})$ attached to $(S, C_S)$.
In this case, $NS = T_\ell^\ast \calF$, so that $\Gamma (\wedge^\bullet N_\ell S \otimes \ell) \simeq \Omega (\calF, \ell)$.
In the following we will understand this isomorphism.
We will show below that the multi-brackets ${\mathfrak m}_k$ are given by formula \eqref{eq:multi} which is the contact analogue of Oh-Park formula (see~\cite[Formula (9.17)]{oh2005deformations}).
We will do this in local coordinates.
From now on, we freely use notations and conventions from Sections~\ref{sec:contact_thickening}, \ref{sec:transversal_geometry} and~\ref{sec:multi-brackets}.

Let $(x^i,u^a,z,p_i)$ be local coordinates on $T^\ast_\ell \calF$ chosen as in the proof of Proposition~\ref{prop:contact_thickening}.
Hence, in particular,
\begin{equation*}
\Gamma(T \calF)=\left\langle \partial/\partial x^i \right\rangle, \quad \Gamma(C_S)=\left\langle \partial/\partial x^i, \mathbb C_a\right\rangle, \quad \mathbb C_a = \frac{\partial}{\partial u^a} + C_a \frac{\partial}{\partial z},
\end{equation*}
where the $C_a$'s are \emph{linear functions} of the $u^b$'s only.
The structure and curvature forms of $C_S$ are locally
\[
\theta = (dz -C_a du^a) \otimes \mu, \quad  \omega=\frac{1}{2}\omega_{ab}du^a\wedge du^b\otimes\mu,
\]
where $\omega_{ab}:=\bbC_a(C_b)-\bbC_b(C_a)$.
The skew-symmetric matrix $(\omega_{ab})$ is non-degenerate, and $(\omega^{ab})$ denotes its inverse.

The de Rham differential $d_{\calF}:\Omega(\calF;\ell)\to\Omega(\calF;\ell)$ of the Jacobi algebroid $(T\calF,\ell)$ is locally given by
\begin{equation}
\label{eq:d_F}
d_\calF\left(f_{i_1\dots i_k}d_\calF x^{i_1}\wwedge d_\calF x^{i_k}\otimes\mu\right)=\frac{\partial f_{i_1\dots i_k}}{\partial x^i}d_\calF x^i\wedge d_\calF^{i_1}\wwedge d_\calF x^{i_k}\otimes\mu.
\end{equation}
Hence, if $j,j^a,j^i,j^\circ$ denotes the local frame of $J^1\ell\to S$ defined by
\begin{equation*}
j:=j^1\mu,\quad j^a:=j^1(y^a\mu)-y^aj^1\mu,\quad j^i:=j^1(q^i\mu)-q^ij^1\mu,\quad j^\circ:=j^1(z\mu)-zj^1\mu,
\end{equation*}
then a local frame of the vector subbundle $J^1_\bot\ell\to S$ is provided by $j,j^a,j^\circ$.
In the following we will denote by $j^\alpha$ the components of the tuple $(j,j^a,j^\circ)$.

Distribution $G$ on $S$ admits a local frame consisting of vector fields of the form
\begin{equation*}
\bbG_a := \frac{\partial}{ \partial u^a} + G_a^i \frac{\partial}{\partial x^i},\quad \bbG : = \frac{\partial}{\partial z} + G^i \frac{\partial}{\partial x^i}.
\end{equation*}
Consequently $F \in \Gamma (\wedge^2 N^\ast \calF \otimes T\calF)$, the curvature form of $G$, is locally given by
\begin{equation*}
F = \left( \frac{1}{2} F_{ab}^i du^a \wedge du^b + F^i_a du^a \wedge dz \right)  \otimes \frac{\partial}{\partial x^i},
\end{equation*}
where 
\[
F^i_{ab}  := \bbG_a (G^i_b) - \bbG_b (G^i_a) \quad \text{and} \quad F^i_a = \bbG_a (G^i) - \bbG (G^i_a).
\]
We also need the degree $0$ graded first order differential operator $j^1_G:\Omega(\calF;\ell)\to\Omega(\calF;J^1_\bot\ell)$.
Locally it is completely determined by
\begin{equation}
\label{eq:j^1_G(2)}
\begin{aligned}
j^1_G(f\mu)&:=j^1_G(f)_\alpha j^\alpha=fj+(\bbG_a f)j^a+(\bbG f)j^\circ,\\
j^1_G(d_\calF x^i{\otimes}\mu)&:=j^1_G(d_\calF x^i{\otimes}\mu)_{h\alpha}d_\calF x^h{\otimes} j^\alpha\\
&\phantom{:}=d_\calF x^i{\otimes} j+\frac{\partial G^i_a}{\partial x^h}d_\calF x^h{\otimes} j^a+\frac{\partial G^i}{\partial x^h}d_\calF x^h{\otimes} j^\circ.
\end{aligned}
\end{equation}
It is easy to see that the structure form $\Theta$ of the contact distribution on the contact thickening is locally given by
\[
\Theta = \left((1 - p_i G^i)dz - (C_a + p_i G^i_a)du^a + p_i dx^i \right) \otimes \mu,
\]
Then, a careful implementation of the definitions, through a long, but straightforward computation, shows that:
\begin{itemize}
	\item the transverse Jacobi structure $\hat{\Lambda}_\bot\in\Gamma(\wedge^2(J^1_\bot\ell)^\ast\otimes\ell)$ is determined by
	\begin{equation}
	\label{eq:proof:multi:transversal_Jacobi}
	\hat{\Lambda}_\bot(j^\alpha,j^\beta)=(\bbW^{-1})^{\alpha\beta}\mu,
	\end{equation}
	\item on the contact thickening, the Jacobi structure 
	$J \in \Gamma (\wedge^2 (J^1L)^\ast\otimes L)$ is locally
	\begin{equation}
	\label{eq:proof:multi:Jacobi_contact_thickening}
	J = \left(\frac{1}{2} (\bbW_{\boldsymbol p}^{-1})^{\alpha \beta} \square_\alpha \wedge \square_\beta + \nabla^i \wedge \nabla_i \right) \otimes \mu,
	\end{equation}
\end{itemize}
where $\bbW_{\boldsymbol p} := \bbW + p_i \bbF^i$, and
\[
\bbW := \left( 
\begin{array}{ccc}
0 & C_b & -1 \\
-C_a & \omega_{ab} & 0 \\
1 & 0 & 0
\end{array}
\right) \quad \text{and} \quad \bbF^i :=  \left( 
\begin{array}{ccc}
0 & 0 & 0 \\
0 & F^i_{ab} & F_a^i \\
0 & -F^i_b & 0
\end{array}
\right).
\]
Moreover we will denote by $\square_\alpha$ the components of the tuple $(\square, \square_a, \square_\circ)$ in $\Gamma ((J^1 L)^\ast)$ given by
\[
\begin{aligned}
\square & := \mu^\ast - p_i \nabla^i, \\
\square_a & := \nabla_a - p_j \frac{\partial G^{j}_a}{\partial x^i} \nabla^i + G^i_a \nabla_i, \\
\square_\circ & := \nabla - p_j \frac{\partial G^{j}}{\partial x^i} \nabla^i + G^i \nabla_i,
\end{aligned}
\]
where $\mu^\ast$, $\nabla_i$, $\nabla_a$, $\nabla$, $\nabla_i$ is the local frame of $(J^1 L)^\ast\to T_\ell{}^\ast\calF$ defined by
\begin{equation*}
\mu^\ast (f \mu) = f,\quad \nabla_i (f \mu) = \frac{\partial f}{\partial x^i}, \quad \nabla_a (f \mu) = \frac{\partial f}{\partial u^a},\quad \nabla (f \mu) = \frac{\partial f}{\partial z} \quad \text{and} \quad \nabla^i (f \mu) := \frac{\partial f}{\partial p_i}.
\end{equation*}
Accordingly the action of vector bundle morphism $F^\sharp\in\Gamma(\End(J^1_\bot L)\otimes\ell^\ast\otimes T\calF)$ is locally given by
\begin{equation}
\label{eq:F^sharp(2)}
F^\sharp(j^\alpha)=(\bbF^i\bbW^{-1})_\beta{}^\alpha j^\beta\otimes\mu^\ast\otimes\frac{\partial}{\partial x^i},
\end{equation}
where $(\bbF^i\bbW^{-1})_\beta\phantom{}^\alpha$ is the entry of the matrix $\bbF^i\bbW^{-1}$ in the $\beta$-th row and $\alpha$-th column.
\begin{proof}[Proof of Theorem~\ref{theor:multi}]
	Denote by $\lambda_k$ the $\bbR$-linear map $\lambda_k:\Omega(\calF;\ell)[1]^{\times k}\to \Omega(\calF;\ell)[1]$ which is defined, for $k=1$, by $\lambda_1=d_\calF$, and, for $k>1$, by the right hand side of Equation~\eqref{eq:multi}.
	By their very definition, both $\lambda_k$ and $\frakm_k$ (cf.~Remark~\ref{rem:multi-brackets}) satisfy the following properties, for all $k>0$:
	\begin{enumerate}[label=(\alph*)]
		\item\label{enum:proof:multi:a} they are degree $1$ graded symmetric $\bbR$-multilinear maps,
		\item\label{enum:proof:multi:b} they are first order differential operators with scalar-type symbol in each entry.
	\end{enumerate}
	From~\ref{enum:proof:multi:b}, both the $\lambda_k$'s and the ${\mathfrak m}_k$'s are completely determined by their actions on elements in $\Omega (\calF, \ell)$ of the form $d_\calF x^i \otimes \mu$, and $f \mu$, with $f \in C^\infty (S)$.
	Furthermore, because of~\ref{enum:proof:multi:a}, for every $k$-tuple $(\eta_1,\ldots,\eta_k)$ of homogeneous elements of $\Omega(\calF,\ell)[1]$ whose degree is non-positive, both $\lambda_k(\eta_1,\ldots,\eta_k)$ and $\frakm_k(\eta_1,\ldots,\eta_k)$ vanish whenever more than two of their arguments have negative degree.
	This starting remark will simplify the proof by reducing the number of cases to be considered.
	
	Let us start by checking that ${\frakm}_1$ coincides with $\lambda_1 = d_{\calF}$.
	From Proposition~\ref{prop:CF}, and Equations~\eqref{eq:j^1_G(2)} and~\eqref{eq:proof:multi:Jacobi_contact_thickening}, we immediately get that
	\begin{align*}
	\mathfrak m_1(f\mu)\!=\!\left.\left((\bbW_{\boldsymbol p}^{-1})^{\alpha\beta}p_l(j^1_G(f\mu))_\alpha (j^1_G(d_\calF x^l{\otimes}\mu))_{k\beta}+\frac{\partial f}{\partial x^k}\right)\right|_{\bf 0}d_\calF x^k{\otimes}\mu,
	\end{align*}
	for all $f\in C^\infty(S)$, and moreover
	\begin{multline*}
	\mathfrak m_1(d_\calF x^i{\otimes}\mu)\\
	=\!\tfrac{1}{2}\tfrac{\partial}{\partial p_i}\left.\left((\bbW_{\boldsymbol p}^{-1})^{\alpha\beta}p_{l_1}p_{l_2}(j^1_G(d_\calF x^{l_1}{\otimes}\mu))_{k_1\alpha}(j^1_G(d_\calF x^{l_2}{\otimes}\mu))_{k_2\beta}\right)\right|_{\bf 0}d_\calF x^{k_1}\wedge d_\calF x^{k_2}{\otimes}\mu.
	\end{multline*}
	Comparing with~\eqref{eq:d_F} shows that indeed $\mathfrak m_1=d_{\calF}$.
	
	Let us go further by checking that ${\frakm}_k = \lambda_k$, for all $k>1$.
	Now, from Corollary~\ref{prop:multi-brackets_coordinates}, we see that ${\frakm}_{k}$ depends on derivatives of $\bbW_{\boldsymbol p}^{-1}$ wrt the $p_i$'s at $\boldsymbol p := (p_i) = 0$ up to order $k$.
	By induction on $k$ we get
	\begin{equation}
	\label{eq:induction}
	\left. \frac{\partial \bbW^{-1}_{\boldsymbol p}}{\partial p_{i_1} \cdots \partial p_{i_k}} \right|_{\boldsymbol p = 0} = (-)^k \sum_{\sigma \in S_k} \bbW^{-1} \bbF^{i_{\sigma(1)}} \bbW^{-1} \cdots \bbF^{i_{\sigma(k)}} \bbW^{-1}.
	\end{equation}
	In this section, we will denote $\bbW^{-1} \bbF^{i_1} \bbW^{-1} \cdots \bbF^{i_k} \bbW^{-1}$ simply by $\bbY^{i_1\cdots i_k}$.
	From Proposition~\ref{prop:CF}, and Equations~\eqref{eq:j^1_G(2)}, \eqref{eq:proof:multi:Jacobi_contact_thickening}, and~\eqref{eq:induction}, through a straightforward computation, we get
%
	\begin{multline*}
	\mathfrak m_{k}(d_\calF x^{i_1}\otimes\mu,\ldots,d_\calF x^{i_k}\otimes\mu)\\
	=\!\frac{1}{2}\!\sum_{\sigma\in S_k}\!(\bbY^{i_{\sigma(1)}\cdots i_{\sigma(k-2)}})^{\alpha\beta}j^1_G(d_\calF x^{i_{\sigma(k-1)}}\otimes\mu)_{s\alpha}j^1_G(d_\calF x^{i_{\sigma(k)}}\otimes\mu)_{t\beta}d_\calF x^s\wedge d_\calF x^t\otimes\mu,
	\end{multline*}
	\begin{multline*}
	\mathfrak m_k(d_\calF x^{i_1}\otimes\mu,\ldots,d_\calF x^{i_{k-1}}\otimes\mu,f\mu)\\
	=-\!\sum_{\sigma\in S_{k-1}}\!\!(\bbY^{i_{\sigma(1)}\cdots i_{\sigma(k-2)}})^{\alpha\beta}j^1_G(f\mu)_{\alpha}j^1_G(d_\calF x^{i_{\sigma(k-1)}}\otimes\mu)_{s\beta}d_\calF x^s\otimes\mu,
	\end{multline*}
	\begin{multline*}
	\frakm_k(d_\calF x^{i_1}\!\otimes\mu,\ldots,d_\calF x^{i_{k-2}}\!\otimes\mu,f\mu,g\mu)\!
	=-\!\sum_{\sigma\in S_{k-2}}\!\!(\bbY^{i_{\sigma(1)}\cdots i_{\sigma(k-2)}})^{\alpha\beta}j^1_G(f\mu)_{\alpha}j^1_G(g\mu)_{\beta}\mu.
	\end{multline*}
	With the previous notations, from~\eqref{eq:F^sharp(2)} we get the following intermediate results
	\begin{gather}
	\label{eq:intermediate_results}
	i_{F^\sharp}(f\mu)=0,\qquad
	i_{F^\sharp}(d_\calF x^i\otimes\mu)=(\bbF^i\bbW^{-1})_\beta\phantom{}^\alpha\nabla_\alpha\otimes j^\beta.
	\end{gather}
	\newline\noindent
	From Equations~\eqref{eq:j^1_G(2)}, \eqref{eq:proof:multi:Jacobi_contact_thickening} and~\eqref{eq:intermediate_results}, implementing $\lambda_k$'s definition we get
	\begin{multline*}
	\lambda_{k}(d_\calF x^{i_1}\otimes\mu,\ldots,d_\calF x^{i_k}\otimes\mu)\\
	=\frac{1}{2}\sum_{\sigma\in S_k}(\bbY^{i_{\sigma(2)}\cdots i_{\sigma(k-1)}})^{\alpha\beta}j^1_G(d_\calF x^{i_{\sigma(1)}}\otimes\mu)_{h_1\alpha}j^1_G(d_\calF x^{i_{\sigma(k)}}\otimes\mu)_{h_2\beta}d_\calF x^{h_1}\wedge d_\calF x^{h_2}\otimes\mu
	\end{multline*}
	\begin{multline*}
	\lambda_k(d_\calF x^{i_1}\otimes\mu,\ldots,d_\calF x^{i_{k-1}}\otimes\mu,f\mu)\\
	=\frac{1}{2}\sum_{\sigma\in S_{k-1}}\left\langle j^1_G(f\mu),(i_{F^\sharp}d_\calF x^{i_{\sigma(1)}}\otimes\mu\circ\dots\circ i_{F^\sharp}d_\calF x^{i_{\sigma(k-2)}}\otimes\mu)j^1_G(d_\calF x^{i_{\sigma(k-1)}}\otimes\mu)\right\rangle_C\\
	+\frac{1}{2}\sum_{\sigma\in S_{k-1}}\left\langle j^1_G(d_\calF x^{i_{\sigma(1)}}\otimes\mu),(i_{F^\sharp}d_\calF x^{i_{\sigma(2)}}\otimes\mu\circ\dots\circ i_{F^\sharp}d_\calF x^{i_{\sigma(k-1)}}\otimes\mu)j^1_G(f\mu)\right\rangle_C\\
	=-\sum_{\sigma\in S_{k-1}}(\bbY^{i_{\sigma(1)}\cdots i_{\sigma(k-2)}})^{\alpha\beta}j^1_G(f\mu)_{\alpha}j^1_G(d_\calF x^{i_{\sigma(k-1)}}\otimes\mu)_{h\beta}d_\calF x^h\otimes\mu,
	\end{multline*}
	\begin{multline*}
	\lambda_k(d_\calF x^{i_1}\!\otimes\mu,\ldots,d_\calF x^{i_{k-2}}\!\otimes\mu,f\mu,g\mu)\\
	=\frac{1}{2}\sum_{\sigma\in S_{k-2}}\left\langle j^1_G(f\mu),(i_{F^\sharp}d_\calF x^{i_{\sigma(1)}}\otimes\mu\circ\dots\circ i_{F^\sharp}d_\calF x^{i_{\sigma(k-2)}}\otimes\mu)j^1_G(g\mu)\right\rangle_C\\
	-\frac{1}{2}\sum_{\sigma\in S_{k-2}}\left\langle j^1_G(g\mu),(i_{F^\sharp}d_\calF x^{i_{\sigma(1)}}\otimes\mu\circ\dots\circ i_{F^\sharp}d_\calF x^{i_{\sigma(k-2)}}\otimes\mu)j^1_G(f\mu)\right\rangle_C\\
	=-\sum_{\sigma\in S_{k-2}}(\bbY^{i_{\sigma(1)}\cdots i_{\sigma(k-2)}})^{\alpha\beta}j^1_G(f\mu)_{\alpha}j^1_G(g\mu)_{\beta}\mu.
	\end{multline*}
	This concludes the proof.
\end{proof}

\section{Toy examples}
\label{subsec:toy_examples}

In this short section we briefly discuss the formal deformation problem for the ``simplest possible'' coisotropic submanifolds, namely \emph{Legendrian submanifolds} in a contact manifold, and their \emph{flowout along a Jacobi vector field} (or, which is the same in this case, a contact vector field).
Recall that the flowout along a Jacobi vector field of a coisotropic submanifold is again coisotropic (Example~\ref{ex:simul}.(2)).

Now, let $(M,C)$ be a contact manifold, and let $(L, J= \{-,-\})$ be the associated Jacobi structure.
In particular, $\dim M = 2n +1$ for some $n > 0$.
Recall that a \emph{Legendrian submanifold} of $(M,C)$ is a locally maximal, hence $n$-dimensional, integral submanifold of the contact distribution.
Equivalently, a Legendrian submanifold is an isotropic submanifold, which is additionally coisotropic wrt~the Jacobi structure $(L,\{-,-\})$.
Let $S \subset M$ be a Legendrian submanifold, $\ell = L|_S$, and let $\mu \in \Gamma (L)$ be such that $\mu_x \neq 0$, hence $(X_\mu)_x \notin T_x S$, for all $x \in S$.
In what follows, we denote by $\mathcal T$ the flowout of $S$ along the Hamiltonian vector field $X_\mu$, and assume that it is closed.

\begin{remark}
	There exists a canonical vector bundle isomorphism $J^1\ell\rightarrow NS$ (over the identity) given by
	$j^1\lambda|_S \mapsto X_\lambda|_S\Mod TS$, for $\lambda\in\Gamma(L)$.
	Accordingly, there are canonical vector bundle isomorphisms
	$
	N^\ast S\simeq J_1 \ell$ and $N_\ell {}^\ast S \simeq \der\, \ell
	$.
\end{remark}

Recall that $J^1 \ell$ is equipped with a canonical contact structure (see Example~\ref{ex:1jet}).
The Legendrian tubular neighborhood theorem~\cite{loose1998neighborhoods} asserts that there is a tubular neighborhood $NS \injects M $ of $S$ in $M$ such that composition $J^1 \ell \to NS \to M$ is a contactomorphism onto its image.
Since we are interested in $C^1$-small coisotropic deformations of $S$, we can assume that $M = J^1 \ell$ and identify $S$ with the image of the zero section of the natural projection $J^1 \ell \to S$.

\begin{proposition}\label{prop:leg}
	Let $\{\frakm_k \}$ be the $L_\infty[1]$-algebra structure on
	$
	\Gamma (\wedge^\bullet N_\ell S \otimes \ell)[1] = (\Der^\bullet \ell)[1]
	$
	associated with the coisotropic submanifold $S$ in the contact
	manifold $J^1 \ell$.
	Then $\frakm_k = 0$ for $k > 1$, and $\frakm_1 = -d_{\der\, \ell , \ell}$, the opposite of the de Rham differential of the Atiyah algebroid $\der\, \ell$ with values in its tautological representation on $\ell$.
\end{proposition}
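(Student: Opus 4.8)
The plan is to reduce everything to an explicit model and then run an Euler-degree (weight) count on the higher derived brackets of Proposition~\ref{prop:linfty}. First I would invoke the Legendrian tubular neighbourhood theorem~\cite{loose1998neighborhoods} to identify a neighbourhood of $S$ in $(M,C)$ with a neighbourhood of the zero section in $J^1\ell$ equipped with its Cartan distribution, so that $NS$ gets identified with $J^1\ell$ through the canonical isomorphism recalled just before the statement, $j^1\lambda|_S\mapsto X_\lambda|_S\Mod TS$. Since the normal bundle of the zero section of a vector bundle is the bundle itself, I may take the tautological fat tubular neighbourhood ($\underline{\smash{\tau}}=\id$, $\tau=\id$), so that $L_{NS}=\pi^\ast\ell$ and the induced Jacobi structure is literally $J$. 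The crucial structural input is Examples~\ref{ex:1jet} and~\ref{ex:J1L}: the Jacobi structure associated with the Cartan distribution on $J^1\ell$ is exactly the fibre-wise linear Jacobi structure attached by Proposition~\ref{prop:lijac} to the Jacobi algebroid $(\der\ell,\ell)$, whose $\ell$-adjoint bundle is $J^1\ell$. In particular $\ldsb\Delta_{J^1\ell,\ell},J\rdsb=-J$, where $\Delta_{J^1\ell,\ell}$ is the Euler derivation of $\pi^\ast\ell\to J^1\ell$.

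Next I would introduce the weight grading on $(\Der^\bullet\pi^\ast\ell)[1]$ induced by $\Delta_{J^1\ell,\ell}$: an operator $\square$ has weight $w$ if $\ldsb\Delta_{J^1\ell,\ell},\square\rdsb=w\,\square$, and the Schouten--Jacobi bracket is additive for this grading. The embedding $I$ weighs the generators as follows: for $\lambda\in\Gamma(\ell)$ the fibre-wise constant section $I(\lambda)=\pi^\ast\lambda$ has weight $0$, while for $\nu\in\Gamma(NS)$ the fibre-wise constant vertical derivation $I(\nu)=\bbD_{\pi^\ast\nu}$ has weight $-1$; and $J$ itself has weight $-1$. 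Now fix inputs among the generators $\Gamma(\ell)$ (degree $-1$) and $\Gamma(NS)$ (degree $0$), say $p$ of type $\nu$ and $q$ of type $\lambda$, with $k=p+q$. On one hand the iterated bracket $\ldsb\ldsb\cdots\ldsb J,I(\xi_1)\rdsb,\ldots\rdsb,I(\xi_k)\rdsb$ has weight $-1-p$; on the other hand, since bracketing with an element of $\Der^1$ preserves and with an element of $\Der^0$ lowers the multiderivation degree by one, it lands in $\Der^{\,2-q}$. A short analysis of the projection $P$ (using~\eqref{eq:projection_P} and~\eqref{eq:projection_P_bis}: feeding a degree-$m$ operator the $m$ conormal directions, which at the zero section are precisely the vertical ones, and restricting to $S$) shows that $P$ annihilates any operator of degree $m$ whose weight differs from $-m$. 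Hence $\frakm_k$ can be non-zero only when $-1-p=-(2-q)$, i.e. $p+q=1$, forcing $k=1$. Because the $\frakm_k$ are graded symmetric and are derivations in each entry (Remark~\ref{rem:multi-brackets}), they are determined by their values on these generators, and therefore $\frakm_k=0$ for all $k>1$.

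It remains to identify $\frakm_1$. Here I would rely on Proposition~\ref{prop:linfty_cohomological_resolution}\ref{enumitem:prop:linfty_cohomological_resolution_1}, which gives $\frakm_1=d_{N_\ell{}^\ast S,\ell}$, together with the identification $N_\ell{}^\ast S\simeq\der\ell$; the de Rham differential of $(\der\ell,\ell)$ is exactly the der-complex differential $d_D=d_{\der\ell,\ell}$. The only point requiring care is the sign, which I would pin down directly on degree $-1$ generators: for $\lambda\in\Gamma(\ell)$,
\[
\frakm_1(\lambda)=P\ldsb J,\pi^\ast\lambda\rdsb=-P(\Delta_{\pi^\ast\lambda})=-\big(X_{\pi^\ast\lambda}|_S\Mod TS\big),
\]
using $\ldsb J,\lambda\rdsb=-\Delta_\lambda$ from~\eqref{eq:trianglelambda} and~\eqref{eq:projection_P}. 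Under $NS\simeq J^1\ell$ the class $X_{\pi^\ast\lambda}|_S\Mod TS$ corresponds to $j^1\lambda=d_D\lambda$, whence $\frakm_1(\lambda)=-d_D\lambda$; matching the symbols on the algebra part then yields $\frakm_1=-d_{\der\ell,\ell}$ on the nose. The main obstacle I anticipate is not the vanishing itself but making the projection step fully rigorous — that is, proving cleanly that $P$ extracts exactly the weight-$(-m)$ component of a degree-$m$ multiderivation — since this is precisely what converts the two independent degree and weight counts into the single constraint $p+q=1$; once that lemma is in place, the remainder is routine bookkeeping.
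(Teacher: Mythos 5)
Your argument is correct, but it reaches the vanishing of the higher brackets by a genuinely different mechanism than the paper. The paper's proof is a two-line corollary of machinery it has already built: since the Jacobi structure on $J^1\ell$ is fiber-wise linear (Example~\ref{ex:J1L}), the three bracket rules (constant--constant is zero, constant--linear is constant, linear--linear is linear) are fed directly into the explicit formulas~\eqref{eq:CF1}, \eqref{eq:CF2}, \eqref{eq:CF3} of Proposition~\ref{prop:CF}, and every term dies because a fiber-wise constant (resp.\ linear) section is annihilated by one (resp.\ two) of the vertical derivatives $\bbD_{s_i}$ appearing there. You instead recast fiber-wise linearity as the Euler eigenvalue equation $\ldsb\Delta_{E,L},J\rdsb=-J$, use additivity of the Euler weight under the Schouten--Jacobi bracket together with the arity count, and observe that $P$ restricted to arity-$m$ operators only sees the weight-$(-m)$ component; the two independent linear constraints then collapse to $p+q=1$, i.e.\ $k=1$. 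What your route buys is conceptual transparency and generality: the same count shows at once that a fiber-wise polynomial structure of degree $d$ can only produce $\frakm_k$ for $k\le d+1$, whereas the paper's route buys brevity given that~\eqref{eq:CF1}--\eqref{eq:CF3} are already on the table. Two small points to tighten: the lemma you flag yourself --- that $P$ extracts exactly the weight-$(-m)$ component of an arity-$m$ operator --- does need the short local verification (in the frame of Section~\ref{sec:multi-brackets_coordinates} a weight-$w$ operator with $m$ vertical indices has components homogeneous of degree $w+m$ in the fibre coordinates, which restrict to zero on $S$ unless $w+m=0$); and your sign check of $\frakm_1$ on $\Gamma(\ell)$ alone does not pin down a graded derivation --- you should also evaluate $\frakm_1$ on the degree-$0$ generators $\Gamma(NS)$ (or match symbols, as you indicate), which is routine but not free.
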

\begin{proof}
	Recall that the Jacobi structure on $J^1 \ell$ is fiber-wise linear (Example~\ref{ex:J1L}).
	Accordingly, the Jacobi bracket between
	\begin{itemize}
		\item fiber-wise constant sections is trivial,
		\item a fiber-wise constant and a fiber-wise linear section is fiber-wise constant,
		\item fiber-wise linear sections is fiber-wise linear.
	\end{itemize}
	Now, the assertion immediately follows from Equations~\eqref{eq:CF1}, \eqref{eq:CF2}, \eqref{eq:CF3}.
\end{proof}

\begin{remark}\label{rem:legformal}
	As a consequence of the above proposition, the formal deformation problem for Legendrian submanifolds is unobstructed.
	Even more, one can exhibit a canonical contracting homotopy for the complex $(\Der^\bullet \ell , d_{\der\, \ell, \ell})$ (see, for instance~\cite{rubtsov1980cohomology}).
	Hence, $\mathfrak{m}_1$ is acyclic and, as known to experts, all coisotropic, hence Legendrian, sections of $J^1 \ell \to S$ are actually trivial, i.e.~they are Hamiltonian equivalent to $S$.
	In other words the moduli space of coisotropic deformations of a Legendrian submanifold is zero dimensional.
\end{remark}

\begin{proposition}\label{prop:multi-brackets_coordinates_hamiltonian_flowout}
	Let $\{\frakm_k \}$ be the $L_\infty[1]$-algebra structure on
	$
	\Gamma (\wedge^\bullet (N\mathcal T \otimes L|_{\mathcal T}^\ast) \otimes L|_{\mathcal T})
	$
	associated with the coisotropic submanifold $\calT$ in the contact
	manifold $J^1 \ell$.
	Then $\frakm_k = 0$ for $k > 2$.
\end{proposition}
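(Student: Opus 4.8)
The plan is to show that $\calT$ is a regular coisotropic submanifold whose characteristic foliation has codimension one, so that $\calT$ is automatically transversally integrable and the explicit formula of Theorem~\ref{theor:multi} forces the higher brackets to vanish.

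First I would recall that $\calT$ is coisotropic: being the flowout of the coisotropic (indeed Legendrian) submanifold $S$ along the contact vector field $X_\mu$, with $(X_\mu)_x\notin T_xS$ for all $x\in S$, it is coisotropic by Example~\ref{ex:simul}. Write $\{\phi_t\}$ for the flow of $X_\mu$, so that $\calT=\bigcup_t\phi_t(S)$ is foliated by the ``slices'' $\phi_t(S)$. Since each $\phi_t$ is a contactomorphism and $S$ is Legendrian, every slice $\phi_t(S)$ is again a Legendrian, hence isotropic, $n$-dimensional submanifold tangent to $C$ (here $\dim J^1\ell=2n+1$, $\dim S=n$, $\dim\calT=n+1$). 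On the other hand $\theta(X_\mu)=\mu$ is nowhere vanishing on $S$; since $X_\mu$ is invariant under its own flow while each $\phi_t$ rescales the structure form $\theta$ by a nowhere-zero conformal factor, $\theta(X_\mu)$ stays nowhere zero along all of $\calT$, so that $X_\mu\notin C$ at every point of $\calT$.

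It then follows that $C_\calT:=C\cap T\calT$ has constant rank $n$, with $C_\calT$ exactly the distribution $G$ tangent to the slices, and $T\calT=C_\calT\oplus\langle X_\mu\rangle$. In particular $\dim(C\cap T\calT)_p=\dim\calT-1$ for all $p$, so $\calT$ is a regular coisotropic submanifold by Proposition~\ref{prop:coisotropics_contact_setting} and Definition~\ref{def:regular_coisotropic}. Moreover $C_\calT$ is isotropic for $\omega$, whence $C_\calT\subseteq(C_\calT)^{\perp_\omega}$; since $\dim(C_\calT)^{\perp_\omega}=2n-n=n=\dim C_\calT$, we get $(C_\calT)^{\perp_\omega}=C_\calT$. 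By Proposition~\ref{prop:coisotropics_contact_setting} the characteristic distribution of the pre-contact manifold $(\calT,C_\calT)$ is therefore $T\calF=(C_\calT)^{\perp_\omega}=C_\calT$, of rank $n$, and consequently the normal bundle $N\calF=T\calT/T\calF$ has rank $1$.

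Finally I would invoke transversal integrability. Any complementary distribution to $T\calF$ in $T\calT$ — for instance $G':=\langle X_\mu|_\calT\rangle$ — has rank $1$ and is therefore integrable, so $(\calT,C_\calT)$ is transversally integrable in the sense of Definition~\ref{def:transversally_integrable}. Choosing the fat tubular neighborhood produced by the contact thickening along such an integrable $G'$, its curvature form $F$ vanishes, whence $F^\sharp=0$ and formula~\eqref{eq:multi} of Theorem~\ref{theor:multi} yields $\frakm_k=0$ for all $k>2$; this is precisely Proposition~\ref{prop:transversally_integrable} applied to $\calT$. Since the $L_\infty[1]$-algebra attached to $\calT$ is independent of these choices up to $L_\infty$-isomorphism (Proposition~\ref{prop:gauge_invariance} together with the uniqueness of coisotropic embeddings, Theorem~\ref{teor:coisotropic_embedding}), this proves the claim. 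The only genuinely delicate point is the verification that $X_\mu$ never enters $C$ along the whole flowout $\calT$ (not merely along $S$), which is what guarantees that $C_\calT$ has locally constant rank and hence that $\calT$ is regular with a rank-one transversal bundle.
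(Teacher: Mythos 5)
Your proof is correct and follows essentially the same route as the paper's: the paper simply observes that the characteristic foliation of $\calT$ is one-codimensional, so any complementary distribution $G$ has rank one, is therefore involutive, has vanishing curvature $F$, and Theorem~\ref{theor:multi} then kills all $\frakm_k$ with $k>2$. The only difference is that you supply the verification that $T\calF=C_\calT$ has corank one in $T\calT$ (via the Legendrian slices and the non-vanishing of $\theta(X_\mu)=\mu$ along the flowout), a fact the paper asserts without proof.
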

\begin{proof}
	
	The characteristic foliation $\calF$ of $\mathcal T$ is one-co-dimensional.
	Accordingly, any distribution $G$ complementary to $T \calF$ has rank $1$ and, therefore, it is involutive.
	In particular, its curvature $F$ vanishes.
	Now the assertion immediately follows from Theorem~\ref{theor:multi}.
\end{proof}


\begin{remark}
	Let $\mu \in \Gamma (L)$ be as above.
	Since $\mu_x \neq 0$ for all $x \in S$, local contactomorphism $J^1 \ell \to M$ can be chosen in such a way that $\mu $ identifies with a no-where zero, fiber-wise constant section of the Jacobi bundle on $J^1 \ell$.
	In particular, $J^1 \ell \simeq J^1 (M) := J^1 \bbR_M$ and $X_\mu$ identifies with the Reeb vector field on $J^1 (M)$.
	It follows that Propositions~\ref{prop:leg} and~\ref{prop:multi-brackets_coordinates_hamiltonian_flowout} can be also proven from Proposition~\ref{prop:multi-brackets_coordinates} and the explicit form of the Jacobi structure on $J^1 (M)$ in jet coordinates (see, for instance, \cite[Exercise 2.7]{bocharov1999firstorder}).
\end{remark}

\section{A first obstructed example in the contact setting}
\label{sec:obstructed_example_contact_L-infinity}

In this section we give a conceptual interpretation based on the associated $L_\infty[1]$-algebra of a first example of co\-iso\-tro\-pic submanifold in a contact manifold whose co\-iso\-tro\-pic deformation problem is formally obstructed (see~\cite[Examples 3.5 and 3.8]{tortorella2016rigidity}).
This example was originally derived by analytical methods, and employed to illustrate that a certain subclass of coisotropic submanifolds (the so called ``integral'' ones) is not stable under small coisotropic deformations (cf.~\cite[Proposition 4.5]{tortorella2016rigidity}).
Notice that a conceptual approach to this example can be alternatively grounded on the BFV-complex (see Section~\ref{sec:obstructed_example_contact_BFV}).

Let us consider vector bundle $E:=\bbT^5\times\bbR^2\overset{\tau}{\longrightarrow} S:=\bbT^5$, $(\phi_i,y_a)\longmapsto(\phi_i)$, where $\phi_1,\ldots,\phi_5$ are the standard angular coordinates on the $5$-dimensional torus $\bbT^5$, and $y_1,y_2$ are the standard Euclidean coordinates on $\bbR^2$.
Now $E$ comes equipped with a coorientable contact structure (cf.~Remark~\ref{rem:coorientable}) by means of the contact distribution $C_E:=\ker\theta_E$, where the global contact form $\theta_E\in\Omega^1(E)$ is given by
\begin{equation*}
\theta_E:=y_1d\phi_1+y_2d\phi_2+\sin\phi_3d\phi_4+\cos\phi_3d\phi_5.
\end{equation*}
As we already know, this contact structure determines a Jacobi structure $J=\{-,-\}$ on $\bbR_E\to E$, the trivial line bundle over $E$.
It is straightforward to check that
\begin{equation*}
J=\frac{\partial}{\partial\phi_3}\wedge X+Y\wedge\Delta_E-\frac{\partial}{\partial\phi_1}\wedge\frac{\partial}{\partial y_1}-\frac{\partial}{\partial\phi_2}\wedge\frac{\partial}{\partial y_2}-Y\wedge\id,
\end{equation*}
where $\Delta_E$ denotes the Euler vector field on $E$, i.e.~$\Delta_E:=y_1\frac{\partial}{\partial y_1}+y_2\frac{\partial}{\partial y_2}$, and $X,Y\in\frakX(S)$ are defined by
\begin{equation*}
X:=\cos\phi_3\frac{\partial}{\partial \phi_4}-\sin\phi_3\frac{\partial}{\partial \phi_5},\qquad Y:=\sin\phi_3\frac{\partial}{\partial \phi_4}+\cos\phi_3\frac{\partial}{\partial \phi_5}.
\end{equation*}
Notice that $Y$ is exactly the Reeb vector field, i.e.~$\theta_E(Y)=1$ and $\theta_E([Y,\Gamma(C_E)])=0$.
Understanding the algebra embedding ${\tau^\ast}:C^\infty(S){\hookrightarrow} C^\infty(E)$, the Jacobi bracket $\{-,-\}$ on $C^\infty(E)$ is also completely determined by
\begin{equation}
\label{eq:obstructed_example_contact1}
\left\{y_a,y_b\right\}=0,\quad\left\{y_a,f\right\}=\frac{\partial f}{\partial\phi_a},\quad
\{f,g\}=\frac{\partial f}{\partial\phi_3}Xg-\frac{\partial g}{\partial\phi_3}Xf+fYg-gYf,
\end{equation}
for arbitrary  $a,b\in\{1,2\}$, and $f,g\in C^\infty(S)$.

From~\eqref{eq:obstructed_example_contact1} we get that $S$ is a regular coisotropic submanifold of $(E,C_E)$.
Hence $S$ inherits a structure of pre-contact manifold, with pre-contact distribution
\begin{equation*}
C_S:=C_E\cap TS=\left\langle\frac{\partial}{\partial\phi_1},\frac{\partial}{\partial\phi_2},\frac{\partial}{\partial \phi_3},X\right\rangle.
\end{equation*} 
The latter coincides with the kernel of the global pre-contact form $\theta_S\in\Omega^1(S)$ given by
\begin{equation*}
\theta_S:=\theta_E|_{TS}=\sin\phi_3d\phi_4+\cos\phi_3d\phi_5.
\end{equation*}
Indeed $S$ is not only regular coisotropic, but it is actually reducible, with contact reduction performed through the projection $\bbT^5=\bbT^2\times\bbT^3\to\bbT^3$, reduced contact form $\theta=\sin\phi_3d\phi_4+\cos\phi_3d\phi_5$ and reduced contact distribution $C=\left\langle\frac{\partial}{\partial \phi_3},X\right\rangle$.
For more details we refer the reader to~\cite[Section~4]{tortorella2016rigidity}.

Notice that the pre-contact manifold $(S,C_S)$ is transversally integrable (cf.~Definition~\ref{def:transversally_integrable}).
Indeed its characteristic distribution is
\begin{equation*}
T\calF=\left\langle\frac{\partial}{\partial\phi_1},\frac{\partial}{\partial\phi_2}\right\rangle;
\end{equation*}
so it admits the following involutive complementary distribution
\begin{equation*}
G:=\left\langle\frac{\partial}{\partial\phi_3},\frac{\partial}{\partial\phi_4},\frac{\partial}{\partial\phi_5}\right\rangle.
\end{equation*}

The global frame $d\phi_1|_{T\calF},d\phi_2|_{T\calF}$ of $T^\ast\calF\to S$ will be used to identify $T^\ast\calF\to S$ with the trivial rank $2$ vector bundle $E\to S$ so that $\tfrac{\partial}{\partial\phi_1}$ and $\tfrac{\partial}{\partial\phi_2}$, seen as fiberwise linear functions on $T^\ast\calF$, agrees with $y_1$ and $y_2$ respectively.
Under this identification, we immediately get that:
\begin{itemize}
	\item $(E,C_E)$ identifies with $(T^\ast\calF,\ker(\theta_G+\tau^\ast\theta_S))$, i.e.~the contact thickening of $(S,C_S)$ corresponding to the splitting $TS=T\calF\oplus G$ (cf.~Section~\ref{sec:contact_thickening}),
	\item the $L_\infty[1]$-algebra of $S$ is represented by $(\Omega^\bullet(\calF),\{\frakm_k\})$ with the multibrackets determined by $G$ according to Theorem~\ref{theor:multi}.
\end{itemize}
Moreover, since $G$ is involutive, Proposition~\ref{prop:transversally_integrable} implies that $\frakm_k=0$, for all $k>2$.
In the following we will describe more explicitly the action of $\frakm_1$ and $\frakm_2$ in order to obtain information about the coisotropic deformation problem of $S$.

Let us start finding the explicit expression of $\frakm_1$, and extracting from it information about 1) the infinitesimal coisotropic deformations of $S$ and 2) their moduli space under infinitesimal Hamiltonian equivalence.
From coorientability, $\frakm_1:\Omega^\bullet(\calF)\to\Omega^\bullet(\calF)$ reduces to the de Rham differential $d_\calF$ of the Lie algebroid $T\calF\to S$.
Hence, for all $f,g\in C^\infty(S)$, we have:
\begin{equation}
\label{eq:obstructed_example_contact2}
\begin{gathered}
\frakm_1(f)=\frac{\partial f}{\partial\phi_1}d\phi_1+\frac{\partial f}{\partial\phi_2}d\phi_2,\\
\frakm_1(fd\phi_1+gd\phi_2)=\left(\frac{\partial g}{\partial\phi_1}-\frac{\partial f}{\partial\phi_2}\right)d\phi_1\wedge d\phi_2
\end{gathered}
\end{equation} 
Now, from~\eqref{eq:obstructed_example_contact2}, we get that:
\begin{enumerate}
	\item in view of Corollary~\ref{cor:inf1}, an arbitrary section $s=fd\phi_1+gd\phi_2\in\Gamma(T^\ast\calF)$, with $f,g\in C^\infty(\bbT^5)$, is an infinitesimal coisotropic deformation of $S$ if and only if
	\begin{equation}
	\label{eq:obstructed_example_contact3}
	\frac{\partial g}{\partial\phi_1}-\frac{\partial f}{\partial\phi_2}=0,
	\end{equation}
	\item in view of Corollary~\ref{cor:infequi}, two infinitesimal coisotropic deformations $s_i=f_id\phi_1+g_id\phi_2$, with $i=0,1$, are infinitesimal Hamiltonian equivalent if and only if there is $h\in C^\infty(\bbT^5)$ such that
	\begin{equation*}
	f_1=f_0+\frac{\partial h}{\partial \phi_1},\quad g_1=g_0+\frac{\partial h}{\partial \phi_2}.
	\end{equation*}
\end{enumerate}

Going further, we will provide an explicit expression of $\frakm_2$, which will tell us something about 1) the first obstruction to the prolongability, under Hamiltonian equivalence, of infinitesimal coisotropic deformations and 2) the space of coisotropic deformations of $S$.
In view of~\eqref{eq:obstructed_example_contact1}, from Proposition~\ref{prop:CF} it follows that $\frakm_2:\Omega^\bullet(\calF)[1]\times\Omega^\bullet(\calF)[1]\to\Omega^\bullet(\calF)[1]$ is completely determined by
\begin{equation}
\label{eq:obstructed_example_contact4}
\begin{gathered}
\frakm_2(f,g)=-\{f,g\},\\
\frakm_2(f,g_1d\phi_1+g_2d\phi_2)=-\{f,g_1\}d\phi_1-\{f,g_2\}d\phi_2,\\
\frakm_2(f_1d\phi_1+f_2d\phi_2,g_1d\phi_1+g_2d\phi_2)=\left(\{f_1,g_2\}-\{f_2,g_1\}\right)d\phi_1\wedge d\phi_2,
\end{gathered}
\end{equation}
Now, from~\eqref{eq:obstructed_example_contact4}, we get that:
\begin{enumerate}
	\item in view of Proposition~\ref{prop:Kuranishi}, if $s=fd\phi_1+gd\phi_2$ is an infinitesimal coisotropic deformation of $S$ which can be prolonged to a formal coisotropic deformation of $S$ up to Hamiltonian equivalence, then there exist $h,k\in C^\infty(\bbT^5)$ such that
	\begin{equation*}
	\frac{\partial f}{\partial\phi_3}Xg-\frac{\partial g}{\partial\phi_3}Xf+fYg-gYf=\frac{\partial k}{\partial\phi_1}-\frac{\partial h}{\partial\phi_2}.
	\end{equation*}
	Hence, integrating over $\phi_1$ and $\phi_2$, we obtain the following weaker necessary condition for the prolongability of $s$
	\begin{equation}
	\label{eq:obstructed_example_contact5}
	\iint\limits_{\bbT^2}\left(\frac{\partial f}{\partial\phi_3}Xg-\frac{\partial g}{\partial\phi_3}Xf+fYg-gYf\right)d\phi_1 d\phi_2=0;
	\end{equation}
	\item in view of Corollary~\ref{cor:conver}, an arbitrary section $s=fd\phi_1+gd\phi_2\in\Gamma(T^\ast\calF)$, with $f,g\in C^\infty(\bbT^5)$, is a coisotropic deformation of $S$ if and only if $-s$ satisfies the MC equation $\frakm_1(-s)+\tfrac{1}{2}\frakm_2(-s,-s)=0$, i.e.~the following non-linear first order pde
	\begin{equation}
	\label{eq:example_cosiotropic_deformation_space_Linfty}
	\frac{\partial f}{\partial\phi_2}-\frac{\partial g}{\partial\phi_1}+\frac{\partial f}{\partial\phi_3}Xg-\frac{\partial g}{\partial\phi_3}Xf+fYg-gYf=0,
	\end{equation}
\end{enumerate}

Finally we point out that $s=\cos\phi_4d\phi_1+
\sin\phi_4d\phi_2$ is an infinitesimal coisotropic deformation of $S$ which can not be prolonged to a formal coisotropic deformation of $S$ (and, a fortiori, neither to a smooth one).
Indeed~\eqref{eq:obstructed_example_contact3} is fulfilled, but the constraint~\eqref{eq:obstructed_example_contact5} fails to be satisfied:
\begin{equation*}
\iint\limits_{\bbT^2}\left(\frac{\partial f}{\partial\phi_3}Xg-\frac{\partial g}{\partial\phi_3}Xf+fYg-gYf\right)d\phi_1 d\phi_2=(2\pi)^2\sin\phi_3\neq 0.
\end{equation*}
Hence the coisotropic deformation problem of $S$ is formally (and a fortiori smoothly) obstructed, even up to Hamiltonian equivalence.
As a consequence the moduli space of its coisotropic deformations under Hamiltonian equivalence is not smooth at the equivalence class of $S$.

\section{A second obstructed example in the contact setting}
\label{sec:second_obstructed_example_contact_L-infinity}

We describe a second example of regular co\-iso\-tro\-pic submanifold whose co\-iso\-tro\-pic deformation problem is formally obstructed (cf.~also the revised version of~\cite{LOTV}).
Differently from Section~\ref{sec:obstructed_example_contact_L-infinity} this obstructed coisotropic submanifold has a non-simple characteristic foliation.
From this perspective it is closely inspired by the analogous example in the symplectic setting which was constructed by Oh--Park~\cite{zambon2008example} and further investigated by Kieserman~\cite{kieserman2010liouville}.
In this section we provide a conceptual interpretation of this obstructed example  in terms of the associated $L_\infty[1]$-algebra.

Let us consider the $7$-dimensional coorientable contact manifold $(M,C)$, with $M:=\bbR^6\times\bbS^1$ and $C:=\ker\theta$, where global contact $1$-form $\theta\in\Omega^1(M)$ is given by
\begin{equation*}
\theta:=d\phi-\sum_{i=1}^3p_idq^i.
\end{equation*}
Above $(q^i,p_i)$ are the cartesian coordinates on $\bbR^6\simeq T^\ast\bbR^3$ and $\phi$ is the angle coordinate for $\bbS^1$.
In the following we also use polar coordinates $(r_i,\phi_i)$ on each plane $\bbR^2=\{(q_i,p^i)\}$, for $i=1,2,3$.

The contact distribution $C$ is a trivial vector bundle over $M$, with a global frame provided by
\begin{equation*}
\frac{\partial}{\partial p_i},\qquad D_i:=\frac{\partial}{\partial q^i}+p_i\frac{\partial}{\partial\phi}.
\end{equation*}
For every $f\in\Gamma(\bbR_M)= C^\infty(M)$, the corresponding contact vector field $X_f$, uniquely determined by $\theta(X_f)=f$ and $\theta([X_f,\Gamma(C)])=0$, is given by
\begin{equation*}
X_f=(D_if)\frac{\partial}{\partial p_i}-\sum_{i=1}^3\frac{\partial f}{\partial p_i}D_i+f\frac{\partial}{\partial\phi}.
\end{equation*}
In particular $\partial/\partial\phi$ is exactly the Reeb vector field $X_1$.
As we already know, the coorientable contact structure on $M$ determines a Jacobi structure $J=\{-,-\}$ on the trivial line bundle $\bbR_M\to M$.
It is straightforward to check that
\begin{equation*}
J=D_i\wedge\frac{\partial}{\partial p_i}+\id\wedge\frac{\partial}{\partial\phi}.
\end{equation*}

Consider the functions $H_i\in C^\infty(M)$ given by $H_i:=\frac{1}{2}r_i^2$, for $i=1,2,3$.
For every $\alpha>0$, set $H_\alpha:=H_1+\alpha H_2$, and define the $5$-dimensional submanifold $S_\alpha\subset M$ by setting
\begin{equation*}
S_\alpha:=H_\alpha^{-1}\left(1/4\right)\cap H_3^{-1}\left(1/2\right).
\end{equation*}
Since $\{H_\alpha,H_3\}=0$, and $\theta$, $dH_\alpha$, $dH_3$, are linearly independent on a neighborhood of $S_\alpha$, from Proposition~\ref{prop:coisotropics_contact_setting} we get that $S_\alpha$ is a regular coisotropic submanifold of $(M,C)$.
Hence $S_\alpha$ inherits a structure of pre-contact manifold, with pre-contact distribution $C_\alpha:=C\cap TS_\alpha$, i.e.~the kernel of global pre-contact form $\theta_\alpha:=\theta|_{TS_\alpha}\in\Omega^1(S_\alpha)$.
Moreover its characteristic distribution $T\calF$ is a trivial vector bundle over $S_\alpha$, with a global frame provided by
\begin{equation*}
\left.X_{(H_\alpha-\tfrac{1}{4})}\right|_{S_\alpha}\!=\!\left.\left(\frac{\partial}{\partial\phi_1}+\alpha\frac{\partial}{\partial\phi_2}-(p_1^2+\alpha p_2^2)\frac{\partial}{\partial\phi}\right)\!\right|_{S_\alpha}\!,\quad \left.X_{(H_3-\tfrac{1}{2})}\right|_{S_\alpha}\!=\!\left.\left(\frac{\partial}{\partial\phi_3}-p_3^2\frac{\partial}{\partial\phi}\right)\!\right|_{S_\alpha}\!.
\end{equation*}
So in particular all its characteristic leaves are orientable.

\begin{remark}
	Clearly, for $\alpha=1$, the characteristic foliation is simple with leaf space diffeomorphic to $\mathbb{CP}^1\times\bbS^1$.
	For all $\alpha\neq 1$ instead the characteristic foliation is not simple.
	On the one hand, for $\alpha\notin\bbQ$, each one of the characteristic leaves contained in $S_\alpha\cap H_1^{-1}(]0,1/4[)$ is dense in $S_\alpha$.
	On the other hand, for $\alpha=m/n$, with $m$ and $n$ coprime integers, there are characteristic leaves with non-trivial holonomy: precisely, characteristic leaves contained in $S_\alpha\cap H_1^{-1}(0)$ (resp.~$S_\alpha\cap H_1^{-1}(1/4)$) have cyclic holonomy group of order $m$ (resp.~$n$).
\end{remark}

Set $U_\alpha:=S_\alpha\cap H_1^{-1}(]0,1/4[)$.
Then $U_\alpha$ is an open dense subset of $S_\alpha$, and it is covered by charts of local coordinates $(u_1,u_2,x,y,z)$ defined as follows
\begin{align*}
u_1=\phi_3,\quad 
u_2=\phi_1+\alpha\phi_2,\quad 
x=H_2,\quad
y=\phi_2-\alpha\phi_1,\quad 
z=\phi+\sum_{i=1}^3 H_i(\phi_i-\frac{1}{2}\sin(2\phi_i)).
\end{align*}
The latter are actually local Darboux coordinates on $S_\alpha$, i.e.~locally $\theta_\alpha=dz-ydx$.
So locally we also have
\begin{equation}
\label{eq:second_obstructed_example_Darboux}
C_\alpha=\left\langle\frac{\partial}{\partial u_1},\frac{\partial}{\partial u_2},\frac{\partial}{\partial y},D:=\frac{\partial}{\partial x}+y\frac{\partial}{\partial z}\right\rangle,\qquad T\calF=\left\langle\frac{\partial}{\partial u_1},\frac{\partial}{\partial u_2}\right\rangle.
\end{equation}
Correspondingly the local coordinate vector fields have the following local expressions
\begin{align*}
\frac{\partial}{\partial u_1}&=X_{(H_3-1/2)},\\
\frac{\partial}{\partial u_2}&=\frac{1}{1+\alpha^2}X_{(H_\alpha-1/4)},\\
\frac{\partial}{\partial y}&=\frac{1}{1+\alpha^2}\left(\frac{\partial}{\partial\phi_2}-\alpha\frac{\partial}{\partial\phi_1}+\left(\alpha p_1^2-p_2^2\right)\frac{\partial}{\partial\phi}\right),\\
\frac{\partial}{\partial x}&=-\frac{\alpha}{r_1}\frac{\partial}{\partial r_1}+\frac{1}{r_2}\frac{\partial}{\partial r_2}+\left(\alpha\phi_1-\phi_2-\frac{\alpha}{2}\sin(2\phi_1)+\frac{1}{2}\sin(2\phi_2)\right)\frac{\partial}{\partial\phi},\\
\frac{\partial}{\partial z}&=\frac{\partial}{\partial\phi}.
\end{align*}
Notice that vector fields $\frac{\partial}{\partial u_1}$, $\frac{\partial}{\partial u_2}$, $\frac{\partial}{\partial y}$, $D$, $\frac{\partial}{\partial z}$ do not depend on the local chart of Darboux coordinates, and so they are globally defined on $U_\alpha$.
Moreover vector fields $\frac{\partial}{\partial u_1}$ and $\frac{\partial}{\partial u_2}$ (resp.~differential forms $d_\calF u_1\equiv(du_1)|_{T\calF}$ and $d_\calF u_2\equiv(du_2)|_{T\calF}$) uniquely prolong to a global frame of $T\calF$ (resp.~$T^\ast\calF$). 
As a consequence, for any $0<\eps<1/8$, we can pick a distribution $G$ on $S_\alpha$ complementary to $T\calF$ and satisfying the following additional property
\begin{equation}
\label{eq:second_obstructed_example_complementary}
\left.G\right|_{U_{\alpha,\eps}}=\left.\left\langle\frac{\partial}{\partial y},\ D,\ \frac{\partial}{\partial z}\right\rangle\right|_{U_{\alpha,\eps}},
\end{equation}
where the open subset $U_{\alpha,\eps}\subset U_\alpha$ is defined by $U_{\alpha,\eps}:=S_\alpha\cap H_1^{-1}(]\eps,1/4-\eps[)$.
From now on we assume to have fixed such a distribution $G$.
As we already know, after such choice we get that:
\begin{itemize}
	\item around $S_\alpha$ the contact manifold $(M,C)$ identifies with 
	the contact thickening of pre-contact manifold $(S_\alpha,C_\alpha)$ corresponding to the splitting $TS_\alpha=T\calF\oplus G$ (cf.~Section~\ref{sec:contact_thickening}),
	\item the $L_\infty[1]$-algebra of $S_\alpha$ is represented by $(\Omega^\bullet(\calF),\{\frakm_k\})$ with the multibrackets determined by $G$ according to Theorem~\ref{theor:multi}.
\end{itemize}

We focus on the explicit action of $\frakm_1$ and $\frakm_2$.
From coorientability, $\frakm_1:\Omega^\bullet(\calF)\to\Omega^\bullet(\calF)$ reduces to the de Rham differential $d_\calF$ of the Lie algebroid $T\calF\to S_\alpha$.
Hence, for all $f,g\in C^\infty(S_\alpha)$, the following identities hold on $S_\alpha$:
\begin{equation}
\label{eq:second_obstructed_example_contact2}
\begin{gathered}
\frakm_1(f)=\frac{\partial f}{\partial u_1}d_\calF u_1+\frac{\partial f}{\partial u_2}d_\calF u_2,\\
\frakm_1(fd_\calF u_1+gd_\calF u_2)=\left(\frac{\partial g}{\partial u_1}-\frac{\partial f}{\partial u_2}\right)d_\calF u_1\wedge d_\calF u_2.
\end{gathered}
\end{equation}
Let $J_\alpha=\{-,-\}_\alpha:C^\infty(U_\alpha)\times C^\infty(U_\alpha)\to C^\infty(U_\alpha)$ be the first-order skew-symmetric bi-differential operator defined by
\begin{equation*}
J_\alpha:=\id\wedge\frac{\partial}{\partial z}+D\wedge\frac{\partial}{\partial y}.
\end{equation*}
In view of~\eqref{eq:second_obstructed_example_Darboux} and~\eqref{eq:second_obstructed_example_complementary}, Theorem~\ref{theor:multi} implies that $\frakm_2:\Omega^\bullet(\calF)[1]\times\Omega^\bullet(\calF)[1]\to\Omega^\bullet(\calF)[1]$ acts so that the following identities hold on $U_{\alpha,\eps}$
\begin{equation}
\label{eq:second_obstructed_example_contact4}
\begin{gathered}
\frakm_2(f,g)=-\{f,g\}_\alpha,\\
\frakm_2(f,g_1d_\calF u_1+g_2d_\calF u_2)=-\{f,g_1\}_\alpha d_\calF u_1-\{f,g_2\}_\alpha d_\calF u_2,\\
\frakm_2(f_1d_\calF u_1+f_2d_\calF u_2,g_1d_\calF u_1+g_2d_\calF u_2)=\left(\{f_1,g_2\}_\alpha-\{f_2,g_1\}_\alpha\right)d_\calF u_1\wedge d_\calF u_2.
\end{gathered}
\end{equation}

We extract from~\eqref{eq:second_obstructed_example_contact2} and~\eqref{eq:second_obstructed_example_contact4} information about the infinitesimal coisotropic deformations of $S_\alpha$, their moduli space under infinitesimal Hamiltonian equivalence, and the first obstructions to their formal prolongability.
According to Corollary~\ref{cor:inf1}, an arbitrary $s=fd_\calF u_1+gd_\calF u_2\in\Omega^1(\calF)$ is an infinitesimal coisotropic deformation of $S_\alpha$ iff the following identity holds on $S_\alpha$
\begin{equation}
\label{eq:second_obstructed_example_contact_deformations}
\frac{\partial g}{\partial u_1}-\frac{\partial f}{\partial u_2}=0.
\end{equation}
In view of Corollary~\ref{cor:infequi}, two infinitesimal coisotropic deformations $s_i=f_id_\calF u_1+g_id_\calF u_2$, with $i=0,1$, are infinitesimally Hamiltonian equivalent iff there exists $h\in C^\infty(S_\alpha)$ such that the following identity holds on $S_\alpha$
\begin{equation*}
f_1=f_0+\frac{\partial h}{\partial u_1},\quad g_1=g_0+\frac{\partial h}{\partial u_2}.
\end{equation*}
Let $s=fd_\calF u_1+gd_\calF u_2$ be an infinitesimal coisotropic deformation of $S_\alpha$, such that $\operatorname{supp}(s)\subset U_\alpha$.
Since $\eps$ can be chosen arbitrarily small, from Proposition~\ref{prop:Kuranishi} it follows that, if $s$ can be prolonged to a formal coisotropic deformation, then there exist $h,k\in C^\infty(S_\alpha)$ such that
\begin{equation}
\label{eq:second_obstructed_example_contact5}
f\frac{\partial g}{\partial z}-g\frac{\partial f}{\partial z}+(Df)\frac{\partial g}{\partial y}-(Dg)\frac{\partial f}{\partial y}
=\frac{\partial k}{\partial u_1}-\frac{\partial h}{\partial u_2}.
\end{equation}
Integrating~\eqref{eq:second_obstructed_example_contact5} over a compact characteristic leaf $L$, we obtain also a weaker necessary condition for the formal prolongability of $s$:
\begin{equation}
\label{eq:second_obstructed_example_contact6}
\iint\limits_{L}\left(f\frac{\partial g}{\partial z}-g\frac{\partial f}{\partial z}+(Df)\frac{\partial g}{\partial y}-(Dg)\frac{\partial f}{\partial y}\right)d_\calF u_1 d_\calF u_2=0.
\end{equation}

\begin{proposition}
	\label{prop:second_obstructed_example_contact}
	If $\alpha\in\bbQ$, then the coisotropic submanifold $S_\alpha$ of $(M,C)$ is formally obstructed.
\end{proposition}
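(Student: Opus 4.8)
The plan is to exhibit a single infinitesimal coisotropic deformation of $S_\alpha$ that fails the prolongability condition \eqref{eq:second_obstructed_example_contact6}, and then to conclude by the obstructedness criterion of Proposition~\ref{prop:Kuranishi}. Write $\alpha=m/n$ with $m,n$ coprime positive integers. The key observation is that the combination $w:=n\phi_2-m\phi_1$ is a globally single-valued angular function on $M$ (since $\phi_1\mapsto\phi_1+2\pi$ and $\phi_2\mapsto\phi_2+2\pi$ shift $w$ by integer multiples of $2\pi$), and that both $w$ and $x:=H_2$ are constant along the characteristic leaves: the leafwise vector fields $X_{(H_3-1/2)}$ and $X_{(H_\alpha-1/4)}$ annihilate $H_2$, and along $X_{(H_\alpha-1/4)}$ one has $\dot w=n\alpha-m=0$. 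In the local Darboux coordinates $(u_1,u_2,x,y,z)$ on $U_{\alpha,\eps}$ the functions $w$ and $y$ are proportional, namely $w=ny$.

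First I would fix a bump function $\beta=\beta(H_2)\in C^\infty(S_\alpha)$ compactly supported in $U_{\alpha,\eps}$ (hence away from the loci $r_1=0$ and $r_2=0$ where $\phi_1$, resp.\ $\phi_2$, degenerate), and set
\[
f:=\beta\cos w,\qquad g:=\beta\sin w,\qquad s:=f\,d_\calF u_1+g\,d_\calF u_2 .
\]
Then $f,g$ are genuinely global smooth functions on $S_\alpha$, supported in $U_{\alpha,\eps}\subset U_\alpha$, and since they are leaf-constant we have $\partial_{u_1}g=\partial_{u_2}f=0$. By \eqref{eq:second_obstructed_example_contact2} (equivalently Corollary~\ref{cor:inf1}) this gives $\frakm_1 s=0$, so $s$ is an infinitesimal coisotropic deformation of $S_\alpha$.

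Next I would compute the obstruction via the local formula \eqref{eq:second_obstructed_example_contact4} on $U_{\alpha,\eps}$. Since $f,g$ do not depend on $z$ and $\partial_y=n\partial_w$, the integrand of \eqref{eq:second_obstructed_example_contact6} reduces to $(Df)\,\partial_y g-(Dg)\,\partial_y f=n(f_x g_w-g_x f_w)=n\beta\beta'$, a leaf-constant function. I would then choose a compact characteristic leaf $L\subset U_{\alpha,\eps}$ sitting over a value $x_0=H_2$ with $\beta(x_0)\beta'(x_0)\neq 0$; such an $x_0$ exists on the ascending part of the bump, and since $x=H_2$ is leaf-constant the whole of $L$ stays in $U_{\alpha,\eps}$. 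As $d_\calF u_1\wedge d_\calF u_2$ is a nowhere-vanishing leafwise volume form (because $d_\calF u_1,d_\calF u_2$ is a global frame of $T^\ast\calF$) and $L$ is compact and orientable, the obstruction integral is
\[
\iint_L\Big(f\tfrac{\partial g}{\partial z}-g\tfrac{\partial f}{\partial z}+(Df)\tfrac{\partial g}{\partial y}-(Dg)\tfrac{\partial f}{\partial y}\Big)\,d_\calF u_1\,d_\calF u_2=n\,\beta(x_0)\beta'(x_0)\!\int_L d_\calF u_1\wedge d_\calF u_2\neq 0 .
\]
Thus \eqref{eq:second_obstructed_example_contact6} fails for $s$, so by Proposition~\ref{prop:Kuranishi} the class $[s]$ cannot be prolonged to a formal coisotropic deformation, and the coisotropic deformation problem of $S_\alpha$ is obstructed.

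The main obstacle I expect is the existence of a compact leaf $L\subset U_{\alpha,\eps}$ for rational $\alpha$. The projections to the $(\phi_1,\phi_2)$- and $\phi_3$-tori close up precisely because $\alpha=m/n$ is rational (the $(\phi_1,\phi_2)$-orbit follows the rational direction $(n,m)$), but one still has to control the $\phi$-monodromy of the leaf; here I would use the explicit leafwise flows together with the orientability of the leaves and the finite cyclic holonomy recorded in the Remark preceding this statement to conclude that the interior leaves over a generic value of $H_1\in\,]0,1/4[$ are compact. A secondary, purely bookkeeping, point is to ensure the cutoff $\beta$ keeps $f,g$ smooth across $r_1=0$ and $r_2=0$, which is automatic once $\operatorname{supp}\beta$ is compactly contained in $U_{\alpha,\eps}$.
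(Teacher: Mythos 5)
Your proposal is correct and follows essentially the same route as the paper's proof: both exhibit a leaf-constant infinitesimal deformation built from $x=H_2$ and the globally defined angle $ny=n\phi_2-m\phi_1$, verify $\frakm_1 s=0$, and show that the obstruction integral~\eqref{eq:second_obstructed_example_contact6} is nonzero over a compact characteristic leaf (whose existence for rational $\alpha$ the paper likewise asserts without further detail). The only difference is cosmetic: the paper takes $f=\rho(x)$, $g=\rho(x)\chi(ny)$, giving the integrand $n\rho\rho'\chi'(n\bar y)$, whereas your choice $f=\beta\cos(ny)$, $g=\beta\sin(ny)$ makes the integrand $n\beta\beta'$ independent of $\bar y$ — a slightly cleaner normalization of the same computation.
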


\begin{proof}
	Let $\alpha=\frac{m}{n}$, with coprime integers $m$ and $n$.
	In this case the characteristic foliation $\calF_\alpha$ has orientable compact leaves.
	Fix two non-constant functions $\chi\in C^\infty(\bbS^1)$ and $\rho\in C^\infty(\bbR)$ such that additionally the support of $\rho$ is contained in $]0,1/4\alpha[$.
	Then there exist two functions $f,g\in C^\infty(S_\alpha)$ uniquely determined by
	\begin{equation*}
	\label{eq:prop:second_obstructed_example_contact}
	f(u_1,u_2,x,y,z)=\rho(x),\qquad g(u_1,u_2,x,y,z)=\rho(x)\chi(ny).
	\end{equation*}
	Set $s:=fd_\calF u_1+gd_\calF u_2\in\Omega^1(\calF)$.
	Actually $s$ is an infinitesimal coisotropic deformation of $S_\alpha$ which is formally obstructed.
	Indeed $s$ satisfies~\eqref{eq:second_obstructed_example_contact_deformations}, but it fails to fulfill the constraint~\eqref{eq:second_obstructed_example_contact6}:
	\begin{equation*}
	\iint\limits_{L(\bar x,\bar y,\bar z)}\!\!\!\left(f\frac{\partial g}{\partial z}-g\frac{\partial f}{\partial z}+(Df)\frac{\partial g}{\partial y}-(Dg)\frac{\partial f}{\partial y}\right)d_\calF u_1 d_\calF u_2=\tfrac{m^2+n^2}{n}(2\pi)^2\rho(\bar x)\rho'(\bar x)\chi'(n\bar y)\neq 0,
	\end{equation*}
	where, for any $(\bar x,\bar y,\bar z)$, we have denoted by $L(\bar x,\bar y,\bar z)$ the characteristic leaf given by the level set $x=\bar x, y=\bar y, z=\bar z$.
\end{proof}
\chapter{Graded Jacobi manifolds}
\label{chap:graded_Jacobi_manifolds}


This chapter aims at setting the framework for constructing of the BFV-complex in the next chapter.
It can be read as a direct continuation of Chapters~\ref{chap:preliminaries} and~\ref{sec:abstract_jac_mfd}.

In the first section we develop Jacobi geometry in the setting of graded differential geometry.
Doing this we not only introduce Jacobi structures on graded line bundles but we even meet derivations of graded vector bundles and the Gerstenhaber--Jacobi algebra of multi-derivations of a graded line bundle.

In the second section we propose a notion of lifting of a Jacobi structure from a line bundle $L\to M$ to a certain graded line bundle $\hat L\to\hat M$.
As a first preliminary step towards the construction of the BFV-complex, we prove existence and uniqueness (up to line bundle automorphisms) of these liftings.
The latter extends to the Jacobi setting analogous results by Rothstein~\cite{Rothstein1991} (symplectic case), Herbig~\cite{herbig2007} and Sch\"atz~\cite{schatz2009bfv} (Poisson case).
In particular we describe a constructive lifting procedure based on the ``step-by-step obstruction'' technique from Homological Perturbation Theory.

\section{Multi-derivations of graded line bundles}
\label{app:graded_multi-do}

In this section we collect basic facts, including conventions and notations, concerning graded symmetric multi-derivations on graded line bundles, and, in particular, graded Jacobi structures and Jacobi bi-derivations.
Doing this we will use the language of $\bbZ$-graded differential geometry (see, e.g.,~\cite{Mehta2006}).

\subsection{Graded symmetric multi-derivations}

Let $\scrM$ be a $\bbZ$-graded manifold and let $\scrP,\scrQ$ be graded vector bundles over $\scrM$.
The notion of (linear) first order differential operator (as recalled in Section~\ref{sec:app_0}) extends immediately to the graded setting as follows.
A \emph{degree $k$ graded first order differential operator} from $\scrP$ to $\scrQ$ is a degree $k$ graded $\bbR$-linear map $\square:\Gamma(\scrP)\longrightarrow\Gamma(\scrQ)$, such that
\begin{equation*}
[[\square,a_1],a_2]=0,
\end{equation*}
for all $a_1,a_2\in C^\infty(\scrM)$, where we interpret the scalars $a_i$ as operators (multiplication by $a_i$), and denote by $[-,-]$ the graded commutator of operators.

A \emph{degree $k$ graded derivation} of $\scrP$ is a degree $k$ graded $\bbR$-linear map $\square:\Gamma(\scrP)\longrightarrow\Gamma(\scrP)$, such that there is a (necessarily unique) vector field $\sigma(\square)\in\frakX(\scrM)$, also denoted by $\sigma_\square$ and called the \emph{symbol} of $\square$, which satisfies the following graded Leibniz rule
\begin{equation*}
\square(a p)=\sigma_\square(a) p+(-)^{|a|k}a\square(p),
\end{equation*}
for all homogeneous $a\in C^\infty(\scrM)$, and $p\in\Gamma(\scrP)$.
Here, as throughout this thesis, we have denoted by $|v|$ the degree of an homogeneous element $v$ in a graded vector space.

\begin{remark}
	Every derivation of a graded vector bundle $\scrP$ is, in particular, a first order differential operator.
	In general, the converse is not true, unless $\scrP\to\scrM$ is a (graded) line bundle.
	In this case derivations of $\scrP$ are the same as first order differential operators from $\scrP$ to itself.
\end{remark}

\begin{remark}
	\label{rem:LRalgebra}
	Denote by $\Diff(\scrP)^k$ the space of degree $k$ graded derivations of $\scrP$.
	Then the space of graded derivations of $\scrP$
	\begin{equation*}
	\Diff(\scrP)^\bullet:=\bigoplus_{k\in\bbZ}\Diff(\scrP)^k
	\end{equation*}
	is a graded $C^\infty(\scrM)$-module, and also a graded Lie algebra, with the Lie bracket given by the usual graded commutator $[-,-]$.
	Additionally, the \emph{symbol map} $\Diff(\scrP)\to\frakX(\scrM),\ \square\mapsto X_\square$, is $C^\infty(\scrM)$-linear, and also a Lie algebra morphism which satisfies the following compatibility condition   
	\begin{equation*}
	[\square_1,a\square_2]=X_{\square_1}(a)\square_2+(-)^{|a||\square_1|}a[\square_1,\square_2],
	\end{equation*}
	for all homogeneous $a\in C^\infty(\scrM)$, and $\square_1,\square_2\in\Diff(\scrP)$, i.e.~the pair $(C^\infty(\scrM),\Diff(\scrP))$ is a graded Lie--Rinehart algebra (see, e.g.,~\cite{huebschmann2004lie} and~\cite{vitagliano2015homotopy}, and also cf.~Example~\ref{ex:Gerstenhaber--Jacobi}~\ref{enumitem:ex:Gerstenhaber--Jacobi_4}).
	Hence $\Diff(\scrP)$ is the module of sections of a graded Lie algebroid over $\scrM$, called the \emph{gauge algebroid} (or the \emph{Atiyah algebroid}) of the graded vector bundle $\scrP$.
	Abusing the notation, we will sometimes denote the gauge algebroid by the same symbol $\Diff(\scrP)$ as for its sections.
	For instance, we will speak about, e.g., $\Diff(\scrP)$-connections, representations of $\Diff(\scrP)$, etc., without further comments.
	Notice that, exactly like in the non-graded case, $\scrP$ carries a canonical representation of the gauge algebroid $\Diff(\scrP)$, the \emph{tautological representation}, given by the action of derivations on sections.
\end{remark}

A \emph{degree $k$ graded symmetric first order $n$-ary differential operator} from $\scrP$ to $\scrQ$ is a degree $k$ graded symmetric $\bbR$-multilinear map
\begin{equation}
\square:\underbrace{\Gamma(\scrP)\times\cdots\times\Gamma(\scrP)}_{n-\textnormal{times}}\longrightarrow\Gamma(\scrQ)
\end{equation}
which is a graded first order differential operator from $\scrP$ to $\scrQ$ in each entry.

Let $\scrL\to\scrM$ be a graded line bundle.
From now on, the trivial line bundle over the graded manifold $\scrM$ will be denoted by $\bbR_{\scrM} := \scrM \times \bbR \to\scrM$.

\begin{remark}
	We denote by $\Diff^n(\scrL,\bbR_{\scrM})^\bullet:=\bigoplus_{k\in\bbZ}\Diff^n(\scrL,\bbR_{\scrM})^k$ the space of graded symmetric first order $n$-ary differential operators from $\scrL$ to $\bbR_{\scrM}$.
	The space of graded symmetric multi-differential operators
	\begin{equation*}
	\Diff^\star(\scrL,\bbR_{\scrM})^\bullet:=\bigoplus_{n\in\bbZ}\Diff^n(\scrL,\bbR_{\scrM})^\bullet,
	\end{equation*}
	is a graded left $C^\infty(\scrM)$-module in the obvious way (we set $\Diff^0(\scrL,\bbR_{\scrM}):=C^\infty(\scrM)$, and $\Diff^n(\scrL,\bbR_{\scrM}):=0$, for all $n<0$).
	Here, and in the following, the superscripts ${}^\star$ and ${}^\bullet$ refer to, respectively, the arity and the total degree.
	The $C^\infty(\scrM)$-module $\Diff^\star(\scrL,\bbR_{\scrM})$ is, additionally, a unital associative graded commutative $\bbR$-algebra whose product is given by
	\begin{equation}
	\label{eq:product_of_multi-do}
	(\Delta\cdot\Delta')(\lambda_1,\ldots,\lambda_{k+k'})=\sum_{\tau\in S_{k,k'}}(-)^{\chi}\epsilon(\tau,\boldsymbol\lambda)\Delta(\lambda_{\tau(1)},\ldots,\lambda_{\tau(k)})\cdot\Delta'(\lambda_{\tau(k+1)},\ldots,\lambda_{\tau(k+k')}),
	\end{equation}
	for all homogeneous $\Delta\in\Diff^k(\scrL,\bbR_{\scrM}),\Delta'\in\Diff^{k'}(\scrL,\bbR_{\scrM})$, and $\lambda_1,\ldots,\lambda_{k+k'}\in\Gamma(\scrL)$.
	In~\eqref{eq:product_of_multi-do}, $\chi:=\sum_{i=1}^{k}|\Delta'||\lambda_{\tau(i)}|$, and $\epsilon(\tau,\boldsymbol{\lambda})$ is the graded symmetric Koszul sign prescribed by the $(k,k')$-shuffle of the $\lambda$'s.
	Accordingly, we have a canonical (degree $0$) graded $C^\infty(\scrM)$-algebra isomorphism
	\begin{equation*}
	\Diff^\star(\scrL,\bbR_{\scrM})\simeq S_{{\scriptscriptstyle C^\infty(\scrM)}}\Diff^1(\scrL,\bbR_{\scrM}).
	\end{equation*}
	Here, as in the remaining part of this thesis, $S_{R}M$ denotes the graded symmetric algebra of a module $M$ over a algebra $R$.
\end{remark}

\begin{remark}
	We denote by $\Diff^n(\scrL)^\bullet:=\bigoplus_{k\in\bbZ}\Diff^n(\scrL)^k$ the space of graded symmetric first order $n$-ary differential operators from $\scrL$ to $\scrL$.
	Since $\scrL$ is a line bundle, the operators in $\Diff^n(\scrL)^\bullet$ are, indeed, derivations in each argument.
	Accordingly, we will also call them (graded symmetric) \emph{multi-derivations}.
	In particular, $\Diff^1(\scrL)^\bullet=\Diff(\scrL)^\bullet$.
	The space of graded symmetric multi-derivations
	\begin{equation*}
	\Diff^\star(\scrL)^\bullet:=\bigoplus_{n\in\bbZ}\Diff^n(\scrL)^\bullet,
	\end{equation*}
	is a graded left $C^\infty(\scrM)$-module in the obvious way (we set $\Diff^0(\scrL):=\Gamma(\scrL)$, and $\Diff^n(\scrL):=0$, for all $n<0$).
	The $C^\infty(\scrM)$-module $\Diff^\star(\scrL)$ is, additionally, a graded $\Diff^\ast(\scrL,\bbR_{\scrM})$-module whose product is given by a formula similar to~\eqref{eq:product_of_multi-do}.
	Accordingly, we have a canonical (degree $0$) isomorphism of graded $\Diff^\star(\scrL,\bbR_{\scrM})$-modules
	\begin{equation*}
	\Diff^\star(\scrL)\simeq\Diff^\star(\scrL,\bbR_{\scrM})\otimes_{{\scriptscriptstyle C^\infty(\scrM)}}\Gamma(\scrL).
	\end{equation*}
\end{remark}

The notion of (graded) Gerstenhaber--Jacobi algebra has already been introduced (see Definition~\ref{definition:Gerstenhaber--Jacobi}), however we recall it here for the reader's convenience.
A \emph{Gerstenhaber-Jacobi algebra} consists of a unital associative graded commutative algebra $\calA$ and a graded $\calA$-module $\calL$, equipped with a graded Lie bracket $[-,-]$ on $\calL$ and an action by derivations of $\calL$ on $\calA$, i.e.~a degree $0$ graded Lie algebra morphism $X_{(-)}:\calL\to\operatorname{Der}(\calA)$, such that
	\begin{equation*}
	[\lambda,a\mu]=X_\lambda(a)\mu+(-)^{|a||\lambda|}a[\lambda,\mu],
	\end{equation*}
for all homogeneous $\lambda,\mu\in\calL$ and $a\in\calA$.
For our aims, as seen in the preceding chapters, the most relevant example of such structures is the Gerstenhaber--Jacobi algebra of multi-derivations of a line bundle (cf.~Section~\ref{sec:GJ_algb_multi-derivations}, and in particular Proposition~\ref{prop:Jacobi_Gerstenhaber_multi-differential}).
The next proposition extends this natural construction to the case of a graded line bundle.

\begin{proposition}
	\label{prop:GJalgebra}
	For every graded line bundle $\scrL\to\scrM$, there is a natural Gerstenhaber-Jacobi algebra structure $(\ldsb-,-\rdsb,X_{(-)})$ on $(\Diff^\star(\scrL,\bbR_{\scrM}),\Diff^\star(\scrL))$, uniquely determined by
	\begin{equation}\label{eq:SJbrackets}
	\ldsb \square,\square'\rdsb=[\square,\square'],\qquad
	\ldsb \square,\lambda\rdsb=\square(\lambda),\qquad
	\ldsb \lambda,\mu\rdsb=0,
	\end{equation}
	for all $\square,\square'\in\Diff(\scrL)$, and $\lambda,\mu\in\Gamma(\scrL)$.
	The Lie bracket $\ldsb-,-\rdsb$ is called \emph{the Schouten--Jacobi bracket}.
\end{proposition}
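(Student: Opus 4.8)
The plan is to run, in the $\bbZ$-graded setting, exactly the argument that proves the ungraded Proposition~\ref{prop:Jacobi_Gerstenhaber_multi-differential}: there the statement followed at once from the correspondence of Proposition~\ref{prop:Jacobi_algbds_GJ-algbs} applied to the Atiyah algebroid of $L$ with its tautological representation, and here I would replace that Atiyah algebroid by the gauge algebroid $\Diff(\scrL)$ of the graded line bundle $\scrL\to\scrM$. By Remark~\ref{rem:LRalgebra}, $\Diff(\scrL)$ is a graded Lie algebroid over $\scrM$ and $\scrL$ carries its tautological representation, so that the pair $(\Diff(\scrL),\scrL)$ is a graded Jacobi algebroid. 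First I would record the structural identifications already noted above, namely $\Diff^\star(\scrL,\bbR_{\scrM})\simeq S_{C^\infty(\scrM)}\Diff^1(\scrL,\bbR_{\scrM})$ and $\Diff^\star(\scrL)\simeq\Diff^\star(\scrL,\bbR_{\scrM})\otimes_{C^\infty(\scrM)}\Gamma(\scrL)$, together with the canonical isomorphism $\Diff^1(\scrL,\bbR_{\scrM})\simeq\Diff(\scrL)\otimes_{C^\infty(\scrM)}\scrL^\ast$. Under these identifications, and writing $A:=\Diff(\scrL)$, $A_{\scrL}:=A\otimes_{C^\infty(\scrM)}\scrL^\ast$, the pair $(\Diff^\star(\scrL,\bbR_{\scrM}),\Diff^\star(\scrL))$ is precisely the pair $(S_{C^\infty(\scrM)}A_{\scrL},\,S_{C^\infty(\scrM)}A_{\scrL}\otimes_{C^\infty(\scrM)}\scrL)$, graded by the total (décalée) degree, to which the graded Jacobi algebroid $(A,\scrL)$ is attached.

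For uniqueness I would argue that $\Diff^\star(\scrL)$ is generated, as a module over the algebra $\Diff^\star(\scrL,\bbR_{\scrM})$, by its degree $0$ and degree $1$ pieces $\Gamma(\scrL)=\Diff^0(\scrL)$ and $\Diff(\scrL)=\Diff^1(\scrL)$; hence any Gerstenhaber--Jacobi bracket on the pair is completely determined by its restriction to these pieces, via the generalized Leibniz rule~\eqref{eq:genleib2} together with the compatibility $X_{\ldsb\lambda,\mu\rdsb}=[X_\lambda,X_\mu]$ from Definition~\ref{definition:Gerstenhaber--Jacobi}. Since the relations~\eqref{eq:SJbrackets} fix exactly this restriction, at most one such structure can exist. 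For existence I would invoke the graded counterpart of Proposition~\ref{prop:Jacobi_algbds_GJ-algbs}, which holds mutatis mutandis over a graded base: the graded Jacobi algebroid structure on $(\Diff(\scrL),\scrL)$ prolongs, by repeated application of~\eqref{eq:genleib2}, to a graded Lie bracket $\ldsb-,-\rdsb$ and an action $X_{(-)}$ on the whole pair. Concretely this prolongation is the graded-symmetric extension, through a graded Gerstenhaber product, of the commutator of derivations and of the tautological action, exactly as in Section~\ref{sec:GJ_algb_multi-derivations}; evaluating it on degree $0$ and degree $1$ elements reproduces the three defining identities~\eqref{eq:SJbrackets}.

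The hard part will be the existence claim, and more precisely checking that the prolongation is well defined and satisfies the graded Jacobi identity. As in the ungraded case, once the Leibniz rule~\eqref{eq:genleib2} has localized everything to the generators, the graded Jacobi identity reduces to the Jacobi identity for the commutator on $\Diff(\scrL)$ and to the flatness of the tautological representation, both of which are built into the graded Lie algebroid structure of $\Diff(\scrL)$ recorded in Remark~\ref{rem:LRalgebra}. The only genuinely new ingredient relative to Proposition~\ref{prop:Jacobi_Gerstenhaber_multi-differential} is the bookkeeping of Koszul signs coming from the internal $\bbZ$-grading of $\scrM$ and $\scrL$; I would discharge this by verifying the signs on the generators and then propagating them by graded symmetry, which is exactly the mechanism already encoded in the product formula~\eqref{eq:product_of_multi-do}.
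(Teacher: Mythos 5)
Your proposal is correct and follows essentially the same route as the paper: the paper's proof consists precisely of observing that $\scrL$ carries the tautological representation of its gauge algebroid $\Diff(\scrL)$ (Remark~\ref{rem:LRalgebra}), so that $(\Diff(\scrL),\scrL)$ is a graded Jacobi algebroid, and then invoking the (graded counterpart of) Proposition~\ref{prop:Jacobi_algbds_GJ-algbs}. Your additional discussion of uniqueness via generators in arity $0$ and $1$, of the prolongation through the graded Gerstenhaber product, and of the Koszul sign bookkeeping simply fills in details that the paper leaves implicit.
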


\begin{proof}
	Since $\scrL$ carries the tautological representation of the gauge algebroid $\Diff(\scrL)$ (cf.~Remark~\ref{rem:LRalgebra}), it is a straightforward consequence of Proposition~\ref{prop:Jacobi_algbds_GJ-algbs}.
\end{proof}

\begin{remark}
	\label{rem:Gerstenhaber_product_and_SJ_bracket}
	When computing with multi-derivations it is very helpful to have an explicit expression for the Schouten--Jacobi bracket.
	It is easy to see that
	\begin{equation}
	\label{eq:Gerstenhaber_product_and_SJ_bracket}
	\ldsb\square,\square'\rdsb=\square\bullet \square'-(-)^{|\square||\square'|}\square'\bullet\square,
	\end{equation}
	for all homogeneous $\square,\square'\in\Diff^\star(\scrL)$.
	In~\eqref{eq:Gerstenhaber_product_and_SJ_bracket}, we have denoted by $\bullet$ the \emph{Gerstenhaber product}
	(of multi-derivations).
	The latter is defined by 
	\begin{equation*}
	\square\bullet\square'(\lambda_1,\ldots,\lambda_{k+k'+1})=\!\!\!\!\sum_{\tau\in S_{k'+1,k}}\!\!\!\!\epsilon(\tau,\boldsymbol\lambda)\square(\square'(\lambda_{\tau(1)},\ldots,\lambda_{\tau(k'+1)}),\lambda_{\tau(k'+2)},\ldots,\lambda_{\tau(k+k'+1)}),
	\end{equation*}
	for all homogeneous $\square\in\Diff^{k+1}(\scrL)$, $\square'\in\Diff^{k'+1}(\scrL)$, and $\lambda_1,\ldots,\lambda_{k+k'+1}\in\Gamma(\scrL)$.
	It is also helpful to point out that, as a consequence of~\eqref{eq:Gerstenhaber_product_and_SJ_bracket}, we have
	\begin{equation}
	\label{eq:Gerstenhaber_product_and_SJ_bracket_bis}
	\square(\lambda_1,\ldots,\lambda_n)=\ldsb\ldsb\ldots\ldsb\square,\lambda_1\rdsb,\ldots\rdsb,\lambda_n\rdsb,
	\end{equation}
	for all $\square\in\Diff^n(\scrL)$, and $\lambda_1,\ldots,\lambda_n\in\Gamma(\scrL)$.
\end{remark}

\subsection{Graded Jacobi bundles}

In this section we introduce Jacobi structures on graded line bundles, and their equivalent description in terms of Jacobi bi-derivations.

\begin{definition}
	\label{def:graded_Jacobi_structure}
	A \emph{graded Jacobi structure} on a graded line bundle $\scrL\to\scrM$ is given by a \emph{graded Jacobi bracket} $\{-,-\} :\Gamma(\scrL)\times\Gamma (\scrL)\rightarrow\Gamma(\scrL)$, i.e.~a (degree $0$) graded Lie bracket which is a first order differential operator, hence a derivation, in each entry.
	A \emph{graded Jacobi bundle} (over $\scrM$) is a graded line bundle (over $\scrM$) equipped with a graded Jacobi structure.
	A \emph{graded Jacobi manifold} is a graded manifold equipped with a graded Jacobi bundle over it.
\end{definition}
A Jacobi structure on a graded line bundle $\scrL\to\scrM$ is the same as a Gerstenhaber-Jacobi algebra structure on $(C^\infty(\scrM),\Gamma(\scrL))$.

\begin{remark}
	\label{rem:shift}
	We want to emphasize that, via \emph{d\'ecalage} isomorphism, \emph{skew-symmetric} multi-deriva\-tions of a graded line bundle $\scrL$ are in one-to-one correspondence with \emph{graded symmetric} multi-deriva\-tions of the shifted line bundle $\scrL[1]$.
	According to Definition~\ref{def:graded_Jacobi_structure}, a graded Jacobi bracket is a \emph{graded skew-symmetric} bi-derivation.
	However, it turns out that formulas get much simplier if we understand graded Jacobi structures as \emph{graded symmetric} bi-derivations on a shifted line bundle, via d\'ecalage.
	This is made precise below.
\end{remark}

\begin{definition}
	\label{def:Jacobi_bi-do}
	A \emph{graded Jacobi bi-derivation} on a graded line bundle $\scrL\to\scrM$ is a degree $1$ graded symmetric bi-derivation $J\in\Diff^2(\scrL)^1$ such that $\ldsb J,J\rdsb=0$.
\end{definition}

Let $\scrL\to\scrM$ be a graded line bundle.
The next proposition establishes a one-to-one correspondence identifying, in a canonical way, Jacobi bi-derivations $J$ on $\scrL[1]$ with Jacobi structures $\{-,-\}$ on $\scrL$, thus clarifying the content of Remark~\ref{rem:shift}.

\begin{proposition}
	\label{prop:Jacobi_bi-do}
	There exists a canonical one-to-one correspondence between
	graded Jacobi brackets $\{-,-\}$ on $\scrL$ and graded Jacobi bi-derivations $J$ on $\scrL[1]$ given by the following relation
	\begin{equation*}
	\{s\lambda_1,s\lambda_2\}=(-)^{|\lambda_1|}s(J(\lambda_1,\lambda_2)),
	\end{equation*}
	for all homogeneous $\lambda_1,\lambda_2\in\Gamma(\scrL[1])$,
	where $s:\Gamma(\scrL[1])\to\Gamma(\scrL)$ denotes the suspension map.
\end{proposition}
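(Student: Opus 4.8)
The plan is to reduce the statement to a décalage (suspension) isomorphism at the linear level, together with the translation — already carried out in Proposition~\ref{prop:J_as_MC_element} in the ungraded case — between the Jacobi identity and the Maurer--Cartan equation $\ldsb J,J\rdsb = 0$. First I would fix the suspension carefully: $s:\Gamma(\scrL[1])\to\Gamma(\scrL)$ is the degree $+1$ graded isomorphism of $C^\infty(\scrM)$-modules underlying the shift, so that $|s\lambda| = |\lambda| + 1$. Extending $s$ to multilinear maps by the standard décalage, I would check that the prescribed relation $\{s\lambda_1, s\lambda_2\} = (-)^{|\lambda_1|} s(J(\lambda_1,\lambda_2))$ sets up a bijection between degree $0$ graded skew-symmetric bi-derivations $\{-,-\}$ on $\scrL$ and degree $1$ graded symmetric bi-derivations $J\in\Diff^2(\scrL[1])^1$. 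The point here is purely bookkeeping: the graded skew-symmetry of $\{-,-\}$ in its two entries $s\lambda_1,s\lambda_2$, of degrees $|\lambda_1|+1$ and $|\lambda_2|+1$, becomes, after absorbing the sign $(-)^{|\lambda_1|}$ and the suspension signs, exactly the graded symmetry of $J$ in $\lambda_1,\lambda_2$; likewise the derivation property in each entry is preserved, the symbol of $J$ being the décalage of the symbol of $\{-,-\}$.

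Having fixed the linear correspondence, the second and main step is to show that it intertwines the two integrability conditions, i.e.\ that $\{-,-\}$ satisfies the graded Jacobi identity if and only if $\ldsb J,J\rdsb = 0$. I would do this by computing $\ldsb J,J\rdsb$ as a ternary symmetric multi-derivation and comparing it, via $s$, with the graded Jacobiator of $\{-,-\}$. Using \eqref{eq:Gerstenhaber_product_and_SJ_bracket} and \eqref{eq:Gerstenhaber_product_and_SJ_bracket_bis}, one evaluates
\[
\ldsb J,J\rdsb(\lambda_1,\lambda_2,\lambda_3) = \ldsb\ldsb\ldsb\ldsb J,J\rdsb,\lambda_1\rdsb,\lambda_2\rdsb,\lambda_3\rdsb
\]
and expands the inner multiple bracket by repeated use of the graded Jacobi identity and graded antisymmetry of $\ldsb-,-\rdsb$, exactly as in the proof of Proposition~\ref{prop:J_as_MC_element}, the only difference being that the graded Jacobi setting produces graded-symmetric Koszul signs $\epsilon(\tau,\boldsymbol\lambda)$ in place of the permutation signs $(-)^\tau$. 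This yields $\ldsb J,J\rdsb = 2\operatorname{Jac}(J)$, where $\operatorname{Jac}(J)$ is the symmetric ternary operator whose image under $s$ is the graded Jacobiator of $\{-,-\}$, namely the cyclic-type sum $\{\{\lambda,\mu\},\nu\} \pm \cdots$. Hence $\ldsb J,J\rdsb = 0$ is equivalent to the vanishing of the Jacobiator, that is, to the graded Jacobi identity for $\{-,-\}$.

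The hard part will be the precise sign analysis in both steps, since the suspension introduces a sign on each argument and the passage from skew-symmetric to symmetric multilinear maps reshuffles these against the graded commutator signs hidden in $\ldsb-,-\rdsb$. I would control this by fixing once and for all that $s$ has degree $+1$ and writing every Koszul sign explicitly in terms of the degrees $|\lambda_i|$ of the arguments in $\Gamma(\scrL[1])$; the identity $\ldsb J,J\rdsb = 2\operatorname{Jac}(J)$ then follows from the graded Leibniz rule \eqref{eq:SJbrackets} and graded antisymmetry of $\ldsb-,-\rdsb$ by the same manipulation as in the ungraded case, with the remaining signs forced by consistency. Finally, since both assignments — from $\{-,-\}$ to $J$ and back — are given by the same relation read in the two directions, they are mutually inverse by construction, which completes the identification.
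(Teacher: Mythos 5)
Your proposal is correct and follows essentially the same route as the paper, whose proof simply states that the claim ``follows from the same argument used in the non-graded case'' (Proposition~\ref{prop:J_as_MC_element}), i.e.\ the identity $\ldsb J,J\rdsb = 2\operatorname{Jac}(J)$ transported through the d\'ecalage. Your write-up just makes explicit the suspension bookkeeping and the graded Koszul signs that the paper leaves implicit.
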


\begin{proof}
	It follows from the same argument used in the non-graded case, see Proposition~\ref{prop:J_as_MC_element}.
\end{proof}

\begin{remark}
	\label{rem:d_J}
	From now on, we will often denote by $\{-,-\}_J$ the (graded) Jacobi bracket corresponding to a (graded) Jacobi bi-derivation $J$.
	Sometimes we will simply identify $J$ and $\{-,-\}_J$ and write $J \equiv \{-,-\}_J$ (or $J \equiv \{-,-\}$).
	
	Let $(\scrM,\scrL,\J)$ be a graded Jacobi manifold.
	There is a differential graded Lie algebra attached to $\scrM$, namely $(\Diff^\star(\scrL[1]),d_\J,\ldsb-,-\rdsb)$, with $d_\J:=\ldsb\J,-\rdsb$.
	The cohomology of $(\Diff^\star(\scrL[1]),d_\J)$ is called the Chevalley--Eilenberg cohomology of $(\scrM,\scrL,\J)$, and it is denoted by $H_{CE}(\scrM,\scrL,\J)$.
\end{remark}

We now discuss automorphisms of a graded Jacobi manifold $(\scrM,\scrL,\J\equiv \{-,-\})$.
\begin{definition}
	\label{def:Jacobi_automorphism}
	A \emph{Jacobi automorphism} of $(\scrM,\scrL,\J)$ is a degree $0$ graded automorphism $\Phi$ of the graded line bundle $\scrL\to\scrM$ such that $\Phi^\ast\J=\J$, i.e.
	\begin{equation*}
	\Phi^\ast\{\lambda_1,\lambda_2\}_{\J}\equiv\{\Phi^\ast\lambda_1,\Phi^\ast\lambda_2\}_{\J},\ \textnormal{for all}\ \lambda_1,\lambda_2\in\Gamma(\scrL).
	\end{equation*}
\end{definition}
The group of Jacobi automorphisms of $(\scrM,\scrL,\J)$ will be denoted by $\Aut(\scrM,\scrL,\J)$.
The Lie subalgebra $\mathfrak{aut}(\scrM,\scrL,\J)\subset\Diff(\scrL)^0$ of infinitesimal Jacobi automorphisms, or \emph{Jacobi derivations}, of $(\scrM,\scrL,\J)$ consists of those degree $0$ graded derivations $\square\in\Diff(\scrL)^0$ such that $\ldsb\J,\square\rdsb=0$, i.e.
\begin{equation*}
\square\{\lambda_1,\lambda_2\}_{\J}\equiv\{\square\lambda_1,\lambda_2\}_{\J}+\{\lambda_1,\square\lambda_2\}_{\J},\ \textnormal{for all}\ \lambda_1,\lambda_2\in\Gamma(\scrL).
\end{equation*}

\begin{definition}
	\label{def:Hamiltonian_family}
	For a smooth path $\{\lambda_t\}_{t\in I}\subset\Gamma(\scrL)^0$ and a smooth path $\{\Phi_t\}_{t\in I}$ of degree 0 automorphisms of $\scrL \to \scrM$, we say that $\{\lambda_t,-\}_{\J}$ \emph{integrates} to $\{\Phi_t\}_{t\in I}$ if
	\begin{equation*}
	\Phi_0=\id_{\scrL},\quad \frac{d}{dt}\Phi_t^\ast=\{\lambda_t,-\}_{\J}\circ\Phi_t^\ast.
	\end{equation*}
	In this case $\{\Phi_t\}_{t\in I}$ consists of Jacobi automorphisms and it is called the \emph{smooth path of Hamiltonian automorphisms} associated with the smooth path of \emph{Hamiltonian sections} $\{\lambda_t\}_{t\in I}$.
\end{definition}

\begin{definition}
	\label{def:Hamiltonian_single}
	A \emph{Hamiltonian automorphism} of $(\scrM,\scrL,\J)$ is a degree $0$ graded automorphism $\Phi\in\Aut(\scrM,\scrL,\J)$ such that $\Phi=\Phi_1$, for some smooth path of Hamiltonian automorphisms $\{\Phi_t\}_{t\in I}$.
\end{definition}
In the following the group of Hamiltonian automorphisms of $(\scrM,\scrL,\J)$ wil be denoted by $\Ham(\scrM,\scrL,\J)$.
The Lie subalgebra $\mathfrak{ham}(\scrM,\scrL,\J)\subset\mathfrak{aut}(\scrM,\scrL,\J)$ of infinitesimal Hamiltonian automorphisms, or \emph{Hamiltonian derivations}, of $(\scrM,\scrL,\J)$ consists of those degree $0$ graded derivations $\square\in\Diff(\scrL)^0$ of the form $\ldsb\J,\lambda\rdsb=\{\lambda,-\}_{\J}$, for some $\lambda\in\Gamma(\scrL)^0$.

\subsection{A splitting result for \texorpdfstring{$\Diff^\star(\scrL)$}{D*(scrL)}}
\label{subsec:psi_nabla}

Let $\scrM$ be a graded manifold with support $M$, and let $\scrL\to\scrM$ be a graded line bundle.
We assume the existence of a (non-necessarily canonical) graded module isomorphism 
\begin{equation}
\label{eq:Batchelor}
\Gamma(\scrL)\simeq S_{{\scriptscriptstyle C^\infty(M)}}\Gamma(F^\ast)\otimes_{{\scriptscriptstyle C^\infty(M)}}\Gamma(P), 
\end{equation}
covering a graded algebra isomorphism $C^\infty(\scrM)\simeq S_{{\scriptscriptstyle C^\infty(M)}}\Gamma(F^\ast)$, where $F\to M$ is a graded vector bundle (without component of degree $0$), and $P\to M$ is a graded line bundle.
Additionally, a $\Diff(P)$-connection in $F\to M$ determines a graded module isomorphism
\begin{equation}
	\label{eq:iso_Batch}
	\Diff^\star(\scrL)\simeq S_{{\scriptscriptstyle C^\infty(M)}}\Gamma(F^\ast\oplus F_P\oplus(J^1P)^\ast )\otimes_{{\scriptscriptstyle C^\infty(M)}}\Gamma(P),
\end{equation}
covering a graded algebra isomorphism $\Diff^\star(\scrL,\bbR_{\scrM})\simeq S_{{\scriptscriptstyle C^\infty(M)}}\Gamma(F^\ast\oplus F_P\oplus(J^1P)^\ast)$, where $F_P:=F\otimes P^\ast$.
The goal of this section is to describe isomorphism~\eqref{eq:iso_Batch} explicitly.

\begin{definition}
	\label{def:vertical_graded_derivation}
	A vector field $X\in\frakX(\scrM)$ is said to be \emph{vertical} if it is in the kernel of the canonical fibration $\scrM\to M$, i.e.~$X(f)=0$, for all $f\in C^\infty(M)\subset C^\infty(\scrM)$.
\end{definition}

Denote by $\vder(\scrM)$ the set of vertical vector fields on $\scrM$.
It is both a graded $C^\infty(\scrM)$-submodule and a graded Lie subalgebra of $\frakX(\scrM)$.

\begin{remark}
	\label{rem:A-derivations}	\

	1) By restricting vertical vector fields to $\Gamma(F^\ast)\subset C^\infty(\scrM)$, we get a degree $0$ graded $C^\infty(M)$-module isomorphism
	\begin{equation}
	\label{eq:A-derivations1}
	\vder(\scrM)\overset{\simeq}{\longrightarrow} C^\infty(\scrM)\underset{{\scriptscriptstyle C^\infty(M)}}{\otimes}\Gamma(F).
	\end{equation}
	
	2) There exists a short exact sequence of degree $0$ graded $C^\infty(\scrM)$-module morphisms
	\begin{equation}
	\label{eq:seq1}
	0 \longrightarrow\VDer(\scrM) \longrightarrow\Diff(\scrL)\longrightarrow C^\infty(\scrM)\underset{{\scriptscriptstyle C^\infty(M)}}{\otimes}\Diff(P) \longrightarrow 0.
	\end{equation}
	The arrow $\VDer(\scrM)\longrightarrow\Diff(\scrL)$, written $Z\longmapsto\bbD_Z$, is defined by setting $\bbD_Z(a\otimes\lambda)=Z(a)\otimes\lambda$, for all $Z\in\VDer(\scrM)$, $a\in C^\infty(\scrM)$, and $\lambda\in\Gamma(P)$.
	In its turn, the arrow $\Diff(\scrL)\longrightarrow C^\infty(\scrM)\otimes_{{\scriptscriptstyle C^\infty(M)}}\Diff(P)$ is obtained by restricting derivations to $\Gamma(P)\subset\Gamma(\scrL)$.
	
	3) A $\Diff(P)$-connection $\nabla$ in $F\to M$ determines a $C^\infty(M)$-linear map $\bar{\nabla}:\Diff(P)\to\Diff(\scrL)$, $\square\mapsto\bar{\nabla}_{\square}$ via
	\begin{equation*}
	\bar{\nabla}_\square(\alpha\otimes p)=(\nabla^\ast_\square\alpha)\otimes p+\alpha\otimes(\square p),
	\end{equation*}
	for all $p\in\Gamma(P)$, and $\alpha\in\Gamma(F^\ast)$, where $\nabla^\ast$ is the $D(P)$-connection in $F^\ast$ dual to $\nabla$.
\end{remark}

The above remark leads immediately to the next proposition whose proof is straightforward.

\begin{proposition}
	\label{prop:psi_nabla}
	The $C^\infty(M)$-linear map $\bar{\nabla}:\Diff(P)\to\Diff(\scrL)$ extends, by $C^\infty(\scrM)$-linearity, to a splitting of the short exact sequence~\eqref{eq:seq1}, so it determines a degree 0 graded $C^\infty(\scrM)$-module isomorphism
	\begin{equation}
	\label{eq:prop:psi_nabla}
	\phi_\nabla : \Diff(\scrL)\overset{\simeq}{\longrightarrow}\left[C^\infty(\scrM)\underset{{\scriptscriptstyle C^\infty(M)}}{\otimes}\Gamma\left(F_P\oplus (J^1P)^\ast\right)\right]\underset{{\scriptscriptstyle C^\infty(\scrM)}}{\otimes}\Gamma(\scrL).
	\end{equation}
	Moreover, there is a unique degree $0$ graded $C^\infty(\scrM)$-module isomorphism 
	\begin{equation*}
	\psi_\nabla : \Diff^\star(\scrL)\overset{\simeq}{\longrightarrow} S_{{\scriptscriptstyle C^\infty(M)}}(\Gamma(F^\ast\oplus F_P))\otimes_{{\scriptscriptstyle C^\infty(M)}}\Diff^\star(P),
	\end{equation*}
	covering a graded $C^\infty(\scrM)$-algebra isomorphism $\underline{\smash{\psi_\nabla}}:\Diff^\star(\scrL,\bbR_\scrM)\to S_{{\scriptscriptstyle C^\infty(M)}}\Gamma(F^\ast\oplus F_P\oplus (J^1P)^\ast)$,
	such that:
	\begin{enumerate}
	\item $\psi_\nabla$ agrees with the identity map on $\Gamma(\scrL)$, and
	\item $\psi_\nabla$ agrees with $\phi_\nabla$ on $\Diff(\scrL)$.
	\end{enumerate}
\end{proposition}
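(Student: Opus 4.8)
The plan is to obtain $\psi_\nabla$ in two stages: first build the arity-one isomorphism $\phi_\nabla$ by splitting the sequence~\eqref{eq:seq1}, and then bootstrap it to all arities using the universal property of the symmetric algebra.

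First I would promote $\bar\nabla$ to a splitting of~\eqref{eq:seq1}. Since the right-hand term $C^\infty(\scrM)\otimes_{C^\infty(M)}\Diff(P)$ is generated over $C^\infty(\scrM)$ by $\Diff(P)$, and $\bar\nabla\colon\Diff(P)\to\Diff(\scrL)$ is $C^\infty(M)$-linear, the assignment $a\otimes\square\mapsto a\,\bar\nabla_\square$ is a well-defined $C^\infty(\scrM)$-linear map $s$; well-definedness is exactly the compatibility $\bar\nabla_{f\square}=f\bar\nabla_\square$ for $f\in C^\infty(M)$. To see that $s$ is a right inverse of the projection in~\eqref{eq:seq1}, which restricts a derivation to $\Gamma(P)\subset\Gamma(\scrL)$, I would check that $\bar\nabla_\square$ restricted to $\Gamma(P)$ equals $\square$. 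This reduces to $\nabla^\ast_\square(1)=0$, where $1\in C^\infty(M)=S^0_{C^\infty(M)}\Gamma(F^\ast)$: the dual connection $\nabla^\ast$ acts on the degree-zero part of $C^\infty(\scrM)$ as the symbol $\sigma_\square$, hence annihilates constants, so $\bar\nabla_\square(1\otimes p)=1\otimes\square p$. Consequently $\Diff(\scrL)\simeq\VDer(\scrM)\oplus\bigl(C^\infty(\scrM)\otimes_{C^\infty(M)}\Diff(P)\bigr)$ as graded $C^\infty(\scrM)$-modules. Feeding in the identifications $\VDer(\scrM)\simeq C^\infty(\scrM)\otimes_{C^\infty(M)}\Gamma(F)$ from~\eqref{eq:A-derivations1}, $\Gamma(F)\simeq\Gamma(F_P)\otimes_{C^\infty(M)}\Gamma(P)$ coming from $F_P=F\otimes P^\ast$, and $\Diff(P)\simeq\Gamma((J^1P)^\ast)\otimes_{C^\infty(M)}\Gamma(P)$, and then regrouping the common factor $\Gamma(P)$ through $\Gamma(\scrL)=C^\infty(\scrM)\otimes_{C^\infty(M)}\Gamma(P)$, yields precisely $\phi_\nabla$ as in~\eqref{eq:prop:psi_nabla}.

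Next I would bootstrap to all arities. Recall the canonical identifications $\Diff^\star(\scrL,\bbR_\scrM)\simeq S_{C^\infty(\scrM)}\Diff^1(\scrL,\bbR_\scrM)$ and $\Diff^\star(\scrL)\simeq\Diff^\star(\scrL,\bbR_\scrM)\otimes_{C^\infty(\scrM)}\Gamma(\scrL)$; dually, the target $S_{C^\infty(M)}\Gamma(F^\ast\oplus F_P\oplus(J^1P)^\ast)$ is the free graded-symmetric $C^\infty(\scrM)$-algebra on the module $C^\infty(\scrM)\otimes_{C^\infty(M)}\Gamma(F_P\oplus(J^1P)^\ast)$. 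Cancelling the invertible factor $\Gamma(\scrL)$ in $\phi_\nabla$ (using $\Diff^1(\scrL)\simeq\Diff^1(\scrL,\bbR_\scrM)\otimes_{C^\infty(\scrM)}\Gamma(\scrL)$) produces a $C^\infty(\scrM)$-module isomorphism of the generating modules $\Diff^1(\scrL,\bbR_\scrM)\simeq C^\infty(\scrM)\otimes_{C^\infty(M)}\Gamma(F_P\oplus(J^1P)^\ast)$, which by the universal property of the symmetric algebra extends uniquely to a graded $C^\infty(\scrM)$-algebra isomorphism $\underline{\smash{\psi_\nabla}}$. I would then define $\psi_\nabla:=\underline{\smash{\psi_\nabla}}\otimes\id_{\Gamma(\scrL)}$ through the identification $\Diff^\star(\scrL)\simeq\Diff^\star(\scrL,\bbR_\scrM)\otimes_{C^\infty(\scrM)}\Gamma(\scrL)$.

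Finally I would verify the two normalisation conditions and uniqueness. Property (1) is immediate, since $\Gamma(\scrL)=\Diff^0(\scrL)$ is the module factor on which $\psi_\nabla$ acts by the identity; property (2) follows because any arity-one element is of the form $\Delta\otimes\lambda$ with $\Delta\in\Diff^1(\scrL,\bbR_\scrM)$ and $\lambda\in\Gamma(\scrL)$, whence $\psi_\nabla(\Delta\otimes\lambda)=\underline{\smash{\psi_\nabla}}(\Delta)\cdot\lambda$ agrees with $\phi_\nabla$ by construction. Uniqueness is then forced: $\Diff^\star(\scrL)$ is generated as a $\Diff^\star(\scrL,\bbR_\scrM)$-module by $\Gamma(\scrL)$, and $\Diff^\star(\scrL,\bbR_\scrM)$ is generated as a $C^\infty(\scrM)$-algebra by $\Diff^1(\scrL,\bbR_\scrM)$, so conditions (1) and (2) determine any covering module isomorphism on a generating set. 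The main obstacle I anticipate is organisational rather than conceptual: keeping the web of canonical identifications over the two ground algebras $C^\infty(M)$ and $C^\infty(\scrM)$ consistent — in particular verifying $\nabla^\ast_\square(1)=0$ so that $s$ genuinely splits~\eqref{eq:seq1}, and checking that tensoring by the invertible module $\Gamma(\scrL)$ commutes with passing between generators and their symmetric algebras.
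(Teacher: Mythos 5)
Your proposal is correct and follows exactly the route the paper intends: the paper itself omits the proof as ``straightforward'', pointing only to Remark~\ref{rem:A-derivations}, whose three items (the identification $\VDer(\scrM)\simeq C^\infty(\scrM)\otimes_{C^\infty(M)}\Gamma(F)$, the sequence~\eqref{eq:seq1}, and the map $\bar\nabla$) are precisely the ingredients you assemble. Your two-stage argument --- splitting~\eqref{eq:seq1} via the $C^\infty(\scrM)$-linear extension of $\bar\nabla$ and then extending to all arities through the universal property of the graded symmetric algebra, with uniqueness forced by the generating sets --- is a faithful and complete filling-in of that omitted proof.
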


The preceding proposition plays a key r\^ole in lifting a Jacobi structure (first step in the construction of the BFV-complex of a coisotropic submanifold) and consequently in understanding how the moduli spaces of a coisotropic submanifold are encoded by its BFV-complex.
Namely construction of the set of contraction data~\eqref{eq:1contraction_data_1} and proof of Proposition~\ref{prop:main_result} are crucially based on Proposition~\ref{prop:psi_nabla}.

\section{Lifted graded Jacobi structures}
\label{sec:lifting_Jacobi_structures}

In this section we will describe a procedure to lift a given Jacobi structure $\J$ on an ordinary line bundle $L\to M$ to a graded Jacobi structure $\hat{\J}$ on a certain graded line bundle $\hat{L}\to\hat{M}$ (Theorems~\ref{theor:existence_BFV_brackets} and~\ref{theor:uniqueness_BFV_brackets}).
The interest in this procedure is at least two-fold.
First, it extends, exploiting simplified techniques, similar lifting procedures in~\cite{Rothstein1991} (symplectic case) and~\cite{herbig2007,schatz2009bfv} (Poisson case).
Second, as we will show in Sections~\ref{sec:BRST-charges} and~\ref{sec:BFV-complex}, the lifting of Jacobi structures represents the first step towards the construction of the BFV-complex of a coisotropic submanifold of a Jacobi manifold (see also~\cite{schatz2009bfv}).

A lifting procedure is based on a set of contraction data that Proposition~\ref{prop:1contraction_data_1} associates with every $\Diff(L)$-connection.
Indeed the proof of Theorem~\ref{theor:existence_BFV_brackets} will outline how to use these contraction data to inductively construct $\hat{\J}$ by implementing the ``step-by-step obstruction'' method of homological perturbation theory.
The same method can be used to construct a BRST charge (cf.~Theorems~\ref{theor:existence_BRST-charge} and~\ref{theor:uniqueness_BRST-charge}), and goes back to Stasheff~\cite{Stasheff1997}.
See Appendix~\ref{app:SBSO} for a version of this technique well-suited for our aims.

Notice that there is an alternative approach to the lifting.
Namely, the scheme adopted by Sch\"atz~\cite{schatz2009bfv} extends from the Poisson to the Jacobi setting.
Sch\"atz scheme involves two main ingredients:
\begin{itemize}
	\item homotopy transfer along the contraction data~\eqref{eq:1contraction_data_1} to lift the quasi-isomorphism of co-chain complexes $i_\nabla$ to a $L_\infty$-quasi-isomorphism of $L_\infty[1]$-algebras $\hat{i_\nabla}$ (see Section~\ref{subsec:first_relevant_contraction_data} for more details, including a definition, about $i_\nabla$), 
	\item transfer of formal Maurer--Cartan (MC) elements via $L_\infty$-quasi-isomorphisms to transform a Jacobi bi-derivation $\J$ on $L[1]$ into a new Jacobi bi-derivation $\hat{\J}$ on $\hat{L}[1]$.
\end{itemize}
However, we have preferred the first approach to the second one, because, in our opinion, it is simpler and does not involve unnecessarily sophisticated tools.

\subsection{The initial setting}
\label{subsec:initial_setting}

Let $E\to M$ be a vector bundle, and let $L\to M$ be a line bundle.
Define the vector bundle $E_L\to M$ by setting $E_L:=E\otimes L^\ast$.

Denote by $\hat{M}$ the graded manifold, with support $M$, represented by the graded vector bundle $\pi:E_L{}^\ast[1]\oplus E[-1]\longrightarrow M$, and by $\hat{L}$ the graded line bundle over $\hat{M}$ given by $\hat{L}:=\pi^\ast L\longrightarrow\hat{M}$.
This means that $C^\infty(\hat{M})$ and $\Gamma(\hat{L})$ are given by
\begin{gather*}
C^\infty(\hat{M})=S_{{\scriptscriptstyle C^\infty(M)}}\Gamma(E_L[-1]\oplus E^\ast[1]),\qquad
\Gamma(\hat{L})=C^\infty(\hat{M})\underset{{\scriptscriptstyle C^\infty(M)}}{\otimes}\Gamma(L).
\end{gather*}

\begin{remark}
	The algebra $C^\infty(\hat{M})=\bigoplus_{n\in\bbZ}C^\infty(\hat{M})^n$ is graded commutative wrt the $\bbZ$-grading  provided by the \emph{total ghost number} $n$, where
	\begin{equation*}
	C^\infty(\hat{M})^n:=\bigoplus_{\genfrac{}{}{0pt}{}{(h,k)\in\bbN_0 ^2}{h-k=n}}C^\infty(\hat{M})^{(h,k)},\qquad\textnormal{with}\quad C^\infty(\hat{M})^{(h,k)}:=\Gamma((\wedge^h E_L)\otimes(\wedge^k E^\ast)).
	\end{equation*}
	Hence its multiplication is also compatible with the finer $\bbN_0 ^2$-grading provided by the \emph{ghost/anti-ghost bi-degree} $(h,k)$.
	Similarly, the $C^\infty(\hat{M})$-module structure on $\Gamma(\hat{L})=\bigoplus_{n\in\bbZ}\Gamma(\hat{L})^n$ is graded wrt the $\bbZ$-grading provided by the \emph{total ghost number} $n$:
	\begin{equation*}
	\Gamma(\hat{L})^n:=\bigoplus_{\genfrac{}{}{0pt}{}{(h,k)\in\bbN_0 ^2}{h-k=n}}\Gamma(\hat{L})^{(h,k)},\qquad\textnormal{with}\quad \Gamma(\hat{L})^{(h,k)}:=\Gamma((\wedge^h E_L)\otimes(\wedge^k E^\ast)\otimes L).
	\end{equation*}
	Hence its $C^\infty(\hat{M})$-module structure is also compatible with the finer $\bbN_0 ^2$-grading provided by the \emph{ghost/anti-ghost bi-degree} $(h,k)$.
\end{remark}

\begin{remark}
	\label{rem:local_frames1}
	Let $x^i$ be an arbitrary local coordinate system on $M$.
	Fix a local frame $\xi^A$ on $E\to M$, and denote by $\xi^\ast_A$ the dual local frame on $E^\ast\to M$.
	Fix also a local frame $\mu$ on $L\to M$, and denote by $\mu^\ast$ the dual local frame on $L^\ast\to M$.
	Then the $C^\infty(M)$-algebra $C^\infty(\hat{M})$ is locally generated by
	\begin{equation*}
	\underbrace{\xi^A\otimes\mu^\ast}_{(1,0)}\qquad \underbrace{\xi^\ast_B}_{(0,1)},
	\end{equation*}
	where the subscripts denote the bi-degrees.
	Moreover, for all $(h,k)\in\bbN_0 ^2$, a local set of generators of the $C^\infty(M)$-module $C^\infty(\hat{M})^{(h,k)}$ is given by
	\begin{equation*}
	(\xi^{A_1}\otimes\mu^\ast)\cdots(\xi^{A_h}\otimes\mu^\ast)\xi^\ast_{B_1}\cdots\xi^\ast_{B_k},
	\end{equation*}
	and a local set of generators of the $C^\infty(M)$-module $\Gamma(\hat{L})^{(h,k)}$ is given by
	\begin{equation*}
	(\xi^{A_1}\otimes\mu^\ast)\cdots(\xi^{A_h}\otimes\mu^\ast)\xi^\ast_{B_1}\cdots\xi^\ast_{B_k}\otimes\mu.
	\end{equation*}
	Here, and in what follows, the symmetric product is denoted by juxtaposition.
	Hence an arbitrary $f\in C^\infty(\hat{M})$ and an arbitrary $\lambda\in\Gamma(\hat{L})$ admit the following local expression
	\begin{equation*}
	f=f^{\bfC}_{\bfB}(\xi^{\bfB}\otimes\mu^\ast)\xi_{\bfC}^\ast,\qquad\lambda=f^{\bfC}_{\bfB}(\xi^{\bfB}\otimes\mu^\ast)\xi_{\bfC}^\ast\otimes\mu.
	\end{equation*}
	Where, for any ordered $n$-tuple $\bfB=(B_1,\ldots,B_n)$, we understand the following abbreviations: $\xi^{\bfB}\otimes\mu^\ast$ for $(\xi^{B_1}\otimes\mu^\ast)\cdots(\xi^{B_n}\otimes\mu^\ast)$, and $\xi_{\bfB}^\ast$ for $\xi_{B_1}^\ast\cdots\xi_{B_n}^\ast$.
	The same notation will be adopted below without further comments.
\end{remark}

\begin{remark}
	\label{rem:compatible_gradings}
	A graded symmetric $n$-ary derivation $\Delta\in\Diff^n(\hat{L}[1],\bbR_{\hat{M}})$ is said to have (ghost/anti-ghost) bi-degree $(h,k)\in\bbZ^2$ if
	\begin{equation*}
	\Delta(\Gamma(\hat{L})^{(p_1,q_1)}\times\cdots\times\Gamma(\hat{L})^{(p_n,q_n)})\subseteq C^\infty(\hat{M})^{(h+\sum_ip_i,k+\sum_iq_i)}.
	\end{equation*}
	Denote by $\Diff^n(\hat{L}[1],\bbR_{\hat{M}})^{(h,k)}\subseteq\Diff^n(\hat{L}[1],\bbR_{\hat{M}})$ the $C^\infty(M)$-submodule of graded symmetric $n$-ary derivations of bi-degree $(h,k)$.
	The associative algebra $\Diff^n(\hat{L}[1],\bbR_{\hat{M}})$ is graded commutative wrt the $\bbZ$-grading provided by the \emph{total ghost number} $K$ provided by
	\begin{equation*}
	\Diff^\star(\hat{L}[1],\bbR_{\hat{M}}):=\bigoplus_{K\in\bbZ}\Diff^\star(\hat{L}[1],\bbR_{\hat{M}})^K,\quad\textnormal{with}\ \Diff^\star(\hat{L}[1],\bbR_{\hat{M}})^K=\!\!\!\bigoplus_{K=n+h-k}\!\!\!\Diff^n(\hat{L}[1],\bbR_{\hat{M}})^{(h,k)}.
	\end{equation*}
	Moreover, the product is compatible with both the arity, and the ghost/anti-ghost bi-degree $(h,k)$, i.e.
	\begin{equation*}
		\Diff^{n_1}(\hat{L}[1],\bbR_{\hat{M}})^{(h_1,k_1)}\cdot\Diff^{n_2}(\hat{L}[1],\bbR_{\hat{M}})^{(h_2,k_2)}\subseteq\Diff^{n_1+n_2}(\hat{L}[1],\bbR_{\hat{M}})^{(h_1+h_2,k_1+k_2)}.
	\end{equation*}
\end{remark}

\begin{remark}
	\label{rem:local_generating_system_for_do_from_calL_to_calA}
	A local set of generators of the $C^\infty(\hat{M})$-algebra $\Diff^\star(\hat{L}[1],\bbR_{\hat{M}})$ is provided by
	\begin{equation}
	\label{eq:gen_2}
	\underbrace{\mu^\ast}_{(0,0)},\quad\underbrace{\slashed\Delta_i}_{(0,0)},\quad\underbrace{\slashed\Delta_A}_{(-1,0)},\quad\underbrace{\slashed\Delta^A}_{(0,-1)},
	\end{equation}
	with $\mu^\ast$, $\slashed\Delta_i$, $\slashed\Delta_A$, and $\slashed\Delta^A$ defined by:
	\begin{align*}
	\mu^\ast(f\mu)&=f,&\mu^\ast(\xi^B)={}&\xi^B\otimes\mu^\ast,&\mu^\ast(\xi_B^\ast\otimes\mu)={}&\xi_B^\ast,\\
	\slashed\Delta_i(f\mu)&=\frac{\partial f}{\partial x^i},&\slashed\Delta_i(\xi^B)={}&0,&\slashed\Delta_i(\xi_B^\ast\otimes\mu)={}&0,\\
	\slashed\Delta_A(f\mu)&=0,&\slashed\Delta_A(\xi^B)={}&\delta_A^B,&\slashed\Delta_A(\xi_B^\ast\otimes\mu)={}&0,\\
	\slashed\Delta^A(f\mu)&=0,&\slashed\Delta^A(\xi^B)={}&0,&\slashed\Delta^A(\xi_B^\ast\otimes\mu)={}&\delta^A_B.
	\end{align*}
	Hence an arbitrary $\slashed\Delta\in\Diff^1(\hat{L}[1],\bbR_{\hat{M}})$ will be locally given by:
	\begin{equation*}
	\slashed\Delta=f^{\bfC}_{\bfB}(\xi^{\bfB}\!\otimes\!\mu^\ast)\xi_{\bfC}^\ast\mu^\ast+f^{i\bfC}_{\phantom{i}\bfB}(\xi^{\bfB}\!\otimes\!\mu^\ast)\xi_{\bfC}^\ast\slashed\Delta_i+f^{A\bfC}_{\phantom{A}\bfB}(\xi^{\bfB}\!\otimes\!\mu^\ast)\xi_{\bfC}^\ast\slashed\Delta_A+f^{\phantom{A}\bfC}_{A\bfB}(\xi^{\bfB}\!\otimes\!\mu^\ast)\xi_{\bfC}^\ast\slashed\Delta^A.
	\end{equation*}
\end{remark}

\begin{remark}
	A graded symmetric $n$-ary derivations $\square\in\Diff^n(\hat{L}[1])$ is said to have (ghost/anti-ghost) bi-degree $(h,k)$ if
	\begin{equation*}
	\square(\Gamma(\hat{L})^{(p_1,q_1)}\times\cdots\times\Gamma(\hat{L})^{(p_n,q_n)})\subseteq\Gamma(\hat{L})^{(h+\sum_ip_i,k+\sum_iq_i)}.
	\end{equation*}
	Denote by $\Diff^n(\hat{L}[1])^{(h,k)}\subseteq\Diff^n(\hat{L}[1])$ the $C^\infty(M)$-submodule of graded symmetric $n$-ary derivations of bi-degree $(h,k)$.
	Similarly as in Remark~\ref{rem:compatible_gradings}, all the algebraic structures on $\Diff^\star (\hat{L}[1])$ are compatible with both the arity, the ghost/anti-ghost bi-degree $(h,k)$, and the total ghost number $K$
	\begin{equation*}
	\Diff^\star(\hat{L}[1])=\bigoplus_{K\in\bbZ}\Diff^\star(\hat{L}[1])^K,\quad\textnormal{with}\ \Diff^\star(\hat{L}[1])^K=\!\!\!\bigoplus_{K=n-1+h-k}\!\!\!\Diff^n(\hat{L}[1])^{(h,k)}.
	\end{equation*}
	In the following, for every $(h,k)\in\bbZ^2$, we denote by $\operatorname{pr}^{(h,k)}:\Diff^\star(\hat{L}[1]) \to\Diff^\star(\hat{L}[1])^{(h,k)}$ the projection onto the homogeneous component of bi-degree $(h,k)$.
\end{remark}

\begin{remark}
	\label{rem:local_generating_system_for_do_from_calL_to_calL}
	A local set of generators for the $C^\infty(\hat{M})$-module $\Diff(\hat{L}[1])$ is provided by
	\begin{equation*}
	\underbrace{\id}_{(0,0)},\quad\underbrace{\Delta_i}_{(0,0)},\quad\underbrace{\Delta_A}_{(-1,0)},\quad\underbrace{\Delta^A}_{(0,-1)},
	\end{equation*}
	with $\id$, $\Delta_i$, $\Delta_A$, and $\Delta^A$ defined by:
	\begin{align*}
	\id(f\mu)&=f\mu,&\id(\xi^B)={}&\xi^B,&\id(\xi_B^\ast\otimes\mu)={}&\xi_B^\ast\otimes\mu,\\
	\Delta_i(f\mu)&=\frac{\partial f}{\partial x^i}\mu,&\Delta_i(\xi^B)={}&0,&\Delta_i(\xi_B^\ast\otimes\mu)={}&0,\\
	\Delta_A(f\mu)&=0,&\Delta_A(\xi^B)={}&\delta_A^B\mu,&\Delta_A(\xi_B^\ast\otimes\mu)={}&0,\\
	\Delta^A(f\mu)&=0,&\Delta^A(\xi^B)={}&0,&\Delta^A(\xi_B^\ast\otimes\mu)={}&\delta^A_B\mu.
	\end{align*}
	Hence an arbitrary $\square\in\Diff(\hat{L}[1])$ will be locally given by:
	\begin{equation*}
	\square=f^{\bfC\phantom{i}}_{\phantom{\bfA}\bfB}(\xi^{\bfB}\!\otimes\!\mu^\ast)\xi_{\bfC}^\ast\id+f^{i\bfC}_{\phantom{A}\bfB}(\xi^{\bfB}\!\otimes\!\mu^\ast)\xi_{\bfC}^\ast\Delta_i+f^{A\bfC}_{\phantom{A}\bfB}(\xi^{\bfB}\!\otimes\!\mu^\ast)\xi_{\bfC}^\ast\Delta_A+f^{\phantom{i}\bfC}_{A\bfB}(\xi^{\bfB}\!\otimes\!\mu^\ast)\xi_{\bfC}^\ast\Delta^A.
	\end{equation*}
\end{remark}

In the following, \emph{with no risk of confusion}, $\mu^\ast,\slashed\Delta_i$  will also denote the local set of generators of the $C^\infty(M)$-algebra $\Diff^\star(L[1],\bbR_M)$ defined by
\begin{equation*}
\mu^\ast(f\mu)=f,\quad\slashed\Delta_i(f\mu)=\frac{\partial f}{\partial x^i},
\end{equation*}
similarly $\id,\Delta_i$ will also denote the local set of generators of the $C^\infty(M)$-module $\Diff(L[1])$ defined by
\begin{equation*}
\id(f\mu)=f\mu,\quad\Delta_i(f\mu)=\frac{\partial f}{\partial x^i}\mu.
\end{equation*}

\subsection{Existence and uniqueness of the lifting: the statements}
\label{subsec:lifted_graded_Jacobi}

The graded line bundle $\hat{L}\to\hat{M}$ admits a tautological graded Jacobi structure, given by the canonical bi-degree $(-1,-1)$ Jacobi bi-derivation $G$, with corresponding Jacobi brackets $\{-,-\}_G$.
The latter can be seen as the natural extension, from the Poisson setting to the Jacobi one, of the so-called big bracket (cf.~\cite{kosmann1996poisson}).

\begin{definition}
	\label{def:G}
	The \emph{tautological Jacobi bi-derivation} on $\hat{L}[1]$ is the unique $G\in\Diff^2(\hat{L}[1])^{(-1,-1)}$ such that
	\begin{equation}
	\label{eq:def:G}
	G(u,\alpha)=G(\alpha,u)=\alpha(u),
	\end{equation}
	for all $u\in\Gamma(\hat{L}[1])^{(1,0)}=\Gamma(E)$, and $\alpha\in\Gamma(\hat{L}[1])^{(0,1)}=\Gamma(E^\ast\otimes L)$.
\end{definition}

\begin{remark}
	\label{rem:d_G}
	Set $d_G:=\ldsb G,-\rdsb$.
	Then $d_G$ is a cohomological derivation of $\Diff^\star(\hat{L}[1])$, with symbol $X_G$, and it is a derivation wrt $\ldsb-,-\rdsb$ as well.
	Moreover its bi-degree is $(-1,-1)$, i.e.
	\begin{equation*}
	d_G(\Diff^n(\hat{L}[1])^{(h,k)})\subseteq\Diff^{n+1}(\hat{L}[1])^{(h-1,k-1)}.
	\end{equation*}
	In particular, each section $\lambda\in\Gamma(\hat{L}[1])^{(0,0)}=\Gamma(L)[1]$ is a co-cycle wrt $d_G$, i.e.~$d_G\lambda=0$.
\end{remark}

\begin{remark}
	\label{rem:local_expression_G}
	Clearly, $G$ is locally given by $G=\slashed\Delta_A\slashed\Delta^A\otimes\mu$.
	Moreover, according to Remark~\ref{rem:Gerstenhaber_product_and_SJ_bracket}, $d_G$ is completely determined by:
	\begin{gather*}
	d_G(f\mu)=0,\quad d_G(\xi^A)=\Delta^A,\quad d_G(\xi_A^\ast\otimes\mu)=\Delta_A,\\
	d_G(\id)=G,\quad d_G(\Delta_i)=d_G(\Delta_A)=d_G(\Delta^A)=0.
	\end{gather*}
\end{remark}

\begin{definition}
	\label{def:BFV_brackets}
	A Jacobi bi-derivation $\hat{\J}$ on $\hat{L}[1]$ is said to be a \emph{lifting} of a Jacobi bi-derivation $J$ on $L[1]$ if
	\begin{enumerate}
		\item ${\operatorname{pr}^{(0,0)}}\circ\{-,-\}_{\hat{\J}}$ agrees with $\{-,-\}_G$ on $\Gamma(\hat{L})^{(1,0)}\oplus\Gamma(\hat{L})^{(0,1)}$,
		\item ${\operatorname{pr}^{(0,0)}}\circ\{-,-\}_{\hat{\J}}$ agrees with $\{-,-\}_{\J}$ on $\Gamma(\hat{L})^{(0,0)}$.
	\end{enumerate}
\end{definition}


\begin{theorem}[Existence]
	\label{theor:existence_BFV_brackets}
	Every Jacobi structure $\J$ on the line bundle $L\to M$ admits a lifting to a graded Jacobi structure $\hat{\J}$ on the graded line bundle $\hat{L}\to\hat{M}$.
	Specifically, for every fixed Jacobi structure $\J$ on $L\to M$, there exists a canonical map
	\begin{equation*}
	\{\textnormal{$\Diff(L)$-connections in $E\to M$}\}\longrightarrow\{\textnormal{liftings of $\J$ to $\hat{L}\to\hat{M}$}\},\quad \nabla\longmapsto\hat{\J}^\nabla.
	\end{equation*}
\end{theorem}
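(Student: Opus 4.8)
The plan is to reformulate the existence of a lifting as a Maurer--Cartan problem in the differential graded Lie algebra $(\Diff^\star(\hat L[1]),d_G,\ldsb-,-\rdsb)$ of Remark~\ref{rem:d_G} and to solve it by a step-by-step obstruction argument driven by contraction data attached to a connection. First I would fix a $\Diff(L)$-connection $\nabla$ in $E\to M$ and invoke Proposition~\ref{prop:1contraction_data_1} to obtain the associated contraction data~\eqref{eq:1contraction_data_1}, namely a projection, a horizontal inclusion $i_\nabla$, and a contracting homotopy $h$ for $d_G$, whose very construction rests on the splitting isomorphism $\psi_\nabla$ of Proposition~\ref{prop:psi_nabla}. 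Since the tautological bi-derivation $G$ is itself a Jacobi bi-derivation, $\ldsb G,G\rdsb=0$, the ansatz $\hat\J=G+\calK$ turns the defining condition $\ldsb\hat\J,\hat\J\rdsb=0$ into the Maurer--Cartan equation
\begin{equation*}
d_G\calK+\tfrac12\ldsb\calK,\calK\rdsb=0,
\end{equation*}
while the two clauses of Definition~\ref{def:BFV_brackets} become the requirement that the bi-degree $(0,0)$ part of $\calK$ reproduce $\J$ and that $\hat\J$ carry no component strictly below $G$.

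The key observation is that $d_G$ carries bi-degree $(-1,-1)$ and that every homogeneous component of a total-ghost-number $1$ bi-derivation has equal ghost and anti-ghost numbers; hence I would expand $\calK=\sum_{k\ge0}\hat\J_k$ with $\hat\J_k\in\Diff^2(\hat L[1])^{(k,k)}$ and collect the Maurer--Cartan equation in bi-degree $(k-1,k-1)$, obtaining the recursion
\begin{equation*}
d_G\hat\J_k=\mathcal R_k,\qquad \mathcal R_k:=-\tfrac12\sum_{i+j=k-1}\ldsb\hat\J_i,\hat\J_j\rdsb,
\end{equation*}
in which $\mathcal R_k$ depends only on $\hat\J_0,\dots,\hat\J_{k-1}$. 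The seed is the $d_G$-closed horizontal lift $\hat\J_0:=i_\nabla(\J)$, which reproduces $\J$ in bi-degree $(0,0)$ and thereby enforces Definition~\ref{def:BFV_brackets}. For $k\ge1$ the induction proceeds by first checking, from the graded Jacobi identity for $\ldsb-,-\rdsb$ together with the lower-order equations, that $d_G\mathcal R_k=0$, and then solving $d_G\hat\J_k=\mathcal R_k$ by means of $h$. Here one uses that the $d_G$-cohomology is concentrated in anti-ghost degree $0$ (it computes the horizontal multi-derivations): for $k\ge2$ the obstruction $\mathcal R_k$ sits in positive anti-ghost degree, so closedness already forces exactness and one may set $\hat\J_k:=h(\mathcal R_k)$; for $k=1$ the class of $\mathcal R_1=-\tfrac12\ldsb\hat\J_0,\hat\J_0\rdsb$ lives in anti-ghost degree $0$ and projects to $-\tfrac12\ldsb\J,\J\rdsb=0$, vanishing precisely because $\J$ is a genuine Jacobi structure on $M$. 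Since $E\to M$ has finite rank the anti-ghost variables are finitely many, the recursion terminates, and the resulting $\hat\J^\nabla:=G+\sum_k\hat\J_k$ is canonically attached to $\nabla$, which yields the asserted map $\nabla\mapsto\hat\J^\nabla$.

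The main obstacle is concentrated in the inductive step and is twofold. First, the closedness $d_G\mathcal R_k=0$ is the delicate Stasheff-type point: one must combine the graded Jacobi identity with the already-solved equations $d_G\hat\J_j=\mathcal R_j$ for $j<k$ so that the cubic terms telescope and leave a $d_G$-closed remainder, and the sign and bi-degree bookkeeping is where most of the care lies. Second, the entire argument rests on producing the correct contraction data for $d_G$ in the graded Jacobi setting; the genuine content is the construction of the homotopy $h$, the identification of its projection with the passage to horizontal multi-derivations, and the verification that $i_\nabla$ is a $d_G$-chain map, all of which reduce, through the splitting isomorphism $\psi_\nabla$ of Proposition~\ref{prop:psi_nabla}, to the linear algebra of $\nabla$. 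With these two ingredients in place, the step-by-step obstruction machinery recalled in Appendix~\ref{app:SBSO} delivers $\hat\J^\nabla$ algorithmically.
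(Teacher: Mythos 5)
Your proposal is correct and takes essentially the same route as the paper: the paper's proof simply feeds the seed $\bar{Q}=G+i_\nabla(\J)$, the contraction data~\eqref{eq:1contraction_data_1}, and the anti-ghost filtration into the step-by-step obstruction method of Proposition~\ref{prop:SBSO_existence}, whose recursion $Q_{k+1}=\tfrac12 H[Q(k),Q(k)]$ is exactly the bi-degree-by-bi-degree Maurer--Cartan induction you spell out, with the sole non-trivial obstruction (in anti-ghost degree $0$) killed by $p$ because $\ldsb\J,\J\rdsb=0$. Your unpacking of the closedness of $\mathcal R_k$ and of why exactness is automatic in positive anti-ghost degree matches the content of Propositions~\ref{prop:p}, \ref{prop:i_nabla} and \ref{prop:SBSO_existence}.
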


\begin{proposition}
	\label{prop:existence_BFV_brackets}
	For every Jacobi structure $\J$ on the line bundle $L\to M$, if the $DL$-connection $\nabla$ in $E\to M$ is flat, then $\hat{\J}^\nabla=G+i_\nabla\J$.
\end{proposition}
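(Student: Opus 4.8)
The plan is to establish the closed formula in three stages: first, to record the two structural properties of the embedding $i_\nabla$ that make the computation go through; second, to verify that $G + i_\nabla \J$ is a genuine lifting of $\J$ in the sense of Definition~\ref{def:BFV_brackets} which additionally solves the Maurer--Cartan equation precisely when $\nabla$ is flat; and third, to identify this explicit bi-derivation with the canonically constructed $\hat{\J}^\nabla$ of Theorem~\ref{theor:existence_BFV_brackets}.

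The two properties of $i_\nabla$ I would isolate are the following. On the one hand, $i_\nabla$ enters the contraction data as a cochain map into the complex $(\Diff^\star(\hat{L}[1]), d_G)$ whose source carries the zero differential; hence its image lies in $\ker d_G$, and in particular $d_G(i_\nabla \J) = \ldsb G, i_\nabla \J\rdsb = 0$. On the other hand---and this is the decisive point---$i_\nabla$ intertwines the Schouten--Jacobi brackets,
\begin{equation*}
\ldsb i_\nabla \square, i_\nabla \square'\rdsb = i_\nabla \ldsb \square, \square'\rdsb, \qquad \square, \square' \in \Diff^\star(L[1]),
\end{equation*}
\emph{if and only if} $\nabla$ is flat. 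To prove this intertwining I would write $i_\nabla$ explicitly through the splitting isomorphism $\psi_\nabla$ of Proposition~\ref{prop:psi_nabla}, so that the horizontal generators $\Delta_i$ are replaced by their $\nabla$-covariant counterparts acting on the ghost and anti-ghost coordinates; the two sides of the displayed identity then differ by a term built from the commutators of these covariant derivations, that is, by the curvature of $\nabla$, which vanishes exactly under flatness. Establishing this lemma is the main obstacle, since it is where the hypothesis on $\nabla$ is genuinely used.

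With these facts in hand the verification is immediate. Expanding by bilinearity and using $\ldsb G, G\rdsb = 0$ (the tautological $G$ is a Jacobi bi-derivation, Definition~\ref{def:G}) gives
\begin{equation*}
\ldsb G + i_\nabla \J, G + i_\nabla \J\rdsb = 2\,\ldsb G, i_\nabla \J\rdsb + \ldsb i_\nabla \J, i_\nabla \J\rdsb = 0 + i_\nabla\ldsb \J, \J\rdsb = 0,
\end{equation*}
where the middle term vanishes by $d_G$-closedness and the last by the intertwining lemma together with $\ldsb \J, \J\rdsb = 0$. That $G + i_\nabla \J$ is a lifting then follows from a bidegree count: $G$ has bidegree $(-1,-1)$ and $i_\nabla \J$ has bidegree $(0,0)$, so on $\Gamma(\hat{L})^{(1,0)} \oplus \Gamma(\hat{L})^{(0,1)}$ the component $\operatorname{pr}^{(0,0)} \circ \{-,-\}_{i_\nabla \J}$ is forced into strictly positive bidegree and drops out, leaving $\{-,-\}_G$ as required in condition~(1); while on $\Gamma(\hat{L})^{(0,0)} = \Gamma(L)[1]$ the term $\{-,-\}_G$ lands in vanishing bidegree and $i_\nabla \J$ restricts to $\J$, giving condition~(2).

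It remains to match $G + i_\nabla \J$ with the output of the canonical construction. The construction of Theorem~\ref{theor:existence_BFV_brackets} proceeds by the step-by-step obstruction scheme (Appendix~\ref{app:SBSO}): starting from the zeroth-order term $G + i_\nabla \J$ fixed by $\nabla$, one corrects the failure of the Maurer--Cartan equation order by order along the chosen contracting homotopy. Since we have just shown that, for flat $\nabla$, this zeroth-order term already satisfies the Maurer--Cartan equation, every obstruction class produced by the recursion vanishes and no higher corrections are introduced. Hence the procedure terminates with $\hat{\J}^\nabla = G + i_\nabla \J$, as claimed.
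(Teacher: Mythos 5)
Your proposal is correct and follows essentially the same route as the paper: the identity $\ldsb G+i_\nabla\J,G+i_\nabla\J\rdsb=\ldsb i_\nabla\J,i_\nabla\J\rdsb$ via $\ldsb G,G\rdsb=0$ and Proposition~\ref{prop:i_nabla}(3), the fact that $i_\nabla$ intertwines the Schouten--Jacobi brackets exactly when $\nabla$ is flat (which is already Proposition~\ref{prop:flatness}, proved there by the same reduction-to-generators argument you sketch), and the observation that the step-by-step obstruction scheme then adds no corrections to $\bar Q = G+i_\nabla\J$. Your explicit bidegree check of the lifting conditions is a harmless addition that the paper leaves implicit in the setup of Theorem~\ref{theor:existence_BFV_brackets}.
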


\begin{theorem}[Uniqueness]
	\label{theor:uniqueness_BFV_brackets}
	Fix an arbitrary Jacobi structure $\J$ on $L\to M$.
	Let $\hat{\J}$ and $\hat{\J}'$ be Jacobi structures on $\hat{L}\to\hat{M}$.
	If $\hat{\J}$ and $\hat{\J}'$ are both liftings of $\J$, then there exists a degree $0$ graded automorphism $\phi$ of the graded line bundle $\hat{L}\to\hat{M}$ such that
	\begin{equation*}
	\hat{\J}=\phi^\ast\hat{\J}'.
	\end{equation*}
	Moreover such $\phi$ can be chosen so to have the additional property that
	\begin{equation}
	\label{eq:theor:uniqueness_BFV_brackets}
	\phi(\Omega)-\Omega\in\bigoplus_{k\geq 1}\Gamma(\hat{L})^{(p+k,q+k)},\qquad\Omega\in\Gamma(\hat{L})^{(p,q)},\ (p,q)\in\bbN_0 ^2.
	\end{equation}
\end{theorem}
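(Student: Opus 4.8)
The plan is to recognize the two liftings as Maurer--Cartan elements of a single differential graded Lie algebra and to gauge one into the other by a transformation of exactly the filtered type prescribed by~\eqref{eq:theor:uniqueness_BFV_brackets}. Setting $\beta:=\hat{\J}-G$ and $\beta':=\hat{\J}'-G$, the identity $\ldsb G,G\rdsb=0$ for the tautological bi-derivation together with $\ldsb\hat{\J},\hat{\J}\rdsb=\ldsb\hat{\J}',\hat{\J}'\rdsb=0$ shows that both $\beta$ and $\beta'$ solve the Maurer--Cartan equation $d_G(-)+\tfrac12\ldsb-,-\rdsb=0$ of the DGLA $(\Diff^\star(\hat{L}[1]),d_G,\ldsb-,-\rdsb)$ from Proposition~\ref{prop:GJalgebra} and Remark~\ref{rem:d_G}. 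A degree $0$ graded automorphism $\phi=\exp(Z)$ of $\hat{L}\to\hat{M}$ generated by a vertical derivation $Z$ acts on Jacobi bi-derivations by $\phi^\ast\hat{\J}'=\exp(\ldsb-,Z\rdsb)\hat{\J}'$ (Remark~\ref{rem:inf_aut}, Equation~\eqref{Lie2}), and property~\eqref{eq:theor:uniqueness_BFV_brackets} is precisely the requirement that $Z$ be homogeneous of ghost/anti-ghost bi-degree $(k,k)$ with $k\geq1$. Thus it suffices to transform $\beta'$ into $\beta$ by a (locally finite) composition of exponentials of such $Z$.

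First I would fix the base of the induction using Definition~\ref{def:BFV_brackets}. Condition~(1) pins the pairing of $\hat{\J}$ and $\hat{\J}'$ between $\Gamma(\hat{L})^{(1,0)}$ and $\Gamma(\hat{L})^{(0,1)}$; since, by Definition~\ref{def:G}, $G$ is the \emph{unique} bi-derivation of bi-degree $(-1,-1)$ realizing that pairing, it follows that $\beta$ and $\beta'$ have components only in bi-degrees $(k,k)$ with $k\geq0$. Condition~(2) then forces the induced brackets on $\Gamma(\hat{L})^{(0,0)}=\Gamma(L)\cdot\mu$ to agree, i.e.\ $\operatorname{pr}^{(0,0)}(\beta-\beta')$ restricted to $\Gamma(L)\times\Gamma(L)$ vanishes. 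This is the only ``rigid'' datum: all remaining ghost-valued components of the $(0,0)$-parts will be absorbed by the gauge, exactly because an allowed $Z$ of bi-degree $(1,1)$ produces $d_GZ=\ldsb G,Z\rdsb$ of bi-degree $(0,0)$.

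Next comes the inductive engine, namely the ``step-by-step obstruction'' method of Appendix~\ref{app:SBSO}. Suppose a partial automorphism $\phi_{<p}$ has been built so that the discrepancy $C:=\hat{\J}-\phi_{<p}^\ast\hat{\J}'$ has vanishing components in all bi-degrees $(q,q)$ with $q<p$; let $C_p$ be its leading component, of bi-degree $(p,p)$. Subtracting the two Maurer--Cartan equations gives $\ldsb\hat{\J}+\phi_{<p}^\ast\hat{\J}',\,C\rdsb=0$, whose leading term forces $d_GC_p=0$; here one uses the graded Jacobi identity of Proposition~\ref{prop:GJalgebra}. One then seeks $Z_p$ of bi-degree $(p+1,p+1)$ with $d_GZ_p=\pm C_p$ and sets $\phi_{<p+1}:=\exp(Z_p)\circ\phi_{<p}$, so that $\exp(\ldsb-,Z_p\rdsb)$ cancels $C_p$ without disturbing lower bi-degrees (as $Z_p$ strictly raises bi-degree). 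Solvability is the key input: the required acyclicity of $d_G$ in positive bi-degree follows from the explicit Koszul form $G=\slashed\Delta_A\slashed\Delta^A\otimes\mu$ of Remark~\ref{rem:local_expression_G}, which realizes $d_G$ on $\xi^A\mapsto\Delta^A$, $\xi^\ast_A\otimes\mu\mapsto\Delta_A$ as a Koszul differential admitting a contracting homotopy; at the initial bi-degree $(0,0)$ the non-exact part of the obstruction is removed by condition~(2) rather than by acyclicity. Because $C^\infty(\hat{M})$ is finitely generated over $C^\infty(M)$ by the \emph{odd} ghost/anti-ghost generators, all bi-degrees are bounded by $\rank E$, so the process terminates and the composition $\phi$ is a genuine automorphism enjoying~\eqref{eq:theor:uniqueness_BFV_brackets} with $\phi^\ast\hat{\J}'=\hat{\J}$.

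The main obstacle I expect is twofold and concentrated in the technical lemma underlying the inductive step: establishing the acyclicity of $d_G$ in positive bi-degree \emph{globally} (the fibrewise Koszul contracting homotopy of Remark~\ref{rem:local_expression_G} must be assembled into a global operator, e.g.\ via partitions of unity or by identifying $H(d_G)$ as the cohomology of a fibrewise Koszul complex), and, correlatively, keeping the ghost/anti-ghost bi-degree bookkeeping straight so that each closed obstruction $C_p$ lands in a bi-degree where $d_G$ is acyclic, with the single exceptional bi-degree $(0,0)$ handled separately by Definition~\ref{def:BFV_brackets}. Once this lemma is in place the argument is the direct analogue of the existence proof of Theorem~\ref{theor:existence_BFV_brackets} and of the classical constructions of Rothstein, Herbig and Sch\"atz.
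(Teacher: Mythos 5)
Your argument is correct and is essentially the paper's proof: the paper simply packages your induction along the anti-ghost filtration into the abstract step-by-step obstruction statements (Proposition~\ref{prop:SBSO_uniqueness} and Corollary~\ref{cor:SBSO_uniqueness}) applied to the contraction data~\eqref{eq:1contraction_data_1}, with $\bar{Q}=G+i_\nabla\J$, $\calF_n:=\bigoplus_{j\geq n}\Diff^\star(\hat{L}[1])^{(i,j)}$ and $N=0$. The global acyclicity you flag as the main obstacle is already supplied by the homotopy $H_\nabla$ of Proposition~\ref{prop:1contraction_data_1}, which is built from a $\Diff(L)$-connection rather than a partition of unity: the identity $i_\nabla\circ p-\id=[d_G,H_\nabla]$ shows that every $d_G$-closed obstruction annihilated by $p$ is exact, the bi-degree $(0,0)$ obstruction being killed by $p$ precisely because of condition~(2) of Definition~\ref{def:BFV_brackets}, as you note.
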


The following proposition, describing an interesting property of liftings of Jacobi structures, generalizes a similar result obtained in the Poisson setting by Herbig (cf.~\cite[Theorem 3.4.5]{herbig2007}).
\begin{proposition}
	\label{prop:lifting_CE_cohom}
	If a Jacobi structure $\hat{\J}$ on the graded line bundle $\hat{L}\to\hat{M}$ is a lifting of a Jacobi structure $\J$ on $L\to M$, then the Chevalley--Eilenberg cohomologies of $\J$ and $\hat{\J}$ are isomorphic:
	\begin{equation*}
	H_{CE}^\bullet(M,L,\J)\simeq H_{CE}^\bullet(\hat{M},\hat{L},\hat{\J}).
	\end{equation*}
	More precisely, every $\Diff(L)$-connection in $E\to M$ determines a quasi-isomorphism from $(\Diff^\star(L[1]),d_{\J})$ to $(\Diff^\star(\hat{L}[1]),d_{\hat{\J}})$.
\end{proposition}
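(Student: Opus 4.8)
The plan is to realize both Chevalley--Eilenberg complexes as the two ends of a single deformation retraction and to read off the quasi-isomorphism from the Homological Perturbation Lemma. Recall from Remark~\ref{rem:d_J} that $H_{CE}(\hat M,\hat L,\hat{\J})$ is the cohomology of $(\Diff^\star(\hat L[1]),d_{\hat{\J}})$, with $d_{\hat{\J}}=\ldsb\hat{\J},-\rdsb$. First I would fix a $\Diff(L)$-connection $\nabla$ in $E\to M$ and split the differential as $d_{\hat{\J}}=d_G+\delta$, where $d_G=\ldsb G,-\rdsb$ is the tautological differential of Remark~\ref{rem:d_G} and $\delta:=\ldsb\hat{\J}-G,-\rdsb$. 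The unperturbed complex $(\Diff^\star(\hat L[1]),d_G)$ is precisely the one governed by the contraction data~\eqref{eq:1contraction_data_1} attached to $\nabla$ (Proposition~\ref{prop:1contraction_data_1}): these data form a deformation retraction of $(\Diff^\star(\hat L[1]),d_G)$ onto $(\Diff^\star(L[1]),0)$, with structure maps $i_\nabla$, $p_\nabla$, $h_\nabla$ satisfying the usual side conditions (in particular $p_\nabla i_\nabla=\id$). Thus $\delta$ is to be treated as a perturbation of $d_G$.

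Next I would check the hypotheses of the Homological Perturbation Lemma (see the Appendix). That $\delta$ is a genuine perturbation is immediate, since $(d_G+\delta)^2=\tfrac12\ldsb\ldsb\hat{\J},\hat{\J}\rdsb,-\rdsb=0$ by the Maurer--Cartan equation for $\hat{\J}$. The crucial point is the local finiteness of the series $(\id-h_\nabla\delta)^{-1}=\sum_{n\ge 0}(h_\nabla\delta)^n$. Here I would exploit the ghost/anti-ghost bi-grading: the homotopy $h_\nabla$ has bi-degree $(+1,+1)$, being a contracting homotopy for the bi-degree $(-1,-1)$ differential $d_G$, whereas every homogeneous component of $\hat{\J}-G$ lies in a bi-degree $(h,k)$ with $h,k\ge 0$, so $\delta$ never lowers the anti-ghost number. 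Consequently $h_\nabla\delta$ strictly raises the anti-ghost number on each homogeneous component, the series terminates in each fixed bi-degree, and the Lemma applies. It yields a deformation retraction of $(\Diff^\star(\hat L[1]),d_{\hat{\J}})$ onto $(\Diff^\star(L[1]),D)$ with perturbed inclusion $I_\nabla=(\id-h_\nabla\delta)^{-1}i_\nabla$ and transferred differential $D=p_\nabla\delta(\id-h_\nabla\delta)^{-1}i_\nabla$; being part of a contraction, $I_\nabla$ is automatically a quasi-isomorphism.

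It then remains to identify $D$ with $d_{\J}$. Expanding $D=\sum_{n\ge 0}p_\nabla\delta(h_\nabla\delta)^n i_\nabla$, the terms with $n\ge 1$ vanish: after one application of $h_\nabla$ the anti-ghost number is at least $1$, it is never lowered by the subsequent $\delta$'s and $h_\nabla$'s, and the final projection $p_\nabla$ onto bi-degree $(0,0)$ annihilates it. In the surviving term $p_\nabla\delta i_\nabla$, since $\delta$ cannot lower the bi-degree, only the bi-degree $(0,0)$ part of $i_\nabla$ contributes; combined with $p_\nabla i_\nabla=\id$ and the lifting conditions of Definition~\ref{def:BFV_brackets} (which pin down the bi-degree $(0,0)$ part of $\hat{\J}$ as a copy of $\J$), this gives $p_\nabla\delta i_\nabla=\ldsb\J,-\rdsb=d_{\J}$. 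Hence $I_\nabla\colon(\Diff^\star(L[1]),d_{\J})\to(\Diff^\star(\hat L[1]),d_{\hat{\J}})$ is the asserted quasi-isomorphism, and in particular $H_{CE}(M,L,\J)\simeq H_{CE}(\hat M,\hat L,\hat{\J})$. For an arbitrary lifting $\hat{\J}$ I would first reduce to the canonical one $\hat{\J}^\nabla$ of Theorem~\ref{theor:existence_BFV_brackets}: by the uniqueness Theorem~\ref{theor:uniqueness_BFV_brackets} any two liftings differ by a graded line-bundle automorphism $\phi$, which induces an isomorphism of the corresponding Chevalley--Eilenberg differential graded Lie algebras, so the cohomology is independent of the chosen lifting. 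The main obstacle is precisely the bi-degree bookkeeping of this argument: establishing that $\hat{\J}-G$ is supported in bi-degrees $(h,k)$ with $h,k\ge 0$ (which I would extract from the step-by-step construction of the lifting) is the single input that simultaneously forces convergence of the perturbation series and the collapse of $D$ onto its leading term $d_{\J}$.
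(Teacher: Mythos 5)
Your proposal is correct and follows essentially the same route as the paper's proof: fix a $\Diff(L)$-connection, perturb the contraction data of Proposition~\ref{prop:1contraction_data_1} by $\delta=d_{\hat{\J}}-d_G$, use the ghost/anti-ghost bi-degree bookkeeping to get nilpotency of $h_\nabla\delta$ and to collapse the transferred differential onto $p\,\delta\, i_\nabla=d_{\J}$, then invoke the Homological Perturbation Lemma. The only (harmless) detour is your final reduction to the canonical lifting via Theorem~\ref{theor:uniqueness_BFV_brackets}: the bi-degree support of $\hat{\J}-G$ in components $(k,k)$ with $k\geq 0$ holds for \emph{any} lifting by Remark~\ref{rem:BFV_brackets}, so the argument applies directly without that step.
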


In order to develop the necessary technical tools first, we postpone the proofs of Theorems~\ref{theor:existence_BFV_brackets} and~\ref{theor:uniqueness_BFV_brackets}, Proposition~\ref{prop:existence_BFV_brackets}, and Proposition~\ref{prop:lifting_CE_cohom} to the end of this section.

\subsection{A first relevant set of contraction data}
\label{subsec:first_relevant_contraction_data}

In this subsection we show that every (non-necessarily flat) $DL$-connection $\nabla$ in $E$ determines a set of contraction data from $\Diff^\star(\hat{L}[1])$ to $\Diff^\star(L[1])$ (Proposition~\ref{prop:1contraction_data_1}).
For the reader's convenience, we recall here that a set of \emph{contraction data} (between co-chain complexes) from $(\calK, \partial)$ to $(\underline{\smash{\calK}}, \underline{\smash{\partial}})$ consists of:
\begin{itemize}
	\item a surjective co-chain map $q:(\calK,\partial)\longrightarrow(\underline{\smash{\calK}},\underline{\smash{\partial}})$ that we simply call the \emph{projection},
	\item an injective co-chain map $j:(\underline{\smash{\calK}},\underline{\smash{\partial}})\longrightarrow(\calK,\partial)$, that we call the \emph{immersion}, such that $q\circ j=\id_{\underline{\smash{\calK}}}$,
	\item a \emph{homotopy} $h : (\calK, \partial) \to (\calK, \partial)$ between $j\circ q$ and $\id_{\calK}$.
\end{itemize}
Additionally, $q,j,h$ satisfy the following \emph{side conditions}:
\begin{equation*}
h^2=0,\qquad h\circ j=0,\qquad q\circ h=0.
\end{equation*}

In particular the set of contraction data from $\Diff^\star(\hat{L}[1])$ to $\Diff^\star(L[1])$ we are going to built represents the technical tool on which the proof of existence and uniqueness of the lifting is based (cf.~the next Subsection~\ref{subsec:existence_uniqueness_lifted_graded_Jacobi}).

\subsubsection*{The projection}
\label{subsubsec:p}
There is a degree $0$ graded module morphism $p:\Diff^\star(\hat{L}[1])\to\Diff^\star(L[1])$, covering a degree $0$ graded $\bbR$-algebra morphism $\underline{\smash{p}}:\Diff^\star(\hat{L}[1],\bbR_{\hat{M}})\to\Diff^\star(L[1],\bbR_M)$, given by
\begin{equation}
\label{eq:p}
(p\square)(\lambda_1,\ldots,\lambda_k)=\operatorname{pr}^{(0,0)}(\square(\lambda_1,\ldots,\lambda_k)),
\end{equation}
for all $\square\in\Diff^k(\hat{L}[1])$, and $\lambda_1,\ldots,\lambda_k\in\Gamma(L[1])$.
The following proposition lists some properties of $p$.

\begin{proposition}
	\label{prop:p} \ 
	
	(1) $p$ preserves the arity, i.e.~$p(\Diff^k(\hat{L}[1]))\subseteq\Diff^k(L[1])$.
	
	(2) $p$ annihilates $\Diff^\star(\hat{L}[1])^{(h,k)}$, for all $(h,k)\in\bbZ^2\setminus\{(0,0)\}$, and induces a degree $0$ graded Lie algebra morphism from $\Diff^\star(\hat{L}[1])^{(0,0)}$ onto $\Diff^\star(L[1])$, i.e.
	\begin{equation*}
	p(\ldsb\square_1,\square_2\rdsb)=\ldsb p\square_1,p\square_2\rdsb,\qquad\square_1,\square_2\in\Diff^\star(\hat{L}[1])^{(0,0)}.
	\end{equation*} 
	
	(3) $p$ is a co-chain map from $(\Diff^\star(\hat{L}[1]),d_G)$ to $(\Diff^\star(L[1]),0)$, i.e.~$p\circ d_G=0$.
\end{proposition}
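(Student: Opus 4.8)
The plan is to prove the three assertions separately, using throughout the bi-degree bookkeeping set up above together with the explicit shape of $G$ from Remark~\ref{rem:local_expression_G}. The single observation that makes everything run is that $C^\infty(M)$ sits inside $C^\infty(\hat M)$ precisely as the bi-degree $(0,0)$ component, so multiplication by $f\in C^\infty(M)$ preserves bi-degree; consequently $\operatorname{pr}^{(0,0)}$ is $C^\infty(M)$-linear, both on $C^\infty(\hat M)$ and on $\Gamma(\hat L)$. For part (1) I would first note that by the very definition~\eqref{eq:p}, $p$ sends a $k$-ary operator to a $k$-ary graded symmetric map $\Gamma(L[1])^{\times k}\to\Gamma(L[1])$, so only the multiderivation property requires checking. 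Fixing all but the $i$-th entry and writing $\square_{(i)}$ for the resulting operator in that entry, the $C^\infty(M)$-linearity of $\operatorname{pr}^{(0,0)}$ gives $[p\square_{(i)},a]=\operatorname{pr}^{(0,0)}\circ[\square_{(i)},a]$ for $a\in C^\infty(M)$; since $\square$ is first order in each entry, iterating once more yields $[[p\square_{(i)},a],a']=0$, and as $L[1]$ is a line bundle a first-order operator is automatically a derivation. Thus $p\square\in\Diff^k(L[1])$, which is part (1).

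For part (2), the annihilation statement is pure bookkeeping: if $\square$ has bi-degree $(h,k)\neq(0,0)$, then on inputs $\lambda_i\in\Gamma(L[1])=\Gamma(\hat L[1])^{(0,0)}$ the value $\square(\lambda_1,\ldots,\lambda_n)$ lies in $\Gamma(\hat L)^{(h,k)}$, which $\operatorname{pr}^{(0,0)}$ kills. For the Lie-algebra morphism property on the $(0,0)$ part I would use the Gerstenhaber-product formula~\eqref{eq:Gerstenhaber_product_and_SJ_bracket}. The key point is that a bi-degree $(0,0)$ operator maps $(0,0)$ sections to $(0,0)$ sections, on which $\operatorname{pr}^{(0,0)}$ acts as the identity; hence in evaluating $(\square_1\bullet\square_2)(\lambda_1,\ldots)$ on $(0,0)$ inputs every intermediate section stays in bi-degree $(0,0)$, so that $p(\square_1\bullet\square_2)=p\square_1\bullet p\square_2$. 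Antisymmetrizing via~\eqref{eq:Gerstenhaber_product_and_SJ_bracket} and invoking linearity of $p$ then gives $p\ldsb\square_1,\square_2\rdsb=\ldsb p\square_1,p\square_2\rdsb$.

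Part (3) is where the real content lies, and I expect it to be the main obstacle, because naive bi-degree counting does not close the argument: although $d_G$ lowers bi-degree by $(-1,-1)$, an operator of bi-degree $(1,1)$ is sent by $d_G$ into bi-degree $(0,0)$, so one cannot conclude $p\circ d_G=0$ from part (2) alone. The plan is instead to expand $d_G\square=\ldsb G,\square\rdsb=G\bullet\square-(-)^{|G||\square|}\square\bullet G$ through~\eqref{eq:Gerstenhaber_product_and_SJ_bracket} and evaluate on $(0,0)$ inputs $\lambda_1,\ldots,\lambda_{n+1}\in\Gamma(L[1])$. In every summand of $\square\bullet G$ there appears a factor $G(\lambda_{\tau(1)},\lambda_{\tau(2)})$ with both arguments of bi-degree $(0,0)$, while in every summand of $G\bullet\square$ there appears a factor $G(\square(\ldots),\lambda_{\tau(n+1)})$ whose second argument has bi-degree $(0,0)$. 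The crux is the computation that $G(\Omega,\lambda)=0$ whenever $\lambda\in\Gamma(\hat L)^{(0,0)}=\Gamma(L)$: indeed, by Remark~\ref{rem:local_expression_G} one has $G=\slashed\Delta_A\slashed\Delta^A\otimes\mu$, built from the ghost and anti-ghost derivations $\slashed\Delta_A,\slashed\Delta^A$, both of which annihilate $\Gamma(L)$; equivalently $d_G\lambda=\ldsb G,\lambda\rdsb=0$ for such $\lambda$ by Remark~\ref{rem:d_G}.

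Hence both Gerstenhaber products vanish identically on $(0,0)$ inputs, so $(d_G\square)(\lambda_1,\ldots,\lambda_{n+1})=0$ for all such inputs, and therefore $p\circ d_G=0$. This completes the plan; I emphasize that the only non-bookkeeping ingredient is the vanishing $G(\Omega,\lambda)=0$ on $(0,0)$ sections, which handles precisely the borderline bi-degree $(1,1)$ case that parts (1) and (2) leave untouched.
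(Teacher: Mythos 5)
Your proposal is correct and follows essentially the same route as the paper: parts (1) and (2) are the same bi-degree bookkeeping the paper dismisses as immediate consequences of~\eqref{eq:p} and Remark~\ref{rem:Gerstenhaber_product_and_SJ_bracket}, and part (3) rests on exactly the same key fact the paper invokes, namely that $d_G\lambda=\ldsb G,\lambda\rdsb=0$ for $\lambda\in\Gamma(L)[1]$ (Remark~\ref{rem:d_G}), which is precisely your vanishing $G(\Omega,\lambda)=0$ on bi-degree $(0,0)$ sections. The only cosmetic difference is that in (3) you expand $\ldsb G,\square\rdsb$ directly through the Gerstenhaber product~\eqref{eq:Gerstenhaber_product_and_SJ_bracket}, whereas the paper evaluates $(d_G\square)(\lambda_1,\ldots,\lambda_{k+1})$ as an iterated bracket and applies the Leibniz rule for $d_G$; both computations reduce to the same two vanishing statements.
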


\begin{proof}
	Properties (1) and (2) are immediate consequences of~\eqref{eq:p} and Remark~\ref{rem:Gerstenhaber_product_and_SJ_bracket}.
	Moreover, from Remarks~\ref{rem:d_G} and~\ref{rem:Gerstenhaber_product_and_SJ_bracket}, it follows that
	\begin{align*}
	(d_G\square)(\lambda_1,\ldots,\lambda_{k+1})&=\ldsb \ldsb \ldots\ldsb d_G\square,\lambda_1\rdsb ,\ldots\rdsb ,\lambda_{k+1}\rdsb \\
	&=d_G\underbrace{\ldsb \ldsb \ldots\ldsb \square,\lambda_1\rdsb ,\ldots\rdsb ,\lambda_{k+1}\rdsb }_{=0}\\
	&\phantom{=}+\sum_{i=1}^{k+1}(-)^{|\square|-i}\ldsb \ldsb \ldots\ldsb \ldsb \ldots\ldsb \square,\lambda_1\rdsb ,\ldots\rdsb ,\underbrace{d_G\lambda_i}_{=0}\rdsb ,\ldots\rdsb ,\lambda_{k+1}\rdsb =0,
	\end{align*}
	for all homogeneous $\square\in\Diff^k(\hat{L}[1])$, and $\lambda_1,\ldots,\lambda_{k+1}\in\Gamma(L)[1]$.
	This concludes the proof.
\end{proof}

\begin{remark}
	\label{rem:local_expression_p}
	The module morphism $p:\Diff^\star(\hat{L}[1])\to\Diff^\star(L[1])$ is locally given by
	\begin{gather*}
	p(f\mu)=f\mu,\quad p(\xi^A)=0,\quad p(\xi_A\otimes\mu)=0,\\
	p(\id)=\id,\quad p(\Delta_i)=\Delta_i,\quad p(\Delta_A)=0,\quad p(\Delta^A)=0.
	\end{gather*}
\end{remark}

\begin{remark}
	\label{rem:BFV_brackets}
	Let $\J$ be a Jacobi structure on $L\to M$.
	An arbitrary $\hat{\J}\in\Diff^2(\hat{L}[1])^1$ decomposes as follows
	\begin{equation*}
	\hat{\J}=\sum_{k=0}^\infty\hat{\J}_k,\qquad\textnormal{with}\quad\underbrace{\hat{\J}_k}_{(k-1,k-1)}.
	\end{equation*}
	It follows that the liftings of $\J$ are given by the degree $1$ graded symmetric bi-derivations $\hat{\J}$ such that
	\begin{gather}
	\label{eq:rem:BFV_brackets1}
	\hat{\J}_0=G\\
	\label{eq:rem:BFV_brackets2}
	p(\hat{\J}_1)=\J\\
	\label{eq:rem:BFV_brackets3}
	2d_G\hat{\J}_k+\sum_{i=1}^{k-1}\ldsb \hat{\J}_i,\hat{\J}_{k-i}\rdsb = 0 ,\quad\textnormal{for all }k>0.
	\end{gather}
\end{remark}

As already remarked the line bundle $L\to M$ comes equipped with a canonical flat $\Diff(L)$-connection, i.e.~the tautological representation given by the identity map $\id:\Diff(L)\to\Diff(L)$.
Accordingly every choice of a $\Diff(L)$-connection $\nabla$ in $E\to M$ determines a $\Diff(L)$-connection in $E^\ast\otimes L\to M$, denoted by $\nabla$ again.
This shows that every connection $\nabla:\Diff(L)\to \Diff(E)$ admits a canonical extension $\nabla:\Diff(L)\to\Diff(\hat{L})$.
Note here that $\Diff(L)=\Diff(L[1])$ and $\Diff(\hat{L})=\Diff(\hat{L}[1])$.

\begin{lemma}
	\label{lem:metric_connection}
	Let $\nabla$ be a $\Diff(L)$-connection in $E\to M$.
	Its canonical extension $\nabla:\Diff(L)\to\Diff(\hat{L})$ takes values in $\mathfrak{aut}(\hat{M},\hat{L},G)$, i.e., for all $\square\in\Diff(L)$, the equivalent conditions hold:
	\begin{itemize}
		\item\label{en:lem:metric_connection_1}
		$\nabla_\square$ is a Jacobi derivation, or, which is the same, an infinitesimal Jacobi automorphism of $(\hat{M},\hat{L},G)$, i.e.
		\begin{equation*}
		\nabla_\square\{\lambda_1,\lambda_2\}_G=\{\nabla_\square\lambda_1,\lambda_2\}_G+\{\lambda_1,\nabla_\square\lambda_2\}_G,\qquad\lambda_1,\lambda_2\in\Gamma(\hat{L}),
		\end{equation*}
		\item\label{en:lem:metric_connection_2}
		$\nabla_\square$ is a co-cycle of the co-chain complex $(\Diff^\star(\hat{L}[1]),d_G)$, i.e.~$d_G\nabla_\square=0$.
	\end{itemize}
\end{lemma}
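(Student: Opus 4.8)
I would first observe that the two displayed conditions are equivalent directly from the definitions. By Remark~\ref{rem:d_G} we have $d_G=\ldsb G,-\rdsb$, so $d_G\nabla_\square=0$ reads $\ldsb G,\nabla_\square\rdsb=0$; on the other hand, by the graded analogue of Equation~\eqref{eq:DeltaJ}, for any degree $0$ derivation one has
\begin{equation*}
\ldsb G,\nabla_\square\rdsb(\lambda_1,\lambda_2)=\pm\left(\{\nabla_\square\lambda_1,\lambda_2\}_G+\{\lambda_1,\nabla_\square\lambda_2\}_G-\nabla_\square\{\lambda_1,\lambda_2\}_G\right),
\end{equation*}
so that $\ldsb G,\nabla_\square\rdsb=0$ is exactly the statement that $\nabla_\square$ is a derivation of $\{-,-\}_G$, i.e.\ a Jacobi derivation in $\mathfrak{aut}(\hat M,\hat L,G)$. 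Hence the whole lemma reduces to proving $\ldsb G,\nabla_\square\rdsb=0$.

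The plan is to prove this by reducing to generators. By~\eqref{eq:Gerstenhaber_product_and_SJ_bracket_bis}, the bi-derivation $\ldsb G,\nabla_\square\rdsb$ vanishes once the derivation identity $\nabla_\square\{\lambda_1,\lambda_2\}_G=\{\nabla_\square\lambda_1,\lambda_2\}_G+\{\lambda_1,\nabla_\square\lambda_2\}_G$ holds for $\lambda_1,\lambda_2$ ranging over a generating set of the Gerstenhaber--Jacobi algebra $(C^\infty(\hat M),\Gamma(\hat L))$; since $\{-,-\}_G$ is a bi-derivation and $\nabla_\square$ is a derivation, both obeying the Leibniz rule~\eqref{eq:genleib2}, the identity then propagates from generators to all sections. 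A convenient generating set is $\Gamma(L)\cup\Gamma(E)\cup\Gamma(E^\ast\otimes L)$, i.e.\ the sections of $\hat L$ of bidegrees $(0,0)$, $(1,0)$, $(0,1)$, on which the canonical extension acts respectively as $\square$, as the given connection $\nabla_\square$, and as its dual connection.

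Next I would carry out the generator check by a bidegree count. Since $G$ has bidegree $(-1,-1)$ (Definition~\ref{def:G}), the bracket of elements of bidegrees $(p_1,q_1)$ and $(p_2,q_2)$ has bidegree $(p_1+p_2-1,q_1+q_2-1)$; running over the six pairs of generator types, this forces $\{-,-\}_G$ to vanish on all of them except $\{u,\alpha\}_G=\pm\,\alpha(u)\in\Gamma(L)$ for $u\in\Gamma(E)$, $\alpha\in\Gamma(E^\ast\otimes L)$. For every vanishing pair the identity reads $0=0$, because $\nabla_\square$ preserves the bidegree and so sends each generator to a section of the same bidegree, on which the relevant brackets again vanish. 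The single nontrivial pair gives, using $\nabla_\square|_{\Gamma(L)}=\square$,
\begin{equation*}
\nabla_\square\{u,\alpha\}_G=\pm\,\square(\alpha(u)),\qquad \{\nabla_\square u,\alpha\}_G+\{u,\nabla_\square\alpha\}_G=\pm\,\alpha(\nabla_\square u)\pm(\nabla_\square\alpha)(u),
\end{equation*}
and these coincide precisely because the dual connection is \emph{defined} by the Leibniz rule $\square(\alpha(u))=(\nabla_\square\alpha)(u)+\alpha(\nabla_\square u)$ with $\square$ acting tautologically on $\Gamma(L)$. This is the conceptual heart: the vanishing of $\ldsb G,\nabla_\square\rdsb$ is nothing but the compatibility of $\nabla_\square$ with the canonical $L$-valued pairing between $E$ and $E^\ast\otimes L$ out of which $G$ is built.

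I expect the main obstacle to be the bookkeeping rather than the idea: keeping track of the décalage and Koszul signs hidden in the $\pm$ signs above, and making the propagation from generators to arbitrary sections fully rigorous (a routine but lengthy Leibniz induction that must also control the symbol of $\nabla_\square$). For this last step the cleanest rigorous route is likely a direct local computation of $\ldsb G,\nabla_\square\rdsb$ using the explicit local expression $G=\slashed\Delta_A\slashed\Delta^A\otimes\mu$ (Remark~\ref{rem:local_expression_G}) together with the local form of $\nabla_\square$, where the vanishing appears as the exact cancellation between the action of $\nabla_\square$ on the ghost generators $\xi^A$ and its dual action on the anti-ghost generators $\xi^\ast_A\otimes\mu$.
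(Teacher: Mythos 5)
Your proposal is correct and follows essentially the same route as the paper's proof: the paper likewise reduces everything to showing $\ldsb G,\nabla_\square\rdsb=0$, verifies this on the single nontrivial generator pair $(\alpha\otimes\lambda,u)$ with $u\in\Gamma(E)$ and $\alpha\otimes\lambda\in\Gamma(E^\ast\otimes L)$ (where, as you say, it is exactly the defining Leibniz rule of the induced connection on $E^\ast\otimes L$), and then concludes by the bidegree argument, since $\ldsb G,\nabla_\square\rdsb\in\Diff^2(\hat L[1])^{(-1,-1)}$ is determined by its values on such pairs. Your additional remarks on the equivalence of the two bullet conditions and on propagating the identity from generators are consistent with, and merely more explicit than, what the paper leaves as a "straightforward computation".
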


\begin{proof}
	Fix an arbitrary $\square\in\Diff(L)$.
	It follows from Remark~\ref{rem:Gerstenhaber_product_and_SJ_bracket} after a straightforward computation that, for all $u\in\Gamma(E)$, $\alpha\in\Gamma(E^\ast)$, and $\lambda\in\Gamma(L)[1]$,
	\begin{equation*}
	\ldsb G,\nabla_\square\rdsb (\alpha\otimes\lambda,u)=0.
	\end{equation*}
	Since $\ldsb G,\nabla_\square\rdsb \in\Diff^2(\hat{L}[1])^{(-1,-1)}$, we get that $d_G\nabla_\square=0$.
\end{proof}

\subsubsection*{The immersion}
There is a degree $0$ graded module morphism $i_\nabla:\Diff^\star(L[1])\to\Diff^\star(\hat{L}[1])$, covering a degree $0$ graded algebra morphism $\underline{\smash{i_\nabla}}:\Diff^\star(L[1],\bbR_M)\to\Diff^\star(\hat{L}[1],\bbR_{\hat{M}})$, completely determined by
\begin{equation}
\label{eq:i_nabla}
i_\nabla\lambda=\lambda,\qquad i_\nabla\square=\nabla_\square,
\end{equation}
for all $\lambda\in\Gamma(L)$, and $\square\in\Diff(L[1])$.
In the following proposition we list some properties of $i_\nabla$.

\begin{proposition}
	\label{prop:i_nabla} \ 
	
	(1) $i_\nabla$ pr\underline{}eserves the arity, and takes values of bi-degree $(0,0)$, i.e.
	\begin{equation*}
	i_\nabla(\Diff^k(L[1]))\subseteq\Diff^k(\hat{L}[1])^{(0,0)}.
	\end{equation*}
	
	(2) $i_\nabla$ is a section of $p$, i.e.~$p\circ i_\nabla=\id$ on $\Diff^\star(L[1])$.
	
	(3) $i_\nabla$ is a morphism of co-chain map $(\Diff^\star(L[1]),0)$ to $(\Diff^\star(\hat{L}[1]),d_G)$, i.e.~$d_G\circ i_\nabla=0$.
\end{proposition}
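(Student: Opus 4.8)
The plan is to verify all three properties on generators, exploiting that $i_\nabla$ is by construction a degree $0$ graded module morphism covering the graded algebra morphism $\underline{\smash{i_\nabla}}$. Recall that $\Diff^\star(L[1])$ is a rank-one free module over $\Diff^\star(L[1],\bbR_M)\simeq S_{{\scriptscriptstyle C^\infty(M)}}\Diff^1(L[1],\bbR_M)$, so every multiderivation is an iterated symmetric product of functions in $C^\infty(M)$, arity-one operators, and a single section of $L[1]$. Since the symmetric product is compatible with both the arity and the bi-degree (cf.~Remark~\ref{rem:compatible_gradings}), and both $p$ and $i_\nabla$ are module morphisms covering algebra morphisms, in each case it suffices to treat the arity-$\leq 1$ generators $\lambda\in\Gamma(L)$ and $\square\in\Diff(L[1])$.

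For property (1), on the arity-zero generators we have $i_\nabla\lambda=\lambda\in\Gamma(\hat L)^{(0,0)}$ directly from~\eqref{eq:i_nabla}, while on the arity-one generators $i_\nabla\square=\nabla_\square$, the canonical extension of the $\Diff(L)$-connection. The key observation is that this extension acts on $C^\infty(\hat M)$ through the induced connections on $\Gamma(E)$ and on $\Gamma(E^\ast\otimes L)$, which preserve the bundles $E$ and $E^\ast$ separately; hence $\nabla_\square$ preserves the ghost number and the anti-ghost number independently, so it has bi-degree $(0,0)$ and arity one. As the symmetric product adds both arities and bi-degrees, products of such generators land in $\Diff^k(\hat L[1])^{(0,0)}$, which is property (1). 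For property (2), the composition $p\circ i_\nabla$ is again a module morphism, so I would check it is the identity on the same generators. For $\lambda\in\Gamma(L)[1]$ one has $p(i_\nabla\lambda)=p(\lambda)=\operatorname{pr}^{(0,0)}(\lambda)=\lambda$, since $\lambda$ already has bi-degree $(0,0)$. For $\square\in\Diff(L[1])$ I would evaluate $p(\nabla_\square)$ on a pulled-back section $\lambda\in\Gamma(L)[1]$: by~\eqref{eq:p}, $p(\nabla_\square)(\lambda)=\operatorname{pr}^{(0,0)}(\nabla_\square\lambda)$, and because $\nabla_\square$ restricts to $\square$ along $L$ (there being no ghost coefficients to differentiate), this equals $\square\lambda$, in agreement with the local formula of Remark~\ref{rem:local_expression_p}. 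Thus $p(\nabla_\square)=\square$ and $p\circ i_\nabla=\id$.

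For property (3), I would use that $d_G=\ldsb G,-\rdsb$ is a graded derivation of $\ldsb-,-\rdsb$ and, covering $X_G$, of the module structure (Remark~\ref{rem:d_G}); consequently $d_G$ annihilates a product as soon as it annihilates each factor, so it is enough to show $d_G$ vanishes on the generators of $\im i_\nabla$. On $\lambda=i_\nabla\lambda\in\Gamma(\hat L[1])^{(0,0)}$ this is exactly the cocycle statement $d_G\lambda=0$ of Remark~\ref{rem:d_G}; on $\nabla_\square=i_\nabla\square$ it is precisely Lemma~\ref{lem:metric_connection}, $d_G\nabla_\square=0$; and $X_G$ kills $C^\infty(M)$ for bi-degree reasons, since $G$ has bi-degree $(-1,-1)$ while $C^\infty(\hat M)$ carries no negative bi-degrees. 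Hence $d_G\circ i_\nabla=0$.

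The only genuinely substantive input, beyond this bookkeeping, is the behavior on the arity-one generators $\square\mapsto\nabla_\square$: namely that the canonical extension of the connection has bi-degree $(0,0)$, restricts to $\square$ along $L$, and is a $d_G$-cocycle. The first two facts are immediate from the construction of the extension recalled before Lemma~\ref{lem:metric_connection}, and the third is supplied by that lemma; once these are in hand, properties (1)--(3) follow formally from the generator-wise arguments above. I therefore expect no serious obstacle, the proof being a short reduction to generators together with the cited lemma.
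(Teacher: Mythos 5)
Your proposal is correct and takes essentially the same route as the paper: reduce each property to the generators $\Gamma(L)[1]$ and $\Diff(L[1])$ using that $p\circ i_\nabla$ is a module morphism and $d_G\circ i_\nabla$ is a derivation along $i_\nabla$, then invoke the defining formulas~\eqref{eq:p} and~\eqref{eq:i_nabla} together with Remark~\ref{rem:d_G} and Lemma~\ref{lem:metric_connection}. Your write-up merely spells out the generator-wise verifications that the paper leaves implicit.
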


\begin{proof}\ 
	
	(1) It is an immediate consequence of~\eqref{eq:i_nabla}.
	
	(2) Since $p\circ i_\nabla:\Diff^\star(L[1])\to\Diff^\star(L[1])$ is a module morphism, covering the algebra morphism $\underline{\smash{p}}\circ\underline{\smash{i_\nabla}}:\Diff^\star(L[1],\bbR_M)\to\Diff^\star(L[1],\bbR_M)$, it is enough to check that $p\circ i_\nabla$ agrees with the identity on $\Gamma(L)[1]$ and $\Diff(L[1])$.
	This is exactly the case because of~\eqref{eq:p} and~\eqref{eq:i_nabla}.
	
	(3) Since $d_G\circ i_\nabla:\Diff^\star(L[1]))\to\Diff^\star(\hat{L}[1])$ is a derivation along the module morphism $i_\nabla:\Diff^\star(L[1]))\to\Diff^\star(\hat{L}[1])$ (cf.~Remark~\ref{rem:derivation_along_morphism}), it is enough to check that $d_G\circ i_\nabla$ vanishes on $\Gamma(L)[1]$ and $\Diff(L[1])$, and indeed this is the case because of~\eqref{eq:i_nabla}, Remark~\ref{rem:d_G} and Lemma~\ref{lem:metric_connection}.
\end{proof}

\begin{proposition}
	\label{prop:flatness}
	The following conditions are equivalent:
	\begin{itemize}
		\item the $DL$-connection $\nabla$ in $E\to M$ is flat, i.e.
		\begin{equation}
		\label{eq:flatness_connection}
		\ldsb \nabla_{\square_1},\nabla_{\square_2}\rdsb=\nabla_{\ldsb\square_1,\square_2\rdsb},\quad\text{for all}\ \square_1,\square_2\in DL.
		\end{equation}
		\item the immersion $i_\nabla:D^\star(L[1])\to D^\star(\hat{L}[1])$ is a Lie algebra morphism, i.e.
		\begin{equation}
		\label{eq:flatness_immersion}
		\ldsb i_\nabla\square_1,i_\nabla\square_2\rdsb=i_\nabla\ldsb\square_1,\square_2\rdsb,\quad\text{for all}\ \square_1,\square_2\in D^\star(L[1]).
		\end{equation}
	\end{itemize}
\end{proposition}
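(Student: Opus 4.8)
The plan is to reduce the intertwining identity \eqref{eq:flatness_immersion} to the generating set $\Gamma(L)\cup DL$ of the graded symmetric multi-derivations $D^\star(L[1])$, and to observe that on this set it reproduces verbatim the flatness condition \eqref{eq:flatness_connection}. The implication $\eqref{eq:flatness_immersion}\Rightarrow\eqref{eq:flatness_connection}$ is then immediate: restricting \eqref{eq:flatness_immersion} to $\square_1,\square_2\in DL$ and using the defining relation $\ldsb\square,\square'\rdsb=[\square,\square']$ of the Schouten--Jacobi bracket \eqref{eq:SJbrackets} (both in $D^\star(L[1])$ and in $D^\star(\hat{L}[1])$), together with $i_\nabla\square=\nabla_\square$ from \eqref{eq:i_nabla}, yields exactly $[\nabla_{\square_1},\nabla_{\square_2}]=\nabla_{[\square_1,\square_2]}$. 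So the real content lies in the converse.

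First I would record the behaviour of both sides on generators, splitting into three cases by arity and using \eqref{eq:SJbrackets} and \eqref{eq:i_nabla}. For $\lambda,\mu\in\Gamma(L)$ both sides vanish, since $\ldsb\lambda,\mu\rdsb=0$ and $i_\nabla$ fixes $\Gamma(L)$. For $\square\in DL$ and $\lambda\in\Gamma(L)$, I first note the auxiliary fact $\nabla_\square|_{\Gamma(L)}=\square$: indeed $p\circ i_\nabla=\id$ (Proposition~\ref{prop:i_nabla}(2)) gives $\operatorname{pr}^{(0,0)}(\nabla_\square\lambda)=\square\lambda$, while $\nabla_\square\in D(\hat{L}[1])^{(0,0)}$ (Proposition~\ref{prop:i_nabla}(1)) forces $\nabla_\square\lambda$ to have bi-degree $(0,0)$, so the projection acts as the identity on it; hence $\ldsb i_\nabla\square,i_\nabla\lambda\rdsb=\ldsb\nabla_\square,\lambda\rdsb=\nabla_\square\lambda=\square\lambda=i_\nabla\ldsb\square,\lambda\rdsb$. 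Both of these cases hold \emph{independently} of $\nabla$. The remaining case $\square,\square'\in DL$ is the only one where the connection enters, and as above it returns precisely \eqref{eq:flatness_connection}. This isolates flatness to the arity-one/arity-one corner, as one expects.

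The hard part will be propagating the identity from generators to all of $D^\star(L[1])$, which is where I expect the main technical obstacle to lie. For a fixed generator $g$, both maps $\square\mapsto\ldsb i_\nabla g,i_\nabla\square\rdsb$ and $\square\mapsto i_\nabla\ldsb g,\square\rdsb$ are, by the generalized Leibniz rule \eqref{eq:genleib2} and the fact that $i_\nabla$ covers the graded algebra morphism $\underline{\smash{i_\nabla}}$, first-order differential operators along $i_\nabla$; their difference is $D^\star(L[1],\bbR_M)$-linear precisely when their scalar-type symbols coincide, i.e.\ when $X_{i_\nabla g}\circ\underline{\smash{i_\nabla}}=\underline{\smash{i_\nabla}}\circ X_g$ as derivations of $D^\star(L[1],\bbR_M)$. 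Establishing this anchor-compatibility — a computation on the algebra generators $C^\infty(M)$ and $J_1L$ of $D^\star(L[1],\bbR_M)$, which does not itself involve flatness — is the crux. Granting it, the difference of the two operators is a module morphism that vanishes on the module generators $\Gamma(L)$ (cases one and two above), hence vanishes identically, which disposes of an arbitrary second argument; running the same argument in the first slot then reduces that slot again to $\Gamma(L)\cup DL$, where the three cases apply. Throughout, the Koszul signs must be tracked with care, but they are entirely dictated by Definition~\ref{definition:Gerstenhaber--Jacobi} and the décalage conventions, and do not affect the shape of the argument.
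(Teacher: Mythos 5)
Your proposal is correct and follows essentially the same route as the paper: both proofs reduce the identity \eqref{eq:flatness_immersion}, via the Leibniz rules of the Gerstenhaber--Jacobi structures and the fact that $i_\nabla$ is a module morphism covering an algebra morphism, to the three generator cases $\Gamma(L)$--$\Gamma(L)$, $DL$--$\Gamma(L)$ and $DL$--$DL$, of which only the last involves the connection and is precisely the flatness condition \eqref{eq:flatness_connection}. The one step you leave open, the symbol compatibility $X_{i_\nabla g}\circ\underline{\smash{i_\nabla}}=\underline{\smash{i_\nabla}}\circ X_g$ needed to run the Leibniz reduction, is true and routine (it involves no flatness, only the definitions of $i_\nabla$ and the actions $X_{(-)}$); the paper's proof passes over it in silence, so you are if anything more explicit about where the reduction actually happens.
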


\begin{proof}
	Formula~\eqref{eq:flatness_immersion} relates two objects:
	\begin{itemize}
	\item[-] the Schouten--Jacobi bracket $\ldsb-,-\rdsb$, which makes $D^\star(L[1])$ and $D^\star(\hat{L}[1])$ Gerstenhaber--Jacobi algebras over $D^\star(L[1],\bbR_M)$ and $D^\star(\hat{L}[1],\bbR_{\hat{M}})$ respectively,
	\item[-] the immersion $i_\nabla:D^\star(L[1])\to D^\star(\hat{L}[1])$, which is a module morphism covering an algebra morphism $\underline{\smash{i_\nabla}}:D^\star(L[1],\bbR_{M})\to D^\star(\hat{L}[1],\bbR_{\hat{M}})$.
	\end{itemize}
	Moreover module $D^\star(L[1])$ is generated by $D^0(L[1])=\Gamma(L)$ over $D^\star(L[1],\bbR_{M})$, and algebra $D^\star(L[1],\bbR_{M})$ is generated by $D^1(L[1],\bbR_{M})$ over $D^0(L[1],\bbR_{M})=C^\infty(M)$.
	Hence~\eqref{eq:flatness_immersion} holds iff it is satisfied when $\square_1$ and $\square_2$ have degree $0$ or $1$, i.e.
	\begin{align}
	\label{eq:flatness_immersion_a}\tag{a}
	i_\nabla\ldsb\lambda_1,\lambda_2\rdsb&=\ldsb i_\nabla\lambda_1,i_\nabla\lambda_2\rdsb,&&\text{for all}\ \lambda_1,\lambda_2\in\Gamma(L),\\
	\label{eq:flatness_immersion_b}\tag{b}
	i_\nabla\ldsb\square,\lambda\rdsb&=\ldsb i_\nabla\square,i_\nabla\lambda\rdsb,&&\text{for all}\ \square\in DL,\lambda\in\Gamma(L),\\
	\label{eq:flatness_immersion_c}\tag{c}
	i_\nabla\ldsb\square_1,\square_2\rdsb&=\ldsb i_\nabla\square_1,i_\nabla\square_2\rdsb,&&\text{for all}\ \square_1,\square_2\in DL.
	\end{align}
	On the one hand, both~\eqref{eq:flatness_immersion_a} and~\eqref{eq:flatness_immersion_b} are trivially satisfied because of the very definition of $i_\nabla$ and $\ldsb-,-\rdsb$.
	On the other hand, the identity~\eqref{eq:flatness_immersion_c} is satisfied iff its two sides, seen as derivations of $\hat{L}$, coincide when evaluated on $\Gamma(\hat{L})^{(1,0)}=\Gamma(E)$, and so it exactly amounts to the flatness of the $DL$-connection $\nabla$ in $E\to M$.
	This shows the equivalence of~\eqref{eq:flatness_connection} and~\eqref{eq:flatness_immersion}, and completes the proof.
\end{proof}

\begin{remark}
	\label{rem:local_expression_i_nabla}
	Let us keep the same notations of Remark~\ref{rem:local_frames1}.
	The tautological $\Diff(L)$-connection in $L$ is locally given by:
	\begin{align*}
	\nabla_{\id}\mu&=\mu,&\nabla_{\Delta_i}\mu&=0,
	\intertext{Clearly, if the $\Diff(L)$-connection in $E$ is locally given by:}
	\nabla_{\id}\xi^A&=\Gamma^A_B\xi^B,&\nabla_{\Delta_i}\xi^A&=\Gamma^{\phantom{i}A}_{iB}\xi^B,
	\end{align*}
	then the $\Diff(L)$-connection in $E^\ast\otimes L$, obtained by tensor product, is locally given by:
	\begin{align*}
	\nabla_{\id}(\xi^\ast_A\otimes\mu)&=(\delta_A^B-\Gamma_A^B)\xi^\ast_B\otimes\mu,&\nabla_{\Delta_i}(\xi^\ast_A\otimes\mu)&=-\Gamma^{\phantom{i}B}_{iA}\xi^\ast_B\otimes\mu.
	\end{align*}
	Accordingly its extension $\nabla:\Diff(L)\to\Diff(\hat{L})$ is locally given by:
	\begin{align*}
	\nabla_{\id}&=\id-(\delta_B^A-\Gamma_B^A)(\xi^B\otimes\mu^\ast)\Delta_A-\Gamma_A^B\xi_B^\ast\Delta^A,\\
	\nabla_{\Delta_i}&=\Delta_i+\Gamma_{iB}^{\phantom{i}A}(\xi^B\otimes\mu^\ast)\Delta_A-\Gamma_{iA}^{\phantom{i}B}\xi_B^\ast\Delta^A.
	\end{align*}  	
	Hence, the action of the module morphism $i_\nabla:\Diff^\star(L[1])\to\Diff^\star(\hat{L}[1])$ is locally given by
	\begin{align*}
	i_\nabla(f\mu)&=f\mu,\\
	i_\nabla(\id)&=\id+(\Gamma_B^A-\delta_B^A)(\xi^B\otimes\mu^\ast)\Delta_A-\Gamma_A^B\xi_B^\ast\Delta^A,\\
	i_\nabla(\Delta_i)&=\Delta_i+\Gamma_{iB}^{\phantom{i}A}(\xi^B\otimes\mu^\ast)\Delta_A-\Gamma_{iA}^{\phantom{i}B}\xi_B^\ast\Delta^A.
	\end{align*}
\end{remark}

\subsubsection*{The homotopy}
We construct the homotopy in several steps.
First of all, each $\Diff L$-connection $\nabla$ in $E$ determines a degree $0$ graded module isomorphism
\begin{equation*}
\psi_\nabla:
\Diff^\star(\hat{L}[1])\overset{\simeq}{\longrightarrow} S_{{\scriptscriptstyle C^\infty(M)}}(\Gamma(E_L[-1]\oplus E^\ast[1]\oplus E_L[-2]\oplus E^\ast[0]))\underset{{\scriptscriptstyle C^\infty(M)}}{\otimes}\Diff^\star(L[1]),
\end{equation*} 
covering a degree $0$, $C^\infty(\hat{M})$-linear, graded algebra isomorphism
\begin{equation*}
\underline{\smash{\psi_\nabla}}:
\Diff^\star(\hat{L}[1],\bbR_{\hat{M}})\overset{\simeq}{\longrightarrow} S_{{\scriptscriptstyle C^\infty(M)}}(\Gamma(E_L[-1]\oplus E^\ast[1]\oplus E_L[-2]\oplus E^\ast[0]))\underset{{\scriptscriptstyle C^\infty(M)}}{\otimes}\Diff^\star(L[1],\bbR_M),
\end{equation*}
cf.~Section~\ref{subsec:psi_nabla}.
Using the same notations of Remark~\ref{rem:local_frames1}, $\psi_\nabla$ is locally given by
\begin{gather*}
\psi_\nabla(f\mu)=f\mu,\quad\psi_\nabla(\xi^A)=\xi^A,\qquad\psi_\nabla(\xi_A^\ast\otimes\mu)=\xi_A^\ast\otimes\mu,\\
\psi_{\nabla}(\nabla_{\id})=\id,\quad \psi_\nabla(\nabla_{\Delta_i})=\Delta_i,\\ \psi_{\nabla}(\Delta_A)=\xi_A^\ast\otimes\mu,\quad \psi_{\nabla}(\Delta^A)=\xi^A.
\end{gather*}
Now we can use $\psi_\nabla$ to define a degree $0$ graded derivation $\weight:\Diff^\star(\hat{L}[1])\to\Diff^\star(\hat{L}[1])$ as follows: $\weight$ is completely determined by the following conditions:	\begin{itemize}
	\item $\psi_\nabla\circ\weight\circ\psi_\nabla^{-1}$ vanishes on $\Diff^\star(L[1])$, and
	\item $\underline{\smash{\psi_\nabla}}\circ\underline{\smash{\weight}}\circ\underline{\smash{\psi_\nabla}}^{-1}$ agrees with the identity map on $\Gamma(E_L[-1]\oplus E^\ast[1]\oplus E_L[-2]\oplus E^\ast[0])$,
\end{itemize}
where $\underline{\smash{\weight}}$ is the symbol of $\weight$.
In other words, $\weight$ counts the tensorial degree of multi-derivations w.r.t.~$E_L[-1]$, $E^\ast[1]$, $E_L[-2]$ and $E^\ast[0]$.
\begin{remark}
	Because of its very definition, $\weight$ is $C^\infty(M)$-linear, and preserves both the arity and the bi-degree, i.e.
	\begin{equation*}
	\weight(\Diff^n(\hat{L}[1])^{(h,k)})\subseteq\Diff^n(\hat{L}[1])^{(h,k)}.
	\end{equation*}
\end{remark}

\begin{remark}
	Derivation $\weight$ is locally given by:
	\begin{gather*}
	\weight(f\mu)=\weight(i_\nabla(\id))=\weight(i_\nabla(\Delta_i))=0,\\
	\weight(\xi^A)=\xi^A,\quad\weight(\xi_A^\ast\otimes\mu)=\xi_A^\ast\otimes\mu,\\\quad\weight(\Delta_A)=\Delta_A,\quad\weight(\Delta^A)=\Delta^A.
	\end{gather*}
\end{remark}
We also define a degree $(-1)$ graded derivation $\tilde H_\nabla$ of $\Diff^\star(\hat{L}[1])$ as follows: $\tilde H_\nabla$ is completely determined by the following conditions:
\begin{itemize}
	\item $\psi_\nabla\circ{\tilde H}_\nabla\circ\psi_\nabla^{-1}$ vanishes on $\Diff^\star(L[1])$,
	\item $\underline{\smash{\psi_\nabla}}\circ\underline{\smash{{\tilde H}_\nabla}}\circ\underline{\smash{\psi_\nabla}}^{-1}$ vanishes on $\Gamma(E_L[-1]\oplus E^\ast[1])$, it is $C^\infty(\hat{M})$-linear, and maps $\Gamma(E_L[-2]\oplus E^\ast[0])$ to $\Gamma(E_L[-1]\oplus E^\ast[1])$ acting as the desuspension map,
\end{itemize}
where $\underline{\smash{{\tilde H}_\nabla}}$ is the symbol of $\tilde H_\nabla$.

\begin{remark}
	\label{rem:H_nabla}
	From its very definition, $\tilde H_\nabla$ is also $C^\infty(\hat{M})$-linear, and it has bi-degree $(1,1)$, so that
	\begin{equation*}
	\tilde H_\nabla(\Diff^n(\hat{L}[1])^{(h,k)})\subseteq\Diff^{n-1}(\hat{L}[1])^{(h+1,k+1)}.
	\end{equation*}
\end{remark}

\begin{remark}
	\label{def:H^tilde}
	Derivation $\tilde H_\nabla$ is locally given by:
	\begin{gather*}
	\tilde H_\nabla(f\mu)= \tilde H_\nabla(i_\nabla(\id))= \tilde H_\nabla(i_\nabla(\Delta_i))=0,\\
	\tilde H_\nabla(\xi^A)=\tilde H_\nabla(\xi_A^\ast\otimes\mu)=0,\quad \tilde H_\nabla(\Delta_A)=\xi_A^\ast\otimes\mu,\quad \tilde H_\nabla(\Delta^A)=\xi^A.
	\end{gather*}
\end{remark}

\begin{lemma}
	\label{lem:1contraction_data_1a}
	The following identities hold:
	\begin{equation}
	\label{eq:1contraction_data_1a}
	p\circ\tilde H_\nabla=0,\qquad \tilde H_\nabla\circ i_\nabla=0,\qquad \tilde H_\nabla^2=0,\qquad [\tilde{H}_\nabla,d_G]=\weight.
	\end{equation}
\end{lemma}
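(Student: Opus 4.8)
The plan is to exploit that every map occurring in~\eqref{eq:1contraction_data_1a} respects the relevant algebraic structure, so that all four identities reduce to a check on arity~$\le 1$ generators. Concretely, $\tilde{H}_\nabla^2=\tfrac12[\tilde{H}_\nabla,\tilde{H}_\nabla]$ and $[\tilde{H}_\nabla,d_G]-\weight$ are graded derivations of the associative algebra $\Diff^\star(\hat{L}[1])$, being (anti)commutators of the derivations $\tilde{H}_\nabla$, $d_G$, $\weight$; whereas $p\circ\tilde{H}_\nabla$ and $\tilde{H}_\nabla\circ i_\nabla$ are graded derivations along the module morphisms $p$ (covering $\underline{p}$) and $i_\nabla$ (covering $\underline{i_\nabla}$) respectively, in the sense of Remark~\ref{rem:derivation_along_morphism} — the very mechanism already used in the proof of Proposition~\ref{prop:i_nabla}. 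Since $\Diff^\star(\hat{L}[1])$ is generated, over $\bbR$, by $C^\infty(\hat{M})$, by the sections $\Gamma(\hat{L})$, and by the derivations $\Diff(\hat{L}[1])$, each identity holds in general as soon as it holds on these generators, for which Remarks~\ref{rem:local_expression_G}, \ref{rem:H_nabla}, \ref{def:H^tilde}, \ref{rem:local_expression_p}, and~\ref{rem:local_expression_i_nabla}, together with the local description of $\weight$, provide complete formulas.

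First I would dispatch the three identities not involving $d_G$. For $p\circ\tilde{H}_\nabla=0$ it suffices to note that $\tilde{H}_\nabla$ has bi-degree $(1,1)$ (Remark~\ref{rem:H_nabla}) while no generator lies in bi-degree $(-1,-1)$; hence $\tilde{H}_\nabla$ sends every generator to an element of bi-degree $\ne(0,0)$, which $p$ annihilates by Proposition~\ref{prop:p}(2). For $\tilde{H}_\nabla\circ i_\nabla=0$, the image of $i_\nabla$ is generated by $f\mu$, $i_\nabla(\id)$ and $i_\nabla(\Delta_i)$, all killed by $\tilde{H}_\nabla$ according to Remark~\ref{def:H^tilde}; being a derivation along $i_\nabla$, $\tilde{H}_\nabla\circ i_\nabla$ then vanishes identically. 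For $\tilde{H}_\nabla^2=0$, observe that $\tilde{H}_\nabla$ annihilates $C^\infty(\hat{M})$ and the sections $\Gamma(\hat{L})$, and sends the derivation generators $\Delta_A,\Delta^A$ to the sections $\xi_A^\ast\otimes\mu,\xi^A$ while killing the remaining derivation generators; applying $\tilde{H}_\nabla$ a second time therefore yields $0$ on every generator.

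The substantive identity is $[\tilde{H}_\nabla,d_G]=\weight$, and here the key is to evaluate on the \emph{connection-adapted} generating set $\{f\mu,\ \xi^A,\ \xi_A^\ast\otimes\mu,\ i_\nabla(\id),\ i_\nabla(\Delta_i),\ \Delta_A,\ \Delta^A\}$. On these, $\tilde{H}_\nabla$ acts as $\Delta_A\mapsto\xi_A^\ast\otimes\mu$, $\Delta^A\mapsto\xi^A$, and annihilates the remaining generators (Remark~\ref{def:H^tilde}); $d_G$ acts as $\xi^A\mapsto\Delta^A$, $\xi_A^\ast\otimes\mu\mapsto\Delta_A$, and annihilates the rest, the crucial input being Lemma~\ref{lem:metric_connection}, which gives $d_G\,i_\nabla(\id)=d_G\,i_\nabla(\Delta_i)=0$; finally $\weight$ is the identity on $\xi^A,\xi_A^\ast\otimes\mu,\Delta_A,\Delta^A$ and zero on $f\mu,i_\nabla(\id),i_\nabla(\Delta_i)$. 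Writing $[\tilde{H}_\nabla,d_G]=\tilde{H}_\nabla d_G+d_G\tilde{H}_\nabla$ (both operators being odd), one reads off directly, e.g.\ $[\tilde{H}_\nabla,d_G](\xi^A)=\tilde{H}_\nabla(\Delta^A)=\xi^A=\weight(\xi^A)$ and $[\tilde{H}_\nabla,d_G](\Delta_A)=d_G(\xi_A^\ast\otimes\mu)=\Delta_A=\weight(\Delta_A)$, with the remaining section/derivation pairs handled symmetrically and the $i_\nabla$-generators giving $0=0$.

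The main obstacle is purely organisational and concentrated in this last identity: on the generators $\id$ and $\Delta_i$ a naive computation produces correction terms involving the connection coefficients $\Gamma$ (already visible in the local formula for $\tilde{H}_\nabla(\id)$), and matching these against $\weight$ by hand is error-prone. The device that removes the difficulty is precisely to use $i_\nabla(\id),i_\nabla(\Delta_i)$ as generators in place of $\id,\Delta_i$: Lemma~\ref{lem:metric_connection} makes them $d_G$-cocycles and Remark~\ref{def:H^tilde} makes them $\tilde{H}_\nabla$-closed, so both $\tilde{H}_\nabla d_G$ and $d_G\tilde{H}_\nabla$ vanish on them, in agreement with $\weight\,i_\nabla(\id)=0$. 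Once one commits to this generating set, no $\Gamma$-terms ever appear and the whole lemma collapses to the short table of generator evaluations above; the only remaining routine point is to confirm that this set does generate $\Diff^\star(\hat{L}[1])$, equivalently that the derivations $\tilde{H}_\nabla$, $d_G$, $\weight$ and the derivations-along-morphisms $p\circ\tilde{H}_\nabla$, $\tilde{H}_\nabla\circ i_\nabla$ are determined by their values in arity $\le 1$.
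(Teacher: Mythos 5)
Your proof is correct and follows essentially the same route as the paper's: all four identities are reduced, via the derivation/derivation-along-a-morphism structure, to evaluations on generators given by the local expressions, with Lemma~\ref{lem:metric_connection} (equivalently $d_G\circ i_\nabla=0$) supplying the vanishing on $i_\nabla(\id)$ and $i_\nabla(\Delta_i)$. Your explicit remark that one should use the connection-adapted generators $i_\nabla(\id),i_\nabla(\Delta_i)$ in place of $\id,\Delta_i$ to avoid tracking the $\Gamma$-terms is exactly the device the paper uses implicitly when it defines $\tilde H_\nabla$ on that generating set.
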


\begin{proof}
	The first three identities follow immediately from the local coordinate expressions for $p$, $i_\nabla$, $d_G$, and $\tilde{H}_\nabla$.
	Moreover, from $d_G\circ i_\nabla=H_\nabla\circ i_\nabla=0$, a straighforward computation in local coordinates shows that the graded derivations $[\widetilde H_\nabla, d_G]$ and $\weight$ agree on generators.
	Hence they coincide.
\end{proof}

\begin{remark}
	\label{rem:eigenspaces_deg}
	Since both $d_G$ and $\tilde{H}_{\nabla}$ commute with $\weight$, the eigenspaces of $\weight$ are invariant under $d_G$ and ${\tilde H}_\nabla$, i.e., for every $k\in\bbN_0 $,
	\begin{align*}
	d_G(\ker(\weight-k\id))&\subseteq\ker(\weight-k\id),\\
	{\tilde H}_\nabla(\ker(\weight-k\id))&\subseteq\ker(\weight-k\id).
	\end{align*}
	Moreover,from the spectral decomposition of $\weight$ it follows that
	\begin{equation*}
	\Diff^\star(\hat{L}[1])=\bigoplus_{k\geq 0}\ker(\weight-k\id)=\ker p\oplus\im i_\nabla,
	\end{equation*}
	where $\im i_\nabla=\ker\weight$, and $\ker p=\bigoplus_{k>0}\ker(\weight-k\id)$.
\end{remark}

\begin{lemma}
	\label{lem:1contraction_data_1b}
	Let $H_\nabla:\Diff^\star(\hat{L}[1])\to\Diff^\star(\hat{L}[1])$ be the degree $(-1)$ graded $C^\infty(M)$-linear map, of bi-degree $(1,1)$, defined by setting:
	\begin{equation*}
	H_\nabla=
	\begin{cases}
	0,&\textnormal{ on }\ker\weight,\\
	-k^{-1}{\tilde H}_\nabla,&\textnormal{ on }\ker(\weight-k\id),\textnormal{ for all }k>0.
	\end{cases}
	\end{equation*}
	Then
	\begin{equation}
	\label{eq:1contraction_data_1b}
	i_\nabla\circ p-\id=[d_G,H_\nabla].
	\end{equation}
	Additionally, $p,i_\nabla$ and $H_\nabla$ satisfy the side conditions $H_\nabla^2=0$, $H_\nabla\circ i_\nabla=0$, $p\circ H_\nabla=0$.
\end{lemma}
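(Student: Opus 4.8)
The plan is to exploit the spectral decomposition of the weight derivation $\weight$ recorded in Remark~\ref{rem:eigenspaces_deg}, namely $\Diff^\star(\hat{L}[1]) = \bigoplus_{k\geq 0}\ker(\weight - k\id)$, together with the fact that $d_G$ and $\tilde H_\nabla$ both commute with $\weight$ and hence preserve each eigenspace. Since $H_\nabla$ is by construction a scalar multiple of $\tilde H_\nabla$ on each eigenspace, it too preserves the eigenspaces, and every identity to be proved is linear; thus it suffices to verify each claim separately on $\ker\weight = \im i_\nabla$ and on $\ker(\weight - k\id) \subseteq \ker p$ for each $k > 0$.

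First I would establish the homotopy identity~\eqref{eq:1contraction_data_1b}. On $\ker\weight$ one has $H_\nabla = 0$; since $d_G$ preserves $\ker\weight$, both $d_G H_\nabla$ and $H_\nabla d_G$ vanish there, so $[d_G, H_\nabla] = 0$. On the other hand, every $x \in \ker\weight = \im i_\nabla$ satisfies $i_\nabla p\, x = x$ because $p \circ i_\nabla = \id$ (Proposition~\ref{prop:i_nabla}), whence $i_\nabla p - \id = 0 = [d_G, H_\nabla]$ there. On $\ker(\weight - k\id)$ with $k > 0$ one has $p = 0$, so $i_\nabla p - \id = -\id$; and since $H_\nabla = -k^{-1}\tilde H_\nabla$ on this eigenspace, which is preserved by $d_G$, I would compute, keeping track of the graded commutator of the two odd operators, that $[d_G, H_\nabla] = -k^{-1}[d_G, \tilde H_\nabla] = -k^{-1}[\tilde H_\nabla, d_G] = -k^{-1}\weight = -\id$, the penultimate equality being Lemma~\ref{lem:1contraction_data_1a}. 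Thus both sides agree on every eigenspace, and the identity follows by the direct sum decomposition.

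The side conditions reduce immediately to the analogous properties of $\tilde H_\nabla$ from Lemma~\ref{lem:1contraction_data_1a}. For $H_\nabla^2 = 0$: on $\ker\weight$ it is trivial, while on $\ker(\weight - k\id)$ it equals $k^{-2}\tilde H_\nabla^2 = 0$. For $H_\nabla \circ i_\nabla = 0$: since $\im i_\nabla = \ker\weight$ and $H_\nabla$ vanishes there, the composite is zero. For $p \circ H_\nabla = 0$: on $\ker\weight$ again $H_\nabla = 0$, and on $\ker(\weight - k\id)$ one has $p \circ H_\nabla = -k^{-1}\, p \circ \tilde H_\nabla = 0$.

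The only point requiring genuine care, rather than being purely formal, is the bookkeeping of signs in the graded commutators: both $d_G$ (degree $+1$) and $\tilde H_\nabla$ (degree $-1$) are odd, so $[d_G, \tilde H_\nabla] = d_G\tilde H_\nabla + \tilde H_\nabla d_G$ is symmetric and coincides with $[\tilde H_\nabla, d_G]$, which is precisely what lets me invoke the relation $[\tilde H_\nabla, d_G] = \weight$ directly. Apart from this, the proof is a routine eigenspace-by-eigenspace verification, and I expect no substantive obstacle once the well-definedness of $H_\nabla$ through the spectral decomposition is in place.
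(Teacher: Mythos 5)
Your proof is correct and follows exactly the route the paper intends: the paper's own proof simply states that the lemma is an immediate consequence of Lemma~\ref{lem:1contraction_data_1a} and Remark~\ref{rem:eigenspaces_deg}, and your eigenspace-by-eigenspace verification is precisely the spelled-out version of that argument.
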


\begin{proof}
	It is an immediate consequence of Lemma~\ref{lem:1contraction_data_1a} and Remark~\ref{rem:eigenspaces_deg}.
\end{proof}

The following proposition summarizes the above discussion.

\begin{proposition}
	\label{prop:1contraction_data_1}
	Every  $\Diff L$-connection $\nabla$ in $E$ determines the following set of contraction data:
	\begin{equation}
	\label{eq:1contraction_data_1}
	\begin{tikzpicture}[>= stealth,baseline=(current bounding box.center)]
	\node (u) at (0,0) {$(\Diff^\star(\hat{L}[1]),d_G)$};
	\node (d) at (5,0) {$(\Diff^\star(L[1]),0)$};
	\draw [transform canvas={yshift=-0.5ex},-cm to] (d) to node [below] {\footnotesize $i_\nabla$} (u);
	\draw [transform canvas={yshift=+0.5ex},cm to-] (d) to node [above] {\footnotesize $p$} (u);
	\draw [-cm to] (u.south west) .. controls +(210:1) and +(150:1) .. node[left=2pt] {\footnotesize $H_\nabla$} (u.north west);
	\end{tikzpicture}
	\end{equation}
	In particular, $p$ is a quasi-isomorphism, so that $H_{CE}(\hat{M},\hat{L},\hat{\J})\simeq\Diff^\ast(L[1])$, in a canonical way.
\end{proposition}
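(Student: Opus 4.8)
The plan is to recognize that this proposition is, at bottom, a bookkeeping statement: all the analytic content has already been extracted in Propositions~\ref{prop:p} and~\ref{prop:i_nabla} and Lemmas~\ref{lem:1contraction_data_1a} and~\ref{lem:1contraction_data_1b}, and it only remains to check that the triple $(p, i_\nabla, H_\nabla)$ satisfies the axioms of a set of contraction data recalled at the beginning of Section~\ref{subsec:first_relevant_contraction_data}. First I would verify that $p$ is a surjective co-chain map: the co-chain condition $p \circ d_G = 0$ is exactly Proposition~\ref{prop:p}(3), while surjectivity is immediate from the relation $p \circ i_\nabla = \id$ of Proposition~\ref{prop:i_nabla}(2), which exhibits a right inverse. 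Dually, $i_\nabla$ is a co-chain map since $d_G \circ i_\nabla = 0$ (Proposition~\ref{prop:i_nabla}(3)), and it is injective, again by $p \circ i_\nabla = \id$.

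Next I would record that $H_\nabla$ realizes the required chain homotopy. This is precisely the content of Lemma~\ref{lem:1contraction_data_1b}, which gives $i_\nabla \circ p - \id = [d_G, H_\nabla]$, so that $i_\nabla \circ p$ is homotopic to $\id_{\Diff^\star(\hat{L}[1])}$, together with the three side conditions $H_\nabla^2 = 0$, $H_\nabla \circ i_\nabla = 0$, and $p \circ H_\nabla = 0$. Combined with $p \circ i_\nabla = \id$, these are exactly the defining relations of a set of contraction data from $(\Diff^\star(\hat{L}[1]), d_G)$ to $(\Diff^\star(L[1]), 0)$, so the diagram~\eqref{eq:1contraction_data_1} is established.

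For the ``in particular'' clause I would invoke the standard fact that a set of contraction data induces an isomorphism on cohomology: since $\id$ and $i_\nabla \circ p$ are chain homotopic via $H_\nabla$, the maps $p$ and $i_\nabla$ are mutually inverse up to homotopy, whence $p$ is a quasi-isomorphism. Because the target complex $(\Diff^\star(L[1]), 0)$ carries the zero differential, its cohomology is $\Diff^\star(L[1])$ itself, and the isomorphism induced by $p$, with inverse induced by $i_\nabla$, identifies $H(\Diff^\star(\hat{L}[1]), d_G)$ — that is, the Chevalley--Eilenberg cohomology of the tautological bi-derivation $G$ of Definition~\ref{def:G}, since $d_G = \ldsb G, -\rdsb$ — with $\Diff^\star(L[1])$.

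The proof presents no genuine obstacle, as the substantive work was already carried out in Lemmas~\ref{lem:1contraction_data_1a} and~\ref{lem:1contraction_data_1b} — in particular the construction of the contracting homotopy $H_\nabla$ by rescaling $\tilde{H}_\nabla$ on the eigenspaces of the weight derivation $\weight$, exploiting $[\tilde{H}_\nabla, d_G] = \weight$. The only point deserving a word of care is the canonicity of the cohomology identification, which I would phrase via the spectral decomposition of Remark~\ref{rem:eigenspaces_deg}: one has $\Diff^\star(\hat{L}[1]) = \ker p \oplus \im i_\nabla$ with $\im i_\nabla = \ker \weight$, and on the subcomplex $\im i_\nabla$ the maps $p$ and $i_\nabla$ are literally mutually inverse, so the induced isomorphism on cohomology depends on $\nabla$ only through this canonical decomposition.
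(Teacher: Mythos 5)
Your proof is correct and follows exactly the route the paper intends: the paper introduces this proposition with the words ``the following proposition summarizes the above discussion,'' so its proof is precisely the assembly of Propositions~\ref{prop:p} and~\ref{prop:i_nabla} with Lemmas~\ref{lem:1contraction_data_1a} and~\ref{lem:1contraction_data_1b} that you carry out. Your closing remark that the contraction data directly computes the cohomology of $d_G=\ldsb G,-\rdsb$ (the passage to $d_{\hat{\J}}$ being deferred to the perturbation argument of Proposition~\ref{prop:lifting_CE_cohom}) is a careful and accurate reading of what the diagram actually delivers.
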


\subsection{Existence and uniqueness of liftings: the proofs}
\label{subsec:existence_uniqueness_lifted_graded_Jacobi}

\begin{proof}[Proof of Theorem~\ref{theor:existence_BFV_brackets}]
	Fix a $\Diff(L)$-connection $\nabla$ in $E$.
	The corresponding $\hat{J}^\nabla$ is constructed by applying Proposition~\ref{prop:SBSO_existence}.
	It is enough to use contraction data~\eqref{eq:1contraction_data_1} for contraction data~\eqref{eq:homotopy_equivalence}, and set $\calF_n:=\bigoplus_{j\geq n}\Diff^\star(\hat{L}[1])^{(i,j)}$, $N=0$, and $\bar{Q}:=G+i_\nabla(J)$.
	In this special case, the necessary and sufficient condition~\eqref{eq:prop:SBSO_existence} is trivially satisfied.
\end{proof}

\begin{proof}[Proof of Proposition~\ref{prop:existence_BFV_brackets}]
	For an arbitrary Jacobi structure $\J$ on $L\to M$, because of Proposition~\ref{prop:i_nabla}(3), and $\ldsb G,G\rdsb=0$, we just have
	\begin{equation*}
	\ldsb G+i_\nabla\J,G+i_\nabla\J\rdsb=\ldsb i_\nabla\J,i_\nabla\J\rdsb.
	\end{equation*}
	Hence, when $\nabla$ is flat, Proposition~\ref{prop:flatness} guarantees that $\ldsb G+i_\nabla\J,G+i_\nabla\J\rdsb=0$.
	In such case the step-by-step obstruction method does not add any perturbative correction to $G+i_\nabla\J$, and so the output is $\hat{\J}^\nabla=G+i_\nabla\J$.
\end{proof}

\begin{proof}[Proof of Theorem~\ref{theor:uniqueness_BFV_brackets}]
	It follows from Proposition~\ref{prop:SBSO_uniqueness} and Corollary~\ref{cor:SBSO_uniqueness}.
\end{proof}

\begin{proof}[Proof of Proposition~\ref{prop:lifting_CE_cohom}]
	\label{proof:prop:lifting_CE_cohom}
	Let $\J$ and $\hat{\J}$ be Jacobi structures on $L\to M$ and $\hat{L}\to\hat{M}$ respectively.
	Assume that $\hat{\J}$ is a lifting of $\J$, and fix a $\Diff(L)$-connection $\nabla$ in $E\to M$.
	
	Using the same terminology as in~\cite{Crainic_perturbation}, $\delta:=d_{\hat{\J}}-d_G$ provides a small perturbation of the contraction data~\eqref{eq:1contraction_data_1} determined by $\nabla$.
	Actually, from Remarks~\ref{rem:BFV_brackets} and~\ref{rem:H_nabla}, it follows that
	\begin{align*}
	\delta(\Diff^n(\hat{L}[1])^{(p,q)})&\subseteq\bigoplus_{k\geq 0}\Diff^{n+1}(\hat{L}[1])^{(p+k,q+k)},\\
	(\delta H_\nabla)(\Diff^n(\hat{L}[1])&\subseteq\bigoplus_{k\geq 1}\Diff^n(\hat{L}[1])^{(p+k,q+k)},
	\end{align*}
	so that $\delta H_\nabla$ is nilpotent and $(\id-\delta H_\nabla)$ is invertible with $(\id-\delta H_\nabla)^{-1}=\sum_{k=0}^\infty(\delta H_\nabla)^k$.
	Hence the Homological Perturbation Lemma (see, e.g., \cite{brown1967twisted,Crainic_perturbation}) applies with the contraction data~\eqref{eq:1contraction_data_1} and their small perturbation $\delta$ as input.
	The output is a new (deformed) set of contraction data
	\begin{equation*}
	\begin{tikzpicture}[>= stealth,baseline=(current bounding box.center)]
	\node (d) at (0,0) {$(\Diff^\star(L[1]),d')$};
	\node (u) at (-5,0) {$(\Diff^\star(\hat{L}[1]),d_{\hat{\J}})$};
	\draw [transform canvas={yshift=-0.5ex},-cm to] (d) to node [below] {\footnotesize $i_\nabla'$} (u);
	\draw [transform canvas={yshift=0.5ex},cm to-] (d) to node [above] {\footnotesize $p'$} (u);
	\draw [-cm to] (u.south west) .. controls +(210:1) and +(150:1) .. node[left=2pt] {\footnotesize $H_\nabla'$} (u.north west);
	\end{tikzpicture}
	\end{equation*}
	given by
	\begin{align*}
	i_\nabla'&=\sum_{k=0}^\infty(H_\nabla\delta)^ki_\nabla,&
	p'&=\sum_{k=0}^\infty p(\delta H_\nabla)^k,\\
	H_\nabla'&=\sum_{k=0}^\infty H_\nabla(\delta H_\nabla)^k,&
	d'&=\sum_{k=0}^\infty p\delta(H_\nabla\delta)^k i_\nabla.
	\end{align*}
	It follows from Propositions~\ref{prop:p} and~\ref{prop:i_nabla} that $p\delta(H_\nabla\delta)^ki_\nabla=0$, for all $k\geq 1$, and moreover
	\begin{equation*}
	d'\square=p\ldsb \hat{\J}-G,i_\nabla\square\rdsb=p\ldsb\hat{\J}_1,i_\nabla\square\rdsb=\ldsb\J,\square\rdsb=d_{\J}\square,
	\end{equation*}
	for all $\square\in\Diff^\star(L[1])$.
	Hence $d'=d_{\J}$, and $i_\nabla'$ is the desired quasi-isomorphism.
\end{proof}

\chapter{The BFV-complex of a coisotropic submanifold}
\label{chap:BFV_complex}

In this chapter, aiming at studying the coisotropic deformation problem at the non-formal level, we associate a further invariant to every coisotropic submanifold $S$ of a Jacobi manifold $(M,L,J=\{-,-\})$, namely its BFV-complex.

For any choice of a fat tubular neighborhood $(\tau,\underline{\smash{\tau}})$ of the restricted line bundle $\ell\to S$ in $L\to M$, we introduce the notion of BFV-complex attached to $S$ via $(\tau,\underline{\smash{\tau}})$ (Definition~\ref{def:BFV_complex}) extending to the Jacobi setting the similar notion presented by Sch\"atz~\cite{schatz2009bfv} in the Poisson setting.
The BFV-complex can be seen as a certain graded Jacobi manifold $(\hat{M},\hat{L},\hat{J}=\{-,-\}_{BFV})$ additionally equipped with a cohomological Hamiltonian derivation $d_{BFV}=\{\Omega_{\BRST},-\}_{BFV}$, where the graded Jacobi structure $\hat{J}$ is a lifting of $J$ (cf.~Section~\ref{sec:lifting_Jacobi_structures}) and the potential $\Omega_{\BRST}$ is a particular instance of BRST charge (cf.~Section~\ref{sec:BRST-charges}).
Combining the existence and uniqueness theorems for the BRST charges (Theorems~\ref{theor:existence_BRST-charge} and~\ref{theor:uniqueness_BRST-charge}) with the analogous results for the lifted Jacobi structures, the BFV-complex of $S$ is proven to exist and to be independent, up to isomorphisms, from the chosen fat tubular neighborhood (Theorem~\ref{theor:gauge_invariance_BFV_complex}).

As already the $L_\infty$-algebra $\frakg^\bullet(S)$ of $S$, the BFV-complex provides a cohomological resolution of the reduced Gerstenhaber--Jacobi algebra of $S$ (Corollary~\ref{cor:BFV_homological_resolution}).
But the relation between the $L_\infty$-algebra and the BFV-complex of $S$ is deeper than this.
Actually they are $L_\infty$-quasi-isomorphic (Theorem~\ref{theor:L_infty_qi}) extending to the Jacobi setting the result obtained by Sch\"atz~\cite{schatz2009bfv} in the Poisson setting.
As a consequence, the BFV-complex fully encodes the local moduli spaces, under Hamiltonian equivalence, of both the infinitesimal and the formal coisotropic deformations of $S$ (Propositions~\ref{prop:BFVinfinitesimal_moduli_space} and~\ref{prop:BFVformal_moduli_space}).
Further it also provides criteria for the unobstructedness (Proposition~\ref{prop:BFV_2nd_cohomology}) and the obstructedness (Proposition~\ref{prop:BFV_kuranishi}) of the formal coisotropic deformation problem.

Unlike the $L_\infty$-algebra, the BFV-complex controls the (non-formal) coisotropic deformation problem of $S$, even under Hamiltonian and Jacobi equivalence, with no need of any restrictive hypothesis.
In order to see this, following Sch\"atz~\cite{schatz2011moduli}, we single out the special class of so-called ``geometric'' MC elements of the BFV complex.
Then a one-to-one correspondence is canonically established between the space of coisotropic sections and the orbit space of geometric MC elements under the action of a certain group of Hamiltonian automorphisms (Theorem~\ref{theor:coisotropic_def_space}).
Additionally such one-to-one correspondence intertwines the Hamiltonian/Jacobi equivalence of coisotropic sections with the Hamiltonian/Jacobi equivalence of geometric MC elements (Theorems~\ref{theor:Mod_Ham} and~\ref{theor:Mod_Jac}).

Finally, in Section~\ref{sec:obstructed_example_contact_BFV}, the framework of the BFV-complex is employed to get a conceptual interpretation of the obstructed coisotropic deformation problem~\cite[Examples~3.5 and~3.8]{tortorella2016rigidity} which integrates its description in terms of the $L_\infty$-algebra provided in Section~\ref{sec:obstructed_example_contact_L-infinity}.

\section{BRST charges}

\label{sec:BRST-charges}

Let $S$ be a coisotropic submanifold of a Jacobi manifold $(M,L,J)$.
In analogy with the Poisson case~\cite{herbig2007,schatz2009bfv}, we will attach to $S$ an algebraic invariant, the BFV-complex.
The BFV-complex provides a cohomological resolution of the reduced Gerstenhaber-Jacobi algebra of $S$, and encodes its coisotropic deformations and their local moduli spaces.
Since we are only interested in small deformations of $S$, we can restrict to work within a fat tubular neighborhood of restricted line bundle $\ell:=L|_S\to S$ in $L\to M$.
For the reader's convenience, we recall here that, according to Definition~\ref{def:fat_tubular_neighborhod}, a fat tubular neighborhood $(\tau,\underline{\smash{\tau}})$ of $\ell\to S$ in $L\to M$ consists of two layers:
\begin{itemize}
	\item a tubular neighborhood $\underline{\smash{\tau}}:NS\to M$ of $S$ in $M$,
	\item an embedding $\tau:L_{NS}\to L$ of line bundles, over $\underline{\smash{\tau}}:NS\to M$, such that $\tau=\id$ on $L_{NS}|_S\simeq\ell$,
\end{itemize}
where $\pi:NS\to S$ is the normal bundle to $S$ in $M$, and $L_{NS}:=\pi^\ast\ell\to NS$.
By transferring Jacobi structures along a fat tubular neighborhood, we end up with the following local model for a Jacobi manifold $(M,L,J)$ around an arbitrary submanifold $S\subset M$.
\begin{itemize}
	\item The manifold $M$ is modelled on the total space $\calE$ of a vector bundle $\pi:\calE\to S$, and $S$ is identified with the image of the zero section of $\pi$.
	\item The line bundle $L\to M$ is modelled on $\pi^\ast\ell\to\calE$, for some line bundle $\ell\to S$.
\end{itemize}
In this section, working within such local model of $(M,L,J)$ around $S$, we will apply the lifting procedure described in Section~\ref{sec:lifting_Jacobi_structures} to the case when $E\to M$ is $V\calE \simeq \pi^\ast\calE\to\calE$, the vertical bundle of $\calE\to S$.
In particular, $E\to M = \calE$ admits a tautological section, that we denote by $\Omega_E$, mapping $x \in \calE$ to $(x,x) \in \pi^\ast \calE = \calE \times_S \calE$.

\subsection{Existence and uniqueness of the BRST charges: the statements}
\label{subsec:BRST-charges}

Let $u^i$ be a system of local coordinates on $S$, $\eta^A$ a local frame of $\pi:\calE\to S$, and $\mu$ a local frame of $\ell\to S$.
Denote by $\eta_A^\ast$ the local frame on $\calE^\ast\to S$ dual to $\eta^A$, and by $y_A$ the corresponding fiber-wise linear functions on $\calE$.
Then $\xi^A:=\pi^\ast\eta^A$ is a local frame of $E\to\calE$, and $\xi_A^\ast=\pi^\ast\eta_A^\ast$ is the dual frame of $E^\ast\to\calE$.
Furthermore a local frame of $E_L\to\calE$ is given by $\xi^A\otimes \pi^\ast\mu^\ast=\pi^\ast(\eta^A\otimes\mu^\ast)$, with $\xi_A^\ast\otimes \pi^\ast\mu=\pi^\ast(\eta_A^\ast\otimes\mu)$ the dual local frame of $(E_L)^\ast\to\calE$.

Recall that as first step of the lifting procedure (cf.~Section~\ref{sec:lifting_Jacobi_structures}) a graded line bundle $\hat{L}\to\hat{M}$ is constructed out of our initial data: the line bundle $L\to M$ and the vector bundle $E\to M$.
Moreover the graded line bundle $\hat{L}\to\hat{M}$ is canonically equipped with the tautological Jacobi structure $\{-,-\}_G$ (see Definition~\ref{def:G}).

\begin{proposition}
	\label{prop:delta_s}
	Let $s$ be an arbitrary section of $\pi:\calE\to S$.
	Section $\Omega_E[s]:=\Omega_E-\pi^\ast s\in\Gamma(E)$ is a MC element of $(\Gamma(\hat{L}),\{-,-\}_G)$.
	In particular $d[s]:=\{\Omega_E[s],-\}_G$ is a bi-degree $(0,-1)$ cohomological Hamiltonian derivation of the graded Jacobi manifold $(\hat{M},\hat{L},\{-,-\}_G)$.
\end{proposition}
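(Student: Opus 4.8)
The plan is to verify directly that $\Omega_E[s]$ is a degree $1$ element of $(\Gamma(\hat L),\{-,-\}_G)$ squaring to zero under the bracket, and then to read off the stated properties of $d[s]$ from the graded Jacobi identity. First I would record the relevant gradings. Since $E=V\calE\simeq\pi^\ast\calE$ and $\hat L=\pi^\ast L$, there is the canonical identification $\Gamma(\hat L)^{(1,0)}=\Gamma(\wedge^1 E_L\otimes L)=\Gamma(E)$, because $E_L=E\otimes L^\ast$. Both the tautological section $\Omega_E$ and the fibre-wise constant section $\pi^\ast s$ lie in $\Gamma(E)$, hence so does their difference $\Omega_E[s]=\Omega_E-\pi^\ast s\in\Gamma(\hat L)^{(1,0)}$; in particular its total ghost number is $h-k=1$, which is exactly the degree that a Maurer--Cartan element of the (degree $0$) graded Lie bracket $\{-,-\}_G$ must have (cf.~Definition~\ref{def:graded_Jacobi_structure} and Proposition~\ref{prop:Jacobi_bi-do}).

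The core of the argument is then a one-line bi-degree count. By Definition~\ref{def:G} the bi-derivation $G$, and hence the bracket $\{-,-\}_G$, carries bi-degree $(-1,-1)$, so the self-bracket satisfies
\[
\{\Omega_E[s],\Omega_E[s]\}_G\in\Gamma(\hat L)^{(1,0)+(1,0)+(-1,-1)}=\Gamma(\hat L)^{(1,-1)}.
\]
Since there are no sections of negative anti-ghost degree, $\Gamma(\hat L)^{(1,-1)}=0$, and the Maurer--Cartan equation $\{\Omega_E[s],\Omega_E[s]\}_G=0$ holds automatically, for every choice of $s$. Equivalently, and more conceptually, by Definition~\ref{def:G} the bi-derivation $G$ only pairs the ghost bundle $E$ against the anti-ghost bundle $E^\ast\otimes L$, so its restriction to $\Gamma(E)\times\Gamma(E)$ vanishes identically; this is precisely what makes the whole computation insensitive to the perturbation $\pi^\ast s$.

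With the Maurer--Cartan equation in hand, the remaining assertions are formal. The derivation $d[s]=\{\Omega_E[s],-\}_G$ has bi-degree $(1,0)+(-1,-1)=(0,-1)$, as claimed, and it is Hamiltonian by construction, its Hamiltonian section being the degree $1$ element $\Omega_E[s]$. That it is cohomological follows from the graded Jacobi identity for $\{-,-\}_G$ together with the oddness of $\Omega_E[s]$: for any $\lambda\in\Gamma(\hat L)$ one has
\[
d[s]^2\lambda=\{\Omega_E[s],\{\Omega_E[s],\lambda\}_G\}_G=\tfrac12\{\{\Omega_E[s],\Omega_E[s]\}_G,\lambda\}_G=0.
\]
I expect no genuine obstacle here. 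The only points requiring care are the bookkeeping of the two compatible gradings (arity and ghost/anti-ghost bi-degree) and of the décalage signs relating $G$ on $\hat L[1]$ to the bracket $\{-,-\}_G$ on $\hat L$ through Proposition~\ref{prop:Jacobi_bi-do}; since the entire argument rests on a vanishing statement, these signs are immaterial. As a cross-check one may also confirm the vanishing in the explicit local frame, where $G=\slashed\Delta_A\slashed\Delta^A\otimes\mu$ (Remark~\ref{rem:local_expression_G}) manifestly contains exactly one ghost-lowering and one anti-ghost-lowering factor, and so cannot act non-trivially on a product of two pure-ghost sections.
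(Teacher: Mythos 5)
Your argument is correct and is exactly the paper's proof: the paper dispatches this proposition with the single remark that it is ``straightforward for ghost/anti-ghost bi-degree reasons,'' and your bi-degree count ($\Omega_E[s]\in\Gamma(\hat L)^{(1,0)}$, $G$ of bi-degree $(-1,-1)$, hence the self-bracket lands in the vanishing component $\Gamma(\hat L)^{(1,-1)}$) together with the formal consequences for $d[s]$ is precisely the intended elaboration.
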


\begin{proof}
	It is straightforward for ghost/anti-ghost bi-degree reasons.
\end{proof}

\begin{remark}
	\label{rem:local_expression_delta_s}
	The tautological section $\Omega_E$, and an arbitrary $s\in\Gamma(\pi)$  are locally given by $\Omega_E=y_A\xi^A$, and $s=g_A(u^i)\eta^A$.
	Hence $\Omega_E[s]$, and the associated cohomological derivation $d[s]$ are locally given by
	\begin{equation*}
	\Omega_E[s]=(y_A-g_A(u^i))\xi^A,\quad\textnormal{and}\quad d[s]=(y_A-g_A(u^i))\Delta^A.
	\end{equation*}
\end{remark}

Now, let $\J$ be a Jacobi structure on $L\to M$, and let $\hat{\J}$ be a lifting of $\J$ to $\hat{L}\to\hat{M}$.
Fix an arbitrary $ s\in\Gamma(\pi)$.
In general $\Omega_E[s]$ fails to be a MC element of $(\Gamma(\hat{L}),\{-,-\}_{\hat{\J}})$.
The aim of this section is to find conditions on $s$ so that $\Omega_E [s]$ can be deformed into a suitable MC element of $(\Gamma(\hat{L}),\{-,-\}_{\hat{\J}})$.
The latter will be called an \emph{$s$-BRST charge}.
It turns out that an $s$-BRST charge exists precisely when the image of $s$ is a coisotropic submanifold.
Now, suppose $S$ is coisotropic itself.
There are two reasons why $s$-BRST charges are interesting.
First of all, as it will be shown in Section~\ref{sec:BFV-complex}, the choice of a $0$-BRST charge represents the second and last step in the construction of the BFV-complex of $S$.
Moreover, as it will be shown in Section~\ref{sec:coisotropic_deformation_problem}, small coisotropic deformations of $S$ are encoded by the BFV-complex through BRST charges.
\begin{definition}
	\label{def:BRST-charge}
	An \emph{$s$-BRST charge} wrt $\hat{\J}$ is a MC element $\Omega$ of $(\Gamma(\hat{L}),\{-,-\}_{\hat{\J}})$ having $\Omega_E[s]$ as its bi-degree $(1,0)$ component.
	Explicitly, $\Omega\in\Gamma(\hat{L})^1$, $\{\Omega,\Omega\}_{\hat{\J}}=0$, and $\operatorname{pr}^{(1,0)}\Omega=\Omega_E[s]$.
\end{definition}

\begin{remark}
	Our $s$-BRST charges are analogous to what Sch\"atz calls \emph{normalized MC elements}.
	In particular, $0$-BRST charges are analogous to Sch\"atz's \emph{BFV-charges}~\cite{schatz2009bfv}.
	We adopted the terminology ``BRST charge'' because it seems to be more standard in the Physics literature on the subject.
\end{remark}

\begin{remark}
	\label{rem:BRST_charge}
	Assume that $\hat{\J}=\sum_{k=0}^\infty\hat{\J}_k$, a lifting of $\J$ to $\hat{L}\to\hat{M}$, has been decomposed as in Remark~\ref{rem:BFV_brackets}.
	Every $\Omega\in\Gamma(\hat{L})^1$ decomposes as follows
	\begin{equation*}
	\Omega=\sum_{i=0}^\infty\Omega_i,
	\end{equation*}
	with $\Omega_i\in\Gamma(\hat{L})^{(i+1,i)}$.
	Accordingly, an $s$-BRST charge wrt $\hat{\J}$ is a degree $1$ section $\Omega$ of $\hat{L}\to\hat{M}$ such that
	\begin{gather}
	\label{eq:rem:BRST_charge1}
	\Omega_0=\Omega_E[s],\\
	\label{eq:rem:BRST_charge2}
	2d[s]\Omega_h+\sum_{\genfrac{}{}{0pt}{}{i,j< h,k\geq 0}{i+j+k=h}}\{\Omega_i,\Omega_j\}_{\hat\J_k}=0,\quad\textnormal{for all } h\geq 1.
	\end{gather}
\end{remark}

Given $s\in\Gamma(\pi)$, next theorem shows that, as already announced, an $s$-BRST charge wrt $\hat{\J}$ exists precisely when the image of $s$ is coisotropic.
In this case, the $s$-BRST charge is also unique up to isomorphisms (Theorem~\ref{theor:uniqueness_BRST-charge}).

\begin{theorem}[Existence]
	\label{theor:existence_BRST-charge}
	Let $\J$ be a Jacobi structure on $L\to M$, let $\hat{\J}$ be a lifting of $\J$ to $\hat{L}\to\hat{M}$, and let $s\in\Gamma(\pi)$.
	Then there exists an $s$-BRST charge wrt $\hat{\J}$ iff  the image of $s$ is coisotropic in $(M,L,\J)$.
\end{theorem}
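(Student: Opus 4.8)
The plan is to construct the $s$-BRST charge recursively by the ``step-by-step obstruction'' method (see Appendix~\ref{app:SBSO} and Proposition~\ref{prop:SBSO_existence}), using as the relevant acyclicity structure the Koszul-type differential $d[s]=\{\Omega_E[s],-\}_G$ furnished by Proposition~\ref{prop:delta_s}. Filtering $\Gamma(\hat{L})$ by anti-ghost degree and writing $\Omega=\sum_{i\geq 0}\Omega_i$ with $\Omega_i\in\Gamma(\hat{L})^{(i+1,i)}$ as in Remark~\ref{rem:BRST_charge}, the Maurer--Cartan equation $\{\Omega,\Omega\}_{\hat{\J}}=0$ splits into the system $\Omega_0=\Omega_E[s]$ and, for $h\geq 1$, $2d[s]\Omega_h=R_h$, where $R_h:=-\sum_{i,j<h,\,i+j+k=h}\{\Omega_i,\Omega_j\}_{\hat{\J}_k}$ depends only on $\Omega_0,\ldots,\Omega_{h-1}$ and lands in bi-degree $(h+1,h-1)$. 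Solving for $\Omega$ thus amounts to inverting $d[s]$ order by order, which is governed by the cohomology of $(\Gamma(\hat{L}),d[s])$.

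First I would set up a set of contraction data for $(\Gamma(\hat{L}),d[s])$, in the spirit of Section~\ref{subsec:first_relevant_contraction_data} but for this vertical Koszul differential. Locally $d[s]=(y_A-g_A)\Delta^A$, i.e.\ the Koszul differential on the constraints $y_A-g_A$ cutting out the graph $s(S)=\{y=g(u)\}$; since these form a regular sequence, the complex is acyclic in positive anti-ghost degree and its cohomology $H^0(d[s])$ is canonically identified, via restriction to the graph, with $\Gamma(\wedge^\bullet N_\ell S\otimes\ell)$ pulled back along $s$. A choice of connection then yields an explicit homotopy $H[s]$ contracting $d[s]$ onto this cohomology.

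With this data in hand, the induction on $h$ runs as follows. The graded Jacobi identity for $\{-,-\}_{\hat{\J}}$ together with the Maurer--Cartan equation holding up to order $h-1$ shows that each $R_h$ is a $d[s]$-cocycle. For $h\geq 2$ the cocycle $R_h$ sits in anti-ghost degree $h-1\geq 1$, where the cohomology vanishes, so $R_h$ is $d[s]$-exact and $\Omega_h:=\tfrac{1}{2}H[s](R_h)$ solves the $h$-th equation. Hence every obstruction beyond the first step disappears automatically, and the single genuine obstruction is the class $[R_1]=-[\{\Omega_E[s],\Omega_E[s]\}_{\hat{\J}_1}]\in H^0(d[s])$, which lives in anti-ghost degree $0$.

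The decisive step — and the main obstacle — is to identify this first obstruction with coisotropicity. Using $p(\hat{\J}_1)=\J$ (Remark~\ref{rem:BFV_brackets}) and the definition of the projection $p$, one computes that under the identification $H^0(d[s])\simeq\Gamma(\wedge^\bullet N_\ell S\otimes\ell)$ the class $[R_1]$ corresponds, up to a nonzero factor, to $P(\exp I(-s)_\ast J)$, the quantity appearing in Proposition~\ref{prop:coiss}. This settles both implications at once. For the ``if'' direction, coisotropicity of $s(S)$ gives $P(\exp I(-s)_\ast J)=0$, hence $[R_1]=0$, which lets the induction start and produces an $s$-BRST charge. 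For the ``only if'' direction, the existence of any $\Omega$ with $\operatorname{pr}^{(1,0)}\Omega=\Omega_E[s]$ forces, through the $h=1$ equation $2d[s]\Omega_1=R_1$, that $R_1$ be $d[s]$-exact, whence $[R_1]=0$ and $s(S)$ is coisotropic. The delicate bookkeeping is exactly the identification of $[R_1]$ with $P(\exp I(-s)_\ast J)$, which requires tracking the contraction isomorphism and the bi-degree $(1,0)$ piece of $\hat{\J}_1$ carefully through the Koszul cohomology.
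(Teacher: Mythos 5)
Your proposal is correct and follows essentially the same route as the paper: the paper likewise runs the step-by-step obstruction method (Proposition~\ref{prop:SBSO_existence}) against the contraction data of Proposition~\ref{prop:2contraction_data} for $(\Gamma(\hat{L}),d[s])$, with the anti-ghost filtration forcing all higher obstructions to vanish and leaving the single anti-ghost-degree-zero obstruction $\wp[s]\{\Omega_E[s],\Omega_E[s]\}_{\hat{\J}}$. The only cosmetic difference is that the paper identifies this obstruction with coisotropicity by the direct local computation of Lemma~\ref{lem:BRST_coisotropic} (reducing to Lemma~\ref{lem:cois}) rather than via $P(\exp I(-s)_\ast J)$ and Proposition~\ref{prop:coiss}, which are equivalent characterizations.
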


\begin{remark}
	\label{rem:filtration_Ham}
	Let $\{\calL^\bullet_{\geq n}\}_{n\geq 0}$ be the \emph{finite} decreasing filtration of $\Gamma(\hat{L})^\bullet$, by the graded $C^\infty(\hat{M})$-submodules $\calL^\bullet_{\geq n}$ defined as the sum of those $\Gamma(\hat{L})^{(h,k)}$, with $k\geq n$.
	Then there is a \emph{finite} decreasing filtration $\{\Ham_{\geq n}(\hat{M},\hat{L},\hat{\J})\}_{n\geq 0}$ of the group $\Ham(\hat{M},\hat{L},\hat{\J})$ of Hamiltonian automorphisms of $(\hat{M},\hat{L},\hat{\J})$.
	Namely, for $n\geq 0$, $\Ham_{\geq n}(\hat{M},\hat{L},\hat{\J})$ consists of those $\Phi\in\Ham(\hat{M},\hat{L},\hat{\J})$ such that $\Phi=\Phi_1$ for a smooth path of Hamiltonian automorphisms $\{\Phi_t\}_{t\in I}$ integrating $\{\lambda_t,-\}_{\hat{\J}}$, with $\{\lambda_t\}_{t\in I}\subset\calL^0_{\geq n}$ (cf.~Definitions~\ref{def:Hamiltonian_family} and~\ref{def:Hamiltonian_single}).
\end{remark}

\begin{theorem}[Uniqueness]
	\label{theor:uniqueness_BRST-charge}
	Let $\J$ be a Jacobi structure on $L\to M$, and let $\hat{\J}$ be a lifting of $\J$ to $\hat{L}\to\hat{M}$.
	Moreover, let $s\in\Gamma(\pi)$, and let $\Omega$ and $\Omega'$ be $s$-BRST charges wrt $\hat{\J}$.
	Then there exists $\phi\in\Ham_{\geq 2}(\hat{M},\hat{L},\hat{\J})$ such that $\phi^\ast\Omega'=\Omega$.
\end{theorem}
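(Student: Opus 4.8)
The plan is to deduce the statement from the uniqueness part of the step-by-step obstruction method (Proposition~\ref{prop:SBSO_uniqueness} together with Corollary~\ref{cor:SBSO_uniqueness}), in exactly the same way the uniqueness of the lifted Jacobi structure (Theorem~\ref{theor:uniqueness_BFV_brackets}) was deduced from that machinery. The key observation is that, by Remark~\ref{rem:BRST_charge}, both $\Omega$ and $\Omega'$ are obtained by solving the single Maurer--Cartan equation $\{\Omega,\Omega\}_{\hat{\J}}=0$ order by order in the anti-ghost degree, starting from one and the same leading term $\Omega_0=\Omega_0'=\Omega_E[s]$. Thus $\Omega$ and $\Omega'$ are two solutions of the \emph{same} obstruction problem, and the whole content of the theorem is that solutions of such a problem with a common leading term are intertwined by a gauge transformation which, moreover, respects the anti-ghost filtration.

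To feed the abstract uniqueness result, I first need the contraction data governing the recursion, the very same data already used (implicitly) in the existence result Theorem~\ref{theor:existence_BRST-charge}. By Proposition~\ref{prop:delta_s} the operator $d[s]=\{\Omega_E[s],-\}_G$ is a square-zero, bi-degree $(0,-1)$ cohomological Hamiltonian derivation: it is precisely the Koszul-type differential appearing as the linear part of~\eqref{eq:rem:BRST_charge2}. I would construct a set of contraction data for $(\Gamma(\hat{L}),d[s])$ along the lines of Subsection~\ref{subsec:first_relevant_contraction_data}, using the splitting induced by a $\Diff(L)$-connection: a projection onto the $d[s]$-cohomology, an inclusion, and a homotopy $h$ of bi-degree $(0,+1)$, i.e.\ raising the anti-ghost degree by one. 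The relevant filtration is the finite decreasing filtration $\{\calL^\bullet_{\geq n}\}$ by anti-ghost degree of Remark~\ref{rem:filtration_Ham}, and the nonlinear corrections $\tfrac{1}{2}\sum_{i,j<h,\,k\geq 0}\{\Omega_i,\Omega_j\}_{\hat{\J}_k}$ in~\eqref{eq:rem:BRST_charge2} lie in strictly higher filtration than the term they determine, which is exactly the hypothesis required by Proposition~\ref{prop:SBSO_uniqueness}.

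With this input, the SBSO uniqueness statement produces a gauge transformation interpolating $\Omega'$ to $\Omega$. Because the gauge transformations here are generated by Hamiltonian sections and integrate to Hamiltonian automorphisms of $(\hat{M},\hat{L},\hat{\J})$, the output is an honest $\phi\in\Ham(\hat{M},\hat{L},\hat{\J})$ with $\phi^\ast\Omega'=\Omega$; finiteness of the filtration guarantees both that the iteratively built generators assemble into a single well-defined Hamiltonian section and that the process terminates. The degree bookkeeping pins down the filtration level: since $\Omega$ and $\Omega'$ already agree in anti-ghost degree $0$, their difference is corrected starting at anti-ghost degree $1$, and the fact that $d[s]$ lowers the anti-ghost degree by one forces the generating Hamiltonian sections to have anti-ghost degree $\geq 2$, i.e.\ to lie in $\calL^0_{\geq 2}$; hence $\phi\in\Ham_{\geq 2}(\hat{M},\hat{L},\hat{\J})$, as claimed.

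The main obstacle I anticipate is verifying that the contraction data for $d[s]$ interact correctly with the full $\hat{\J}$-bracket so that the hypotheses of Proposition~\ref{prop:SBSO_uniqueness} genuinely hold: concretely, checking that the homotopy $h$ \emph{strictly} raises the anti-ghost degree and that the bi-degrees of the corrections $\{\Omega_i,\Omega_j\}_{\hat{\J}_k}$ behave as required, so that the generators are forced into $\calL^0_{\geq 2}$ rather than merely $\calL^0_{\geq 1}$. This is the same kind of careful filtration analysis already carried out for the lifting, transported from the bi-derivation complex $(\Diff^\star(\hat{L}[1]),d_G)$ to the section complex $(\Gamma(\hat{L}),d[s])$.
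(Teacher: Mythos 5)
Your proposal is correct and follows essentially the same route as the paper: Theorem~\ref{theor:uniqueness_BRST-charge} is proved there by feeding Proposition~\ref{prop:SBSO_uniqueness} the contraction data~\eqref{eq:2contraction_data_1} with $\calF_n=\calL_{\geq n+1}$, $N=-1$ and $\bar{Q}=\Omega_E[s]$, so the generators $h[s](\Omega'-\Omega)$ land in $\calL^0_{\geq 2}$ exactly as in your degree count. The only minor deviation is that the relevant contraction data are those of Proposition~\ref{prop:2contraction_data} (Subsection~\ref{subsec:second_relevant_contraction_data}), built from the explicit fibrewise homotopy $h[s]$ with no $\Diff(L)$-connection required, rather than a connection-induced splitting as in Subsection~\ref{subsec:first_relevant_contraction_data}.
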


In order to develop the necessary technical tools first, we postpone the proofs of Theorems~\ref{theor:existence_BRST-charge} and~\ref{theor:uniqueness_BRST-charge} to the end of this section.

\subsection{A second relevant set of contraction data}
\label{subsec:second_relevant_contraction_data}

In this subsection we show that a set of contraction data from $\Gamma(\hat{L})$ to $\Gamma((\wedge^\bullet\calE_\ell)\otimes\ell)$ is associated with any section $s\in\Gamma(\pi)$ (Proposition~\ref{prop:2contraction_data}).
The latter represents the technical tool on which the proof of existence and uniqueness of $s$-BRST charges is based (cf.~the next Subsection~\ref{subsec:existence_uniqueness_BRST-charges}).

Fix a section $s\in\Gamma(\pi)$ locally given by $s=g_A(u^i)\eta^A$.
In the following, for any vector bundle $F\to S$, we will understand the canonical isomorphism $F\overset{\simeq}{\longrightarrow}(\pi^\ast F)|_{\im s}$, given by $F_x \ni v\mapsto(s(x),v)\in(\pi^\ast F)_{s(x)}$, for all $x\in S$.

\subsubsection*{The projection}

There is a degree $0$ graded module epimorphism $\wp[s]:\Gamma(\hat{L})\to\Gamma((\wedge^\bullet\calE_\ell)\otimes\ell)$, covering a degree $0$ graded algebra morphism $\underline{\smash{\wp[s]}}:C^\infty(\hat{M})\to\Gamma(\wedge^\bullet\calE_\ell)$, completely determined by:
\begin{equation*}
\wp[s]\lambda=\lambda|_{\im s},\qquad \wp[s](u)=u|_{\im s},\qquad \wp[s](\alpha)=0,
\end{equation*}
for all $\lambda\in\Gamma(L)$, $u\in\Gamma(E)$, and $\alpha\in\Gamma(E^\ast\otimes L)$.
This means that $\wp[s]$ is obtained by restricting to $\im s$ and killing the components with non-zero anti-ghost degree.
Locally
\begin{equation*}
\wp[s]\left(f_{\bfA}^{\bfB}(u^i,y_C)\xi^\ast_{\bfB}\cdot(\xi^{\bfA}\otimes \pi^\ast\mu^\ast)\cdot\pi^\ast\mu\right)=f_{\bfA}(u^i,g_A(u^i))(\eta^{\bfA}\otimes\mu^\ast)\otimes\mu.
\end{equation*}
\subsubsection*{The immersion}
There is a degree $0$ graded module monomorphism $\iota:\Gamma( (\wedge^\bullet\calE_\ell)\otimes\ell)\to\Gamma(\hat{L})$, covering a degree $0$ graded algebra morphism $\underline{\smash{\iota}}:\Gamma(\wedge^\bullet\calE_\ell)\to C^\infty(\hat{M})$, completely determined by:
\begin{equation*}
\iota\lambda'=\pi^\ast\lambda',\qquad \iota\eta=\pi^\ast\eta,
\end{equation*}
for all $\lambda'\in\Gamma(\ell)$, and $\eta\in\Gamma(\calE)$.
Locally
\begin{equation*}
\iota(f_{\bfA}(u^i)(\eta^{\bfA}\otimes\mu^\ast)\otimes\mu)=f_{\bfA}(u^i)(\xi^{\bfA}\otimes \pi^\ast\mu^\ast)\cdot\pi^\ast\mu.
\end{equation*}
It immediately follows from the definitions of $d[s]$, $\wp[s]$ and $\iota$, that
\begin{itemize}
	\item $\wp[s]$ and $\iota$ are differential graded module morphisms between $(\Gamma(\hat{L}),d[s])$ and $(\Gamma((\wedge^\bullet\calE_\ell)\otimes\ell),0)$, i.e.
	\begin{equation*}
	\wp[s]\circ d[s]=0,\qquad d[s]\circ \iota=0,
	\end{equation*}
	\item $\iota$ is a section of $\wp[s]$, i.e.~$\wp[s]\circ \iota=\id$.
\end{itemize}
Conversely $\iota \circ \wp[s]=\id$ holds only up to a homotopy of differential graded modules that we construct now.
\subsubsection*{The homotopy}
Let $\{\psi^\ast_t[s]\}_{t\in I}$ be the smooth path of bi-degree $(0,0)$ graded module endomorphisms of $\Gamma(\hat{L})$, covering a smooth path $\{\underline{\smash{\psi}}^\ast_t[s]\}_{t\in I}$ of bi-degree $(0,0)$ graded algebra endomorphisms of $C^\infty(\hat{M})$, locally given by
\begin{multline*}
\label{eq:local_psi_t}
\psi_t^\ast[s]\left(f_{\bfA}^{\bfB}(u^i,y_C)\xi^\ast_{\bfB}\cdot(\xi^{\bfA}\otimes \pi^\ast\mu^\ast)\cdot\pi^\ast\mu\right)=\\=f_{\bfA}^{\bfB}(u_i,y_C-t(y_C-g_C(u^i)))(1-t)^{|\bfB|}\xi^\ast_{\bfB}\cdot(\xi^{\bfA}\otimes\pi^\ast\mu^\ast)\cdot\pi^\ast\mu.
\end{multline*}
We remark that $\{\psi^\ast_t[s]\}_{t\in I}$ connects $\id$ to $\iota\circ \wp[s]$.
Define a smooth path $\{\square_t[s]\}_{t\in I}$ of bi-degree $(0,0)$ graded derivations of $\hat{L}\to\hat{M}$ along $\{\psi_t^\ast[s]\}_{t\in I}$ by setting
\begin{equation*}
\square_t[s]:=\frac{d}{dt}\psi_t^\ast[s],
\end{equation*}
(see~Remark~\ref{rem:derivation_along_morphism} for the meaning of derivation along a module morphism).
There is a smooth path $\{j_t[s]\}_{t\in I}$ of bi-degree $(0,1)$ graded derivations of $\hat{L}\to\hat{M}$, along $\{\psi_t^\ast[s]\}_{t\in\bbR}$, completely determined by
\begin{gather*}
j_t[s](f(u^i,y_A)\pi^\ast\mu)=-(\partial_{y_A}f)(u_i,y_C-t(y_C-g_C(u^i)))\xi_A^\ast\otimes\mu,\\
j_t[s](\xi^A)=0,\qquad j_t[s](\xi_A^\ast\otimes\pi^\ast\mu)=0.
\end{gather*}
Since $d[s]$ and $\psi_t^\ast[s]$ commute, $\{[d[s],j_t[s]]\}_{t\in I}$ is a smooth path of bi-degree $(0,0)$ graded derivations, along $\{\psi_t^\ast[s]\}_{t\in I}$, as well.
Actually, a straightforward computation in local coordinates shows that $[d[s], j_t[s]]$ and $\square_t$ agree on both sections of $\pi^\ast\ell\to\calE$ and sections of $\pi^\ast\calE\to\calE$.
Hence they coincide.

Finally, define a bi-degree $(0,1)$ graded $C^\infty(M)$-linear map $h[s]:\Gamma(\hat{L})\to\Gamma(\hat{L})$ by setting
\begin{equation*}
h[s]:=\int_0^1 j_t[s]dt.
\end{equation*}
The map $h[s]$ is a homotopy betweeen the co-chain morphisms $\id,\iota\circ \wp[s]:(\Gamma(\hat{L}),d[s])\to(\Gamma(\hat{L}),d[s])$.
Indeed
\begin{align*}
\iota\circ \wp[s]-\id=\int_0^1\frac{d}{dt}\psi_t^\ast[s]dt=\int_0^1\square_t[s]dt=\int_0^1[d[s],j_t[s]]dt=[d[s],h[s]].
\end{align*}
Additionally $\iota$, $\wp[s]$ and $h[s]$ satisfy the side conditions $h[s]^2=0$, $h[s]\circ\iota$, $\wp[s]\circ h[s]=0$.

The above discussion is summarized in the following.

\begin{proposition}
	\label{prop:2contraction_data}
	Every section $s\in\Gamma(\pi)$ determines a set of contraction data
	\begin{equation}
	\label{eq:2contraction_data_1}
	\begin{tikzpicture}[>= stealth,baseline=(current bounding box.center)]
	\node (u) at (0,0) {$(\Gamma(\hat{L}),d[s])$};
	\node (d) at (5,0) {$(\Gamma((\wedge^\bullet\calE_\ell)\otimes\ell),0)$};
	\draw [transform canvas={yshift=-0.5ex},-cm to] (d) to node [below] {\footnotesize $\iota$} (u);
	\draw [transform canvas={yshift=0.5ex},cm to-] (d) to node [above] {\footnotesize $\wp[s]$} (u);
	\draw [-cm to] (u.south west) .. controls +(210:1) and +(150:1) .. node[left=2pt] {\footnotesize $h[s]$} (u.north west);
	\end{tikzpicture}
	\end{equation}
	In particular $\wp[s]$ is a quasi-isomophism, and $H^\bullet(\Gamma(\hat{L}),d[s])\simeq\Gamma((\wedge^\bullet\calE_\ell)\otimes\ell)$ in a canonical way.
\end{proposition}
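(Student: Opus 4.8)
The plan is to verify directly that the triple $(\wp[s],\iota,h[s])$ assembled above satisfies the three defining axioms of a set of contraction data between $(\Gamma(\hat{L}),d[s])$ and $(\Gamma((\wedge^\bullet\calE_\ell)\otimes\ell),0)$, and then to read off the quasi-isomorphism statement as a formal consequence. Most of the individual identities have in fact already been recorded in the paragraphs preceding the statement, so the core of the argument is a matter of collecting them and checking the few remaining side conditions. First I would recall that $\wp[s]$ is a surjective cochain map and $\iota$ an injective cochain map: since the target carries the zero differential, this is precisely the content of the identities $\wp[s]\circ d[s]=0$ and $d[s]\circ\iota=0$ established above from the explicit definitions of $d[s]$, $\wp[s]$, and $\iota$. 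Together with $\wp[s]\circ\iota=\id$, which is immediate in local coordinates, this disposes of the projection/immersion part of the data.

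The key step is the homotopy identity $\iota\circ\wp[s]-\id=[d[s],h[s]]$. Here I would exploit the smooth path $\{\psi^\ast_t[s]\}_{t\in I}$ of graded module endomorphisms interpolating between $\psi^\ast_0[s]=\id$ and $\psi^\ast_1[s]=\iota\circ\wp[s]$. Differentiating produces the path of derivations $\square_t[s]=\tfrac{d}{dt}\psi^\ast_t[s]$ along $\psi^\ast_t[s]$, and the crucial point is the identity $\square_t[s]=[d[s],j_t[s]]$. Since $d[s]$ and $\psi^\ast_t[s]$ commute, both sides are derivations along $\psi^\ast_t[s]$, so it suffices to compare them on a generating set of $\Gamma(\hat{L})$ over $C^\infty(\hat{M})$, that is on sections of $\pi^\ast\ell\to\calE$ and of $\pi^\ast\calE\to\calE$; this is a short computation in the local coordinates $(u^i,y_A)$ and the frames $\xi^A,\xi^\ast_A\otimes\pi^\ast\mu$. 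Integrating over $t\in[0,1]$ and setting $h[s]:=\int_0^1 j_t[s]\,dt$ then yields the homotopy relation, because $d[s]$ is constant in $t$ and $\int_0^1\square_t[s]\,dt=\psi^\ast_1[s]-\psi^\ast_0[s]$.

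Next I would verify the side conditions $h[s]^2=0$, $h[s]\circ\iota=0$, and $\wp[s]\circ h[s]=0$. Each follows from the explicit local form of $j_t[s]$, which has bi-degree $(0,1)$, annihilates the frame sections $\xi^A$ and $\xi^\ast_A\otimes\pi^\ast\mu$, and whose symbol differentiates only in the fibre coordinates $y_A$: a short computation then gives $j_{t'}[s]\circ j_t[s]=0$ for all $t,t'$, whence $h[s]^2=0$; while $h[s]\circ\iota=0$ and $\wp[s]\circ h[s]=0$ are immediate, since $\iota$ takes values in anti-ghost degree zero and $\wp[s]$ kills everything of positive anti-ghost degree. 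With all axioms in place the triple is a genuine set of contraction data; in particular $\wp[s]$ is a quasi-isomorphism, and since the target differential is zero the induced map on cohomology gives the canonical isomorphism $H^\bullet(\Gamma(\hat{L}),d[s])\simeq\Gamma((\wedge^\bullet\calE_\ell)\otimes\ell)$, with inverse induced by $\iota$.

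I expect the main obstacle to be conceptual rather than computational: ensuring that the operators $j_t[s]$, $\square_t[s]$, and $h[s]$, which are most conveniently written through coordinate formulas, are genuinely globally well-defined and independent of the chosen local frames. This is handled by characterizing each of them intrinsically — as a graded derivation along the globally defined module morphism $\psi^\ast_t[s]$, determined by its action on the generators in $\Gamma(\pi^\ast\ell)$ and $\Gamma(\pi^\ast\calE)$ — so that the local expressions merely compute a globally defined object. Granting this, the verification of $\square_t[s]=[d[s],j_t[s]]$ on generators is the only step requiring genuine computation.
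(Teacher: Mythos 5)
Your proposal is correct and follows essentially the same route as the paper: the identities $\wp[s]\circ d[s]=0$, $d[s]\circ\iota=0$, $\wp[s]\circ\iota=\id$, the homotopy obtained by differentiating the interpolating path $\psi_t^\ast[s]$, checking $[d[s],j_t[s]]=\square_t[s]$ on generators, and integrating to define $h[s]$ are exactly the paper's argument (the paper merely asserts the side conditions, which you additionally verify). One small justification is off: $h[s]\circ\iota=0$ holds not because $\im\iota$ sits in anti-ghost degree zero, but because $\im\iota$ consists of fibre-wise constant sections (pull-backs along $\pi$), on which the $y$-derivatives appearing in $j_t[s]$ vanish.
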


\begin{remark}
	\label{rem:2contraction__data}
	All the above constructions, and Proposition~\ref{prop:2contraction_data} still hold true after replacing the section $s\in\Gamma(\pi)$ with a smooth path $\{s_\tau\}_{\tau\in I}$ in $\Gamma(\pi)$.
	The obvious details are left to the reader.
\end{remark}

\subsection{Existence and uniqueness of the BRST charges: the proofs}
\label{subsec:existence_uniqueness_BRST-charges}

\begin{lemma}
	\label{lem:BRST_coisotropic}
	Let $\J$ be a Jacobi structure on $L\to M$, and $s\in\Gamma(\pi)$.
	For any lifting $\hat{\J}$ of $\J$ to $\hat{L}\to\hat{M}$, we have that  $\{\Omega_E[s],\Omega_E[s]\}_{\hat{\J}}\in\ker\wp[s]$ iff the image of $s$ is coisotropic in $(M,L,J)$.
\end{lemma}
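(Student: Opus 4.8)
The plan is to prove the lemma by establishing the single algebraic identity
\[
\wp[s]\big(\{\Omega_E[s],\Omega_E[s]\}_{\hat{\J}}\big)=-2\,P(\exp I(-s)_\ast J)
\]
(the precise nonzero constant being immaterial for our purposes). Granting this, the assertion follows immediately from Proposition~\ref{prop:coiss}, which says that the image of $s$ is coisotropic in $(M,L,J)$ precisely when $P(\exp I(-s)_\ast J)=0$. So both implications of the ``iff'' are handled at once.

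First I would perform a bi-degree reduction. Decompose $\hat{\J}=\sum_{k\geq 0}\hat{\J}_k$ as in Remark~\ref{rem:BFV_brackets}, so that $\hat{\J}_0=G$ and $\hat{\J}_k\in\Diff^2(\hat{L}[1])^{(k-1,k-1)}$ with $p(\hat{\J}_1)=\J$. Since $\Omega_E[s]\in\Gamma(\hat{L})^{(1,0)}$ and $G$ only pairs $\Gamma(\hat{L})^{(1,0)}$ with $\Gamma(\hat{L})^{(0,1)}$ (Definition~\ref{def:G}), the summand $\{\Omega_E[s],\Omega_E[s]\}_G$ vanishes; and for $k\geq 1$ the component $\{\Omega_E[s],\Omega_E[s]\}_{\hat{\J}_k}$ has bi-degree $(k+1,k-1)$, hence anti-ghost degree $k-1$. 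Because $\wp[s]$ annihilates every homogeneous component of non-zero anti-ghost degree, only the $k=1$ term survives, and therefore
\[
\wp[s]\big(\{\Omega_E[s],\Omega_E[s]\}_{\hat{\J}}\big)=\wp[s]\big(\{\Omega_E[s],\Omega_E[s]\}_{\hat{\J}_1}\big).
\]

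Next I would translate the section $s$ to the zero section. Let $\Phi_s:=\exp I(s)$ be the automorphism of $\hat{L}\to\hat{M}$ generated by $I(s)$, a bi-degree $(0,0)$ automorphism covering the fiber translation by $s$, fixing the ghosts (cf.~Remark~\ref{rem:coiss}). A direct check gives $\Phi_s^\ast\Omega_E[s]=\Omega_E$ and $\wp[s]=\wp[0]\circ\Phi_s^\ast$. Since $\Phi_s$ preserves the bi-grading and acts as the base translation on the $(0,0)$-piece, it commutes with the projection $p$, whence $p(\Phi_s^\ast\hat{\J}_1)=\exp I(-s)_\ast J$ (using $p(\hat{\J}_1)=J$). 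Thus the problem is reduced to the ``crux identity'' $\wp[0]\big(\{\Omega_E,\Omega_E\}_{\hat{K}}\big)=-2\,P(p\hat{K})$ for an arbitrary bi-degree $(0,0)$ bi-derivation $\hat{K}$, applied to $\hat{K}=\Phi_s^\ast\hat{\J}_1$ (for which $p\hat K=\exp I(-s)_\ast J$).

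The main obstacle is this last identity, whose content is that, after restricting to $S$, only the $p$-projection of $\hat{K}$ survives and exactly its ``normal--normal'' bi-symbol is extracted. I would prove it by a local computation in fibered coordinates $(u^i,y_A)$ adapted to the splitting, using $\Omega_E=y_A\xi^A$. Evaluating the bi-derivation $\hat{K}$ on the two copies of $\Omega_E$, then setting $y=0$ and discarding the components of positive anti-ghost degree, one sees that a term survives only if both factors $y_A$ are differentiated; as $\hat{K}$ is first order in each slot, this forces the $\partial_{y_A}\partial_{y_B}$-part of $\hat{K}$, each copy of $\Omega_E$ contributing one $\partial_y$-derivative (whence the factor $2$) and leaving $\xi^A\wedge\xi^B$. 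Comparing the resulting coefficient $\hat K^{AB}|_S$ with the coordinate description~\eqref{eq:projection_P_bis} of $P$ via the co-symbol $\gamma$ identifies it with $P(p\hat{K})$. The care needed is exactly in checking that the mixed ($yu$) and linear components of $\hat{K}$, together with all components not seen by $p$ (those carrying ghosts or anti-ghosts in the operator), produce only terms proportional to $y$ or of positive anti-ghost degree, all of which are killed by $\wp[0]$; in a local frame with constant coordinate ghosts these vanishings are routine. (Alternatively, one may run the same computation directly with $\Omega_E[s]=(y_A-g_A)\xi^A$ and $\wp[s]$, the translation step merely reorganizing the corrections coming from $\partial_{u^i}g_A$ into the translated bi-symbol of $\exp I(-s)_\ast J$.)
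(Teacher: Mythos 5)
Your argument is correct, and at its computational core it coincides with the paper's proof: both come down to the observation that, after applying $\wp[s]$, the only surviving contribution to $\{\Omega_E[s],\Omega_E[s]\}_{\hat{\J}}$ is the double normal derivative of the bi-degree $(0,0)$ part of $\hat{\J}$, i.e.\ locally the Jacobi brackets $\{(y_A-g_A)\mu,(y_B-g_B)\mu\}_{\J}$ restricted to $\im s$. The packaging, however, is genuinely different. The paper records this local identity directly for $\Omega_E[s]$, in a single display, and concludes at once from the Lie-subalgebra characterization of coisotropy (Lemma~\ref{lem:cois}), since the $(y_A-g_A)\mu$ generate the sections vanishing on $\im s$. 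You instead first discard all $\hat{\J}_k$ with $k\neq 1$ by a bi-degree count (Remark~\ref{rem:BFV_brackets}), translate to the zero section by conjugating with $\exp I(s)$ (using $\wp[s]=\wp[0]\circ\exp I(s)^\ast$ and $p(\exp I(s)^\ast\hat{\J}_1)=\exp I(-s)_\ast J$), reduce to the identity $\wp[0]\{\Omega_E,\Omega_E\}_{\hat K}=-2\,P(p\hat K)$ for a bi-degree $(0,0)$ bi-derivation $\hat K$, and conclude from Proposition~\ref{prop:coiss} via the vanishing of $P(\exp I(-s)_\ast J)$. What your route buys is an explicit bridge between the BFV-side obstruction $\wp[s]\{\Omega_E[s],\Omega_E[s]\}_{\hat{\J}}$ and the quantity $P(\exp I(-s)_\ast J)$ governing the $L_\infty$-picture, which is in the spirit of the comparison results of Section~\ref{subsec:L_infty_qi}; the cost is that you must separately verify the two compatibilities of the translation step and handle the ghost-carrying components of $\hat K$ in the crux identity, none of which is problematic but all of which the paper's one-step local computation sidesteps. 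One small point of care: in your crux identity the surviving coefficient is $(p\hat K)(y_A\mu,y_B\mu)|_{y=0}$, i.e.\ the value of $p\hat K$ on the first jets of the vanishing sections, which matches $P(p\hat K)$ through~\eqref{eq:projection_P_bis} precisely because $\gamma(dy_A\otimes\mu)|_S=j^1(y_A\mu)|_S$; stating this explicitly would close the last gap between your coordinate computation and the invariant formulation.
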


\begin{proof}
	Since $\hat{\J}$ is a bi-derivation with $p(\hat{\J})=\J$, it is straightforward to check that, locally,
	\begin{equation*}
	\wp[s]\vphantom{\xi^A}\{\Omega_E[s],\Omega_E[s]\}_{\hat{\J}}=\left.(\eta^B\otimes\mu^\ast)(\eta^A\otimes\mu^\ast)\{(y_A-g_A(u^i))\mu,(y_B-g_B(u^i))\mu\}_{\J}\right|_{\im s}.
	\end{equation*}
	Hence $\wp[s]\{\Omega_E[s],\Omega_E[s]\}_{\hat{\J}}=0$ iff $\{(y_A-g_A(u^i))\mu,(y_B-g_B(u^i))\mu\}_{\J} = 0$, for every $A$ and $B$.
	The last condition means exactly that the image of $s$ is coisotropic in $(M,L,\J)$.
\end{proof}

\begin{proof}[Proof of Theorem~\ref{theor:existence_BRST-charge} (resp.~Theorem~\ref{theor:uniqueness_BRST-charge})]
	It follows immediately as a special case of Proposition~\ref{prop:SBSO_existence} (resp.~Proposition~\ref{prop:SBSO_uniqueness}).
	It will be enough to use the contraction data~\eqref{eq:2contraction_data_1} for the contraction data~\eqref{eq:homotopy_equivalence}, and set $\calF_n:=\calL_{\geq n+1}$, $N=-1$, and $\bar{Q}:=\Omega_E[s]$.
	Indeed, in this case, from Lemma~\ref{lem:BRST_coisotropic} the necessary and sufficient condition~\eqref{eq:prop:SBSO_existence} coincides with $\im(s)$ being coisotropic.
\end{proof}

\section{The BFV-complex of a coisotropic submanifold}

\label{sec:BFV-complex}

Let $(M,L,\J)$ be a Jacobi manifold, and let $S\subset M$ be a coisotropic submanifold.
Recall that $\pi:NS\to S$ denotes the normal bundle to $S$ in $M$, $\ell:=L|_S\to S$ is the restricted line bundle, and we have set $L_{NS}:=\pi^\ast\ell\to NS$.
We will use a fat tubular neighborhood $(\tau,\underline{\smash{\tau}})$ of $\ell\to S$ in $L\to M$ to identify $S$ with the image of the zero section of $\pi$, and replace the Jacobi manifold $(M,L,J)$ with its local model $(NS,L_{NS},\tau^\ast J)$ around $S$.
We will then use the lifting procedure of Section~\ref{sec:lifting_Jacobi_structures}, and the results of Section~\ref{sec:BRST-charges} with the r\^ole of $M=\calE\to S$ and $E\to M=\calE$ being now played by the normal bundle $\pi:NS\to S$ and the vertical bundle $V(NS):=\pi^\ast(NS)\to NS$ respectively.
\begin{definition}
	\label{def:BFV_complex}
	A \emph{BFV-complex} (attached to $S$ via the fat tubular neighborhood $(\tau,\underline{\smash{\tau}})$) is a differential graded Lie algebra $(\Gamma(\hat{L}),\{-,-\}_{\BFV},d_{\BFV})$ such that:
	\begin{itemize}
		\item $\{-,-\}_{\BFV}\equiv \hat{\J}$, for some lifting $\hat{\J}$ of $\J$ to a Jacobi structure on $\hat{L}\to\hat{M}$,
		\item $d_{\BFV}=\{\Omega_{\BRST},-\}_{\BFV}$, where $\Omega_{\BRST}$ is some $0$-BRST charge wrt $\hat{\J}$.
	\end{itemize}
\end{definition}
\begin{remark}
	\label{rem:BFV_complex}
	In more geometric terms, a BFV-complex can be seen, in particular, as the graded Jacobi manifold $(\hat{M},\hat{L},\hat{\J}\equiv\{-,-\}_{\BFV})$ further equipped with the cohomological Hamiltonian derivation $d_{\BFV}$.
\end{remark}
This section aims to show that the BFV-complex is actually independent, to some extent, and up to isomorphisms, of the fat tubular neighborhood, it is a cohomological resolution of the reduced Gerstenhaber-Jacobi algebra of $S$, and controls the formal coisotropic deformation problem of $S$ under both Hamiltonian and Jacobi equivalence.

\subsection{Gauge invariance of the BFV-complex}
\label{subsec:gauge_invariance_BFV}

Let $(M,L,\J)$ be a Jacobi manifold, and let $S\subset M$ be a coisotropic submanifold.
The BFV-complex of $S$ is actually independent of the choice of the (fat) tubular neighborhood, at least around $S$, as pointed out by the following

\begin{theorem}
	\label{theor:gauge_invariance_BFV_complex}
	Let $(\tau_0,\underline{\smash{\tau}}_0)$ and $(\tau_1,\underline{\smash{\tau}}_1)$ be fat tubular neighborhoods of $\ell\to S$ in $L\to M$, and set $\J^0:=\tau_0^\ast\J$ and $\J^1:=\tau_1^\ast\J$.
	Pick liftings $\hat{\J}^i$ of $\J^i$ to $\hat{L}\to\hat{M}$, and let $\Omega^i_{\BRST}$ be a $0$-BRST charge wrt $\hat\J^i$, with $i=0,1$.
	Then there exist open neighborhoods $U_0$ and $U_1$ of $S$ in $NS$, and a degree $0$ graded Jacobi bundle isomorphism $\phi:(\hat M,\hat L,\hat\J^0)|_{U_0}\longrightarrow (\hat M,\hat L,\hat\J^1)|_{U_1}$, such that $\phi^\ast(\Omega^1_{\BRST})=\Omega^0_{\BRST}$, and a fortiori $\phi^\ast d_{\BFV}^1=d_{\BFV}^0$.
\end{theorem}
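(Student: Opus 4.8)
The plan is to exploit the fact that the graded manifold $\hat M$, the graded line bundle $\hat L\to\hat M$ and the tautological structure $G$ are \emph{intrinsic} to $S$: they are built only from the normal bundle $\pi:NS\to S$, its vertical bundle $E=V(NS)$, and the restricted line bundle $\ell$, and do not depend on the chosen fat tubular neighborhood. Changing the fat tubular neighborhood changes \emph{only} the pulled-back Jacobi structure $\J^i=\tau_i^\ast\J$ on $L_{NS}\to NS$, together with its lifting $\hat\J^i$ and the chosen $0$-BRST charge $\Omega^i_{\BRST}$. Accordingly, I would produce $\phi$ (defined near $S$) as a composite $\phi=\alpha\circ\beta\circ\gamma$ of three degree $0$ graded line bundle isomorphisms, each accounting for one of these three choices and each supplied by a uniqueness statement already available.

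First I would relate the two pulled-back Jacobi structures. By Proposition~\ref{prop:isotopy} any two fat tubular neighborhoods are isotopic, so, exactly as in the two-case analysis in the proof of Proposition~\ref{prop:gauge_invariance}, it suffices to treat the case $\tau_1=\tau_0\circ\psi$ for a vector bundle automorphism $\psi$ of $NS$ over $\id_S$ (lifted to $L_{NS}$) and the case where $\tau_0,\tau_1$ are joined by a smooth path $\tau_t$ of fat tubular neighborhoods. In the first case $\psi$ preserves the vertical bundle $E=V(NS)$, hence induces a degree $0$ graded automorphism $\hat\psi$ of $\hat L\to\hat M$ that preserves $G$ (it respects the canonical $E$--$(E^\ast\otimes L)$ pairing defining $G$) and fixes the tautological section $\Omega_E$. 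In the second case, setting $\varphi_t:=\tau_t^{-1}\circ\tau_0$ (local automorphisms of $L_{NS}$ fixing $S$ pointwise, with $\varphi_0=\id$) and letting $\xi_t\in\Der L_{NS}$ be their infinitesimal generators, I would lift each $\xi_t$ to the degree $0$ derivation $i_\nabla(\xi_t)=\nabla_{\xi_t}$ of $\hat L$ attached to an auxiliary connection $\nabla$ in $E$ (Section~\ref{subsec:first_relevant_contraction_data}). By Lemma~\ref{lem:metric_connection} each $\nabla_{\xi_t}$ is a Jacobi derivation of $G$, so its flow $\{\hat\varphi_t\}$ consists of $G$-preserving graded automorphisms which moreover preserve the bi-degree (because $i_\nabla$ takes values in bi-degree $(0,0)$, by Proposition~\ref{prop:i_nabla}) and restrict to $\varphi_t$ on the bi-degree $(0,0)$ part. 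In either case one obtains, via the projection $p$ of Proposition~\ref{prop:p}, a graded automorphism $\alpha$ near $S$ such that $\alpha^\ast\hat\J^1=:\tilde\J^0$ is a lifting of $\J^0$ and $\alpha^\ast\Omega^1_{\BRST}=:\tilde\Omega^0$ is a $0$-BRST charge with respect to $\tilde\J^0$.

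Now $\tilde\J^0$ and $\hat\J^0$ are two liftings of the same $\J^0$, so Theorem~\ref{theor:uniqueness_BFV_brackets} supplies a degree $0$ graded automorphism $\beta$ with $\beta^\ast\tilde\J^0=\hat\J^0$ satisfying \eqref{eq:theor:uniqueness_BFV_brackets}; this filtration property guarantees that $\beta^\ast\tilde\Omega^0$ is still a $0$-BRST charge with respect to $\hat\J^0$, since its bi-degree $(1,0)$ component is unchanged and equals $\Omega_E$. Finally $\beta^\ast\tilde\Omega^0$ and $\Omega^0_{\BRST}$ are two $0$-BRST charges with respect to the \emph{same} lifting $\hat\J^0$, so Theorem~\ref{theor:uniqueness_BRST-charge} provides $\gamma\in\Ham_{\geq 2}(\hat M,\hat L,\hat\J^0)$ (which preserves $\hat\J^0$) with $\gamma^\ast(\beta^\ast\tilde\Omega^0)=\Omega^0_{\BRST}$. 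Restricting to suitable open neighborhoods $U_0,U_1$ of $S$ (the domains of definition of $\alpha$), the composite $\phi=\alpha\circ\beta\circ\gamma$ is a degree $0$ graded Jacobi bundle isomorphism with $\phi^\ast\hat\J^1=\gamma^\ast\beta^\ast\tilde\J^0=\hat\J^0$ and $\phi^\ast\Omega^1_{\BRST}=\gamma^\ast\beta^\ast\tilde\Omega^0=\Omega^0_{\BRST}$; and since $d_{\BFV}=\{\Omega_{\BRST},-\}$ is Hamiltonian and $\phi$ is a Jacobi isomorphism, $\phi^\ast d^1_{\BFV}=d^0_{\BFV}$ follows at once.

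The main obstacle is the second case of the tubular step: lifting the isotopy of tubular neighborhoods to a graded automorphism that simultaneously relates the liftings \emph{and} preserves the $0$-BRST normalization. Two points need care. The interpolating diffeomorphisms $\underline\varphi_t$ do not preserve $E=V(NS)$, so they cannot be pushed forward naively to $\hat M$; the remedy is the connection-dependent lift $i_\nabla$ together with Lemma~\ref{lem:metric_connection}, which forces the lifted flow to be $G$-Hamiltonian. The genuinely delicate verification is that $\hat\varphi_1$ carries the $0$-BRST normalization to itself, i.e. that $\hat\varphi_1^\ast\Omega_E$ is again of the form $\Omega_E[0]$; here one uses that $\varphi_t$ fixes $S$ pointwise and that $\hat\varphi_t$ is bi-degree preserving. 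Finally, since the $\varphi_t$ are only defined near $S$ and a priori only for small time, one must exploit that $S$ is closed to piece together the flow on a genuine neighborhood (or, as in Proposition~\ref{prop:gauge_invariance}, pass to the formal neighborhood of $S$, using that the lifted data depend only on the infinite jet of $\J$ along $S$) in order to obtain $\phi$ on actual open sets $U_0,U_1$.
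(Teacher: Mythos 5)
Your overall architecture is exactly the paper's: reduce to the two cases of Proposition~\ref{prop:isotopy}, transport $(\hat\J^1,\Omega^1_{\BRST})$ across the change of fat tubular neighborhood to a lifting of $\J^0$ together with a $0$-BRST charge, then finish with Theorem~\ref{theor:uniqueness_BFV_brackets} (whose filtration property \eqref{eq:theor:uniqueness_BFV_brackets} indeed preserves the $(1,0)$-component) and Theorem~\ref{theor:uniqueness_BRST-charge}. Case~1 is also handled as in the paper. The gap is in Case~2, and it is precisely the point you flag as ``delicate'' but then dismiss with an insufficient reason. If you lift the isotopy by integrating $i_\nabla(\xi_t)=\nabla_{\xi_t}$, the resulting flow $\hat\varphi_t$ acts on $\Gamma(\hat L)^{(1,0)}=\Gamma(E)$ by parallel transport along $\underline{\smash{\varphi}}_t$, and the tautological section $\Omega_E$ is not parallel. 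The facts that $\underline{\smash{\varphi}}_t$ fixes $S$ pointwise and that $\hat\varphi_t$ preserves the bi-degree only give that the zero locus of $\hat\varphi_1^\ast\Omega_E$ is $\im\mathbf 0$ and that the intersection is transversal; by Lemma~\ref{lem:technical_lemma} this means $\hat\varphi_1^\ast\Omega_E=A\,\Omega_E$ for some $A\in\Gamma(\operatorname{GL}(E))$, with $A\neq\id_E$ in general. So $\hat\varphi_1^\ast\Omega^1_{\BRST}$ is only a \emph{geometric} MC element in the sense of Proposition/Definition~\ref{prop:geometric_MC}, not a $0$-BRST charge, and Theorem~\ref{theor:uniqueness_BRST-charge} (which compares two charges with \emph{equal} $(1,0)$-components $\Omega_E[s]$) does not yet apply.

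The paper closes this gap in Proposition~\ref{prop:main_result}: the lift of $F_t$ is not $\exp(\nabla_{\xi_t})$ alone but a composition $\calF_t=\Phi_t\circ\Psi_t$, where $\Psi_t$ is the $G$-Hamiltonian flow of a path $\tilde a_t\in\Gamma(\End E)=\Gamma(\hat L)^{(1,1)}$ chosen, via Lemma~\ref{lem:technical_lemma}, to solve $\tfrac{d}{dt}e_t=\tilde a_t e_t$ with $e_t=\Phi_t^\ast(\Omega_E[s_t])$; this extra factor restores the normalization $\operatorname{pr}^{(1,0)}\Omega_t=\Omega_E[s_t]$ while preserving $G$ and the bi-degree $(0,0)$ part. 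Your argument can be repaired either by inserting this correction, or by invoking the equivalence $(1)\Leftrightarrow(2)$ of Proposition/Definition~\ref{prop:geometric_MC} to replace $\hat\varphi_1^\ast\Omega^1_{\BRST}$ by $\Phi^\ast(\hat\varphi_1^\ast\Omega^1_{\BRST})$ for some $\Phi\in\Ham_{\geq 1}(\hat M,\hat L,\cdot)$ before applying Theorem~\ref{theor:uniqueness_BRST-charge}. A minor further remark: the paper does not pass to formal neighborhoods here; it works on genuine open sets $U_0=V$ and $U_1=F_1(V)$ where the interpolating embeddings are defined, which is what produces the neighborhoods in the statement.
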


\begin{proof}
	The main idea of the proof is to show the existence of open neighborhoods $U_0$ and $U_1$ of $S$ in $NS$, and a bi-degree $(0,0)$ graded line bundle isomorphism $\phi$ from $\hat L|_{U_0}\to\hat M|_{U_0}$ to $\hat L|_{U_1}\to\hat M|_{U_1}$, such that $\phi^\ast\hat{\J}^1$ is a lifting of $\J^0$ to $\hat L|_{U_0}\to\hat M_{U_0}$, and $\phi^\ast(\Omega^1_{\BRST})$ is a $0$-BRST charge wrt $\phi^\ast\hat{\J}^1$.
	After doing this, the proof will be completed using Theorems~\ref{theor:uniqueness_BFV_brackets} and~\ref{theor:uniqueness_BRST-charge}.
	
	As shown in Proposition~\ref{prop:isotopy}, the standard uniqueness of tubular neighborhoods up to isotopy (cf.~\cite[Chapter 4, Theorem 5.3]{hirsch}) can be adapted to fat tubular neighborhoods.
	Accordingly it will be enough to consider the following two special cases:
	\begin{enumerate}
		\item $\tau_1\circ F=\tau_0$, for some automorphism $F$ of the line bundle $L_{NS}\to NS$, covering an automorphism $\underline{\smash{F}}$ of the normal bundle $NS\overset{\pi}{\to} S$, such that $F=\id$ on $L_{NS}|_S\simeq\ell$,
		\item $\tau_0=\calT_0$ and $\tau_1=\calT_1$, for some smooth path $\{(\calT_t,\underline{\smash{\calT}}_t)\}_{t\in I}$ of fat tubular neighborhoods of $\ell\to S$ in $L\to M$.
	\end{enumerate}
	
	{\sc First case.}
	Let $\underline{F}^\dagger:N^\ast S\to N^\ast S$ be the inverse of the transpose of the vector bundle automorphism $\underline{F}:NS\to NS$.
	There is a bi-degree $(0,0)$ automorphism $\calF$ of the graded line bundle $\hat{L}\to\hat{M}$ uniquely determined by
	\begin{equation}
	\label{eq:proof:gauge_invariance1}
	\calF^\ast\lambda=F^\ast\lambda,\quad\calF^\ast(\pi^\ast\eta)=\pi^\ast(\underline{\smash{F}}^\ast\eta),\quad\calF^\ast((\pi^\ast\alpha)\otimes\lambda)=\pi^\ast((\underline{\smash{F^\dagger}})^\ast\alpha)\otimes F^\ast\lambda,
	\end{equation}
	for all $\lambda\in\Gamma(\hat{L})^{(0,0)}=\Gamma(L)$, $\eta\in\Gamma(NS)$, and $\alpha\in\Gamma(N^\ast S)$.
	By its very construction, $\calF$ satifies:
	\begin{equation*}
	\calF^\ast\Omega_{NS}=\Omega_{NS},\quad \calF^\ast G=G,\quad p\circ\calF^\ast=F^\ast\circ p.
	\end{equation*}
	It follows that $\calF^\ast\hat{\J}^1$ is a lifting of $\J^0$ to $\hat{L}\to\hat{M}$, and $\calF^\ast\Omega_{\BRST}^1$ is a $0$-BRST charge wrt $\calF^\ast\hat{\J}^1$.
	
	{\sc Second case.}
	We can find an open neighborhood $V$ of $S$ in $NS$, and a smooth path $\{(F_t,\underline{\smash{F}}_t)\}_{t\in I}$ of line bundle embeddings of $L_{NS}|_V\to V$ into $L_{NS}\to NS$ such that
	\begin{itemize}
		\item $\calT_0=\calT_t\circ F_t$, so that, in particular, $F_0=\id$ on $L_{NS}|_V$,
		\item $F_t$ agrees with the identity map on $L_{NS}|_S\simeq\ell$.
	\end{itemize}
	Consequently, $\J^t:=\calT_t^\ast\J$ is a Jacobi structure, and the image of the zero section of $\pi:NS\to S$ is coisotropic wrt $\J^t$.
	Additionally, $(F_t)_\ast\J^0=\J^t$ and $F_t(\im 0)=\im 0$, for all $t\in I$.
	Hence, in view of Proposition~\ref{prop:main_result}, $\{F_t\}_{t\in I}$ can be lifted to a smooth path $\{\calF_t\}_{t\in I}$ of bi-degree $(0,0)$ graded line bundle embeddings of $\hat{L}|_V\to\hat{M}|_V$ into $\hat{L}\to\hat{M}$ such that, for all $t\in I$,
	\begin{itemize}
		\item $(\calF_t)_\ast\hat{\J}^0$ is a lifting of $\J^t$ to $\hat{L}|_{F_t(V)}\to\hat{M}|_{F_t(V)}$,
		\item $(\calF_t)_\ast\Omega_{\BRST}^0$ is a $0$-BRST charge wrt $(\calF_t)_\ast\hat{\J}^0$.
	\end{itemize}
	In particular, $U_0:=V$ and $U_1:=F_1(V)$ are open neighborhoods of $S$ in $NS$, and $\phi:=\calF_1$ is a bi-degree $(0,0)$ graded line bundle isomorphism from $\hat{L}|_{U_0}\to\hat{M}|_{U_0}$ to $\hat{L}|_{U_1}\to\hat{M}|_{U_1}$ with all the desired properties.
\end{proof}

\subsection{The BFV-complex and the cohomological Jacobi reduction of a coisotropic submanifold}
\label{subsec:BFV_homological_resolution}

Let $\J$ be a Jacobi structure on the line bundle $L\to M$, and let $S\subset M$ be a coisotropic submanifold wrt $\J$.
According to Proposition~\ref{prop:conormal}, this means that $(N_\ell{}^\ast S,\ell)$ is a Jacobi subalgebroid of $(J^1L,L)$.
Set $\frakg(S):=\Gamma(\wedge^\bullet(N_\ell S)\otimes\ell)$, and denote by $d_{N_\ell{}^\ast S,\ell}:\frakg(S)\to\frakg(S)$ the de Rham differential of the Jacobi algebroid $(N_\ell{}^\ast S,\ell)$.
Differential $d_{N_\ell{}^\ast S,\ell}$ is completely determined by
\begin{equation*}
d_{N_\ell{}^\ast S,\ell}\circ P=P\circ d_{\J},
\end{equation*}
where the degree $0$ graded module epimorphism  $P:\Diff^\star(L[1])\to\frakg(S)[1]$ is the canonical projection defined by setting $P(\lambda)=\lambda|_S$, $\langle P(\square),(df\otimes\lambda)|_S\rangle=\square(f\lambda)|_S$, for all $\lambda\in\Gamma(L)$, and $f\in I_S$.
Recall that here, as throughout this thesis, $I_S\subset C^\infty(M)$ denotes the ideal of functions vanishing on $S$.
In the following we will understand the module isomorphism $\Gamma(L_{\textnormal{red}})\overset{\simeq}{\longrightarrow}H^0(\frakg(S),d_{N_\ell{}^\ast S,\ell})$, $\lambda+\Gamma_S\longmapsto[\lambda|_S]$, introduced in Section~\ref{sec:Jacobi_reduction}.

\begin{proposition}
	\label{prop:BFV_homological_resolution}
	For every BFV-complex $(\Gamma(\hat{L}),\{-,-\}_{\BFV},d_{\BFV})$ of $S$, its cohomology is canonically isomorphic to the de Rham cohomology of the Jacobi algebroid of $S$
	\begin{equation*}
	H^\bullet(\Gamma(\hat{L}),d_{\BFV})\simeq H^\bullet(\frakg(S),d_{N_\ell{}^\ast S,\ell}).
	\end{equation*}
	Specifically, from Section~\ref{subsec:second_relevant_contraction_data}, the map $\wp[0]$ is a quasi-isomorphism from $(\Gamma(\hat{L}),d_{\BFV})$ to $(\frakg(S),d_{N_\ell{}^\ast S,\ell})$.
\end{proposition}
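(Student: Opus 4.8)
The plan is to realise $(\Gamma(\hat L),d_{\BFV})$ as a small perturbation of the contraction data attached to the zero section, and then transfer cohomology along it. Concretely, I would start from the set of contraction data~\eqref{eq:2contraction_data_1} produced by Proposition~\ref{prop:2contraction_data} for the section $s=0$, namely the projection $\wp[0]$, the immersion $\iota$ and the homotopy $h[0]$ relating $(\Gamma(\hat L),d[0])$ and $(\frakg(S),0)$, where $d[0]=\{\Omega_E,-\}_G$ and $\frakg(S)=\Gamma((\wedge^\bullet N_\ell S)\otimes\ell)$ (recall $\calE=NS$, so $\calE_\ell=N_\ell S$). Writing $\Omega_{\BRST}=\sum_i\Omega_i$ and $\hat\J=\sum_k\hat\J_k$ as in Remarks~\ref{rem:BFV_brackets} and~\ref{rem:BRST_charge}, the leading terms $\Omega_0=\Omega_E$ and $\hat\J_0=G$ give back $d[0]$, so that $d_{\BFV}=d[0]+\delta$ with $\delta:=\sum_{n\geq 1}\delta_n$, where $\delta_n=\sum_{i+k=n}\{\Omega_i,-\}_{\hat\J_k}$ has ghost/anti-ghost bi-degree $(n,n-1)$.

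First I would verify that $\delta$ is a legitimate small perturbation of $d[0]$: since $h[0]$ has bi-degree $(0,1)$ and each $\delta_n$ has bi-degree $(n,n-1)$ with $n\geq 1$, the composite $\delta h[0]$ strictly increases the anti-ghost degree; as the latter is bounded above by $\rank NS$, $\delta h[0]$ is nilpotent and the Homological Perturbation Lemma~\cite{Crainic_perturbation} applies. Its output is a deformed contraction between $(\Gamma(\hat L),d_{\BFV})$ and $(\frakg(S),d')$, with deformed projection $p'=\wp[0]\sum_{k\geq 0}(\delta h[0])^k$ and deformed differential $d'=\sum_{k\geq 0}\wp[0]\,\delta(h[0]\delta)^k\iota$. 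The same anti-ghost bookkeeping collapses both sums: for $k\geq 1$ every summand contains an $h[0]$, hence reaches anti-ghost degree $\geq 1$ immediately before the final $\wp[0]$, which annihilates it. Thus $p'=\wp[0]$, and $d'=\wp[0]\,\delta\iota=\wp[0]\,\delta_1\iota$ (only the bi-degree $(1,0)$ part of $\delta$ survives on the anti-ghost-zero image of $\iota$), with $\delta_1=\{\Omega_1,-\}_G+\{\Omega_E,-\}_{\hat\J_1}$. In particular $\wp[0]$ is already a quasi-isomorphism from $(\Gamma(\hat L),d_{\BFV})$ onto $(\frakg(S),d')$.

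It then remains to identify $d'$ with the de Rham differential $d_{N_\ell{}^\ast S,\ell}$ of the Jacobi algebroid $(N_\ell{}^\ast S,\ell)$, which by Proposition~\ref{prop:conormal} is a Jacobi subalgebroid of $(J^1L,L)$. Since $\wp[0]$ is surjective, $d'$ is the unique operator with $d'\circ\wp[0]=\wp[0]\circ d_{\BFV}$, so it suffices to show $\wp[0]\circ d_{\BFV}=d_{N_\ell{}^\ast S,\ell}\circ\wp[0]$; equivalently, that the leading operator $\wp[0]\,\delta_1\iota$ computes the Jacobi-algebroid differential. I would carry this out by a direct local computation in the coordinates of Section~\ref{subsec:BRST-charges}, feeding in the defining relations $p(\hat\J_1)=\J$ from~\eqref{eq:rem:BFV_brackets2} and $2\{\Omega_E,\Omega_1\}_G+\{\Omega_E,\Omega_E\}_{\hat\J_1}=0$ from~\eqref{eq:rem:BRST_charge2}, and matching the result against the characterisation $d_{N_\ell{}^\ast S,\ell}\circ P=P\circ d_\J$ used in Proposition~\ref{prop:linfty_cohomological_resolution}(1). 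This identification is the crux and the main obstacle: one must check that, after restriction to the zero section and projection to anti-ghost degree zero, the contributions of $\Omega_1$ and $\hat\J_1$ reassemble exactly into the induced differential on $N_\ell{}^\ast S$-forms, with no leftover higher-order terms — precisely the point where the coisotropy of $S$ (equivalently $P(J)=0$) together with the BRST and Jacobi-lifting equations are essential. Once this is in place, the canonical isomorphism $H^\bullet(\Gamma(\hat L),d_{\BFV})\simeq H^\bullet(\frakg(S),d_{N_\ell{}^\ast S,\ell})$ is the one induced by $\wp[0]$, and its independence of the auxiliary choices (lifting, BRST charge, fat tubular neighborhood) follows from Theorem~\ref{theor:gauge_invariance_BFV_complex}.
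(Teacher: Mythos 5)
Your proposal is correct and follows essentially the same route as the paper: identify $\delta=d_{\BFV}-d[0]$ as a small perturbation of the contraction data~\eqref{eq:2contraction_data_1}, use the bi-degree bookkeeping to see $\delta h[0]$ is nilpotent, apply the Homological Perturbation Lemma, collapse the deformed projection and differential to $\wp[0]$ and $\wp[0]\,\delta_1\,\iota$, and then identify the latter with $d_{N_\ell{}^\ast S,\ell}$ on generators using $p(\hat\J_1)=\J$ and the order-one BRST equation~\eqref{eq:rem:BRST_charge2}. The only part you leave as a sketch is the final generator-by-generator computation, but you have correctly isolated exactly the two relations the paper feeds into it, so there is no gap in the strategy.
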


\begin{proof}
	Let $(\Gamma(\hat{L}),\{-,-\}_{\BFV},d_{\BFV})$ be a BFV-complex of $S$.
	Assume that this latter has been constructed by choosing a fat tubular neighborhood $(\tau,\underline{\smash{\tau}})$ of $\ell\to S$ into $L\to M$, a lifting of $\J$ to a Jacobi structure $\hat{\J}=\sum_{k=0}^\infty\hat{\J}_k$ on $\hat{L}\to\hat{M}$, and a $0$-BRST charge $\Omega_{\BRST}=\sum_{k=0}^\infty\Omega_k$ wrt $\hat{\J}$.
	
	Perturbation $\delta:=d_{\BFV}-d[0]$ of the contraction data~\eqref{eq:2contraction_data_1} is small (cf.~\cite[Section 2.2]{Crainic_perturbation}).
	Indeed
	\begin{align}
	\label{eq:prop:lifting_CE_cohom1}
	\delta\in\bigoplus_{k\geq 0}\Diff(\hat{L})^{(k+1,k)},\qquad
	\delta h[0]\in\bigoplus_{k\geq 1}\End(\hat{L},\hat{L})^{(k,k)},
	\end{align}
	so that $\delta h[0]$ is nilpotent and $\id-\delta h[0]$ is invertible with $(\id-\delta h[0])^{-1}=\sum_{k=0}^\infty(\delta h[0])^k$.
	Hence the Homological Perturbation Lemma (cf., e.g.,~Appendix~\ref{sec:homological_perturbation_lemma}) can be applied taking as input the contraction data~\eqref{eq:2contraction_data_1} and the small perturbation $\delta$.
	The resulting output is given by a new deformed set of contraction data
	\begin{equation*}
	\begin{tikzpicture}[>= stealth,baseline=(current bounding box.center)]
	\node (u) at (0,0) {$(\Gamma(\hat{L}),d_{\BFV})$};
	\node (d) at (5,0) {$(\frakg(S),d')$};
	\draw [transform canvas={yshift=-0.5ex},-cm to] (d) to node [below] {\footnotesize $\iota'$} (u);
	\draw [transform canvas={yshift=0.5ex},cm to-] (d) to node [above] {\footnotesize $\wp[0]'$} (u);
	\draw [-cm to] (u.south west) .. controls +(210:1) and +(150:1) .. node[left=2pt] {\footnotesize $h[0]'$} (u.north west);
	\end{tikzpicture}.
	\end{equation*}
	From the explicit formulas for $d',\wp[0]',\iota',h[0]'$ (see Equations~\eqref{eq:rem:homological_perturbation_lemma} and~\eqref{eq:prop:homological_perturbation_lemma}), and the very definition of $\wp[0]$, it follows that
	\begin{equation*}
	\wp[0]'=\wp[0],\qquad d'=\wp[0]\delta\iota.
	\end{equation*}
	Now it remains to prove that $d'=d_{N_\ell{}^\ast S,\ell}$.
	Both $d_{N_\ell{}^\ast S,\ell}$ and $d'$ are cohomological derivations of the graded module $\frakg(S)$, hence it is enough to check that they coincide on (local) generators, i.e.~on:
	\begin{enumerate}
		\item arbitrary sections $\lambda$ of $\ell\to S$,
		\item elements of a local frame $\eta^A$ of $NS\to S$.
	\end{enumerate}
	Since $\{\Omega_E,-\}_{\hat{\J}_1}+\{\Omega_1,-\}_G$ is the $(1,0)$ bi-degree component of $\delta$, 
	it follows that
	\begin{multline*}
	d'\lambda=(\wp[0]\delta\iota)\lambda=(\wp[0]\delta)(\pi^\ast\lambda)=\left.\{\pi^\ast\lambda,\Omega_E\}_{\hat{\J}_1}\right|_S=\\=\left.(\eta^A\otimes\mu^\ast)\{\pi^\ast\lambda,y_A\mu\}_{\J}\right|_S=(P\circ d_{\J})(\pi^\ast\lambda)=d_{N_\ell{}^\ast S,\ell}\lambda.
	\end{multline*}
	for all $\lambda\in\Gamma(\ell)$.
	Moreover,
	\begin{equation}
	\label{eq:proof:BFV_homological_resolution1}
	d'\eta^A=(\wp[0]\delta\iota)\eta^A=(\wp[0]\delta)\xi^A=\left.\left(\{\xi^A,\Omega_E\}_{\hat{\J}_1}+\{\xi^A,\Omega_1\}_G\right)\right|_S.
	\end{equation}
	From~\eqref{eq:rem:BRST_charge2}, with $\Omega=\Omega_{\BRST}$ and $h=1$, it follows that, locally,
	\begin{equation}
	\label{eq:proof:BFV_homological_resolution2}
	\left.\Delta_{y_A}\{\Omega_1,\Omega_E\}_{\hat{\J}_1}\right|_S=-2\left.\{\Omega_1,\xi^A\}_G\right|_S.
	\end{equation}
	Finally, plugging~\eqref{eq:proof:BFV_homological_resolution2} into~\eqref{eq:proof:BFV_homological_resolution1}, we get
	\begin{align*}
	d'\eta^A&=\frac{1}{2}\left.\ldsb\hat{\J}_1,\Delta_{y_A}\rdsb(\Omega_E,\Omega_E)\right|_S\\&=\frac{1}{2}\eta^A\eta^B\left.\ldsb\J,\Delta_{y_A}\rdsb(y_A\mu,y_B\mu)\right|_S\\&=(P\circ d_{\J})\Delta_{y_A}\\&=d_{N_\ell{}^\ast S,\ell}\eta_A.\qedhere
	\end{align*}
\end{proof}

It is now straightforward to see that each BFV-complex of $S$ provides a cohomological resolution of the reduced Gerstenhaber-Jacobi algebra of $S$.

\begin{corollary}
	\label{cor:BFV_homological_resolution}
	The degree $0$ graded module isomorphism $\wp[0]_\ast:H^\bullet(\Gamma(\hat{L}),d_{\BFV})\to H^\bullet(\frakg(S),d_{N_\ell{}^\ast S,\ell})$ intertwines the bracket induced by $\{-,-\}_{\BFV}$ on $H^0(\Gamma(\hat{L}),d_{\BFV})$ and the reduced Jacobi bracket $\{-,-\}_{\textnormal{red}}$ on $H^0(\frakg(S),d_{N_\ell{}^\ast S,\ell})\simeq\Gamma(L_{\textnormal{red}})$.
\end{corollary}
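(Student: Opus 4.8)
The plan is to upgrade the module isomorphism of Proposition~\ref{prop:BFV_homological_resolution} to a bracket isomorphism in degree $0$. First I would record that the differential $d_{\BFV}=\{\Omega_{\BRST},-\}_{\BFV}$ is a derivation of the graded Lie bracket $\{-,-\}_{\BFV}$ which squares to zero (because $\Omega_{\BRST}$ is a Maurer--Cartan element of the lifted structure, cf.~Definition~\ref{def:BFV_complex}); hence $\{-,-\}_{\BFV}$ descends to a degree $0$ bracket on $H^\bullet(\Gamma(\hat L),d_{\BFV})$, in particular on $H^0$. On the target side, the degree $0$ cohomology $H^0(\frakg(S),d_{N_\ell{}^\ast S,\ell})=H^0(N_\ell{}^\ast S,\ell)$ is identified with $\Gamma(L_{\textnormal{red}})$ via the isomorphism $\phi$ of Remark~\ref{rem:linfty_cohom_resolution}: a closed class is represented by $\lambda|_S$ with $\lambda\in N(\Gamma_S)$, and $\phi(\lambda+\Gamma_S)=[\lambda|_S]$. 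Thus the whole content to be proved is that $\wp[0]_\ast$ is a morphism of these brackets in degree $0$.

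Next I would fix two $d_{N_\ell{}^\ast S,\ell}$-closed sections $\bar\lambda_1,\bar\lambda_2\in\Gamma(\ell)$ and lift them to $d_{\BFV}$-closed sections $\Lambda_a:=\iota'(\bar\lambda_a)$ by means of the perturbed immersion $\iota'=\sum_{k\geq 0}(h[0]\delta)^k\iota$ from the proof of Proposition~\ref{prop:BFV_homological_resolution}. Since $\iota(\bar\lambda_a)=\pi^\ast\bar\lambda_a$ has bi-degree $(0,0)$ and each factor $h[0]\delta$ raises the anti-ghost degree by at least one, every $\Lambda_a$ splits as $\Lambda_a=\pi^\ast\bar\lambda_a+\sum_{j\geq 1}\Lambda_a^{(j,j)}$, and $\wp[0]\Lambda_a=\bar\lambda_a$. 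As the induced bracket on cohomology is independent of the chosen closed lift, it is enough to compute $\wp[0]\{\Lambda_1,\Lambda_2\}_{\BFV}$, which, $\{\Lambda_1,\Lambda_2\}_{\BFV}$ being of total degree $0$, equals $\operatorname{pr}^{(0,0)}\{\Lambda_1,\Lambda_2\}_{\BFV}|_S$. Expanding $\{-,-\}_{\BFV}=\sum_k\hat\J_k$ with $\hat\J_k$ of bi-degree $(k-1,k-1)$ and doing the bi-degree bookkeeping, only the triples $(k,p,q)\in\{(1,0,0),(0,1,0),(0,0,1)\}$ (with $p,q$ the anti-ghost degrees of the two arguments) can land in bi-degree $(0,0)$; the two $G$-terms vanish because, by Remark~\ref{rem:d_G}, $d_G$ annihilates bi-degree $(0,0)$ sections and so $\{-,\pi^\ast\bar\lambda_a\}_G=0$. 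Using $p(\hat\J_1)=\J$, what survives is exactly $\wp[0]\{\Lambda_1,\Lambda_2\}_{\BFV}=\{\pi^\ast\bar\lambda_1,\pi^\ast\bar\lambda_2\}_\J|_S$.

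The remaining, and main, step is to identify $\{\pi^\ast\bar\lambda_1,\pi^\ast\bar\lambda_2\}_\J|_S$ with the reduced bracket. Writing $\bar\lambda_a=\lambda_a|_S$ with $\lambda_a\in N(\Gamma_S)$, I would set $\nu_a:=\lambda_a-\pi^\ast\bar\lambda_a\in\Gamma_S$ and expand $\{\lambda_1,\lambda_2\}_\J$ bilinearly. The term $\{\nu_1,\nu_2\}_\J$ lies in $\Gamma_S$ by coisotropicity (Lemma~\ref{lem:cois}), hence vanishes on $S$. For the mixed terms I would use that the Hamiltonian derivation $\Delta_\nu$ of a section $\nu\in\Gamma_S$ has symbol $X_\nu$ tangent to $S$ (Lemma~\ref{lem:cois}~\ref{enumitem:lem:cois_3}), so that $\Delta_\nu(-)|_S$ depends only on the restriction to $S$ of its argument; as $\pi^\ast\bar\lambda_a$ and $\lambda_a$ agree on $S$, this gives $\{\nu_1,\pi^\ast\bar\lambda_2\}_\J|_S=\{\nu_1,\lambda_2\}_\J|_S$ and $\{\pi^\ast\bar\lambda_1,\nu_2\}_\J|_S=\{\lambda_1,\nu_2\}_\J|_S$, both of which vanish on $S$ since $\lambda_a\in N(\Gamma_S)$ forces $\{\Gamma_S,\lambda_a\}_\J\subset\Gamma_S$. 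Hence $\{\pi^\ast\bar\lambda_1,\pi^\ast\bar\lambda_2\}_\J|_S=\{\lambda_1,\lambda_2\}_\J|_S=\phi(\{\lambda_1+\Gamma_S,\lambda_2+\Gamma_S\}_{\textnormal{red}})$, which is exactly the intertwining claimed in the corollary.

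I expect the genuine obstacle to be precisely this last normalizer/tangency argument, where coisotropicity enters essentially: one must show that replacing $\lambda_a$ by $\pi^\ast(\lambda_a|_S)$ does not change the Jacobi bracket \emph{on} $S$, despite the two agreeing only along $S$. The bi-degree bookkeeping and the vanishing of the $G$-terms are, by contrast, purely mechanical once the perturbed closed lift $\iota'$ is available. A minor point to verify carefully is that all correction terms $\Lambda_a^{(j,j)}$ indeed carry strictly positive anti-ghost degree, so that $\wp[0]$ kills them and only the leading bi-degree $(0,0)$ part contributes.
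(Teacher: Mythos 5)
Your proof is correct and follows essentially the same computation as the paper's: the paper takes arbitrary $d_{\BFV}$-closed degree-$0$ sections $\lambda_i=\sum_k\lambda_i^k$ with $\lambda_i^k\in\Gamma(\hat L)^{(k,k)}$, and observes via the same bi-degree bookkeeping that $\wp[0]\{\lambda_1,\lambda_2\}_{\BFV}=\{\lambda_1^0,\lambda_2^0\}_{\J}|_S$; your use of the perturbed immersion $\iota'$ to manufacture the closed representatives is an immaterial variation, since the induced bracket on cohomology is representative-independent. The one place where you add genuine content is the final identification of $\{\pi^\ast\bar\lambda_1,\pi^\ast\bar\lambda_2\}_{\J}|_S$ with the reduced bracket: the paper compresses this into a single asserted equality, implicitly relying on the computation already carried out in the proof of Proposition~\ref{prop:linfty_cohomological_resolution}~\ref{enumitem:prop:linfty_cohomological_resolution_3}, whereas you spell out the normalizer/tangency argument (writing $\lambda_a=\pi^\ast\bar\lambda_a+\nu_a$ with $\nu_a\in\Gamma_S$, using that $X_{\nu_a}$ is tangent to $S$ so that $\Delta_{\nu_a}(-)|_S$ only sees restrictions to $S$, and that $\lambda_a\in N(\Gamma_S)$ kills the mixed terms). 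That argument is correct, and you are right that it is the only step where coisotropicity and the normalizer condition enter; everything else is mechanical once one knows that $\wp[0]$ annihilates positive anti-ghost degree and that $\{-,\mu\}_G=0$ for $\mu$ of bi-degree $(0,0)$.
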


\begin{proof}
	Pick arbitrary $d_{\BFV}$-closed degree $0$ sections $\lambda_i=\sum_{k=0}^\infty\lambda_i^k\in\Gamma(\hat{L})^0$, with $\lambda_i^k\in\Gamma(\hat{L})^{(k,k)}$, for all $k\in\bbN_0$, and $i=1,2$.
	From the construction of $\wp:=\wp[0]$ and the BFV-complex, it follows that
	\begin{align*}
	\wp_\ast\left[\{\lambda_1,\lambda_2\}_{\BFV}\right]&=\left[\wp\left(\{\lambda_1,\lambda_2\}_{\hat{\J}}\right)\right]\\&=\left[\left.\{\lambda^0_1,\lambda^0_2\}_{\J}\right|_S\right]\\&=\left\{\left[\left.\lambda^0_1\right|_S\right],\left[\left.\lambda^0_2\right|_S\right]\right\}_{\textnormal{red}}\\&=\left\{\wp_\ast[\lambda_1],\wp_\ast[\lambda_2]\right\}_{\textnormal{red}}.\qedhere
	\end{align*}
\end{proof}

Hence the reduced Gerstenhaber-Jacobi algebra of a coisotropic submanifold $S$ admits two different cohomological resolutions: one provided by the BFV-complex and another one given by the $L_\infty$-algebra.
Actually, as shown in the next section, these two resolutions are strictly related.

\subsection{The BFV-complex and the \texorpdfstring{$L_\infty$}{L-infinity}-algebra of a coisotropic submanifold}
\label{subsec:L_infty_qi}

Let $\J$ be a Jacobi structure on a line bundle $L\to M$, and let $S\subset M$ be a coisotropic submanifold of $(M,L,\J)$.
Additionally, let $(\Gamma(\hat{L}),\{-,-\}_{\BFV},d_{\BFV})$ be a BFV-complex associated with $S$ via the choice of:
\begin{itemize}
	\item a fat tubular neighborhood $(\tau,\underline{\tau})$ of $\ell\to S$ into $L\to M$,
	\item a lifting of $\J$ to a Jacobi structure $\hat{\J}$ on $\hat{L}\to\hat{M}$,
	\item a $0$-BRST charge $\Omega_{\BRST}$ wrt $\hat{\J}$.
\end{itemize}
In view of the proof of Proposition~\ref{prop:BFV_homological_resolution}, after these choices, the corresponding set of contraction data~\eqref{eq:2contraction_data_1} gets deformed into a new set of contraction data
\begin{equation}
\label{eq:deformed_second_contraction_data}
\begin{tikzpicture}[>= stealth,baseline=(current bounding box.center)]
\node (u) at (0,0) {$(\Gamma(\hat{L}),d_{\BFV})$};
\node (d) at (5,0) {$(\frakg(S),d_{N_\ell{}^\ast S,\ell})$};
\draw [transform canvas={yshift=-0.5ex},-cm to] (d) to node [below] {\footnotesize $\iota'$} (u);
\draw [transform canvas={yshift=0.5ex},cm to-] (d) to node [above] {\footnotesize $\wp[0]$} (u);
\draw [-cm to] (u.south west) .. controls +(210:1) and +(150:1) .. node[left=2pt] {\footnotesize $h[0]'$} (u.north west);
\end{tikzpicture},
\end{equation}
with $\iota'=\sum_{k=0}^\infty(h[0]\delta)^k\iota$, and $h[0]'=\sum_{k=0}^\infty h[0](\delta h[0])^k$, where $\delta$ is the ``small'' deformation of $d[0]$ given by $\delta=d_{\BFV}-d[0]$.

As described in Section~\ref{sec:linfty_algebra}, a fat tubular neighborhood $(\tau,\underline{\tau})$ allows to construct a right inverse of the canonical projection $P$ as the unique degree $0$ graded module morphism $I:\frakg(S)[1]\to\Diff^\star(L[1])$ such that $I(\lambda)=\pi^\ast\lambda$, and $I(\eta)(f\otimes\lambda)=((\pi^\ast\eta) f)\otimes\lambda$, for all $\lambda\in\Gamma(\ell)$, $\eta\in\Gamma(NS)$, and $f\in C^\infty(NS)$.
Here $\pi^\ast\eta$ is the vertical lift of $\eta$: the unique vertical vector field on $NS$ which is constant along the fibers and agree with $\eta$ along $S$.
The quadruple $(\Diff^\star(L[1]),\im I,P,\J)$ is a set of $V$-data (see Lemma~\ref{lem:linfty}), so that, according to Proposition~\ref{prop:linfty}, there is a $L_\infty$-algebra structure $\{\widetilde{\frakm}_k\}_{k\geq 1}$ on $\frakg(S)$ given by higher derived brackets 
\begin{equation*}
\widetilde{\frakm}_k(sg_1,\ldots,sg_k)=(-)^\sharp s\left(P\ldsb\ldsb\ldots\ldsb\J,Ig_1\rdsb,\ldots\rdsb,Ig_k\rdsb\right),
\end{equation*}
for all homogeneous $g_1,\ldots,g_k\in\frakg(S)$, where $s:\frakg(S)\to\frakg(S)[1]$ is the suspension map, and $(-)^\sharp$ denotes a certain sign coming from d\'ecalage.
It is easy to see that $\widetilde{\frakm}_1$ coincides with the de Rham differential of the Jacobi algebroid $(N_\ell{}^\ast S,\ell)$ associated with $S$.
Hence, from Section~\ref{sec:Jacobi_reduction}, it follows that the $L_\infty$-algebra is a cohomological resolution of the reduced Gerstenhaber-Jacobi algebra of $S$.

The following theorem constructs, by homotopy transfer, an $L_\infty$-quasi-isomorphism between the BFV-complex and the $L_\infty$-algebra.
In this way we extend, from the Poisson to the Jacobi case, a result by Sch\"atz~\cite{schatz2009bfv}.
\begin{theorem}
	\label{theor:L_infty_qi}
	For every BFV-complex $(\Gamma(\hat{L}),\{-,-\}_{\BFV},d_{\BFV})$ of $S$ there exists a quasi-isomorphism of $L_\infty$-algebras
	\begin{equation*}
	(\frakg(S),\{\widetilde{\frakm}_k\}_{k\geq 1})\longrightarrow(\Gamma(\hat{L}),\{-,-\}_{\BFV},d_{\BFV})
	\end{equation*}
	which lifts $\iota'$, i.e.~such that its first Taylor coefficient coincides with $\iota'$.
\end{theorem}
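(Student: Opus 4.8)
The plan is to obtain the quasi-isomorphism by homotopy transfer of the differential graded Lie algebra structure of the BFV-complex along the deformed contraction data~\eqref{eq:deformed_second_contraction_data}. I regard $(\Gamma(\hat{L}),d_{\BFV},\{-,-\}_{\BFV})$ as an $L_\infty$-algebra whose only non-trivial multi-brackets are $\ell_1=d_{\BFV}$ and $\ell_2=\{-,-\}_{\BFV}$. By Proposition~\ref{prop:BFV_homological_resolution} the projection $\wp[0]$ is a quasi-isomorphism of co-chain complexes from $(\Gamma(\hat{L}),d_{\BFV})$ onto $(\frakg(S),\widetilde{\frakm}_1)$, where $\widetilde{\frakm}_1=d_{N_\ell{}^\ast S,\ell}$, while~\eqref{eq:deformed_second_contraction_data} supplies the immersion $\iota'$ and the homotopy $h[0]'$ subject to the side conditions. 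This is exactly the data required to run the Homotopy Transfer Theorem.

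First I would apply homotopy transfer in the form furnished by the Homological Perturbation Lemma (cf.~Appendix~\ref{sec:homological_perturbation_lemma}) lifted to the cofree graded cocommutative coalgebras cogenerated by $\frakg(S)[1]$ and $\Gamma(\hat{L})[1]$. The codifferential encoding the BFV $L_\infty$-structure is a small perturbation of the one encoding its linear part $d_{\BFV}$, and the coalgebra lift of~\eqref{eq:deformed_second_contraction_data} is a genuine contraction, so the Perturbation Lemma outputs a transferred $L_\infty$-structure $\{\frakm^{\mathrm{tr}}_k\}$ on $\frakg(S)$, with $\frakm^{\mathrm{tr}}_1=\widetilde{\frakm}_1$, together with an $L_\infty$-quasi-isomorphism $F\colon(\frakg(S),\{\frakm^{\mathrm{tr}}_k\})\to(\Gamma(\hat{L}),d_{\BFV},\{-,-\}_{\BFV})$ whose first Taylor coefficient is precisely $\iota'$. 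The higher transferred brackets are given by the usual sum over rooted binary trees, with $\ell_2=\{-,-\}_{\BFV}$ decorating the vertices, $h[0]'$ the internal edges, $\iota'$ the leaves, and $\wp[0]$ the root.

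The heart of the matter is then to identify the transferred structure with Voronov's higher derived brackets, i.e.~to prove $\frakm^{\mathrm{tr}}_k=\widetilde{\frakm}_k$ for all $k$. The strategy is to unfold the tree sum using $\iota'=\sum_{k\geq 0}(h[0]\delta)^k\iota$ and $h[0]'=\sum_{k\geq 0}h[0](\delta h[0])^k$, with $\delta=d_{\BFV}-d[0]$, and then exploit that $\{-,-\}_{\BFV}\equiv\hat{\J}$ is a lifting of $\J$, that $\Omega_{\BRST}$ is a $0$-BRST charge, and that $d[0]=\{\Omega_E,-\}_G$. Using the side conditions $h[0]\circ\iota=0$, $\wp[0]\circ h[0]=0$, $h[0]^2=0$ together with the bi-degree/anti-ghost bookkeeping one shows that only the ``ladder'' trees survive and that each contributes a nested Schouten--Jacobi bracket. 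Finally I would convert these $\hat{\J}$-ladders into $\J$-ladders by comparing the immersion/projection pair $(\iota,\wp[0])$ on $\hat{L}$ with the pair $(I,P)$ on $\Diff^\star(L[1])$ through the first contraction data~\eqref{eq:1contraction_data_1} and the defining property of the lifting $\hat{\J}$; this turns the residual sum into $P\ldsb\ldsb\cdots\ldsb\J,I(\,\cdot\,)\rdsb,\ldots\rdsb$, which by Lemma~\ref{lem:linfty} and Proposition~\ref{prop:linfty} is $\widetilde{\frakm}_k$.

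The step I expect to be the main obstacle is precisely this last identification: the combinatorial collapse of the binary-tree sum to a single ladder, and the compatibility of the two immersion/projection pairs induced by the lifting, all carried out coherently with the \emph{décalage} signs relating the graded symmetric $L_\infty$-convention to the $L_\infty[1]$-convention carried by the suspension map. I would organize both the tree collapse and the sign verification by induction on the arity $k$, using the finite filtration by anti-ghost degree (equivalently the $\weight$-eigenspace decomposition) that controls all the perturbative series in this chapter. Once $\frakm^{\mathrm{tr}}_k=\widetilde{\frakm}_k$ is established, the morphism $F$ produced by the transfer is the sought quasi-isomorphism lifting $\iota'$, and the theorem follows; this extends to the Jacobi setting the corresponding result of Sch\"atz in the Poisson case.
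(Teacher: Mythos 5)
Your proposal is correct and follows essentially the same route as the paper: homotopy transfer of the dgLa structure along the (perturbed) contraction data~\eqref{eq:2contraction_data_1}, collapse of the resulting tree sum to a single ``ladder'' tree by bi-degree and side-condition arguments, and identification of the surviving contributions with the higher derived brackets $\widetilde{\frakm}_k$. The one point worth noting is how the paper resolves the step you flag as the main obstacle: rather than comparing the pairs $(\iota,\wp[0])$ and $(I,P)$ abstractly, it first invokes Theorems~\ref{theor:uniqueness_BFV_brackets} and~\ref{theor:uniqueness_BRST-charge} to assume without loss of generality that $\hat{\J}$ and $\Omega_{\BRST}$ arise from the step-by-step construction, so that $\hat{\J}_1=i_\nabla\J$ and $\Omega_{\BRST}^{(2,1)}=\tfrac{1}{2}h[0]\{\Omega_E,\Omega_E\}_G$ are known explicitly, and then matches the ladder contributions against the coordinate formulas of Section~\ref{sec:multi-brackets_coordinates}; without some such normalization the ``defining property of the lifting'' alone does not pin down the components of $\hat{\J}$ and $\Omega_{\BRST}$ that actually enter the ladder, so you would need to make this reduction explicit.
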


\begin{proof}
	It is an adaptation of the proof of the analogous theorem in~\cite{schatz2009bfv} and we reproduce it here for completeness and the reader's convenience.
	
	In view of Theorems~\ref{theor:uniqueness_BFV_brackets} and~\ref{theor:uniqueness_BRST-charge}, we can assume that $\hat{\J}$ and $\Omega_{\BRST}$ have been constructed as in the proofs of Theorem~\ref{theor:existence_BFV_brackets} (after the choice of a $\Diff L$-connection $\nabla$ in $E\to M$), and Theorem~\ref{theor:existence_BRST-charge}.
	The homotopy transfer along contraction data~\eqref{eq:2contraction_data_1} of the differential graded Lie algebra structure $(\{-,-\}_{\BFV},d_{\BFV})$ on $\Gamma(\hat{L})$ produces as output:
	\begin{itemize}
		\item  an $L_\infty$-algebra structure $\{\frakl_k\}_{k\geq 1}$ on $\frakg(S)$, such that $\frakl_1=d_{N_\ell{}^\ast S,\ell}$, and
		\item an $L_\infty$-quasi-isomorphism $\iota'_\infty:(\frakg(S),\{\frakl_k\}_{k\geq 1})\longrightarrow(\Gamma(\hat{L}),\{-,-\}_{\BFV},d_{\BFV})$, with $(i'_\infty)_1=\iota'$,
	\end{itemize}
	(for more details about the Homotopy Transfer Theorem see, e.g.,~\cite[Section 10.3]{loday2012algebraic},~\cite[Section~4]{fiorenza2007structures} and~\cite[Section~2.3]{schatz2009bfv}).
	Moreover the Taylor coefficients of the $L_\infty$-algebra structure and the $L_\infty$-quasi-isomorphism are explicitly given by sums over bivalent oriented rooted trees
	\begin{align}
	\frakl_1&=d_{N_\ell{}^\ast S,\ell},&\frakl_k&=\sum_{T\in|\calT[k]|}\wp[0]\circ Z_T\circ(\iota')^{\otimes k},\label{eq:L_infty_algebras_trees}\\
	(\iota'_\infty)_1&=\iota',&(\iota'_\infty)_k&=\sum_{T\in|\calT[k]|}h[0]'\circ Z_T\circ(\iota')^{\otimes k},\label{eq:L_infty_q_i_trees}
	\end{align}
	where $\left|\calT[k]\right|$ denotes the set of bivalent oriented rooted trees with $k$ leaves, and we associated a degree $(2-k)$ graded skew-symmetric $\bbR$-linear map $Z_T:\Gamma(\hat{L})^{\otimes k}\to\Gamma(\hat{L})$ with every $T\in\left|\calT[k]\right|$.
	The latter is obtained by the standard operadic rules, after decorating $T$ as follows: we attach 1) $h[0]'$ to internal edges, 2) $\{-,-\}_{BFV}$ to internal (bivalent) vertices, and 3) $\id$ to external vertices.
	
	The main idea of the proof is to show that $\frakl_k=-\widetilde{\frakm}_k$, for all $k\geq 1$.
	By construction, both $-\widetilde{\frakm}_k$ and $\frakl_k$ are degree $(2-k)$ graded skew-symmetric first order multi-differential operators from $\frakg(S)$ to $\frakg(S)$.
	Hence it is enough to check that they agree on both sections of $\ell\to S$ and sections of $NS\to S$.
	Moreover, by skew-symmetry and degree reasons,	only the following special cases have to be checked
	\begin{equation}
	\label{eq:comparison_multibrackets1}
	\begin{aligned}
	\frakl_k(s_1,\ldots,s_{k-2},\lambda,\nu)&=-\widetilde{\frakm}_k(s_1,\ldots,s_{k-2},\lambda,\nu),\\
	\frakl_k(s_1,\ldots,s_{k-1},\lambda)&=-\widetilde{\frakm}_k(s_1,\ldots,s_{k-1},\lambda),\\
	\frakl_k(s_1,\ldots,s_k)&=-\widetilde{\frakm}_k(s_1,\ldots,s_k),\\
	\end{aligned}
	\end{equation}
	for all $k\geq 1$, $\lambda,\nu\in\Gamma(\ell)$, and $s_1,\ldots,s_k\in\Gamma(NS)$.
	
	In an arbitrary tree $T\in|\calT[k]|$ there are exactly $k-1$ internal vertices, and $k-2$ internal edges.
	By construction $\wp[0]$ vanishes on $\calL_{\geq 1}$, and the image of $\iota'$ is an abelian subalgebra of $(\Gamma(\hat{L}),\{-,-\}_G)$.
	Moreover, because of their series expansions, $\{-,-\}_{\BFV}$ decreases the anti-ghost degree at most by one, whereas $h[0]'$ increases the anti-ghost degree at least by one.
	It follows that there is a unique $T_k^0\in|\calT[k]|$ giving a non-zero contribution to~\eqref{eq:L_infty_algebras_trees}: it is exactly the bivalent oriented rooted tree, with $k$ leaves, such that each one of its internal vertices is the endpoint of at least one tail.
	Moreover the contribution of $T_k^0$ to~\eqref{eq:L_infty_algebras_trees} splits into two pieces
	\begin{equation}
	\label{eq:L_infty_q_i_reduced}
	\frakl_k=\wp[0]\circ Z_{T_k^0}\circ(\iota')^{\otimes k}=\frakl_k^L+\frakl_k^R.
	\end{equation}
	where the degree $(2-k)$ graded skew-symmetric $\bbR$-linear maps $\frakl_k^L,\frakl_k^R:\frakg(S)^{\otimes k}\to\frakg(S)$ are obtained by standard operadic rules after decorating $T_k^0$ in the following two ways:
	\begin{equation*}
	\begin{tikzpicture}[baseline=(current bounding box.center)]
	\draw (0,0) -- (1,0.5);
	\draw (0,1) -- (1,0.5);
	\draw (1,0.5) -- node[very near start, above] {$h[0]$} (2,1);
	\draw (2,1) --(1,1.5);
	\draw[dotted] (2,1) -- (3,1.5);
	\draw (3,1.5) -- (2,2);
	\draw (3,1.5) -- node[very near start, above] {$h[0]$} (4,2);
	\draw (3,2.5) --(4,2);
	\draw (4,2) -- (5,2.5);
	\node[left] at (0,0) {$\wp[0]$};
	\node[below right] at (1,0.5) {$G$};
	\node[left] at (0,1) {$\iota$};
	\node[below right] at (2,1) {$G$};
	\node[left] at (1,1.5) {$\iota$};
	\node[below right] at (3,1.5) {$G$};
	\node[left] at (2,2) {$\iota$};
	\node[below right] at (4,2) {$G$};
	\node[left] at (3,2.5) {$\iota$};
	\node[right] at (5,2.5) {$h[0]d_{\BFV}^{(1,0)}\iota$};
	\draw (7,0) -- (8,0.5);
	\draw (7,1) -- (8,0.5);
	\draw (8,0.5) -- node[very near start, above] {$h[0]$} (9,1);
	\draw (9,1) --(8,1.5);
	\draw[dotted] (9,1) -- (10,1.5);
	\draw (10,1.5) -- (9,2);
	\draw (10,1.5) -- node[very near start, above] {$h[0]$} (11,2);
	\draw (10,2.5) --(11,2);
	\draw (11,2) -- (12,2.5);
	\node[left] at (7,0) {$\wp[0]$};
	\node[below right] at (8,0.5) {$G$};
	\node[left] at (7,1) {$\iota$};
	\node[below right] at (9,1) {$G$};
	\node[left] at (8,1.5) {$\iota$};
	\node[below right] at (10,1.5) {$G$};
	\node[left] at (9,2) {$\iota$};
	\node[below right] at (11,2) {$\hat{\J}_1$};
	\node[left] at (10,2.5) {$\iota$};
	\node[right] at (12,2.5) {$\iota$};
	\end{tikzpicture}
	\end{equation*}
	
	As a direct consequence of the local coordinate formulas for $h[0]$, as given in Section~\ref{subsec:second_relevant_contraction_data}, we get
	\begin{multline*}
	\wp[0]\{\pi^\ast\eta_1,h[0]\{\pi^\ast\eta_2,\ldots,h[0]\{\pi^\ast\eta_k,h[0]X\}_G\ldots\}_G\}_G\\=\frac{(-)^k}{k!}\left.\left(\bbD_{\pi^\ast\eta_1}\cdots\bbD_{\pi^\ast\eta_k}\right)(X)\right|_S\in\frakg^d(S),
	\end{multline*}  
	for all $\eta_1,\ldots,\eta_k\in\Gamma(\pi)$, and $X\in\Gamma(\hat{L})^{(d,0)}$.
	Hence, it follows from~\eqref{eq:L_infty_q_i_reduced} that
	\begin{equation}
	\label{eq:comparison_multibrackets2}
	\begin{aligned}
	\frakl_k(s_1,\ldots,s_{k-2},\lambda,\nu)&\!=\!\left.(-)^k\left(\bbD_{s_1}\cdots\bbD_{s_{k-2}}\{\lambda,\nu\}_{\J}\right)\right|_S,\\
	\frakl_k(s_1,\ldots,s_{k-1},\lambda)&=(-)^{k+1}\Bigg(\bbD_{s_1}\cdots\bbD_{s_{k-1}}\{\Omega_E,\lambda\}_{\hat{\J}_1}\\&\phantom{=}-\sum_{i=1}^k\bbD_{s_1}\cdots\hat{\bbD_{s_i}}\cdots\bbD_{s_{k-1}}\{s_i,\lambda\}_{\hat{\J}_1}\Bigg)\!\Bigg|_S,\\
	\frakl_k(s_1,\ldots,s_k)&=(-)^{k+1}\Bigg(\sum_{i=1}^k\bbD_{s_1}\cdots\hat{\bbD_{s_i}}\cdots\bbD_{s_{k-1}}\left(\{\Omega_{\BRST}^{(2,1)},s_i\}_G+\{\Omega_E,s_i\}_{\hat{\J}_1}\right)\\
	&\phantom{=}-\sum_{i<j}\bbD_{s_1}\cdots\hat{\bbD_{s_i}}\cdots\hat{\bbD_{s_j}}\cdots\bbD_{s_k}\{s_i,s_j\}_{\hat{\J}_1}\Bigg)\!\Bigg|_S,
	\end{aligned}
	\end{equation}
	for all $k\geq 1$, $\lambda,\nu\in\Gamma(\ell)$, and $s_1,\ldots,s_k\in\Gamma(NS)$.
	Finally, since $\hat{\J}_1=i_\nabla\J$ and $\Omega_{\BRST}^{(2,1)}=\frac{1}{2}h[0]\{\Omega_E,\Omega_E\}_G$, the rhs of~\eqref{eq:comparison_multibrackets2} coincides exactly with the coordinate expressions for the rhs of~\eqref{eq:comparison_multibrackets1} as  explicitly given in Section~\ref{sec:multi-brackets_coordinates}.
	This concludes the proof.
\end{proof}

%
%

\section{The BFV-complex and the coisotropic deformation problem}
\label{sec:coisotropic_deformation_problem}

Let $(M,L,\J)$ be a Jacobi manifold, and let $S\subset M$ be a coisotropic submanifold.
Throughout this section we will assume to have fixed a fat tubular neighborhood $(\tau,\underline{\smash{\tau}})$ of the restricted line bundle $\ell:=L|_S\to S$.
Accordingly, $(\tau,\underline{\smash{\tau}})$ will be used to identify $S$ with the image of the zero section of the normal bundle $\pi:NS\to S$, and to replace the Jacobi manifold $(M,L,\J)$ with its local model $(NS,L_{NS},\tau^\ast\J)$ around $S$.
Furthermore let $(\Gamma(\hat{L}),d_{\BFV},\{-,-\}_{\BFV}\equiv\hat{\J})$ be a BFV-complex attached to the coisotropic submanifold $S$ via the fat tubular neighborhood $(\tau,\underline{\smash{\tau}})$.

The aim of this section is describing the r\^ole played by the BFV-complex of $S$ in the coisotropic deformation problem.
It will be accomplished in two steps: first at the formal level, and then at the non-formal one.
In Section~\ref{subsec:BFVinfinitesimal_and_formal_coisotropic_deformations} we will show that the BFV-complex encodes the moduli spaces, under Hamiltonian equivalence, of infinitesimal and formal coisotropic deformations of $S$.
Along the way, we also state necessary and sufficient conditions for the formal coisotropic deformation problem to be unobstructed in terms of the BFV-complex.
Later, in Section~\ref{subsec:BFVcoisotropic_sections_and_moduli_spaces}, we will show that the BFV-complex conveys the whole information about coisotropic sections (i.e.~$C^1$-small coisotropic deformations of $S$ lying within the tubular neighborhood), and their moduli spaces under Hamiltonian and Jacobi equivalence.

\subsection[Infinitesimal and formal coisotropic deformations]{Infinitesimal and formal coisotropic deformations: obstructions and moduli spaces}
\label{subsec:BFVinfinitesimal_and_formal_coisotropic_deformations}

Let us start recalling that every choice of a fat tubular neighborhood $(\tau,\underline{\smash{\tau}})$ canonically determines a set of contraction data
\begin{equation*}
\begin{tikzpicture}[>= stealth,baseline=(current bounding box.center)]
\node (u) at (0,0) {$(\Gamma(\hat{L}),d[0])$};
\node (d) at (5,0) {$(\frakg(S),0)$};
\draw [transform canvas={yshift=-0.5ex},-cm to] (d) to node [below] {\footnotesize $\iota$} (u);
\draw [transform canvas={yshift=0.5ex},cm to-] (d) to node [above] {\footnotesize $\wp[0]$} (u);
\draw [-cm to] (u.south west) .. controls +(210:1) and +(150:1) .. node[left=2pt] {\footnotesize $h[0]$} (u.north west);
\end{tikzpicture}
\end{equation*}
as special case for $s=0$ of Proposition~\ref{prop:2contraction_data}.
Going further, every choice of the BFV-complex $(\Gamma(\hat{L}),d_{BFV},\{-,-\}_{BFV})$ associated with $S$ via $(\tau,\underline{\smash{\tau}})$ canonically determines  a new deformed set of contraction data by means of the Homological Perturbation Lemma, as in the proof of Proposition~\ref{prop:BFV_homological_resolution},
\begin{equation*}
\begin{tikzpicture}[>= stealth,baseline=(current bounding box.center)]
\node (u) at (0,0) {$(\Gamma(\hat{L}),d_{\BFV})$};
\node (d) at (5,0) {$(\frakg(S),d_{N_\ell{}^\ast S,\ell})$};
\draw [transform canvas={yshift=-0.5ex},-cm to] (d) to node [below] {\footnotesize $\iota'$} (u);
\draw [transform canvas={yshift=0.5ex},cm to-] (d) to node [above] {\footnotesize $\wp[0]$} (u);
\draw [-cm to] (u.south west) .. controls +(210:1) and +(150:1) .. node[left=2pt] {\footnotesize $h[0]'$} (u.north west);
\end{tikzpicture}.
\end{equation*}
In particular, there is an isomorphism $H(\Gamma(\hat{L}),d_{\BFV})\simeq H(\frakg(S),d_{N_\ell{}^\ast S,\ell})$.
Hence
\begin{itemize}
	\item the moduli space of infinitesimal coisotropic deformations of $S$, under Hamiltonian equivalence, identifies with the first cohomology group of the BFV-complex,
	\item the obstructions to the prolongability of infinitesimal coisotropic deformations to formal ones live in the second cohomology group of the BFV-complex.
\end{itemize}
We summarize the situation in Propositions~\ref{prop:BFVinfinitesimal_moduli_space} and~\ref{prop:BFV_2nd_cohomology}.

\begin{proposition}
	\label{prop:BFVinfinitesimal_moduli_space}
	The first cohomology group $H^1(\Gamma(\hat{L}),d_{BFV})$ is canonically isomorphic to the moduli space of infinitesimal coisotropic deformations, under Hamiltonian equivalence, as follows
	\begin{equation*}
	\wp[0]_\ast:H^1(\Gamma(\hat{L}),d_{BFV})\hookrightarrow \hspace{-8pt} \rightarrow H^1(\frakg(S),d_{N_\ell{}^\ast S,\ell}).
	\end{equation*}
\end{proposition}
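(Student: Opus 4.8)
The plan is to assemble the statement from two facts already at our disposal: the quasi-isomorphism property of $\wp[0]$ proved in Proposition~\ref{prop:BFV_homological_resolution}, and the description of the infinitesimal moduli space obtained in the $L_\infty[1]$-algebra setting of Chapter~\ref{chap:L-infinity-algebra} (Corollaries~\ref{cor:inf1} and~\ref{cor:infequi}). First I would recall that, by Proposition~\ref{prop:BFV_homological_resolution}, the projection $\wp[0]$ is a quasi-isomorphism of co-chain complexes from $(\Gamma(\hat{L}),d_{\BFV})$ to $(\frakg(S),d_{N_\ell{}^\ast S,\ell})$, where $\frakg(S)=\Gamma(\wedge^\bullet N_\ell S\otimes\ell)$. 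Consequently the induced map $\wp[0]_\ast$ is an isomorphism in each cohomological degree, in particular in degree~$1$, which already gives $H^1(\Gamma(\hat{L}),d_{\BFV})\simeq H^1(\frakg(S),d_{N_\ell{}^\ast S,\ell})=H^1(N_\ell{}^\ast S,\ell)$.

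It then remains to identify this target with the moduli space of infinitesimal coisotropic deformations under (infinitesimal) Hamiltonian equivalence. Here I would invoke the results of Chapter~\ref{chap:L-infinity-algebra}: by Corollary~\ref{cor:inf1}, a section $s\in\Gamma(NS)=\frakg^1(S)$ is an infinitesimal coisotropic deformation of $S$ if and only if it is a $1$-cocycle of $(\frakg(S),d_{N_\ell{}^\ast S,\ell})$, i.e.~$d_{N_\ell{}^\ast S,\ell}s\equiv\frakm_1 s=0$; and by Corollary~\ref{cor:infequi}, two such cocycles are infinitesimally Hamiltonian equivalent precisely when they are cohomologous. Hence the set of equivalence classes is exactly $H^1(\frakg(S),d_{N_\ell{}^\ast S,\ell})$, and composing this identification with the isomorphism $\wp[0]_\ast$ yields the proposition.

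The only point that needs care, and where I expect the sole (minor) obstacle to lie, is the bookkeeping of gradings. On the BFV side the relevant grading is the total ghost number, under which $\wp[0]$ sends the zero-anti-ghost part $\Gamma(\hat{L})^{(h,0)}$ onto $\frakg^h(S)$, so that $\wp[0]$ is strictly degree-preserving and the degree-$1$ statement is unambiguous; on the $L_\infty[1]$ side, by contrast, the deformation data live in degree~$0$ of the shifted complex $\frakg(S)[1]$, that is, in $\frakg^1(S)=\Gamma(NS)$. I would spell out this correspondence explicitly so as to confirm that ``first cohomology of the BFV-complex'', ``$H^1$ of the de Rham complex of the Jacobi algebroid $(N_\ell{}^\ast S,\ell)$'', and ``degree-zero cohomology of the shifted $L_\infty[1]$-algebra'' all name the same object, namely the Hamiltonian-equivalence classes of deformations in $\Gamma(NS)$. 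Once this alignment is fixed, the composite $\wp[0]_\ast$ is precisely the canonical isomorphism asserted in the statement.
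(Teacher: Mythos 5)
Your proposal is correct and follows exactly the paper's own argument: Proposition~\ref{prop:BFV_homological_resolution} gives that $\wp[0]_\ast$ is an isomorphism in degree $1$, and Corollaries~\ref{cor:inf1} and~\ref{cor:infequi} identify $H^1(\frakg(S),d_{N_\ell{}^\ast S,\ell})=H^1(N_\ell{}^\ast S,\ell)$ with the moduli space of infinitesimal coisotropic deformations under infinitesimal Hamiltonian equivalence. Your extra paragraph on the décalage bookkeeping is a sensible precaution but does not change the route.
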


\begin{proof}
	It follows immediately from Proposition~\ref{prop:BFV_homological_resolution} and Corollaries~\ref{cor:inf1} and~\ref{cor:infequi}.
\end{proof}

\begin{proposition}
	\label{prop:BFV_2nd_cohomology}
	If the second cohomology group $H^2(\Gamma(\hat{L}),d_{BFV})$ vanishes, then the coisotropic deformation problem of $S$ is \emph{formally unobstructed}, i.e.~every infinitesimal coisotropic deformation of $S$ can be prolonged to a formal one.
\end{proposition}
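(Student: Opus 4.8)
The plan is to obtain this statement as an immediate corollary of the cohomological comparison between the BFV-complex and the $L_\infty[1]$-algebra of $S$, rather than to reconstruct the prolongation argument from scratch inside the BFV framework. The argument runs exactly parallel to the proof of Proposition~\ref{prop:BFVinfinitesimal_moduli_space}, where the role of the first cohomology was analysed: here one transports the vanishing hypothesis on the BFV-complex into the vanishing hypothesis appearing in the $L_\infty[1]$-unobstructedness criterion, Corollary~\ref{prop:rigid}, and then simply invokes that criterion.

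First I would recall that, by Proposition~\ref{prop:BFV_homological_resolution}, the quasi-isomorphism $\wp[0]$ induces a canonical isomorphism in cohomology
\[
\wp[0]_\ast : H^\bullet(\Gamma(\hat{L}),d_{\BFV}) \overset{\simeq}{\longrightarrow} H^\bullet(\frakg(S),d_{N_\ell{}^\ast S,\ell}).
\]
The right-hand side is, by its very definition, the de Rham cohomology $H^\bullet(N_\ell{}^\ast S,\ell)$ of the Jacobi algebroid associated with $S$: indeed $\frakg(S)=\Gamma(\wedge^\bullet N_\ell S\otimes\ell)$, and, as recorded in Proposition~\ref{prop:linfty_cohomological_resolution}, the differential $d_{N_\ell{}^\ast S,\ell}$ is precisely the unary bracket $\frakm_1$ of the $L_\infty[1]$-algebra. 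Specialising to degree two I obtain a canonical isomorphism $H^2(\Gamma(\hat{L}),d_{\BFV})\simeq H^2(N_\ell{}^\ast S,\ell)$.

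It then remains to combine the two hypotheses. Under the assumption $H^2(\Gamma(\hat{L}),d_{\BFV})=0$, the isomorphism just exhibited forces $H^2(N_\ell{}^\ast S,\ell)=0$, which is exactly the vanishing required in Corollary~\ref{prop:rigid}. That corollary yields at once that every infinitesimal coisotropic deformation of $S$ prolongs to a formal coisotropic deformation, i.e.\ that the problem is formally unobstructed, completing the argument.

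The only point that needs genuine care — and which is the closest thing to an obstacle here — is checking that the notion of ``formally unobstructed'' really transports unchanged across $\wp[0]$, so that the $L_\infty$-level conclusion of Corollary~\ref{prop:rigid} may legitimately be read as a statement about the BFV-complex. This is guaranteed conceptually by Theorem~\ref{theor:L_infty_qi}, which exhibits an $L_\infty$-quasi-isomorphism between $(\frakg(S),\{\widetilde{\frakm}_k\})$ and the BFV-complex lifting $\iota'$; since an $L_\infty$-quasi-isomorphism identifies the two formal Maurer--Cartan moduli problems, prolongability of infinitesimal deformations is a property shared by both sides. In the write-up I would therefore keep the proof to the short cohomological reduction above, pointing to Theorem~\ref{theor:L_infty_qi} for the underlying equivalence of the two deformation problems.
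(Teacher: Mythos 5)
Your proof is correct and takes exactly the same route as the paper, whose own proof is the one-line observation that the statement is a straightforward consequence of Proposition~\ref{prop:BFV_homological_resolution} and Corollary~\ref{prop:rigid}. The cautionary final paragraph is unnecessary: the conclusion of Corollary~\ref{prop:rigid} is already a statement about prolonging infinitesimal coisotropic deformations of $S$ itself, so only the hypothesis needs to be transported across $\wp[0]_\ast$ and nothing has to be carried back via Theorem~\ref{theor:L_infty_qi}.
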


\begin{proof}
	It is a straightforward consequence of Proposition~\ref{prop:BFV_homological_resolution} and Corollary~\ref{prop:rigid}.
\end{proof}

Now, every choice of the BFV-complex $(\Gamma(\hat{L}),d_{BFV},\{-,-\}_{BFV})$ canonically determines an $L_\infty$-quasi-isomorphism (see Theorem~\ref{theor:L_infty_qi}) 
\begin{equation}
\label{eq:L_infty_qi_deformations}
\iota'_\infty:(\frakg(S),\{\frakm_k\}_{k\geq 1})\longrightarrow(\Gamma(\hat{L}),d_{\BFV},\{-,-\}_{\BFV})
\end{equation}
which lifts $\iota'$, i.e.~such that its first Taylor coefficient $(\iota'_\infty)_1$ coincides with $\iota'$.
Consequently, from the very definition of $L_\infty$-quasi-isomorphism (cf.~\cite[Definition 5.2]{lada1995strongly}), $\iota'$ induces a Lie algebra isomorphism $\iota'_\ast$ in cohomology, i.e.
\begin{equation*}
[\{\iota'\varpi_1,\iota'\varpi_2\}_{BFV}]=[\iota'\frakm_2(\varpi_1,\varpi_2)],
\end{equation*}
for all $\varpi_1,\varpi_2\in\ker\frakm_1$.
In particular, we get the following commutative diagram
\begin{equation}
\label{eq:Kuranishi_commutative_diagram}
\begin{tikzcd}
H^1(\frakg(S),\frakm_1) \arrow[r, "\operatorname{Kr}"] \arrow[d, hook, two heads, "\iota'_\ast" swap]
& H^2(\frakg(S),\frakm_1) \arrow[d, hook, two heads, "\iota'_\ast"] \\
H^1(\Gamma(\hat{L}),d_{BFV}) \arrow[r, "\operatorname{Kr}" swap]
& H^2(\Gamma(\hat{L}),d_{BFV})
\end{tikzcd}
\end{equation}
where the upper row denotes the Kuranishi map for the $L_\infty$-algebra (see~\eqref{eq:L_infty_kuranishi_map}), and the lower row is the \emph{Kuranishi map} for the BFV-complex, namely
\begin{equation}
\label{eq:BFV_kuranishi_map}
\operatorname{Kr}:H^1(\Gamma(\hat{L}),d_{BFV})\longrightarrow H^2(\Gamma(\hat{L}),d_{BFV}),\ [\nu]\longmapsto [\{\nu,\nu\}_{BFV}].
\end{equation}

\begin{proposition}
	\label{prop:BFV_kuranishi}
	Let $\alpha=[\iota's]\in H^1(\Gamma(\hat{L}),d_{BFV})$, where $s\in\Gamma(NS)$ is an infinitesimal coisotropic deformation of $S$.
	If $\operatorname{Kr}(\alpha)\neq 0$, then $s$ is \emph{formally obstructed}, i.e.~it cannot be prolonged to a formal coisotropic deformation of $S$.
	In particular, if $\operatorname{Kr}\neq 0$, then the coisotropic deformation problem of $S$ is \emph{formally unobstructed}.
\end{proposition}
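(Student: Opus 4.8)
The plan is to deduce this obstructedness criterion directly from its $L_\infty[1]$-algebra counterpart, Proposition~\ref{prop:Kuranishi}, by transporting the computation of the obstruction across the $L_\infty$-quasi-isomorphism $\iota'_\infty$ of~\eqref{eq:L_infty_qi_deformations}. All of the genuinely analytic content has already been packaged into Theorem~\ref{theor:L_infty_qi} and the commutative diagram~\eqref{eq:Kuranishi_commutative_diagram}, so what remains is essentially a diagram chase combined with one previously established result.

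First I would record that, since $s\in\Gamma(NS)$ is an infinitesimal coisotropic deformation, Corollary~\ref{cor:inf1} gives $\frakm_1 s=d_{N_\ell{}^\ast S,\ell}s=0$, so that $[s]$ is a well-defined class in $H^1(\frakg(S),\frakm_1)=H^1(N_\ell{}^\ast S,\ell)$. Because $(\iota'_\infty)_1=\iota'$ and $\iota'_\ast$ is the isomorphism induced in cohomology by $\iota'_\infty$, one has $\iota'_\ast[s]=[\iota's]=\alpha$. Next I would invoke the commutativity of~\eqref{eq:Kuranishi_commutative_diagram}, which yields $\operatorname{Kr}(\alpha)=\operatorname{Kr}(\iota'_\ast[s])=\iota'_\ast(\operatorname{Kr}[s])$, where the outer $\operatorname{Kr}$ is the BFV Kuranishi map~\eqref{eq:BFV_kuranishi_map} and the inner one is the $L_\infty[1]$ Kuranishi map~\eqref{eq:L_infty_kuranishi_map}. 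Since $\iota'_\ast$ is a linear isomorphism, the hypothesis $\operatorname{Kr}(\alpha)\neq 0$ forces $\operatorname{Kr}[s]\neq 0$. Finally, Proposition~\ref{prop:Kuranishi} applied to the class $[s]\in H^1(N_\ell{}^\ast S,\ell)$ shows that $s$ cannot be prolonged to a formal coisotropic deformation of $S$; and since the existence of even a single formally obstructed infinitesimal deformation means that not every such deformation prolongs, the ``in particular'' clause (the coisotropic deformation problem being formally obstructed) follows at once.

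The only point deserving care, and hence the main conceptual obstacle, is the commutativity of~\eqref{eq:Kuranishi_commutative_diagram}, i.e.~the identity $\operatorname{Kr}\circ\iota'_\ast=\iota'_\ast\circ\operatorname{Kr}$ between the two Kuranishi maps. I would verify it by specialising, to $\varpi_1=\varpi_2=s$ with $\frakm_1 s=0$, the relation $[\{\iota'\varpi_1,\iota'\varpi_2\}_{\BFV}]=[\iota'\frakm_2(\varpi_1,\varpi_2)]$ which expresses that $\iota'_\ast$ is a graded Lie algebra morphism in cohomology; this gives $\operatorname{Kr}(\iota'_\ast[s])=[\{\iota's,\iota's\}_{\BFV}]=[\iota'\frakm_2(s,s)]=\iota'_\ast(\operatorname{Kr}[s])$ immediately. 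That this relation holds is precisely the content of Theorem~\ref{theor:L_infty_qi} together with the definition of an $L_\infty$-morphism, so no new estimates or constructions are needed: the entire argument is formal once the quasi-isomorphism $\iota'_\infty$ and the well-definedness of both Kuranishi maps (which uses that $\frakm_1$, respectively $d_{\BFV}$, is a derivation of the binary bracket $\frakm_2$, respectively $\{-,-\}_{\BFV}$) are in hand.
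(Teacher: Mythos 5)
Your proposal is correct and follows essentially the same route as the paper, whose proof simply cites the commutative diagram~\eqref{eq:Kuranishi_commutative_diagram} together with Proposition~\ref{prop:Kuranishi}; you merely spell out the diagram chase (using that $\iota'_\ast$ is an isomorphism intertwining the two Kuranishi maps) that the paper leaves implicit. Your closing remark on why the diagram commutes — specialising the cohomological Lie-morphism identity $[\{\iota'\varpi_1,\iota'\varpi_2\}_{\BFV}]=[\iota'\frakm_2(\varpi_1,\varpi_2)]$ to $\varpi_1=\varpi_2=s$ — is exactly the justification the paper gives just before stating the diagram, so nothing is missing.
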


\begin{proof}
	It is a straightforward consequence of~\eqref{eq:Kuranishi_commutative_diagram} and Proposition~\ref{prop:Kuranishi}.
\end{proof}

By a central result in deformation theory (cf., e.g.,~\cite[Theorem 7.8]{doubek2007deformation} and~\cite[Theorem 4.6]{kontsevich2003deformation}), the $L_\infty$-quasi-isomorphism~\eqref{eq:L_infty_qi_deformations} transforms each formal MC element $\gamma=\sum_{k=1}^\infty\gamma_k\varepsilon^k$ of $(\frakg(S),\frakm_k)$ into a formal MC element $\operatorname{MC}(\iota'_\infty)\gamma$ of $(\Gamma(\hat{L}),d_{BFV},\{-,-\}_{BFV})$ defined via  
\begin{equation*}
\operatorname{MC}(\iota'_\infty)\gamma:=\sum_{n=1}^\infty\frac{1}{n!}{(\iota'_\infty)}_n(\gamma,\ldots,\gamma).
\end{equation*}
Moreover such map descends to the moduli spaces, under gauge equivalence.
Hence it gives rise to one-to-one correspondence between:
\begin{itemize}
	\item the moduli space, under gauge equivalence, of formal MC elements of the $L_\infty$-algebra of $S$, and
	\item the moduli space, under gauge equivalence, of formal MC elements of the BFV-complex of $S$.
\end{itemize}
As a consequence Propositions~\ref{prop:mcformal} and~\ref{prop:MC_gauge} lead immediately to the following.
\begin{proposition}
	\label{prop:BFVformal_moduli_space}
	The BFV-complex of $S$ controls the formal coisotropic deformation problem of $S$ under Hamiltonian equivalence.
	Indeed there is a canonical one-to-one correspondence between
	\begin{itemize}
		\item the moduli space, under Hamiltonian equivalence, of formal coisotropic deformations $s(\eps)$ of $S$, and
		\item the moduli space, under gauge equivalence, of formal MC elements $\Omega(\eps)$ of the BFV-complex,
	\end{itemize}
	which is established by mapping the equivalence class under Hamiltonian equivalence of $s(\eps)$ to the equivalence class under gauge equivalence of $\operatorname{MC}(\iota'_\infty)(-s(\eps))$.
\end{proposition}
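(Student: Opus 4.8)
The plan is to deduce the stated correspondence by composing two bijections of moduli spaces: one internal to the $L_\infty[1]$-algebra of $S$ constructed in Chapter~\ref{chap:L-infinity-algebra}, the other induced by the $L_\infty$-quasi-isomorphism $\iota'_\infty$ of Theorem~\ref{theor:L_infty_qi}. To assemble the first bijection, I would invoke Proposition~\ref{prop:mcformal}, which identifies formal coisotropic deformations $s(\eps)$ of $S$ with formal Maurer--Cartan elements $-s(\eps)$ of $(\frakg(S)[1],\{\frakm_k\})$, together with Proposition~\ref{prop:MC_gauge}, which shows that under this identification Hamiltonian equivalence of coisotropic deformations corresponds exactly to gauge equivalence of Maurer--Cartan elements. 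Passing to equivalence classes then yields a canonical bijection $[s(\eps)]\mapsto[-s(\eps)]$ from the moduli space of formal coisotropic deformations of $S$ under Hamiltonian equivalence onto the moduli space of formal Maurer--Cartan elements of $(\frakg(S)[1],\{\frakm_k\})$ under gauge equivalence.

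Next, I would transport Maurer--Cartan elements along $\iota'_\infty$. By the transfer principle in deformation theory (cf.~\cite{doubek2007deformation,kontsevich2003deformation}), an $L_\infty$-quasi-isomorphism induces a bijection between moduli spaces of formal Maurer--Cartan elements under gauge equivalence, implemented by $\gamma\mapsto\operatorname{MC}(\iota'_\infty)\gamma$. The formal series $\operatorname{MC}(\iota'_\infty)\gamma=\sum_{n}\tfrac{1}{n!}(\iota'_\infty)_n(\gamma,\ldots,\gamma)$ converges $\eps$-adically, since every formal Maurer--Cartan element $\gamma=\sum_{k\ge 1}\gamma_k\eps^k$ starts in order $\eps$ and hence each power of $\eps$ receives contributions from only finitely many Taylor coefficients $(\iota'_\infty)_n$. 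Composing with the first bijection produces exactly the map $[s(\eps)]\mapsto[\operatorname{MC}(\iota'_\infty)(-s(\eps))]$ of the statement, whose target is the moduli space under gauge equivalence of formal Maurer--Cartan elements of the BFV-complex $(\Gamma(\hat{L}),d_{\BFV},\{-,-\}_{\BFV})$.

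The one genuinely delicate point — and the step I expect to be the main obstacle — is the reconciliation of the two grading conventions that are in play. Propositions~\ref{prop:mcformal} and~\ref{prop:MC_gauge} are formulated for the $L_\infty[1]$-algebra $(\frakg(S)[1],\{\frakm_k\})$, whereas the quasi-isomorphism $\iota'_\infty$ of \eqref{eq:L_infty_qi_deformations} and Theorem~\ref{theor:L_infty_qi} is produced out of the unshifted $L_\infty$-algebra $(\frakg(S),\{\widetilde{\frakm}_k\})$, with $\widetilde{\frakm}_k$ and $\frakm_k$ differing only by the décalage signs. I would therefore carry out the argument through the décalage isomorphism, which is a strict isomorphism of homotopy structures and hence matches Maurer--Cartan elements and gauge equivalences on the nose; this guarantees that the transfer principle applies verbatim after décalage, and that the two bijections above compose to a single canonical correspondence, completing the proof.
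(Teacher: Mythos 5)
Your proposal is correct and follows essentially the same route as the paper: the paper likewise composes the bijection furnished by Propositions~\ref{prop:mcformal} and~\ref{prop:MC_gauge} with the transfer of formal Maurer--Cartan moduli along the $L_\infty$-quasi-isomorphism $\iota'_\infty$ of Theorem~\ref{theor:L_infty_qi}, citing the same deformation-theoretic transfer principle. Your additional remarks on $\eps$-adic convergence and on reconciling the d\'ecalage conventions are sound and only make explicit points the paper leaves implicit.
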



\subsection{Coisotropic deformations and their moduli spaces}
\label{subsec:BFVcoisotropic_sections_and_moduli_spaces}

Let us fix some notation which will be used in the following.
We will denote by $C(L,\J)$ the set of coisotropic sections, i.e.~those sections $s$ of the normal bundle $\pi:NS\to S$ whose image is coisotropic.
We will denote by $\BRST(\hat{L},\hat{\J})$ the set of those $\Omega\in\Gamma(\hat{L})$ which are $s$-BRST charges wrt $\hat{\J}$, for some arbitrary $s\in\Gamma(\pi)$.
Elements of $\BRST(\hat{L},\hat{\J})$ will be simply called \emph{BRST charges}.

\begin{proposition}
	\label{prop:coisotropic_deformation_space} \ 
	\newline\noindent
	1) The space $\BRST(\hat{L},\hat{\J})$ is invariant under the natural action of $\Ham_{\geq 2}(\hat{M},\hat{L},\hat{\J})$ on $\Gamma(\hat{L})$ (see Remark~\ref{rem:filtration_Ham} for the meaning of $\Ham_{\geq 2}(\hat{M},\hat{L},\hat{\J})$).
	\newline\noindent
	2) For any $\Omega\in\BRST(\hat{L},\hat{\J})$, there is a unique $s_\Omega\in C(L,\J)$, such that $\Omega$ is an $s_\Omega$-BRST charge.
	Section $s_\Omega$ is implicitly determined by the following relation
	\begin{equation*}
	\im(s_\Omega)=\textnormal{``zero locus of $\operatorname{pr}^{(1,0)}\Omega$''}.
	\end{equation*}
	3) There is a canonical one-to-one correspondence between $\BRST(\hat{L},\hat{\J})/\Ham_{\geq 2}(\hat{M},\hat{L},\hat{\J})$ and $C(L,\J)$ mapping $\Ham_{\geq 2}(\hat{M},\hat{L},\hat{\J}).\Omega$ to $s_\Omega$.
\end{proposition}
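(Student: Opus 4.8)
The plan is to deduce all three parts from the two existence/uniqueness theorems for BRST charges (Theorems~\ref{theor:existence_BRST-charge} and~\ref{theor:uniqueness_BRST-charge}), together with a bookkeeping argument on the anti-ghost degree. I would begin with part 2), which makes the statement well-posed. If $\Omega$ is an $s$-BRST charge then, by Definition~\ref{def:BRST-charge}, $\operatorname{pr}^{(1,0)}\Omega=\Omega_E[s]=\Omega_E-\pi^\ast s$. Since the tautological section $\Omega_E$ is intrinsic and $\pi^\ast$ is injective, this relation recovers $s$ uniquely from $\Omega$ via $\pi^\ast s=\Omega_E-\operatorname{pr}^{(1,0)}\Omega$; set $s_\Omega:=s$. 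That $s_\Omega\in C(L,\J)$ is immediate from Theorem~\ref{theor:existence_BRST-charge}, since the very existence of an $s_\Omega$-BRST charge forces $\im(s_\Omega)$ to be coisotropic. The description of the zero locus follows by evaluating $\Omega_E[s_\Omega]$ pointwise: writing $\Omega_E(x)=(x,x)$ and $\pi^\ast s_\Omega(x)=(x,s_\Omega(\pi(x)))$ in $\pi^\ast\calE=\calE\times_S\calE$, the section $\operatorname{pr}^{(1,0)}\Omega$ vanishes at $x$ precisely when $x=s_\Omega(\pi(x))$, i.e.\ on $\im(s_\Omega)$ (equivalently, this is the content of the local expression $\Omega_E[s]=(y_A-g_A(u^i))\xi^A$ from Remark~\ref{rem:local_expression_delta_s}).

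For part 1), I would first note that any $\phi\in\Ham_{\geq 2}(\hat M,\hat L,\hat\J)$ is in particular a Jacobi automorphism, so $\phi^\ast$ preserves the Maurer--Cartan equation: $\{\phi^\ast\Omega,\phi^\ast\Omega\}_{\hat\J}=\phi^\ast\{\Omega,\Omega\}_{\hat\J}=0$. It then remains to check that $\phi^\ast$ does not alter the bi-degree $(1,0)$ component. Here is the key estimate: a generating Hamiltonian $\lambda_t\in\calL^0_{\geq 2}$ has bi-degree $(p,p)$ with $p\geq 2$, and since $\hat\J=\sum_{k\geq 0}\hat\J_k$ with $\hat\J_k$ of bi-degree $(k-1,k-1)$, the derivation $\{\lambda_t,-\}_{\hat\J}$ raises the anti-ghost degree by $p+k-1\geq 1$ on each summand (the smallest increase occurring for $k=0$, i.e.\ for the big bracket $G$), hence maps $\calL_{\geq n}$ into $\calL_{\geq n+1}$ for every $n$. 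Integrating the flow equation $\tfrac{d}{dt}\Phi_t^\ast=\{\lambda_t,-\}_{\hat\J}\circ\Phi_t^\ast$ with $\Phi_0^\ast=\id$ then gives $\phi^\ast\Omega-\Omega=\int_0^1\{\lambda_t,\Phi_t^\ast\Omega\}_{\hat\J}\,dt\in\calL_{\geq 1}$, whence $\operatorname{pr}^{(1,0)}(\phi^\ast\Omega)=\operatorname{pr}^{(1,0)}\Omega=\Omega_E[s_\Omega]$. Thus $\phi^\ast\Omega$ is again an $s_\Omega$-BRST charge; in particular $s_{\phi^\ast\Omega}=s_\Omega$, and invariance of $\BRST(\hat L,\hat\J)$ follows since $\Ham_{\geq 2}$ is a group.

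Finally, part 3) combines the previous two. By part 1) the assignment $\Omega\mapsto s_\Omega$ is constant on $\Ham_{\geq 2}$-orbits, so it descends to a map $\BRST(\hat L,\hat\J)/\Ham_{\geq 2}(\hat M,\hat L,\hat\J)\to C(L,\J)$. Surjectivity is exactly the existence Theorem~\ref{theor:existence_BRST-charge}: any $s\in C(L,\J)$ admits an $s$-BRST charge $\Omega$, and then $s_\Omega=s$. Injectivity is exactly the uniqueness Theorem~\ref{theor:uniqueness_BRST-charge}: if $s_{\Omega_1}=s_{\Omega_2}$, then $\Omega_1$ and $\Omega_2$ are BRST charges for the same section, hence related by some $\phi\in\Ham_{\geq 2}(\hat M,\hat L,\hat\J)$ and thus lie in a single orbit. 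The main obstacle is not in this final assembly — the genuinely hard work, the step-by-step obstruction arguments, is already absorbed in the two cited theorems; the only new ingredient needed here is the anti-ghost degree estimate of part 1), which guarantees that $\Ham_{\geq 2}$ fixes the $(1,0)$ component and therefore acts within the fibers of $\Omega\mapsto s_\Omega$, so that the orbit space maps bijectively onto $C(L,\J)$.
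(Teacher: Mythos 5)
Your proof is correct and follows the same route the paper takes (the paper simply declares the proposition a straightforward consequence of Definition~\ref{def:BRST-charge} and Theorem~\ref{theor:uniqueness_BRST-charge}); you have merely filled in the details it leaves implicit, in particular the anti-ghost-degree estimate showing that a Hamiltonian flow generated by $\lambda_t\in\calL^0_{\geq 2}$ maps $\calL_{\geq n}$ into $\calL_{\geq n+1}$ and hence fixes the bi-degree $(1,0)$ component.
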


\begin{proof}
	It is a straightforward consequence of Definition~\ref{def:BRST-charge}, and Proposition~\ref{theor:uniqueness_BRST-charge}.
\end{proof}

Now we introduce the notion of geometric MC element of the BFV-complex by slightly adapting the analogous one given by Sch\"atz in the Poisson case (see~\cite[Section~3.4]{schatz2011moduli}).
\begin{proposition/definition}
	\label{prop:geometric_MC}
	Let $\Omega$ be a MC element of $(\Gamma(\hat{L}),\{-,-\}_{\BFV})$, and let $s\in C(L,\J)$.
	The following conditions are equivalent:
	\begin{enumerate}
		\item there is $\Phi\in\Ham_{\geq 1}(\hat{M},\hat{L},\hat{\J})$ such that $\Phi^\ast(\Omega)$ is an $s$-BRST charge wrt $\hat{\J}$,
		\item there is a section $A\in\Gamma(\operatorname{GL}_+(E))$ such that $A^\ast(\operatorname{pr}^{(1,0)}\Omega)=\Omega_E[s]$.
	\end{enumerate}
	If the equivalent conditions (1)-(2) hold, for some $s\in C(L,\J)$, then $\Omega$ is said to be a \emph{geometric MC element} of $(\Gamma(\hat{L}),\{-,-\}_{\BFV})$.
	In this case, $s$ will be also denoted by $s_\Omega$, and it is completely determined by $\Omega$ through the relation:
	\begin{equation*}
	\im s_\Omega=\textnormal{``zero locus of $\operatorname{pr}^{(1,0)}\Omega$''}.
	\end{equation*}
\end{proposition/definition}

\begin{proof}
	Let $\{\lambda_t\}_{t\in I}\subset\calL^0_{\geq 1}$ and $\{a_t\}_{t\in I}\subset\Gamma(\hat{L})^{(1,1)}=\Gamma(\End(E))$ be smooth paths such that $a_t=\operatorname{pr}^{(1,1)}\lambda_t$.
	Since the $(0,0)$ bi-degree component of $\{\lambda_t,-\}_{\hat{\J}}$ reduces to $\{a_t,-\}_G$, whose symbol is zero, Lemma~\ref{lem:integrating_graded_do} guarantees that $\{\{\lambda_t,-\}_{\hat{\J}}\}_{t\in I}$ and $\{\{a_t,-\}_G\}_{t\in I}$ integrate to smooth paths $\{\Phi_t\}_{t\in I}\subset\Ham(\hat{M},\hat{L},\hat{\J})$ and $\{\Psi_t\}_{t\in I}\subset\Ham(\hat{M},\hat{L},G)^{(0,0)}$, respectively, so that
	\begin{equation}
	\label{eq:proof:geometric_MC_1}
	\Phi_t^\ast\lambda=\Psi_t^\ast\lambda\Mod\bigoplus_{k\geq 1}\Gamma(\hat{L})^{(p+k,q+k)},\quad\textnormal{for all}\ (p,q)\in\bbN_0^2,\ \lambda\in\Gamma(\hat{L})^{(p,q)}.
	\end{equation}
	Furthermore, from the very definition of $G$, it follows that
	\begin{equation}
	\label{eq:proof:geometric_MC_2}
	\Psi_t= S_{{\scriptscriptstyle C^\infty(M)}}((A_t\otimes\id_{L^\ast})\oplus A_t^\ast)\otimes\id_L,
	\end{equation}
	where $\{A_t\}_{t\in I}\subset\Gamma(\operatorname{GL}(E))$ is the smooth path, with $A_0=\id_E$, which integrates $\{a_t\}_{t\in I}\subset\Gamma(\End(E))$, and it is explicitly given by the time-ordered exponential
	\begin{align*}
	A_t&=\calT\exp\left(\int_0^t a_sds\right)\\
	&:=\sum_{n=0}^\infty\frac{1}{n!}\int_0^t\int_0^{s_{n-1}}\dots\int_0^{s_1}\left(a_{s_{n-1}}\circ a_{s_n}\circ\ldots\circ a_{s_0}\right)ds_{n-1}\dots ds_1ds_0.
	\end{align*}
	
	Finally, \eqref{eq:proof:geometric_MC_1} and~\eqref{eq:proof:geometric_MC_2} imply that $\operatorname{pr}^{(1,0)}(\Phi_t^\ast\Omega)=A_t^\ast(\operatorname{pr}^{(1,0)}\Omega)$, for every $\Omega\in\Gamma(\hat{L})^1$, which is enough to conclude the proof.
\end{proof}

In the following, we will denote by $\MC_{\textnormal{geom}}(\hat{L},\hat{\J})$ the set of geometric MC elements of $(\Gamma(\hat{L}),\{-,-\}_{\BFV})$.
According to Proposition~\ref{prop:geometric_MC}, $\MC_{\textnormal{geom}}(\hat{L},\hat{\J})$ coincides with the orbit described by $\BRST(\hat{L},\hat{\J})$ under the natural action of $\Ham_{\geq 1}(\hat{M},\hat{L},\hat{\J})$ on $\Gamma(\hat{L})$.
The following theorem shows that, as already in the Poisson case~\cite[Theorem 2]{schatz2011moduli}, also in the Jacobi setting, the BFV-complex of a coisotropic submanifold $S$ encodes small coisotropic deformations of $S$.

\begin{theorem}
	\label{theor:coisotropic_def_space}
	A 1-1 correspondence between $\MC_{\textnormal{geom}}(\hat{L},\hat{\J})/\Ham_{\geq 1}(\hat{M},\hat{L},\hat{\J})$ and $C(L,\J)$ is canonically defined by mapping $\Ham_{\geq 1}(\hat{M},\hat{L},\hat{\J}).\Omega$ to $s_\Omega$.
\end{theorem}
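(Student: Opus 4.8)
The plan is to exhibit the assignment $\Omega \mapsto s_\Omega$ of Proposition/Definition~\ref{prop:geometric_MC} as the desired bijection, assembling the claim from the existence (Theorem~\ref{theor:existence_BRST-charge}) and uniqueness (Theorem~\ref{theor:uniqueness_BRST-charge}) of BRST charges, together with the fact, recorded just before the statement, that $\MC_{\textnormal{geom}}(\hat{L},\hat{\J})$ is the $\Ham_{\geq 1}(\hat{M},\hat{L},\hat{\J})$-orbit of $\BRST(\hat{L},\hat{\J})$. Write $\Psi\colon \MC_{\textnormal{geom}}(\hat{L},\hat{\J}) \to C(L,\J)$, $\Omega \mapsto s_\Omega$, where $s_\Omega$ is characterised by $\im s_\Omega$ being the zero locus of $\operatorname{pr}^{(1,0)}\Omega$. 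First I would check that $\Psi$ is constant on $\Ham_{\geq 1}(\hat{M},\hat{L},\hat{\J})$-orbits, so that it descends to a map $\bar{\Psi}$ on the orbit space; it then remains to prove that $\bar{\Psi}$ is both surjective and injective.

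The orbit-invariance of $\Psi$ is already built into the proof of Proposition/Definition~\ref{prop:geometric_MC}: for $\Phi \in \Ham_{\geq 1}(\hat{M},\hat{L},\hat{\J})$ one has $\operatorname{pr}^{(1,0)}(\Phi^\ast\Omega) = A^\ast(\operatorname{pr}^{(1,0)}\Omega)$ for some section $A$ of $\operatorname{GL}_+(E)$. Since $A$ is fibre-wise invertible, the zero locus of $\operatorname{pr}^{(1,0)}(\Phi^\ast\Omega)$ equals that of $\operatorname{pr}^{(1,0)}\Omega$; as both are images of sections of $\pi$, this forces $s_{\Phi^\ast\Omega} = s_\Omega$, so $\bar{\Psi}$ is well defined. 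For surjectivity I would argue directly from Theorem~\ref{theor:existence_BRST-charge}: given $s \in C(L,\J)$, its image is coisotropic, hence an $s$-BRST charge $\Omega$ exists; this $\Omega$ lies in $\BRST(\hat{L},\hat{\J}) \subseteq \MC_{\textnormal{geom}}(\hat{L},\hat{\J})$ and satisfies $\operatorname{pr}^{(1,0)}\Omega = \Omega_E[s]$, whose zero locus is $\im s$, so that $s_\Omega = s$ and $\bar{\Psi}(\Ham_{\geq 1}(\hat{M},\hat{L},\hat{\J}).\Omega) = s$.

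For injectivity I would take geometric MC elements $\Omega, \Omega'$ with $s_\Omega = s_{\Omega'} =: s$ and produce a single element of $\Ham_{\geq 1}(\hat{M},\hat{L},\hat{\J})$ intertwining them. By Proposition/Definition~\ref{prop:geometric_MC} there are $\Phi, \Phi' \in \Ham_{\geq 1}(\hat{M},\hat{L},\hat{\J})$ such that $\Phi^\ast\Omega$ and $(\Phi')^\ast\Omega'$ are both $s$-BRST charges wrt $\hat{\J}$; Theorem~\ref{theor:uniqueness_BRST-charge} then gives $\phi \in \Ham_{\geq 2}(\hat{M},\hat{L},\hat{\J}) \subseteq \Ham_{\geq 1}(\hat{M},\hat{L},\hat{\J})$ with $\phi^\ast((\Phi')^\ast\Omega') = \Phi^\ast\Omega$, i.e.~$\Omega = (\Phi' \circ \phi \circ \Phi^{-1})^\ast\Omega'$. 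Because $\Ham_{\geq 1}(\hat{M},\hat{L},\hat{\J})$ is a subgroup of $\Ham(\hat{M},\hat{L},\hat{\J})$ (Remark~\ref{rem:filtration_Ham}), the composite $\Phi' \circ \phi \circ \Phi^{-1}$ lies in it, so $\Omega$ and $\Omega'$ share the same orbit and $\bar{\Psi}$ is injective. The main obstacle is not analytic, since the hard work sits in the already-established step-by-step obstruction constructions behind Theorems~\ref{theor:existence_BRST-charge} and~\ref{theor:uniqueness_BRST-charge}; it is rather the \emph{filtration bookkeeping}: injectivity is claimed at level $\Ham_{\geq 1}$, whereas uniqueness of BRST charges only supplies an intertwiner in $\Ham_{\geq 2}$, so the argument must first transport both geometric MC elements into $\BRST(\hat{L},\hat{\J})$ via Proposition/Definition~\ref{prop:geometric_MC} and only then recombine the various automorphisms inside the single group $\Ham_{\geq 1}(\hat{M},\hat{L},\hat{\J})$.
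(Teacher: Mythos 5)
Your proposal is correct and follows essentially the same route as the paper, which simply derives the theorem as a direct consequence of Proposition~\ref{prop:coisotropic_deformation_space} (the bijection $\BRST(\hat{L},\hat{\J})/\Ham_{\geq 2}\leftrightarrow C(L,\J)$ built from the existence and uniqueness of BRST charges) together with Proposition/Definition~\ref{prop:geometric_MC} (identifying $\MC_{\textnormal{geom}}(\hat{L},\hat{\J})$ as the $\Ham_{\geq 1}$-orbit of $\BRST(\hat{L},\hat{\J})$ and showing $s_\Omega$ is orbit-invariant). Your write-up merely unpacks the well-definedness, surjectivity and injectivity checks that the paper leaves implicit, including the correct handling of the $\Ham_{\geq 2}\subseteq\Ham_{\geq 1}$ filtration step.
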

\begin{proof}
	It is a straightforward consequence of Propositions~\ref{prop:coisotropic_deformation_space} and~\ref{prop:geometric_MC}.
\end{proof}

The BFV-complex does also encode the (local) moduli space of coisotropic submanifolds under Hamiltonian equivalence.

\begin{definition}
	\label{def:Ham_homotopy_geometric_MC}
	A \emph{Hamiltonian homotopy of geometric MC elements} of graded Lie algebra $(\Gamma(\hat{L}),\{-,-\}_{\BFV})$ consists of
	\begin{itemize}
		\item a smooth path $\{\Omega_t\}_{t\in I}$ of geometric MC elements of $(\Gamma(\hat{L}),\{-,-\}_{\BFV})$, and
		\item a smooth path $\{\Phi_t\}_{t\in I}$ of Jacobi automorphisms of $(\hat{M},\hat{L},\hat{\J})$, with $\Phi_0=\id_{\hat{L}}$, which integrates $\{\lambda_t,-\}_{\hat{\J}}$, for some smooth path $\{\lambda_t\}_{t\in I}\subset\Gamma(\hat{L})^0$ (cf.~Definition~\ref{def:Hamiltonian_family}),
	\end{itemize}
	such that they are related by the compatibility condition $\Phi_t^\ast\Omega_t=\Omega_0$.
	Such Hamiltonian homotopy is said to \emph{interpolate} the geometric MC elements $\Omega_0$ and $\Omega_1$.
	If geometric MC elements $\Omega_0$ and $\Omega_1$ are interpolated by an Hamiltonian homotopy, then they are called \emph{Hamiltonian equivalent}, and we write $\Omega_0\sim_{\Ham}\Omega_1$.
	Indeed $\sim_{\Ham}$ is an equivalence relation on $\MC_{\textnormal{geom}}(\hat{L},\hat{\J})$, and the equivalence class of $\Omega$ is denoted by $[\Omega]_{\Ham}$.
\end{definition}

\begin{lemma}
	\label{lem:coisotropic_moduli_space}
	Let $s$ be an arbitrary coisotropic section.
	\newline\noindent
	1) Any two $s$-BRST charges wrt $\hat{\J}$ are Hamiltonian equivalent.
	\newline\noindent
	2) If $\Omega_0,\Omega_1\in\MC_{\textnormal{geom}}(\hat{L},\hat{\J})$, with $s_{\Omega_0}=s_{\Omega_1}=s$, then $\Omega_0\sim_{\Ham}\Omega_1$.
\end{lemma}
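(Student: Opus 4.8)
The plan is to reduce both parts to two results already established in the excerpt: the uniqueness theorem for BRST charges (Theorem~\ref{theor:uniqueness_BRST-charge}) and the very definition of geometric Maurer--Cartan element (Proposition/Definition~\ref{prop:geometric_MC}). Before treating either part I would record two structural facts that make the bookkeeping clean. First, by construction $\MC_{\textnormal{geom}}(\hat{L},\hat{\J})$ is the orbit of the $\Ham_{\geq 2}(\hat{M},\hat{L},\hat{\J})$-invariant set $\BRST(\hat{L},\hat{\J})$ under the natural action of the subgroup $\Ham_{\geq 1}(\hat{M},\hat{L},\hat{\J})$; hence $\MC_{\textnormal{geom}}(\hat{L},\hat{\J})$ is invariant under $\Ham_{\geq 1}(\hat{M},\hat{L},\hat{\J})$, and every $s$-BRST charge is itself geometric with $s_\Omega=s$ (take $A=\id_E$ in condition (2) of Proposition/Definition~\ref{prop:geometric_MC}). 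Second, I would isolate the following reformulation of Definition~\ref{def:Ham_homotopy_geometric_MC}: if $\phi\in\Ham_{\geq 1}(\hat{M},\hat{L},\hat{\J})$ satisfies $\phi^\ast\Omega_1=\Omega_0$ with $\Omega_0$ a geometric MC element, then $\Omega_0\sim_{\Ham}\Omega_1$. Indeed, choosing a defining Hamiltonian isotopy $\{\Phi_t\}$ with $\Phi_0=\id$, $\Phi_1=\phi$ and Hamiltonians in $\calL^0_{\geq 1}$, the path $\Omega_t:=(\Phi_t^{-1})^\ast\Omega_0$ interpolates $\Omega_0$ and $\Omega_1$, satisfies the compatibility $\Phi_t^\ast\Omega_t=\Omega_0$ by construction, and consists of geometric MC elements because each $\Phi_t^{-1}$ lies in the subgroup $\Ham_{\geq 1}(\hat{M},\hat{L},\hat{\J})$ (Remark~\ref{rem:filtration_Ham}) which preserves $\MC_{\textnormal{geom}}(\hat{L},\hat{\J})$; that the $\Omega_t$ are MC elements is automatic since Jacobi automorphisms preserve $\hat{\J}$ and its MC elements.

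For part 1), I would argue as follows. Let $\Omega_0,\Omega_1$ be two $s$-BRST charges wrt $\hat{\J}$. Theorem~\ref{theor:uniqueness_BRST-charge} provides $\phi\in\Ham_{\geq 2}(\hat{M},\hat{L},\hat{\J})\subseteq\Ham_{\geq 1}(\hat{M},\hat{L},\hat{\J})$ with $\phi^\ast\Omega_1=\Omega_0$. Since $\Omega_0$, being an $s$-BRST charge, is geometric, the reformulation above applies verbatim and yields $\Omega_0\sim_{\Ham}\Omega_1$. This is essentially immediate once the direction of the pullback is matched correctly, the only genuine content being imported from Theorem~\ref{theor:uniqueness_BRST-charge}.

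For part 2), I would bootstrap from part 1). Let $\Omega_0,\Omega_1\in\MC_{\textnormal{geom}}(\hat{L},\hat{\J})$ with $s_{\Omega_0}=s_{\Omega_1}=s$. By Proposition/Definition~\ref{prop:geometric_MC}(1) there exist $\phi_0,\phi_1\in\Ham_{\geq 1}(\hat{M},\hat{L},\hat{\J})$ such that $\tilde\Omega_0:=\phi_0^\ast\Omega_0$ and $\tilde\Omega_1:=\phi_1^\ast\Omega_1$ are $s$-BRST charges wrt $\hat{\J}$. Applying the reformulation with $\phi=\phi_0^{-1}$ (resp.\ $\phi=\phi_1^{-1}$), which again lies in the subgroup $\Ham_{\geq 1}(\hat{M},\hat{L},\hat{\J})$ and satisfies $\phi_0^{-1}{}^\ast\tilde\Omega_0=\Omega_0$ (resp.\ $\phi_1^{-1}{}^\ast\tilde\Omega_1=\Omega_1$), gives $\Omega_0\sim_{\Ham}\tilde\Omega_0$ and $\Omega_1\sim_{\Ham}\tilde\Omega_1$. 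By part 1) the two $s$-BRST charges $\tilde\Omega_0$ and $\tilde\Omega_1$ satisfy $\tilde\Omega_0\sim_{\Ham}\tilde\Omega_1$. Since $\sim_{\Ham}$ is an equivalence relation on $\MC_{\textnormal{geom}}(\hat{L},\hat{\J})$ (Definition~\ref{def:Ham_homotopy_geometric_MC}), transitivity then yields $\Omega_0\sim_{\Ham}\Omega_1$.

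The substantive mathematical input is entirely contained in Theorem~\ref{theor:uniqueness_BRST-charge} and Proposition/Definition~\ref{prop:geometric_MC}, so the remaining work is organizational. The one point that requires real care—and the place where the argument could break if conventions are not tracked—is the preservation of the \emph{geometric} property along the interpolating paths. This is precisely why I insist on taking all intertwining automorphisms inside the subgroup $\Ham_{\geq 1}(\hat{M},\hat{L},\hat{\J})$, so that each $\Phi_t^{-1}$ stays in it and the $\Ham_{\geq 1}$-invariance of $\MC_{\textnormal{geom}}(\hat{L},\hat{\J})$ can be invoked, together with consistently matching the direction of the pullbacks so that the endpoint and the compatibility condition $\Phi_t^\ast\Omega_t=\Omega_0$ of Definition~\ref{def:Ham_homotopy_geometric_MC} come out right. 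Everything else is routine.
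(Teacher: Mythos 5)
Your proof is correct and follows exactly the route the paper intends: the paper's own proof consists of the single sentence that the lemma is a straightforward consequence of Theorem~\ref{theor:uniqueness_BRST-charge} and Proposition~\ref{prop:geometric_MC}, and your argument is precisely the careful unpacking of that claim (including the one genuinely delicate point, that the interpolating path of MC elements stays geometric because the intertwining automorphisms live in $\Ham_{\geq 1}(\hat{M},\hat{L},\hat{\J})$).
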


\begin{proof}
	It is a straightforward consequence of Theorem~\ref{theor:uniqueness_BRST-charge} and Proposition~\ref{prop:geometric_MC}.
\end{proof}

The analogous notion of Hamiltonian equivalence for coisotropic sections of a Jacobi manifold, within a fat tubular neighborhood, has already been introduced in Definition~\ref{def:hequi}~\ref{enumitem:def:hequi_1}.
We rephrase it here in a more convenient way for our current purposes.

\begin{definition*}
	A \emph{Hamiltonian homotopy of coisotropic sections} consists of
	\begin{itemize}
		\item a smooth path $\{s_t\}_{t\in I}\subset C(L,\J)$ of coisotropic sections of $(L,\J)$, and
		\item a smooth path $\{(F_t,\underline{\smash{F_t}})\}_{t\in I}$ of Jacobi automorphisms of $(M,L,\J)$, with $F_0=\id_L$, which integrates $\{\lambda_t,-\}_{\J}$, for some smooth path $\{\lambda_t\}_{t\in I}\subset\Gamma(L)$,
	\end{itemize}
	such that they are related by the compatibility condition $\im s_t=\underline{\smash{F_t}}(\im s_0)$.
	Such an Hamiltonian homotopy $\{(F_t,\underline{\smash{F_t}})\}_{t\in I}$ is said to \emph{interpolate} the coisotropic sections $s_0$ and $s_1$.
	If coisotropic sections $s_0$ and $s_1$ are interpolated by an Hamiltonian homotopy, then they are called \emph{Hamiltonian equivalent}, and we write $s_0\sim_{\Ham}s_1$.
	Indeed $\sim_{\Ham}$ is an equivalence relation on $C(L,\J)$, and the equivalence class of $s$ is denoted by $[s]_{\Ham}$.
\end{definition*}

The following theorem shows that, as already in the Poisson case~\cite[Theorem 4]{schatz2011moduli}, also in the Jacobi setting, the BFV-complex of a coisotropic submanifold $S$ encodes the local moduli space of $S$ under Hamiltonian equivalence.

\begin{theorem}
	\label{theor:Mod_Ham}
	A 1-1 correspondence between $\MC_{\textnormal{geom}}(\hat{L},\hat{\J})/{\sim_{\Ham}}$ and $C(L,\J)/{\sim_{\Ham}}$ is canonically defined by mapping $[\Omega]_{\Ham}$ to $[s_\Omega]_{\Ham}$.
\end{theorem}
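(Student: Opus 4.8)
The plan is to deduce the statement from the bijection of orbit spaces already established in Theorem~\ref{theor:coisotropic_def_space}, refining it along the two Hamiltonian equivalence relations. Concretely, I would verify that the assignment $[\Omega]_{\Ham}\mapsto[s_\Omega]_{\Ham}$ is (a)~well defined, (b)~surjective, and (c)~injective. Surjectivity is immediate: by Theorem~\ref{theor:coisotropic_def_space} every $s\in C(L,\J)$ has the form $s_\Omega$ for some $\Omega\in\MC_{\textnormal{geom}}(\hat{L},\hat{\J})$, so every class $[s]_{\Ham}$ is hit. The whole difficulty is concentrated in a single \emph{compatibility lemma} which I would prove first and then invoke in both of the remaining directions.

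The compatibility lemma reads as follows. Let $\{\lambda_t\}\subset\Gamma(\hat{L})^0$ generate a Hamiltonian isotopy $\{\Phi_t\}$ of $(\hat{M},\hat{L},\hat{\J})$ (in the sense of Definition~\ref{def:Hamiltonian_family}), and let $\mu_t:=\operatorname{pr}^{(0,0)}\lambda_t\in\Gamma(L)$ generate the Hamiltonian isotopy $\{F_t\}$ of $(M,L,\J)$. Then for every geometric MC element $\Omega$ the element $(\Phi_t)_\ast\Omega$ is again geometric, and the zero locus of $\operatorname{pr}^{(1,0)}((\Phi_t)_\ast\Omega)$ is the $\underline{\smash{F}}_t$-image of the zero locus of $\operatorname{pr}^{(1,0)}\Omega$; equivalently $\im s_{(\Phi_t)_\ast\Omega}=\underline{\smash{F}}_t(\im s_\Omega)$. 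Granting this, well-definedness (a) follows by projecting a Hamiltonian homotopy $(\{\Omega_t\},\{\Phi_t\})$ of geometric MC elements: the generating sections $\lambda_t$ project, via $\operatorname{pr}^{(0,0)}$ (equivalently via the bidegree-$(0,0)$ Lie-algebra morphism $p$ of Proposition~\ref{prop:p}, using $p(\hat{\J}_1)=\J$), to $\mu_t\in\Gamma(L)$ generating a Hamiltonian isotopy $\{F_t\}$ of $(M,L,\J)$, and the lemma yields $\im s_{\Omega_t}=\underline{\smash{F}}_t(\im s_{\Omega_0})$, so that $(\{s_{\Omega_t}\},\{F_t\})$ is a Hamiltonian homotopy of coisotropic sections and $s_{\Omega_0}\sim_{\Ham}s_{\Omega_1}$.

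For injectivity (c) I would run the construction in reverse and combine it with the uniqueness packaged in Lemma~\ref{lem:coisotropic_moduli_space}. Given $s_{\Omega_0}\sim_{\Ham}s_{\Omega_1}$ via $(\{s_t\},\{F_t\})$ with $F_t$ generated by $\mu_t\in\Gamma(L)$, I view each $\mu_t$ as a degree-$0$ section of $\hat{L}$ through the inclusion $\Gamma(L)=\Gamma(\hat{L})^{(0,0)}\subset\Gamma(\hat{L})^0$; it generates a Hamiltonian isotopy $\{\Phi_t\}$ of $(\hat{M},\hat{L},\hat{\J})$ with $\Phi_0=\id$. Setting $\Omega_t:=(\Phi_t^{-1})^\ast\Omega_0$ produces Maurer--Cartan elements (Hamiltonian flows preserve the MC equation), and the lemma shows they are geometric with $s_{\Omega_t}=s_t$. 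Thus $(\{\Omega_t\},\{\Phi_t\})$ is a Hamiltonian homotopy from $\Omega_0$ to a geometric MC element $\Omega_1'$ with $s_{\Omega_1'}=s_1=s_{\Omega_1}$; Lemma~\ref{lem:coisotropic_moduli_space}(2) then gives $\Omega_1'\sim_{\Ham}\Omega_1$, and transitivity of $\sim_{\Ham}$ yields $\Omega_0\sim_{\Ham}\Omega_1$, as required.

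The main obstacle is the compatibility lemma itself, namely controlling the zero locus of $\operatorname{pr}^{(1,0)}((\Phi_t)_\ast\Omega)$ under a flow $\Phi_t$ that does \emph{not} preserve the ghost/anti-ghost bigrading. I would tame it in two steps. First, splitting $\lambda_t=\mu_t+\lambda_t^{\geq 1}$ with $\lambda_t^{\geq 1}\in\calL^0_{\geq 1}$, the contribution of the anti-ghost-positive part lies in $\Ham_{\geq 1}(\hat{M},\hat{L},\hat{\J})$, which by Proposition/Definition~\ref{prop:geometric_MC} and Proposition~\ref{prop:coisotropic_deformation_space} preserves both geometricity and the zero locus, since it acts on $\operatorname{pr}^{(1,0)}$ through a fibrewise invertible $\operatorname{GL}_+(E)$-gauge; this reduces the lemma to the pure bidegree-$(0,0)$ case $\lambda_t=\mu_t\in\Gamma(L)$. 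Second, in that case the evolution of the $(1,0)$-component is governed by $\{\mu_t,-\}_{\hat{\J}_1}$, which under $p$ and the identification $E\simeq\pi^\ast NS$ is intertwined with the downstairs Hamiltonian derivation $\{\mu_t,-\}_{\J}$; a local computation in the coordinates of Section~\ref{subsec:initial_setting} then shows the zero locus is transported exactly by the base diffeomorphism $\underline{\smash{F}}_t$. This last verification is the Jacobi analogue of the corresponding step in Sch\"atz's treatment of the Poisson case~\cite{schatz2011moduli}, and I expect it, rather than the bookkeeping of equivalence classes, to demand the most care.
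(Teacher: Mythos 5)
Your proposal is correct in outline, and its two halves relate to the paper's proof in different ways. The well-definedness direction is essentially the paper's: the paper also projects a Hamiltonian homotopy of geometric MC elements down to $(M,L,\J)$ via $\operatorname{pr}^{(0,0)}$, and your ``compatibility lemma'' merely makes explicit the bidegree bookkeeping that the paper leaves implicit there. The injectivity direction, however, is a genuinely different route. The paper does \emph{not} flow by the bare $\mu_t\in\Gamma(L)\subset\Gamma(\hat L)^0$: it invokes Proposition~\ref{prop:main_result_bis}, which lifts $F_t$ to $\calF_t\in\Aut(\hat M,\hat L,\hat\J^\nabla)$ generated by a corrected Hamiltonian $\hat\lambda_t=\lambda_t+\tilde a_t$ with $\tilde a_t\in\Gamma(\End(E))$ chosen (via the ODE of Lemma~\ref{lem:technical_lemma}) so that $(\calF_t)_\ast\Omega_0$ is an honest $s_t$-BRST charge, and then conjugates by the isomorphism $\calG$ of Theorem~\ref{theor:uniqueness_BFV_brackets} to pass from the auxiliary lifting $\hat\J^\nabla$ back to $\hat\J$. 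Your version dispenses with the correction term, the auxiliary connection and the conjugation, accepting that $(\Phi_t)_\ast\Omega_0$ is only a \emph{geometric} MC element with zero locus $\im s_t$ rather than a BRST charge on the nose, and then absorbs the discrepancy into Lemma~\ref{lem:coisotropic_moduli_space}(2). This is more economical, but it shifts all the weight onto your compatibility lemma, where one point needs care: the flow of $\ldsb\lambda_t,-\rdsb$ does not factor naively into the flows of $\mu_t$ and of $\lambda_t^{\geq 1}$, so the cleanest argument is the direct bidegree count --- since $\{\mu_t,-\}_G=0$ and $\hat\J_k$ has bidegree $(k-1,k-1)$, the only contributions to $\operatorname{pr}^{(1,0)}\tfrac{d}{dt}(\Phi_t)_\ast\Omega_0$ come from $\{\operatorname{pr}^{(1,1)}\lambda_t,-\}_G$ (a fibrewise endomorphism of $E$) and $\{\mu_t,-\}_{\hat\J_1}$ (a derivation of $E$ with symbol $X_{\mu_t}$), so the $(1,0)$-component evolves by a vector bundle automorphism of $E$ covering $\underline{\smash{F}}_t$, which transports the zero locus by $\underline{\smash{F}}_t$ \emph{and} preserves the transversality needed for geometricity (the $\operatorname{GL}_+(E)$-gauge condition of Proposition/Definition~\ref{prop:geometric_MC}, via the second case of Lemma~\ref{lem:technical_lemma}). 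With that computation supplied, your argument closes, and it in fact reuses exactly the mechanism of the paper's proof of Proposition/Definition~\ref{prop:geometric_MC} in a slightly wider setting.
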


\begin{proof}
	We have to prove that, for all $\Omega_0,\Omega_1\in\MC_{\textnormal{geom}}(\hat{L},\hat{\J})$, the following conditions are equivalent:
	\begin{itemize}
		\item[a)] $\Omega_0$ and $\Omega_1$ are Hamiltonian equivalent,
		\item[b)] $s_0:=s_{\Omega_0}$ and $s_1:=s_{\Omega_1}$ are Hamiltonian equivalent.
	\end{itemize}
	
	a) $\Longrightarrow$ b).
	Assume there is a Hamiltonian homotopy of geometric MC elements given by $\{\Omega_t\}_{t\in I}\subset\MC_{\textnormal{geom}}(\hat{L},\hat{\J})$ and $\{\Phi_t\}_{t\in I}\subset\Aut(\hat{M},\hat{L},\hat{\J})$, with
	\begin{equation*}
	\Phi_0=\id_{\hat{L}},\quad\frac{d}{dt}\Phi_t^\ast=\{\lambda_t,-\}_{\hat{\J}}\circ\Phi_t^\ast,
	\end{equation*}
	for some $\{\lambda_t\}_{t\in I}\subset\Gamma(\hat{L})^0$.
	The latter can be canonically projected onto a Hamiltonian homotopy of coisotropic sections of $(L,\J)$, given by $\{s_t\}_{t\in I}\subset C(L,\J)$ and $\{(F_t,\underline{\smash{F_t}})\}_{t\in I}\subset\Aut(M,L,\J)$, with
	\begin{equation*}
	F_0=\id_{\hat{L}},\quad\frac{d}{dt}F_t^\ast=\{\lambda_t,-\}_{\J}\circ F_t^\ast,
	\end{equation*}
	for some $\{\lambda_t\}_{t\in I}\subset\Gamma(L)$.
	Such projection is defined by setting
	\begin{equation*}
	s_t:=s_{\Omega_t},\quad \lambda_t:=\operatorname{pr}^{(0,0)}\lambda_t,\quad F_t^\ast=\operatorname{pr}^{(0,0)}\circ\Phi_t^\ast|_{\Gamma(\hat{L})^{(0,0)}}.
	\end{equation*}
	
	b) $\Longrightarrow$ a).
	Assume there is a Hamiltonian homotopy of coisotropic sections of $(L,\J)$ given by $\{s_t\}_{t\in I}\subset C(L,\J)$ and $\{(F_t,\underline{\smash{F_t}})\}_{t\in I}\subset\Aut(M,L,\J)$.
	In view of Lemma~\ref{lem:coisotropic_moduli_space}, it is enough to show that the latter can be lifted to a Hamiltonian homotopy of geometric MC elements wrt $\hat{\J}$ intertwining an $s_0$-BRST charge and an $s_1$-BRST charge.
	
	So, let $\nabla$ be a $\Diff(L)$-connection in $L\to M$ obtained by pulling back, along $NS\overset{\pi}{\to}S$, a $\Diff(\ell)$-connection in $NS\overset{\pi}{\to}S$.
	For an arbitrary $s_0$-BRST charge $\tilde\Omega_0$ wrt $\hat{\J}^\nabla$, Proposition~\ref{prop:main_result_bis} allows us to lift $\{F_t\}_{t\in I}$ to a smooth path $\{\calF_t\}_{t\in I}\subset\Aut(\hat{M},\hat{L},\hat{\J}^\nabla)$ such that
	\begin{equation*}
	\calF_0=\id_{\hat{L}},\quad\frac{d}{dt}\calF_t^\ast=\{\lambda_t,-\}_{\hat{\J}^\nabla}\circ\calF_t^\ast,
	\end{equation*}
	for some smooth path $\{\lambda_t\}_{t\in I}\subset\Gamma(\hat{L})^0$, and additionally $\tilde\Omega_t:=(\calF_t)_\ast\tilde\Omega_0$ is an $s_t$-BRST charge wrt $\hat{\J}^\nabla$.
	In view of Theorem~\ref{theor:uniqueness_BFV_brackets}, there is an automorphism $\calG$ of the graded line bundle $\hat{L}\to\hat{M}$ such that $\calG^\ast(\hat{\J}^\nabla)=\hat{\J}$, and $\Omega_t:=\calG^\ast\tilde\Omega_t$ is an $s_t$-BRST charge wrt $\hat{\J}$.
	Hence $\{\Phi_t:=\calG^{-1}\circ\calF_t\circ\calG\}_{t\in I}$ and $\{\Omega_t\}_{t\in I}$ provide the required Hamiltonian homotopy of geometric MC elements wrt $\hat{\J}$.
\end{proof}

The BFV-complex does also encode the (local) moduli space of coisotropic submanifolds under Jacobi equivalence.

\begin{definition}
	\label{def:Jac_homotopy_geometric_MC}
	A \emph{Jacobi homotopy of geometric MC elements} of $(\Gamma(\hat{L}),\{-,-\}_{\BFV})$ consists of
	\begin{itemize}
		\item a smooth path $\{\Omega_t\}_{t\in I}$ of geometric MC elements of $(\Gamma(\hat{L}),\{-,-\}_{\BFV})$, and
		\item a smooth path $\{\Phi_t\}_{t\in I}$ of Jacobi automorphisms of $(\hat{M},\hat{L},\hat{\J})$ (cf.~Definition~\ref{def:Jacobi_automorphism}),
	\end{itemize}
	such that they are related by the compatibility condition $\Phi_t^\ast\Omega_t=\Omega_0$.
	Such a Jacobi homotopy is said to \emph{interpolate} the geometric MC elements $\Omega_0$ and $\Omega_1$.
	If geometric MC elements $\Omega_0$ and $\Omega_1$ are interpolated by a Jacobi homotopy, then they are called \emph{Jacobi equivalent}, and we write $\Omega_0\sim_{\Jac}\Omega_1$.
	Indeed $\sim_{\Jac}$ is an equivalence relation on $\MC_{\textnormal{geom}}(\hat{L},\hat{\J})$ coarser than $\sim_{\Ham}$, and the equivalence class of $\Omega$ is denoted by $[\Omega]_{\Jac}$.
\end{definition}

We present now the analogous notion of Jacobi equivalence for coisotropic sections within a fat tubular neighborhood.

\begin{definition}
	\label{def:Jac_homotopy_coisotropic_sections}
	A \emph{Jacobi homotopy of coisotropic sections} of $(L,\J)$ consists of
	\begin{itemize}
		\item a smooth path $\{s_t\}_{t\in I}\subset C(L,\J)$ of coisotropic sections of $(L,\J)$, and
		\item a smooth path $\{(F_t,\underline{\smash{F_t}})\}_{t\in I}$ of Jacobi automorphisms of $(M,L,\J)$ (cf.~Definition~\ref{def:Jacobi_mfd_morphism}),
	\end{itemize}
	such that they are related by the compatibility condition $\im s_t=\underline{\smash{F_t}}(\im s_0)$.
	Such a Jacobi homotopy is said to \emph{interpolate} the coisotropic sections $s_0$ and $s_1$.
	If coisotropic sections $s_0$ and $s_1$ are interpolated by a Jacobi homotopy, then they are called \emph{Jacobi equivalent}, and we write $s_0\sim_{\Jac}s_1$.
	Indeed $\sim_{\Jac}$ is an equivalence relation on $C(L,\J)$ coarser than $\sim_{\Ham}$, and the equivalence class of $s$ is denoted by $[s]_{\Jac}$.
\end{definition}

The following theorem shows that, in the Jacobi setting, and a fortiori in the Poisson setting, the BFV-complex of a coisotropic submanifold $S$ encodes the local moduli space of $S$ under Jacobi equivalence.
\begin{theorem}
	\label{theor:Mod_Jac}
	A 1-1 correspondence between $\MC_{\textnormal{geom}}(\hat{L},\hat{\J})/{\sim_{\Jac}}$ and $C(L,\J)/{\sim_{\Jac}}$ is canonically defined by mapping $[\Omega]_{\Jac}$ to $[s_\Omega]_{\Jac}$.
\end{theorem}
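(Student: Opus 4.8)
The plan is to mimic the proof of Theorem~\ref{theor:Mod_Ham} essentially verbatim, replacing Hamiltonian homotopies by Jacobi homotopies (Definitions~\ref{def:Jac_homotopy_geometric_MC} and~\ref{def:Jac_homotopy_coisotropic_sections}) throughout, and to isolate the single point where the weaker, non-Hamiltonian nature of the isotopies matters. Surjectivity of the assignment $[\Omega]_{\Jac}\mapsto[s_\Omega]_{\Jac}$ is immediate from Theorem~\ref{theor:coisotropic_def_space}, since already the map $\Omega\mapsto s_\Omega$ hits every coisotropic section. Thus the whole content reduces to proving, for $\Omega_0,\Omega_1\in\MC_{\textnormal{geom}}(\hat{L},\hat{\J})$, the equivalence of (a) $\Omega_0\sim_{\Jac}\Omega_1$ and (b) $s_{\Omega_0}\sim_{\Jac}s_{\Omega_1}$; this single statement yields simultaneously the well-definedness and the injectivity of the proposed correspondence.

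For the implication (a)$\Longrightarrow$(b) I would push a Jacobi homotopy down along the bi-degree projection $\operatorname{pr}^{(0,0)}$, exactly as in the Hamiltonian case. Given interpolating data $\{\Omega_t\}_{t\in I}$ and $\{\Phi_t\}_{t\in I}\subset\Aut(\hat{M},\hat{L},\hat{\J})$ with $\Phi_t^\ast\Omega_t=\Omega_0$, I set $s_t:=s_{\Omega_t}$ and $F_t^\ast:=\operatorname{pr}^{(0,0)}\circ\Phi_t^\ast|_{\Gamma(\hat{L})^{(0,0)}}$. Because $\hat{\J}$ is a lifting of $\J$ (Definition~\ref{def:BFV_complex}), the restriction of a bi-degree preserving Jacobi automorphism of $(\hat{M},\hat{L},\hat{\J})$ to the $(0,0)$ component is a Jacobi automorphism of $(M,L,\J)$, so $\{(F_t,\underline{\smash{F_t}})\}_{t\in I}\subset\Aut(M,L,\J)$; here no Hamiltonian hypothesis is used, which is precisely why the argument survives the passage to the coarser relation $\sim_{\Jac}$. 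The compatibility condition $\im s_t=\underline{\smash{F_t}}(\im s_0)$ then follows from $\Phi_t^\ast\Omega_t=\Omega_0$ together with the description of $s_\Omega$ as the zero locus of $\operatorname{pr}^{(1,0)}\Omega$ in Proposition~\ref{prop:geometric_MC}.

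The implication (b)$\Longrightarrow$(a) is the technical heart. As in Theorem~\ref{theor:Mod_Ham}, it suffices by Lemma~\ref{lem:coisotropic_moduli_space} to lift a Jacobi homotopy of coisotropic sections to a Jacobi homotopy of geometric MC elements intertwining an $s_0$-BRST charge and an $s_1$-BRST charge. I would fix a $\Diff(L)$-connection $\nabla$ pulled back along $\pi:NS\to S$, pick an $s_0$-BRST charge $\tilde{\Omega}_0$ wrt $\hat{\J}^\nabla$, lift $\{F_t\}$ to a smooth path $\{\calF_t\}_{t\in I}\subset\Aut(\hat{M},\hat{L},\hat{\J}^\nabla)$ so that $\tilde{\Omega}_t:=(\calF_t)_\ast\tilde{\Omega}_0$ is an $s_t$-BRST charge, and finally transport everything to the given $\hat{\J}$ by the isomorphism $\calG$ supplied by Theorem~\ref{theor:uniqueness_BFV_brackets}. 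The main obstacle is exactly this lifting step: the proof of Theorem~\ref{theor:Mod_Ham} invoked Proposition~\ref{prop:main_result_bis} for \emph{Hamiltonian} isotopies, whose generators are Hamiltonian derivations $\{\lambda_t,-\}_{\J}$, whereas a Jacobi homotopy is generated only by time-dependent \emph{Jacobi} derivations $\xi_t\in\mathfrak{aut}(M,L,\J)$. I therefore expect the decisive work to be verifying that the lifting construction of Proposition~\ref{prop:main_result_bis}, together with the solvability of the accompanying Cauchy problem for the lifted generators, goes through for this broader class of infinitesimal Jacobi automorphisms; once that is established, $\{\Phi_t:=\calG^{-1}\circ\calF_t\circ\calG\}_{t\in I}$ and $\{\Omega_t:=\calG^\ast\tilde{\Omega}_t\}_{t\in I}$ provide the required Jacobi homotopy, completing the equivalence and hence the theorem.
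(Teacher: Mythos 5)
Your reduction to the equivalence (a)\,$\Leftrightarrow$\,(b) and your treatment of (a)\,$\Rightarrow$\,(b) by projecting along $\operatorname{pr}^{(0,0)}$ agree with the paper. The problem sits exactly where you flag it, in the lifting step of (b)\,$\Rightarrow$\,(a), and your proposal does not close it: you propose to rerun Proposition~\ref{prop:main_result_bis} for generators $\xi_t\in\mathfrak{aut}(M,L,\J)$ that are merely Jacobi derivations and you only ``expect'' the construction to go through. It does not, at least not by routine verification. The proof of Proposition~\ref{prop:main_result_bis} hinges on the identity $\nabla_{\slashed\square_t}=\{\lambda_t,-\}_{i_\nabla\J}$ for the \emph{Hamiltonian} generator $\slashed\square_t=\{\lambda_t,-\}_{\J}$; this is what makes the lifted generator $\hat\lambda_t=\lambda_t+\tilde a_t$ Hamiltonian for $\hat{\J}^\nabla$ and hence forces $\calF_t$ to land in $\Aut(\hat{M},\hat{L},\hat{\J}^\nabla)$. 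For a general Jacobi derivation $\xi_t$ there is no reason that $\nabla_{\xi_t}$, or any correction of it by vertical terms of bi-degree $(0,0)$, should satisfy $\ldsb\hat{\J}^\nabla,\nabla_{\xi_t}\rdsb=0$: the connection $\nabla$ need not be flat and $\hat{\J}^\nabla$ carries higher bi-degree corrections, so a lift of $\xi_t$ \emph{preserving} $\hat{\J}^\nabla$ need not exist. Demanding that the lifted path stay inside $\Aut(\hat{M},\hat{L},\hat{\J}^\nabla)$ is too strong a requirement once you leave the Hamiltonian setting.

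The paper resolves this with a weaker lifting followed by a correction. It invokes Proposition~\ref{prop:main_result} (not~\ref{prop:main_result_bis}), which applies to an arbitrary path of automorphisms $F_t$ with $F_t^\ast\J=\J$ and produces only a path $\{\calF_t\}$ of bi-degree $(0,0)$ automorphisms of the graded line bundle $\hat{L}\to\hat{M}$ --- not Jacobi automorphisms of $(\hat{M},\hat{L},\hat{\J})$ --- such that $\hat{\J}_t:=(\calF_t)_\ast\hat{\J}$ is again a lifting of $\J$ and $(\calF_t)_\ast\Omega_0$ is an $s_t$-BRST charge wrt $\hat{\J}_t$. The failure of $\calF_t$ to preserve $\hat{\J}$ is then repaired a posteriori: Theorem~\ref{theor:uniqueness_BFV_brackets} supplies a path $\{\calG_t\}$ with $\calG_0=\id_{\hat{L}}$ and $(\calG_t)_\ast\hat{\J}_t=\hat{\J}$, and $\Phi_t:=\calG_t\circ\calF_t$ is the desired Jacobi homotopy of geometric MC elements. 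Note that a single correcting automorphism $\calG$, as in your outline, cannot work here precisely because $\hat{\J}_t$ varies with $t$. If you want to salvage your argument, this two-step ``lift without preserving the graded Jacobi structure, then correct along the path'' is the missing ingredient; upgrading Proposition~\ref{prop:main_result_bis} to Jacobi isotopies is not.
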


\begin{proof}
	We have to prove that, for all $\Omega_0,\Omega_1\in\MC_{\textnormal{geom}}(\hat{L},\hat{\J})$, the following conditions are equivalent:
	\begin{itemize}
		\item[a)] $\Omega_0$ and $\Omega_1$ are Jacobi equivalent,
		\item[b)] $s_0:=s_{\Omega_0}$ and $s_1:=s_{\Omega_1}$ are Jacobi equivalent.
	\end{itemize}
	a) $\Longrightarrow$ b).
	Assume there is a Jacobi homotopy of geometric MC elements given by $\{\Omega_t\}_{t\in I}\subset\MC_{\textnormal{geom}}(\hat{L},\hat{\J})$ and $\{\Phi_t\}_{t\in I}\subset\Aut(\hat{M},\hat{L},\hat{\J})$.
	The latter can be canonically projected onto a Jacobi homotopy of coisotropic sections given by $\{s_t\}_{t\in I}\subset C(L,\J)$ and $\{(F_t,\underline{\smash{F_t}})\}_{t\in I}\subset\Aut(M,L,\J)$.
	Such projection is defined by setting
	\begin{equation*}
	s_t:=s_{\Omega_t},\qquad F_t^\ast:=({\operatorname{pr}^{(0,0)}}\circ{\Phi_t^\ast})|_{\Gamma(\hat{L})^{(0,0)}}.
	\end{equation*}
	
	b) $\Longrightarrow$ a).
	Assume there is a Jacobi homotopy of coisotropic sections of $(L,\J)$ given by $\{s_t\}_{t\in I}\subset C(L,\J)$ and $\{(F_t,\underline{\smash{F_t}})\}_{t\in I}\subset\Aut(M,L,\J)$.
	In view of Lemma~\ref{lem:coisotropic_moduli_space}, it is enough to show that the latter can be lifted to a Jacobi homotopy of geometric MC elements wrt $\hat{\J}$ intertwining an $s_0$-BRST charge and an $s_1$-BRST charge.
	
	So, fix $\Omega_0$: an $s_0$-BRST charge wrt $\hat{\J}$.
	Proposition~\ref{prop:main_result} allows us to lift $\{F_t\}_{t\in I}$ to a smooth path $\{\calF_t\}_{t\in I}$ of bi-degree $(0,0)$ graded automorphisms of the graded line bundle $\hat{L}\to\hat{M}$ such that
	\begin{itemize}
		\item $\hat{\J}_t:=(\calF_t)_\ast\hat{\J}$ is a lifting of $\J$ to a Jacobi structure on $\hat{L}\to\hat{M}$,
		\item $\Omega_t:=(\calF_t)_\ast\Omega_0$ is a $s_t$-BRST charge wrt $\hat{\J}_t$.
	\end{itemize}
	Hence, in view of Theorem~\ref{theor:uniqueness_BFV_brackets}, there is also a smooth path $\{\calG_t\}_{t\in I}$ of automorphisms of the graded line bundle $\hat{L}\to\hat{M}$, with $\calG_0=\id_{\hat{L}}$, such that $(\calG_t)_\ast\hat{\J}_t=\hat{\J}$, and $(\calG_t)_\ast\Omega_t$ is an $s_t$-BRST charge wrt $\hat{\J}$.
	Finally, $\{\Phi_t:=\calG_t\circ\calF_t\}_{t\in I}$ and $\{\Omega_t:=(\Phi_t)_\ast\Omega_0\}_{t\in I}$ provide the desired Jacobi homotopy of geometric MC elements wrt $\hat{\J}$.
\end{proof}

\section{An obstructed example in the contact setting}
\label{sec:obstructed_example_contact_BFV}

It has appeared in~\cite[Examples 3.5 and 3.8]{tortorella2016rigidity} a first example, in the contact setting, of a coisotropic submanifold whose coisotropic deformation problem is obstructed at the formal level, and so a fortiori at the smooth level.
Originally this obstructed example, derived in analytical terms, was considered to illustrate that a special subclass of coisotropic submanifolds (called the ``integral'' ones in~\cite{tortorella2016rigidity}) is not stable under small coisotropic deformations.
Our goal, in the current Section, is to give an interpretation of this obstructed example within the conceptual framework provided by the associated BFV-complex (see also Section~\ref{sec:obstructed_example_contact_L-infinity} for its re-interpretation in terms of the associated $L_\infty$-algebra).

Let us consider the vector bundle $\calE:=\bbT^5\times\bbR^2\overset{\pi}{\longrightarrow} S:=\bbT^5$, $(\phi_i,y_a)\longmapsto(\phi_i)$, where $\phi_1,\ldots,\phi_5$ are the standard angular coordinates on $\bbT^5$, and $y_1,y_2$ are the standard Euclidean coordinates on $\bbR^2$.
Now $\calE$ comes equipped with a coorientable contact structure by means of the global contact form $\theta_\calE\in\Omega^1(\calE)$ given by
\begin{equation*}
\theta_\calE:=y_1d\phi_1+y_2d\phi_2+\sin\phi_3d\phi_4+\cos\phi_3d\phi_5.
\end{equation*}
Let $\{-,-\}_J$ be the transitive Jacobi structure, on the line bundle $L:=\calE\times\bbR\to \calE$, which is determined by this coorientable contact structure on $\calE$.
Then it is straightforward to compute the expression of such Jacobi bi-derivation $J\in D^2L$, i.e.
\begin{equation}
\label{eq:obstructed_example_contact_BFV_J}
J=\frac{\partial}{\partial\phi_3}\wedge X+Y\wedge\left(y_1\frac{\partial}{\partial y_1}+y_2\frac{\partial}{\partial y_2}\right)-\frac{\partial}{\partial\phi_1}\wedge\frac{\partial}{\partial y_1}-\frac{\partial}{\partial\phi_2}\wedge\frac{\partial}{\partial y_2}-Y\wedge\id,
\end{equation}
where $X,Y\in\frakX(S)$ are defined by
\begin{equation*}
X:=\cos\phi_3\frac{\partial}{\partial \phi_4}-\sin\phi_3\frac{\partial}{\partial \phi_5},\qquad Y:=\sin\phi_3\frac{\partial}{\partial \phi_4}+\cos\phi_3\frac{\partial}{\partial \phi_5}.
\end{equation*}
Since $\{y_a,y_b\}_J=0$, for all $a,b\in\{1,2\}$, we immediately get that $S$ is a regular coisotropic submanifold of $(\calE,C_\calE)$.

Denote by $\eta^1,\eta^2$ the canonical global frame of $\calE=\bbT^5\times\bbR^2\overset{\pi}{\to} S=\bbT^5$.
Then $\xi^1:=\pi^\ast\eta^1,\eta^2:=\pi^\ast\xi^2$ is the canonical global frame of $E:=\pi^\ast\calE=\bbT^5\times\bbR^2\times\bbR^2\to\calE=\bbT^5\times\bbR^2$.
In the current situation, concerning the graded manifold $\hat{M}$ and the graded line bundle $\hat{L}\to\hat{M}$ of Section~\ref{sec:lifting_Jacobi_structures}, we find that
\begin{equation*}
C^\infty(\hat{M})
=C^\infty(\calE)\otimes_{C^\infty(S)}\Gamma(S^\bullet(\calE[-1]\oplus \calE^\ast[1])=C^\infty(\calE)\otimes_{\bbR}\wedge^\bullet(\bbR^2\oplus(\bbR^2)^\ast),
\end{equation*}
and $\Gamma(\hat{L})$ coincides with $C^\infty(\hat{M})$ seen as a module over itself.

Let us start constructing the BFV bracket.
We will lift the Jacobi structure $J$ on $L$ to a graded Jacobi structure $\hat{J}^\nabla$ on $\hat{L}\to\hat{M}$ as in the proof of Theorem~\ref{theor:existence_BFV_brackets}.
In order to apply such procedure, we still have to pick up a $DL$-connection $\nabla$ in $E\to \calE$.
Since $E\to\calE$ is the trivial bundle, there is a privileged choice for $\nabla$: the trivial flat $DL$-connection in $E\to\calE$, i.e., for all $i=1,\ldots,5$, and $a,b=1,2$,
\begin{equation}
\label{eq:obstructed_example_contact_BFV_connection}
\nabla_{\tfrac{\partial}{\partial\phi_i}}\xi^b=\nabla_{\tfrac{\partial}{\partial y_a}}\xi^b=\nabla_1\xi^b=0.
\end{equation}   
Since such $\nabla$ is flat, then, according to Proposition~\ref{prop:existence_BFV_brackets}, the corresponding $\hat{J}^\nabla$ reduces to
$\hat{J}^\nabla=G+i_\nabla J$, and the BFV bracket on $\Gamma(\hat{L})$ is $\{-,-\}_{BFV}=\{-,-\}_G+\{-,-\}_{i_\nabla J}$.
From here, keeping in mind equations~\eqref{eq:obstructed_example_contact_BFV_J} and~\eqref{eq:obstructed_example_contact_BFV_connection}, and Remarks~\ref{rem:local_expression_G} and~\ref{rem:local_expression_i_nabla}, we get the explicit expression
\begin{equation}
\begin{aligned}
\hat{\J}^\nabla&=\frac{\partial}{\partial\phi_3} X+Y\left(y_1\frac{\partial}{\partial y_1}+y_2\frac{\partial}{\partial y_2}\right)-\frac{\partial}{\partial\phi_1}\frac{\partial}{\partial y_1}-\frac{\partial}{\partial\phi_2}\frac{\partial}{\partial y_2}\\
&\phantom{=}-Y\left(\id-\xi^1\frac{\partial}{\partial\xi^1}-\xi^2\frac{\partial}{\partial\xi^2}\right)+\frac{\partial}{\partial\xi^1}\frac{\partial}{\partial\xi^\ast_1}+\frac{\partial}{\partial\xi^2}\frac{\partial}{\partial\xi^\ast_2}.
\end{aligned}
\end{equation}

With the help of the BFV bracket $\{-,-\}_{BFV}$ we can give a complete description of the space of coisotropic sections of $\calE\to S$.
Along the way, as a by-product, we will also construct the BFV-differential $d_{BFV}$.
\newline\noindent
Let $s$ be an arbitrary section of $\pi:\bbT^5\times\bbR^2\to\bbT^5$.
Hence $s=f\eta^1+g\eta^2$, and $\Omega_E[s]=(y_1-f)\xi^1+(y_2-f)\xi^2$, for some arbitrary $f,g\in C^\infty(\bbT^5)$, and plugging $\Omega_E[s]$ into the BFV bracket we get
\begin{equation}
\label{eq:obstructed_example_contact_BFV_coisotropic1}
\{\Omega_E[s],\Omega_E[s]\}_{BFV}=2\left(\frac{\partial f}{\partial \phi_3}Xg-\frac{\partial g}{\partial \phi_3}Xf+\frac{\partial f}{\partial\phi_2}-\frac{\partial g}{\partial\phi_1}+y_1Yg-y_2Yf\right)\xi^1\xi^2.
\end{equation}
In the special case of $s=0$, i.e.~$f=g=0$, the latter becomes $\{\Omega_E,\Omega_E\}_{BFV}=0$.
As a consequence, the procedure for the construction of the BRST-charge, described in the proof of Theorem~\ref{theor:existence_BRST-charge}, produces as output $\Omega_{BRST}=\Omega_E$.
Hence the corresponding BFV-differential $d_{BFV}:=\{\Omega_{BRST},-\}_{BFV}$ has the following expression
\begin{equation*}
d_{BFV}=y_1\frac{\partial}{\partial\xi_1^\ast}+y_2\frac{\partial}{\partial\xi_2^\ast}-\xi^1\frac{\partial}{\partial\phi_1}-\xi^2\frac{\partial}{\partial\phi_2}+(y_1\xi^1+y_2\xi^2)Y.
\end{equation*}
Coming back the general case of an arbitrary $s\in\Gamma(\pi)$, and applying $\wp[s]$ to~\eqref{eq:obstructed_example_contact_BFV_coisotropic1}, Lemma~\ref{lem:BRST_coisotropic} provides us with a complete description of coisotropic sections:
\begin{itemize}
	\item $s=f\eta^1+f\eta^2$ is a coisotropic section iff $f,g\in C^\infty(\bbT^5)$ satisfy the following non-linear first order pde
	\begin{equation}
	\label{eq:obstructed_example_contact_BFV_nonlinear_pde}
	\frac{\partial f}{\partial \phi_3}Xg-\frac{\partial g}{\partial \phi_3}Xf+\frac{\partial f}{\partial\phi_2}-\frac{\partial g}{\partial\phi_1}+fYg-gYf=0,
	\end{equation}
	which duly agrees with Equation~\eqref{eq:example_cosiotropic_deformation_space_Linfty} found through the $L_\infty$-algebra.
\end{itemize}

Our next aim is to illustrate how the BFV-complex of $S$ allows us to:
\begin{enumerate}
	\item characterize infinitesimal coisotropic deformations of $S$, and  
	\item point out a first obstruction to their prolongability to formal ones.
\end{enumerate}
As shown in the proof of Proposition~\ref{prop:BFV_homological_resolution}, every choice of the BFV-complex of $S$ canonically determines a deformed set of contraction data
\begin{equation*}
\begin{tikzpicture}[>= stealth,baseline=(current bounding box.center)]
\node (u) at (0,0) {$(\Gamma(\hat{L}),d_{\BFV})$};
\node (d) at (5,0) {$(\frakg(S),d_{N_\ell{}^\ast S,\ell})$};
\draw [transform canvas={yshift=-0.5ex},-cm to] (d) to node [below] {\footnotesize $\iota'$} (u);
\draw [transform canvas={yshift=0.5ex},cm to-] (d) to node [above] {\footnotesize $\wp[0]$} (u);
\draw [-cm to] (u.south west) .. controls +(210:1) and +(150:1) .. node[left=2pt] {\footnotesize $h[0]'$} (u.north west);
\end{tikzpicture},
\end{equation*}
with the immersion $\iota'$ being such that $\iota'\varpi$ agrees with $\iota\varpi:=\pi^\ast\varpi$, modulo $\ker\wp[0]$, for all $\varpi\in\frakg(S)$.
Hence, Corollary~\ref{cor:inf1} tells us that the infinitesimal coisotropic deformations of $s$ are those $s=f\eta^1+g\eta^2$ such that $\pi^\ast s$ agrees with a $1$-cocycle of $(\Gamma(\hat{L}),d_{BFV})$, modulo $\ker\wp[0]$.
Since, for arbitrary $f,g\in C^\infty(\bbT^5)$ and $h^1,h^2\in C^\infty(\bbT^5\times\bbR^2)$, we have
\begin{equation*}
d_{BFV}\!\!\left(f\xi^1+g\xi^2+(h^1\xi_1^\ast+h^2\xi^\ast_2)\xi^1\xi^2\right)\!=\!\left(\!\frac{\partial f}{\partial\phi_2}-\frac{\partial g}{\partial\phi_1}+y_1(h^1-Yg)+y_2(Yf+h^2)\!\right)\!\xi^1\xi^2,
\end{equation*}
we can state that:
\begin{itemize}
	\item[(1.a)] $s=f\eta^1+g\eta^2$ is an infinitesimal coisotropic deformation of $S$ iff $f,g\in C^\infty(\bbT^5)$ satisfy the following linear first order pde
	\begin{equation}
	\label{eq:obstructed_example_contact_BFV_infinitesimal_deformation}
	\frac{\partial f}{\partial\phi_2}-\frac{\partial g}{\partial\phi_1}=0,
	\end{equation}
	which agrees with Equation~\eqref{eq:obstructed_example_contact3} found through the $L_\infty$-algebra,
	\item[(1.b)] in view of Proposition~\ref{prop:BFVinfinitesimal_moduli_space}, the moduli space, under Hamiltonian equivalence, of infinitesimal coisotropic deformations identifies with $H^1(\Gamma(\hat{L}),d_{BFV})$ as follows
	\begin{align*}
	H^1(\frakg(S),d_{N_\ell{}^\ast S,\ell})&\hookrightarrow \hspace{-8pt} \rightarrow H^1(\Gamma(\hat{L}),d_{BFV}),\\
	[f\eta^1+g\eta^2]&\mapsto[f\xi^1+g\xi^2+((Yg)\xi^\ast_1-(Yf)\xi^\ast_2)\xi^1\xi^2].
	\end{align*}
\end{itemize}
Now, for an arbitrary infinitesimal coisotropic deformation $s=f\eta^1+g\eta^2$, we can compute, according with~\eqref{eq:BFV_kuranishi_map},
\begin{equation*}
\label{eq:obstructed_example_contact_BFV_kuranishi}
\operatorname{Kr}\left[f\xi^1+g\xi^2+\left(Y(g)\xi^\ast_1-Y(f)\xi^\ast_2\right)\xi^1\xi^2\right]=\left[2\left(\tfrac{\partial f}{\partial\phi_3}Xg-\tfrac{\partial g}{\partial\phi_3}Xf+fYg-gYf\right)\xi^1\xi^2\right].
\end{equation*}
For an arbitrary $\varkappa\in\Gamma(\hat{L})^1$, i.e.~$\varkappa=F_1\xi^1+F_2\xi^2+(G^1\xi_1^\ast+G^2\xi_2^\ast)\xi^1\xi^2$, with $F_1,F_2,G^1,G^2\in C^\infty(\bbT^5\times\bbR^2)$, we have
\begin{equation*}
d_{BFV}\varkappa=\left(\frac{\partial F_1}{\partial\phi_2}-\frac{\partial F_2}{\partial\phi_1}+y_1(G^1+YF_2)+y_2(G^2-YF_1)\right)\xi^1\xi^2.
\end{equation*}
Hence, in particular, $\iint_{\bbT^2}\left(\wp[0]d_{BFV}\varkappa\right)d\phi_1d\phi_2=0$.
From what above, in view of Proposition~\ref{prop:BFV_kuranishi}, it follows that
\begin{itemize}
	\item[(2)] if an infinitesimal coisotropic deformation $s=f\eta^1+g\eta^2$ can be prolonged to a formal one, then $f,g\in C^\infty(\bbT^5)$ have to satisfy the following constraint
	\begin{equation}
	\label{eq:obstructed_example_contact_BFV_obstruction}
	\iint\limits_{\bbT^2}\left(\frac{\partial f}{\partial\phi_3}Xg-\frac{\partial g}{\partial\phi_3}Xf+fYg-gYf\right)d\phi_1 d\phi_2=0,
	\end{equation}
	which duly agrees with Equation~\eqref{eq:obstructed_example_contact5} found through the $L_\infty$-algebra.
\end{itemize}

Now we can argue as in Section~\ref{sec:obstructed_example_contact_L-infinity} (cf.~also~\cite[Example 2]{tortorella2016rigidity}) to see that $s=(\cos\phi_4)\eta^1+(\sin\phi_4)\eta^2$ is an infinitesimal coisotropic deformation of $S$ but it is formally obstructed.



\appendix
	\chapter{Auxiliary material}
\label{app:auxiliary material}

In this Appendix we present some auxiliary material which is of central importance to the development of Chapters~\ref{chap:graded_Jacobi_manifolds} and~\ref{chap:BFV_complex}.
However, if included in the main body of the thesis, it would have represented too a large deviation from the principal line of reasoning.

In Section~\ref{sec:homological_perturbation_lemma} we give a short presentation of the Homological Perturbation Lemma (cf.~\cite{brown1967twisted}).
Adapting~\cite{Crainic_perturbation} we provide a version of this classic tool from homological perturbation theory which is well-suited to the aims of this thesis.
Indeed Homological Perturbation Lemma allows to prove central properties of the BFV-complex, namely: its being a cohomological resolution of the reduced Gerstenhaber--Jacobi algebra (Proposition~\ref{prop:BFV_homological_resolution} and Corollary~\ref{cor:BFV_homological_resolution}), and (jointly with Homotopy Transfer Theorem) its being $L_\infty$-quasi-isomorphic to the $L_\infty$-algebra of a coisotropic submanifold (Theorem~\ref{theor:L_infty_qi}).

In Section~\ref{app:SBSO} we present a version of another well-known technique from homological perturbation theory: the step-by-step obstruction method (cf.~\cite{Stasheff1997}).
Our version is optimized for the main steps towards the construction of the BFV-complex, namely the existence and the uniqueness of both the lifted Jacobi structure (Theorems~\ref{theor:existence_BFV_brackets} and~\ref{theor:uniqueness_BFV_brackets}) and the BRST-charge (Theorems~\ref{theor:existence_BRST-charge} and~\ref{theor:uniqueness_BRST-charge}).

In Section~\ref{app:technical_tools}, we state and prove two propositions which are the keystones in the proofs of some central theorems about the BFV-complex, namely its gauge invariance (Theorem~\ref{theor:gauge_invariance_BFV_complex}) and its encoding the moduli spaces of coisotropic sections under Hamiltonian and Jacobi equivalence (Theorems~\ref{theor:Mod_Ham} and~\ref{theor:Mod_Jac}).

\section{Homological Perturbation Lemma}
\label{sec:homological_perturbation_lemma}

The Homological Perturbation Lemma is a classical technique from homological perturbation theory which goes back to R.~Brown~\cite{brown1967twisted}.
At least for our aims it can be essentially seen as a tool which takes as input a small perturbation of a set of contraction data and produces as output a new ``deformed'' set of contraction data.
Our short presentation mainly follows that one given in~\cite{Crainic_perturbation}.
\begin{definition}
	\label{def:contraction_data}
	\emph{Contraction data} (between co-chain complexes) from $(\calK, \partial)$ to $(\underline{\smash{\calK}}, \underline{\smash{\partial}})$
	\begin{equation}
	\label{eq:contraction_data}
	\begin{tikzpicture}[>= stealth,baseline=(current bounding box.center)]
	\node (u) at (0,0) {$(\calK,\partial)$};
	\node (d) at (5,0) {$(\underline{\smash{\calK}},\underline{\smash{\partial}})$};
	\draw [transform canvas={yshift=-0.5ex},-cm to] (d) to node [below] {\footnotesize $j$} (u);
	\draw [transform canvas={yshift=+0.5ex},cm to-] (d) to node [above] {\footnotesize $q$} (u);
	\draw [-cm to] (u.south west) .. controls +(210:1) and +(150:1) .. node[left=2pt] {\footnotesize $h$} (u.north west);
	\end{tikzpicture}
	\end{equation}
	consists of the following:
	\begin{itemize}
		\item a surjective co-chain map $q:(\calK,\partial)\longrightarrow(\underline{\smash{\calK}},\underline{\smash{\partial}})$ that we simply call the \emph{projection},
		\item an injective co-chain map $j:(\underline{\smash{\calK}},\underline{\smash{\partial}})\longrightarrow(\calK,\partial)$, that we call the \emph{immersion}, such that $q\circ j=\id_{\underline{\smash{\calK}}}$,
		\item a \emph{homotopy} $h : (\calK, \partial) \to (\calK, \partial)$ between $j\circ q$ and $\id_{\calK}$.
	\end{itemize}
	Additionally, $q,j,h$ satisfy the following \emph{side conditions}:
	\begin{equation*}
	h^2=0,\qquad h\circ j=0,\qquad q\circ h=0.
	\end{equation*}
\end{definition}
From now on in this section we will assume to have fixed the contraction data~\eqref{eq:contraction_data}.

\begin{definition}
	A \emph{deformation} of contraction data~\eqref{eq:contraction_data} is a degree $1$ map $\delta:\calK\to\calK$ such that $\partial+\delta$ is again a coboundary operator, i.e.~$(\partial+\delta)^2=0$.
	A deformation $\delta$ of~\eqref{eq:contraction_data} is said to be \emph{small} if $\id_{\calK}-\delta h$ is invertible, or equivalently if  $\id_{\calK}-h\delta$ is invertible.
\end{definition}

\begin{remark}
	\label{rem:homological_perturbation_lemma}
	Let $\delta$ be a deformation of contraction data~\eqref{eq:contraction_data}.
	Assume that $\delta h$ is nilpotent, or equivalently that $h\delta$ is nilpotent.
	In this case $\delta$ is a small perturbation of~\eqref{eq:contraction_data}, with
	\begin{equation}
	\label{eq:rem:homological_perturbation_lemma}
	(\id_{\calK}-\delta h)^{-1}=\sum_{n=0}^\infty(\delta h)^n,\qquad (\id_{\calK}-h\delta)^{-1}=\sum_{n=0}^\infty(h\delta)^n.
	\end{equation}
\end{remark}

\begin{proposition}[Homological Perturbation Lemma]
	\label{prop:homological_perturbation_lemma}
	Let $\delta$ be a small perturbation of contraction data~\eqref{eq:contraction_data}.
	Then $\delta$ determines the new perturbed contraction data
	\begin{equation*}
	\begin{tikzpicture}[>= stealth,baseline=(current bounding box.center)]
	\node (u) at (0,0) {$(\calK,\partial+\delta)$};
	\node (d) at (5,0) {$(\underline{\smash{\calK}},\underline{\smash{\partial}}')$};
	\draw [transform canvas={yshift=-0.5ex},-cm to] (d) to node [below] {\footnotesize $j'$} (u);
	\draw [transform canvas={yshift=+0.5ex},cm to-] (d) to node [above] {\footnotesize $q'$} (u);
	\draw [-cm to] (u.south west) .. controls +(210:1) and +(150:1) .. node[left=2pt] {\footnotesize $h'$} (u.north west);
	\end{tikzpicture}
	\end{equation*}
	where $q', j',h'$, and $\underline{\smash{\partial}}'$ are given by the following formulas:
	\begin{equation}
	\label{eq:prop:homological_perturbation_lemma}
	\begin{gathered}
	q':=q(\id_{\calK}-\delta h)^{-1},\quad j':=(\id_{\calK}-h\delta)^{-1}j,\\
	h':=h(\id_{\calK}-\delta h)^{-1}\equiv(\id_{\calK}-h\delta)^{-1}h,\\ 
	\underline{\smash{\partial}}':=\underline{\smash{\partial}}+q(\id_{\calK}-\delta h)^{-1}\delta j\equiv\underline{\smash{\partial}}+q\delta(\id_{\calK}-h\delta )^{-1} j.
	\end{gathered}
	\end{equation}
\end{proposition}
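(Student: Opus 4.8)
The plan is to verify directly that the quadruple $(q',j',h',\underline{\smash{\partial}}')$ satisfies every requirement of Definition~\ref{def:contraction_data} for the differential $\partial+\delta$ on $\calK$. Throughout I abbreviate $A:=(\id_{\calK}-\delta h)^{-1}$ and $B:=(\id_{\calK}-h\delta)^{-1}$, both of which exist by the smallness hypothesis (and equal the geometric series of Remark~\ref{rem:homological_perturbation_lemma} when $\delta h$ is nilpotent). The relations I will use are the cochain conditions $q\partial=\underline{\smash{\partial}}\,q$ and $\partial j=j\,\underline{\smash{\partial}}$; the splitting $qj=\id_{\underline{\smash{\calK}}}$; the homotopy relation $jq-\id_{\calK}=\partial h+h\partial$; the side conditions $h^2=0$, $hj=0$, $qh=0$; and the \emph{perturbation equation} $\partial\delta+\delta\partial+\delta^2=0$, which is merely the expansion of $(\partial+\delta)^2=0$. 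I also record the resolvent identities $A=\id_{\calK}+A\delta h=\id_{\calK}+\delta hA$ and $B=\id_{\calK}+Bh\delta=\id_{\calK}+h\delta B$, obtained by multiplying out the inverses and using that $A$, $B$ are power series in $\delta h$, $h\delta$.

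First I would settle well-definedness, i.e.\ that the two displayed formulas for $h'$ and for $\underline{\smash{\partial}}'$ coincide. This rests on the elementary factorizations $\delta(\id_{\calK}-h\delta)=(\id_{\calK}-\delta h)\delta$ and $h(\id_{\calK}-\delta h)=(\id_{\calK}-h\delta)h$. Multiplying the first on the left by $A$ and on the right by $B$ yields the intertwining relation $A\delta=\delta B$; the same operation on the second yields $hA=Bh$. From $hA=Bh$ one reads off $h(\id_{\calK}-\delta h)^{-1}=(\id_{\calK}-h\delta)^{-1}h$, the equality of the two expressions for $h'$; from $A\delta=\delta B$ one gets $qA\delta j=q\delta Bj$, the equality of the two expressions for $\underline{\smash{\partial}}'$.

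Next I would dispatch the ``easy'' axioms, all governed by the side conditions. Writing $q'=q+q'\delta h$ and $j'=j+h\delta j'$ (from the resolvent identities), one finds $q'h=qh+q'\delta h^2=0$ and $q'j=qj+q'\delta(hj)=\id_{\underline{\smash{\calK}}}$, so $q'j'=q'j+(q'h)\delta j'=\id_{\underline{\smash{\calK}}}$; in particular $q'$ is surjective and $j'$ injective. The three new side conditions then follow from $h'=hA=Bh$ together with $h^2=0$, $hj=0$, $qh=0$: indeed $(h')^2=(Bh)(hA)=Bh^2A=0$, $h'j=Bhj=0$ hence $h'j'=h'j+h'h\delta j'=0$, and $q'h'=qh'+q'\delta h\,h'=qhA+0=0$. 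Finally, once $q'$ is shown to be a cochain map, $(\underline{\smash{\partial}}')^2=0$ is \emph{free}: from $q'(\partial+\delta)=\underline{\smash{\partial}}'q'$ and surjectivity of $q'$ one gets $(\underline{\smash{\partial}}')^2q'=q'(\partial+\delta)^2=0$, whence $(\underline{\smash{\partial}}')^2=0$.

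This leaves the two substantive verifications: that $q'$ (and, symmetrically, $j'$) is a cochain map, and the homotopy identity $j'q'-\id_{\calK}=(\partial+\delta)h'+h'(\partial+\delta)$. Both require commuting the differential past the resolvent, and the clean way to organize this is to prove a single master commutation identity $\partial A-A\partial=A\,[\partial,\delta h]\,A$, where $[\partial,\delta h]=\partial\,\delta h-\delta h\,\partial$ is evaluated using the perturbation equation $\partial\delta+\delta\partial=-\delta^2$ and the homotopy relation $\partial h+h\partial=jq-\id_{\calK}$, giving $[\partial,\delta h]=-\delta^2 h-\delta jq+\delta$. Feeding this, the relations $\delta hA=A-\id_{\calK}$, $hA\delta=B-\id_{\calK}$, $A\delta=\delta B$, and the side conditions into the expansion of $(\partial+\delta)h'+h'(\partial+\delta)$ collapses it to $j'q'-\id_{\calK}$. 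I expect the homotopy identity to be the main obstacle: it is the one place where essentially all of the structural data intervene simultaneously, and the bookkeeping of the telescoping series (tracking exactly where each occurrence of $jq$, $\delta^2$, and the side conditions is absorbed) is by far the most delicate part of the argument. The cochain property of $q'$ follows from the same master identity with less effort, and $j'$ is handled by the symmetric computation with the roles of $A,B$ interchanged.
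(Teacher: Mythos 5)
Your proposal is correct, but note that the paper does not actually prove this statement: it defers entirely to the reference \cite{Crainic_perturbation}, so your direct verification is a genuinely self-contained substitute rather than a variant of an argument in the text. I checked the key ingredients and they all hold. The resolvent identities $A=\id_{\calK}+A\delta h=\id_{\calK}+\delta hA$ and the intertwinings $A\delta=\delta B$, $hA=Bh$ are correct and do give well-definedness of $h'$ and $\underline{\smash{\partial}}'$; the side conditions and $q'j'=\id_{\underline{\smash{\calK}}}$ follow exactly as you say; and deducing $(\underline{\smash{\partial}}')^2=0$ from surjectivity of $q'$ once the cochain property is known is a clean shortcut. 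The master identity $\partial A-A\partial=A[\partial,\delta h]A$ with $[\partial,\delta h]=-\delta^2h-\delta jq+\delta$ is correct, and feeding it into $q'(\partial+\delta)-\underline{\smash{\partial}}'q'$ does make that expression vanish (the term $qA\delta^2hA$ cancels against $qA\delta A-qA\delta$ via $\delta hA=A-\id_{\calK}$). Your instinct that the homotopy identity is the delicate step is right, and there is one wrinkle worth flagging: expanding $(\partial+\delta)h'+h'(\partial+\delta)$ with the master identities for $A$ and for $B$ each reintroduces the unknown combination $h\partial A+B\partial h$, so the computation does not telescope term by term; one must combine the two expansions with the relation $Bh\partial+\partial hA=Bjq+jqA-A-B-(B\partial h+h\partial A)$ and solve for $h\partial A+B\partial h$ (which over $\bbR$ amounts to a harmless cancellation of a factor of $2$). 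This is extra bookkeeping rather than a gap --- the identity $j'q'-\id_{\calK}=(\partial+\delta)h'+h'(\partial+\delta)$ does come out --- but if you write the proof up in full you should make that final step explicit rather than asserting that the expression ``collapses.''
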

For the proof of the Homological Perturbation Lemma we refer the reader to~\cite{Crainic_perturbation}.

\section{A step-by-step obstruction method}
\label{app:SBSO}

Our procedure to lift a Jacobi structure to a graded one, and more generally our construction of the BFV-complex of a coisotropic submanifold are entirely based on the step-by-step obstruction method of homological perturbation theory.
Indeed the central results in the BFV-construction, namely the existence and the uniquess of the lifted Jacobi structure (see Theorems~\ref{theor:existence_BFV_brackets} and~\ref{theor:uniqueness_BFV_brackets}) and the existence and uniquess of the BRST-charges (see Theorems~\ref{theor:existence_BRST-charge} and~\ref{theor:uniqueness_BRST-charge}), have been proved through a direct application of this method.
In this section we will provide a self-contained version of the method, which is well-suited for the our objectives.
In doing this, we will adapt and integrate~\cite[Section 4.1]{Stasheff1997}.

The setting is the following:
a set of contraction data (cf.~Definition~\ref{def:contraction_data})
\begin{equation}
\label{eq:homotopy_equivalence}
\begin{tikzpicture}[>= stealth,baseline=(current bounding box.center)]
\node (u) at (0,0) {$(X,d)$};
\node (d) at (5,0) {$(Y,0)$};
\draw [transform canvas={yshift=-0.5ex},-cm to] (d) to node [below] {\footnotesize $I$} (u);
\draw [transform canvas={yshift=+0.5ex},cm to-] (d) to node [above] {\footnotesize $P$} (u);
\draw [-cm to] (u.south west) .. controls +(210:1) and +(150:1) .. node[left=2pt] {\footnotesize $H$} (u.north west);
\end{tikzpicture},
\end{equation}
and a decreasing filtration of $X$ by graded subspaces $\{\calF_n\}_{n\in\bbZ}$ such that
\begin{equation}
\label{eq:SBSO_1}
\begin{gathered}
d\calF_n\subset\calF_{n-1},\quad H\calF_n\subset\calF_{n+1},\ \textnormal{for all}\ n\in\bbZ,\\
P\calF_{N+1} = 0,\ \textnormal{for some}\ N\geq-1.
\end{gathered}
\end{equation}
Additionally, it is important to assume that the filtration $\{\calF_n\}_{n\in\bbZ}$ is finite in each homogeneous component, i.e., for every $i\in\bbZ$, we have $X^i\cap\calF_n=0$ for all $n \gg 0$.
Finally suppose there is a degree $0$ graded Lie bracket $[-,-]$ on $X$, and a degree $1$ element $\bar{Q}\in X^1\cap\calF_{-1}$, such that
\begin{equation}
\label{eq:SBSO_2}
\begin{gathered}{}
[\calF_m,\calF_n] \subset\calF_{m+n},\ \textnormal{for all}\ m,n\in\bbZ,\\
[\bar{Q},\Omega]\equiv d\Omega\Mod{\calF_n},\ \textnormal{for all}\ n\in\bbZ,\ \textnormal{and}\ \Omega\in\calF_n.
\end{gathered}
\end{equation}

The question is whether or not $\bar{Q}$ can be deformed to a Maurer--Cartan (MC) element $Q$ of $(X,[-,-])$ such that $Q\equiv\bar{Q}\Mod\calF_{N+1}$.
The step-by-step obstruction method provides an answer.
Firstly, Proposition~\ref{prop:SBSO_existence} points out a necessary and sufficient condition for the existence of such MC element, and explicitly constructs one, in the affirmative case.
Secondly, Proposition~\ref{prop:SBSO_uniqueness} establishes uniqueness, up to isomorphisms.

\begin{proposition}[Existence]
	\label{prop:SBSO_existence}
	There exists a MC element $Q$ of $(X,[-,-])$, such that $Q\equiv\bar{Q}\Mod\calF_{N+1}$, iff the following condition holds:
	\begin{equation}
	\label{eq:prop:SBSO_existence}
	[\bar{Q},\bar{Q}]\in\calF_{N}\cap\ker P.
	\end{equation}
\end{proposition}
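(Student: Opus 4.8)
The plan is to establish the two implications separately, bearing in mind that the differential has been absorbed into the bracket through $\bar{Q}$: the Maurer--Cartan condition is simply $[Q,Q]=0$, and on each $\calF_n$ the operator $[\bar{Q},-]$ coincides with $d$ modulo $\calF_n$, by hypothesis \eqref{eq:SBSO_2}.

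For necessity I would argue directly. Suppose $Q=\bar{Q}+R$ with $R\in\calF_{N+1}$ and $[Q,Q]=0$. Expanding the quadratic gives $[\bar{Q},\bar{Q}]=-2[\bar{Q},R]-[R,R]$. Since $\bar{Q}\in\calF_{-1}$ and $R\in\calF_{N+1}$, the compatibility $[\calF_m,\calF_n]\subset\calF_{m+n}$ places $[\bar{Q},R]\in\calF_N$ and $[R,R]\in\calF_{2N+2}\subset\calF_N$ (using $N\geq -1$), so $[\bar{Q},\bar{Q}]\in\calF_N$. Furthermore $[\bar{Q},R]\equiv dR\pmod{\calF_{N+1}}$ and $[R,R]\in\calF_{N+1}$, whence $[\bar{Q},\bar{Q}]\equiv -2\,dR\pmod{\calF_{N+1}}$; applying the projection $P$ and using $P\calF_{N+1}=0$ together with $Pd=0$ gives $P[\bar{Q},\bar{Q}]=0$. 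This yields the asserted condition $[\bar{Q},\bar{Q}]\in\calF_N\cap\ker P$.

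For sufficiency I would build $Q$ by a step-by-step induction along the filtration. Starting from $Q_N:=\bar{Q}$, for which $\omega_N:=[Q_N,Q_N]=[\bar{Q},\bar{Q}]\in\calF_N\cap\ker P$ by hypothesis, I would inductively produce $Q_{n+1}=Q_n+\rho_{n+1}$ with $\rho_{n+1}\in\calF_{n+1}$ so that $\omega_{n+1}:=[Q_{n+1},Q_{n+1}]\in\calF_{n+1}$, while maintaining $P\omega_{n+1}=0$. The expansion $\omega_{n+1}=\omega_n+2[Q_n,\rho_{n+1}]+[\rho_{n+1},\rho_{n+1}]$, combined with $[\bar{Q},\rho_{n+1}]\equiv d\rho_{n+1}$ and the filtration estimates, gives the congruence $\omega_{n+1}\equiv\omega_n+2\,d\rho_{n+1}\pmod{\calF_{n+1}}$, so the task is to solve the descent equation $\omega_n+2\,d\rho_{n+1}\in\calF_{n+1}$. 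Here the contraction data enter through two facts that make $\omega_n$ ``integrable'' by the homotopy $H$. First, the graded Jacobi identity forces $[Q_n,\omega_n]=0$ (the identity $3[Q_n,[Q_n,Q_n]]=0$ over $\bbR$), and since $r_n:=Q_n-\bar{Q}\in\calF_{N+1}$ one deduces from \eqref{eq:SBSO_2} that $d\omega_n\equiv[\bar{Q},\omega_n]=-[r_n,\omega_n]\in\calF_n$, i.e. $\omega_n$ is closed in the associated graded. Second, $P\omega_n=0$, which I would propagate through the induction by observing that each correction term $[Q_n,\rho_{n+1}]$ and $[\rho_{n+1},\rho_{n+1}]$ lands in $\ker P$ (using $P\calF_{N+1}=0$ and $Pd=0$). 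Writing the homotopy identity as $IP-\id_X=dH+Hd$ and setting $\rho_{n+1}:=\tfrac12 H\omega_n\in\calF_{n+1}$ (legitimate since $H\calF_n\subset\calF_{n+1}$) then gives $\omega_n+2\,d\rho_{n+1}=IP\omega_n-Hd\omega_n$; as $P\omega_n=0$ and $d\omega_n\in\calF_n$, the right-hand side lies in $\calF_{n+1}$, completing the step. Finiteness of the filtration in each homogeneous degree ensures that $Q:=\bar{Q}+\sum_{n>N}\rho_n$ is well defined and that $[Q,Q]\in\bigcap_n\calF_n=0$, so $Q$ is the required Maurer--Cartan element with $Q\equiv\bar{Q}\pmod{\calF_{N+1}}$.

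The main obstacle will be the bookkeeping that keeps the obstruction $\omega_n=[Q_n,Q_n]$ simultaneously closed in the associated graded and inside $\ker P$ at every stage, since it is exactly the conjunction of these two properties that allows the homotopy $H$ to furnish a primitive. Verifying that both survive each inductive correction, with the correct signs in the homotopy identity and in the quadratic expansion, is the delicate part; the two hypotheses on $\bar{Q}$ are precisely what is needed to launch and sustain this induction.
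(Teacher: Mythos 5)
Your proposal is correct and follows essentially the same route as the paper's proof: the direct quadratic expansion for necessity, and for sufficiency the step-by-step induction with correction term $\tfrac12 H[Q(k),Q(k)]$, using the Jacobi identity to see that the obstruction is $d$-closed modulo $\calF_k$ and the homotopy identity $I\circ P-\id=d\circ H+H\circ d$ to kill it. The only cosmetic difference is that you propagate $P\omega_n=0$ by hand, whereas for $n>N$ it is automatic from $\omega_n\in\calF_n\subset\calF_{N+1}\subset\ker P$.
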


\begin{proof} \ 
	
	$(\Longrightarrow)$
	Let $Q$ be a MC element of $(X,[-,-])$ such that $Q\equiv\bar{Q}\Mod\calF_{N+1}$.
	Then we get immediately
	\begin{equation*}
	0=[Q,Q]=[\bar{Q},\bar{Q}]+2[\bar{Q},Q-\bar{Q}]+[Q-\bar{Q},Q-\bar{Q}]\equiv[\bar{Q},\bar{Q}]+2d(Q-\bar{Q})\Mod\calF_{N+1}.
	\end{equation*}
	Hence, from~\eqref{eq:SBSO_1} and $P\circ d=0$, it follows that $P([\bar{Q},\bar{Q}])=0$, and $[\bar{Q},\bar{Q}]\in\calF_N$.
	
	$(\Longleftarrow)$
	Assume that $[\bar{Q},\bar{Q}]\in\calF_N\cap\ker P$.
	The main idea of the proof is to construct the desired MC element $Q$ through a perturbative expansion
	\begin{equation*}
	Q=\bar{Q}+\sum_{n>N} Q_n,
	\end{equation*}
	where $\{Q_n\}_{n>N}$ is a (necessarily finite) sequence in $X^1$, with $Q_n\in\calF_n$, such that, for every $k\geq N$,
	\begin{equation}
	\label{eq:perturbative_expansion}
	Q(k):=\bar{Q}+\sum_{N<h\leq k}Q_h\Longrightarrow [Q(k),Q(k)]\in\calF_k.
	\end{equation}
	Clearly if sequence $\{Q_n\}_{n>N}$ exists, then the finite sum~\eqref{eq:perturbative_expansion} actually provides a MC element $Q$ of $(X,[-,-])$ such that $Q\equiv\bar{Q}\Mod\calF_{N+1}$.
	Now, we show how to set up a recursive procedure to construct the $Q_n$.
	
	{\sc Proof of the Main Idea.}
	Assume we constructed the sequence $\{Q_n\}_{n>N}$ up to the term of filtration degree $k$, for some $k\geq N$.
	Then the next term in the sequence can be obtained by setting $Q_{k+1}:=\frac 12H[Q(k),Q(k)]$.
	Indeed, from either the hypotheses on $\bar{Q}$ (if $k=N$), or the inductive hypotheses (if $k>N$), it follows, in any case, that $[Q(k),Q(k)]\in\ker P$.
	Moreover, from Jacobi identity, we get
	\begin{equation*}
	0=[Q(k),[Q(k),Q(k)]]\equiv[\bar{Q},[Q(k),Q(k)]]\Mod\calF_k\equiv d[Q(k),Q(k)]\Mod\calF_{k}.
	\end{equation*}
	Hence $[Q(k),Q(k)]$ is annihilated by $P$, and is $d$-closed up to terms of filtration degree $k$, so that
	\begin{align*}
	[Q(k)+Q_{k+1},Q(k)+Q_{k+1}]&=[Q(k),Q(k)]+2[Q(k),Q_{k+1}]+[Q_{k+1},Q_{k+1}]\\
	&\equiv(\id+d\circ H)[Q(k),Q(k)]\Mod\calF_{k+1}\\
	&\equiv(I\circ P-H\circ d)[Q(k),Q(k)]\Mod\calF_{k+1}\\
	&\equiv 0\Mod\calF_{k+1}.\qedhere
	\end{align*}
\end{proof}

\begin{proposition}[Uniqueness]
	\label{prop:SBSO_uniqueness}
	Let $Q^0,Q^1$ be MC elements of $(X,[-,-])$, such that $Q^i\equiv\bar{Q}\Mod\calF_{N+1}$, $i=0,1$.
	Then there exists an automorphism $\phi$ of $(X,[-,-])$ such that $\phi(Q^0)=Q^1$.
	Moreover $\phi$ can be chosen so that $\phi(\Omega)\equiv\Omega\Mod\calF_{n+N+2}$, for all $n\in\bbZ$, and $\Omega\in\calF_n$.
\end{proposition}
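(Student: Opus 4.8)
The plan is to prove Proposition~\ref{prop:SBSO_uniqueness} by the same step-by-step obstruction philosophy already used for existence, but now applied to the \emph{difference} of the two Maurer--Cartan elements, building the conjugating automorphism $\phi$ as a composition (or, better, an integral) of ``elementary'' gauge transformations generated by successively higher-filtration elements. First I would record the basic mechanism: given any degree $0$ element $\xi \in \calF_m$ (with $m \geq 1$), its adjoint action $[\xi,-]$ is a degree $0$ derivation of the graded Lie bracket $[-,-]$ which raises filtration degree by at least $m$, so by the finiteness of the filtration in each homogeneous component it is locally nilpotent. Hence $\exp(\ad_\xi) := \sum_{k \geq 0} \tfrac{1}{k!}[\xi,-]^k$ is a well-defined automorphism of $(X,[-,-])$, and it satisfies $\exp(\ad_\xi)(\Omega) \equiv \Omega \Mod \calF_{n+m}$ for all $\Omega \in \calF_n$. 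The effect on a Maurer--Cartan element $Q$ is the familiar formula
\begin{equation*}
\exp(\ad_\xi)(Q) = Q + [\xi, Q] + \tfrac{1}{2}[\xi,[\xi,Q]] + \cdots,
\end{equation*}
and since $[Q,-] \equiv d \Mod \calF_n$ on $\calF_n$ by~\eqref{eq:SBSO_2}, the leading correction is $[\xi,Q] \equiv -d\xi \Mod \calF_{m+1}$ (up to sign conventions). This is the key computational identity linking the gauge action to the differential $d$, and it is exactly what lets the obstruction to equating $Q^0$ and $Q^1$ be killed order by order using the contracting homotopy $H$.

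Next I would set $R := Q^1 - Q^0 \in \calF_{N+1}$ (both agree with $\bar Q$ modulo $\calF_{N+1}$) and run an induction on filtration degree. Assume inductively that after applying a partial composition of elementary gauge transformations we have arranged $Q^1 - Q^0 \in \calF_k$ for some $k \geq N+1$. The degree $1$ element $R_k := \operatorname{pr}_k(Q^1 - Q^0)$ (its leading filtration component) must be analyzed: subtracting the Maurer--Cartan equations $[Q^0,Q^0]=[Q^1,Q^1]=0$ and working modulo $\calF_{k+1}$ gives $[\,\overline{Q}, R_k\,] \equiv 0$, i.e.\ $d R_k \equiv 0 \Mod \calF_{k}$, so $R_k$ is a $d$-cocycle to leading order. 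Moreover $P R_k = 0$ because $PR = P(Q^1) - P(Q^0) = 0$ (both reduce to $P\bar Q$, using $P\calF_{N+1}=0$). Since $R_k$ lies in $\ker P$ and is $d$-closed to leading order, the homotopy relation $\id - I\circ P = d\circ H + H\circ d$ from the contraction data~\eqref{eq:homotopy_equivalence} shows $R_k \equiv d(H R_k) \Mod \calF_{k+1}$. Setting $\xi_k := H R_k \in \calF_{k+1}$ (by~\eqref{eq:SBSO_1}) and applying $\exp(\ad_{\xi_k})$ to $Q^0$ removes the leading term $R_k$, so that $\exp(\ad_{\xi_k})(Q^0) - Q^1 \in \calF_{k+1}$, advancing the induction.

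I would then assemble the infinite composition $\phi := \cdots \circ \exp(\ad_{\xi_{k+1}}) \circ \exp(\ad_{\xi_k}) \circ \cdots \circ \exp(\ad_{\xi_{N+1}})$; because each factor acts as the identity modulo progressively higher filtration, and the filtration is finite in each homogeneous component, this composition converges to a well-defined automorphism $\phi$ of $(X,[-,-])$ with $\phi(Q^0) = Q^1$ and, since every generator $\xi_k$ lies in $\calF_{k+1} \subseteq \calF_{N+2}$, one obtains $\phi(\Omega) \equiv \Omega \Mod \calF_{n+N+2}$ for all $\Omega \in \calF_n$, as claimed. The main obstacle I anticipate is not the cohomological step—which is handled cleanly by the homotopy $H$ once $PR_k=0$ and $dR_k \equiv 0$ are established—but rather the bookkeeping needed to justify convergence and filtration-shift estimates for the infinite composition of exponentials: one must track carefully that conjugating $Q^0$ by $\exp(\ad_{\xi_k})$ produces higher-order corrections that do not spoil the already-corrected lower filtration degrees, and that the resulting $\phi$ genuinely satisfies the sharp filtration estimate $\phi(\Omega) \equiv \Omega \Mod \calF_{n+N+2}$ rather than a weaker bound. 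This requires verifying the filtration behaviour~\eqref{eq:SBSO_1}–\eqref{eq:SBSO_2} is stable under the iterated adjoint actions, which is routine but is where all the care must go.
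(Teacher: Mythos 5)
Your proposal is correct and follows essentially the same route as the paper: an induction on filtration degree in which, at each stage, the Maurer--Cartan equations give $d(Q^1-Q^0)\equiv 0$ modulo lower filtration, the vanishing $P(Q^1-Q^0)=0$ together with the homotopy relation writes the difference as $d$ of its $H$-image, and one gauges it away with $\exp(\operatorname{ad}_{H(Q^1-Q^0)})$, the composition terminating by finiteness of the filtration in each homogeneous component. The only cosmetic difference is that you project onto the leading filtration component before applying $H$, whereas the paper applies $H$ to the whole difference $Q^1-Q^0$; the resulting filtration estimate $\phi(\Omega)\equiv\Omega\Mod\calF_{n+N+2}$ is obtained identically in both arguments.
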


\begin{proof}
	The main idea of the proof is to check that, for every $n\geq N+1$, if $Q^0$ and $Q^1$ coincide up to terms of filtration degree $n$, i.e.
	\begin{equation}
	\label{eq:proof:prop:SBSO_uniqueness1}
	Q^1\equiv Q^0\Mod\calF_n,
	\end{equation}
	then there is $R\in\calF_{n+1}\subset\calF_1$ such that $Q^1$ and $(\exp R)Q^0:=\sum_{k=0}^\infty\frac{1}{k!}\operatorname{ad}_{R}^k Q^0$, with $\operatorname{ad}_R:=[R,-]$, coincide up to terms of filtration degree $n+1$, i.e.
	\begin{equation}
	\label{eq:proof:prop:SBSO_uniqueness2}
	(\exp R)(Q^0)\equiv Q^1\Mod\calF_{n+1}.
	\end{equation}
	The statement of the Proposition will then follow because the decreasing filtration is finite, and $Q^0\equiv Q^1\Mod\calF_{N+1}$, by hypothesis.
	
	{\sc Proof of the Main Idea.}
	If~\eqref{eq:proof:prop:SBSO_uniqueness1} holds, then~\eqref{eq:proof:prop:SBSO_uniqueness2} is satisfied by setting $R:=H(Q^1-Q^0)$.
	Indeed, from the Maurer--Cartan equation for $Q^0$ and $Q^1$, it follows that 
	\begin{equation*}
	0=[Q^1,Q^1]=2[Q^0,Q^1-Q^0]+[Q^1-Q^0,Q^1-Q^0]\equiv 2d(Q^1-Q^0)\Mod\calF_n,	\end{equation*}
	i.e.~$Q^1-Q^0$ is $d$-closed up to terms of filtration degree $n$.
	Hence we also get
	\begin{equation*}
	Q^1-Q^0=(I\circ P-d\circ H-H\circ d)(Q^1-Q^0)\equiv[R,Q^0]\Mod\calF_{n+1},
	\end{equation*}
	and so $(\exp R)Q^0\equiv Q^1\Mod\calF_{n+1}$, as needed.
\end{proof}

By the same argument used in the proof of Proposition~\ref{prop:SBSO_uniqueness}, we get the following.

\begin{corollary}
	\label{cor:SBSO_uniqueness}	
	Let $N\geq 0$, and let $Q$ be a MC element of $(X,[-,-])$, with $Q\equiv\bar{Q}\Mod(\calF_{N}\cap\ker P)$.
	Then there is an automorphism $\phi$ of $(X,[-,-])$ such that $\phi(Q)\equiv\bar{Q}\Mod\calF_{N+1}$.
	Additionally $\phi$ can be chosen so that $\phi(\Omega)\equiv\Omega\Mod\calF_{n+N+1}$, for all $n\in\bbZ$, and $\Omega\in\calF_n$.
\end{corollary}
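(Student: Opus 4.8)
The plan is to reproduce, in a single pass, the inductive step that drives the proof of Proposition~\ref{prop:SBSO_uniqueness}, the essential new feature being that here $\bar{Q}$ is \emph{not} assumed to be a Maurer--Cartan element. Set $\delta := Q - \bar{Q} \in \calF_N \cap \ker P$ and look for the automorphism in inner exponential form, $\phi := \exp(\operatorname{ad}_R)$ with $\operatorname{ad}_R := [R,-]$ and the natural guess $R := -H\delta$. Since $H$ has degree $-1$ and $\delta \in X^1$, the element $R$ has degree $0$, and $H\calF_N \subset \calF_{N+1}$ gives $R \in \calF_{N+1}$; as the filtration is finite in each homogeneous component and $\operatorname{ad}_R$ raises filtration degree, $\exp(\operatorname{ad}_R)$ is a well-defined degree $0$ automorphism of $(X,[-,-])$. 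I then have to verify $\phi(Q) \equiv \bar{Q} \Mod \calF_{N+1}$.

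The first genuine step is to control $d\delta$. From the Maurer--Cartan equation $[Q,Q]=0$ I would extract the identities $[\bar{Q},\delta] = -[\bar{Q},\bar{Q}]$ and $[\delta,\delta] = [\bar{Q},\bar{Q}]$. The second is the crux: it shows $[\bar{Q},\bar{Q}] = [\delta,\delta] \in [\calF_N,\calF_N] \subset \calF_{2N} \subset \calF_N$ (using $N \geq 0$), i.e.\ the mere existence of a Maurer--Cartan element $Q$ near $\bar{Q}$ forces $[\bar{Q},\bar{Q}]$ to lie in $\calF_N$. Combined with the defining relation $[\bar{Q},\Omega] \equiv d\Omega \Mod \calF_N$ for $\Omega \in \calF_N$, this gives $d\delta \equiv [\bar{Q},\delta] = -[\bar{Q},\bar{Q}] \Mod \calF_N$, whence $d\delta \in \calF_N$ and therefore $Hd\delta \in \calF_{N+1}$.

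The second step is the filtered homotopy computation. I would expand $\phi(Q) - \bar{Q} = \delta + [R,Q] + \tfrac12[R,[R,Q]] + \cdots$, where the quadratic and higher brackets land in $\calF_{N+1}$ by the estimate $[\calF_m,\calF_n]\subset\calF_{m+n}$. For the linear term, $[R,Q] = [R,\bar{Q}] + [R,\delta]$ with $[R,\delta] \in \calF_{2N+1} \subset \calF_{N+1}$, and $[R,\bar{Q}] \equiv -dR \Mod \calF_{N+1}$ since $R \in \calF_{N+1}$. Writing $-dR = dH\delta$ and invoking the contraction identity $\id = IP - dH - Hd$ together with $P\delta = 0$, I obtain $dH\delta = -\delta - Hd\delta$, so that $\phi(Q) - \bar{Q} \equiv \delta + (-\delta - Hd\delta) = -Hd\delta \Mod \calF_{N+1}$; by the previous paragraph $-Hd\delta \in \calF_{N+1}$, which closes the argument. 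The additional property is then immediate: for $\Omega \in \calF_n$ the estimate $[R,\calF_n] \subset \calF_{N+1+n}$ yields $\phi(\Omega) - \Omega = [R,\Omega] + \cdots \in \calF_{n+N+1}$.

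The main obstacle, and the only real divergence from the proof of Proposition~\ref{prop:SBSO_uniqueness}, is exactly the control of $d\delta$: there both endpoints were Maurer--Cartan, so $d$-closedness of their difference up to deep filtration was automatic, whereas here it must be deduced indirectly from the single hypothesis that $Q$ is Maurer--Cartan, via $[\bar{Q},\bar{Q}] = [\delta,\delta] \in \calF_{2N} \subset \calF_N$. Everything else is routine bookkeeping with the structural estimates $d\calF_n\subset\calF_{n-1}$, $H\calF_n\subset\calF_{n+1}$, $[\calF_m,\calF_n]\subset\calF_{m+n}$, and $P\calF_{N+1}=0$.
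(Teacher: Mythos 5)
Your overall strategy---a single application of the inductive step from the proof of Proposition~\ref{prop:SBSO_uniqueness}, with $R=-H\delta$ and $\phi=\exp\operatorname{ad}_R$---is exactly what the paper means by ``the same argument'', and your filtered homotopy computation in the second step (including the identification $\phi(Q)-\bar Q\equiv -Hd\delta\Mod\calF_{N+1}$ and the verification of the additional property) is correct. The problem is the first step. From $[Q,Q]=0$ you claim to ``extract the identities $[\bar Q,\delta]=-[\bar Q,\bar Q]$ and $[\delta,\delta]=[\bar Q,\bar Q]$''. The Maurer--Cartan equation expands to the \emph{single} relation
\begin{equation*}
[\bar Q,\bar Q]+2[\bar Q,\delta]+[\delta,\delta]=0,
\end{equation*}
and nothing allows you to split it into those two separate identities: the three terms sit in the nested subspaces $\calF_{-2}\supseteq\calF_{N-1}\supseteq\calF_{2N}$ of a decreasing filtration, not in complementary summands. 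Since your conclusion $[\bar Q,\bar Q]=[\delta,\delta]\in\calF_{2N}\subseteq\calF_N$ rests entirely on the second identity, the crux of your argument---the bound $d\delta\in\calF_N$, hence $Hd\delta\in\calF_{N+1}$---is not established.

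What the Maurer--Cartan equation really yields is $[\bar Q,\delta]\equiv-\tfrac12[\bar Q,\bar Q]\Mod\calF_N$ (using only $[\delta,\delta]\in\calF_{2N}\subseteq\calF_N$, valid since $N\geq 0$), hence $d\delta\equiv-\tfrac12[\bar Q,\bar Q]\Mod\calF_N$. So the missing ingredient is precisely the containment $[\bar Q,\bar Q]\in\calF_N$, and this does \emph{not} follow from the stated hypotheses in the abstract setting of~\eqref{eq:SBSO_1}--\eqref{eq:SBSO_2}: a priori the same expansion only gives $[\bar Q,\bar Q]\in\calF_{N-1}$. To close the argument you need one extra input, for instance the condition $[\bar Q,\bar Q]\in\calF_N\cap\ker P$ of~\eqref{eq:prop:SBSO_existence}, or the observation that in the paper's only application of this corollary (Theorem~\ref{theor:uniqueness_BFV_brackets}, where $N=0$ and $\calF_n$ is cut out by the $\bbN_0$-valued anti-ghost degree, so that $\calF_{-1}=\calF_0=X$) one has $d\delta\in d\calF_N\subseteq\calF_{N-1}=\calF_N$ for free. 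With either of these the rest of your computation goes through verbatim; without one of them the step $Hd\delta\in\calF_{N+1}$ is unjustified.
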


\begin{remark}
	Notice that the \emph{side conditions} satisfied by the contraction data, do not play any r\^ole in the proofs of Propositions~\ref{prop:SBSO_existence}, \ref{prop:SBSO_uniqueness} and Corollary~\ref{cor:SBSO_uniqueness}, and they could be actually relaxed.
	Actually, there is only one place in this work where the side conditions are truly relevant.
	Namely, we need the homotopy equivalence~\eqref{eq:2contraction_data_1} to be a set of true contraction data when proving Theorem~\ref{theor:L_infty_qi}.
	Indeed, the side conditions are necessary to implement the \emph{homotopy transfer} that generates the higher brackets in the $L_\infty$-algebra from the BFV-complex.
\end{remark}

\section{Some auxiliary technical results}
\label{app:technical_tools}

The aim of this section is to state and prove Propositions~\ref{prop:main_result} and~\ref{prop:main_result_bis}, which represent the technical tools at the basis of most of the main results in Chapter~\ref{chap:BFV_complex}, namely: the gauge independence of the BFV complex (cf.~Theorem~\ref{theor:gauge_invariance_BFV_complex}), and the fact that the BFV complex encodes the local moduli spaces of coisotropic sections under Hamiltonian and Jacobi equivalence (cf.~Theorems~\ref{theor:Mod_Ham} and~\ref{theor:Mod_Jac}).
Actually these theorems have been proven as straightforward applications of Propositions~\ref{prop:main_result} and~\ref{prop:main_result_bis}.

In this section we will work within the local model established in Section~\ref{sec:BRST-charges}.
Let us start with two preliminary lemmas.
\begin{lemma}
	\label{lem:technical_lemma}
	Let $s$ be a section of the vector bundle $\pi:NS\to S$, and let $\{e_t\}_{t\in I}$ be a smooth path of sections of the pull-back vector bundle $E\to NS$.
	Suppose that
	\begin{enumerate}
		\item\label{enum:lem:technical_lemma1}
		$e_0=\Omega_E[s]$,
		\item\label{enum:lem:technical_lemma2}
		$\textnormal{``zero locus of $e_t$''}=\im(s)$,
		\item\label{enum:lem:technical_lemma3}
		$\left.e_t\right|_{E_x}$ intersects transversally the restriction to $E_x$ of the zero section of $E\to NS$, for all $x\in S$.
	\end{enumerate}
	Then there exists a smooth path $\{A_t\}_{t\in I}\subset\Gamma(\operatorname{GL}(E))$, with $A_0=\id_E$, such that
	\begin{equation}
	\label{eq:lem:technical_lemma}
	e_t=A_te_0,
	\end{equation}
	or equivalently there is a smooth path $\{a_t\}_{t\in I}\subset\End(E)$ such that
	\begin{equation}
	\label{eq:lem:technical_lemma_bis}
	\frac{d}{dt}e_t=a_te_t.
	\end{equation}
\end{lemma}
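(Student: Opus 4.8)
The plan is to reduce the two formulations \eqref{eq:lem:technical_lemma} and \eqref{eq:lem:technical_lemma_bis} to a single one and then to solve the resulting \emph{division problem} by combining a Hadamard-type argument near the zero locus $\im(s)$ with an elementary rank-one construction on its complement. First I would record the equivalence of \eqref{eq:lem:technical_lemma} and \eqref{eq:lem:technical_lemma_bis}. Given a smooth path $\{A_t\}$ as in \eqref{eq:lem:technical_lemma}, differentiating $e_t=A_t e_0$ in $t$ and using $A_0=\id_E$ yields \eqref{eq:lem:technical_lemma_bis} with $a_t:=\dot A_t\circ A_t^{-1}\in\Gamma(\End E)$. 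Conversely, given a smooth path $\{a_t\}$ solving \eqref{eq:lem:technical_lemma_bis}, I would solve the linear ODE $\dot A_t=a_t\circ A_t$, $A_0=\id_E$, in the vector bundle $E\to NS$; its fundamental solution is a smooth path in $\Gamma(\operatorname{GL}(E))$, and since $f_t:=A_t e_0$ and $e_t$ both solve the same linear equation $\tfrac{d}{dt}\sigma_t=a_t\sigma_t$ with the same initial datum $e_0$, uniqueness of solutions gives $A_t e_0=e_t$. Thus it suffices to produce a smooth path $\{a_t\}\subset\Gamma(\End E)$ with $\dot e_t=a_t e_t$.

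The heart of the proof is the construction of $a_t$. Both $e_t$ and its $t$-derivative $\dot e_t$ are sections of $E=\pi^\ast(NS)$ vanishing on $\im(s)$, by hypotheses \ref{enum:lem:technical_lemma1} and \ref{enum:lem:technical_lemma2}. Since the fibres of $E$ are constant along the fibres of $\pi\colon NS\to S$, the fibre-wise (vertical) derivative of a section is intrinsically defined, and a Hadamard-type integration along the fibre-wise segment from $s(u)$ to $y$ lets me write, for any section $\sigma$ vanishing on $\im(s)$, an identity $\sigma=B_\sigma\, e_0$ with $B_\sigma\in\Gamma(\End E)$ whose restriction to $\im(s)$ equals the transversal vertical derivative of $\sigma$ along $\im(s)$. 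Applying this to $\sigma=e_t$ gives $e_t=B_t e_0$, where $B_t|_{\im(s)}$ is invertible \emph{precisely} by the transversality hypothesis \ref{enum:lem:technical_lemma3}; hence $B_t$ is invertible on an open neighbourhood $W$ of $\im(s)$ in $NS$. Applying it to $\sigma=\dot e_t$ gives $\dot e_t=C_t e_0$, whence on $W$ I may set $a_t^{(1)}:=C_t\circ B_t^{-1}$, which satisfies $\dot e_t=a_t^{(1)}e_t$ there. Away from $\im(s)$, hypothesis \ref{enum:lem:technical_lemma2} ensures that $e_t$ is nowhere vanishing, so fixing a fibre metric $h$ on $E$ I set $a_t^{(2)}:=h(e_t,e_t)^{-1}\,\dot e_t\otimes h(e_t,-)$, which satisfies $\dot e_t=a_t^{(2)}e_t$ on $NS\smallsetminus\im(s)$. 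Finally, choosing a partition of unity $\{\rho,1-\rho\}$ with $\rho=1$ near $\im(s)$ and $\operatorname{supp}\rho\subset W$, the section $a_t:=\rho\, a_t^{(1)}+(1-\rho)\, a_t^{(2)}$ is a globally defined smooth element of $\Gamma(\End E)$ satisfying $a_t e_t=\dot e_t$ everywhere.

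The main obstacle is exactly this division step: neither $a_t^{(1)}$ nor $a_t^{(2)}$ is globally defined on $NS$, and the role of hypothesis \ref{enum:lem:technical_lemma3} is to guarantee the invertibility of $B_t$ near $\im(s)$ that makes $a_t^{(1)}$ available there, while hypothesis \ref{enum:lem:technical_lemma2} makes $a_t^{(2)}$ available on the complement; the partition of unity then patches them, because both solve the same equation $a\,e_t=\dot e_t$ on the overlap $W\smallsetminus\im(s)$. The remaining care concerns joint smoothness in the parameter $t$: the neighbourhood $W$ and the cutoff $\rho$ may be taken locally constant in $t$ (or smoothly $t$-dependent), since the set $\{(t,p):B_t(p)\ \text{is invertible}\}$ is open in $I\times NS$ and contains $I\times\im(s)$, which is a standard openness argument. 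With the smooth path $\{a_t\}$ in hand, the ODE of the first step produces the desired $\{A_t\}\subset\Gamma(\operatorname{GL}(E))$ with $A_0=\id_E$, completing the proof.
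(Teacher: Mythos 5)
Your proof is correct and follows essentially the same route as the paper's: near $\im(s)$ the paper's operator $A_t:=-h[s]e_t$ is precisely your Hadamard-type fibrewise integral $B_t$ (the homotopy $h[s]$ of the contraction data in Proposition~\ref{prop:2contraction_data} is defined by exactly that fibrewise integration), with hypothesis~(3) supplying invertibility along $\im(s)$, while away from $\im(s)$ the paper likewise uses a fibre metric and the rank-one projection onto the line subbundle spanned by $e_t$. If anything, your write-up is slightly more complete, since you make explicit the partition-of-unity gluing of the two local solutions of the linear equation $a\,e_t=\dot e_t$, the passage between the multiplicative and infinitesimal formulations via the linear ODE, and the uniformity of the neighbourhood $W$ in $t$ — points the paper leaves implicit under the remark that the statement ``has a local character''.
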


\begin{proof}
	The proposition has a local character, and it is enough to work in a neighborhood of an arbitrary point $x\in NS$.
	We distinguish two cases: $x\notin\im(s)$ and $x\in\im(s)$.
	
	\textsc{First Case.}
	Set $N':=NS\smallsetminus\im(s)$, and equip the vector bundle $E':=E|_{N'}\to N'$ with a Riemannian metric.
	Denote by $F_t\to N'$ the rank-$1$ vector subbundle of $E'\to N'$ generated by $e_t|_{N'}$.
	A smooth path $\{a_t\}_{t\in I}\subset\End(E)$ satisfying~\eqref{eq:lem:technical_lemma_bis} can be obtained by the following composition of vector bundle morphisms: $E'\overset{P_t}{\longrightarrow}F_t\overset{I_t}{\longrightarrow}E'$, where $P_t:E'\to F_t$ is the orthogonal projection, and $I_t:F_t\to E'$ is given by $I_t(e_t|_{N'})=\left(\frac{d}{dt}e_t\right)|_{N'}$.
	
	\textsc{Second Case.}
	Since $e_t\in\Gamma(\hat{L})^{(1,0)}$, and $d[s]\in\Diff(\hat{L})^{(0,1)}$, it follows, by bi-degree reasons, that $d[s](e_t)=0$.
	Even more, $e_t$ is actually a co-boundary of $(\Gamma(\hat{L}),d[s])$.
	Indeed hypothesis~\eqref{enum:lem:technical_lemma2} guarantees that $\wp[s]e_t=0$.
	Hence, setting $A_t:=-h[s]e_t$, in view of Proposition~\ref{prop:2contraction_data}, we get
	\begin{equation*}
	e_t=d[s]A_t=\{\Omega_E[s],A_t\}_G=A_t\Omega_E[s]=A_te_0,
	\end{equation*}
	where, in the last step, we used hypothesis~\eqref{enum:lem:technical_lemma1}.
	Finally, a simple computation in local coordinates shows that hypothesis~\eqref{enum:lem:technical_lemma3} is equivalent to the fiberwise invertibility of $A_t:=-h[s]e_t$ on $\im(s)$, and so also on some open neighborhood of $\im(s)$ in $NS$.
\end{proof}

\begin{lemma}
	\label{lem:integrating_graded_do}
	Fix a smooth path $\{\square_t\}_{t\in I}\subset\Diff(\hat{L})^0$, with $I:=[0,1]$.
	Denote by $\{\tilde\square_t\}_{t\in I}\subset\Diff(\hat{L})^{(0,0)}$, $\{\slashed\square_t\}_{t\in I}\subset\Diff(L)$ and $\{X_t\}_{t\in I}\subset\frakX(M)$ the smooth paths defined by setting $\tilde\square_t:=\operatorname{pr}^{(0,0)}\square_t$, $\slashed\square_t:=\left.\square_t^0\right|_{\Gamma(L)}$, and $X_t:=\sigma_{\slashed\square_t}$ respectively.
	The following conditions are equivalent:
	\begin{enumerate}
		\item\label{enum:integrating_graded_do1}
		$\{\square_t\}_{t\in I}$ integrates to a smooth path $\{\Phi_t\}_{t\in I}$ of automorphisms of $\hat{L} \to \hat{M}$;
		\item\label{enum:integrating_graded_do2}
		$\{\tilde\square_t\}_{t\in I}$ integrates to a smooth path $\{\tilde\Phi_t\}_{t\in I}$ of bi-degree $(0,0)$ automorphisms of $\hat{L}\to\hat{M}$;  
		\item\label{enum:integrating_graded_do3}
		$\{\slashed\square_t\}_{t\in I}$ integrates to a smooth path $\{(F_t,\underline{\smash{F_t}})\}_{t\in I}$ of automorphisms of $L \to M$;
		\item\label{enum:integrating_graded_do4}
		$\{X_t\}_{t\in I}$ integrates to a smooth path $\{\underline{\smash{F_t}}\}_{t\in I}$ of diffeomorphisms of $M$.
	\end{enumerate}
	Moreover, if the equivalent conditions (1)-(4) are satisfied, then the following relations hold:
	\begin{gather*}
	\Phi_t^\ast\lambda=\tilde\Phi_t^\ast\lambda\Mod\bigoplus_{k\geq 1}\Gamma(\hat{L})^{(p+k,q+k)},\ \textnormal{for all}\ (p,q)\in\bbN_0^2,\ \lambda\in\Gamma(\hat{L})^{(p,q)},\\
	\tilde\Phi^\ast\lambda=F_t^\ast\lambda,\ \textnormal{for all}\ \lambda\in\Gamma(L).	
	\end{gather*}
\end{lemma}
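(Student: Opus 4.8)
The plan is to prove the equivalence of the four integrability conditions by exploiting the finite filtration structure on $\Diff^\star(\hat{L})$ together with standard ODE existence theory, and then to deduce the two compatibility relations by differentiating. First I would observe that the key point is purely a statement about integrating time-dependent derivations, so the heart of the matter is the following general fact: a smooth path $\{\square_t\}_{t\in I}$ of degree $0$ graded derivations of a graded line bundle integrates to a smooth path of automorphisms precisely when the induced path of \emph{symbols} (which are genuine time-dependent vector fields on the base) integrates to a flow of diffeomorphisms. This reduces everything to the classical statement that a time-dependent vector field on $M$ integrates to a flow iff it is complete on the relevant time interval, and that over any such flow a derivation with that symbol integrates by solving a linear ODE fibrewise. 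This immediately gives the implication chain $(1)\Rightarrow(2)\Rightarrow(3)\Rightarrow(4)$ by restriction and passing to symbols, since $\tilde\square_t$, $\slashed\square_t$, and $X_t$ are obtained from $\square_t$ by successive projections that are compatible with the respective flows.

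The substantive direction is $(4)\Rightarrow(1)$, which I would handle in stages. Granting that $X_t$ integrates to $\{\underline{F}_t\}$, one first lifts to $L\to M$: the derivation $\slashed\square_t$ has symbol $X_t$, so the Cauchy problem $\tfrac{d}{dt}F_t^\ast = F_t^\ast\circ\slashed\square_t$ (or the equivalent form with $\slashed\square_t\circ F_t^\ast$, cf.~Section~\ref{sec:app_0}) has a unique solution $\{F_t\}$ covering $\{\underline{F}_t\}$, because once the base flow is known the equation for $F_t^\ast$ is linear along the orbits and hence complete on $I$. This yields $(3)$. To pass from $(3)$ to $(2)$, and then to $(1)$, I would use the finiteness of the filtration $\{\calL_{\geq n}\}$ from Remark~\ref{rem:filtration_Ham}: the bi-degree $(0,0)$ part $\tilde\square_t$ acts on $C^\infty(\hat{M})=S_{C^\infty(M)}\Gamma(E_L[-1]\oplus E^\ast[1])$ and is determined by its action on the generators $\Gamma(L)$, $\Gamma(E)$, $\Gamma(E^\ast\otimes L)$; since it preserves each finite-dimensional-over-$C^\infty(M)$ summand of fixed ghost/anti-ghost bi-degree, the flow can be built degree by degree as a finite tower of linear ODEs, all of which have global solutions on $I$ because there are only finitely many bi-degrees in each total-degree component. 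The same finiteness argument upgrades $(2)$ to $(1)$: writing $\square_t=\tilde\square_t+\delta_t$ with $\delta_t$ of strictly positive anti-ghost shift, the perturbation $\delta_t$ is nilpotent on each homogeneous component, so the full flow $\Phi_t$ is obtained from $\tilde\Phi_t$ by a finite (hence convergent, with no analytic issues) time-ordered correction.

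The two compatibility relations then follow by differentiation and uniqueness. For the first, I would note that $\square_t-\tilde\square_t$ raises the anti-ghost degree by at least one, so on $\Gamma(\hat{L})^{(p,q)}$ the derivations $\square_t$ and $\tilde\square_t$ agree modulo $\bigoplus_{k\geq 1}\Gamma(\hat{L})^{(p+k,q+k)}$; since this family of subspaces is preserved by the flows and the two Cauchy problems have identical initial data $\id_{\hat L}$, uniqueness of solutions forces $\Phi_t^\ast\lambda\equiv\tilde\Phi_t^\ast\lambda$ modulo $\bigoplus_{k\geq 1}\Gamma(\hat{L})^{(p+k,q+k)}$. For the second, restricting $\tilde\Phi_t^\ast$ to $\Gamma(\hat{L})^{(0,0)}=\Gamma(L)$ gives a flow whose generator is $\slashed\square_t$ by construction, so by uniqueness it coincides with $F_t^\ast$.

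The main obstacle I anticipate is not any single computation but the bookkeeping needed to make the degree-by-degree integration in $(4)\Rightarrow(1)$ rigorous: one must verify that the projections $\operatorname{pr}^{(h,k)}$ intertwine the various Cauchy problems correctly, that completeness on $I=[0,1]$ is preserved at each stage (which is where the finiteness of the filtration in each homogeneous component is essential — otherwise one would face genuine convergence questions for an infinite time-ordered exponential), and that the side‐conditions on bi-degrees are respected throughout. Once the finiteness is exploited systematically, however, there are no analytic subtleties beyond ordinary linear ODE theory, and every implication reduces to a uniqueness-of-flow argument.
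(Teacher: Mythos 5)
Your proposal is correct, and it supplies exactly the details the paper omits: the published proof of this lemma is literally ``It is straightforward,'' and the intended argument is the one you give --- reduce everything to completeness of the time-dependent vector field $X_t$ on the body $M$, then rebuild the flow upward through $L$, the bi-degree $(0,0)$ part, and finally the full derivation via finitely many linear ODEs, using that $\wedge^\bullet E_L$ and $\wedge^\bullet E^\ast$ are bounded in degree so that $\delta_t:=\square_t-\tilde\square_t$ is nilpotent on each homogeneous component. The one point worth making explicit in your write-up is the observation that justifies both the ``leading term'' extraction and the nilpotency: since functions on $\hat M$ have bi-degrees in $\bbN_0^2$, every degree $0$ derivation of $\hat L$ decomposes into components of bi-degree $(h,h)$ with $h\geq 0$ only, so its flow automatically preserves the diagonal filtration $\bigoplus_{k\geq 0}\Gamma(\hat{L})^{(p+k,q+k)}$, which is what makes $\operatorname{pr}^{(0,0)}\circ\Phi_t^\ast$ a genuine automorphism integrating $\tilde\square_t$ and yields the two ``Moreover'' relations by uniqueness of solutions.
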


\begin{proof}
	It is straightforward.
\end{proof}

Let us fix the setting for Proposition~\ref{prop:main_result}.
Assume we have the following smooth paths
\begin{itemize}
	\item $\{\J_t\}_{t\in I}\subset\Diff^2(L[1])$ such that $\J_t$ is a Jacobi structure on $L\to M$,
	\item $\{s_t\}_{t\in I}\subset\Gamma(\pi)$, such that $\im s_t$ is a coisotropic submanifold of $(M,L,\J_t)$,
	\item A smooth path $\{(F_t,\underline F_t)\}_{t\in I}$ of automorphisms of $L \to M$, with $\underline{\smash{F}}_0=\id_{NS}$ and $F_0=\id_L$, such that $\im s_t=\underline F_t(\im s_0)$ and $\J_t=F_t^\ast\J_0$.
\end{itemize}
Fix moreover the following objects:
\begin{itemize}
	\item $\hat{\J}_0$, a lifting of $\J_0$ to a Jacobi structure on $\hat{L}\to\hat{M}$,
	\item $\Omega_0$, an $s_0$-BRST charge wrt $\hat{\J}_0$.
\end{itemize}
Our aim is to find a lifting of $\{F_t\}_{t\in I}$ to a suitable smooth path $\{\calF_t\}_{t\in I}$ of bi-degree $(0,0)$ automorphisms of $\hat{L}\to\hat{M}$, with $\calF_0=\id_{\hat{L}}$.
This is accomplished by the following.

\begin{proposition}
	\label{prop:main_result}
	There exists a smooth path $\{\calF_t\}_{t\in I}$ of bi-degree $(0,0)$ automorphisms of $\hat{L}\to\hat{M}$, with $\calF_0=\id_{\hat{L}}$, such that
	\begin{enumerate}
		\item\label{enum:main_result1}
		$\{\calF_t\}_{t\in I}$ is a lifting of $\{F_t\}_{t\in I}$, i.e.~$\calF_t|_L=F_t$,
		\item\label{enum:main_result2}
		$\hat\J_t:=(\calF_t)_\ast\hat\J_0$ is a lifting of $\J_t$ to $\hat{L}\to\hat{M}$,
		\item\label{enum:main_result3}
		$\Omega_t:=(\calF_t)_\ast\Omega_0\in\MC(\Gamma(\hat{L}),\{-,-\}_{\J_t})$ is an $s_t$-BRST charge wrt $\hat{\J}_t$.
	\end{enumerate}	
\end{proposition}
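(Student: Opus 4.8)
The plan is to construct $\{\calF_t\}_{t\in I}$ as the flow of a suitable time-dependent derivation of $\hat L\to\hat M$ lifting the infinitesimal generator of $\{F_t\}$, and then to correct it by a fibre-wise linear automorphism so as to fix the $(1,0)$-component of $(\calF_t)_\ast\Omega_0$. Throughout I work at the infinitesimal level and integrate via Lemma~\ref{lem:integrating_graded_do}.

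First I would pass to the generator. Since $\{(F_t,\underline{\smash{F_t}})\}_{t\in I}$ is a smooth path of automorphisms of $L\to M$ with $F_0=\id_L$, there is a smooth path $\{\slashed\square_t\}_{t\in I}\subset\Der L$ generating it. Choosing, as in the proof of Theorem~\ref{theor:gauge_invariance_BFV_complex} (cf.~Equation~\eqref{eq:proof:gauge_invariance1}), a canonical bi-degree $(0,0)$ lift $\square_t\in\Diff(\hat L)^0$ of $\slashed\square_t$ built from a $\Der L$-connection in $E\to M$, Lemma~\ref{lem:integrating_graded_do} guarantees that $\{\square_t\}$ integrates to a smooth path $\{\hat\calF_t\}_{t\in I}$ of bi-degree $(0,0)$ automorphisms of $\hat L\to\hat M$, with $\hat\calF_0=\id_{\hat L}$, $\hat\calF_t|_L=F_t$, covering $\underline{\smash{F_t}}$ on the base. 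Because $\hat\calF_t$ is bi-degree $(0,0)$, restricts to $F_t$ on $\Gamma(\hat L)^{(0,0)}=\Gamma(L)$, and covers $\underline{\smash{F_t}}$, the projection $p$ intertwines $(\hat\calF_t)_\ast$ with the action of $F_t$; hence $(\hat\calF_t)_\ast\hat\J_0$ is a lifting of $\J_t=F_t^\ast\J_0$ in the sense of Definition~\ref{def:BFV_brackets}. Likewise $\tilde\Omega_t:=(\hat\calF_t)_\ast\Omega_0$ is a Maurer--Cartan element of $(\Gamma(\hat L),\{-,-\}_{(\hat\calF_t)_\ast\hat\J_0})$, and since push-forward by $\hat\calF_t$ transforms the zero locus of $\operatorname{pr}^{(1,0)}\Omega_0=\Omega_E[s_0]$ by $\underline{\smash{F_t}}$, the section $\operatorname{pr}^{(1,0)}\tilde\Omega_t$ has zero locus $\underline{\smash{F_t}}(\im s_0)=\im s_t$. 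Thus $\tilde\Omega_t$ is already a geometric MC element with associated section $s_t$ (Proposition~\ref{prop:geometric_MC}), but its $(1,0)$-component need not equal $\Omega_E[s_t]$ exactly.

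The correction step removes this last discrepancy. Set $e_t:=\operatorname{pr}^{(1,0)}\tilde\Omega_t\in\Gamma(E)$; this is a smooth path with $e_0=\Omega_E[s_0]$ and common zero locus $\im s_t$ (transverse, by the set-up), so a version of Lemma~\ref{lem:technical_lemma} — adapted to a moving section, e.g.~after composing with $\underline{\smash{F_t}}{}^{-1}$ to bring the zero loci back to $\im s_0$ — produces a smooth path $\{A_t\}_{t\in I}\subset\Gamma(\operatorname{GL}_+(E))$, with $A_0=\id_E$, such that $A_t\,\Omega_E[s_t]=e_t$. Each $A_t$ determines a bi-degree $(0,0)$ automorphism $\Psi_t$ of $\hat L\to\hat M$ of the form $S_{C^\infty(M)}((A_t\otimes\id_{L^\ast})\oplus A_t^\ast)\otimes\id_L$ (as in the proof of Proposition~\ref{prop:geometric_MC}); $\Psi_t$ covers the identity, acts trivially on $\Gamma(L)$, preserves the tautological bracket $G$, and satisfies $\operatorname{pr}^{(1,0)}(\Psi_t^\ast X)=A_t^{-1}\operatorname{pr}^{(1,0)}X$. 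Defining $\calF_t:=\Psi_t^{-1}\circ\hat\calF_t$, one gets $\Omega_t:=(\calF_t)_\ast\Omega_0=\Psi_t^\ast\tilde\Omega_t$ with $\operatorname{pr}^{(1,0)}\Omega_t=A_t^{-1}e_t=\Omega_E[s_t]$, i.e.~an honest $s_t$-BRST charge; moreover $\calF_t$ still restricts to $F_t$ on $L$ and still covers $\underline{\smash{F_t}}$, because $\Psi_t$ is trivial in bi-degree $(0,0)$ and on the base, so $\hat\J_t=(\calF_t)_\ast\hat\J_0=\Psi_t^\ast\big((\hat\calF_t)_\ast\hat\J_0\big)$ remains a lifting of $\J_t$.

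The main obstacle I foresee is the correction step, namely producing the family $\{A_t\}$ smoothly and globally: Lemma~\ref{lem:technical_lemma} is stated for a fixed section, so one must first verify that, after transporting the varying zero loci $\im s_t$ back to $\im s_0$ by the $\underline{\smash{F_t}}$, the hypotheses (initial value, common transverse zero locus) hold uniformly in $t$, so that the homotopy $h[s_0]$ from the contraction data of Proposition~\ref{prop:2contraction_data} yields a $t$-smooth, fibre-wise invertible $A_t$ on a neighbourhood of $\im s_0$. Once this is secured, the remaining verifications — that $\Psi_t$ preserves $G$ and the bi-degree, and that $\calF_t$ meets conditions \ref{enum:main_result1}--\ref{enum:main_result3} — are routine bookkeeping with the bi-degree and the push-forward formula of Remark~\ref{rem:inf_aut}.
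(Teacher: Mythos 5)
Your proposal is correct and follows essentially the same route as the paper: both decompose $\calF_t$ into a bi-degree $(0,0)$ lift of $F_t$ built from a connection (your $\hat\calF_t$, the paper's $\Phi_t$ integrating $\nabla_{\slashed\square_t}$) composed with a fibre-wise $\operatorname{GL}(E)$-factor $\Psi_t$ preserving $G$, and both reduce condition~\ref{enum:main_result3} to Lemma~\ref{lem:technical_lemma}. The ``moving zero locus'' issue you flag is resolved exactly as you suggest: the paper applies the lemma to $e_t:=\Phi_t^\ast(\Omega_E[s_t])$, which has fixed zero locus $\im s_0$ and starts at $\Omega_E[s_0]$, and solves the resulting ODE $\tfrac{d}{dt}e_t=\tilde a_t(e_t)$ for the generator $\tilde a_t$ rather than for the integrated path $A_t$.
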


\begin{proof}
	We will show explicitly how to construct a smooth path $\{\square_t\}_{t\in I}\subset\Diff(\hat{L})^{(0,0)}$ integrating to a smooth path $\{\calF_t\}_{t\in\ I}\subset$, so that
	\begin{equation*}
	\calF_0=\id_{\hat L},\quad\frac{d}{dt}\calF_t^\ast=\square_t\circ\calF_t^\ast,
	\end{equation*}
	and moreover the latter satisfies conditions~\eqref{enum:main_result1}--\eqref{enum:main_result3} in the statement.
	
	Fix an arbitrary $\Diff(\ell)$-connection $\nabla$ in $\pi:NS\to S$.
	By pull-back along $\pi$, we also get a $\Diff(L)$-connection in $E\to NS$, denoted by $\nabla$ again.
	Arguing as in Sections~\ref{subsec:first_relevant_contraction_data} and~\ref{subsec:psi_nabla}, the latter $\nabla$ determines a degree $0$ graded $C^\infty(\hat{M})$-module isomorphism, of bi-degree $(0,0)$,
	\begin{equation*}
	\Diff(\hat{L})\simeq C^\infty(\hat{M})\underset{{\scriptscriptstyle C^\infty(M)}}{\otimes}\big(\underbrace{\Gamma((E_L)^\ast)}_{(-1,0)}[1]\oplus\underbrace{\Gamma(E)}_{(0,-1)}[-1]\oplus\underbrace{\Diff(L)}_{(0,0)}\big)
	\end{equation*}
	Focussing on the ghost/anti-ghost bi-degree $(0,0)$ component, we get, in particular, an isomorphism of $C^\infty(M)$-modules
	\begin{equation*}
	\Diff(\hat{L})^{(0,0)}\simeq\End(E)\oplus\End(E^\ast)\oplus\Diff(L).
	\end{equation*}
	
	Consequently, for any path $\{\square_t\}_{t\in I}\subset\Diff(\hat{L})^{(0,0)}$ there exist $\{a_t\}_{t\in I}\subset\End(E)$, $\{b_t\}_{t\in I}\subset\End(E^\ast)$, and $\{\slashed\square_t\}_{t\in I}\subset\Diff(L)$ uniquely determined by
	\begin{equation*}
	\square_t=\bbD_{a_t}+\bbD_{b_t}+\nabla_{\slashed\square_t}.
	\end{equation*}
	Here we interpret the endomorphisms $a_t,b_t$ as vertical vector fields on $\hat{M}$ via~\eqref{eq:A-derivations1} (with $F = (E_L)^\ast [1] \oplus E[-1]$) and use the connection $\bbD:\VDer(\hat{M})\to D(\hat{L})$ in Remark~\ref{rem:A-derivations} (with $\scrM = \hat{M}$ and $\scrL = \hat{L}$).
	For every $Z\in\VDer(\hat{M})$, the symbol of $\bbD_Z$ is $Z$ and vanishes on $C^\infty(M)$.
	Hence Lemma~\ref{lem:integrating_graded_do} guarantees that the condition on $\{\square_t\}_{t\in I}\subset\Diff(\hat{L})^{(0,0)}$ to integrate to a smooth path $\{\calF_t\}_{t\in I}$ of bi-degree $(0,0)$ automorphisms of $\hat{L}\to\hat{M}$, is equivalent to both the following
	\begin{itemize}
		\item $\{\nabla_{\slashed\square_t}\}_{t\in I}\subset\Diff(\hat{L})^{(0,0)}$, and integrates to a smoooth path $\{\Phi_t\}_{t\in I}$ of bi-degree $(0,0)$ automorphisms of $\hat{L}\to\hat{M}$,
		\item $\{\slashed\square_t\}_{t\in I}\subset\Diff(L)$ integrates to a smooth path $\{\phi_t\}_{t\in I}$ of automorphisms of $L \to M$.
	\end{itemize}
	Now, it follows that $\{\Phi_t^{-1}(\bbD_{a_t}+\bbD_{b_t})\}_{t\in I}=\{\bbD_{{\tilde a}{}_t}+\bbD_{\tilde b_t}\}_{t\in I}$, for some $\{\tilde a_t\}_{t\in I}\subset\End(E)$, and $\{\tilde b_t\}_{t\in I}\subset\End(E^\ast)$.
	Therefore Lemma~\ref{lem:integrating_graded_do} again implies that $\{\Phi_t^{-1}(\bbD_{a_t}+\bbD_{b_t})\}_{t\in I}$ integrates to $\Psi_t:= S_{{\scriptscriptstyle C^\infty(M)}}((A_t\otimes\id_{L^\ast})\oplus B_t)\otimes\id_L$, for some smooth paths $\{A_t\}_{t\in I}\subset\Gamma(\operatorname{GL}(E))$ and $\{B_t\}_{t\in I}\subset\Gamma(\operatorname{GL}(E^\ast))$, with $A_0=\id_E$ and $B_0=\id_{E^\ast}$.
	So we get the following decomposition
	\begin{equation*}
	\calF_t=\Phi_t\circ\Psi_t.
	\end{equation*}
	
	The construction of $\{\Phi_t\}_{t\in I}$ and $\{\Psi_t\}_{t\in I}$ guarantees that
	\begin{equation*}
	\Phi_t|_L=\phi_t,\qquad \Phi_t G=G,\qquad
	\Psi_t|_L=\id_L,\qquad \Psi_t|_{\hat{L}^{(1,0)}}=A_t,\qquad \Psi_t|_{\hat{L}^{(0,1)}}=B_t\otimes\id_L.
	\end{equation*}
	Now condition~\eqref{enum:main_result1} can be equivalently rewritten as
	\begin{equation}
	\label{eq:proof:main_resultI}
	\phi_t=F_t,
	\end{equation}
	which completely determines $\Phi_t$, hence $\slashed\square_t$.
	Condition~\eqref{enum:main_result2} splits into two conditions: $\phi_t\J_0=\J_t\equiv F_t\J_0$, and $\Psi_tG=G$.
	Therefore it reduces to $B_t^\ast A_t=\id_E$, for all $t\in I$, and it can be equivalently rewritten as
	\begin{equation*}
	\tilde b_t^\ast+\tilde a_t=0.
	\end{equation*}
	As a consequence $\bbD_{\tilde a_t}+\bbD_{\tilde b_t}=\{\tilde a_t,-\}_G$, and $\{\Psi_t\}_{t\in I}$ is obtained by integration of
	\begin{equation}
	\label{eq:proof:main_resultII}
	\Psi_0=\id_{\hat{L}},\quad \frac{d}{dt}\Psi_t=\{\tilde a_t,-\}_G\circ\Psi_t.
	\end{equation}
	The latter does not yet tell us anything about $\tilde a_t$.
	However, condition~\eqref{enum:main_result3} becomes $A_t\Omega_E[s_0]=\Phi_t^\ast(\Omega_E[s_t])$, or equivalently
	\begin{equation}
	\label{eq:proof:main_resultIII}
	\frac{d}{dt}e_t=\tilde a_t(e_t),
	\end{equation}
	where $e_t:=\Phi_t^\ast(\Omega_E[s_t])$, for all $t\in I$.
	From the choice of $\nabla$ and the construction of $\{\Phi_t\}_{t\in I}$, the smooth path $\{e_t\}_{t\in I}\subset\Gamma(\hat{L})^{(1,0)}=\Gamma(E)$ satisfies the hypotheses of Lemma~\ref{lem:technical_lemma}, so equation~\eqref{eq:proof:main_resultIII} admits a (non-unique) solution $\{\tilde a_t\}_{t\in I}\subset\Gamma(\hat{L})^{(1,0)}$.
	This concludes the proof.
\end{proof}

Now, we fix the setting for Proposition~\ref{prop:main_result_bis}.
Fix a Jacobi structure $\J$ on $L\to M$.
Suppose we have the following:
\begin{itemize}
	\item a smooth path $\{s_t\}_{t\in I}\subset C(L,\J)$ of coisotropic sections of $(L,\J)$, and
	\item a smooth path $\{(F_t,\underline F_t)\}_{t\in I}$ of automorphisms of $(M,L,\J)$, with $F_0=\id_L$, integrating $\{\lambda_t,-\}_{\J}$, for some smooth path $\{\lambda_t\}_{t\in I}\subset\Gamma(L)$, and $\im s_t=\underline F_t(\im s_0)$.
\end{itemize}
Choose a $\Diff(\ell)$-connection $\nabla$ in $\pi:NS\to S$, and pull it back along $\pi$ to get a $\Diff(L)$-connection in $L\to M$ which will be again denoted by $\nabla$.
Fix moreover $\Omega_0$, an $s_0$-BRST charge wrt $\hat{\J}^\nabla$.

Our aim is to find a lifting of $\{F_t\}_{t\in I}\subset\Aut(M,L,\J)$ to a suitable smooth path $\{\calF_t\}_{t\in I}\subset\Aut(\hat{M},\hat{L},\hat{\J}^\nabla)$, with $\calF_0=\id_{\hat{L}}$.
This aim is accomplished by the following

\begin{proposition}
	\label{prop:main_result_bis}
	There exist a smooth path $\{\hat{\lambda}_t\}_{t\in I}\subset\Gamma(\hat{L})^0$, and a  smooth path $\{\calF_t\}_{t\in I}\subset\Aut(\hat{M},\hat{L},\hat{\J}^\nabla)$, which integrates $\{\hat{\lambda}_t,-\}_{\hat{\J}^\nabla}$, such that
	\begin{enumerate}
		\item\label{enum:main_result_bis1}
		$\{\calF_t\}_{t\in I}\subset\Aut(\hat{M},\hat{L},\hat{\J}^\nabla)$ is a lifting of $\{F_t\}_{t\in I}\subset\Aut(M,L,\J)$, i.e.~$\operatorname{pr}^{(0,0)}\circ\calF_t|_L=F_t$,
		\item\label{enum:main_result_bis2}
		$\Omega_t:=(\calF_t)_\ast\Omega_0\in\MC(\Gamma(\hat{L}),\{-,-\}_{\hat{\J}^\nabla})$ is an $s_t$-BRST charge wrt $\hat{\J}^\nabla$.
	\end{enumerate}	
\end{proposition}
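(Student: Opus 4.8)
The plan is to build the path $\{\calF_t\}$ directly as the \emph{Hamiltonian} flow of a path $\{\hat{\lambda}_t\}\subset\Gamma(\hat{L})^0$ that I construct by hand, rather than routing through Proposition~\ref{prop:main_result}. The advantage of insisting on a genuine Hamiltonian flow is that two of the requirements then come for free: being generated by $\{\hat{\lambda}_t,-\}_{\hat{\J}^\nabla}$, the flow $\{\calF_t\}$ automatically lies in $\Aut(\hat{M},\hat{L},\hat{\J}^\nabla)$, and consequently $\Omega_t:=(\calF_t)_\ast\Omega_0$ is a Maurer--Cartan element of $(\Gamma(\hat{L}),\{-,-\}_{\hat{\J}^\nabla})$ for every $t$, being the image of the Maurer--Cartan element $\Omega_0$ under a Jacobi automorphism. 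Thus the whole problem reduces to arranging that (a) $\operatorname{pr}^{(0,0)}\hat{\lambda}_t=\lambda_t$, which by Lemma~\ref{lem:integrating_graded_do} together with the lifting property of $\hat{\J}^\nabla$ forces the flow induced on $L$ to be exactly $F_t$ (condition~\ref{enum:main_result_bis1}), and (b) $\operatorname{pr}^{(1,0)}\Omega_t=\Omega_E[s_t]$ for all $t$, which by Definition~\ref{def:BRST-charge} is precisely the assertion that $\Omega_t$ is an $s_t$-BRST charge (condition~\ref{enum:main_result_bis2}).

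Next I would turn (b) into an infinitesimal condition. Writing $\hat{\lambda}_t=\lambda_t+\hat{\lambda}_t^{(1,1)}$ with $\hat{\lambda}_t^{(1,1)}\in\Gamma(\hat{L})^{(1,1)}$ and all higher bi-degree components set to zero (harmless, since they cannot influence the $(1,0)$-part), a bi-degree bookkeeping in $\{\hat{\lambda}_t,\Omega_t\}_{\hat{\J}^\nabla}$ shows that only three index combinations can produce a $(1,0)$-output; and, because $\lambda_t$ carries no ghosts or anti-ghosts so that $\{\lambda_t,-\}_G=0$, the differentiated form of (b) collapses to the \emph{decoupled} equation
\[
\{\hat{\lambda}_t^{(1,1)},\Omega_E[s_t]\}_G+\{\lambda_t,\Omega_E[s_t]\}_{\hat{\J}^\nabla_1}=-\pi^\ast\dot{s}_t,
\]
where $\hat{\J}^\nabla_1$ is the bi-degree $(0,0)$ component of the lifted bracket. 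Since $\{-,\Omega_E[s_t]\}_G=\pm\,d[s_t]$ (Proposition~\ref{prop:delta_s}) and every element of $\Gamma(\hat{L})^{(1,0)}$ is automatically $d[s_t]$-closed (as $d[s_t]$ has bi-degree $(0,-1)$), the contraction data~\eqref{eq:2contraction_data_1} of Proposition~\ref{prop:2contraction_data} let me solve for $\hat{\lambda}_t^{(1,1)}$ by the explicit formula $\hat{\lambda}_t^{(1,1)}=\pm\,h[s_t]\bigl(-\pi^\ast\dot{s}_t-\{\lambda_t,\Omega_E[s_t]\}_{\hat{\J}^\nabla_1}\bigr)$, \emph{provided} the right-hand side is annihilated by $\wp[s_t]$. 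Smooth dependence on $t$ is then guaranteed by Remark~\ref{rem:2contraction__data}.

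The hard part is exactly this solvability condition $\wp[s_t]\bigl(-\pi^\ast\dot{s}_t-\{\lambda_t,\Omega_E[s_t]\}_{\hat{\J}^\nabla_1}\bigr)=0$. Unravelling $\wp[s_t]$ (restriction to $\im s_t$ followed by killing of positive anti-ghost degree) and the component $\hat{\J}^\nabla_1$ through a local computation in the frames of Section~\ref{sec:BRST-charges}, this identity reduces to $\dot{s}_t=P^{s_t}(\Delta_{\lambda_t})$, i.e.\ to the statement that the velocity of the family of coisotropic sections equals the normal component of the Hamiltonian vector field $X_{\lambda_t}$ along $\im s_t$. This is precisely the infinitesimal content of the compatibility hypothesis $\im s_t=\underline{F_t}(\im s_0)$ with $\{F_t\}$ integrating $\{\lambda_t,-\}_{\J}$, and it is supplied by equation~\eqref{eq:HameqPs} in the proof of Proposition~\ref{prop:Hameq}; the identification $-\wp[s_t]\{\lambda_t,\Omega_E[s_t]\}_{\hat{\J}^\nabla_1}=P^{s_t}(\Delta_{\lambda_t})$ is itself an instance of the same bracket computation underlying Lemma~\ref{lem:BRST_coisotropic} and the proof of Proposition~\ref{prop:BFV_homological_resolution}. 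I expect the careful tracking of signs and of the several graded brackets to be the only genuinely delicate point of the whole argument.

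Finally I would close by a uniqueness-of-solutions step. Having fixed $\{\hat{\lambda}_t\}$ as above, let $\{\calF_t\}$ be the Hamiltonian flow it generates; this flow exists on all of $I$ because, by Lemma~\ref{lem:integrating_graded_do}, its existence is equivalent to that of the flow of its symbol on $NS$, which is $\underline{F_t}$ and exists by hypothesis. Setting $\Omega_t:=(\calF_t)_\ast\Omega_0$ and $\omega_t:=\operatorname{pr}^{(1,0)}\Omega_t$, the same bi-degree bookkeeping shows that $\omega_t$ solves the linear evolution equation $\tfrac{d}{dt}\omega_t=\{\hat{\lambda}_t^{(1,1)},\omega_t\}_G+\{\lambda_t,\omega_t\}_{\hat{\J}^\nabla_1}$ with $\omega_0=\Omega_E[s_0]$. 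By the very choice of $\hat{\lambda}_t^{(1,1)}$, the path $t\mapsto\Omega_E[s_t]$ solves the identical initial value problem, so uniqueness of solutions of linear ordinary differential equations forces $\omega_t=\Omega_E[s_t]$ for all $t$; this is condition~\ref{enum:main_result_bis2}, while condition~\ref{enum:main_result_bis1} was already secured in the first paragraph, completing the proof.
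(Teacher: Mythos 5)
Your argument is correct, and it reorganizes the same ingredients along a genuinely different route. The paper does not solve an infinitesimal equation for the $(1,1)$-component directly: it first records that $\nabla_{\Delta_{\lambda_t}}=\{\lambda_t,-\}_{i_\nabla\J}$ (this is where the choice of $\nabla$ as a pull-back along $\pi$ enters) and then invokes the scheme of Proposition~\ref{prop:main_result} with $\J_t=\J$, which conjugates by the flow $\Phi_t$ of $\nabla_{\Delta_{\lambda_t}}$ and solves the \emph{integrated} equation $A_t\,\Omega_E[s_0]=\Phi_t^\ast(\Omega_E[s_t])$ via Lemma~\ref{lem:technical_lemma}, i.e.\ $A_t=-h[s_0]\bigl(\Phi_t^\ast\Omega_E[s_t]\bigr)$ plus a transversality check for invertibility; the generator $\tilde a_t$ is recovered by differentiation and $\hat{\lambda}_t=\lambda_t+\tilde a_t$ is exactly your ansatz. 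You instead linearize from the outset, solve $d[s_t]\hat{\lambda}_t^{(1,1)}=-\bigl(\pi^\ast\dot{s}_t+\{\lambda_t,\Omega_E[s_t]\}_{\hat{\J}^\nabla_1}\bigr)$ with the homotopy $h[s_t]$, and expose the solvability condition as precisely~\eqref{eq:HameqPs}. This makes the role of the hypothesis $\im s_t=\underline{\smash{F_t}}(\im s_0)$ transparent and sidesteps the invertibility of $A_t$ (a flow of derivations is automatically by automorphisms). The costs are the sign bookkeeping you flag and the closing uniqueness step: your evolution equation for $\omega_t$ is a first-order linear \emph{transport} equation on $\Gamma(E)$ (the operator $\{\lambda_t,-\}_{\hat{\J}^\nabla_1}=\nabla_{\Delta_{\lambda_t}}$ has symbol $X_{\lambda_t}$), so uniqueness is not finite-dimensional ODE theory but follows by pulling back along the flow $\underline{\smash{F_t}}$, which exists by hypothesis; this deserves a sentence. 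Both routes ultimately rest on the contraction data of Proposition~\ref{prop:2contraction_data} — the paper hides the homotopy inside Lemma~\ref{lem:technical_lemma}, you use it in the open.
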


\begin{proof}
	Setting $\slashed\square_t=\{\lambda_t,-\}_{\J}$, from the local coordinate expression of $i_\nabla$ in Remark~\ref{rem:local_expression_i_nabla}, it is straightforward to get
	\begin{equation}
	\label{eq:main_result_bis1}
	\nabla_{\slashed\square_t}=\{\lambda_t,-\}_{i_\nabla\J}.
	\end{equation}
	The scheme used in the proof of Proposition~\ref{prop:main_result} applies as well in the current special situation, where $\hat{\J}^0=\hat{\J}^\nabla$, and $\J_t=\J$, and guarantees the existence of a smooth path $\{\tilde a_t\}_{t\in I}\subset\Gamma(\hat{L})^{(1,1)}=\Gamma(\End(E))$ and a smooth path $\{\tilde\calF_t\}_{t\in I}$ of bi-degree $(0,0)$ automorphisms of $\hat{L} \to \hat{M}$, with
	\begin{equation}
	\label{eq:main_result_bis2}
	\tilde\calF_0=\id_{\hat{L}},\quad\frac{d}{dt}\tilde\calF_t^\ast=(\{\tilde a_t,-\}_G+\nabla_{\slashed\square_t})\circ\tilde\calF_t^\ast,
	\end{equation}
	such that, in particular, $\tilde\calF_t$ is a lifting of $F_t$, and $pr^{(1,0)}(\tilde\calF_t)_\ast\Omega_0=\Omega_E[s_t]$.
	Define a smooth path $\{\hat{\lambda}_t\}_{t\in I}\subset\Gamma(\hat{L})^0$ by setting $\hat{\lambda}_t:=\lambda_t+\tilde a_t$.
	Because of Lemma~\ref{lem:integrating_graded_do}, from~\eqref{eq:main_result_bis1} and~\eqref{eq:main_result_bis2} it follows that $\{\hat{\lambda}_t,-\}_{\hat{\J}^\nabla}$ integrates to a smooth path $\{\calF_t\}_{t\in I}$, with $\calF_0=\id_{\hat{L}}$, satifying conditions~\eqref{enum:main_result_bis1} and ~\eqref{enum:main_result_bis2}.
\end{proof}


\backmatter



\end{document}